\documentclass[10pt,electronic]{amsart}        
\usepackage{baskervald}
\usepackage[swedish, english]{babel}

\usepackage[latin1]{inputenc}
\usepackage{accents}
\usepackage{adjustbox}
\usepackage{url}
\usepackage[hmargin=3.3cm,vmargin=3.5cm]{geometry} 
\usepackage{graphicx}                         
\usepackage{amsmath}

\usepackage[pdfusetitle]{hyperref}
\usepackage[noabbrev,capitalize]{cleveref}
\hypersetup{
	colorlinks,
	linkcolor={red!50!black},
	citecolor={blue!50!black},
	urlcolor={blue!80!black}
}

\usepackage{amsthm}                      
\usepackage{amssymb} 
\usepackage{mathrsfs}
\usepackage{stmaryrd}
\usepackage{mathtools}
\usepackage[shortlabels]{enumitem}
\usepackage{tikz-cd}
\usetikzlibrary{babel}
\DeclareFontFamily{U}{wncy}{}
\DeclareFontShape{U}{wncy}{m}{n}{<->wncyr10}{}
\DeclareSymbolFont{mcy}{U}{wncy}{m}{n}
\DeclareMathSymbol{\sha}{\mathord}{mcy}{"58} 
\usepackage{euscript}
\renewcommand{\mathcal}{\EuScript}
\swapnumbers
\theoremstyle{plain}                                              \makeatletter
\def\swappedhead#1#2#3{%
	\thmnumber{\@upn{\the\thm@headfont#2\@ifnotempty{#1}{.~}}}%
	\thmname{#1}%
	\thmnote{ {\the\thm@notefont(#3)}}}
\makeatother
\newtheorem{thm}{Theorem}[subsection]

\newtheorem{lem}[thm]{Lemma}
\newtheorem{prop}[thm]{Proposition}
\newtheorem{cor}[thm]{Corollary}
\newtheorem{conj}[thm]{Conjecture}

\theoremstyle{definition}
\newtheorem{examplex}[thm]{Example}
\newtheorem{rem}[thm]{Remark}
\newtheorem{defn}[thm]{Definition}
\newtheorem{construction}[thm]{Construction}

\newtheorem{para}[thm]{}

\parindent=10pt 


\DeclareMathOperator{\Tr}{Tr}
\newcommand{\F}{\mathbf{F}}
\newcommand{\Q}{\mathbf{Q}}
\newcommand{\C}{\mathbf{C}}
\newcommand{\R}{\mathbf{R}}
\newcommand{\N}{\mathbf{N}}
\newcommand{\V}{\mathbb V}
\newcommand{\Z}{\mathbf{Z}}

\newcommand{\Sym}{\mathrm{Sym}}
\newcommand{\map}{\mathrm{map}}

\newcommand{\GL}{\mathrm{GL}}
\newcommand{\Sp}{\mathrm{Sp}}
\newcommand{\KN}{\mathrm{KN}}
\newcommand{\length}{l}
\newcommand{\et}{\mathrm{\acute{e}t}}
\newcommand{\sing}{\mathrm{sing}}
\newcommand{\M}{\mathcal M}
\renewcommand{\H}{\mathcal H}
\DeclareMathOperator{\Spec}{Spec}
\newcommand{\Fq}{\mathbf F_q}
\newcommand{\Fqbar}{\overline{\mathbf{F}}_q}
\renewcommand{\V}{\mathbb V}
\renewcommand{\aa}{\alpha}
\newcommand{\Conf}{\mathsf{Fin}}
\newcommand{\Exp}{\mathbf{Exp}}
\newcommand{\Log}{\mathbf{Log}}
\newcommand{\Hur}{\mathsf{Hur}}
\newcommand{\antishriek}{\text{!`}}

\newcommand{\Ana}{\mathsf{Ana}}
\newcommand{\Poly}{\mathsf{Poly}}

\newcommand{\tr}{\operatorname{tr}}

\title{Hyperelliptic curves, the scanning map, and moments of families of quadratic $L$-functions}
\author{Jonas Bergstr\"om}
\author{Adrian Diaconu} 
\author{Dan Petersen}
\author{Craig Westerland}
\address{Matematiska institutionen \\ Stockholms Universitet \\ 106 91 Stockholm \\ Sweden}
\email{jonasb@math.su.se, dan.petersen@math.su.se}

\address{School of Mathematics \\
127 Vincent Hall \\ 206 Church St.\ SE \\ Minneapolis MN 55455 \\ USA }
\email{cad@umn.edu, cwesterl@umn.edu}

\begin{document}

 \maketitle

\begin{abstract}
    We compute the stable homology of the braid group with coefficients in any Schur functor applied to the integral reduced Burau representation. This may be considered as a hyperelliptic analogue of the Mumford conjecture (Madsen--Weiss theorem) with twisted coefficients. We relate the result to the function field case of conjectures of Conrey--Farmer--Keating--Rubinstein--Snaith on moments of families of quadratic $L$-functions. Combined with a recent homological stability theorem of Miller--Patzt--Petersen--Randal-Williams, our homological calculations confirm the Conrey--Farmer--Keating--Rubinstein--Snaith predictions for all large enough prime powers $q$.
\end{abstract}

\tableofcontents 

\section{Introduction}

\subsection{Summary of results}

 \begin{para}Let $q$ be an odd prime power. For $d \in \mathbf F_q[t]$ monic and squarefree, let $L(s,\chi_d)$ denote the $L$-function attached to the Galois representation on the first cohomology of the hyperelliptic curve with equation $y^2=d(x)$. It is interesting to study the distribution of the central values $L(\frac 1 2, \chi_d)$ as $d$ varies; our concern will be the  \emph{moments} of the distribution. A precise prediction for the asymptotics of the moments of this family was given by Conrey--Farmer--Keating--Rubinstein--Snaith \cite{CFKRS}, as a special case of a conjecture applying to many different families of $L$-functions. \end{para}
 
 \begin{conj}[Conrey--Farmer--Keating--Rubinstein--Snaith]\label{CFKRS conjecture} For fixed $q$ and $r$ there exists an explicit polynomial $Q_r(n)$ of degree $r(r+1)/2$ such that
\[ 
\frac{1}{q^{2g+1}}\sum_{\substack{d \text{\emph{ monic  \&  sq.\ free}} \\ \deg d = 2g+1}}
L(\tfrac{1}{2}, \chi_{d})^{r} 
=\,  Q_{r}(2g+1) \, + \, o(1).
\] 
\end{conj}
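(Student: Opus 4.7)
The plan is to convert the arithmetic moment sum into a cohomological trace calculation via the Grothendieck-Lefschetz trace formula, and then combine the paper's computation of the stable homology of the braid group with Schur-functor-of-Burau coefficients with the Miller-Patzt-Petersen-Randal-Williams homological stability theorem.

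First I would write $L(\tfrac{1}{2},\chi_d) = \det\bigl(1 - q^{-1/2}\operatorname{Frob}_q \bigm\vert H^1_{\et}(C_{d,\Fqbar},\mathbf{Q}_\ell)\bigr)$ by the Weil conjectures, where $C_d$ is the smooth projective hyperelliptic curve $y^2 = d(x)$. Expanding $\prod_i (1-\alpha_i)^r$ in the Schur basis of symmetric polynomials yields an identity
\[
L(\tfrac{1}{2},\chi_d)^r \;=\; \sum_{\lambda} c_{r,\lambda}\cdot \tr\bigl(\operatorname{Frob}_q \bigm\vert \mathbf{S}^\lambda H^1(C_d)\bigr),
\]
with explicit universal coefficients $c_{r,\lambda}$, where $\mathbf{S}^\lambda$ is the Schur functor indexed by a partition $\lambda$. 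Viewing $d\mapsto C_d$ as a family over the complement $U_n \subset \mathbf{A}^n$ of the discriminant in the space of monic degree $n=2g+1$ polynomials, the monodromy on $H^1$ is the integral reduced Burau representation of $B_n = \pi_1(U_n)$, so $\mathbf{S}^\lambda H^1(C_d)$ is the local system $\mathcal{S}^\lambda$ obtained by applying $\mathbf{S}^\lambda$ to Burau. The Grothendieck-Lefschetz trace formula then gives
\[
\sum_{d}\tr\bigl(\operatorname{Frob}_q \bigm\vert \mathbf{S}^\lambda H^1(C_d)\bigr) \;=\; \sum_i (-1)^i \tr\bigl(\operatorname{Frob}_q \bigm\vert H^i_c(U_{n,\Fqbar},\mathcal{S}^\lambda)\bigr).
\]

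Next I would invoke the homological stability theorem of Miller-Patzt-Petersen-Randal-Williams to conclude that $H^i(B_n, \mathbf{S}^\lambda\,\mathcal{B})$ is independent of $n$ for $n$ large relative to $i$ and $|\lambda|$, where $\mathcal{B}$ denotes integral reduced Burau, and then apply the paper's main theorem to identify this stable value explicitly. Via Poincaré duality for the smooth affine $U_n$, the ``top-weight'' part of the stable compactly supported cohomology contributes a term of the form $q^n\cdot P_\lambda(n)$ for an explicit polynomial $P_\lambda$; summing $c_{r,\lambda}P_\lambda$ over $\lambda$ and dividing by $q^n$ produces the candidate polynomial $Q_r(n)$. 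The degree bound $r(r+1)/2$ is visible from the topology: only partitions $\lambda$ with $|\lambda|\le r(r+1)/2$ (the staircase $(r,r-1,\dots,1)$ being extremal) can contribute to the leading polynomial. The unstable cohomology and the lower-weight pieces are controlled by Deligne's Weil II, which bounds Frobenius eigenvalues on $H^i_c$ by $q^{i/2}$; for $q$ sufficiently large this is $o(q^n)$, giving the required $o(1)$ error after normalization.

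The main obstacle will be matching the polynomial $Q_r$ that emerges from the topology — naturally expressed in terms of multiplicities of Schur functors $\mathbf{S}^\lambda$ in the stable braid homology, together with the Frobenius action on the stable homology — with the arithmetic recipe of \cite{CFKRS}, which packages $Q_r$ as an Euler product over monic irreducibles of $\F_q[t]$ multiplied by a symmetric function / Barnes $G$-function factor. This is essentially a representation-theoretic identity on the stable side, and is where the bulk of the combinatorial work (and, plausibly, the origin of the restriction to large $q$) will concentrate. A secondary difficulty is making the stability range in $n$, $i$, and $|\lambda|$ quantitatively explicit enough that, once $q$ is taken large, the unstable cohomology truly contributes $o(1)$ and not a term competing with the leading polynomial.
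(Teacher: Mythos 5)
Your proposal captures the broad strategy --- Grothendieck--Lefschetz trace formula, homological stability for braid groups, Deligne's bounds --- but there is a gap at the heart of the argument concerning the choice of coefficient system, and this gap cannot be repaired without changing the structure of the decomposition.

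You expand $L(\tfrac12,\chi_d)^r$ in Schur functors $S^\lambda H^1(C_d)$, i.e.\ the local systems $S^\lambda(\V)$, and then invoke MPPRW to conclude that $H^i(\beta_n,S^\lambda\mathcal B)$ stabilizes ``for $n$ large relative to $i$ and $|\lambda|$.'' That caveat is where the argument breaks. The MPPRW theorem, and the notion of a uniform stability bound, apply only to the symplectic-irreducible coefficient systems $V_\lambda$; as emphasized in \S\ref{not uniform for polynomial}, the stabilization maps $H_i(\beta_n,S^\lambda(V))\to H_i(\beta_{n+1},S^\lambda(V))$ \emph{cannot} have a stable range independent of $\lambda$, even in degree $i=0$, because the branching from $\GL(2g)$ to $\Sp(2g)$ only stabilizes once $g>\ell(\lambda)$. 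In the dual-Cauchy expansion of $L(\tfrac12,\chi_d)^r$, the partitions that appear have $\lambda_1\le r$ but \emph{unbounded length} --- as $g\to\infty$, partitions with $\ell(\lambda)$ of order $g$ contribute, and for those the Randal-Williams--Wahl stable range for $S^\lambda(V)$ is empty. Deligne's bounds alone cannot save you: you also need to control the \emph{dimensions} of the unstable cohomology groups, and the number of contributing $\lambda$ grows with $g$. The paper handles this by (i) replacing the dual Cauchy identity with the symplectic Jimbo--Miwa/Howe identity, which converts the expansion to $V_\lambda$-coefficients indexed by $\lambda\subseteq(r^g)$; (ii) a cell-count bound from the Fuks stratification, $\dim H_k(\beta_n,V)\le\binom{n-1}{k}\dim V$; (iii) the numerical identity $\sum_{\lambda\subseteq(r^g)}\dim V_\lambda^{\Sp(2g)}\dim V_{\lambda^\dag}^{\Sp(2r)}=4^{gr}$ to dominate the sum; and (iv) the vanishing results $H_k(\beta_\infty,V_\lambda)=0$ for $k<\ell(\lambda)/2$ and for $k<|\lambda|/4$, plus explicit convergence estimates on the generating series of stable Betti numbers, to tame the large-$|\lambda|$ contributions to the stable part. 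What you describe as a ``secondary difficulty'' --- making the stability quantitative in $\lambda$ --- is in fact the crux, and the $S^\lambda(V)$-based framing you chose has no mechanism to close it.

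A smaller point: the degree $r(r+1)/2$ of $Q_r$ does not come from a constraint ``$|\lambda|\le r(r+1)/2$'' with the staircase extremal. In the paper's analysis, for \emph{every} fixed $\lambda$ with $\lambda_1\le r$ the companion dimension $g\mapsto\dim V_{\lambda^\dag(g)}^{\Sp(2r)}$ is a polynomial in $g$ of degree exactly $r(r+1)/2$ (Weyl dimension formula), and $Q_r$ arises from the convergent sum $\sum_{\lambda_1\le r}T_\lambda\dim V_{\lambda^\dag(g)}$ over all such $\lambda$, not just a finite set.
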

 
 \begin{para}The goal of this paper is to prove \cref{CFKRS conjecture} in the regime where $q$ is much larger than $r$, using methods of homotopy theory. The strategy will be to translate the conjecture to a problem about homological stability, using the Grothendieck--Lefschetz trace formula. In this paper we determine completely the stable homology, and show that the answer matches \cref{CFKRS conjecture} perfectly. The companion paper \cite{MPPRW} proves an improved stable range for the relevant homological stability problem. Together, this implies \cref{CFKRS conjecture} for $q$ sufficiently large. \end{para}

\begin{para}Let $\beta_n$ denote the Artin braid group on $n$ strands. We denote by $V_n$ the \emph{reduced Burau representation specialized at $t=-1$}, also known as the \emph{integral} reduced Burau representation,  $\beta_n \to \GL_{n-1}(\Z)$. This representation arises naturally in diverse problems in algebraic topology, geometric group theory and algebraic geometry, and has been extensively studied by many authors from different perspectives.  
	\end{para}\begin{para}
	It follows from a general homological stability theorem of Randal-Williams and Wahl \cite{randalwilliamswahl} that the embedding $\beta_n \hookrightarrow \beta_{n+1}$ induces isomorphisms on homology in a range: specifically, 
	$$ H_i(\beta_n,V_n^{\otimes k}) \longrightarrow H_i(\beta_{n+1},V_{n+1}^{\otimes k})$$
	is an isomorphism for $i \leq \tfrac {n-2-k} 2$. We denote by $H_i(\beta_\infty,V^{\otimes k})$ the stable homology. All of our main results in this paper concern the \emph{rationalization} $V_n \otimes_\Z \Q$, and to ease notation we denote the rationalization simply by $V_n$. The main goals of this paper are:
	\begin{enumerate}
		\item To calculate the stable homology of each of the local systems $V_n^{\otimes k}$, with its natural action by the symmetric group $\Sigma_k$. Equivalently, we calculate the stable homology of $\beta_n$ with coefficients in any Schur functor $S^\lambda(V_n)$. 
		\item To determine the Galois action on stable homology. The Galois action arises from an interpretation as the homology of an algebro-geometrically defined moduli space of hyperelliptic curves. The moduli interpretation also shows that our result is a hyperelliptic analogue of the Mumford conjecture (the Madsen--Weiss theorem \cite{madsenweiss}), with twisted coefficients. 
		\item To relate the result to the function field version of the conjectures of Conrey--Farmer--Keating--Rubinstein--Snaith \cite{CFKRS} on asymptotics of moments of families of quadratic $L$-functions. In particular, we show that the improved stable range in Randal-Williams--Wahl's theorem recently obtained in \cite{MPPRW} implies the conjectured asymptotics for moments in the rational function field case. 
	\end{enumerate} 
Let us first state the topological result. The Poincar\'e series of the stable homologies can be packaged into a generating series as follows:
\end{para}

\begin{thm}\label{thmA}There is an equality
	$$ \sum_i \sum_\lambda \dim H_i(\beta_\infty,S^\lambda(V)) (-z)^i s_{\lambda'} =\Exp (z^{-1}\Log(z + \sum_{j\geq 0} h_{2j}z^j)-1).$$ 
\end{thm}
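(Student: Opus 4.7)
The plan is to reduce the computation of $H_*(\beta_\infty, S^\lambda(V))$ to a twisted group-completion/scanning statement for configuration spaces, and then to carry out a plethystic symmetric-function computation on the resulting infinite loop space.

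\textbf{Geometric model for the coefficients.} First, I would identify $B\beta_n$ with the unordered configuration space of $n$ points in $\R^2$, and interpret $V_n$ geometrically as $H^1$ (rationally) of the double cover of $\R^2$ branched over those $n$ points --- a punctured hyperelliptic surface. Under this identification, $S^\lambda(V_n)$ becomes the local system on $B\beta_n$ obtained by applying the Schur functor $S^\lambda$ to $R^1\pi_* \Q$, where $\pi$ denotes the universal branched double cover over $B\beta_n$. Packaging together all $k$ by studying $\bigoplus_k V_n^{\otimes k}$ with its $\Sigma_k$-action lets us compute every $H_*(\beta_\infty, S^\lambda(V))$ simultaneously via isotypic decomposition.

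\textbf{Twisted scanning.} Second, I would apply a twisted scanning / group-completion theorem. The disjoint union $\coprod_n B\beta_n$ is an $E_2$-algebra, and the Burau local system is multiplicative: the cover over $n_1 + n_2$ configuration points is built from the covers over $n_1$ and $n_2$ points, which induces the inclusions $V_{n_1} \oplus V_{n_2} \hookrightarrow V_{n_1+n_2}$. Group completion with these twisted coefficients should identify the stable homology with the twisted homology of an infinite loop space $\Omega^\infty E$ for an appropriate spectrum $E$ assembled from the universal hyperelliptic family; this is a hyperelliptic analogue with twisted coefficients of the Madsen--Weiss theorem, and one reasonably expects $E$ to take the form of a Thom spectrum of a virtual bundle on a Grassmannian together with a decoration by the universal hyperelliptic Hodge bundle.

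\textbf{Plethystic computation.} Third, with the infinite loop space model in hand, I would compute its rational twisted homology and repackage it as a symmetric function in the variables controlling the Schur labels. Rationally, the homology of an infinite loop space is a free graded commutative algebra on its primitives (dual to $\pi_*$), and this freeness is what produces the outer $\Exp$ in the formula. Inside, the plethystic logarithm $\Log$ extracts precisely those primitives from the ``$E_\infty$-Poincar\'e series'' $z + \sum_{j \geq 0} h_{2j} z^j$ of the universal hyperelliptic family (the $h_{2j}$ arising as the symmetric-function characters of even symmetric powers of the standard representation on $H^1$), while the shift $z^{-1}(\cdot) - 1$ reflects the suspension/degree shift built into the scanning map. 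The restriction to even-degree $h_{2j}$ should fall out of the symplectic self-duality of the Burau representation, which forces pairings to land in even total degree.

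\textbf{Main obstacle.} The technical heart of the argument is the twisted scanning step: establishing a group-completion theorem for $\coprod_n B\beta_n$ with Burau-type local coefficients in a way that is both $\Sigma_k$-equivariant and geometric enough to identify the resulting infinite loop space with a concrete Thom-type spectrum built from the universal hyperelliptic family. A secondary but still delicate task is the plethystic symmetric-function bookkeeping: correctly matching the character of $H^1$ of the universal hyperelliptic family to the generating series $z + \sum_{j \geq 0} h_{2j} z^j$, and verifying that the $\Exp/\Log$ combinatorics reproduces exactly the right-hand side of the theorem.
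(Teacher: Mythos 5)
Your proposal correctly identifies the starting point (interpreting $V_n$ via branched double covers of the disc, and wanting to feed this into a scanning/group-completion argument), but the second and third steps contain genuine gaps that do not match how the argument actually works.

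The main problem is your step 2. You propose running ``group completion with twisted coefficients'' directly on $\coprod_n B\beta_n$ with the local systems $V_n^{\otimes k}$, and you expect the output to be an infinite loop space $\Omega^\infty E$ for a Thom spectrum $E$ built from a Grassmannian and a Hodge bundle. Neither half of this expectation is right. Braid groups form an $E_2$-algebra, not an $E_\infty$-algebra; their group-completion (already in the untwisted case) is Segal's double loop space $\Omega^2 S^2$, which is not an infinite loop space, and there is no Tillmann-style infinite delooping theorem for braid groups analogous to the one for mapping class groups. More importantly, ``group completion with twisted coefficients'' is not a well-defined operation that outputs a space. The paper sidesteps this with Randal-Williams's trick: instead of twisting the coefficients, one fattens the moduli problem by adding a continuous map to a background Eilenberg--MacLane space $KA$ (equivariantly, for the hyperelliptic involution). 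The Serre spectral sequence for $H_g^{0,1}(KA) \to H_g^{0,1}$ degenerates, and the Cauchy identity $\mathrm{Sym}(A \otimes V) \cong \bigoplus_\lambda S^\lambda(A) \otimes S^\lambda(V)$ then shows that the \emph{untwisted} homology of $H_g^{0,1}(KA)$, viewed as an analytic functor of $A$, has the groups $H_\bullet(\beta_n, S^\lambda(V))$ as its Taylor coefficients. Now ordinary group completion / scanning applies, and the answer is the double loop space $\Omega^2 W(A)$ where $W(A)$ is the homotopy pushout of $KA/\!/\langle\iota\rangle$ and an $S^2$-bundle with fibre $B\Sigma_2$ over $K(\Z/2,1)$ --- in particular, nothing like a Thom spectrum over a Grassmannian.

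Your step 3 is also underspecified in a way that hides real work. For a genuine infinite loop space, ``free graded-commutative algebra on primitives'' plus ``$\Log$ extracts primitives'' would indeed be the whole story. But $\Omega^2 W(A)$ is only a double loop space, so one needs (i) formality and Koszulity of $W(A) \simeq_\Q S^2 \vee KA/\langle\iota\rangle$ (the Koszulity coming from a Veronese-subring argument), (ii) Berglund's theorem identifying $H_\bullet(\Omega W(A), \Q)$ as the quadratic dual coalgebra of $H_\bullet(W(A), \Q)$, which is where the reciprocal $1/(z + \sum_r z^{-r}h_{2r})$ enters, and then (iii) the $\Log$, $z^{-1}$-shift, and final $\Exp$ to pass from the homology of $\Omega W(A)$ to its rational homotopy, shift down a degree, and resymmetrize. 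Your sketch collapses (ii) and (iii) into a single ``$\Log$ extracts primitives from $z + \sum h_{2j} z^j$,'' which mislocates where the series $z + \sum h_{2j} z^j$ comes from: it is (up to the degree-shift substitution) the Hilbert--Poincar\'e series of the cohomology coalgebra $H_\bullet(W(A),\Q)$, and the reciprocal/$\Log$ structure is Koszul duality, not primitives of an infinite loop space.
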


\begin{para}
	Let us briefly comment on the notation here. The equality takes place in a completion of $\Lambda[z]$, where $\Lambda$ is the ring of symmetric functions in infinitely many variables. We denote by $s_\lambda$ the Schur polynomial associated to the partition $\lambda$; they form a basis for $\Lambda$ as a free abelian group. We denote by $\lambda'$ the conjugate partition of $\lambda$. We denote by $h_r$ the $r$th complete homogeneous symmetric polynomial. By $\Exp$ and $\Log$ we mean the plethystic exponential and plethystic logarithm introduced by Getzler and Kapranov \cite{getzlerkapranov}. These are universal operations acting on any complete $\lambda$-ring, and in particular they act on the completion of $\Lambda[z]$. 
\end{para}

\begin{rem}From the theorem one may in particular algorithmically calculate an expression for the generating series $ \sum_i\dim H_i(\beta_\infty,S^\lambda(V)) (-z)^i$ for any fixed $\lambda$, for example using SAGE's functionality for calculations with symmetric functions. For each $\lambda$ the result is a rational function of $z$ with all its poles on the unit circle. In particular, the stable homology grows polynomially in $i$, for any fixed $\lambda$. 
	We have compiled tables of these rational functions for small values of $\lambda$, as well as tables of Betti numbers for small values of $i$; these tables are accessible on GitHub \cite{tablesjonas}.
\end{rem}

\begin{para}
	As we briefly mentioned and will explain later, these homology groups admit an algebro-geometric interpretation, from which it follows that the homology is ``motivic''. In particular, it admits a mixed Hodge structure, and the structure of an $\ell$-adic Galois representation after tensoring with $\Q_\ell$ for any $\ell$. We show that the stable homology is a pure Tate motive, and we determine the weights in each degree. 
\end{para}

\begin{thm}\label{thmB}
	The stable homology $H_i(\beta_\infty,V^{\otimes k})$ is pure Tate of weight $-2i+k$. 
\end{thm}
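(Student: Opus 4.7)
The plan is to deduce \cref{thmB} by enhancing the proof of \cref{thmA} so that it tracks mixed Hodge structures (equivalently, $\ell$-adic Galois representations), and then applying Deligne's weight theorems to the motivic origin of the local systems involved.

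First I would set up and apply the Deligne weight bounds. The classifying space $B\beta_n$ is realized by the space $\mathbb{A}^n\setminus\Delta$ of monic squarefree polynomials of degree $n$, a smooth affine variety of dimension $n$ defined over $\Z[\tfrac12]$. The local system $V_n$ is identified (up to conventions) with $R^1\pi_*\Q_\ell$ of the universal hyperelliptic curve $\pi\colon\mathcal{C}\to \mathbb{A}^n\setminus\Delta$, hence is pure of weight $1$, and $V_n^{\otimes k}$ is pure of weight $k$. Deligne's weight theorem for cohomology of smooth varieties places the weights of $H^i(B\beta_n, V^{\otimes k})$ in $[i+k,\,2i+k]$; via the polarization $V \cong V^\vee(-1)$ and Kronecker duality, $H_i(\beta_n, V^{\otimes k})$ has weights in the Deligne interval $[-2i+k,\,-i+k]$. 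Thus \cref{thmB} amounts to purity at the lower extreme of this interval in the stable limit.

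Second, to pin the weight down to the extreme, I would propagate the scanning / group-completion equivalence underlying \cref{thmA} through the motivic setup. This equivalence identifies $H_*(\beta_\infty, V^{\otimes k})$ with the homology of (a component of) an infinite loop space assembled from a visibly motivic spectrum, and the plethystic formula $\Exp(z^{-1}\Log(z+\sum_{j\geq 0} h_{2j} z^j)-1)$ reflects this at the level of generating functions: the generators inside the $\Log$ correspond to classes on symmetric powers of the universal curve and on the Hodge bundle, all pure Tate, with weights governed by the rule ``$z$ carries weight $-2$ (a Tate twist $\Q(1)$ in cohomological degree $2$) and a symmetric function of degree $d$ carries weight $d$''. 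Since $\Log$ and $\Exp$ are assembled from direct sums and symmetric powers---functors that preserve pure Tate type with additive weight bookkeeping---the coefficient of $(-z)^i s_{\lambda'}$ in the output carries weight $-2i+|\lambda|=-2i+k$, exactly as required.

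The principal obstacle is making this rigorous: one must upgrade the topological scanning equivalence to a statement in the derived category of mixed Hodge structures (or lisse $\Q_\ell$-sheaves) that is compatible with the stabilization maps on $B\beta_n$. For each finite $n$ the homology may populate the entire Deligne interval $[-2i+k,\,-i+k]$, so purity is genuinely a stable phenomenon and one must argue that intermediate-weight pieces die in the colimit. A cleaner alternative that dovetails nicely with the arithmetic applications of the paper is a Grothendieck--Lefschetz point-count argument: any weight strictly greater than $-2i+k$ appearing in the stable limit would be detectable in the trace of Frobenius by its larger $q$-order, and such excess contributions are ruled out by matching with the dimensions already computed by \cref{thmA}.
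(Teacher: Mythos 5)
Your high-level reading of the situation is correct: Deligne's weight bounds bound the weights from below (after passing to homology) by the claimed value $-2i+k$, so the content of \cref{thmB} is purity at the extreme end of the Deligne interval, and some motivic input beyond the purely topological proof of \cref{thmA} is required. However, neither of your two proposed ways to close this gap actually works as stated, and the paper's real argument is different from both.

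Your first route --- ``upgrade the topological scanning equivalence to a statement in the derived category of mixed Hodge structures'' --- is exactly the kind of thing the paper deliberately avoids, because the stabilization maps and the scanning equivalence are not morphisms of algebraic varieties and there is no obvious motivic model for $\Omega^2 W(A)$. The paper instead (i) uses Vaintrob's logarithmic model $\{\mathsf H_g^{n,m}\}$ of the hyperelliptic surface operad to equip the stable homology $\bigoplus_g\bigoplus_\lambda S^\lambda(A)\otimes H_\bullet(H_\infty^{0,1},S^\lambda(V[-1]))$ with a Galois-equivariant framed $E_2$-algebra structure (\cref{hyperelliptic bv structure}), (ii) observes that it therefore suffices to compute the Galois action on the Gerstenhaber \emph{indecomposables}, and (iii) identifies those indecomposables via Koszul self-duality of the Gerstenhaber operad (\cref{berglund generalization}, \cref{indecomposables}): they are exactly the fundamental class of the $2$-sphere, its inverse, and $\mathrm{Sym}^2(A)\otimes H_0(H_\infty^{0,1},\wedge^2 V)[-2]$. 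The Galois action on these three classes is immediate (trivial, trivial, and $\Q_\ell(-1)$ from the symplectic form), and every other class is generated from these by the Gerstenhaber operations, whose Galois weight is fixed. Your plan of tracking weights through the plethystic formula using ``$z$ carries weight $-2$, $h_{2j}$ carries weight $2j$'' is morally what comes out of this, but as written it is bookkeeping without a proof --- it silently assumes that the scanning equivalence, the Koszul duality, and the symmetric-algebra identifications in Section 6 are all Galois-equivariant, which is precisely the hard content; the operadic reduction is what lets the paper verify equivariance once (on the three indecomposable classes) rather than in every degree.

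Your second, ``cleaner alternative'' via Grothendieck--Lefschetz also has a real gap. Matching the alternating trace of Frobenius (a single number, for each $q$ and $\lambda$) against the Betti numbers from \cref{thmA} and the arithmetic factor $R$ cannot by itself distribute weights to individual $H_i$'s, because of potential cancellations between degrees; moreover the paper's logical direction in Section 10 is the opposite --- the arithmetic statement (\cref{thm:stable traces}) is \emph{deduced} from \cref{thmA} and \cref{thmB}, with the independent analytic proof of \cref{asympt-stability} offered as a cross-check, not as an input to purity. So appealing to the point-count would be circular in the paper's framework, and would need a substantially more careful (degree-by-degree, positivity-using) argument to stand on its own.
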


\begin{para}That there should be a link between these results and analytic number theory over function fields can in fact be seen already from \cref{thmA} and \cref{thmB}. Indeed, according to \cref{thmB} the eigenvalues of $\mathrm{Frob}_q$ on $H_i(\beta_\infty,V^{\otimes k})$ are $q^{-i+k/2}$, and so we obtain a generating series for the ``stable traces of Frobenius on homology'' from \cref{thmA} (where trace is taken in the graded sense) by making the substitutions $z \leadsto q^{-1}$ and $h_{j} \leadsto q^{j/2}h_j$. Since elements of $\Lambda$ are symmetric polynomials in infinitely many variables $x_1,x_2,x_3,\ldots$, the resulting generating series can be expanded as a power series in these variables. As we will explain, this results --- up to a factor of $(1-q^{-1})$ --- in the following expression:
	\begin{align*} R(x_1,x_2,\ldots) &= \prod_{n>0} \left(1 + \frac{1}{1+q^{-n}} \sum_{j>0} h_{2j}(x_1^n,x_2^n,\ldots) \right)^{\frac 1 n \sum_{d \mid n} \mu(n/d)q^{d}} \\
	&= \prod_{P \text{ monic irreducible}} \left(1 + \frac{1}{1+\vert P \vert^{-1}} \sum_{j>0}\sum_{j_1+\ldots+j_r=2j} \prod_{d} x_d^{j_d \deg P}  \right) \end{align*}
where $\mu$ denotes the M\"obius function, and the product runs over monic irreducible polynomials in $\Fq[t]$, of which there are $\frac 1 n \sum_{d \mid n} \mu(n/d)q^{d}$ of degree $n$, and $\vert P\vert = q^{\deg P}$. 
This function $R$ is the ``arithmetic factor'' appearing in the function field version of the moment conjectures of \cite{CFKRS}, see \cite[Section 4]{AK}.
\end{para}

\begin{para}
The calculations of this paper do not by themselves prove the expected asymptotics of moments. To obtain an asymptotic formula for moments, one needs to control the range of homological stability for the braid group, with coefficients in an irreducible representation of the symplectic group.
\end{para}

\begin{defn}\label{def:USB}We say that a function $\theta$ is a \emph{uniform stability bound} if all stabilization maps $$ H_i(\beta_n,V_\lambda) \longrightarrow H_i(\beta_{n+1},V_\lambda)$$
	are isomorphisms for $i \leq \theta(n)$, for all $\lambda$. Here $V_\lambda$ denotes the irreducible representation of the symplectic group of highest weight $\lambda$; when $n$ is even, $V_\lambda$ is a representation of the ``odd symplectic group'' of Gelfand--Zelevinsky \cite{gelfandzelevinskyodd}. 
\end{defn}

\begin{thm} \label{thmC} Let $q$ be an odd prime power, and $\theta$ a uniform stability bound. Then one has the asymptotic formula for the moments of quadratic $L$-functions
\[ 
\frac{1}{q^{2g+1}}\sum_{\substack{d \text{\emph{ monic  \&  sq.\ free}} \\ \deg d = 2g+1}}
L(\tfrac{1}{2}, \chi_{d})^{r} 
=\,  Q_{r}(2g+1) \, + \, O\!\left(4^{g(r + 1)} q^{-{\theta} (2g+1) \slash 2}\right)
\] 
where $Q_{r}$ is an explicit polynomial of degree $r(r + 1)\slash 2$, and the implied constant depending upon {$\theta$} in the $O$-symbol is explicitly computable. \end{thm}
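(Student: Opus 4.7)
The plan is to reduce \cref{thmC} to the cohomology of the moduli of hyperelliptic curves via the Grothendieck--Lefschetz trace formula, extract the main term $Q_r(2g+1)$ from the stable homology computed in \cref{thmA,thmB}, and control the unstable contribution as the error term using Deligne's purity bounds together with the hypothesis $\theta$. Concretely, for a monic squarefree $d$ of degree $2g+1$ one has $L(s,\chi_d)=\det(1-q^{-s}\mathrm{Frob}_q\mid H^1(C_d))$, where $C_d$ is the smooth hyperelliptic curve $y^2=d(x)$ and $H^1$ is $2g$-dimensional and symplectic. First I would expand
\[
L(\tfrac{1}{2},\chi_d)^r \;=\; \sum_{\mu} c_\mu(q)\, \chi_{V_\mu}\bigl(\mathrm{Frob}_q\mid H^1(C_d)\bigr)
\]
as a finite linear combination of characters of the irreducible symplectic representations $V_\mu$, with explicit coefficients $c_\mu(q)$ built from powers of $q^{\pm 1/2}$ and supported on partitions $\mu$ of size bounded in terms of $r$. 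Summing over $d$ and applying Grothendieck--Lefschetz to the hyperelliptic moduli stack equipped with the local system associated to $V_\mu$ converts each $\sum_d\chi_{V_\mu}$ into an alternating Frobenius trace on $H_\bullet(\beta_{2g+1},V_\mu)$, up to an overall power of $q$.

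For degrees $i\le \theta(2g+1)$ the uniform stability bound identifies $H_i(\beta_{2g+1},V_\mu)$ with its stable value, which in turn can be extracted from \cref{thmA} via the branching of Schur functors $S^\lambda(V)$ under $\mathrm{GL}\supset\Sp$. By \cref{thmB} these stable groups are pure Tate of the stated weight, so each Frobenius trace is an explicit power of $q$. The substitution $z\leadsto q^{-1}$, $h_j \leadsto q^{j/2}h_j$ in the generating function of \cref{thmA} then produces the arithmetic factor $R$ recorded in the introduction; pairing it with the coefficients $c_\mu(q)$ of the character decomposition assembles the polynomial $Q_r(2g+1)$ of degree $r(r+1)/2$ predicted by \cref{CFKRS conjecture}.

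The main obstacle is a uniform bound on the unstable tail $i>\theta(2g+1)$, where no structural identification is available. Here Deligne's Riemann hypothesis bounds each Frobenius eigenvalue on the cohomology of the moduli with coefficients in $V_\mu$ by $q^{i/2}$ times a factor depending on the weight of $V_\mu$, and the total Betti numbers of $\beta_{2g+1}$ with coefficients in $V_\mu$ can be controlled polynomially in $g$ and in $\dim V_\mu$. Combined with the convexity bound $\dim V_\mu \le 4^{g}$ for the partitions appearing at the $r$th moment, the trivial estimate $|L(\tfrac{1}{2},\chi_d)|\le 2^{2g}$, and the normalization factor $q^{-(2g+1)}$, these give an error of the form $4^{g(r+1)}q^{-\theta(2g+1)/2}$. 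The delicate point is making all bounds uniform in $\mu$ so that the constant implied by the $O$-symbol is effective and depends only on $\theta$ and $r$.
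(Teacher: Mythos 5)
There is a genuine gap in your very first step, and it propagates through the whole argument. You claim that $L(\tfrac12,\chi_d)^r$ expands as a linear combination $\sum_\mu c_\mu(q)\,\chi_{V_\mu}(\mathrm{Frob}_q)$ over partitions $\mu$ of size bounded in terms of $r$, with coefficients $c_\mu(q)$ built from powers of $q^{\pm1/2}$. Neither clause is correct. Since $L(\tfrac12,\chi_d)$ is a polynomial of degree $2g$ in the unitarized Frobenius eigenvalues, the $r$th power expands into symplectic characters indexed by all partitions $\lambda$ fitting inside a $g\times r$ rectangle, so $|\lambda|$ can be as large as $gr$; and the relevant coefficients, coming from the Jimbo--Miwa identity (\cref{eq: Howe-duality}, specialized at $t_j=1$), are the dimensions $\dim V_{\lambda^\dag}^{\Sp(2r)}$, which are polynomials of degree $r(r+1)/2$ in $g$, not constants built from $q^{\pm1/2}$. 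This is the whole point of the theorem: a uniform stability bound is needed precisely because partitions of weight comparable to $g$ occur, and the $g$-polynomial structure of $Q_r(2g+1)$ comes from those $\dim V_{\lambda^\dag(g)}$ factors. Your own second paragraph already contradicts your first: if the $c_\mu(q)$ were $g$-independent and supported on finitely many $\mu$, pairing with the $g$-independent stable traces $T_\mu$ could not produce a nonconstant polynomial of degree $r(r+1)/2$ in $g$.

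The error estimate also omits a necessary ingredient. Your bound only accounts for the unstable cohomology contribution ($i>\theta(2g+1)$ with $\lambda\subseteq(r^g)$) via Deligne bounds and the Fuks cell count (\cref{fuks}), and the factor $4^{gr}$ should come from the dimension identity of \S\ref{dimension identity} rather than from a ``convexity bound $\dim V_\mu\le 4^g$'' combined with $|L(\tfrac12,\chi_d)|\le 2^{2g}$ (the latter is not used in the paper's proof and would double-count). More importantly, you do not bound the tail of the \emph{stable} side of the difference: the terms with $\lambda_1\le r$ but $\ell(\lambda)>g$, and the stable homology in degrees $k>\theta(2g+1)$, both enter $\mathcal{Q}_1$ and must be estimated separately. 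The paper handles these with the vanishing theorems \cref{1/2} and \cref{1/4} for the stable homology together with \cref{good bound} and \cref{dim lemma}; without those, the estimate does not close.
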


\begin{para}When this paper was first posted as a preprint, the existence of a nontrivial uniform stability bound was posed as a conjecture. In fact, we conjecture that there exists a uniform stability bound of the form $\theta(n)=\tfrac 1 4 n - c$, where $c\geq 0$ is an absolute constant. The existence of a nontrivial uniform stability bound was subsequently proven in \cite{MPPRW}. Note that \cref{thmC} and the following \cref{MPPRW} together imply \cref{CFKRS conjecture} for $q > 4^{34(r+1)}$. 
\end{para}

\begin{thm}[Miller--Patzt--Petersen--Randal-Williams]\label{MPPRW}
    The function $\theta(n)=(n-35)/34$ is a uniform stability bound.
\end{thm}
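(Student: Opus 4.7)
The plan is to build on the Randal--Williams--Wahl framework \cite{randalwilliamswahl} for twisted homological stability but refine it to produce a range uniform across all highest weights $\lambda$. Their main theorem, applied naively to the coefficient system $V_\lambda$, produces a stability range of slope $1/2$ but with a cut-off depending on the polynomial degree of the coefficient system, which for $V_\lambda$ is $|\lambda|$; this cannot yield a uniform statement. The essential move is to repackage all $V_\lambda$ at once by working instead with the sequence of tensor powers $V_n^{\otimes k}$, since by symplectic Schur--Weyl a uniform stability bound is equivalent to stability for every $V_n^{\otimes k}$ with a range whose dependence on $k$ is controlled.

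Concretely, I would cast the problem in the $E_2$-cellular algebra framework of Galatius--Kupers--Randal-Williams. The disjoint union $\coprod_n B\beta_n$ is an $E_2$-algebra via the configuration space model $B\beta_n \simeq C_n(\R^2)$, and this can be enhanced to an $E_2$-algebra $\mathbf{A}$ in a suitable symmetric monoidal category of ``symplectically labelled'' configurations, designed so that its arity-$n$ homology records $\bigoplus_k H_*(\beta_n, V_n^{\otimes k})$ together with its natural action. The GKRW machinery then converts uniform stability into a vanishing range for the derived $E_2$-indecomposables of $\mathbf{A}$; by the standard cell-attachment argument this in turn follows from high connectivity of a ``splitting complex'', a semisimplicial resolution whose $p$-simplices are decorated configurations split into $p+1$ sub-configurations, each carrying its own Burau label.

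The central obstacle is the uniform connectivity estimate for this splitting complex. Undecorated variants (essentially arc complexes in the disc) are known to be highly connected by classical Hatcher--Vogtmann-type arguments, but one must show that decorating by arbitrarily many tensor factors of the Burau representation costs only a bounded amount of connectivity, \emph{independently} of the number of factors. A plausible strategy is to filter the complex by the decoration and inductively control the loss at each stage, exploiting the geometric origin of $V_n$ as the first homology of the branched double cover of the disc: tensor factors of $V_n$ can be absorbed by passing to a suitable arc complex on the double-cover surface, where Harer-type connectivity theorems are available. The specific slope $1/12$ of the stated bound is expected to emerge only at the end, from the composite bookkeeping in the GKRW spectral sequence; what the strategy realistically delivers is a linear uniform range of \emph{some} positive slope, with optimizing the constant to $1/12$ being a matter of careful analysis of the various losses in the spectral sequence.
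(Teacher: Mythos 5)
First, note that the theorem is not proved in this paper at all: it is imported as a citation from the companion work \cite{MPPRW}, as the text immediately preceding it makes explicit ("The existence of a nontrivial uniform stability bound was subsequently proven in \cite{MPPRW}"). There is therefore no internal argument to compare against, and your proposal has to stand on its own.

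On that standard there are two problems. The opening reduction is suspect: you assert that, by symplectic Schur--Weyl, a uniform stability bound "is equivalent to stability for every $V_n^{\otimes k}$ with a range whose dependence on $k$ is controlled," but the paper itself explains in \S\ref{not uniform for polynomial} why this fails. The stabilization maps for $S^\lambda(V)$, and hence for $V^{\otimes k} = \bigoplus_\lambda S^\lambda(V)\otimes\sigma_\lambda^\vee$, cannot have a range independent of $\lambda$ even in degree $0$, because the branching multiplicities from $\GL(2g)$ to $\Sp(2g)$ only become independent of $g$ once $g>\ell(\lambda)$. Extracting $H_\bullet(\beta_n,V_\lambda)$ from $H_\bullet(\beta_n,V^{\otimes k})$ therefore requires control of these $g$-dependent multiplicities, and that dependence is precisely what destroys uniformity; this is the reason \cref{def:USB} is phrased in terms of $V_\lambda$ rather than tensor powers. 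One would have to work with the coefficient systems $V_\lambda$ directly, or with Weyl's traceless tensors $V^{\langle k\rangle}$ of \S\ref{weyl construction} (which are not a tensor power of a single polynomial coefficient system in the Randal-Williams--Wahl sense), so the repackaging you propose steps around the core difficulty rather than solving it. Second, and more fundamentally, what you have written is a plan, not a proof: the uniform connectivity of the decorated splitting complex is the entire substance of the theorem, and you only offer "a plausible strategy" for it with no precise statement of the complex, no connectivity bound, and no argument that decoration costs only a bounded amount of connectivity independently of the number of Burau factors. You also acknowledge that the slope $1/12$ would not emerge from the sketch; since that specific constant is the assertion being proved, the theorem as stated is not recovered even modulo the deferred connectivity claim.
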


\begin{para}\label{not uniform for polynomial}
    The reader may note that \cref{def:USB} involves coefficients in $V_\lambda$, whereas \cref{thmA} involved coefficients in $S^\lambda(V)$. This is a subtle and important point. In fact, the stabilization maps $ H_i(\beta_n,S^\lambda(V)) \longrightarrow H_i(\beta_{n+1},S^\lambda(V))$ \emph{cannot} have a stable range independent of $\lambda$, even for $i=0$. The point is that the branching from $\mathrm{GL}(2g)$ to $\mathrm{Sp}(2g)$ only becomes independent of $g$ in the regime $g > \length(\lambda)$, by Littlewood's stable branching rule.  
\end{para}

\begin{para}The link between homological stability and moments of families of quadratic $L$-functions is mediated through the \emph{Grothendieck--Lefschetz trace formula}, applied to the cohomology of the moduli space $\H_{g,1,0}$ of hyperelliptic curves with a marked Weierstrass point, with coefficients in a symplectic local system. Indeed, the left-hand side of the asymptotic formula in \cref{thmC} is a sum over the $\Fq$-points of $\H_{g,1,0}$, and if we replace each value $\tfrac 1 2$ with a parameter, say $\tfrac 1 2 + it_j$, $j=1,\ldots,r$, then the coefficients of the resulting series are symmetric polynomials in the Frobenius eigenvalues of the corresponding hyperelliptic curve. 
\end{para}

\begin{para}
    The error term occurring in \cref{thmC} is in a sense extremely crude: it arises by bounding the Betti numbers $\dim H^k(\beta_n,V_\lambda)$ by the number of cells in the Fuks stratification of configuration space, multiplied by $\dim V_\lambda$. Nevertheless, it gives a good bound for sufficiently large $q$, because of the factor $q^{- \theta(2g+1) \slash 2}$, arising from the Deligne bounds on Frobenius eigenvalues.
\end{para}

\begin{para}
More general than the moment conjecture of \cite{CFKRS} is the \emph{ratios conjecture} \cite{CFZ}. It is an insight of Wang \cite{Wang} that the methods of this paper can be applied to also derive certain asymptotics for ratios. One obtains a power-saving error term for sufficiently large $q$, as long as all $L$-function arguments in the denominator have real part $\geq \frac 1 2 + q^{-\delta}$, for an explicit $\delta>0$. See \cite{Wang} for a detailed treatment.
\end{para}

\begin{para}
    Methods of algebraic topology have long been leveraged to attack problems in number theory over function fields. In particular, homological stability for Hurwitz spaces has been applied to asymptotic questions in arithmetic statistics in \cite{EVW,ETW17,EL}. For prior geometric and cohomological approaches to moments of families of $L$-functions in the function field setting, see \cite{saw1,saw2,saw3}.
\end{para}

\subsection{Asymptotics for moments of quadratic \texorpdfstring{$L$}{L}-functions} 

\begin{para}
Understanding the value distribution of the {R}iemann zeta-function $\zeta(s)$ on the critical line $\Re(s) = \frac{1}{2}$ and the distribution of central values in families of $L$-functions are problems of fundamental importance in analytic number theory. The general behavior of the values $\zeta(\tfrac{1}{2} + it)$ is described by a fundamental theorem of {S}elberg, which asserts that $\log \zeta(\tfrac{1}{2} + it)$ is distributed like a complex {G}aussian with prescribed mean and variance. 
More refined information, such as understanding the large deviations range in {S}elberg's theorem, can be obtained by studying the asymptotics of \emph{moments}. 
\end{para}
\begin{para}
    The study of moments of the {R}iemann zeta-function began with the work of {H}ardy and {L}ittlewood \cite{Ha-Li}, who obtained the asymptotic formula for the second moment on the critical line, 
\[
\int_{0}^{T}|\zeta(\tfrac{1}{2} + it)|^{2}\, dt \sim T\log T    \qquad \text{(as $T\to \infty$)}.     
\] 
A few years later, {I}ngham \cite{Ing} obtained the asymptotic formula for the fourth moment 
\[
\int_{0}^{T}|\zeta(\tfrac{1}{2} + it)|^{4}\, dt \sim \frac{1}{2\pi^2}T \log^4 T  \qquad \text{(as $T\to \infty$)}.
\] 
At present, no asymptotic formula for higher moments of the {R}iemann zeta-function is known, but it is conjectured that 
\[ 
\int_{0}^{T}|\zeta(\tfrac{1}{2} + it)|^{2r}\, dt \sim C_{r}T\log^{r^2} T \qquad \text{(for $r\ge 3$)}.
\] 
The value of the constant $C_{r}$ was conjectured by {K}eating and {S}naith \cite{Ke-Sn1}, using random matrix theory. For a more accurate conjecture, where the full main terms in the asymptotic formula are predicted, see \cite{CFKRS}. Perhaps the main motivation for studying the moments of $\zeta(s)$ is that they capture information about the large values of 
$|\zeta(\tfrac{1}{2} + it)|$. For instance, the {L}indel\"of {H}ypothesis 
 that $|\zeta(\tfrac{1}{2} + it)| \ll_{\varepsilon} (1 + |t|)^{\varepsilon}$ --- a well-known consequence of the Riemann Hypothesis --- is equivalent to the bound 
\[ 
\int_{0}^{T}|\zeta(\tfrac{1}{2} + it)|^{2r}\, dt \ll_{r, \varepsilon} T^{1 + \varepsilon} 
\] 
for all $r \in \mathbf{N}.$ 
\end{para}

\begin{para}The values $\zeta(\frac 1 2 + it)$ are the central $L$-values of a continuous family of automorphic representations: for any $t \in \R$ we may consider the one-dimensional representation $|\cdot|^{it}$, and its associated $L$-function is $s \mapsto \zeta(s+it)$. One may similarly consider the moments of different natural discrete families of $L$-functions; this represents a central topic in analytic number theory. 
    \end{para}

\begin{para}

Besides the continuous family $\{\zeta(\tfrac{1}{2} + it) : t \in \R\}$, there is yet the symplectic family 
\[
\mathcal{F}_{\mathcal{D}} =\,  \{\chi_{d} : \text{$d\in \Z$ non-zero and square-free}\}
\] 
of real {\it primitive} {D}irichlet characters, which received considerable attention over the past forty years or so. The full moment conjecture of {C}onrey, {F}armer, {K}eating, {R}ubinstein, and {S}naith \cite{CFKRS} predicts the asymptotic formula for this family: 
\[ 
\sum_{\substack{\chi_{d} \in \mathcal{F}_{\mathcal{D}} \\ |d| < D}} L(\tfrac{1}{2}, \chi_{d})^{r}
\sim  DQ(\log D) \qquad \text{(as $D\to \infty$)}
\] 
for some polynomial $Q(x)$ of degree $r (r + 1)\slash 2$; for an explicit description of this polynomial, see \cite{GHRR}. The leading coefficient of $Q(x)$ has been previously obtained by {K}eating and {S}naith \cite{Ke-Sn2}, using random matrix theory calculations. As in the case of the {R}iemann zeta-function, the above asymptotic formula is known only for the first few moments. {J}utila \cite{Jut} established asymptotics for the first two moments, and the third moment was evaluated by {S}oundararajan \cite{Sound}. Subsequent asymptotics for the third moment, with better error terms, were obtained in \cite{DGH} and \cite{Young}. Assuming the generalized {R}iemann {H}ypothesis, {S}hen \cite{Sh} obtained an asymptotic formula for the fourth moment, using the method developed by {S}oundararajan and {Y}oung in \cite{SY}. More recently, Shen and Stucky \cite{SheSt24} unconditionally established an asymptotic formula with four main terms, following a method of Li \cite{XLi}, which had previously been used to establish an asymptotic formula for the second moment of quadratic twists of modular $L$-functions. 
\end{para}
\begin{para}
Diaconu--Goldfeld--Hoffstein \cite{DGH} conjectured the existence of additional terms in the asymptotic formula of the $r$th moment when $r \ge 3$. This expectation was later confirmed in \cite{Di-Wh}, where an asymptotic formula with an additional term of size $D^{\frac{3}{4}}$ has been established for a {\it smoothed} version of the third moment. When $r \ge 4$, we have the following: 
\end{para}

\begin{conj} \label{conj: Moment-conjecture-over-rationals} Let $N \ge 1$ be an integer, and take $(N + 1)^{-1} < \Theta < N^{-1}$. Then 
	\begin{equation*} 
		\sum_{\substack{\chi_{d} \in \mathcal{F}_{\mathcal{D}} \\ |d| < D}} L(\tfrac{1}{2}, \chi_{d})^{r} 
		= \sum_{n = 1}^{N} D^{(\frac{1}{2} + \frac{1}{2n})}Q_{n, r}(\log D)
		\, + \, O\big(D^{(1 + \Theta)\slash 2}\big)
	\end{equation*} 
as $D\to \infty$, for some polynomials $Q_{n, r}(x)$. 
\end{conj}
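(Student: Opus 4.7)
The plan is to attack \cref{conj: Moment-conjecture-over-rationals} via the method of multiple Dirichlet series, which is the natural analytic counterpart to the homotopy-theoretic approach taken in the rest of the paper. Define
\[ Z(s; w_1, \ldots, w_r) \;=\; \sum_{\substack{d \in \Z \setminus \{0\} \\ d \text{ squarefree}}} \frac{L(w_1, \chi_d) \cdots L(w_r, \chi_d)}{|d|^s}, \]
absolutely convergent when all $\Re(w_i)$ and $\Re(s)$ are sufficiently large. This series inherits $r$ functional equations from the quadratic $L$-functions, together with an additional ``Weyl-type'' functional equation in $s$ arising from quadratic reciprocity interchanging the roles of $d$ and the $w_i$; together these generate an action of a reflection group $W$ on $\C^{r+1}$.

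First I would establish meromorphic continuation of $Z$ to a tube domain containing the point $(s; \tfrac{1}{2}, \ldots, \tfrac{1}{2})$ for all $\Re(s) \geq (1+\Theta)/2$, and identify the polar divisors crossed. Mellin--Perron inversion then gives
\[ \sum_{\substack{|d| < D \\ d \text{ sq.\ free}}} L(\tfrac{1}{2}, \chi_d)^r \;=\; \frac{1}{2\pi i} \int_{(c)} Z(s; \tfrac{1}{2}, \ldots, \tfrac{1}{2}) \frac{D^s}{s}\, ds \]
for $c$ large, and shifting the contour to $\Re(s) = (1+\Theta)/2$ picks up residues at $s = 1$ and at $s = \tfrac{1}{2} + \tfrac{1}{2n}$ for $1 \leq n \leq N$. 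The residue at $s = 1$ should supply the main term $D \cdot Q_{1,r}(\log D)$; the residues at the remaining predicted poles should produce the secondary terms $D^{1/2 + 1/(2n)} Q_{n,r}(\log D)$, while the shifted integral contributes the error $O(D^{(1+\Theta)/2})$ provided one has polynomial bounds on $|Z|$ along vertical lines.

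The main obstacle is the meromorphic continuation itself. For small $r$ the group $W$ is finite, and classical Rankin--Selberg and convexity techniques in the style of Fisher--Friedberg and Diaconu--Goldfeld--Hoffstein suffice; this is essentially how the $r \leq 3$ cases, including the Diaconu--Whitehead identification of the $D^{3/4}$ secondary term, have been handled. For $r \geq 4$ the group $W$ becomes infinite (an affine or hyperbolic reflection group), so no finite set of functional equations suffices to continue $Z$ past its natural boundary by purely formal arguments. A proof would require either realizing $Z$ as a residue of an Eisenstein series on a suitable infinite-dimensional metaplectic cover, or finding an entirely new mechanism for continuation. Even granting continuation, the convexity-type estimates required for a genuine power-saving error term are a substantial independent obstacle.

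A parallel route --- closer in flavour to the Grothendieck--Lefschetz approach elsewhere in this paper --- is to apply an approximate functional equation to each $L(\tfrac{1}{2}, \chi_d)^r$, swap the order of summation, and evaluate the resulting character sums $\sum_{|d| < D} \chi_d(n)$ by Poisson summation for quadratic characters. The diagonal $n = \square$ then contributes the $D \cdot \log^{r(r+1)/2} D$ main term, and the secondary $D^{1/2 + 1/(2k)}$ terms are expected to appear on the ``dual'' side of Poisson as contributions from $n$ which are $k$-th powers up to small perturbations. Making this rigorous for $r \geq 4$ would again require non-trivial cancellation in character sums of a depth beyond what is currently available unconditionally.
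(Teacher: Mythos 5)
This statement is a \emph{conjecture} in the paper, not a theorem: it appears without proof, is attributed to the line of work beginning with \cite{DGH} and continued in \cite{Di-Wh} and \cite{DT}, and nothing in this paper claims to establish it over $\Q$. The paper's actual theorem on moments, \cref{thmC}, concerns the rational function field analogue and is proved by entirely different (topological/cohomological) means; the number-field case remains open. So there is no ``paper's own proof'' to compare against.

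Your write-up is an honest sketch of the standard multiple Dirichlet series attack and of the approximate functional equation / Poisson alternative, and you correctly identify where both break down: for $r \ge 4$ the relevant reflection group is infinite, so the formal functional-equation argument does not yield meromorphic continuation past the natural boundary, and the requisite subconvexity and cancellation estimates are unavailable unconditionally. What you have written is therefore a well-informed \emph{heuristic} for why the conjecture should hold and why it is hard, not a proof. Since the paper also offers no proof, this is not a mismatch --- but if you intended this as a proof you should be explicit that it is not one. The only concrete missing ingredient, which you already name, is the meromorphic continuation of $Z(s; \tfrac12,\ldots,\tfrac12)$ with polynomial growth on the line $\Re(s) = (1+\Theta)/2$ for $r\ge 4$; absent that, neither the existence nor the shape of the secondary terms $D^{1/2 + 1/(2n)}Q_{n,r}(\log D)$ can be established by the contour-shift you describe.
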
 

\begin{para}

The \emph{general} moment problem for \emph{suitable} families $\mathcal{F}$ of automorphic representations asks for asymptotics of the form 
\[
\frac{1}{D^{*}} \!\sum_{\substack{f \in \mathcal{F} \\ c(f) \le D}} W(L(\tfrac{1}{2}, f))^{r}  \sim g_{r} 
\frac{a_{r}}{\Gamma(1 + B(r))}(\log D^{A})^{B(r)} \qquad \text{(as $D\to \infty$)}
\] 
where $r \in \mathbf{N},$ and the $L$-functions associated to $f \in \mathcal{F}$ are normalized to have functional equations as $s \to 1 - s$ (so $L(\tfrac{1}{2}, f)$ is the central value). Here we think of $\mathcal{F}$ as being partially ordered by the ``conductor'' $c(f)$, with $D^{*}$ the number of elements with $c(f) \le D.$ The function $W(z)$ equals $z$ or $|z|^{2}$, depending on the {\it symmetry} type of $\mathcal{F}$, introduced by {K}atz and {S}arnak \cite{KaS}. The constants $g_{r}$ and $B(r)$ depend also only on the symmetry type of $\mathcal{F}$, and the constant $a_{r}$, usually referred to as the \emph{arithmetic factor}, depends on the family in a natural way. Finally, the constant $A$ depends both on the symmetry type and the functional equation satisfied by the elements in $\mathcal{F}.$ 

\end{para}

\begin{para}An important feature of the family $\mathcal{F}_{\mathcal{D}}$, compared to $\{\zeta(\tfrac{1}{2} + it) : t \in \R\}$, is that its moments admit a perfect function field analogue. In this respect, most of the existing results are in the context of rational function fields over finite fields of odd characteristic. 
Prior to this work, asymptotics were known only for the first four moments, largely by the work of {F}lorea \cite{Florea1, Florea2, Florea3}. Previously, {H}offstein and {R}osen \cite{HR} obtained an asymptotic formula for the first moment, albeit with a weaker error term. Following the ``recipe'' developed in \cite{CFKRS}, {A}ndrade and {K}eating \cite{AK} obtained the corresponding conjectural asymptotics for all moments (see also \cite{RW}), and the analogue of \cref{conj: Moment-conjecture-over-rationals} over rational function fields in \cite{DT} is: \end{para}
\begin{conj}Let $D$, $N$ and $r \geq 4$ be integers. Take $(N + 1)^{-1} < \Theta < N^{-1}$. Then \label{conj: Moment-conjecture-over-ff}
\begin{equation*}  
\sum_{\substack{d \text{ \emph{monic \& sq.\ free}} \\ \deg d = D}}
L(\tfrac{1}{2}, \chi_{d})^{r} 
= \sum_{n \le N} Q_{r, n}(D, q)q^{(\frac{1}{2} 
+ \frac{1}{2n})D} + \, 
O\big(q^{D (1 + \Theta)\slash 2}\big).
\end{equation*} 
     The coefficients $Q_{r, n}(D, q)$ are computable, and the implied constant in the $O$-symbol depends on $\Theta, q$ and $r$, but not on $D$. 
\end{conj}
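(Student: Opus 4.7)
The natural plan is to push the method of Theorem \ref{thmC} beyond leading order. By the Grothendieck--Lefschetz trace formula the $r$-th moment equals, up to explicit Euler-product factors coming from ramification along the discriminant locus, a sum of traces of Frobenius on $H^\bullet(\H_{g,1,0}, \V_\lambda)$, where $\V_\lambda$ runs over the symplectic local systems arising in the Schur--Weyl decomposition of $V^{\otimes r}$. The leading term $q^D\, Q_{r,1}(D,q)$ comes from the stable part of this cohomology, with Galois weights controlled by Theorem \ref{thmB}; the subleading terms of size $q^{D(1+1/n)/2}$ for $n\geq 2$ must then come from classes in the \emph{unstable} range whose Tate weights lie below the stable weights by exactly $D/n$ to leading order in $D$.

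First I would aim for a \emph{secondary stability} statement: a decreasing filtration $F^\bullet H^i(\beta_n, V_\lambda)$ whose $n$-th graded piece stabilises on a range of slope $1/n$ in $n$, with pure Tate graded pieces of the weights indicated above. Given such a structure, Frobenius traces on each graded piece assemble, after summation over $\lambda$, into an $n$-indexed contribution of the form $Q_{r,n}(D,q)\, q^{D(1+1/n)/2}$, and an error term of size $q^{D(1+\Theta)/2}$ emerges once one truncates beyond $n=N$. The branching phenomenon highlighted in \ref{not uniform for polynomial} --- that Littlewood's rule from $\GL(2g)$ to $\Sp(2g)$ only stabilises for $g>\ell(\lambda)$ --- is then not an obstruction but a feature: below that threshold the decomposition of $V^{\otimes r}$ into symplectic irreducibles fluctuates in a controlled, $1/g$-like fashion, and organising this fluctuation into the desired filtration is the topological crux.

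An alternative and likely complementary route goes through multiple Dirichlet series. The generating series $\sum_D(\text{$r$-th moment at degree $D$})\,X^D$ is a Weyl-group multiple Dirichlet series of rank $r$, which Chinta--Diaconu--Friedberg-style analytic methods predict to admit meromorphic continuation with poles at $X=q^{-(1+1/n)/2}$, $n=1,2,\ldots$, of controlled order. A contour shift or Tauberian argument then reads off the full expansion from the residues. The topological input of the present paper identifies the first pole and its residue via Theorems \ref{thmA} and \ref{thmB}; pushing through to subsequent poles would require interpreting those residues as trace-sums either on boundary strata of a good compactification of $\H_{g,1,0}$ or on lower-genus hyperelliptic moduli.

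The main obstacle is establishing the secondary stability statement. Scanning and group completion, the tools underlying Theorem \ref{thmA}, only detect leading-order asymptotics, and the uniform bound of Theorem \ref{MPPRW} only controls the stable part; capturing slope-$1/n$ corrections in the unstable range appears to require genuinely new topological input, for instance an $E_2$-cellular decomposition of $B\beta_\infty$ with twisted coefficients in the spirit of Galatius--Kupers--Randal-Williams, or a geometric model for the subleading classes via boundary strata of a compactification of $\H_{g,1,0}$. Both routes must additionally be combined with a pointwise purity analysis, extending Theorem \ref{thmB} into the unstable range, in order to invoke Deligne's Riemann Hypothesis in precisely the secondary weight ranges and obtain an effective error term of the predicted shape.
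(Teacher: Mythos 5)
This statement is Conjecture~\ref{conj: Moment-conjecture-over-ff}, not a theorem: the paper attributes it to \cite{DT} and offers no proof of it. The paper's actual results in this direction are Theorem~\ref{thmC} (under a uniform stability hypothesis) and its combination with Theorem~\ref{MPPRW}, which together give only the $n=1$ term $Q_{r,1}(D,q)q^D$ up to an error $O\!\left(4^{g(r+1)}q^{-\theta(2g+1)/2}\right)$; they say nothing about the secondary terms $q^{(\frac12+\frac1{2n})D}$ for $n \geq 2$, and in fact the error term in Theorem~\ref{thmC} is not even small enough to isolate the $n=2$ term for a general uniform stability bound. So there is no ``paper's proof'' to compare your proposal against.

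That said, your sketch is an accurate map of the open problem, and it tracks the paper's own speculations closely. Your claim that the $n\geq 2$ terms ``must come from classes in the unstable range'' whose weights drop by roughly $D/n$ is precisely the paper's interpretation: Section~\ref{predictions} computes $\mathcal T_1(\lambda^\dag)$ and $\mathcal T_2(\lambda^\dag)$ explicitly, shows that $\mathcal T_1$ is governed by the stable homology (Theorems~\ref{thmA}--\ref{thmB}), observes that $\mathcal T_2$ dominates when $g - \lambda_i' = O(1)$ for $i=1,2,3$, and suggests that the $\mathcal T_n$ for $n\geq 2$ should be explained by secondary and higher-order stability in the sense of \cite{gkrw-secondary}. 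Your second route via Weyl-group multiple Dirichlet series is also exactly how \cite{DT} arrived at the conjecture; the paper cites \cite{DV} for the mechanism translating the analytic conjecture into the trace asymptotics of Conjecture~\ref{conj-traces}. Where your text is weakest is the phrase ``a sum of traces of Frobenius on $H^\bullet(\H_{g,1,0},\V_\lambda)$ \ldots up to explicit Euler-product factors'': the reduction in the paper (Lemma~\ref{coh formula for Z} and Corollary~\ref{geom-expr-moments}) is an exact identity via Jimbo--Miwa duality, with no extra Euler factors, so that clause should be dropped. More substantively, you correctly flag that no form of secondary stability with the needed quantitative weight control is currently known, and that scanning/group-completion methods see only the stable range — so this remains a conjecture, as the paper itself maintains.
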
\begin{para}When $r = 4$, a weighted version of \cref{conj: Moment-conjecture-over-ff} was established in \cite{DPP}, and when $r = 3$, an asymptotic formula analogous to that in \cite{Di-Wh}, with a secondary term of size $q^{\frac{3}{4}D}$, was proved in \cite{Dia}. 
\end{para} 

\begin{para}\label{traces-complete-asympt}
More generally, one can formulate a precise variant of \cref{conj: Moment-conjecture-over-ff} for shifted moments; see \cite{DT}. By applying a result of the second-named author with Pa\c sol in \cite{DV}, together with the Weyl integration formula for the unitary symplectic group of dimension $2r$, one can interpret the shifted moment conjecture when $D = 2g + 1$ in terms of asymptotics for the traces of Frobenius on the \'etale (co)homology with coefficients in symplectic local systems of the moduli space of smooth hyperelliptic curves of genus $g$ with a marked Weierstrass point. More precisely, if $\tr_{\lambda}(g)$ denotes the alternating sum of the traces of Frobenius on these homology groups, then one is led to the conjectural asymptotic formula:
\[
\tr_{\lambda}(g)
=
\sum_{n \le N} \mathcal{T}_{n}(\lambda^{\dag})\,
+ \, O\big(q^{((2g + 1)(\Theta-1))\slash 2}\big)
\]
for all partitions
$
\lambda = (\lambda_{1} \ge \cdots \ge \lambda_{g} \ge 0)
$,
with $\lambda_{1} \le r$. The terms in the right-hand side of the asymptotic formula depend on the partition
$
\lambda^{\dag} =
(g - \lambda_{r}' \ge \cdots \ge g - \lambda_{1}')
$,
where the nonzero integers among $\lambda_{j}'$, $1 \le j \le r$, are the parts of the conjugate partition $\lambda'$ of $\lambda$, and $(N + 1)^{-1} < \Theta < N^{-1}$ is the same as in \cref{conj: Moment-conjecture-over-ff}. All terms involved are described in more detail in \cref{predictions}. Conversely, this conjectural asymptotic formula for $\tr_{\lambda}(g)$ implies \cref{conj: Moment-conjecture-over-ff}, but with a weaker error term, since one must sum over all partitions whose Young diagrams fit inside a rectangle of $g$ rows and $r$ columns, which introduces an additional polynomial factor in $g$.
\end{para}


\begin{para}
The main motivation of our results comes from an attempt to explain the presence of each term $\mathcal{T}_{n}(\lambda^{\dag})$ in the asymptotic formula of $\tr_{\lambda}(g)$ from the topological and algebro-geometric perspectives. It turns out that the asymptotics of $\tr_{\lambda}(g)$ depend on different regimes of the ratio 
$0 \le \frac{|\lambda|}{g} \le r$ where terms $\mathcal{T}_{n}(\lambda^{\dag})$, for specific $n$, dominate all the other terms. This observation points towards the existence of some {\it higher-order} homological stability statements, which, in turn, would explain the significant changes in the behavior of the traces $\tr_{\lambda}(g)$ in specific regimes of $\lambda$. 
In this paper, we will focus on analyzing the first term $\mathcal{T}_{1}(\lambda^{\dag})$, and determine its precise connection with the homological stability satisfied by the sequence of braid groups. This term dominates all the other terms for small partitions, but as the weight of $\lambda$ increases, the size of $\mathcal{T}_{1}(\lambda^{\dag})$ decreases. Ultimately, one would like to show that the traces 
$\tr_{\lambda}(g)
$ 
decrease, as $|\lambda|$ grows with $g$. As we will see in the proof of Theorem \ref{thmC}, the {\it uniform} homological stability assumption will be precisely used to obtain an explicit estimate for these traces.
\end{para}

\begin{para} A number-theoretically inclined reader may want to start reading Sections \ref{point counting} and \ref{predictions}, which discuss the number-theoretic results in this paper. 
In particular, in Subsection \ref{subsec: genfunc} we study certain generating functions for the traces of 
{F}robenius on the \'etale homology with 
twisted coefficients $S^\lambda(\mathbb V)$ 
of the moduli stacks $\H_g^{1,0}$ and 
$\H_g^{0,1}$ of smooth hyperelliptic curves of 
genus $g$, and show that the limit as 
$g \to \infty$ of each of these functions exists 
and it matches, after some simple substitutions, 
the right-hand side of the equality in Theorem \ref{thmA}; here the limit exists in the sense that the coefficients of the generating functions converge. This result will be deduced from an asymptotic formula for the sum over all monic square-free polynomials $d \in \F_{q}[x]$ of a fixed degree of the elementary symmetric polynomial 
$
e_{\lambda} = e_{ \lambda_{1}}e_{ \lambda_{2}} \cdots
$ 
in the {F}robenius eigenvalues attached to the hyperelliptic curve with affine equation $y^{2} = d(x)$. In Subsection \ref{subsec: moments from hom stab} we prove a slightly more general version of Theorem \ref{thmC}. 

In the concluding Section \ref{predictions}, by assuming the asymptotic formula of \cref{conj: Moment-conjecture-over-ff}, we will determine the asymptotic behaviour of the traces $\tr_{\lambda}(g)$ as the genus $g$ grows, 
for all partitions $\lambda$ whose Young diagrams 
fit inside a $g \times r$ rectangle, for a fixed positive integer $r$. We then analyze the first two terms $\mathcal{T}_{1}(\lambda^{\dag})$ and $\mathcal{T}_{2}(\lambda^{\dag})$ of the asymptotic formula of $\tr_{\lambda}(g)$, determine the precise connection of $\mathcal{T}_{1}(\lambda^{\dag})$ with the homological stability satisfied by the sequence of braid groups, and show that the second term $\mathcal{T}_{2}(\lambda^{\dag})$ dominates the first term for partitions $\lambda$ such that $g - \lambda_{i}' = O(1)$ for $i = 1, 2, 3$, and 
$\lambda_{i}' = O(1)$ for $i \ge 4$. In particular, if $\lambda_{i} = 3$ for all $i$, then we have unconditionally that 
\[
\tr_{\lambda}(g) \sim \mathcal{T}_{2}(\lambda^{\dag})
\] 
as $g \to \infty$. 
\end{para}

\begin{para}
    Given that our results provide a cohomological interpretation of the first term $\mathcal T_1(\lambda^\dag)$, it is natural to ask whether in addition the higher terms $\mathcal T_n(\lambda^\dag)$, $n \geq 2$, can be understood similarly. Since $\mathcal T_1$ is described by the \emph{stable homology}, a natural speculation is that the higher order terms correspond to \emph{secondary stability}, and higher-order stability, in the sense of Galatius--Kupers--Randal-Williams \cite{gkrw-secondary}. 
\end{para}

\subsection{Braid groups and moduli spaces}

\begin{para}\label{big diagram}As already stated, this paper aims in particular to study the stable homology of braid groups with certain algebraic coefficients. The family of all groups involved can be assembled in the following diagram:\[ \begin{tikzcd}
    \ldots \arrow[r]& \beta_{2g+1} \arrow[r]\arrow[d]&\arrow[r]\arrow[d] \beta_{2g+2} & \arrow[r]\arrow[d]\beta_{2g+3} &\arrow[r]\arrow[d] \beta_{2g+4} & \ldots \\
   \ldots \arrow[r]& \mathrm{Mod}_g^1 \arrow[r]\arrow[d]&\arrow[r]\arrow[d] \mathrm{Mod}_g^2 &\arrow[r]\arrow[d] \mathrm{Mod}_{g+1}^1 &\arrow[r]\arrow[d] \mathrm{Mod}_{g+1}^2 & \ldots  \\
  \ldots \arrow[r]& \arrow[r] \Sp(2g,\Z) &\arrow[r] \Sp(2g+1,\Z) &\arrow[r] \Sp(2g+2,\Z) & \arrow[r]\Sp(2g+3,\Z)& \ldots 
\end{tikzcd}\]
Let us define our notation:
\begin{itemize}
    \item We denote by $\beta_n$ Artin's braid group on $n$ strands, and $\beta_n \to \beta_{n+1}$ is the natural embedding which adds another strand to the braid.
    \item We denote by $\mathrm{Mod}_g^n$ the mapping class group of a compact oriented surface of genus $g$ with $n$ boundary components. The map $\mathrm{Mod}_g^1 \to \mathrm{Mod}_g^2$ glues on a pair of pants along its waist, and the map $\mathrm{Mod}_g^2 \to \mathrm{Mod}_{g+1}^1$ glues on a pair of pants along its two legs. 
    \item We denote by $\Sp(2g)$ the usual symplectic group of $2g \times 2g$ matrices. The group $\Sp(2g-1)$ is the \emph{odd symplectic group} of Gelfand--Zelevinsky \cite{gelfandzelevinskyodd}; $\Sp(2g-1)$ is the subgroup of $\Sp(2g)$ fixing a vector in the standard $2g$-dimensional symplectic vector space (a unimodular vector, when we take integer coefficients). These are not the same as the odd symplectic groups defined by Proctor \cite{proctor}, although they surject onto Proctor's groups.
    \item The map $\beta_{2g+1} \to \mathrm{Mod}_g^1$ is defined as follows: consider the configuration space of $2g+1$ unordered points in a disk. For each such configuration there is a unique branched double cover of the disk, ramified exactly over each point of the configuration. This double cover is a genus $g$ surface with one boundary component, and in this way we obtain a map from the moduli space of configurations on the disk to the moduli space of genus $g$ surfaces; on fundamental groups, it induces $\beta_{2g+1} \to \mathrm{Mod}_g^1$. The map $\beta_{2g+2} \to \mathrm{Mod}_g^2$ is defined exactly the same way, but when the number of branch points is even the monodromy along the boundary of the disk is trivial, and we obtain two boundary components on the double cover. 
    \item The map $\mathrm{Mod}_g^1 \to \Sp(2g,\Z)$ associates to a mapping class of the surface $S_g$ its action on the first cohomology $H^1(S_g,\Z)$. The composite $ \mathrm{Mod}_g^2 \to \mathrm{Mod}_{g+1}^1 \to \Sp(2g+2,\Z)$ lands in the subgroup consisting of symplectic transformations fixing the class of the curve given by either of the two bottom hems of the pair of pants, which is precisely $\Sp(2g+1,\Z)$.  
\end{itemize}\end{para}

\begin{para}For each $n$ there is a natural map $\Sp(n) \to \GL(n)$: for $n$ even it is the usual inclusion, and for $n$ odd it is defined by assigning to a symplectic vector space $V$ and a vector $v \in V$ the symplectic complement of the line spanned by $v$.     The composite map $\beta_{n+1} \to \GL(n,\Z)$ is the integral reduced Burau representation. The maps from the braid groups to the mapping class groups, and the reduced Burau representation, have been studied by many people over the years, including \cite{birmanhilden,a'campo,songtillmann,segaltillmann,bandboyland,bmp,chen-burau,bianchi-braid,harrvistrupwahl,bloomquist}. It seems in some situations more natural to think of the Burau representation as taking values in the symplectic group, in particular as the map $\Sp(n) \to \GL(n)$ fails to be injective for odd $n$. 
 \end{para}

 \begin{rem}   
    In general, the (reduced) Burau representation depends on a parameter $t$, and the \emph{integral} (reduced) Burau representation denotes its specialization as $t=-1$. A similar topological interpretation of the representation exists more generally when $t$ is specialized to a primitive $r$th root of unity, except we take an $r$-fold cyclic branched cover of the disk, and instead of taking $H^1$ of the covering surface we pick out one isotypical component of it under the action by the group of deck transformations. The case $t=-1$ is called the ``integral'' Burau representation since it is the only nontrivial root of unity for which the matrices have integer coefficients. 
    
    \end{rem}
    
\begin{para}While the homomorphism from the mapping class group to $\Sp(n,\Z)$ is surjective for all $n$, the Burau representation is not: it turns out that there is a natural embedding of the symmetric group into the mod $2$ symplectic group, $\Sigma_{n+1} \to \Sp(n,\Z/2)$, in such a way that the projection $\beta_{n+1} \to \Sp(n,\Z/2)$ coincides with the standard surjection $\beta_{n+1} \to \Sigma_{n+1}$, and the image of the integral reduced Burau representation in $\Sp(n,\Z)$ is the preimage of $\Sigma_{n+1}$ under the reduction map to $\Sp(n,\Z/2)$. When $n$ is even, this is a theorem of A'Campo \cite{a'campo}; for the case of odd symplectic groups see \cite{bloomquist}. In particular, the image of the Burau representation has finite index.
\end{para}

\begin{para}
    Each row in the diagram of \S\ref{big diagram} satisfies \emph{homological stability}. Homological stability of braid groups is due to Arnold \cite{arnoldbraid2}. The case of mapping class groups is due to Harer \cite{harerstability}, see also \cite{boldsen,wahlstability}. There has been extensive study of homological stability of groups $G_n(A)$ where $G_n$ is a family of classical groups and $A$ is a ring, following the original work of Quillen for $G_n=\mathrm{GL}_n$ in connection with algebraic $K$-theory. The case of symplectic groups over a Dedekind domain is originally due to Charney \cite{charney}. Note though, that Charney only considers the usual even symplectic groups; homological stability for the family of odd and even symplectic groups together is considered in \cite{sierrawahl,MPPRW}. For a unified perspective on these homological stability results, see \cite{randalwilliamswahl}. As explained in recent work of Harr--Vistrup--Wahl \cite{harrvistrupwahl}, homological stability for the system of mapping class groups alternating between one and two boundary components is particularly interesting.
    \end{para}\begin{para}\label{stable homologies}
    In all three cases, more is true than homological stability: there is a \emph{monoidal structure}, which makes the disjoint union of classifying spaces of the groups involved into an $E_1$-algebra. By the group-completion theorem \cite{mcduffsegal} the stable homology can be identified with the homology of the group-completion, and this group-completion can in the three respective cases be explicitly identified:
    \begin{itemize}
        \item Classifying spaces of braid groups group-complete to $\Omega^2 S^2$, by a theorem of Segal \cite{segalconfiguration}, see also \cite[p. 226]{cohenladamay}.
        \item Classifying spaces of mapping class groups group-complete to $\Omega^\infty \mathrm{MTSO}(2)$, by the Madsen--Weiss theorem \cite{madsenweiss}.
        \item Classifying spaces of symplectic groups group-complete to $\Omega^\infty K\Sp(\Z)$, where $K\Sp(\Z)$ denotes the symplectic $K$-theory spectrum of $\Z$. This is one of many ways of \emph{defining} higher algebraic $K$-theory.
    \end{itemize}
\end{para}

\begin{para}The descriptions in the preceding paragraph can be used as a tool to compute the stable (co)homology of the groups involved. Notably, this is the case for the Madsen--Weiss theorem, which was used to resolve the \emph{Mumford conjecture} \cite{mumfordtowards}: rationally and in the stable range, the cohomology of $\mathrm{Mod}_g^1$ coincides with the polynomial algebra $\Q[\kappa_1,\kappa_2,\ldots]$ on the Morita--Miller--Mumford classes, with $|\kappa_i| = 2i$.  The cohomology of $\beta_n$ is less interesting rationally: for every $n>1$, the circle action on the configuration space of $n$ points in the plane induces an isomorphism $H^\bullet(\beta_n,\Q) \cong H^\bullet(S^1,\Q)$ \cite{arnoldbraid2}. The stable rational cohomology of $\Sp(2g,\Z)$ was calculated by Borel \cite{borelstablereal1}: it is a polynomial algebra $\Q[\lambda_1,\lambda_3,\lambda_5,\ldots]$ with $\vert \lambda_i \vert = 2i$. 
\end{para}

\begin{para}\label{stable twisted par}
    We may instead consider stable cohomology with \emph{twisted} coefficients. To a partition $\lambda = (\lambda_1\geq \lambda_2 \geq \ldots \geq \lambda_g \geq 0)$ we may associate an irreducible representation $V_\lambda$ of the symplectic group $\Sp(2g)$, and by padding with zeroes we may consider $V_\lambda$ as a system of representations of all larger (even) symplectic groups. (It would take us too far afield to work out the representation theory of odd symplectic groups in this paper.) The stable cohomology of $\Sp(2g,\Z)$ with  coefficients in $V_\lambda$ was also computed by Borel \cite{borelstablereal}, and the stable cohomology vanishes for all $\lambda \neq 0$. The stable cohomology of $\mathrm{Mod}_g^1$ with $V_\lambda$ coefficients was determined by Looijenga \cite{looijengastable}, conditional on the Mumford conjecture. (Looijenga works with closed surfaces, but his arguments extend straightforwardly to the case of surfaces with boundary.) Given these results, it is natural to also try to compute the stable cohomology of braid groups with coefficients in $V_\lambda$, and our \cref{thmA} accomplishes this.
\end{para}


\begin{para}
    We can also give these maps algebro-geometric meaning. Let $\M_g^n$ denote the moduli space parametrizing smooth genus $g$ algebraic curves with $n$ distinct ordered marked points and a nonzero tangent vector at each marking. Its fundamental group is $\mathrm{Mod}_g^n$. We may consider the closed subspace $\H_g^{1,0} \subset \M_g^1$ parametrizing \emph{hyperelliptic curves} marked at a \emph{Weierstrass point}. We have $\H_g^{1,0} \cong \mathrm{Conf}_{2g+1}(\mathbb A^1)/\mathbb G_a$. There is a diagram of moduli spaces
    $$ \H_g^{1,0}\to \M_g^1 \to \mathcal A_g$$
    where the second map is the \emph{Torelli morphism} from the moduli space of curves to the moduli space of principally polarized abelian varieties, assigning to a curve its Jacobian. All three spaces are $K(\pi,1)$, and on fundamental groups the morphisms correspond to the maps 
    $$ \beta_{2g+1}\to\mathrm{Mod}_g^1\to \Sp(2g,\Z).$$
    In particular, the cohomology of these groups with finite and adic coefficients carries an action of $\mathrm{Gal}(\overline{\Q}/\Q)$, and the rational cohomology has a canonical mixed Hodge structure. 
\end{para}

\begin{para}
    Also the maps $\beta_{2g+2}\to\mathrm{Mod}_g^2\to \Sp(2g+1,\Z)$ admit an interpretation in terms of algebraic geometry, but this is slightly more subtle. Since we will not need this, we allow ourselves to merely sketch the construction. If $A$ is an abelian variety, then extensions $$1 \to \mathbb G_m \to B \to A \to 0$$ are naturally parametrized by the dual abelian variety $A^\vee$. In families, this means in particular that the universal principally polarized abelian variety $\mathcal X_g \to \mathcal A_g$ carries a natural family of $(g+1)$-dimensional \emph{semiabelian} varieties of torus rank $1$. This defines a degree $2$ morphism $\mathcal X_g \to \widetilde{ \mathcal A}_{g+1}$ onto the locus of rank $1$ degenerations. Here $\widetilde{ \mathcal A}_{g+1}$ denotes a toroidal compactification of $\mathcal A_{g+1}$. It is well known that although such a toroidal compactification depends on choices of highly noncanonical data, the locus of rank $\leq 1$ degenerations is independent of choices. The rank $1$ locus is a divisor in $\widetilde{ \mathcal A}_{g+1}$, and we may pull back its normal bundle to $\mathcal X_g$. The complement of the zero-section in this bundle is a $K(\pi,1)$ for the group $\Sp(2g+1,\Z)$. Now we may first consider the space $\M_{g,2}$ of smooth genus $g$ curves with two distinct markings. We have a diagram
    \[\begin{tikzcd}
        \M_{g,2} \arrow[r,"\nu"]\arrow[d]&\arrow[d] \overline{\M}_{g+1} \\
        \mathcal X_g \arrow[r]& \widetilde{\mathcal A}_{g+1}
    \end{tikzcd}\]
    where the horizontal arrows both are double covers of codimension $1$ boundary strata, the map $\M_g\to\mathcal X_g$ assigns to $(C,p,q)$ the pair of $\mathrm{Jac}(C)$ and the line bundle $\mathcal O_C(p-q)$, and $\overline{\M}_{g+1}\to\widetilde{\mathcal A}_{g+1}$ is the extended Torelli map. (We suppose that our toroidal compactification has been chosen so that the Torelli map extends, e.g.\ the second Voronoi compactification.) Now from the data of a nonzero tangent vector at the points $p$ and $q$ we obtain a nonzero normal vector to the stratum at the point $\nu(C,p,q)$, which maps to a nonzero normal vector to the rank $1$ divisor on $\widetilde{\mathcal A}_{g+1}$. Hence $\M_g^2$ maps into the space considered just above, producing the map $\mathrm{Mod}_g^2 \to \Sp(2g+1,\Z)$. Moreover, on $\M_g^2$ we may consider the locus $\H_g^{0,1}$ of hyperelliptic curves for which the two markings are conjugate under the hyperelliptic involution. This locus is isomorphic to $\mathrm{Conf}_{2g+2}(\mathbb A^1)/\mathbb G_a$, and $\H_g^{0,1}\to\M_g^2$ induces the map $\beta_{2g+2}\to\mathrm{Mod}_g^2$. 
\end{para}

\begin{para}
    As a consequence of our algebro-geometric discussion, we may think of our calculation as determining the \emph{stable homology of moduli spaces of hyperelliptic curves with symplectic coefficients}. This interpretation is also what connects the topological results to the questions of moments of families of quadratic $L$-functions over function fields.
\end{para}

\begin{para}
    The \emph{horizontal} maps in the diagram of \S\ref{big diagram} are much harder to interpret in terms of algebraic geometry, except for the map $\Sp(2g,\Z)\to\Sp(2g+2,\Z)$ which can be understood via the map $\mathcal A_g \to \mathcal A_{g+1}$ given by taking the product with some fixed elliptic curve. But there is for example no morphism of varieties $\mathrm{Conf}_n(\mathbb A^1) \to \mathrm{Conf}_{n+1}(\mathbb A^1)$ implementing the natural embedding of braid groups. The stabilization maps can however be understood by means of \emph{logarithmic algebraic geometry}. Informally, one chooses a suitable (partial) compactification of the spaces involved in such a way that the boundary is a normal crossing divisor. This divisor then defines a logarithmic structure, whose effect on the analytification of the variety is to take a real oriented blow-up of the boundary; in particular, this blow-up has the same homotopy type as the interior. But now the stabilization maps can be understood as inclusions of boundary strata, and are logarithmically meaningful. We elaborate on this in the body of the paper; the important consequence is that the stable homology has a well-defined Galois action. 
\end{para}

\begin{para}
    One consequence of \cref{thmB}, which identifies the Galois action on the stable homology of $\beta_n$ with symplectic coefficients, is that the map $ H_\bullet(\beta_{2g+1},V_\lambda) \to H_\bullet(\mathrm{Mod}_g^1,V_\lambda) $ is zero in the stable range. Indeed, the source and target have different weights. This fits philosophically with earlier results of Song--Tillmann \cite{songtillmann} and Bianchi \cite{bianchi-braid}, who proved the analogous vanishing statement for homology with arbitrary constant coefficients (including integrally), and for coefficients in the standard local system $V$ (integrally), respectively. But this vanishing statement for $V_\lambda$ coefficients is not, strictly speaking, new. It is known that the stable cohomologies $H^\bullet(\mathrm{Mod}_g^1,V_\lambda)$, for all $\lambda$, are generated under natural tensor operations by the class of the Ceresa cycle in $H^1(\mathrm{Mod}_g^1,\wedge^3(V))$, see \cite{kawazumimorita} and \cite[Section 12]{petersentavakolyin}. But the Ceresa cycle vanishes on the hyperelliptic locus.
\end{para}

\begin{para}
    Algebraic geometers are often more interested in the moduli spaces $\M_g$ and $\H_g$ of \emph{closed} surfaces. In case of closed surfaces the notion of homological stability is more slippery, as there are no natural maps between the respective moduli spaces to induce the stabilization isomorphisms. For homology with constant coefficients, Harer proved that all maps
    $$ \begin{tikzcd}
    H_i(\mathrm{Mod}_g^1,\Z) \arrow[d]\arrow[r]& H_i(\mathrm{Mod}_{g+1}^1,\Z) \arrow[d]\\
    H_i(\mathrm{Mod}_g,\Z) & H_i(\mathrm{Mod}_{g+1},\Z)\end{tikzcd}
    $$are isomorphisms in a stable range, giving a canonical identification $H_i(\mathrm{Mod}_g,\Z) \cong H_i(\mathrm{Mod}_{g+1},\Z)$ stably. For homology with twisted coefficients, this is not true (although see \cref{standard coefficient system}). Nevertheless, it is still the case 
that $H_i(\mathrm{Mod}_g,S^\lambda(V)) \cong H_i(\mathrm{Mod}_{g+1},S^\lambda(V))$ stably, but less canonically. In Section \ref{closed surfaces} we prove analogous results for moduli spaces of closed hyperelliptic surfaces, and calculate explicitly the stable homology.
\end{para}

\subsection{Outline of arguments}

\begin{para}Since the paper is quite long, it may be helpful to sketch the arguments leading to \cref{thmA} and \cref{thmB}. Our calculation of the stable homology of $\beta_n$ with symplectic coefficients is modelled on earlier calculations for the mapping class group. As mentioned in \S\ref{stable twisted par}, the homology of $\mathrm{Mod}_g^1$ with symplectic coefficients was first determined by Looijenga. Unfortunately his methods do not appear to generalize at all to the hyperelliptic case. However, the result was later re-proved by Randal-Williams \cite{randalwilliamstwisted} by a completely different method, which turns out to admit a hyperelliptic analogue. 
\end{para}

\begin{para}
    The starting point of Randal-Williams is a topological moduli space $M_g^1(X)$ of surfaces with boundary, equipped with a continuous map to a target space $X$, taking the boundary to the basepoint of $X$.  This space fits in a fiber sequence
    \begin{equation}
         \map_\ast(S_g^1/\partial S_g^1,X) \to M_g^1(X) \to M_g^1, \label{fiber seq}
    \end{equation}
    where $S_g^1$ denotes a ``reference surface'' of genus $g$ with one boundary component.\footnote{We write $M_g^1$ rather than $\M_g^1$ as we shall attempt to distinguish notationally --- here, and in the body of the paper --- between the topological moduli space $M_g^1=B\mathrm{Diff}_\partial(S_g^1)$ and the algebro-geometric moduli space $\M_g^1$, which is a stack over $\mathrm{Spec}(\Z)$.} Randal-Williams chooses $X$ to be an Eilenberg--MacLane space $K(A,n)$ for a $\Q$-vector space $A$. Then $$\map_\ast(S_g^1/\partial S_g^1,X) \simeq K(A \otimes V,n-1) \times K(A,n-2),$$ where $V=H^1(S_g^1,\Q)$. The second factor $K(A,n-2)$ is mostly a nuisance, and we will simply pretend that it does not exist in this introduction. Then the homology of the fiber of \eqref{fiber seq} is the free graded-commutative algebra on $A \otimes V$, placed in degree $n-1$. By the Cauchy identity we have $\mathrm{Sym}(A \otimes V) = \bigoplus_\lambda S^\lambda(A) \otimes S^\lambda(V)$. One can show that the Serre spectral sequence for the fibration \eqref{fiber seq} degenerates immediately. The conclusion is then that 
    $$ H_\bullet(M_g^1(K(A,n)),\Q) \cong \bigoplus_\lambda S^\lambda(A) \otimes H_{\bullet-\vert\lambda\vert (n-1)}(M_g^1,S^\lambda(V)),$$
  if $n$ is odd. (If $n$ is even the homology of the fiber is instead an exterior algebra which changes the formula slightly.) What is important in the formula is the following:
  \begin{enumerate}
      \item The functor $A \mapsto H_\bullet(M_g^1(K(A,n)),\Q)$ is an \emph{analytic functor} of $A$, i.e.\ an infinite direct sum of Schur functors.
      \item The \emph{coefficients} of this analytic functor are exactly the cohomology groups we are trying to determine.
  \end{enumerate}
  It follows that if we can somehow determine the coefficients of this analytic functor by any other means, then we can immediately read off the homologies $H_\bullet(M_g^1,S^\lambda(V))$. But stably we \emph{can} determine these coefficients, by the results of Galatius--Madsen--Tillmann--Weiss \cite{gmtw}: the stable homology of $M_g^1(X)$ is the homology of the infinite loop space $\Omega_0^\infty (\mathrm{MTSO}(2) \wedge X_+)$, and it is an easy exercise to compute the rational homology of this space as an analytic functor in $A$, when $X=K(A,n)$. 
\end{para}

\begin{para}
    The idea is now to repeat the exactly analogous strategy for moduli spaces of hyperelliptic surfaces. Although this is not essential, it will be slightly more natural to suppose that the target space $X$ carries a $C_2$-action, and define $H_g^1(X)$ to be the moduli space of hyperelliptic surface equipped with a $C_2$-equivariant map to $X$ taking the boundary to the basepoint, where $C_2$ acts on the hyperelliptic surface by the hyperelliptic involution. The first parts of Randal-Williams's arguments carry over essentially verbatim to show that $A \mapsto H_\bullet(H_g^1(K(A,n)),\Q)$ is an analytic functor of $A$, and that its coefficients are the homology groups we are trying to compute. Thus all what remains to carry out the argument is to determine the analogue of Galatius--Madsen--Tillmann--Weiss in the hyperelliptic case. 
\end{para}

\begin{para}
    For hyperelliptic surfaces \emph{without} the map to a target space, the analogue of the Madsen--Weiss theorem is Segal's theorem, mentioned in \S\ref{stable homologies}, that the classifying spaces of braid groups (i.e.\ moduli spaces of hyperelliptic surfaces with boundary) group-complete to $\Omega^2 S^2$. Thus our task is to understand how Segal's theorem changes when the hyperelliptic curves are equipped with a map to a target space. 
\end{para}

\begin{para}One method of proving Segal's theorem goes via the \emph{scanning map}.    Let us briefly explain what it does. Consider a configuration of points in $\R^2$, and imagine it as dots on a piece of paper, placed in a document scanner (like you would find in any office).\footnote{The word ``scanning'' was coined by Segal, who  however describes the procedure not in terms of an actual scanner but a microscope \cite{segalrationalfunctions}.} The scanner will sweep vertically across the paper, and at a given time the scanner only sees those dots which lie along a thin horizontal slice of the paper. Mathematically we can model this as follows. Let $Y$ be the configuration space parametrizing distinct unordered points on the horizontal strip $\R \times (-\epsilon,\epsilon)$, topologized in such a way that points can appear and disappear along the top and bottom of the strip. We take the basepoint of $Y$ to be the empty configuration. For each $n$, we then have a natural map $\mathrm{Conf}_n(\R^2) \to \map_c(\R,Y)$ --- where $\map_c$ denotes the space of maps constant at the basepoint outside a compact set --- defined as follows: for any configuration $z \in \mathrm{Conf}_n(\R^2)$ we obtain a map $\R \to Y$ sending $t \in \R$ to the configuration $z \cap (\R \times (t-\epsilon,t+\epsilon))$. Now the point is that $\map_c(\R,Y) \simeq \Omega Y$, and that one can prove that when we take the disjoint union over all $n$ then the map to $\Omega Y$ turns out to be a group-completion. In particular, the maps $\mathrm{Conf}_n(\R^2) \to \Omega Y$ become more and more connected as $n$ grows, and the stable homology of braid groups can be identified with the homology of $\Omega Y$.
\end{para}

\begin{para}
    The scanning map described in the preceding paragraph can be \emph{iterated}: a further delooping, i.e.\ a delooping of $Y$, is obtained as the space of configurations of points in a small square $(-\epsilon,\epsilon)^2$, where points are allowed to  create and annihilate along all sides. To finish the proof of Segal's theorem, one shows that the latter space has the homotopy type of $S^2$. This is done by a process of ``zooming in'' on a configuration, which has the effect of deformation retracting down to the subspace of configurations of at most one point in the square $(-\epsilon,\epsilon)^2$. But the latter subspace is homeomorphic to the one-point compactification of $(-\epsilon,\epsilon)^2$.
\end{para}

\begin{para}
    If one carries out this argument while keeping track of the data of an equivariant map to a target space $X$, it works out very similarly all the way to the final step, where we identify the homotopy type of the subspace of configurations of at most one point in $(-\epsilon,\epsilon)^2$. When $X=K(A,n)$, one finds (after localizing away from the prime $2$) that this subspace has the homotopy type of the wedge sum $S^2 \vee K(A,n)/\langle \iota\rangle$, where $\iota$ is the involution induced by multiplication by $-1$ on $A$. What remains is to compute the rational homology of the double loop space $\Omega^2_0 (S^2 \vee K(A,n)/\langle\iota\rangle)$, treated as an analytic functor of $A$. 
\end{para}

\begin{para}
    We can now explain why computing the rational homology of $\Omega^2_0 (S^2 \vee K(A,n)/\langle\iota\rangle)$ leads to the formula $$\Exp(z^{-1}\Log(z+\sum_{j\geq 0} h_{2j}z^j)-1)$$ given in \cref{thmA}. The space $W=S^2 \vee K(A,n)/\langle\iota\rangle$ is formal, and $H^\bullet(W,\Q)$ is a Koszul quadratic algebra. Hence its quadratic dual algebra is $H_\bullet(\Omega W,\Q)$, by a result of Berglund \cite{berglundkoszulspaces}, and $z+\sum_{j\geq 0} h_{2j}z^j$ is nothing but the coefficients of the analytic functor of this quadratic dual. Now the rational homology of a loop space is the symmetric algebra on the rational homotopy, and taking the symmetric algebra corresponds to the functor $\Exp$. Hence $\Log(z+\sum_{j\geq 0} h_{2j}z^j)$ gives the rational homotopy groups of $\Omega W$. Dividing by $z$ shifts homotopy groups down one degree, and exponentiating again gives the homology of the double loop space. The subtraction by $1$ corresponds to the fact that we only want the homology of the base component. 
\end{para}

\begin{para}
    This type of quadratic duality argument is also what allows us to prove \cref{thmB}. We want to understand the Galois action on $H_\bullet(\Omega^2 W,\Q_\ell)$. But this space has a Galois-equivariant Gerstenhaber algebra structure, and it then suffices to compute the Galois action on Gerstenhaber indecomposables. Quadratic duality for Gerstenhaber algebras shows that the space of indecomposables is dual to the space of indecomposables in $H^\bullet(W,\Q_\ell)$. But the indecomposables in $H^\bullet(S^2 \vee K(A,n)/\langle\iota\rangle,\Q_\ell)$ are the fundamental class of $S^2$ in degree $2$, and the space $\mathrm{Sym}^2(A^\vee)$ in degree $2n$. Keeping track of Koszul signs and degree shifts in the quadratic duality, and then translating further from the homology of $\Omega^2 W$ to the twisted homology of braid groups, one sees that it suffices to compute the (obvious) Galois action on the two groups
    $$ H_0(\beta_\infty,\Q_\ell)\cong \Q_\ell \qquad \text{and} \qquad H_0(\beta_\infty,\wedge^2 (V)) \cong \Q_\ell(-1),$$
    and all other classes in the stable homology with symplectic coefficients are obtained from these two by means of the operations of the little disk operad. Here $\Q_\ell(-1) \subset \wedge^2(V)$ is spanned by the class of the symplectic form.
\end{para}

\begin{rem}There are now multiple different proofs of the Madsen--Weiss theorem, including the more general Galatius--Madsen--Tillmann--Weiss theorem \cite{gmtw} and the Cobordism Hypothesis \cite{luriecobordism}. Interestingly, the proof of \cite{grw} (see also \cite{hatcher-madsenweiss}) interprets the Madsen--Weiss isomorphism as an instance of a scanning map, making the parallel between this story and the Madsen--Weiss theorem even closer. To see the parallel, note first that the proof of Segal's theorem sketched above works just as well for configurations of points in $\R^n$ for any $n$ (including $n=\infty$, corresponding to the Barratt--Priddy--Quillen theorem), in which case one finds that the monoid of configuration spaces group-completes to $\Omega^n S^n$; in particular, we get an $n$-fold loop space since there are $n$ different coordinate directions along which one can ``scan''. Now to prove Madsen--Weiss one may attempt scanning, but not inputting configuration spaces of points (moduli spaces of embedded $0$-manifolds), but instead moduli spaces of embedded \emph{surfaces}. Applied to the space of surfaces in $\R^\infty$, which is a classifying space of the mapping class groups, the scanning map should then optimistically produce an infinite loop space. It turns out that this is the case, and that this is a viable strategy for proving the Madsen--Weiss theorem.
\end{rem}



\subsection{Acknowledgements}
We are grateful to Alexander Berglund, Andrea Bianchi, Geoffroy Horel, Sander Kupers, Sam Nariman,  Sheila Sundaram, and Nathalie Wahl for useful conversations. Our treatment of logarithmic geometry was clarified by discussions with Aaron Landesman and Oliver Lindstr\"om. We would like to particularly thank Jeremy Miller, Peter Patzt, and Oscar Randal-Williams. The second author was partially supported by the CNCS-UEFISCDI grant PN-IIIP4-ID-PCE-2020-2498, and a Simons Fellowship in Mathematics with award number 681716. He would also like to thank the Department of Mathematics at Stockholm University for the hospitality and the excellent working conditions during his visit in 2021. The third author was supported by the grant ERC-2017-STG 759082 and a Wallenberg Academy Fellowship.  Part of this work was carried out at Institut Mittag-Leffler in Djursholm, Sweden during the semester programs ``Moduli and algebraic cycles'' (2021) and ``Higher algebraic structures in algebra, topology and geometry'' (2022), supported by the Swedish Research Council under grant no. 2016-06596.

\subsection{Degree conventions} Gradings are homological by default. If a homologically graded object and a cohomologically graded object appear in the same formula, then the cohomological grading is interpreted homologically via $C^\bullet = C_{-\bullet}$. We denote by $[n]$ the functor that raises homological degree by $n$, and lowers cohomological degree by $n$; for example, $\Z[n]$ denotes the abelian group $\Z$ considered as a chain complex concentrated in degree $n$, or a cochain complex concentrated in degree $-n$.

\section{Symmetric function formalism}

\begin{para}In this section we recall the theory of symmetric functions and analytic functors. We work over a field of characteristic zero, which remains fixed throughout this section. The canonical reference for symmetric functions is the book of Macdonald \cite{macdonald}. The plethystic exponential and logarithm was introduced by Getzler and Kapranov \cite{getzlerkapranov}. 
\end{para}
\subsection{Polynomial and analytic functors}
\label{polynomial and analytic functors}
\begin{defn}
	By a \emph{symmetric sequence} we mean a sequence $V = \{V(n)\}_{n \geq 0}$, where each $V(n)$ is a finite dimensional representation of the symmetric group $\Sigma_n$. We say that $V(n)$ is the \emph{arity $n$ component} of $V$. We say that $V$ is \emph{bounded} if $V(n) = 0$ for $n \gg 0$. 
\end{defn}

\begin{defn}By an \emph{analytic functor} we mean an endofunctor of the category of vector spaces of the form
$$ A \mapsto \bigoplus_{n \geq 0} V(n) \otimes_{\Sigma_n} A^{\otimes n},$$
where $V$ is a symmetric sequence. The representations $V(n)$ are called the \emph{Taylor coefficients} of the analytic functor. We denote by $\Ana$ the category of analytic functors; it is equivalent to the category of symmetric sequences. An analytic functor corresponding to a bounded symmetric sequence is called a \emph{polynomial functor}, and we denote the category of polynomial functors by $\Poly$. \end{defn}

\begin{para}
	If $F$ and $G$ are analytic functors, then so is their pointwise tensor product, defined by 
$$ (F \otimes G)(A) = F(A) \otimes G(A).$$
The corresponding tensor product of symmetric sequences is the Day convolution product:
\[ (V \otimes W)(n) = \bigoplus_{p+q=n} \mathrm{Ind}_{\Sigma_p \times \Sigma_q}^{\Sigma_n} V(p) \otimes W(q).\]
The tensor product of two polynomial functors is a polynomial functor.
\end{para}\begin{para}\label{composition product}
If $F$ and $G$ are polynomial functors, then so is the composition $F \circ G$. The corresponding monoidal product on bounded symmetric sequences is the \emph{composition product}
$$ (V \circ W)(n) = \bigoplus_{k \geq 0} V(k) \otimes_{\Sigma_k} (W^{\otimes k})(n),$$
where $W^{\otimes k}$ denotes the $k$-fold Day convolution, and $(W^{\otimes k})(n)$ thus carries commuting actions of $\Sigma_n$ and $\Sigma_k$. 
\end{para}\begin{para}\label{plethysm undefined}
If $F$ and $G$ are analytic functors, then $F \circ G$ is not necessarily analytic by our definition, since its Taylor coefficients are not necessarily finite dimensional. However, if $G(0) = 0$, then $F \circ G$ is analytic. One may compare this with the fact that the composition $f \circ g$ of two formal power series is defined only if $g$ has vanishing constant term. 
\end{para}\begin{para}
The irreducible representations of $\Sigma_n$ are conventionally indexed by partitions of $n$. We denote by $\sigma_\lambda$ the irreducible representation of $\Sigma_n$ corresponding to the partition $\lambda$. We may consider it as a symmetric sequence concentrated in arity $n$. The corresponding polynomial functor is called a \emph{Schur functor}, and denoted $S^\lambda(-)$. 
For example, the Schur functor associated to the trivial representation of $\Sigma_n$ is the $n$th symmetric power functor $\Sym^n$, and the sign representation corresponds to the exterior power $\wedge^n$. 
\end{para}

\begin{examplex}\label{example of tca}
	Fix a finite dimensional vector space $W$. We consider the tensor algebra on $W$ as a symmetric sequence whose arity $n$ component is $W^{\otimes n}$. It corresponds to the analytic functor 
	$$ A \mapsto \mathrm{Sym}(A \otimes W),$$
	where $\mathrm{Sym}(-)$ denotes the symmetric algebra on a vector space. 
\end{examplex}

\subsection{The ring of symmetric functions}

\begin{defn}Define
$$ \Lambda := \varprojlim \Z[x_1,\ldots,x_n]^{\Sigma_n},$$
where the transition maps $\Z[x_1,\ldots,x_n]^{\Sigma_n} \to \Z[x_1,\ldots,x_{n-1}]^{\Sigma_{n-1}}$ are given by setting the variable $x_n$ to zero, and the inverse limit is taken in the category of graded rings, where each $x_i$ has degree $1$. We call $\Lambda$ the \emph{ring of symmetric functions}.
\end{defn}\begin{para}
The ring of symmetric functions has several standard sets of generators. Write
$$ h_r = \sum_{i_1 \leq \ldots \leq i_r } \prod_{j=1}^r x_{i_j}, \qquad \qquad e_r = \sum_{i_1 < \ldots < i_r } \prod_{j=1}^r x_{i_j} $$
for the $r$th \emph{complete} and \emph{elementary} symmetric functions, respectively. The ring homomorphisms
$$ \Z[h_1,h_2,h_3,\ldots ] \to \Lambda \qquad \qquad \Z[e_1,e_2,e_3,\ldots] \to \Lambda$$
are both isomorphisms. \end{para}

\begin{para}We have defined $\Lambda$ as a graded ring; we will call the grading the  \emph{arity} grading. We write $\Lambda = \bigoplus_{n \geq 0} \Lambda_n$, and refer to $\Lambda_n$ as the \emph{arity $n$ component} of $\Lambda$. We may make an identification of $\Lambda_n$ with the degree $n$ part of  $\Z[x_1,\ldots,x_n]^{\Sigma_n}$.
\end{para}\begin{para}
There is a standard isomorphism of rings 
$$ \Lambda \cong K_0(\Poly).$$
The addition and multiplication on $K_0(\Poly)$ are induced by the pointwise direct sum and tensor product of polynomial functors. This isomorphism takes $h_r$ to the class of the symmetric power functor $\mathrm{Sym}^r(-)$, and $e_r$ to the class of the exterior power functor $\wedge^r(-)$. For each $n$, this isomorphism restricts to an isomorphism $\Lambda_n \cong R(\Sigma_n)$ between the arity $n$ component of $\Lambda$ and the ring of virtual representations of $\Sigma_n$. For a polynomial functor $F$ (or bounded symmetric sequence $V$), we denote by $[F]$ (resp. $[V]$) its class in the Grothendieck group $\Lambda$. \end{para}
\begin{para}

 The symmetric function associated to the Schur functor $S^\lambda(-)$ is called a \emph{Schur polynomial}, and will be denoted $s_\lambda$. The Schur polynomials form a basis for $\Lambda$ as a free abelian group. An explicit formula for $s_\lambda$ is given by the Jacobi bialternant formula: for $\lambda = (\lambda_1 \geq \lambda_2\geq \ldots \geq \lambda_n \geq 0)$, one has
$$ s_\lambda = \frac{\det \left( x_i^{\lambda_j+n-j} \right)_{1\leq i,j \leq n}}{\det \left( x_i^{n-j} \right)_{1\leq i,j \leq n}} \in \Z[x_1,\ldots,x_n]^{\Sigma_n}.$$
Note that the denominator is the Vandermonde determinant $\prod_{i<j}(x_i-x_j)$. 
One may also define $s_\lambda$ as follows: choose $n \geq \length(\lambda)$, and let $s_\lambda$ be the unique symmetric function such that $s_\lambda(x_1,\ldots,x_n)$ is the character of the irreducible representation of $\mathrm{GL}(n)$ of highest weight $\lambda$, i.e.\ $S^\lambda$ applied to the defining $n$-dimensional representation of $\mathrm{GL}(n)$. The Jacobi bialternant formula is then the Weyl character formula.  
\end{para}

\begin{para}
    We denote by $\omega : \Lambda \to \Lambda$ the unique ring automorphism sending $e_n$ to $h_n$. It is an involution. Under the isomorphism $R(\Sigma_n) \cong \Lambda_n$, $\omega$ corresponds to tensoring with the sign representation of $\Sigma_n$. We have $\omega (s_\lambda) = s_{\lambda'}$.
\end{para}

\begin{para}
We denote by $\langle -, -\rangle$ the inner product on $\Lambda$, uniquely defined by the condition that 
$$ \langle s_\lambda, s_\mu \rangle = \begin{cases}1 & \lambda = \mu \\ 0 & \lambda \neq \mu. \end{cases} $$
In other words, we take the inner product induced from the standard inner product on $R(\Sigma_n)$, for which the classes of irreducible representations form an orthonormal basis. 
\end{para}
\begin{para}
    For $f \in \Lambda$ we write $f^\perp$ for the adjoint of multiplication by $f$ with respect to the inner product $\langle -, -\rangle$. We have $s_\lambda^\perp s_\mu = 0$ unless $\lambda \subseteq \mu$, and in this case we use the notation $s_{\mu/\lambda} := s_\lambda^\perp s_\mu$. These are the \emph{skew Schur functions}, indexed by skew diagrams $\mu/\lambda$. 
\end{para}

\begin{para}
We define a \emph{comultiplication} on $\Lambda$ by $\Lambda \ni f(x_1,x_2,\ldots) \mapsto f(x_1,x_2,\ldots,y_1,y_2,\ldots) \in \Lambda \otimes \Lambda$. Alternatively, it is defined by the formula
$$ \Delta( h_n ) = \sum_{p+q=n} h_p \otimes h_q;$$
equivalently, $ \Delta( e_n ) = \sum_{p+q=n} e_p \otimes e_q$. With this comultiplication, $\Lambda$ becomes a commutative and cocommutative Hopf algebra. In terms of representations of the symmetric groups, the coproduct is the sum over all $p+q=n$ of the maps 
$$ \mathrm{Res}^{\Sigma_n}_{\Sigma_p \times \Sigma_q} : R(\Sigma_n) \to R(\Sigma_p \times \Sigma_q) \cong R(\Sigma_p) \otimes R(\Sigma_q)$$ 
obtained from the inclusions $\Sigma_p \times \Sigma_q \to \Sigma_n$. As the product in $\Lambda \cong K_0(\Poly)$ corresponded to the Day convolution (induction), it follows from Frobenius reciprocity that the coproduct is the adjoint of the product with respect to the inner product. 

\end{para}\begin{para}

Each ring of virtual representations $R(\Sigma_n)$ is itself a Hopf algebra, and carries its own multiplication and comultiplication. Applying these aritywise to $\Lambda$ gives another, different, product and coproduct, which are called the \emph{inner multiplication} and \emph{inner comultiplication}, which we denote $\ast \colon \Lambda \otimes \Lambda \to \Lambda$ and $\delta \colon \Lambda \to \Lambda \otimes \Lambda$. 
\end{para}\begin{para}\label{plethysm identities}
A further important operation on symmetric functions is the \emph{plethysm}, which we denote by $\circ$. Plethysm is a binary operation $\Lambda \times \Lambda \to \Lambda$ and is characterized by the following algebraic identities:
\begin{enumerate}
	\item Plethysm is associative.
	\item The element $h_1$ is a two-sided identity for plethysm. 
	\item For all $f \in \Lambda$, $- \circ f$ is a ring homomorphism $\Lambda \to \Lambda$. 
	\item For all $f, g, h \in \Lambda$, $f \circ (g + h) = \sum (f_{(1)}^\Delta \circ g)(f_{(2)}^\Delta \circ h)$. 
	\item For all $f, g, h \in \Lambda$, $f \circ (gh) = \sum (f_{(1)}^\delta \circ g)(f_{(2)}^\delta \circ h)$. 
\end{enumerate}
In properties (4) and (5) we are using Sweedler notation for the two coproducts $\Delta$ and $\delta$: 
\[ \Delta(f) = \sum f_{(1)}^\Delta \otimes f_{(2)}^\Delta, \qquad \delta(f) = \sum f_{(1)}^\delta \otimes f_{(2)}^\delta. \]
The structure constants for the coproducts $\Delta$ and $\delta$ are the Littlewood--Richardson coefficients $\{c_{\mu,\nu}^\lambda\}$ and the Kronecker coefficients $\{g_{\mu,\nu}^\lambda\}$, respectively. Hence (4) and (5) may also be stated as $s_\lambda \circ (g+h) = \sum c_{\mu,\nu}^\lambda (s_\mu \circ g)(s_\nu \circ h)$ and $s_\lambda \circ (gh) = \sum g_{\mu,\nu}^\lambda (s_\mu \circ g)(s_\nu \circ h)$.
\end{para}

\begin{para}The importance of plethysm is that it corresponds to the composition of polynomial functors. If $F, G \in \Poly$ are polynomial functors, then their composition $F \circ G$ is again a polynomial functor, and 
	$$ [F \circ G] = [F] \circ [G]$$
	in $\Lambda \cong K_0(\Poly)$. 
\end{para}

\begin{para}\label{adams op}

We denote by $p_n$ the $n$th power sum, which is the element
$$ p_n = x_1^n + x_2^n + \ldots \in \Lambda. $$
The elements $p_n$ do not generate $\Lambda$ as a ring, as for example $h_2 = \tfrac{p_1^2+p_2} 2$ and $e_2 = \tfrac{p_1^2-p_2} 2$, but the map
$$ \Q[p_1,p_2,\ldots ] \to \Lambda \otimes \Q$$
is an isomorphism. The power sums are useful for carrying out calculations involving plethysm, as they satisfy the simple relation
\[ p_n \circ f(p_1,p_2,\ldots) = f(p_n,p_{2n},\ldots).\]  
This relation implies in particular that $p_n \circ (-)$ is a ring homomorphism $\Lambda \to \Lambda$, for every $n$. We write $\psi_n(x)$ instead of $p_n \circ x$, and we refer to the ring homomorphism $\psi_n(-)$ as the $n$th \emph{Adams operation}. 
\end{para}

\subsection{Plethystic exponential and logarithm}

\begin{para}
We denote by $\widehat{\Lambda}$ the completion of $\Lambda$ with respect to the arity grading; that is,
$$ \widehat{\Lambda} = \prod_{n=0}^\infty \Lambda_n. $$
In the same way that $\Lambda \cong K_0(\Poly)$, we have $\widehat{\Lambda} \cong K_0(\Ana)$. If $F$ is an analytic functor, then we denote by $[F]$ its class in $\widehat \Lambda$, and we refer to $[F]$ as the \emph{Taylor series} of the analytic functor $F$. We write $\widehat{\Lambda}_{0}$ for $\prod_{n=1}^\infty \Lambda_n \subset \widehat{\Lambda}$. \end{para}\begin{para}\label{plethysm undefined 2}

As in \S\ref{plethysm undefined}, plethysm does not extend to a well-defined operation on all of $\widehat \Lambda$; the plethysm $f \circ g$ is meaningful only if $f$ and $g$ are both in $\Lambda$, or if the arity $0$ component of $g$ vanishes. When defined, the plethysm operation on $\widehat{\Lambda}$ satisfies the same algebraic identities as the plethysm on $\Lambda$. If $F$ and $G$ are analytic functors and $G(0)=0$, so that $F \circ G$ is again an analytic functor, then there is an identity of Taylor series
$$ [F \circ G] = [F] \circ [G].$$
\end{para}

\begin{rem}
	
	A fruitful analogy to keep in mind is that $\Lambda$ is analogous to the ring of polynomials $\Z[t]$, in that both are rings equipped with an additional ``composition'' operation: $\Lambda$ has the plethysm operation, and $\Z[t]$ has composition of polynomials. More formally, $\Lambda$ and $\Z[t]$ are canonical examples of \emph{plethories} \cite{tallwraith,borgerwieland}. Under this analogy, the completion $\widehat \Lambda$ is analogous to the formal power series ring $\Z[[t]]$, and $\widehat{\Lambda}_0$ to the ideal $t \Z[[t]]$. The algebraic structure on $\widehat{\Lambda}$ and $\Z[[t]]$ is that of a \emph{formal plethory} \cite{bauerformal}. In fact, these are two of the most basic formal plethories one encounters in topology, as $K^0(BU) \cong \widehat{\Lambda}$ and $K^0(BU(1)) \cong \Z[[t]]$. \end{rem}

\begin{para}
	From \S\ref{plethysm undefined 2} we see in particular that  $(\widehat\Lambda_{0}, \circ, h_1)$ is a monoid under plethysm. It is natural to ask which elements of this monoid are invertible. The analogous question for power series has a well known answer: given a commutative ring $A$, the set of power series 
	$ tA[[t]] = \{ \sum_{n \geq 1} a_n t^n : a_n \in A\} $
	form a monoid under composition, and such a series is invertible under composition if and only if $a_1 \in A^\times$. (Indeed, one simply writes down the upper triangular system of equations giving the coefficients of the compositional inverse.) In the same way, one can show the following proposition. 
	\end{para}
	\begin{prop}An element of $\widehat\Lambda_{0}$ is invertible with respect to plethysm if and only if the coefficient before $h_1$ is $\pm 1$.
	\end{prop}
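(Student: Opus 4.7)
The plan is to mimic the power-series argument recalled in the paragraph preceding the proposition. For $f \in \widehat\Lambda_0$ write $f = \sum_{n \geq 1} f_n$ with $f_n \in \Lambda_n$, and note that $\Lambda_1 = \Z \cdot h_1$, so $f_1 = a\, h_1$ for a unique $a \in \Z$.

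The core of the argument is a triangularity formula: for any $f, g \in \widehat\Lambda_0$,
\[ (f \circ g)_n \;=\; a\, g_n \;+\; P_n(g_1, \ldots, g_{n-1};\, f_2, \ldots, f_n), \]
where $P_n$ is a $\Z$-polynomial expression depending only on the lower-arity data. I would derive this from the analytic-functor description of plethysm: if $F, G$ correspond to $f, g$ under the isomorphism $\widehat\Lambda \cong K_0(\Ana)$, then since $G(0) = 0$,
\[ (F \circ G)(n) \;=\; \bigoplus_{k=1}^{n} F(k) \otimes_{\Sigma_k} (G^{\otimes k})(n), \]
with $(G^{\otimes k})(n) = \bigoplus_{n_1 + \cdots + n_k = n,\, n_i \geq 1} \mathrm{Ind}_{\Sigma_{n_1} \times \cdots \times \Sigma_{n_k}}^{\Sigma_n} G(n_1) \otimes \cdots \otimes G(n_k)$. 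The $k=1$ summand contributes $F(1) \otimes G(n)$, which gives the $a\, g_n$ term, while every $k \geq 2$ forces each $n_i \leq n - k + 1 \leq n - 1$, so these summands depend only on $g_1, \ldots, g_{n-1}$ (together with $f_2, \ldots, f_n$).

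For necessity, if $g$ is a plethystic inverse to $f$, the $n = 1$ case of the triangularity formula reads $a b\, h_1 = h_1$ in $\Lambda_1 = \Z \cdot h_1$, where $g_1 = b\, h_1$; hence $a b = 1$ in $\Z$, forcing $a = \pm 1$. For sufficiency, assume $a = \pm 1$ and construct the inverse arity by arity: set $g_1 = a\, h_1$, and for $n \geq 2$ solve $(f \circ g)_n = 0$ as $g_n = -a^{-1} P_n(g_1, \ldots, g_{n-1};\, f_2, \ldots, f_n) \in \Lambda_n$. The hypothesis $a = \pm 1$ is precisely what makes this recursion work over $\Z$.

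Finally, the right inverse $g$ so constructed is automatically a two-sided inverse, by a standard monoid manipulation: the arity-$1$ coefficient of $g$ is again $\pm 1$, so $g$ itself admits a right inverse $h \in \widehat\Lambda_0$ by the same procedure, and associativity of plethysm gives
\[ f \;=\; f \circ h_1 \;=\; f \circ (g \circ h) \;=\; (f \circ g) \circ h \;=\; h_1 \circ h \;=\; h, \]
so $g \circ f = g \circ h = h_1$. The only substantive step is the triangularity formula; once that is in hand, the remainder of the proof parallels the power-series case verbatim.
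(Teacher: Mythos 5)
Your proof is correct and is precisely the argument the paper gestures at by analogy with formal power series: the triangularity $(f\circ g)_n = a\,g_n + P_n(g_1,\ldots,g_{n-1};f_2,\ldots,f_n)$ is the ``upper triangular system of equations'' alluded to there, and the arity-by-arity recursion together with the standard monoid manipulation for two-sidedness completes it. One small remark: you need not establish that $P_n$ is literally a $\Z$-polynomial in its arguments; it suffices, and is immediate from the composition-product decomposition you cite, that $P_n$ is a well-defined element of $\Lambda_n$ depending only on $f_2,\ldots,f_n$ and $g_1,\ldots,g_{n-1}$.
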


\begin{defn}\label{defn of L}
	Define special elements of $\widehat{\Lambda}$ by
	$$ E = \sum_{r \geq 0} h_r = \exp \Big( \sum_{k \geq 1} \frac{p_k}{k} \Big)$$
	(where the second equality is a form of Newton's identitites) and 
	$$ L = \sum_{k \geq 1} \frac{\mu(k)} k \log(1+p_k).$$
	By the preceding proposition, $E-1$ and $L$ are both invertible under plethysm. In fact more is true.
\end{defn}

\begin{prop}\label{inverse identity}
	In the ring $\widehat{\Lambda}$ one has
	$$ (E-1) \circ L =  h_1 = L \circ (E-1).$$ 
\end{prop}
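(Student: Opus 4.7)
The plan is to reduce both identities to straightforward manipulations in the power-sum basis. After tensoring with $\Q$, $\widehat\Lambda \otimes \Q$ is a completed polynomial ring on the power sums $p_1,p_2,\ldots$, so the formal expressions $\exp$ and $\log$ of elements of $\widehat\Lambda_0 \otimes \Q$ are well-defined. I will use three tools: property (3) of \S\ref{plethysm identities}, which tells us that $(-)\circ L$ is a ring endomorphism of $\widehat\Lambda \otimes \Q$ (since $L \in \widehat\Lambda_0$); the identity $p_n \circ f = \psi_n(f)$ from \S\ref{adams op}, where $\psi_n$ is the ring endomorphism sending $p_k$ to $p_{nk}$; and the M\"obius identity $\sum_{j \mid n} \mu(j) = [n = 1]$. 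Both $(-)\circ L$ and $\psi_n$ respect the arity filtration, hence are continuous and commute with formal $\exp$ and $\log$.

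Set $P = \sum_{k \geq 1} p_k / k$, so that $E = \exp P$. Since $(-)\circ L$ is a continuous ring endomorphism, $(E - 1)\circ L = \exp(P \circ L) - 1$. To compute $P \circ L$, note that $\psi_k(\log(1 + p_j)) = \log(1 + p_{jk})$, and therefore
\[
P \circ L \;=\; \sum_{k \geq 1} \frac{\psi_k(L)}{k} \;=\; \sum_{k, j \geq 1} \frac{\mu(j)}{jk}\log(1 + p_{jk}) \;=\; \sum_{n \geq 1} \frac{\log(1 + p_n)}{n}\sum_{j \mid n}\mu(j) \;=\; \log(1 + p_1),
\]
after re-indexing by $n = jk$ and applying M\"obius inversion. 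Since $p_1 = h_1$, this gives $(E - 1)\circ L = e^{\log(1 + h_1)} - 1 = h_1$.

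For the other composite, I expand $L \circ (E - 1)$ term by term. The Adams operation $\psi_j$ is a continuous ring endomorphism, so $\psi_j(E) = \psi_j(\exp P) = \exp(\psi_j P)$, and hence $\log(1 + \psi_j(E - 1)) = \log \psi_j(E) = \psi_j(P) = \sum_{k \geq 1} p_{jk}/k$. Therefore
\[
L \circ (E - 1) \;=\; \sum_{j \geq 1} \frac{\mu(j)}{j}\,\psi_j(P) \;=\; \sum_{j, k \geq 1} \frac{\mu(j)}{jk}\, p_{jk} \;=\; \sum_{n \geq 1} \frac{p_n}{n}\sum_{j \mid n}\mu(j) \;=\; p_1 \;=\; h_1,
\]
again by M\"obius inversion. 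The only point requiring care is the justification that formal $\exp$ and $\log$ manipulations commute with plethysm by $L$ and with the Adams operations $\psi_j$; this is routine given that both operations preserve the arity filtration on $\widehat\Lambda \otimes \Q$, and so the main obstacle is purely a matter of bookkeeping.
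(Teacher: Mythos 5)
Your proof is correct, and the computation of $(E-1)\circ L$ is essentially the same power-sum manipulation the paper uses. Where you diverge is in establishing the second identity $L\circ(E-1)=h_1$: the paper dispatches it by observing that a two-sided inverse in a monoid is unique (relying on the earlier proposition that $E-1$ is plethystically invertible), whereas you compute it directly via $\log(1+\psi_j(E-1))=\psi_j(P)$ and a second M\"obius inversion. Your route is self-contained — it does not lean on the invertibility proposition — at the cost of a few extra lines; the paper's is shorter but imports that prior fact. Both are valid, and the dual computation you supply is a nice mirror image of the first.
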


\begin{proof}It suffices to prove the first identity; an invertible element of a monoid has the same right and left inverse. Note that
	\[
		\Big(\sum_{k\geq 1} \frac{p_k}{k} \Big) \circ \Big(\sum_{l \geq 1} \frac{\mu(l)} l \log(1+p_l)\Big) = \sum_{k,l \geq 1} \frac{\mu(l)}{kl}\log(1+p_{kl}) = \log (1+p_1),
	\]
	using that $\sum_{d \mid n} \mu(d) = 0$ for $n>1$ in the second equality. Exponentiate this to get $E \circ L=1+p_1$, which is equivalent to the proposition.	\end{proof}

\begin{rem}The degree $n$ component of $L$ is $\frac{(-1)^{n}}{n}\sum_{d\mid n} {\mu(d)} (-p_d)^{n/d}$. Let $\mathsf{Lie}(n)$ denote the representation of $S_n$ spanned by the set of multilinear Lie words in $n$ symbols. Then the character of $\mathsf{Lie}(n)$ is given by $\frac{1}{n}\sum_{d\mid n} {\mu(d)} p_d^{n/d}$ \cite[Proposition 4]{joyalanalyticfunctors}. This is not a coincidence: if $P$ is a Koszul operad and $P^\antishriek$ is its Koszul dual cooperad, then the analytic functors corresponding to $P$ and $P^\antishriek$ are compositional inverses, since the Koszul complex $P \circ P^\antishriek$ with the twisting differential is acyclic. Now $(E-1)$ is the Taylor series associated to the non-unital commutative operad, so its compositional inverse $L$ must be the Taylor series of the Koszul dual, which is the shifted Lie cooperad. The shift accounts for the sign differences between $L$ and the character of $\mathsf{Lie}$. The calculation of the character of $\mathsf{Lie}(n)$ may be seen as an equivariant refinement of Witt's formula for the graded dimensions of the free Lie algebra \cite[Corollary 5.3.5]{lothaire}. 
\end{rem}

\begin{defn} For $x \in \widehat\Lambda_{0}$, define the \emph{plethystic exponential}
	$$ \Exp(x) = E \circ x,$$
	and \emph{plethystic logarithm}
	$$ \Log(1+x) = L \circ x.$$
\end{defn}

\begin{prop}The map $\Exp : \widehat\Lambda_{0}  \to 1+\widehat\Lambda_{0}$ is a bijection, with inverse $\Log : 1+\widehat\Lambda_{0} \to \widehat\Lambda_{0}$.
\end{prop}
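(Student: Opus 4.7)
The plan is to derive the proposition as an essentially immediate consequence of \cref{inverse identity}, using only the formal properties of plethysm recalled in \S\ref{plethysm identities} together with the careful convention that $f \circ g$ is well-defined once the arity $0$ component of $g$ vanishes (\S\ref{plethysm undefined 2}). The key observation is that since $- \circ x$ is a ring homomorphism for any $x$, it sends the unit to itself, so $1 \circ x = 1$; combining this with the additive distributivity property, one has, for $x \in \widehat\Lambda_0$,
\[
\Exp(x) \;=\; E \circ x \;=\; (1 + (E-1))\circ x \;=\; 1 + (E-1)\circ x.
\]
In particular $\Exp(x) \in 1 + \widehat\Lambda_0$, so the map $\Exp \colon \widehat\Lambda_0 \to 1+\widehat\Lambda_0$ is well-defined; similarly $\Log \colon 1 + \widehat\Lambda_0 \to \widehat\Lambda_0$ is well-defined since $\Log(1+y) = L \circ y$ has no arity $0$ component.

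The first composition to check is $\Log \circ \Exp$. For $x \in \widehat\Lambda_0$, the element $(E-1)\circ x$ lies in $\widehat\Lambda_0$, so we may legitimately apply associativity of plethysm to compute
\[
\Log(\Exp(x)) \;=\; L \circ \bigl((E-1)\circ x\bigr) \;=\; \bigl(L \circ (E-1)\bigr)\circ x \;=\; h_1 \circ x \;=\; x,
\]
where the third equality uses \cref{inverse identity} and the fourth uses that $h_1$ is the two-sided identity for plethysm. The symmetric computation for $\Exp \circ \Log$ is
\[
\Exp(\Log(1+y)) \;=\; 1 + (E-1)\circ (L \circ y) \;=\; 1 + \bigl((E-1)\circ L\bigr)\circ y \;=\; 1 + h_1 \circ y \;=\; 1+y,
\]
using the other half of \cref{inverse identity}.

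The only subtle point, and the thing worth checking carefully, is the legitimacy of the associativity step in each computation: one must verify that both intermediate nestings are of the form $f \circ g$ with $g$ having vanishing arity $0$ component, so that all plethysms appearing are defined and the general associativity axiom applies. Since $x,y \in \widehat\Lambda_0$ and $(E-1)\circ x, L \circ y \in \widehat\Lambda_0$ as well (the outer factors $E-1$ and $L$ both have vanishing constant term, so raise arities), this is automatic. There is no substantive obstacle beyond bookkeeping; the real content has already been absorbed into \cref{inverse identity}.
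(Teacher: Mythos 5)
Your proof is correct and takes essentially the same route as the paper: both reduce to \cref{inverse identity} via associativity of plethysm, with the paper writing $\Exp(\Log(1+x)) = E\circ L\circ x = (1+h_1)\circ x = 1+x$ and noting the other direction is symmetric. Your version is just slightly more explicit, peeling off the constant $1$ so that every composition manifestly has an inner factor in $\widehat\Lambda_0$, and spelling out the well-definedness check that the paper leaves implicit; this is sound bookkeeping but not a different argument.
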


\begin{proof}Let $x \in \widehat\Lambda_{0}$. Then $\Exp(\Log(1+x)) = E \circ L \circ x = (1+h_1) \circ x =1 + x$, using \cref{inverse identity}. One shows $\Log(\Exp(x))=x$ in the same way. \end{proof}

\begin{prop}
	The plethystic exponential and logarithm satisfy the identities
	$$\Exp(x+y) = \Exp(x)\Exp(y) \qquad \text{and} \qquad \Log(ab) = \Log(a)+\Log(b).$$ 
\end{prop}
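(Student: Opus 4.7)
The plan is to prove the additive identity $\Exp(x+y)=\Exp(x)\Exp(y)$ directly, and then deduce the multiplicative identity $\Log(ab)=\Log(a)+\Log(b)$ as a formal consequence of the fact that $\Exp$ and $\Log$ are mutually inverse bijections.

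First I would expand $\Exp(x+y) = E\circ (x+y)$ and apply identity (4) from \S\ref{plethysm identities}, which gives
\[ E \circ (x+y) = \sum (E_{(1)}^\Delta \circ x)(E_{(2)}^\Delta \circ y). \]
The key computation is that $E$ is grouplike with respect to $\Delta$, i.e.\ $\Delta(E)=E\otimes E$. This follows immediately from the formula $\Delta(h_n)=\sum_{p+q=n} h_p \otimes h_q$: summing over $n$ gives
\[ \Delta(E) = \sum_n \sum_{p+q=n} h_p \otimes h_q = \Bigl(\sum_p h_p\Bigr)\otimes \Bigl(\sum_q h_q\Bigr) = E\otimes E. \]
(Strictly speaking, this identity should be checked in the completion $\widehat\Lambda\mathbin{\hat\otimes}\widehat\Lambda$, but there is no issue since the sums are arity-homogeneous.) Hence
\[ \Exp(x+y) = (E\circ x)(E\circ y) = \Exp(x)\Exp(y). \]

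To deduce the second identity, let $a,b \in 1+\widehat\Lambda_0$ and set $x=\Log(a)$, $y=\Log(b)$. By the previous proposition $\Exp(x)=a$ and $\Exp(y)=b$, so by the identity just proved
\[ \Exp(\Log(a)+\Log(b)) = \Exp(x+y) = \Exp(x)\Exp(y) = ab. \]
Applying $\Log$ to both sides and using that $\Log\circ\Exp$ is the identity on $\widehat\Lambda_0$ yields $\Log(a)+\Log(b) = \Log(ab)$, as desired.

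I do not anticipate a serious obstacle: the entire argument rests on the grouplike property of $E$, which is essentially just the definition of the coproduct $\Delta$ on complete symmetric functions. The only minor subtlety worth a line of comment is verifying that identity (4), originally stated for $\Lambda$, still applies in the completion $\widehat\Lambda$ with $x,y\in\widehat\Lambda_0$; this is fine because the sums defining each arity-homogeneous component of $E\circ(x+y)$ are finite, and this is precisely the condition under \S\ref{plethysm undefined 2} that makes plethysm well-defined here.
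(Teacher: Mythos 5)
Your proof is correct and follows the same route as the paper's: both rely on identity (4) from \S\ref{plethysm identities} together with the formula $\Delta(h_n)=\sum_{p+q=n}h_p\otimes h_q$, i.e.\ the grouplike property of $E$, and then obtain the $\Log$ identity by inverting. Your write-up is simply more explicit about the grouplike computation and the deduction of the second identity from the first.
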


\begin{proof}
	It suffices to prove the first identity. To do this, use the identity \S\ref{plethysm identities}(4) and $\Delta(h_n) = \sum_{p+q=n} h_p \otimes h_q$. 
\end{proof}

\begin{prop}\label{lemma:pleth id}For $x \in \widehat{\Lambda}$, $y \in \widehat{\Lambda}_{0}$, there is an identity
\[\Exp\big(\,x\, \Log(1+y)\big) = \prod_{n=1}^\infty (1+\psi_n(y))^{\frac{1}{n} \sum_{d \mid n} \mu(n/d)  \psi_d(x)}
	\]
	where $\psi_n$ denotes the $n$th Adams operation (\S\ref{adams op}), and each factor on the right-hand side is interpreted by expanding the binomial series. 
\end{prop}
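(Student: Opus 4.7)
The plan is to rewrite both sides as exponentials of expressions involving Adams operations, and then verify the exponents match by a change of summation variables. The only real computation is that of $\psi_k$ applied to $\Log(1+y)$, together with a Möbius-style reindexing.

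First, I would establish two preliminary formulas. Since $-\circ x$ is a ring homomorphism (property (3) of \S\ref{plethysm identities}), and $E = \exp(\sum_{k\geq 1} p_k/k)$, we get
\[
\Exp(x) = E \circ x = \exp\!\Big(\sum_{k\geq 1} \tfrac{1}{k}\,\psi_k(x)\Big),
\]
using $p_k \circ x = \psi_k(x)$. Analogously, from the definition $L = \sum_m \tfrac{\mu(m)}{m}\log(1+p_m)$ one obtains
\[
\Log(1+y) = \sum_{m\geq 1}\tfrac{\mu(m)}{m}\,\log\!\bigl(1+\psi_m(y)\bigr).
\]
These identities make sense in $\widehat{\Lambda}\otimes\mathbf{Q}$ because $x,y \in \widehat{\Lambda}_0$, so every power series converges arity-by-arity.

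Next, I apply the first formula with $x$ replaced by $x\cdot\Log(1+y)$. Since each $\psi_k=p_k\circ(-)$ is a ring homomorphism (property (3)) and satisfies $\psi_k\circ\psi_m = \psi_{km}$ (immediate from $p_k\circ p_m = p_{km}$, \S\ref{adams op}), I get
\[
\psi_k\bigl(x\cdot\Log(1+y)\bigr) = \psi_k(x)\cdot\sum_{m\geq 1}\tfrac{\mu(m)}{m}\,\log\!\bigl(1+\psi_{km}(y)\bigr).
\]
Substituting into the formula for $\Exp$, the left-hand side of the proposition becomes
\[
\exp\!\Biggl(\sum_{k,m\geq 1}\frac{\mu(m)\,\psi_k(x)}{km}\,\log\!\bigl(1+\psi_{km}(y)\bigr)\Biggr).
\]

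Now comes the change of variables: set $n = km$ and $d = k$, so $m = n/d$ and $d\mid n$. The double sum reorganizes as
\[
\sum_{n\geq 1}\log\!\bigl(1+\psi_n(y)\bigr)\cdot\frac{1}{n}\sum_{d\mid n}\mu(n/d)\,\psi_d(x).
\]
Exponentiating termwise yields exactly the product on the right-hand side of the proposition (interpreting each factor $(1+\psi_n(y))^{\alpha}$ as $\exp(\alpha\log(1+\psi_n(y)))$, i.e.\ via the binomial series).

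The main thing to check carefully is that all these rearrangements are legal in $\widehat{\Lambda}$: since $y\in\widehat{\Lambda}_0$ one has $\psi_n(y)\in\widehat{\Lambda}_0$ for all $n$, so $\log(1+\psi_n(y))$ lies in $\widehat{\Lambda}_0$ and all infinite sums and products converge with respect to the arity grading. Also $x\cdot\Log(1+y)\in\widehat{\Lambda}_0$, so $\Exp$ is defined on it. The potential obstacle is justifying the double-sum interchange; this is routine because for any fixed arity $N$ only finitely many pairs $(k,m)$ contribute, namely those with $km\le N$.
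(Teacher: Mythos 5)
Your proof is correct and follows essentially the same route as the paper: both expand $\Exp$ and $\Log$ via power sums, compute $\psi_k$ of the product using multiplicativity and $\psi_k\psi_l=\psi_{kl}$, and reindex the double sum over $(k,l)$ by $n=kl$ and $d=k$ to produce the M\"obius exponent. (One small slip: you write ``$x,y\in\widehat{\Lambda}_0$'' when the hypothesis is only $y\in\widehat{\Lambda}_0$ and $x\in\widehat{\Lambda}$, but this is harmless since what you actually need — that $x\cdot\Log(1+y)\in\widehat{\Lambda}_0$ — follows from $y\in\widehat{\Lambda}_0$ alone, as you note.)
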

\begin{proof} As in the proof of \cref{inverse identity}, we observe that  
\begin{align*}
	\Big(\sum_{k\geq 1} \frac{p_k}{k} \Big) \circ \Big(x \sum_{l \geq 1} \frac{\mu(l)} l \log(1+\psi_l(y))\Big) = & \sum_{k,l \geq 1} \frac{\mu(l)\psi_k(x)}{kl}\log(1+\psi_{kl}(y)) \\
	= & \sum_{n=1}^\infty \frac{1}{n} \sum_{d \mid n} \mu(n/ d) \psi_d(x) \log(1+\psi_n(y)),
\end{align*}
from which the result follows after exponentiating. \end{proof}

\subsection{Graded plethystic algebra}
\label{graded plethystic algebra}
\newcommand{\gr}{\mathsf{gr}}

\begin{para}The definitions of Section \ref{polynomial and analytic functors} have evident analogues for graded vector spaces. By an \emph{analytic functor} between graded vector spaces we mean a functor of the form
	$$ A \mapsto \bigoplus_{n \geq 0} V(n) \otimes_{\Sigma_n} A^{\otimes n}$$
where instead each $V(n)$ is a bounded below and degreewise finite-dimensional vector space. It is a \emph{polynomial functor} if $V(n)=0$ for $n \gg 0$. We write $\Ana^{\gr}$ and $\Poly^\gr$ for the categories of analytic and polynomial functors between graded vector spaces. As before, these are equivalent to categories of symmetric sequences and bounded symmetric sequences, respectively, in the category of degreewise finite-dimensional bounded below graded vector spaces.
\end{para}

\begin{para}\label{K0 iso} Define $\Lambda^\gr = \Lambda\otimes \Z(\!(z)\!)$. There is an isomorphism 
	$$ K_0(\Poly^\gr) \cong \Lambda^\gr$$
	generalizing the isomorphism $K_0(\Poly) \cong \Lambda$, defined as follows: if $(V(n))_{n \geq 0}$ is a symmetric sequence in graded vector spaces, with $V(n) = \bigoplus_{k \in \Z} V(n)_k$, then its class in the Grothendieck group is
	$$ [V] = \sum_n \sum_k [V(n)_k] \cdot (-z)^k $$
	where $[V(n)_k]$ denotes the class of the representation $V(n)_k$ in $\Lambda_n$. 
\end{para}

\begin{para}
	The plethysm operation $\circ$ on $\Lambda$ extends uniquely to an operation on $\Lambda^\gr$ satisfying the identities of \S\ref{plethysm identities}, that plethysm is $\Z(\!(z)\!)$-linear in the first variable, that 
	$$ h_n \circ z = z^n$$
	for all $n \geq 0$, and continuity for the $z$-adic topology. 
\end{para}

\begin{para}\label{composition law}
	The composition of two polynomial functors of graded vector spaces is again a polynomial functor. For $F, G \in \Poly^\gr$ there is an identity
	$$ [F \circ G] = [F] \circ [G]$$
	in $\Lambda^\gr = K_0(\Poly^\gr)$. 
\end{para}
\begin{para}
We write 
$$ \widehat \Lambda^\gr = \prod_{n=0}^\infty \Lambda_n \otimes \Z(\!(z)\!).$$
(The notation $\widehat{\Lambda^\gr}$ would be more logical, but leads to unpleasant typesetting.) The isomorphism $K_0(\Poly^\gr) \cong \Lambda^\gr$ extends to an isomorphism $K_0(\Ana^\gr) \cong \widehat{\Lambda}^\gr$. 
\end{para}
\begin{para}

For $F$ and $G$ analytic functors between ungraded vector spaces, we saw that $F \circ G$ was analytic if $G(0)=0$. In the graded setting, we may relax this condition: $F \circ G$ is again analytic when $G(0)$ is concentrated in strictly positive degrees. We write
$$ \widehat \Lambda^\gr_0 = z \Z[[z]] \oplus \prod_{n=1}^\infty \Lambda_n \otimes \Z(\!(z)\!) \subset \widehat \Lambda^\gr.$$
The plethysm operation on $\Lambda^\gr$ extends to a well-defined operation
$$ \circ : \widehat\Lambda^\gr \times \widehat\Lambda^\gr_0 \to \widehat\Lambda^\gr $$
satisfying the same  algebraic identities as before. If $F$ and $G$ are analytic functors and $G(0)$ is concentrated in positive degrees, then 
$$ [F \circ G] = [F] \circ [G].$$
\end{para}
\begin{para}We continue to denote by $\Exp : \widehat \Lambda^\gr_0 \to 1+\widehat \Lambda^\gr_0$ and $\Log : 1+\widehat \Lambda^\gr_0 \to \widehat \Lambda^\gr_0$ the evident extensions of the plethystic exponential and logarithm. 
\end{para}

\begin{para}\label{sign rule 1}We have implicitly assumed in this section that the Koszul sign rule is applied in the category of graded vector spaces; that is, if $V$ and $W$ are graded vector spaces, then the swap isomorphism $V \otimes W \cong W \otimes V$ is given by the formula$$ v \otimes w \mapsto (-1)^{\vert v \vert \vert w \vert} w \otimes v .$$
	In particular, the equality $[F \circ G] = [F] \circ [G]$ of \S\ref{composition law} is only valid when the Koszul sign rule is in effect. The reason that the formulas are sensitive to this convention is that the composition product of symmetric sequences (\S\ref{composition product}) makes use of the natural action of $\Sigma_k$ on $W^{\otimes k}$, which therefore depends on the choice of symmetric monoidal structure. If we had instead considered the category of graded vector spaces to have the ``ungraded'' symmetric monoidal structure, then the results of this section (including the equality $[F \circ G] = [F] \circ [G]$) would remain valid if the isomorphism of \cref{K0 iso} were instead defined by the formula 
		$$ [V] = \sum_n \sum_k [V(n)_k] \cdot z^k. $$
		That is, the difference between the Koszul sign rule and the ungraded symmetry is given by a global change of variables $z \mapsto -z$. Informally, terms appearing with a minus sign are treated as living in ``odd'' degree; removing the Koszul rule amounts to treating all degrees as being even, hence removing all minus signs. 
\end{para}

\subsection{A primer on \texorpdfstring{$\lambda$}{lambda}-rings}

\begin{defn}\label{lambda-ring definition}
	Let $R$ be a commutative ring. A \emph{$\lambda$-ring structure on $R$} consists of a map
	\begin{align*}
		\Lambda \times R &\to R \\
		(f,x) &\mapsto f \circ x 
	\end{align*}
	satisfying the axioms
	\begin{enumerate}[label=$\Lambda$\arabic*.]
		\item For all $f, g \in \Lambda$ and $x \in R$, $(f \circ g) \circ x = f \circ (g \circ x)$.
		\item For all $x \in R$, $h_1 \circ x = x$. 
		\item For all $x \in R$, $- \circ x$ is a ring homomorphism $\Lambda \to R$.  
		\item For all $x, y \in R$ and $f \in \Lambda$, $f \circ (x+y) = \sum (f^\Delta_{(1)} \circ x)(f^\Delta_{(2)} \circ y)$. 
		\item For all $x, y \in R$ and $f \in \Lambda$, $f \circ (xy) = \sum (f_{(1)}^\delta \circ x)(f_{(2)}^\delta \circ y)$.
	\end{enumerate}
 Compare with \S\ref{plethysm identities}.
\end{defn}

\begin{rem}The most common definition of a $\lambda$-ring is in terms of operations $\lambda_n \colon R \to R$. This definition is related to ours via $\lambda_n(x) = e_n \circ x$. We prefer the definition given here since it relates more directly to the role that $\lambda$-rings play in this paper: we want to think of $\lambda$-rings as rings on which $\Lambda$ acts by plethystic substitution, rather than rings equipped with a notion of exterior powers. Indeed, axioms $\Lambda$1 and $\Lambda$2 say precisely that $R$ is a left module over the monoid $(\Lambda, \circ, h_1)$, and the remaining axioms say that the action of $\Lambda$ on $R$ satisfies the same compatibilities with addition and multiplication as the usual plethysm operation on $\Lambda$.
\end{rem}

\begin{rem}
    More generally, for any plethory $P$ one can define a notion of $P$-ring, such that when $P=\Lambda$ one recovers the notion of $\lambda$-ring.
\end{rem}

\begin{rem}Some authors make a distinction between ``$\lambda$-rings'' and ``special $\lambda$-rings''. The previous definition describes what these authors would call a ``special'' $\lambda$-ring. We will only be interested in special $\lambda$-rings, and drop the extra adjective for brevity. \end{rem}

\begin{examplex}Let us mention some standard examples of $\lambda$-rings: 
	\begin{enumerate}[label=(\arabic*a)]
		\item The ring of integers $\Z$ has a unique $\lambda$-ring structure, for which $e_n \circ x = \binom x n$.
		\item The universal example of a $\lambda$-ring is $\Lambda$, acting on itself by plethysm.  
		\item The polynomial ring $\Z[z]$ has a standard $\lambda$-ring structure characterized by $h_n \circ z = z^n$. 
		\item Let $X$ be a topological space. The topological $K$-theory $K(X)$ has a natural $\lambda$-ring-structure.
		\item Let $G$ be a group. The ring of virtual representations $R(G)$ is naturally a $\lambda$-ring.
		\end{enumerate}
Each of these examples arises as the Grothendieck group of a category. These categories are, respectively:
	\begin{enumerate}[label=(\arabic*b)]
		\item The category $\mathsf{Vect}$ of finite dimensional $\Q$-vector spaces. 
		\item The category $\mathsf{Poly}$ of polynomial functors $\mathsf{Vect}\to\mathsf{Vect}$.
		\item The category of finite dimensional nonnegatively graded  vector spaces. If $V = \bigoplus_{n\geq 0} V_n$ is such a vector space, its class $[V]$ in the Grothendieck group is the polynomial $\sum_{n \geq 0} (-1)^n (\dim V_n) z^n$.
		\item The category of finite rank vector bundles over $X$.
		\item The category of finite dimensional representations of $G$.
	\end{enumerate}
\end{examplex}

\begin{para}
	In each of the preceding examples, the $\lambda$-ring structure on $K_0$ arises from the fact that Schur functors are defined in the respective category; the $\lambda$-ring structure is characterized by $[S^\lambda(M)] = s_\lambda \circ [M]$, for $M$ an object of the respective category. More generally, the formula for Schur functors
	$$ S^\lambda(M) := s_\lambda \otimes_{\Sigma_n} M^{\otimes n}, $$
	for $\lambda \vdash n$, is meaningful for $M$ in any symmetric monoidal $\Q$-linear pseudo-abelian category $\mathsf R$. Baez--Moeller--Trimble  \cite{baezmoellertrimble} refer to this notion as a \emph{2-rig}, and Getzler \cite{mixedhodge} calls it a \emph{Karoubian rring} [sic]. If $\mathsf R$ is any $2$-rig then there is a natural structure of $\lambda$-ring on $K_0(\mathsf R)$.
\end{para}

\begin{para}\label{sign rule 2}
	We can now also explain why the ``recipe'' of \S\ref{sign rule 1} works. Consider the $2$-rig $\mathsf R$  of degreewise finite dimensional bounded below graded vector spaces. The point is then that there are isomorphisms of $\lambda$-rings $K_0(\mathsf R) \cong \Z(\!(z)\!)$ --- where $\Z(\!(z)\!)$ carries its standard $\lambda$-ring structure with $\sigma_n(z)=z^n$ for all $n \geq 0$ --- for \emph{both} natural symmetric monoidal structures on $\mathsf R$, and the two isomorphisms differ by $z \mapsto -z$. 
 
 Moreover, for any 2-rig $\mathsf R$, we may consider the category $\Poly_{\mathsf R}$ of polynomial functors $\mathsf R \to \mathsf R$ (bounded symmetric sequences in $\mathsf R$). This category is again a $2$-rig, and there is an isomorphism of $\lambda$-rings
	$$ K_0(\mathsf{Poly}_\mathsf{R}) \cong K_0(\Poly) \otimes K_0(\mathsf R).$$ 
	This is even an isomorphism of plethories, using that if $P$ is a plethory and $A$ is a $P$-ring, then $P \otimes A$ is naturally a plethory. When $\mathsf R$ is the category of degreewise finite dimensional bounded below graded vector spaces, we recover the $\lambda$-ring structure of $\Lambda^\gr$.
\end{para}

\begin{para}
	The plethystic logarithm and exponential can be defined more generally for \emph{complete filtered} $\lambda$-rings. 
\end{para}

\begin{defn}
	A \emph{graded $\lambda$-ring} is a $\lambda$-ring $R$ equipped with a decomposition $R=\bigoplus_{p\in\Z}R_p$ such that\begin{enumerate}
		\item $R_p \cdot R_q \subseteq R_{p+q} $,
		\item if $f \in \Lambda_n$ and $x \in R_p$, then $f \circ x \in R_{np}$. 
	\end{enumerate}
\end{defn}

\begin{defn}\label{defn:filtered lambda}
	A \emph{filtered $\lambda$-ring} is a $\lambda$-ring $R$ equipped with a descending filtration $\ldots \supseteq F^p R \supseteq F^{p+1}R \supseteq \ldots$ as a module, such that:
	\begin{enumerate}
		\item $F^p R \cdot F^q R \subseteq F^{p+q}R $,
		\item if $f \in \Lambda_n$ and $x \in F^pR$, then $f \circ x \in F^{np}R$. 
	\end{enumerate}
Note that every graded $\lambda$-ring is filtered by $F^p R= \bigoplus_{q\geq p}R_q$. 
\end{defn}\begin{defn}The \emph{completion} of a filtered $\lambda$-ring $R$ is the filtered $\lambda$-ring $\widehat R = \varprojlim R/F^p R$.
A filtered $\lambda$-ring $R$ is said to be \emph{complete} if the natural map $R \to \widehat R$ is an isomorphism.  
\end{defn}

\begin{prop}
If $R$ is a complete filtered $\lambda$-ring, then the plethystic action $\Lambda \times R\to R$ extends to an action $\widehat \Lambda \times F^1 R \to R$. In particular, $\Exp$ is a bijection $F^1 R \to 1 + F^1 R$, with inverse $\Log$. 
\end{prop}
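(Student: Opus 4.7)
The plan is to define the extension by a convergent sum and then verify the axioms by continuity. For $\hat f = \sum_{n\geq 0} f_n \in \widehat{\Lambda} = \prod_{n\geq 0}\Lambda_n$ (with $f_n \in \Lambda_n$) and $x \in F^1 R$, I would set
\[ \hat f \circ x \;:=\; \sum_{n=0}^\infty f_n \circ x. \]
This converges in $R$: axiom (2) of \cref{defn:filtered lambda} gives $f_n \circ x \in F^n R$, so the partial sums form a Cauchy sequence with respect to the filtration and converge by completeness. The extension agrees with the original action on $\Lambda \times F^1 R$, since only finitely many $f_n$ are nonzero there.

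Next, I would check that the axioms $\Lambda 1$--$\Lambda 5$ of \cref{lambda-ring definition} continue to hold for the extension, by reducing to the case of $\hat f \in \Lambda$ via density. For this I need continuity of the extended operation in both variables. In the first variable this is immediate: truncating $\hat f$ to arity less than $N$ changes $\hat f \circ x$ by an element of $F^N R$. For the second variable, I would expand $f_m \circ (y+z)$ via axiom $\Lambda 4$; every ``cross term'' contains a factor $(f_m)^\Delta_{(j)} \circ z$ of positive arity acting on $z$, so for fixed $f_m \in \Lambda_m$ one has $f_m \circ (y+z) - f_m \circ y \in F^p R$ whenever $z \in F^p R$. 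The hard part will be associativity $\Lambda 1$: for $\hat f \in \widehat{\Lambda}$ and $\hat g \in \widehat{\Lambda}_0$, the identity $(\hat f \circ \hat g) \circ x = \hat f \circ (\hat g \circ x)$ involves the plethysm $\hat f \circ \hat g \in \widehat{\Lambda}$ on the left (which is defined per \S\ref{plethysm undefined 2}) and two nested actions on the right. Truncating $\hat f$ and $\hat g$ to arity less than $N$ reduces both sides, modulo $F^N R$, to the original axiom $\Lambda 1$ in $\Lambda$; taking $N \to \infty$ and using completeness gives the general case.

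For the ``In particular'' statement, I would specialize: since $E \in \widehat{\Lambda}$ and $L \in \widehat{\Lambda}_0$, the expressions $\Exp(x) = E \circ x = 1 + (E-1)\circ x \in 1 + F^1 R$ and $\Log(1+y) = L \circ y \in F^1 R$ are both well-defined for $x,y \in F^1 R$. Using \cref{inverse identity} together with the extended associativity,
\[ \Log(\Exp(x)) \;=\; L \circ \bigl((E-1)\circ x\bigr) \;=\; \bigl(L \circ (E-1)\bigr) \circ x \;=\; h_1 \circ x \;=\; x, \]
where the last equality uses axiom $\Lambda 2$; a symmetric computation gives $\Exp(\Log(1+y)) = 1+y$, so $\Exp$ and $\Log$ are inverse bijections. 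The main obstacle in the entire argument is the rigorous verification of the extended associativity with two independent filtrations in play, but once continuity in each variable is in place this is a routine density argument.
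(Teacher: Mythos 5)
The paper states this proposition without proof, so there is no internal argument to compare against; the paper treats the extension by continuity as routine. Your approach is exactly the standard one and is correct: define $\hat f \circ x = \sum_n f_n\circ x$, note that $f_n \circ x \in F^nR$ forces convergence, and then verify the $\lambda$-ring axioms and the associativity $(\hat f \circ \hat g)\circ x = \hat f \circ(\hat g\circ x)$ (for $\hat g \in \widehat\Lambda_0$) by reduction modulo $F^N R$. The one point you state informally but should make explicit is that $\hat f \circ \hat g$ and $f_{<N}\circ g_{<N}$ agree in every arity below $N$: this uses that $\hat g$ has no arity-$0$ component, so the arity-$n$ part of the composition product only sees $f_k$ with $k\le n$ and $g_j$ with $1\le j\le n$; this is the precise reason the left side of the associativity equation is well-approximated by truncation. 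Your continuity-in-the-second-variable argument via the coproduct is also fine, though it implicitly uses that $F^pR$ is an ideal (i.e.\ $R\cdot F^pR\subseteq F^pR$), which follows from axiom (1) of \cref{defn:filtered lambda} once one grants $F^0R=R$; worth saying so since the definition there does not spell it out. With those two clarifications, the argument is complete, and the derivation of the $\Exp$/$\Log$ bijection from \cref{inverse identity} and the extended associativity is exactly the intended one.
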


\begin{examplex}
	The $\lambda$-ring $\Lambda$ is graded by the usual arity grading $\Lambda = \bigoplus_n \Lambda_n$. Its completion is $\widehat \Lambda$.  
\end{examplex}

\begin{examplex}
	The $\lambda$-ring $\Lambda^\gr = \Lambda \otimes \Z(\!(z)\!)$ has two natural gradings: one by arity, and one by the degree with respect to $z$. The ring $\widehat{\Lambda}^\gr$ is obtained by completing with respect to both gradings.
\end{examplex}

\begin{examplex}
	Define an increasing filtration on $\Lambda$ by $F_p \Lambda = \mathrm{span} \{s_\lambda : \length(\lambda) \leq p\}$. The corresponding descending filtration $F^p\Lambda=F_{-p}\Lambda$ makes $\Lambda$ into a filtered $\lambda$-ring. Indeed, condition (1) in \cref{defn:filtered lambda} is satisfied by \cite[Lemma 8.3]{bergstromminabe}. Condition (2) follows as well from this, and the representation-theoretic interpretation of plethysm: if $x \in F_{p} \Lambda$, then $x$ corresponds to a virtual representation all of whose constituents are indexed by partitions of length $\leq p$, and if $f \in \Lambda_n$ then $f \circ x$ is a sum of retracts of $n$-fold tensor products of such representations. 
\end{examplex}

\begin{examplex}
	Applying the conjugation automorphism $\omega$, we see that $F_p\Lambda = \mathrm{span}\{s_\lambda : \lambda_1 \leq p\}$ also defines a filtered $\lambda$-ring structure on $\Lambda$. 
\end{examplex}

\begin{prop}\label{plehystic ineq 0} Let $R$ be a $\lambda$-ring equipped with both a grading and a filtration. For any real number $c$ consider 
	$$ R(c) := \bigoplus_{\substack{p,n \\ p \geq cn}} F^p R_n.$$
	Then $R(c)$ is a sub-$\lambda$-ring of $R$. 
\end{prop}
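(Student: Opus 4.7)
My plan is to verify separately that $R(c)$ is closed under the ring operations and under the plethystic action of $\Lambda$. Closure under addition is immediate from the direct sum description. For multiplication, given homogeneous $x \in F^p R_n$ and $y \in F^q R_m$ with $p \geq cn$ and $q \geq cm$, condition (1) in both \cref{defn:filtered lambda} and the definition of a graded $\lambda$-ring together place $xy$ in $F^{p+q} R_{n+m}$; the inequality $p + q \geq c(n+m)$ then shows $xy \in R(c)$. Since $R$ has the constant $1 \in F^0 R_0$ and $0 \geq c \cdot 0$, the unit lies in $R(c)$.

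For closure under the plethystic action, I first reduce to arity-homogeneous $f$: since $- \circ x$ is a ring homomorphism $\Lambda \to R$ by axiom $\Lambda 3$, it is additive, and any $f \in \Lambda$ is a finite sum of pieces $f_k \in \Lambda_k$, so it suffices to show $f_k \circ x \in R(c)$ for every $k$. Next I induct on the number of nonzero graded components of $x$. In the base case, $x \in F^p R_n$ sits in a single graded piece with $p \geq cn$; axiom (2) of both the graded and filtered $\lambda$-ring structures then places $f_k \circ x$ in $F^{kp} R_{kn}$, and $kp \geq c(kn)$ gives $f_k \circ x \in R(c)$.

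The inductive step handles $x = x_1 + y$ by axiom $\Lambda 4$, which writes
\[ f_k \circ (x_1 + y) = \sum (f^\Delta_{k,(1)} \circ x_1)(f^\Delta_{k,(2)} \circ y). \]
After decomposing $\Delta(f_k) \in \bigoplus_{i+j=k} \Lambda_i \otimes \Lambda_j$ into arity-homogeneous pieces, each factor in each summand lies in $R(c)$ (for $x_1$ by the base case, for $y$ by the inductive hypothesis), and then closure under multiplication finishes the step.

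I do not expect any serious obstacle: the entire argument is a direct verification. The essential combinatorial content is just that the condition $p \geq cn$ is preserved by the two structural inequalities $p_1 + p_2 \geq c(n_1 + n_2)$ (from $p_i \geq cn_i$) and $kp \geq c(kn)$, which are the compatibilities needed for products and plethysms respectively; axiom $\Lambda 4$ then propagates the condition across sums of non-homogeneous elements.
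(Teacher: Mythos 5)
Your proof is correct and is simply a careful write-out of the direct verification that the paper dismisses as ``immediate from the definitions,'' so it follows the same approach. The only point worth singling out is the reduction for non-homogeneous $x$ via axiom $\Lambda 4$ together with the fact that $\Delta$ preserves arity, which you correctly identify and handle.
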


\begin{proof}
	Immediate from the definitions. 
\end{proof}

\begin{para} We give two examples illustrating the preceding proposition, by considering the gradings and filtrations just defined. 
\end{para}

\begin{cor}\label{plethystic ineq 1}
	Let $f, g \in \Lambda$. Suppose that every Schur polynomial $s_\lambda$ occurring with nonzero coefficient in $f$ or $g$ satisfies the inequality $\length(\lambda) \leq c\vert\lambda\vert$, for some constant $c$. Then $f+g$ and $fg$ have the same property, and so does $h \circ f$, for any $h \in \Lambda$.  
\end{cor}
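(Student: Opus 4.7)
\emph{Plan of proof.} This corollary is essentially a direct application of Proposition~\ref{plehystic ineq 0} to the ring $\Lambda$, and the plan is simply to set things up so that the proposition's hypotheses are met. The first step is to equip $\Lambda$ with the arity grading $\Lambda = \bigoplus_n \Lambda_n$ and with the descending filtration $F^p\Lambda = F_{-p}\Lambda = \mathrm{span}\{s_\lambda : \ell(\lambda) \leq -p\}$ coming from the example stated immediately before the corollary. The second step is to compare the resulting sub-$\lambda$-ring $R(c')$ from the proposition, for a suitable real number $c'$, with the set of elements described in the hypothesis of the corollary.

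Concretely, I will take $c' = -c$. Then
\[
R(-c) \;=\; \bigoplus_{\substack{n,\, p \\ p \geq -cn}} F^p \Lambda_n.
\]
A Schur polynomial $s_\lambda$ lies in $F^p\Lambda_n$ precisely when $|\lambda|=n$ and $\ell(\lambda)\leq -p$. Combining this with the constraint $p\geq -cn = -c|\lambda|$, such a value of $p$ exists if and only if $\ell(\lambda)\leq c|\lambda|$. Consequently
\[
R(-c) \;=\; \mathrm{span}\{s_\lambda : \ell(\lambda) \leq c|\lambda|\},
\]
which is exactly the set of symmetric functions satisfying the hypothesis of the corollary.

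Once this identification is made, all three closure claims are immediate from the statement of Proposition~\ref{plehystic ineq 0} that $R(-c)$ is a sub-$\lambda$-ring of $\Lambda$: closure under addition and multiplication handles $f+g$ and $fg$, and closure under the plethystic action of $\Lambda$ gives $h \circ f \in R(-c)$ for $h$ of pure arity. For general $h \in \Lambda$, decomposing $h = \sum_n h_n$ by arity and using that $-\circ f$ is a ring homomorphism (in particular additive) by axiom $\Lambda 3$ of Definition~\ref{lambda-ring definition} reduces to the homogeneous case. I do not foresee any substantial obstacle; the only point needing care is the sign flip between the natural ascending ``length'' filtration $F_p\Lambda$ and the descending filtration $F^p\Lambda$ demanded by Proposition~\ref{plehystic ineq 0}, which is absorbed by the substitution $c' = -c$.
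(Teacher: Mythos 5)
Your proof is correct and follows the route the paper intends: equip $\Lambda$ with the arity grading and the length filtration $F^p\Lambda=F_{-p}\Lambda$ described just above the corollary, observe that the resulting $R(-c)$ from \cref{plehystic ineq 0} is exactly the span of the $s_\lambda$ with $\ell(\lambda)\leq c|\lambda|$, and read off the three closure claims. One small remark: your final step, decomposing $h$ into homogeneous arity pieces, is superfluous --- the conclusion of \cref{plehystic ineq 0} already says $R(-c)$ is a \emph{sub-$\lambda$-ring}, which by definition means it is closed under the plethystic action of all of $\Lambda$, not merely its homogeneous elements.
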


\begin{cor}\label{plethystic ineq 2}
	Let $f, g \in \Lambda^\gr$. Suppose that every monomial $z^k s_\lambda$ occurring with nonzero coefficient in $f$ or $g$ satisfies the inequality $\lambda_1 \leq ck$, for some constant $c$. Then $f+g$ and $fg$ have the same property, and so does $h \circ f$, for any $h \in \Lambda$.  
\end{cor}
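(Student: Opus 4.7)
The plan is to deduce \cref{plethystic ineq 2} as a direct application of \cref{plehystic ineq 0}, in exact parallel to \cref{plethystic ineq 1}. The roles of the arity grading and the $\ell(\lambda)$-filtration on $\Lambda$ used in the proof of \cref{plethystic ineq 1} will here be played by the $z$-grading on $\Lambda^{\gr}$ (so $R_n = z^n \Lambda \subseteq \Lambda^{\gr}$) and the decreasing filtration
\[F^p \Lambda^{\gr} := \mathrm{span}\{z^k s_\lambda : \lambda_1 \leq -p\},\]
obtained by tensoring with $\Z(\!(z)\!)$ the filtered $\lambda$-ring structure on $\Lambda$ from the second filtration example preceding \cref{plehystic ineq 0} (the $\omega$-image of the $\ell(\lambda)$-filtration).

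The first step is to verify that these two structures make $\Lambda^{\gr}$ into a graded and filtered $\lambda$-ring. For the grading, I will write any $f \in \Lambda_n$ as a polynomial in power sums $p_1, p_2, \ldots$ and use the identity $f \circ x = f(\psi_1(x), \psi_2(x), \ldots)$; since $\psi_i(z) = z^i$ (inductively from $h_n \circ z = z^n$ and Newton's identities), each $\psi_i$ multiplies $z$-degree by $i$, so $f$ sends $R_p$ into $R_{np}$. For the filtration, the product axiom follows from the Littlewood--Richardson bound $\nu_1 \leq \lambda_1 + \mu_1$ on Schur terms appearing in $s_\lambda s_\mu$, and the plethystic axiom reduces to the analogous axiom on $\Lambda$ once one notes that each Adams operation $\psi_i$ carries $F^p \Lambda^{\gr}$ into $F^{ip} \Lambda^{\gr}$.

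Second, applying \cref{plehystic ineq 0} with the constant $-c$ in place of $c$ gives that
\[R(-c) = \bigoplus_{\substack{p,\, n \\ p \geq -cn}} F^p R_n \subseteq \Lambda^{\gr}\]
is a sub-$\lambda$-ring. A monomial $z^n s_\lambda$ of $z$-degree $n$ lies in $F^p R_n$ exactly when $p \leq -\lambda_1$, hence in $R(-c)$ precisely when $\lambda_1 \leq cn$ --- which is the condition in the statement. The three claims --- closure of this set under $+$, under $\cdot$, and under plethystic action by any $h \in \Lambda$ --- are then simply a rephrasing of the fact that $R(-c)$ is a sub-$\lambda$-ring. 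I expect the only genuinely nontrivial step to be the verification of the plethystic axiom for the $\lambda_1$-filtration on $\Lambda^{\gr}$, but this will transfer cleanly from the corresponding property on $\Lambda$ by the decomposition of plethysm into Adams operations together with the fact that each $\psi_i$ acts on $z$ by $z \mapsto z^i$.
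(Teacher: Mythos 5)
Your proof is correct and follows exactly the approach the paper intends: apply \cref{plehystic ineq 0} to $\Lambda^{\gr}$ equipped with the $z$-degree grading and the $\lambda_1$-filtration (the $\omega$-transform of the $\ell(\lambda)$-filtration), using the constant $-c$. The paper leaves both \cref{plethystic ineq 1} and \cref{plethystic ineq 2} as unproved illustrations of the proposition, so your explicit verification that these structures make $\Lambda^{\gr}$ a graded and filtered $\lambda$-ring simply supplies detail the paper omits.
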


\subsection{Schur--Weyl duality, Weyl's construction, and symplectic Schur functions}

\begin{para}
    The symmetric function identity
\begin{equation} \label{eq:cauchy identity}
    \prod_{i,j} (1-x_i y_j)^{-1} = \sum_\lambda s_\lambda(x)s_\lambda(y) \in (\Lambda \otimes \Lambda)^\wedge
\end{equation} 
is called the \emph{Cauchy identity}. One conceptual interpretation of the Cauchy identity is that it is a decategorification of the decomposition of the symmetric algebra of a tensor product:
$$ \mathrm{Sym}(V \otimes W) \cong \bigoplus_\lambda S^\lambda(V) \otimes S^\lambda(W).$$
Indeed, if $S \in \mathrm{GL}(V)$ has eigenvalues $x_1,x_2,\ldots$ and $T \in \mathrm{GL}(W)$ has eigenvalues $y_1,y_2,\ldots$, then it is clear that the trace of $S \times T$ acting on $\mathrm{Sym}(V \otimes W)$ is the left-hand side of \eqref{eq:cauchy identity}, and the trace on $\bigoplus_\lambda S^\lambda(V) \otimes S^\lambda(W)$ is the right-hand side of \eqref{eq:cauchy identity}. The Cauchy identity can also be seen as an avatar of the Peter--Weyl theorem, or the Robinson--Schensted--Knuth correspondence. 
\end{para}

\begin{para}\label{dual cauchy}
    Applying the automorphism $\omega$ to one of the two factors of $\Lambda$ gives the \emph{dual Cauchy identity}:
    \[ \prod_{i,j} (1+x_i y_j) = \sum_\lambda s_\lambda(x)s_{\lambda'}(y).\]
\end{para}

\begin{para}

The group algebra $\Q[\Sigma_n]$ has commuting left and right actions by $\Sigma_n$. Since $\Q$ is a splitting field for $\Sigma_n$, the Peter--Weyl theorem for finite groups tells us how to decompose $\Q[\Sigma_n]$ into irreducibles under these commuting left and right actions: there is a canonical isomorphism
$$ \Q[\Sigma_n] \cong \bigoplus_{\lambda \vdash n}  \sigma_\lambda \otimes \sigma_\lambda^\vee.$$
Every representation of $\Sigma_n$ is self-dual, and we could have written the summands above instead as $\sigma_\lambda \otimes \sigma_\lambda$, but we prefer to keep the dual to signify handedness: we are considering $\sigma_\lambda$ as a left $\Sigma_n$-module, and $\sigma_\lambda^\vee$ as a right $\Sigma_n$-module. 
\end{para}\begin{para}
If $V$ is any vector space (or graded vector space) then the tensor power $V^{\otimes n}$ carries commuting left and right actions by $\GL(V)$ and $\Sigma_n$. The decomposition of the group algebra of $\Q[\Sigma_n]$ implies a decomposition of $V^{\otimes n}$ into Schur functors applied to $V$:
$$ V^{\otimes n} \cong V^{\otimes n} \otimes_{\Sigma_n} \Q[\Sigma_n] \cong \bigoplus_{\lambda \vdash n} S^\lambda(V) \otimes \sigma_\lambda^\vee.$$
In particular, the decomposition of $V^{\otimes n}$ into irreducible representations of $\Sigma_n$ completely determines the decomposition into irreducible representations of $\GL(V)$, and vice versa. This statement, and the above explicit decomposition of $V^{\otimes n}$, is known as \emph{Schur--Weyl duality}. 

\end{para}\begin{para}\label{weyl construction}
Now suppose that $V$ is a symplectic vector space of dimension $2g$. Contracting with the symplectic form defines a map $V \otimes V \to \Q$. We define 
$$ V^{\langle n \rangle} := \mathrm{Ker}\Big(V^{\otimes n} \to \bigoplus_{\binom n 2} V^{\otimes (n-2)}\Big).$$ 
Clearly the subspace $V^{\langle n \rangle} \subset V^{\otimes n}$ is $\mathrm{Sp}(V)$-invariant. \emph{Weyl's construction of the irreducible representations of the symplectic group} can be formulated as the assertion that there are unique subrepresentations $V_\lambda \subset S^\lambda(V)$ of $\mathrm{Sp}(V)$ such that 
$$ V^{\langle n \rangle} = \bigoplus_{\lambda \vdash n} V_\lambda \otimes \sigma_\lambda^\vee \subset \bigoplus_{\lambda \vdash n} S^\lambda(V) \otimes \sigma_\lambda^\vee = V^{\otimes n},$$
that $V_\lambda \neq 0$ if and only if $g \geq \length(\lambda)$, and that the set of representations $\{V_\lambda\}$ for $\lambda = (\lambda_1 \geq \lambda_2 \geq \ldots \geq \lambda_g \geq 0)$ form a complete set of pairwise distinct irreducible representations of $\mathrm{Sp}(V)$. 

\end{para}
\begin{defn}\label{symplectic cauchy}We define the family of \emph{symplectic Schur polynomials} $s_{\langle\lambda\rangle} \in \Lambda$, where $\lambda$ is a partition, as the unique family of elements satisfying the equation  $$ \Exp(e_2(y))\sum_\lambda s_{\langle \lambda \rangle} (x)s_\lambda(y) = \sum_\lambda s_\lambda(x)s_\lambda(y).$$

\end{defn}

\begin{para}The functions $s_{\langle \lambda\rangle}$ bear the same relationship to the representations $V_\lambda$ of $\Sp(V)$ as $s_\lambda$ does to the representations $S^\lambda(V)$ of $\GL(V)$. More precisely, the character of the irreducible representation $V_\lambda$ of $\Sp(2g)$ of highest weight $\lambda$ is given by 
$$s_{\langle\lambda\rangle}(x_1,x_1^{-1},\ldots,x_g,x_g^{-1}).$$
We refer to the latter as a \emph{symplectic Schur function}. Thus we attempt to make a distinction between the symplectic Schur polynomial $s_{\langle\lambda\rangle}$ (a polynomial, symmetric in its arguments) and the symplectic Schur function $s_{\langle\lambda\rangle}(x_1^\pm,\ldots,x_g^\pm)$ (a Laurent polynomial, with hyperoctahedral symmetry in its arguments). 
\end{para}

\begin{para}An explicit bialternant-type formula for the symplectic Schur functions is given by the Weyl character formula:\label{wcf}
	\[ s_{\langle\lambda\rangle}(x_1^\pm,\ldots,x_g^\pm) = \frac{\det\big(x^{\lambda_j + g-j+1}_i - x^{-(\lambda_j + g-j+1)}_i\big)_{1 \leq i,j \leq g}}{\det\big(x^{g-j+1}_i - x^{-(g-j+1)}_i\big)_{1 \leq i,j \leq g}}\]
\end{para}

\begin{para}It is clear from \cref{symplectic cauchy} that for all $\lambda$, $$s_\lambda = s_{\langle\lambda\rangle} + \text{lower order terms},$$ where the lower order terms are sums of symplectic Schur polynomials $s_{\langle\mu\rangle}$ with $\vert \mu \vert < \vert \lambda \vert$ (and $\vert \mu \vert \equiv \vert \lambda \vert \pmod 2$). This is also clear from Weyl's construction, and the representation-theoretic interpretation of $s_{\langle\lambda\rangle}$. In particular, the symplectic Schur polynomials are indeed a basis of $\Lambda$ as an abelian group. 
\end{para}

\begin{para}\label{no symplectic cauchy} It is interesting to specialize the ``symplectic Cauchy identity''
$$ \prod_{i,j} (1-x_i y_j)^{-1} = \Exp(e_2(y)) \sum_\lambda s_{\langle \lambda\rangle}(x) s_\lambda(y)$$
to finitely many variables. Suppose we set $x_1=t_1$, $x_2=t_1^{-1}$, $x_3=t_2$, \ldots, $x_{2g}=t_g^{-1}$, and $x_i=0$ for $i>2g$. Then for $\length(\lambda)\leq g$, $s_{\langle\lambda\rangle}(x)$ specializes to the symplectic Schur function $s_{\langle\lambda\rangle}(t_1^\pm,\ldots,t_g^\pm)$. Unfortunately, the symplectic Schur polynomials $s_{\langle\lambda\rangle}(x)$ for $\length(\lambda)>g$ specialize in a rather complicated and nontrivial way under this substitution. However, if we specialize the $x$-alphabet as above and we set the variables $y_j=0$ for $j>g$, then we will have $s_\lambda(y)=0$ whenever $\length(\lambda)>g$, and so all terms involving $s_{\langle\lambda\rangle}(x)$ for $\length(\lambda)>g$ vanish. Thus in this case we obtain a useful version of the symplectic Cauchy identity also in finitely many variables: whenever $r \leq g$, we have
$$ \prod_{i=1}^g \prod_{j=1}^r (1-t_i y_j)^{-1} (1-t_i^{-1} y_j)^{-1} = \prod_{1\leq i<j\leq r} (1-y_iy_j)^{-1} \sum_\lambda s_{\langle \lambda\rangle}(t_1^\pm,\ldots, t_g^\pm) s_\lambda(y_1,\ldots, y_r).$$
See also \cite{Su90}. Here we used that $\Exp(e_2(y)) = \prod_{i<j} (1-y_iy_j)^{-1}$; this follows by combining the formulas in \S 5, Example 4(b), and \S 8, Example 6(b), from \cite[Chapter 1]{macdonald}.    By contrast, the ``dual symplectic Cauchy identity''
    \begin{align*}
         \prod_{i,j} (1+x_i y_j) & = \Exp(h_2(y)) \sum_\lambda s_{\langle \lambda\rangle}(x) s_{\lambda'}(y) \\
         & = \prod_{i\leq j} (1-y_i y_j)^{-1} \sum_\lambda s_{\langle \lambda\rangle}(x) s_{\lambda'}(y)
    \end{align*}
    does not admit any simple specialization to finitely many variables in terms of the symplectic Schur functions.
\end{para}

\section{Recollections on moduli of surfaces}

\begin{para}
	In this section we recall background on the homotopical study of moduli spaces of Riemann surfaces.
\end{para}

\subsection{Moduli of surfaces}\label{subsection-moduli of surfaces}
\newcommand{\Diff}{\mathrm{Diff}}

\begin{para}\label{provisional definition}We want to define first of all a space $M_g^n$, which will be the moduli space of smooth oriented genus $g$ surfaces with $n$ boundary components. Let us first give a provisional definition of the space $M_g^n$, and then explain why it is not completely ideal for our purposes. Choose for any $g$ and $n$ a ``reference surface'' $S_{g,n}$, which is a compact smooth oriented surface of genus $g$ with $n$ boundary components. We define
$$ M_g^n := B\Diff_\partial(S_{g,n}),$$
the classifying space of the topological group of self-diffeomorphisms of $S_{g,n}$ restricting to the identity on some collar neighborhood of the boundary components. When $n=0$ we should impose the condition that diffeomorphisms are orientation-preserving.\end{para}

\begin{rem}\label{ag interpretation} The space $M_g^n$ may be thought of algebro-geometrically: it is homotopy equivalent to the analytification of the moduli stack over $\Z$ parametrizing smooth compact curves of genus $g$ equipped with $n$ distinct ordered marked points, and a nonzero tangent vector at each marking. This interpretation will not play any role until the later sections of the paper. 
\end{rem}

\begin{para}\label{desiderata}Let us now indicate some reasons why this definition of $M_g^n$ is not optimal. We would for example like to say that the disjoint union $$\coprod_{g \geq 0} M_g^1$$ carries the structure of a framed $E_2$-algebra in a geometrically natural way: given a little disk with $k$ parametrized holes, and $k$ surfaces with parametrized boundary, we glue the surfaces to the respective holes to get a new surface with a parametrized boundary component. This framed $E_2$-structure was first considered (although perhaps not rigorously justified) by Miller \cite{miller}. More generally, the spaces $\{M_g^n\}$ ought to form a topological modular operad \cite{getzlerkapranov}, meaning that there are gluing operations
$$ M_{g_1}^{n_1+1} \times M_{g_2}^{n_2+1} \to M_{g_1+g_2}^{n_1+n_2}$$
and $$ M_{g}^{n+2} \to M_{g+1}^n$$
satisfying suitable associativity constraints. This modular operad (or rather a closely related PROP) was first considered by Segal \cite{segalCFT}. The framed $E_2$-structure just mentioned should be an instance of this modular operad structure: if $\{P(g,n)\}$ is any topological modular operad, then $\coprod_{g \geq 0} P(g,1)$ forms an algebra over the cyclic operad $\{P(0,n)\}$, via the gluing maps
$$ P(0,n+1) \times P(g_1,1) \times \ldots \times P(g_n,1) \to P(g_1+\ldots+g_n,1).$$
 This should then recover the framed $E_2$-structure on $\coprod_{g \geq 0} M_g^1$ by an identification of the cyclic operad $\{M_0^n\}$ with the framed $E_2$-operad (cf. \cite{kimurastasheffvoronov2}). 
\end{para}
\begin{para}
However, none of the desiderata of \S\ref{desiderata} are satisfied by the spaces $M_g^n$ as defined in \S\ref{provisional definition}: indeed, the gluing map $ M_{g_1}^{n_1+1} \times M_{g_2}^{n_2+1} \to M_{g_1+g_2}^{n_1+n_2}$ depends on the \emph{noncanonical} choice of a diffeomorphism between $S_{g_1+g_2,n_1+n_2}$ and the surface obtained by gluing $S_{g_1,n_1+1}$ and $S_{g_2,n_2+1}$ along a boundary component, and similarly for the other gluing operations considered. This completely destroys the associativity of gluing of surfaces. \end{para}

\begin{para}\label{slightly better definition}
A slightly better definition of the space $M_g^n$ is as follows. Consider the following topologically enriched groupoid $D_{g,n}$: the objects of $D_{g,n}$ are smooth oriented compact surfaces of genus $g$ with $n$ ordered boundary components, each of which is equipped with a germ of a parametrized collar neighborhood. Morphisms are spaces of diffeomorphisms preserving the collars. We then define
$ M_g^n := B D_{g,n}$
to be its classifying space. This definition has the advantage of not relying on any arbitrary choice. 
\end{para}

\begin{para}\label{stack of surface bundles}
    In a different direction, one may replace the topological space with the \emph{differentiable stack} $\mathfrak M_g^n$ parametrizing smooth oriented genus $g$ surface bundles with  $n$ collared boundary components. The stack $\mathfrak M_g^n$ is isomorphic to the classifying stack of $\mathrm{Diff}_\partial(S_{g,n})$, but does not depend on the choice of a reference surface. 
\end{para}

\begin{para}The definitions of \S\ref{slightly better definition} or \S\ref{stack of surface bundles} are philosophically ``correct'', but still have some problems. First of all, the category $D_{g,n}$ is not small, so its classifying space has a proper class of points. Secondly, the collection $\{D_{g,n}\}$ does not form a modular operad in a $1$-category, but in a $2$-category (or rather a $(2,1)$-category) --- that is, gluing of surfaces is not strictly associative, only associative up to a canonical natural equivalence.\footnote{This has nothing to do with topology: even on the level of the underlying sets we do not have associativity. Compare the fact that $(X\sqcup Y) \sqcup Z$ is not strictly equal to $X \sqcup (Y \sqcup Z)$, for sets $X$, $Y$ and $Z$.} This means that the classifying spaces $\{BD_{g,n}\}$ only form a topological modular operad up to coherent homotopy. The same issue arises with \S\ref{stack of surface bundles}, since stacks only form a $(2,1)$-category. One can get around explicitly working with 2-categories by defining a modular operad as in Costello \cite{costello-moduli}, as a strong symmetric monoidal functor out of a category of graphs; via the (monoidal) Grothendieck construction \cite{monoidalgrothendieck}, a modular operad in $\mathsf{Cat}$ or $\mathsf{Grpd}$ can then be defined as a certain fibered category over a category of graphs. An analogous construction works for operads in stacks. However, we will in practice instead give ourselves the freedom of replacing the spaces $M_g^n$ by another convenient model as we please. For example, in order for $\coprod_{g \geq 0} M_g^1$ to be an algebra over the standard framed little disk operad, one may define $M_g^1$ to be the space of smoothly embedded genus $g$ surfaces in $\mathbb D \times \R^\infty$ (where $\mathbb D$ denotes the standard unit disk), with boundary $\partial \mathbb D \times \{0\}$, such that the surface is tangent to infinite order to the standard disk $\mathbb D \times \{0\}$ along the boundary. Another natural way to obtain a genuine topological modular operad, due to Tillmann \cite{tillmann-detects}, is to fix once and for all an ``atomic'' pair of pants $P$, and replace the category $D_{g,n}$ with the full subcategory whose objects are those finitely many surfaces that can be built by gluing $P$ to itself in all possible ways. This collection $\{D_{g,n}\}$ is a modular operad in the $1$-category of small categories and functors (but no natural transformations), and taking classifying spaces yields a genuine topological modular operad. If we want to include also surfaces for which $2g-2+n \leq 0$ then we need to include in addition an ``atomic'' disk as a building block. Many such workarounds have been considered in the literature, too many to mention them all.
\end{para}

\begin{rem}
Many further variations in the above set-up are possible. We could have replaced diffeomorphisms with homeomorphisms throughout,  without changing the situation: the natural homomorphism $ \mathrm{Diff}_\partial(S_{g,n}) \to \mathrm{Homeo}_\partial(S_{g,n}) $ is a homotopy equivalence. (This would have the advantage of excising all references to collars from the discussions, and in \S\ref{stack of surface bundles} one could work with topological stacks, which are perhaps less exotic than differentiable stacks.) Perhaps more importantly, the connected components of $\mathrm{Diff}_\partial(S_{g,n})$ are contractible, except for a few low-genus exceptions. In fact, these diffeomorphism groups have contractible connected components precisely when the corresponding algebro-geometric moduli stack is of Deligne--Mumford type. In these cases we could therefore have replaced $\mathrm{Diff}_\partial(S_{g,n})$ with $\pi_0\,\mathrm{Diff}_\partial(S_{g,n})=\mathrm{Mod}_g^n$, the \emph{mapping class group}. These facts are classical in surface topology and are due to work of many people, in particular Teichm\"uller \cite{teichmuller}, Earle--Eells \cite{earle-eells}, and Hamstrom \cite{hamstrom}. For a textbook account, see Farb--Margalit \cite[Chapter 1]{farbmargalit}. 
\end{rem}

\subsection{Stable homology and Mumford's conjecture}

\begin{para}\label{stabilization is E2 multiplication}
	If we fix an arbitrary point of $M_1^2$, then repeated composition with this element in the modular operad $\{M_g^n\}$ defines a sequence of maps
	$$ \ldots \to M_g^1 \to M_{g+1}^1 \to M_{g+2}^1 \to \ldots $$
	Equivalently, these maps may be considered as the result of repeatedly multiplying by a fixed element of $M_1^1$ in the framed $E_2$-algebra $\coprod_{g \geq 0} M_g^1$. These are precisely the stabilization maps considered by Harer \cite{harerstability}, who proved the following fundamental theorem:
\end{para}

\begin{thm}[Harer]\label{harer}
	The maps $H_i(M_g^1,\Z) \to H_i(M_{g+1}^1,\Z)$ are isomorphisms for $i\leq \tfrac{2g-2}{3}$.
\end{thm}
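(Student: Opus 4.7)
\begin{para}
The plan is to prove this by the standard ``semisimplicial resolution'' technique pioneered by Quillen in algebraic $K$-theory and adapted to mapping class groups by Harer. The first step is to find a highly connected simplicial complex on which $\mathrm{Mod}_g^1$ acts, whose stabilizers are (up to small corrections) mapping class groups of simpler surfaces. Concretely, I would work with the \emph{arc complex} $A_g$ whose $p$-simplices are collections of $p+1$ disjoint, pairwise non-isotopic, properly embedded arcs in the reference surface $S_{g,1}$, each with endpoints on the boundary, together with the condition that the complement of the arc system is connected. A useful variant also records the cyclic order in which the arc endpoints lie on $\partial S_{g,1}$, which makes the action on the set of $p$-simplices transitive with stabilizer an explicit mapping class group of a cut surface.
\end{para}

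\begin{para}
The crucial and most delicate step is the connectivity estimate: one must show that $A_g$ is $\lfloor (g-c)/k \rfloor$-connected for suitable constants, with $k=2$ ultimately being responsible for the slope $2/3$ in the final range. The proof is a surgery argument on arcs: given a simplicial sphere in $A_g$, one picks a reference arc and inductively reduces intersection numbers by pushing across innermost disks and bigons, until the sphere can be coned off in a known sub-complex. I expect this to be the main obstacle, since the entire quantitative strength of the theorem is concentrated here.
\end{para}

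\begin{para}
Granted the connectivity, I would invoke the spectral sequence for the action of $\mathrm{Mod}_g^1$ on $A_g$:
\[ E^1_{p,q} = \bigoplus_{[\sigma] \in A_g^{(p)} / \mathrm{Mod}_g^1} H_q(\mathrm{Stab}(\sigma), \Z_\sigma) \; \Longrightarrow \; H_{p+q}(\mathrm{Mod}_g^1, \Z), \]
converging in a range governed by the connectivity of $A_g$. Using Shapiro's lemma, each stabilizer homology $H_q(\mathrm{Stab}(\sigma),\Z_\sigma)$ is identified with the homology of a mapping class group of a surface obtained by cutting along the arcs of $\sigma$; this cut surface has genus $g-p-1$ together with extra boundary components, which a small auxiliary argument (capping and the contractibility of components of $\mathrm{Diff}_\partial$ of a disk) reduces back to $\mathrm{Mod}_{g-p-1}^1$.
\end{para}

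\begin{para}
The argument is finished by induction on $g$. Assuming the theorem for all smaller genera, the $d^1$-differential is identified, up to signs from the simplicial orientation, with an alternating sum of the maps induced by forgetting individual arcs; by the inductive hypothesis these face maps become isomorphisms in a stable range, so the resulting chain complex computes (in the stable range) a two-term complex whose homology one can read off. The connectivity of $A_g$ then forces the edge map $E^2_{0,q}\to H_q(\mathrm{Mod}_g^1)$ to be an isomorphism in the claimed range, which translates to the statement that the stabilization $H_i(\mathrm{Mod}_g^1)\to H_i(\mathrm{Mod}_{g+1}^1)$ is surjective for $i\leq (2g-2)/3$. A symmetric argument at genus $g+1$, together with a two-step induction, upgrades this to an isomorphism in the same range, completing the proof.
\end{para}
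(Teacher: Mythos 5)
The paper does not prove this theorem; it cites Harer's original work \cite{harerstability} for stability and explicitly attributes the improved slope $\tfrac{2}{3}$ to Boldsen \cite{boldsen}, remarking that Harer's own argument only yields slope $\tfrac{1}{3}$. So there is no ``paper's own proof'' to compare against, and your proposal should be read as a sketch of the proof in the literature.

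As an outline of the Harer-style argument your proposal captures the correct architecture: a highly connected complex of arcs on which $\mathrm{Mod}_g^1$ acts, identification of simplex stabilizers with mapping class groups of cut surfaces via Shapiro's lemma, the homotopy orbit spectral sequence, and an induction on genus in which the $d^1$-differentials are alternating sums of stabilization maps. However, as written the sketch would not reach the slope $\tfrac{2}{3}$ stated in the theorem, and the reason is concentrated exactly where you flagged the main difficulty. Your claim that the arc complex is roughly $\lfloor g/2\rfloor$-connected and that ``$k=2$ is responsible for the slope $2/3$'' does not match either the facts or the bookkeeping: the relevant nonseparating arc complexes are linearly connected with a much steeper slope (Harer proves connectivity roughly of order $2g$), and in any case the slope of the resulting stability range is \emph{not} simply the reciprocal of the connectivity slope of the complex. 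Running the vanilla version of the spectral-sequence induction you describe gives slope $\tfrac{1}{3}$ (Harer's original range), or $\tfrac{1}{2}$ in the streamlined Randal-Williams--Wahl framework with the ``standard connectivity'' hypothesis. Reaching $\tfrac{2}{3}$ requires Boldsen's genuinely additional input: a more delicate relative spectral-sequence argument (or, in the RWW formalism, a stronger connectivity hypothesis for a modified complex) that does not come for free from the induction you set up. So the gap is real: the proof strategy is sound for \emph{some} linear stable range, but a further idea is needed to close the gap between slope $\tfrac{1}{3}$ (or $\tfrac{1}{2}$) and the slope $\tfrac{2}{3}$ claimed in the statement. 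You should also be careful with the cut-surface bookkeeping: cutting $S_{g,1}$ along $p+1$ disjoint arcs with endpoints on the single boundary circle does not in general produce a genus-$(g-p-1)$ surface with one boundary component, and the way extra boundary components are disposed of (capping by disks, using contractibility of $\mathrm{Diff}_\partial(D^2)$) has to be tracked because it interacts with the stabilization map being analyzed.
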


\begin{rem}The stable range given here was first obtained by Boldsen \cite{boldsen}. Harer's original argument yielded a stable range of slope $\tfrac{1}{3}$.\end{rem}

\begin{para}We set $M_\infty^1 := \operatorname{hocolim}_g M_g^1$. After Harer's stability theorem, one wants to compute the homology of $M_\infty^1$, i.e.\ the stable homology of $M_g^1$. Harer's stability theorem implies that the hypothesis of the group-completion theorem of McDuff--Segal \cite{mcduffsegal} are satisfied, which shows that this problem is equivalent to computing the homology of the group-completion of the $E_1$-algebra $\coprod_{g \geq 0} M_g^1$. More formally, the group-completion $\Omega B (\coprod_{g \geq 0} M_g^1)$ is weakly equivalent to $\Z \times (M_\infty^1)^+$, where $(-)^+$ denotes Quillen's plus construction. Thus $M_\infty^1$ is homology equivalent to the base component of the loop space $\Omega B (\coprod_{g \geq 0} M_g^1)$. 
\end{para}

\begin{para}\label{calculate rat hom}
	Since $\coprod_{g \geq 0} M_g^1$ is an $E_2$-algebra, its group-completion is a double loop space. Tillmann \cite{tillmann-stable} showed, surprisingly, that it is in fact an \emph{infinite} loop space. 
 This infinite loop space was subsequently completely identified by Madsen and Weiss \cite{madsenweiss}, who proved an equivalence
	$$ \Omega B (\coprod_{g \geq 0} M_g^1) \simeq \Omega^\infty \mathrm{MTSO}(2),$$
	where $\mathrm{MTSO}(2)$ is the Madsen--Tillmann spectrum: explicitly, this is the Thom spectrum associated to the virtual vector bundle $-\gamma$, where $\gamma$ is the tautological bundle over $B\mathrm{SO}(2)$. From this equivalence, Madsen and Weiss in particular calculated the rational homology of $M_\infty^1$: 
	\begin{itemize}
		\item Rationally, the homology of a connected loop space is the symmetric algebra on its rational homotopy groups.
		\item Rationally, the homotopy groups of the Thom spectrum of a vector bundle coincide with the homology groups of the corresponding Thom space.
		\item The homology of the Thom space of an oriented vector bundle agrees with the homology of the base space, shifted by the rank of the bundle. 
	\end{itemize}
Hence rationally the calculation reduces to knowing the homology of $B \mathrm{SO}(2) \simeq \C \mathbb P^\infty$: one has that $H_\bullet(\Omega^\infty_0 \mathrm{MTSO}(2),\Q) \cong \Sym\, \pi_{\bullet>0}^\Q \mathrm{MTSO}(2)$, and $\pi_\bullet^\Q \mathrm{MTSO}(2)$ is one-dimensional in degrees $-2$, $0$, $2$, $4$, ... In terms of cohomology, it follows that $H^\bullet(M_\infty^1,\Q)$ is a polynomial algebra with one generator in each positive even degree, proving the Mumford conjecture.
\end{para}

\begin{rem}We focused above on the case of one boundary component, but a more precise version of Harer's theorem and a more careful argument shows that $(M_\infty^n)^+ \simeq \Omega^\infty_0 \mathrm{MTSO}(2)$ for any $n \geq 0$. In particular, the stable homology of $M_g^n$ is independent of $n$. 
\end{rem}

\begin{rem}A very different proof of Mumford's conjecture, which does \emph{not} prove the Madsen--Weiss theorem as an intermediate step, was recently given by Bianchi \cite{bianchi1,bianchi2,bianchi3,bianchi4}. See also \cite{daspetersen}. \end{rem}

\subsection{Surfaces in a background space}

\begin{para} Galatius--Madsen--Tillmann--Weiss \cite{gmtw} have proved a more general form of the Madsen--Weiss theorem, in which all surfaces involved are equipped with a \emph{tangential structure}. We will only need to consider the simplest type of tangential structure on a surface: the datum of a continuous map to some fixed background space. The moduli of surfaces in a background space was studied by Cohen--Madsen \cite{backgroundspace}. \end{para}

\begin{para}\label{background space definition}Let $X$ be a based space, which we assume to be simply connected. We define $M_g^n(X)$, the space of $n$-holed genus $g$ surfaces equipped with a map to $X$, as the space
	$$ \mathrm{map}_\ast(S_{g,n}/\partial S_{g,n},X) /\!/ \Diff_\partial(S_{g,n}),$$
	where $\mathrm{map}_\ast(S_{g,n}/\partial S_{g,n},X)$ denotes the space of continuous maps $S_{g,n} \to X$ taking the boundary to the basepoint, equipped with the compact-open topology. Just as discussed in \S\S\ref{desiderata}--\ref{slightly better definition} this definition is not optimal, as it depends on the choice of a reference surface $S_{g,n}$. As in \S\ref{slightly better definition} a ``coordinate-free'' construction can be obtained by consider the topologically enriched functor $D_{g,n} \to \mathsf{Top}$,  which takes a surface $S$ with boundary to the mapping space $\mathrm{map}_\ast(S/\partial S,X)$. A better definition of $M_{g}^n(X)$ would then be as the homotopy colimit of this enriched functor, computed e.g.\ using the bar construction. 
\end{para}

\begin{para}The space $M_g^n(X)$ is typically disconnected. If $S \to X$ is a map from an oriented surface, then the image of the fundamental class of $S$ gives a well-defined element of $H_2(X,\Z)$, and the resulting function $\pi_0(M_g^n(X)) \to H_2(X,\Z)$ is a bijection. 
\end{para}

\begin{para}\label{fiber sequence}
	There is for every $g$ and $n$ a fiber sequence
	$$ \mathrm{map}_\ast(S_{g,n}/\partial S_{g,n},X) \longrightarrow M_g^n(X) \longrightarrow M_g^n. $$
	We denote by a subscript $0$ the base component, which parametrizes nullhomotopic maps to $X$. We also have 
	$$ \mathrm{map}_\ast(S_{g,n}/\partial S_{g,n},X)_0 \longrightarrow M_g^n(X)_0 \longrightarrow M_g^n. $$
\end{para}

\begin{para}
	The spaces $\{M_g^n(X)\}$ form a topological modular operad, just as the spaces $\{M_g^n\}$, and the inclusion of the constant maps $M_g^n \to M_g^n(X)$ and the forgetful map $M_g^n(X) \to M_g^n$ that forgets the data of the map to $X$, are both morphisms of topological modular operads. This implies in particular that the disjoint union $\coprod_{g \geq 0} M_g^1(X)$ is a framed $E_2$-algebra, just like $\coprod_{g \geq 0} M_g^1$. The group-completion of this $E_2$-algebra is determined as a special case of the more general version of the Madsen--Weiss theorem of \cite{gmtw}.
\end{para}

\begin{thm}[Galatius--Madsen--Tillmann--Weiss] The group-completion of $\, \coprod_{g \geq 0} M_g^1(X)$ is the infinite loop space $\Omega^\infty (\mathrm{MTSO}(2) \wedge X_+)$. 
\end{thm}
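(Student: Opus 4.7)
The plan is to deduce this from the original Galatius--Madsen--Tillmann--Weiss theorem \cite{gmtw} applied to the tangential structure $\theta \colon B\mathrm{SO}(2) \times X \to B\mathrm{O}(2)$ given by projection onto the first factor followed by the canonical map $B\mathrm{SO}(2) \to B\mathrm{O}(2)$. A $\theta$-structure on an oriented surface $\Sigma$ is precisely the datum of a continuous map $\Sigma \to X$. The Madsen--Tillmann spectrum of this $\theta$ is the Thom spectrum of the virtual bundle $-\theta^\ast \gamma$ over $B\mathrm{SO}(2) \times X$, where $\gamma$ is the tautological plane bundle. Since $\theta^\ast \gamma$ is the external product of $\gamma$ with the zero bundle on $X$, the Thom construction identifies $\mathrm{MT}\theta$ with $\mathrm{MTSO}(2) \wedge X_+$, which will give the right-hand side.

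First I would set up a topological cobordism category $\mathcal{C}_\theta$ in the style of \cite{gmtw}: objects are closed oriented $1$-manifolds in $\R^\infty$ equipped with a map to $X$ (constant at the basepoint on some collar), and morphisms are oriented surfaces embedded in $\R^\infty \times [0,1]$ equipped with maps to $X$ restricting appropriately on the incoming and outgoing boundary. Composition is gluing of cobordisms. Applying the GMTW theorem in this tangentially structured setting yields a weak equivalence
\[ B\mathcal{C}_\theta \simeq \Omega^{\infty-1}(\mathrm{MTSO}(2) \wedge X_+). \]

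Next I would relate the monoid $\coprod_{g \geq 0} M_g^1(X)$ to the loop space $\Omega B\mathcal{C}_\theta$ at the object $S^1$ (with the constant map to the basepoint). Following Tillmann \cite{tillmann-detects}, the loop space at a connected object of the cobordism category is homotopy equivalent to the group-completion of the endomorphism monoid at that object. The endomorphism monoid of $S^1$ is the monoid of surfaces with two parametrized boundary circles and a map to $X$ that is trivial on both boundaries; by capping off one boundary with a disk carrying the constant map (which is an $E_1$-algebra map that becomes an equivalence after group-completion, by the usual argument that disks provide inverses to stabilization), this is equivalent after group-completion to $\coprod_g M_g^1(X)$. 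Combining these identifications with the GMTW equivalence above finishes the argument; the base component of the resulting infinite loop space is what is being compared.

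The main obstacle is carefully verifying that the scanning/classifying-space machinery goes through with the tangential structure ``map to $X$'' without hidden twists, in particular that the equivalence between $\Omega_{S^1} B\mathcal{C}_\theta$ and the group-completion of the endomorphism monoid (which requires a version of the group-completion/McDuff--Segal argument with coefficients) does not introduce monodromy in the $X$-direction; this is where the assumption that $X$ is simply connected (\S\ref{background space definition}) enters, so that $\pi_0$ of the monoid is well-behaved and the fibration \S\ref{fiber sequence} has simply-connected base on components. Once this is in place, the identification of the spectrum $\mathrm{MT}\theta$ with $\mathrm{MTSO}(2) \wedge X_+$ is formal from the Thom isomorphism and the description of $\theta^\ast\gamma$ as an external bundle.
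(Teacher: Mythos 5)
Your proposal is correct and is essentially what the paper intends: the paper itself gives no proof, simply citing the tangential-structure version of the Madsen--Weiss theorem from \cite{gmtw} (and implicitly Cohen--Madsen \cite{backgroundspace}), and your argument is precisely that deduction, with the tangential structure $\theta\colon B\mathrm{SO}(2)\times X\to B\mathrm{O}(2)$ and the identification $\mathrm{MT}\theta\simeq\mathrm{MTSO}(2)\wedge X_+$ coming from the fact that $\theta^{\ast}\gamma$ has no twisting in the $X$-direction. The passage from $\Omega_{S^1}B\mathcal C_\theta$ to the group-completion of $\coprod_g M_g^1(X)$ via the endomorphism monoid of $S^1$ is the standard Tillmann/GMTW reduction, and your caveats about simple connectedness of $X$ are the right ones to flag.
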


\begin{para}\label{rational homology with background space}The rational homology of this infinite loop space can be computed in exactly the same fashion as in \S\ref{calculate rat hom}, cf.\ \cite{backgroundspace}. The result is that the base component $M_\infty^1(X)_0$, parametrizing nullhomotopic maps into $X$, satisfies
	$$ H_\bullet(M_\infty^1(X)_0,\Q) \cong H_\bullet(\Omega^\infty_0(\mathrm{MTSO}(2) \wedge X_+),\Q) \cong \Sym \, \tau_{>0}\big( \pi_\bullet^\Q \mathrm{MTSO}(2) \otimes {H}_\bullet(X,\Q)\big).$$
	Here $\tau_{>0}(-)$ denotes truncation into positive degrees of a graded vector space. 
\end{para}

\subsection{Twisted coefficients}

\begin{para}\label{twisted1}Harer's stability theorem is part of a much larger framework of homological stability theorems, whose history we will not attempt to survey here. Since the early days of homological stability, it was noticed that it is not only desirable to know that the homology stabilizes with trivial coefficients but also for homology with certain twisted coefficients. One natural class of twisted coefficients for homological stability results are the \emph{polynomial coefficient systems}, a notion going back to work of Dwyer \cite{dwyertwisted}. Randal-Williams and Wahl \cite{randalwilliamswahl} (see also Krannich \cite{krannichtopologicalmoduli}) have made precise the idea that in ``natural situations'' where homological stability is satisfied, one also always has homological stability for polynomial coefficients. 
\end{para}

\begin{para}\label{twisted2}
	We will not give a general definition of a polynomial coefficient system here. In the case of the spaces $M_g^1$, a coefficient system consists in particular of the data of a local system $A{(g)}$ on $M_g^1$ for all $g$, and for each $g$ a map $A{(g)} \to s^{\ast} A{(g+1)}$, where $s \colon M_g^1 \to M_{g+1}^1$ is the stabilization. Hence we get in particular maps on homology $H_i(M_g^1,A{(g)}) \to H_i(M_{g+1}^1,A{(g+1)})$. The definition of polynomiality is inductive: the coefficient system is of degree $-1$ if all $A{(g)}$ are zero, and it is of degree $d$ if $A{(g)} \to s^{\ast} A{(g+1)}$ is injective and its cokernel is of degree $d-1$. All our examples will in fact be \emph{split} polynomial coefficient systems, in which case one obtains a better stable range. See \cite[Section 4]{randalwilliamswahl} for precise definitions. To ease notations, we omit the argument $(g)$, writing the stabilization maps as $H_i(M_g^1,A) \to H_i(M_{g+1}^1,A)$ and denoting by the same symbol $A$ all local systems occurring in the coefficient system. Homological stability of $M_g^1$ with polynomial coefficients was first proven by Ivanov \cite{ivanovtwisted}.  The stable range in the following theorem is due to Boldsen \cite{boldsen}.
\end{para}

\begin{thm}[Ivanov]\label{ivanov theorem}
	Let $A$ be a split polynomial coefficient system of degree $d$ on the spaces $\{M_g^1\}$. Then $H_i(M_g^1,A) \to H_{i}(M_{g+1}^1,A)$ is an isomorphism for $i\leq\frac{2g-d-2}{3}$.
\end{thm}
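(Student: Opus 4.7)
My plan is to follow the general framework of Randal-Williams--Wahl~\cite{randalwilliamswahl}, specialized to the homogeneous category of surfaces-with-boundary. The essential geometric input is a Harer-type high-connectivity estimate: one exhibits a semi-simplicial space $W_\bullet(g)$ whose $p$-simplices parametrize systems of $p+1$ disjoint embedded arcs (or simple closed curves) in $S_{g,1}$ realizing the stabilization operation, and whose geometric realization is roughly $\tfrac{2g-2}{3}$-connected. Via the associated spectral sequence computing $H_\bullet(M_g^1,A)$ in terms of the subquotients $H_\bullet(M_{g'}^1, A')$ for smaller $g'$ and derived coefficient systems $A'$, one obtains the stability range for constant coefficients, recovering~\cref{harer}.

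To upgrade this to split polynomial coefficient systems of degree $d$, I proceed by induction on $d$. The base case $d=-1$ is trivial since $A=0$, and $d=0$ is already covered by Harer. For the inductive step, I use that for a polynomial coefficient system one has a short exact sequence of coefficient systems
\[
0 \longrightarrow \ker \longrightarrow A \longrightarrow s^{\ast} A \longrightarrow \Delta A \longrightarrow 0
\]
where $\Delta A$ is the suspension, of degree $d-1$. When $A$ is \emph{split} polynomial, the map $A \to s^{\ast}A$ has a retraction, so the relevant sequence breaks into short exact pieces, and one can compare the stabilization maps for $A$ with those for $\Delta A$ using a long exact sequence in homology together with the inductive hypothesis. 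The comparison is organized by a spectral sequence whose $E^1$-page involves $H_\bullet(M_{g-p}^1, A^{(p)})$ for coefficient systems $A^{(p)}$ of strictly smaller polynomial degree; the splitting ensures these subsidiary coefficient systems remain split polynomial, so the induction can close.

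The stable range $i \le (2g-d-2)/3$ drops one unit in $2g$ for each increment of the polynomial degree, which matches how the spectral sequence degrees propagate under the inductive step: a class of bidegree $(p,q)$ on the $E^1$-page involves $H_q(M_{g-p}^1,A^{(p)})$, and applying the inductive stable range gives the required isomorphism/surjection provided $q \le (2(g-p) - (d-p) - 2)/3$, which simplifies to a shift by $p/3$ and yields the claimed slope $2/3$ bound after convergence.

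The main obstacle is twofold. The genuinely hard geometric input --- which I would cite rather than reprove --- is the sharp connectivity estimate for the arc/curve complex giving Harer's slope $2/3$; Boldsen's contribution was precisely to establish this improved connectivity and to trace it carefully through the spectral sequence. The remaining combinatorial obstacle is to verify that the Randal-Williams--Wahl hypotheses (standardness and local finiteness of the monoidal groupoid of decorated surfaces, plus the identification of their semi-simplicial space with Harer's arc complex) are in place for the specific stabilization $M_g^1 \to M_{g+1}^1$ induced by gluing on a genus-one piece, rather than for the more common one-boundary stabilization. Modulo these inputs, the induction on polynomial degree proceeds by standard five-lemma and spectral sequence bookkeeping.
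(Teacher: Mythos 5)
The paper does not prove this theorem. It is stated as background and explicitly attributed to Ivanov (for twisted homological stability with polynomial coefficients) and to Boldsen (for the slope-$\tfrac{2}{3}$ range), citing \cite{ivanovtwisted} and \cite{boldsen}; see the surrounding remarks in \S\ref{twisted1}--\S\ref{twisted2}. So there is no in-paper argument to compare your proposal against.

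As a free-standing sketch your outline captures the right modern architecture: high connectivity of the arc complex feeding a Randal-Williams--Wahl-style spectral sequence, together with induction on the polynomial degree. Two corrections. First, for a \emph{polynomial} coefficient system the map $A \to s^\ast A$ is injective by definition (see \S\ref{twisted2}), so the sequence you should be writing is the short exact sequence
\[
0 \longrightarrow A \longrightarrow s^\ast A \longrightarrow \Delta A \longrightarrow 0,
\]
not the four-term sequence with a kernel term; the kernel only enters in the more general notion of a coefficient system of finite degree. The split hypothesis then furnishes the retraction you invoke. Second, the historical proof of Ivanov predates the Randal-Williams--Wahl framework by roughly two decades and works directly with Harer's arc-complex spectral sequence and twisted coefficients; Boldsen's improvement to slope $\tfrac{2}{3}$ comes from a sharper connectivity estimate and more careful spectral-sequence bookkeeping. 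The RWW machinery gives a clean retroactive proof of the same statement, so your route is valid, but it is not Ivanov's. In either account the hard geometric content --- the connectivity theorem --- is exactly what you defer to citation, which is appropriate for a result the paper itself uses as a black box.
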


\begin{para}\label{definition of V}
	 The group $\Diff_\partial(S_{g,1})$ acts on the cohomology group $H^1(S_{g,1}/\partial S_{g,1},\Z)$, which defines a local system of rank $2g$ on the classifying space $B\Diff_\partial(S_{g,1}) \simeq M_g^1$. This collection of local systems assemble to a split polynomial coefficient system of degree $1$. Its $d$-fold tensor power is a natural example of a split polynomial coefficient system of degree $d$. We will only care about the rational homology of these coefficient systems. We denote by the symbol $V$ the rationalization  $H^1(S_{g,1}/\partial S_{g,1},\Q)$, and we call it the \emph{standard coefficient system} on $M_g^1$.  If $S^\lambda$ is a Schur functor, then $S^\lambda(V)$ is a split polynomial coefficient system of degree $\vert \lambda \vert$. The first cohomology of a surface naturally carries a symplectic form, so the monodromy of the local system $V$ lies in the symplectic group $\Sp(2g,\Z)$. Any partition $\lambda$ defines a system of representations of all symplectic groups, as explained in \S\ref{weyl construction}, and we denote by $V_\lambda$ the corresponding coefficient system. (For example, $S^{1,1}(V) = \wedge^2 V \cong V_{1,1} \oplus V_{0}$.) For every $\lambda$, $V_\lambda$ is a split polynomial coefficient system of degree $\vert \lambda \vert$. 
\end{para}

\begin{rem}\label{uniform remark}For the particular polynomial coefficient systems $V_\lambda$, the paper \cite{MPPRW} proves a \emph{uniform} version of \cref{ivanov theorem}, with slope $\frac 1 4$ but no dependence on $d$. Such a result does not hold for the coefficient systems $S^\lambda(V)$.
\end{rem}

\begin{rem}We could equally well have defined $V$ as the cohomology group $H^1(S_{g,n},\Q)$, via the natural isomorphism $H^1(S_{g,1}/\partial S_{g,1},\Q) \to H^1(S_{g,1},\Q)$. But taking cohomology relative to the boundary furnishes naturally the correct functoriality: when $S_{g,1}$ is embedded into $S_{g+1,1}$, we get a collapse map in the opposite direction, $S_{g+1,1}/\partial S_{g+1,1} \to S_{g,1}/\partial{S_{g,1}}$.
\end{rem}

\begin{rem}\label{target}Another natural example of a coefficient system is given by the following construction. Via the fiber sequences of \S\ref{fiber sequence}, we have for any simply connected space $X$ and $d \geq 0$ a local system $H_d(\mathrm{map}_\ast(S_{g,1}/\partial S_{g,1},X),\Z)$ on $M_g^1$. These local systems assemble to a split polynomial coefficient system of degree $\leq d$ (cf. the results of \cite[Section 2]{backgroundspace}). From this result, combined with Ivanov's \cref{ivanov theorem}, it follows that the stabilization maps $ H_i(M_g^1(X),\Z) \to H_i(M_{g+1}^1(X),\Z)$ are isomorphisms in the same range as in the case $X=\{\ast\}$. Indeed, one applies the Serre spectral sequence to the fiber sequence of \S\ref{fiber sequence}; stabilization induces an isomorphism on the $E_2$-page of the spectral sequence in the indicated range, hence also on the abutments. 
	
%
%
\end{rem}

\begin{para}
	The stable (rational) cohomology of $M_g^1$ with coefficients in $S^\lambda(V)$ was first calculated by Looijenga \cite{looijengastable}. One may equivalently formulate Looijenga's result as a calculation of the stable cohomology with coefficients in $V_\lambda$, using the stable branching formula from $\GL(2g)$ to $\Sp(2g)$ to decompose the representation $S^\lambda(V)$ as a direct sum of $V_\lambda$'s. More precisely, Looijenga calculated $H^\bullet(M_g^1,V_\lambda)$ as a free module over $H^\bullet(M_g^1,\Q)$, which after the proof of the Mumford conjecture gives a complete calculation of the stable cohomology. Looijenga's argument was formulated for closed surfaces, but a small variation of his arguments (producing a slightly different answer) works equally well for surfaces with boundary. Looijenga's argument does not seem to generalize to the hyperelliptic setting in any natural way, however. 
\end{para}

\subsection{An argument of Randal-Williams}

\begin{para}
	A completely different way to calculate the stable homology of $M_g^1$ with symplectic coefficients was later given by Randal-Williams \cite[Appendix B]{randalwilliamstwisted}. Whereas Looijenga's argument treats the Mumford conjecture as a black box, the approach of Randal-Williams uses the Madsen--Weiss theorem in its strong form, and in particular the freedom to work with surfaces in a background space. The idea is as follows:\begin{enumerate}
		\item Choose the background space to be an Eilenberg--MacLane space $KA$ of a graded $\Q$-vector space $A$.
		\item Treat the homology of $M_\infty^1(KA)$ as an \emph{analytic functor} of $A$. Try to determine this analytic functor in two different ways. 
		\item The first way is to study the Serre spectral sequence of the fibration $M_g^1(KA) \to M_g^1$. From this one sees that the Taylor coefficients of this analytic functor are given by the homology groups $H_\bullet(M_g^1,S^\lambda(V))$. 
		\item The second way is to use Madsen--Weiss with tangential structure, in which case one can immediately read off the stable homology. 
	\end{enumerate}
Equating the expressions obtained from (3) and (4) recovers Looijenga's theorem. Since this is the argument we will generalize to hyperelliptic surfaces, we will explain Randal-Williams's argument in more detail for the reader's convenience. See also \cite[Section 3]{kupersrandalwilliams-torelli} for a similar application of this technique.
\end{para}

%
%
%

\begin{para}If $A$ is any nonnegatively graded chain complex of abelian groups, then by the Dold--Kan correspondence it may be considered as a simplicial abelian group. We will denote by $KA$ its geometric realization. More generally, if $A$ is an arbitrary chain complex, then we define $KA := K(\tau_{\geq 0} A)$. A space of this form will be called a \emph{generalized Eilenberg--MacLane space}. Up to homotopy, generalized Eilenberg--MacLane spaces are just products of usual Eilenberg--MacLane spaces, and $\pi_n(KA,\ast) \cong H_n(A)$. 
\end{para}

\begin{lem}\label{lem1}The space of maps into any generalized Eilenberg--MacLane space is a generalized Eilenberg--MacLane space. More precisely, for any based space $S$ and chain complex $A$ there is a natural weak equivalence $$\mathrm{map}_\ast(S,KA) \simeq K(\widetilde C^\bullet(S,A)).$$ \end{lem}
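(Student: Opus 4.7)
The plan is to reduce the statement to the Dold--Kan correspondence together with an adjunction argument. Without loss of generality I would take $S$ to be a pointed simplicial set (replacing $S$ by its singular complex if necessary, which does not change the weak homotopy type of the mapping space). Since $KA$ arises as the geometric realization of a simplicial abelian group (the Dold--Kan image of $\tau_{\geq 0}A$), it is itself a topological abelian group, and pointwise operations make $\mathrm{map}_\ast(S, KA)$ into a topological abelian group. Together with Dold--Kan, this alone suffices for the first sentence of the lemma: any topological abelian group is weakly equivalent to a product of ordinary Eilenberg--MacLane spaces, hence to a generalized Eilenberg--MacLane space.

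To identify the precise chain complex, I would use the adjunction between the reduced free simplicial abelian group functor $\widetilde{\mathbb{Z}}[-]$ and the forgetful functor from simplicial abelian groups to pointed simplicial sets. For any simplicial abelian group $B$, internalizing this adjunction yields an isomorphism of simplicial abelian groups between the pointed mapping complex from $S$ to $B$ and the simplicial abelian group of abelian group homomorphisms $\widetilde{\mathbb{Z}}[S] \to B$. Under Dold--Kan the simplicial abelian group $\widetilde{\mathbb{Z}}[S]$ corresponds to the reduced normalized chain complex $\widetilde{C}_\bullet(S, \mathbb{Z})$, and the internal hom of simplicial abelian groups is quasi-isomorphic to the internal hom of chain complexes
\[\underline{\mathrm{Hom}}\bigl(\widetilde{C}_\bullet(S, \mathbb{Z}),\, A\bigr),\]
which is precisely the reduced cochain complex $\widetilde{C}^\bullet(S, A)$ under the convention $C^\bullet = C_{-\bullet}$ of the paper. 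Applying $K$ then produces the claimed weak equivalence, and naturality in $S$ and $A$ is immediate from the construction.

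The main technical subtlety is that the Dold--Kan correspondence is only lax symmetric monoidal, with comparison maps given by Alexander--Whitney and the shuffle product, so its interaction with internal hom is controlled only up to natural quasi-isomorphism rather than on the nose. Fortunately this is exactly what is needed: the functor $K$ only sees the quasi-isomorphism class of a chain complex (after truncation to $\tau_{\geq 0}$), so both sides yield the same generalized Eilenberg--MacLane space up to weak equivalence. A clean way to package the comparison is to use that $K$ is a right Quillen functor and that the adjunctions and Dold--Kan equivalence are all compatible with the respective model structures.
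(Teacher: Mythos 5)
Your proof is correct, but it takes a genuinely different route from the paper's. The paper gives a one-line "highbrow" argument at the level of $\infty$-categories: it observes that $K\colon D(\Z)\to\mathrm{Spaces}_\ast$ is right adjoint to $\widetilde C_\bullet(-,\Z)$ (as the composite of $\Omega^\infty$ and $H$, adjoint to $\Sigma^\infty$ and $-\otimes H\Z$), so $\mathrm{map}_\ast(S,KA)$ is the space of maps $\widetilde C_\bullet(S,\Z)\to A$ in $D(\Z)$, which by definition is $K(\widetilde C^\bullet(S,A))$; no simplicial machinery or Dold--Kan argument is ever invoked. Your argument instead descends to the $1$-categorical level: replace $S$ by a simplicial set, observe $\mathrm{map}_\ast(S,KA)$ is a topological abelian group (which already gives the first, qualitative, sentence of the lemma without any identification of the invariants), then use the free/forgetful adjunction to rewrite the simplicial mapping space as $\underline{\mathrm{Hom}}_{s\mathrm{Ab}}(\widetilde\Z[S],\Gamma\tau_{\geq 0}A)$, and finally compare internal homs across Dold--Kan. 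The trade-off is that you must confront the fact that $N$ and $\Gamma$ are only lax (symmetric) monoidal, so the identification of internal homs holds only up to natural quasi-isomorphism via Eilenberg--Zilber/Alexander--Whitney (or, as you package it, because $\widetilde\Z[S]$ is cofibrant and both sides compute the derived Hom); the paper's $\infty$-categorical phrasing absorbs exactly this subtlety into the definition of mapping spaces in $D(\Z)$. One small point worth making explicit in your version: the reduction to simplicial $S$ via $|\mathrm{Sing}(S)|\to S$ preserves the homotopy type of the mapping space when $S$ has the homotopy type of a CW complex (which covers the paper's applications, where $S=S_{g,n}/\partial S_{g,n}$); for truly arbitrary $S$ one should read $\mathrm{map}_\ast$ as a derived mapping space, which is what the paper's proof implicitly does.
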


\begin{proof}A highbrow argument is as follows. The construction $A \mapsto KA$ defines a functor from $D(\Z)$ (the derived $\infty$-category of abelian groups) to $\mathrm{Spaces}_\ast$ (the $\infty$-category of pointed spaces). As such it turns out to have a natural universal property: it is the right adjoint of the functor of singular chains $\widetilde C_\bullet(-,\Z)$, as one sees by considering the following two adjunctions:
	$$ \begin{tikzcd}
		\mathrm{Spaces}_\ast \arrow[rr,bend left, shift left, dotted, "{\widetilde{C}_\bullet(-,\Z)}"]\arrow[r,shift left,"\Sigma^\infty"]& \arrow[l,shift left,"\Omega^\infty"]\arrow[r,shift left,"-\otimes H\Z"]\mathrm{Spectra} &  H\Z\mathrm{-Mod} \simeq D(\Z)\arrow[l,shift left,"H"] \arrow[ll,shift left, bend left,dotted,"K"]\end{tikzcd}$$
	Then the proposition is a reformulation of adjointness, since the right-hand side is nothing but the space of maps $\widetilde C_\bullet(S,\Z) \to A$ in $D(\Z)$. Indeed, mapping spaces in $D(\Z)$ are defined by taking the internal (derived) Hom of chain complexes and applying the functor $K$. \end{proof}

\begin{rem}We could have written \cref{lem1} as $\mathrm{map}_\ast(S,KA) \simeq K(\widetilde H^\bullet(S,A))$, at the expense of deleting the adjective ``natural'': every chain complex over $\Z$ is quasi-isomorphic to its homology, but not naturally. \end{rem}

\begin{rem}Let us make explicit the degree conventions in \cref{lem1}. Here $K$ is a functor defined on chain complexes, and it depends only on the truncation of a chain complex into nonnegative degrees. Meanwhile, the cochain complex $\widetilde C^\bullet(S,\Z)$ is considered as a chain complex concentrated in nonpositive degree. If $A = \Z[N]$ is the chain complex consisting of $\Z$ placed in degree $N$, the result shows that
	$$ \mathrm{map}_\ast(S,K(\Z,N)) \simeq \prod_{n=0}^N K(\widetilde H^{N-n}(S,\Z),n).$$\end{rem}

\begin{thm}\label{first expression}Suppose that $A$ is a graded vector space concentrated in degree $2$. There is a functorial isomorphism 
	$$ H_\bullet(M_g^1(KA)_0,\Q) \cong \bigoplus_\lambda S^\lambda(A) \otimes H_\bullet(M_g^1,S^\lambda(V[-1])).$$
\end{thm}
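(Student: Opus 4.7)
The plan is to run the Serre spectral sequence of the fiber sequence of \S\ref{fiber sequence} restricted to the nullhomotopic component,
\[
\mathrm{map}_\ast(S_{g,1}/\partial S_{g,1}, KA)_0 \longrightarrow M_g^1(KA)_0 \longrightarrow M_g^1,
\]
identify its $E^2$-page via a Cauchy-type decomposition, and show it degenerates. Throughout I regard $A$ as $A_0[2]$, an ungraded vector space $A_0$ placed in graded degree~$2$.

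By \cref{lem1}, the mapping space is the generalized Eilenberg--MacLane space on the chain complex $\widetilde{C}^\bullet(S_{g,1}/\partial S_{g,1}, A)$. Since $A$ sits in degree~$2$ and $\widetilde{H}^\bullet(S_{g,1}/\partial S_{g,1}, \Q)$ has $V$ in degree $1$ and $\Q$ in degree~$2$, the base component is equivalent, as a fibration over $M_g^1$, to the twisted Eilenberg--MacLane space $K(V \otimes A_0, 1)$ whose monodromy is inherited from that on $V$. Its rational homology in degree~$n$ is $\wedge^n(V \otimes A_0)$, which by the dual Cauchy identity \S\ref{dual cauchy} decomposes as
\[
\bigoplus_\lambda S^\lambda(V) \otimes S^{\lambda'}(A_0),
\]
with the $\lambda$-summand in homological degree $|\lambda|$. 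Plugging this into the local-coefficient Serre spectral sequence gives
\[
E^2_{p,q} \;=\; \bigoplus_{\lambda \vdash q} S^{\lambda'}(A_0) \otimes H_p(M_g^1, S^\lambda(V)).
\]

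I would then argue that this spectral sequence degenerates at $E^2$ rationally. The fibration has a canonical section --- the constant nullhomotopic map from $M_g^1$ --- and the fiber's rational cohomology is generated as an algebra by classes in a single odd degree. By multiplicativity, degeneration reduces to the vanishing of the transgression on these degree-$1$ generators, and the section forces the classifying map $M_g^1 \to K(V \otimes A_0, 2)$ of the principal $K(V \otimes A_0, 1)$-bundle to be rationally nullhomotopic, which kills that transgression. Given this, $H_n(M_g^1(KA)_0, \Q) = \bigoplus_\lambda S^{\lambda'}(A_0) \otimes H_{n-|\lambda|}(M_g^1, S^\lambda(V))$. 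To recognise this as the RHS of the theorem, observe that $S^\lambda(A) \cong S^\lambda(A_0)$ is placed in graded degree $2|\lambda|$ (since $A$ lives in an even degree), while the Koszul sign rule gives $S^\lambda(V[-1]) \cong S^{\lambda'}(V)[-|\lambda|]$, because shifting by an odd degree swaps symmetric and exterior Schur functors. Tracking these degree shifts and relabeling $\lambda \leftrightarrow \lambda'$ reproduces the claimed formula; functoriality in $A$ is manifest from the construction. The main obstacle I anticipate is a clean rigorous justification of the degeneration: one needs that the section does more than ensure surjectivity of the edge map on $\pi_1$-invariants, in fact pinning down all higher differentials, while the remainder of the argument is bookkeeping with Cauchy identities and Koszul signs.
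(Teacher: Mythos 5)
Your overall plan coincides with the paper's: the same Serre spectral sequence, the same identification of the fiber via \cref{lem1} (after discarding the $K(A[-2])$ factor on the base component), the same Cauchy-type decomposition of $\wedge^\bullet(A_0\otimes V)$, and the same degree bookkeeping to match the statement. The one place where you genuinely diverge is the degeneration argument, and there your proposal has a real gap.

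Your reduction is: ``the fiber's cohomology is generated in a single odd degree; by multiplicativity, degeneration reduces to the vanishing of the transgression on those degree-$1$ generators.'' This reduction is valid when the coefficient system on the base is trivial, but here $V$ is a nontrivial $\mathrm{Mod}_g^1$-local system and the argument breaks down. The entry $E_2^{p,q}$ is $H^p(M_g^1; \wedge^q (A_0 \otimes V)^\vee)$ with local coefficients, and the cup product $E_2^{p,0}\otimes E_2^{0,q}\to E_2^{p,q}$ is far from surjective; in fact $E_2^{0,1} = H^0(M_g^1;(A_0\otimes V)^\vee) = A_0^\vee\otimes V^{\mathrm{Mod}_g^1} = 0$ outright, while $E_2^{p,1}\neq 0$ for various $p>0$. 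So the ``transgression on the degree-$1$ generators'' vanishes for a trivial reason, and multiplicativity tells you nothing about the rest of the $E_2$-page. The existence of the section (together with Lyndon's description of $d_2$ for extensions with abelian kernel as cup-contraction against the extension class in local-coefficient $H^2(M_g^1; A_0\otimes V)$ — note this is the correct home for the $k$-invariant, not homotopy classes of maps to a fixed $K(V\otimes A_0,2)$) does kill $d_2$, but it says nothing directly about $d_3, d_4,\ldots$, and the derivation property gives no leverage on the entries $E_r^{p,q}$ for $p>0$. The paper's route avoids all of this: everything is functorial in $A$, so $\GL(A)$ acts on the spectral sequence; the $q$th row is a polynomial functor of $A$ of degree exactly $q$ (by the Cauchy identity); and any $\GL(A)$-equivariant differential $d_r$ would have to change that polynomial degree and hence vanishes. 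This kills every $d_r$ at once, and also forces the extensions on $E^\infty$ to split, which your argument would still need to address. I'd recommend replacing the section/transgression paragraph with this weight argument.
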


\begin{proof}
	We study the Serre spectral sequence associated with the fibration$$\map_\ast(S_{g,1}/\partial S_{g,1},KA)_0 \to M_g^1(KA)_0 \to M_g^n.$$
	By the previous lemma, we have that $$\map_\ast(S_{g,1}/\partial S_{g,1},KA) \simeq K(A \otimes V[-1]) \times K(A[-2]),$$
	where as before we denote by $V = H^1(S_{g,1}/\partial S_{g,1},\Q)$. Recalling that $A$ is concentrated in degree $2$ we see that $\map_\ast(S_{g,1}/\partial S_{g,1},KA)_0 \simeq K(A \otimes V[-1])$. The Serre spectral sequence degenerates immediately, since the differentials are $\GL(A)$-equivariant, and $\GL(A)$ acts  on the homology of the fibers with different weights in different degrees. It follows that
	\begin{align*} H_\bullet(M_g^1(KA)_0,\Q) & \cong H_\bullet(M_g^1,H_\bullet(K(A \otimes V[-1]))) \\
		 & \cong H_\bullet(M_g^1, \Sym (A \otimes V[-1])) \\
		 & \cong \bigoplus_\lambda S^\lambda(A) \otimes H_\bullet(M_g^1, S^\lambda(V[-1])),
	\end{align*}
where in the last step we used the Cauchy identity $\Sym(M \otimes N) \cong \bigoplus_\lambda S^\lambda(M) \otimes S^\lambda(N)$. 
\end{proof}

\begin{prop}\label{second expression}Let $A$ be a graded vector space concentrated in degree $2$. The analytic functor $A \mapsto H_\bullet(M_\infty^1(KA)_0,\Q)$ has the Taylor series
	$$ \Exp(z^{-2}(\Exp(z^2 + h_1) - 1 - z^2 - h_1 )) $$
\end{prop}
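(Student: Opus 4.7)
The argument combines the Galatius--Madsen--Tillmann--Weiss theorem with a direct plethystic computation in the completed $\lambda$-ring.

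Specializing the formula recalled in \S\ref{rational homology with background space} to $X=KA$ yields
\[
H_\bullet(M_\infty^1(KA)_0,\Q) \;\cong\; \Sym\,\tau_{>0}\bigl(\pi_\bullet^\Q \mathrm{MTSO}(2)\otimes H_\bullet(KA,\Q)\bigr).
\]
Because $A$ is concentrated in degree $2$, both tensor factors are concentrated in even degrees and the truncation lies in strictly positive even degrees. In particular, under the class map the outer $\Sym$ corresponds to the plethystic exponential $\Exp$, and it therefore suffices to identify the class of the truncated tensor product with $z^{-2}(\Exp(z^2+h_1)-1-z^2-h_1)$.

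By \S\ref{calculate rat hom}, $\pi_\bullet^\Q\mathrm{MTSO}(2)$ is one-dimensional in each even degree $\geq -2$, with class $z^{-2}+1+z^2+z^4+\cdots=z^{-2}\Exp(z^2)$. The analytic functor $A\mapsto H_\bullet(KA,\Q)=\Sym(A)$ has class $\Exp(h_1)$, under the convention that the symbol $h_1$ records the class of $A$ together with its intrinsic degree $2$. The multiplicativity $\Exp(a)\Exp(b)=\Exp(a+b)$ then yields
\[
\bigl[\pi_\bullet^\Q\mathrm{MTSO}(2)\otimes H_\bullet(KA,\Q)\bigr] \;=\; z^{-2}\Exp(z^2)\cdot\Exp(h_1) \;=\; z^{-2}\Exp(z^2+h_1).
\]

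Finally, expand $\Exp(z^2+h_1)=\sum_{a,b\geq 0}z^{2a}h_b$ and multiply by $z^{-2}$. The terms of total degree $\leq 0$ arise exactly from the three pairs $(a,b)\in\{(0,0),(1,0),(0,1)\}$, contributing $z^{-2}$, $1$, and $z^{-2}h_1$ respectively---the last lying in total degree $0$ because $h_1$ has intrinsic degree $2$. These three sum to $z^{-2}(1+z^2+h_1)$, so
\[
[\tau_{>0}(\cdots)] \;=\; z^{-2}\bigl(\Exp(z^2+h_1)-1-z^2-h_1\bigr),
\]
and applying $\Exp$ gives the claimed Taylor series. The only point requiring care is the bookkeeping of the degree convention on $h_1$; once this is pinned down, everything else is a formal plethystic manipulation.
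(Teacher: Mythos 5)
Your argument is correct and takes essentially the same route as the paper's proof: both specialize the GMTW description from \S\ref{rational homology with background space}, compute the class $z^{-2}\Exp(z^2)\cdot\Exp(h_1)=z^{-2}\Exp(z^2+h_1)$, identify the three Taylor terms of nonpositive total degree using that $A$ sits in degree~$2$ (so $z^{-2}h_1$ lands in degree~$0$), and apply $\Exp$ for the outer symmetric algebra. Your explicit double-sum expansion just spells out what the paper condenses into ``subtracting three terms.''
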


\begin{proof}We use the expression for $H_\bullet(M_\infty^1(KA)_0,\Q)$ from \S\ref{rational homology with background space}. 	The Taylor series of the analytic functor $A \mapsto H_\bullet(KA,\Q) \cong \mathrm{Sym}(A)$ is $\Exp(h_1)$, and then the Taylor series of the functor	$A \mapsto \pi_\bullet^\Q \mathrm{MTSO}(2) \otimes {H}_\bullet(KA,\Q)$ is $\tfrac{z^{-2}}{1-z^2} \Exp(h_1) = z^{-2}\Exp(z^2+h_1)$. Truncating this into positive degrees corresponds to subtracting three terms (keeping in mind that $A$ is concentrated in degree $2$, so $z^{-2}h_1$ corresponds to a class in homological degree $0$), and then we take again the exponential. 
\end{proof}

\begin{para}
	Massaging the resulting identity into a slightly more convenient form gives the following formula.
\end{para}

\begin{thm}
	$$\sum_\lambda \sum_k \dim H_k(M_\infty^1,V_\lambda) (-z)^k s_{\lambda} = \Exp(-e_2 + z^{-2}(\Exp(z^2 - zh_1) - 1 - z^2 + zh_1 )). $$
\end{thm}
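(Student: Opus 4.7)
The plan is to follow Randal-Williams's strategy sketched above: compare two computations of the Taylor series of the analytic functor $F\colon A \mapsto H_\bullet(M_\infty^1(KA)_0,\Q)$ where $A$ is concentrated in degree $2$ (the stable homology on the left being meaningful by Ivanov's twisted homological stability, \cref{ivanov theorem}). \cref{second expression} yields $[F]=\Exp(z^{-2}(\Exp(z^2+h_1)-1-z^2-h_1))$, while \cref{first expression} gives the alternative decomposition
\[ [F] = \sum_\lambda s_\lambda \sum_k \dim H_k(M_\infty^1,S^\lambda(V[-1]))\,(-z)^k. \]
The Koszul sign rule identifies $S^\lambda(V[-1]) \cong S^{\lambda'}(V)[-|\lambda|]$ (a transposition acts with a sign on $(V[-1])^{\otimes n}$, converting $\sigma_\lambda$ into $\sigma_{\lambda'}$, while the tensor power sits in homological degree $-n$), so $H_k(M_\infty^1,S^\lambda(V[-1]))=H_{k+|\lambda|}(M_\infty^1,S^{\lambda'}(V))$. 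Re-indexing via $\mu=\lambda'$ and equating the two expressions for $[F]$ yields
\[ \sum_\mu s_{\mu'}(x)\,(-z)^{-|\mu|} Q_\mu(-z) = \Exp\!\left(z^{-2}(\Exp(z^2+h_1)-1-z^2-h_1)\right), \]
where $Q_\mu(-z) = \sum_j \dim H_j(M_\infty^1,S^\mu(V))(-z)^j$.

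The next step is to convert from $S^\mu(V)$-coefficients to $V_\nu$-coefficients. By Weyl's construction (\S\ref{weyl construction}), in the stable range $S^\mu(V) \cong \bigoplus_\nu N^\mu_\nu V_\nu$, so $Q_\mu = \sum_\nu N^\mu_\nu R_\nu$ with $R_\nu(-z) = \sum_k \dim H_k(M_\infty^1,V_\nu)(-z)^k$. The key algebraic input is extracted from the symplectic Cauchy identity (\cref{symplectic cauchy}): substituting the branching expansion $s_\lambda = \sum_\mu N^\lambda_\mu s_{\langle\mu\rangle}$ into $\sum_\lambda s_\lambda(x) s_\lambda(y) = \Exp(e_2(y))\sum_\mu s_{\langle\mu\rangle}(x) s_\mu(y)$ and equating coefficients of the basis element $s_{\langle\mu\rangle}(x)$ yields
\[ \sum_\lambda N^\lambda_\mu s_\lambda = s_\mu \Exp(e_2). \]
Applying $\omega$ produces the variant $\sum_\lambda N^\lambda_\mu s_{\lambda'} = s_{\mu'} \Exp(h_2)$, and specialising at the alphabet $-z^{-1}x$ rewrites the left-hand side of our displayed identity as
\[ \Exp(z^{-2}h_2)\sum_\nu R_\nu(-z)\,(-z^{-1})^{|\nu|} s_{\nu'}(x). \]

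To finish, apply the conjugation involution $\omega$ in the $x$-variables: since $\omega(h_1)=h_1$ the right-hand side is unchanged, while on the left $h_2 \leftrightarrow e_2$ and $s_{\nu'}\leftrightarrow s_\nu$. Then substitute $x \mapsto -zx$: this cancels the factor $(-z^{-1})^{|\nu|}$ via $s_\nu(-zx)=(-z)^{|\nu|}s_\nu(x)$, sends $\Exp(z^{-2}e_2)$ to $\Exp(e_2)$ via $e_2(-zx)=z^2 e_2(x)$, and sends $h_1 \mapsto -z h_1$ on the right, producing $\Exp(z^2 - z h_1)$ and $1+z^2-zh_1$ inside the outer exponential. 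Dividing through by $\Exp(e_2)$ yields the target identity. The main obstacle is not conceptual but bookkeeping-heavy: one must correctly interpret the Koszul shift $V[-1]$, invert the branching transformation between the two bases $\{s_\lambda\}$ and $\{s_{\langle\mu\rangle}\}$ of $\Lambda$, and choose the scaling substitutions of the $x_i$'s that cause the various powers of $(-z)^{|\nu|}$ to collapse into the Schur polynomials on the left while simultaneously producing the precise exponents $-e_2$ and $\Exp(z^2-zh_1)$ on the right.
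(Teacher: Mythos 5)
Your strategy is the same as the paper's: compare the two expressions for the Taylor series of $A \mapsto H_\bullet(M_\infty^1(KA)_0,\Q)$, unwind the Koszul shift $S^\lambda(V[-1]) \cong S^{\lambda'}(V)[-\lvert\lambda\rvert]$, and convert from the $\mathrm{GL}$-basis to the $\mathrm{Sp}$-basis via the symplectic Cauchy identity. The derivation up through the display for $\Exp(z^{-2}h_2)\sum_\nu R_\nu(-z)(-z^{-1})^{\lvert\nu\rvert}s_{\nu'}(x)$ is correct, and the identity $\sum_\lambda N^\lambda_\mu s_\lambda = s_\mu \Exp(e_2)$ that you extract from \cref{symplectic cauchy} is a nice explicit form of what the paper cites in one sentence. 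But there is a genuine gap in the final step.

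The sentence ``apply the conjugation involution $\omega$ in the $x$-variables: since $\omega(h_1)=h_1$ the right-hand side is unchanged'' is false. The right-hand side $\Exp(z^{-2}(\Exp(z^2+h_1)-1-z^2-h_1))$ is, once the plethystic exponentials are expanded, a series in all the $h_n$ (the inner $\Exp(z^2+h_1)$ is $\tfrac{1}{1-z^2}\sum_n h_n$), and $\omega$ sends $h_n \mapsto e_n$ for every $n$, not just $n=1$. More to the point, $\omega$ does \emph{not} commute with $\Exp$ except on arguments concentrated in even arity (in power-sum coordinates, $\omega$ sends $p_n\mapsto (-1)^{n-1}p_n$, and you can check $\omega(p_k\circ G) = p_k\circ\omega(G)$ only when $G$ has even arity), so even writing $\omega(\Exp(G))=\Exp(\omega(G))$ would be unjustified here. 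For the same reason, your next claim --- that the substitution $x\mapsto -zx$ turns the old right-hand side into $\Exp(z^{-2}(\Exp(z^2-zh_1)-1-z^2+zh_1))$ --- is also false, as a naive variable substitution likewise fails to commute with the outer $\Exp$. You have two false intermediate identities that happen to cancel; the final formula is right, but the argument as written does not prove it.

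The fix is to fuse the two operations into one. The composite $\omega\circ(x\mapsto -zx)$ equals the \emph{plethystic} substitution $f\mapsto f\circ(-zh_1)$: both are the ring homomorphism $p_n\mapsto -z^n p_n$, equivalently both send $s_\lambda\mapsto (-z)^{\lvert\lambda\rvert}s_{\lambda'}$, which is precisely what the Koszul sign convention $S^\lambda(A[1])\cong S^{\lambda'}(A)[\lvert\lambda\rvert]$ produces. Being a plethystic substitution, it is associative with $\Exp = E\circ(-)$ and $\Log$, so it may be implemented by literally replacing $h_1$ by $-zh_1$ inside every plethystic exponential --- this is the paper's one-step ``$h_1\leadsto -zh_1$''. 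Applying it directly to your step-4 identity (or, as the paper does, doing the degree shift first and multiplying by $\Exp(-e_2)$ last) produces the claimed formula without the spurious intermediate claims.
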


\begin{proof}Equating the expressions from \cref{first expression} and \cref{second expression} shows that 
	$$\sum_\lambda \sum_k \dim H_k(M_\infty^1,S^\lambda(V[-1]) (-z)^k s_\lambda = \Exp(z^{-2}(\Exp(z^2 + h_1) - 1 - z^2 - h_1 )). $$
	Now $S^\lambda(V[-1]) = S^{\lambda'} (V) [-\vert \lambda \vert]$. Thus we can replace $V[-1]$ by $V$ on the left-hand side, at the expense of replacing interchanging $s_\lambda$ with $s_{\lambda'}$ (i.e. tensoring with the sign representation in each arity) and shifting the arity $r$ component in degrees by $r$. But these two changes combined amount to making the substitution $h_1 \leadsto -z h_1$ on the right-hand side. Finally,  changing $S^\lambda(V)$ to $V_\lambda$ on the left-hand side corresponds to multiplying the right-hand side with $\Exp(-e_2)$, cf.\  \cref{symplectic cauchy}.
\end{proof}

\begin{rem}
	We chose to work with $A$ concentrated in degree $2$, but it would also have been possible to derive the same final result by choosing instead $A$ to be $2$-connected (as Randal-Williams does). In this case the spaces $\mathrm{map}_\ast(S_{g,1}/\partial S_{g,1},KA)$ and $M_g^1(KA)$ would be connected, and in the proof of \cref{first expression} we would therefore pick up an additional factor of $K(A[-2])$. In the proof of \cref{second expression} we would also pick up an additional term, since $z^{-2}h_1$ would then correspond to a class in positive homological degrees. These two effects exactly cancel. 	The argument would however become more subtle at one point. In the proof of \cref{first expression} we used a weight argument to deduce degeneration of the Serre spectral sequence, and this weight argument breaks down if $A$ is $2$-connected. (At this point there is in fact a small gap in Randal-Williams's argument.) It is still true that the spectral sequence degenerates, but a more complicated argument is needed. Indeed, one can argue that there is a quasi-isomorphism
	$$ C_\bullet(\mathrm{map}_\ast(S_{g,1}/\partial S_{g,1},KA),\Q) \simeq H_\bullet(\mathrm{map}_\ast(S_{g,1}/\partial S_{g,1},KA),\Q)$$
	where both sides are considered as homologically locally constant complexes of sheaves of abelian groups on $M_g^1$, or equivalently, as parametrized $H\Z$-module spectra on $M_g^1$. To prove this quasi-isomorphism, one argues that 
	\begin{align*}
		C_\bullet(\mathrm{map}_\ast(S_{g,1}/\partial S_{g,1},KA),\Q) & \simeq C_\bullet(K(\widetilde C^\bullet(S_{g,1}/\partial S_{g,1},\Q) \otimes A)) \\
		& \simeq \Sym (\widetilde C^\bullet(S_{g,1}/\partial S_{g,1},\Q) \otimes A) \\
		& \simeq \Sym (\widetilde H^\bullet(S_{g,1}/\partial S_{g,1},\Q) \otimes A). 
	\end{align*}
The first two equivalences use respectively a parametrized version of \cref{lem1} and a parametrized version of the calculation of the rational homology of Eilenberg--MacLane spaces. The third equivalence can be deduced by using a section of the universal surface bundle over $M_g^1$ to construct a splitting. 
\end{rem}

\begin{rem}\label{standard coefficient system}In the above we only considered the stable homology $H_\bullet(M_\infty^n,S^\lambda V)$ in the case $n=1$. But in the same way as $H_\bullet(M_\infty^n)$ is independent of $n$, so is the stable homology with twisted coefficients, suitably interpreted. Indeed: for any $n \geq 0$, one has $M_\infty^n(X)^+ \simeq \Omega^\infty_0 (\mathrm{MTSO}(2) \wedge X_+)$. For any $n$ one also has a fiber sequence
	$$ \mathrm{map}_\ast(S_{g,n}/\partial S_{g,n},X) \to M_g^n(X) \to M_g^n, $$
	and when $X=KA$ the Serre spectral sequence degenerates. One may now run the same argument for any $n \geq 0$. Let us denote by 
	$$ V := \tau_{\leq 0} \widetilde H^{\bullet+1}(S_{g,n}/\partial S_{g,n},\Q).$$
 Note that if $n > 0$ then $V$ is a local system of rank $2g-1+n$ on $M_g^n$ (in particular, for $n>1$ it is \emph{not} the pullback of the local system $V$ on $M_g^1$). When $n=0$, $V$ is a complex of local systems with vanishing differential, consisting of the standard rank $2g$ local system --- which we shall denote $V'$ --- placed in degree $0$, and the trivial local system of rank $1$ in cohomological degree $-1$. One finds that the homology $H_\bullet(M_\infty^n, S^\lambda(V))$ is independent of $n$, for all $n \geq 0$. In particular, we may write down a generating series for cohomology of $M_g$ with symplectic coefficients, recovering the case originally considered by Looijenga \cite{looijengastable}. Let us write $V_\lambda'$ for the local system on $M_g$ whose pullback to $M_g^1$ is $V_\lambda$. The considerations just explained show that the analytic functor $A \mapsto H_\bullet(M_\infty^n,\Sym(A \otimes V))$ does not depend on $n$. But we also have $$H_\bullet(M_\infty,\Sym(A \otimes V)) \cong H_\bullet(M_\infty,\Sym(A \otimes (V' \oplus \Q[1]))) \cong H_\bullet(M_\infty,\Sym(A \otimes V')) \otimes \Sym(A[1]),$$
 which shows that the Taylor series of the two analytic functors $A \mapsto H_\bullet(M_\infty,\Sym(A \otimes V))$ and $A \mapsto H_\bullet(M_\infty,\Sym(A \otimes V'))$ differ by multiplication by $\Exp(-zh_1)$, which is the Taylor series of $A \mapsto \Sym(A[1])$. We then have the identity:
\end{rem}

\begin{thm}
	$$\sum_\lambda \sum_k \dim H_k(M_\infty,V_\lambda') (-z)^k s_{\lambda} = \Exp(zh_1 -e_2 + z^{-2}(\Exp(z^2 - zh_1) - 1 - z^2 + zh_1 )). $$
\end{thm}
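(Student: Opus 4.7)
The plan is to run the Randal-Williams scheme of \cref{first expression}--\cref{second expression} for the closed moduli space $M_g = M_g^0$. By Galatius--Madsen--Tillmann--Weiss, which applies for every $n \geq 0$, the group-completion of $\coprod_g M_g(X)$ is again $\Omega^\infty(\mathrm{MTSO}(2) \wedge X_+)$, so with $X = KA$ and $A$ concentrated in homological degree $2$, the Taylor series of $A \mapsto H_\bullet(M_\infty(KA)_0, \Q)$ is the same expression $\mathrm{Exp}(z^{-2}(\mathrm{Exp}(z^2 + h_1) - 1 - z^2 - h_1))$ computed in \cref{second expression}.

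On the other hand, the Serre spectral sequence for the fibration $\map_\ast(S_g \sqcup \{\ast\}, KA) \to M_g(KA) \to M_g$ computes the same analytic functor. Working with the complex $V$ of \cref{standard coefficient system} -- whose closed-surface version has the standard rank-$2g$ local system in cohomological degree $0$ and a trivial rank-$1$ local system in cohomological degree $-1$ -- \cref{lem1} identifies the base component of the fiber with $K(A \otimes V[-1])$. Assuming degeneration of the spectral sequence (the main obstacle, addressed below), the Cauchy identity yields
\[ \sum_\lambda s_\lambda \bigl[H_\bullet(M_\infty, S^\lambda(V[-1]))\bigr] = \mathrm{Exp}\bigl(z^{-2}(\mathrm{Exp}(z^2 + h_1) - 1 - z^2 - h_1)\bigr). \]

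I would then decompose $V = V_{\mathrm{std}} \oplus \Q[-1]$ to pass to Schur functors of the pure local system $V_{\mathrm{std}}$. The Cauchy rule $S^\lambda(A \oplus B) = \bigoplus_{\mu,\nu} c^\lambda_{\mu,\nu}\, S^\mu(A) \otimes S^\nu(B)$, together with the observation that $S^\nu$ applied to the $1$-dimensional even graded vector space $(\Q[-1])[-1] = \Q$ is nonzero only for $\nu = (n)$ a horizontal strip, and Pieri's rule, give
\[ \sum_\lambda s_\lambda \bigl[S^\lambda(V[-1])\bigr] = \mathrm{Exp}(h_1) \cdot \sum_\mu s_\mu \bigl[S^\mu(V_{\mathrm{std}}[-1])\bigr]. \]
Dividing out yields an extra factor $\mathrm{Exp}(-h_1)$ relative to the boundary case. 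Applying the standard transformations from the derivation of \cref{thmA} -- first the substitution $h_1 \leadsto -z h_1$, which passes from $S^\mu(V[-1])$-coefficients to $S^\mu(V_{\mathrm{std}})$-coefficients via $S^\mu(V[-1]) = S^{\mu'}(V)[-|\mu|]$ and simultaneously carries $\mathrm{Exp}(-h_1)$ to $\mathrm{Exp}(zh_1)$, and then multiplying by $\mathrm{Exp}(-e_2)$, which converts $S^\mu(V_{\mathrm{std}})$-coefficients to the symplectic irreducibles $V_\mu'$ via the symplectic Cauchy identity of \cref{symplectic cauchy} -- produces the formula claimed in the theorem.

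The main obstacle is establishing degeneration of the Serre spectral sequence in this closed setting. The weight argument of \cref{first expression} does not directly apply: the fiber $K(A \otimes V[-1])$ now has homology with pieces of both parities (arising from the two components of the complex $V$), so several $A$-arities can occur in a single total degree and $\mathrm{GL}(A)$-equivariance alone no longer forbids nonzero differentials $d_r$ for $r \geq 2$. To address this I would follow the strategy sketched in the remark after \cref{second expression}, establishing a quasi-isomorphism $C_\bullet(\map_\ast(S_g \sqcup \{\ast\}, KA), \Q) \simeq \mathrm{Sym}(\widetilde H^\bullet(S_g \sqcup \{\ast\}, \Q) \otimes A)$ at the level of parametrized rational chain complexes on $M_g$, via a parametrized \cref{lem1} and a parametrized Eilenberg--MacLane computation. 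The step requiring a section of the universal curve, available over $M_g^1$, must be replaced in the closed case by the rational formality of the derived pushforward $R\pi_\ast\Q$ along the universal curve $\pi \colon \mathcal C \to M_g$, which furnishes the required splitting up to homotopy.
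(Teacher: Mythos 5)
Your argument follows essentially the same route the paper intends: run the Randal-Williams scheme for $n = 0$, where the ``standard coefficient system'' $V$ is a two-term complex $V_{\mathrm{std}} \oplus \Q$ (with the rank-one piece in homological degree $+1$, so $\Q$ after the $[-1]$-shift), and the Pieri/Cauchy factorization you write out is exactly the reason the closed formula acquires the extra factor $\Exp(zh_1)$. Your algebra is correct and reproduces the stated identity. You are also right that the $\mathrm{GL}(A)$-weight argument used in the proof of \cref{first expression} genuinely fails here: the base component of the fiber is $K(A,2)\times K(A\otimes V_{\mathrm{std}},1)$ rather than just $K(A\otimes V_{\mathrm{std}},1)$, so several $A$-arities occur in each total degree and equivariance no longer forces degeneration; the paper's remark glosses over this. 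Your proposed repair is the right idea, but ``rational formality of $R\pi_\ast\Q$'' overstates what is needed and what is known directly: the step only requires a quasi-isomorphism of complexes of $\Q$-local systems $\widetilde C^\bullet(S_g\sqcup\ast,\Q)\simeq\widetilde H^\bullet(S_g\sqcup\ast,\Q)$ (no multiplicativity), i.e.\ the Deligne decomposition $R\pi_\ast\Q\simeq\bigoplus_i R^i\pi_\ast\Q[-i]$ for the universal curve; this one obtains by invoking relative hard Lefschetz for the algebraic family $\mathcal C_g\to\M_g$ and transporting to the topological setting via the comparison equivalence, rather than from formality as such. With that wording corrected, your argument closes the gap the paper leaves implicit.
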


\section{Moduli of hyperelliptic surfaces}

\subsection{Braids and hyperelliptic surfaces}

\begin{para}\label{branched def}
	Let $f: M \to N$ be a proper map between surfaces, possibly with boundary. We say that $f$ is a \emph{branched cover} if there exists a finite subset $z$ in the interior of $ N$ such that $f^{-1}(N\setminus z) \to N \setminus z$ is a finite-sheeted covering space, and for every point $x \in f^{-1}(z)$, there exist local coordinates around $x$ and $f(x)$ such that the map $f$ is diffeomorphic to the map $z \mapsto z^r$, $\C \to \C$, for some $r \in \Z_{>0}$. The points $x$ with $r>1$ are called \emph{ramification points}, and the corresponding points $f(x)$ are called \emph{branch points}.
\end{para}


\begin{defn}\label{defn hyperelliptic}A \emph{hyperelliptic surface} is an oriented compact surface $S$, possibly with boundary, equipped with an involution $\iota$ such that $S \to S/\langle\iota\rangle$ is a branched cover, and the surface $S/\langle\iota\rangle$ is a sphere with a finite number of holes. The involution $\iota$ is called the \emph{hyperelliptic involution}.
\end{defn}

\begin{para}Fixed points of the hyperelliptic involution are called \emph{Weierstrass points}. A closed hyperelliptic surface of genus $g$ always has exactly $2g+2$ Weierstrass points, by consideration of Euler characteristic. \end{para}

\begin{para}
	Compact oriented surfaces are classified up to diffeomorphism by their genus and their number of boundary components. The same is not true for hyperelliptic surfaces; a hyperelliptic surface can have two distinct types of boundary component. A boundary component of a hyperelliptic surface is called a \emph{Weierstrass boundary} if the hyperelliptic involution maps this boundary component to itself. The non-Weierstrass boundary components are interchanged pairwise by the hyperelliptic involution, and we refer to each such pair as a \emph{conjugate boundary pair}. 
\end{para}

\begin{para}\label{hyp definition}Let $S_{g,n,m}$ be a hyperelliptic surface of genus $g$ with $n$ Weierstrass boundary components and $m$ conjugate  boundary pair components. (So $S_{g,n,m}$ has $n+2m$ holes.) We denote by $\mathrm{Hyp}_\partial(S_{g,n,m})$ the subgroup of $\Diff_\partial(S_{g,n,m})$ consisting of diffeomorphisms which commute with the hyperelliptic involution, and define
	$$ H_g^{n,m} = B\mathrm{Hyp}_\partial(S_{g,n,m}).$$
	We call $H_g^{n,m}$ the moduli of hyperelliptic surfaces.  
\end{para}

\begin{para}\label{coordinate free hyperelliptic}
	As in \S\S\ref{desiderata}--\ref{slightly better definition} one may want to give a coordinate-free variant of this definition. We consider instead the topologically enriched groupoid whose objects are branched double covers $S \to S_0$ where $S_0$ is a genus $0$ surface with $n+m$ boundaries, and the surface $S$ has genus $g$, and comes with $n$ ordered Weierstrass boundaries and $m$ ordered conjugate  boundary pairs. All boundaries have a germ of a parametrized collar neighborhood with respect to which the branched cover is in a standard form. By this we mean that each Weierstrass boundary is parametrized in such a way that on a collar neighborhood $S^1 \times [0,\varepsilon)$ the hyperelliptic involution restricts to $(z,t) \mapsto (-z,t)$, and similarly for the conjugate boundary pairs, i.e.\ the restriction of the hyperelliptic involution to the collars is given by permutation of the components, and the identity on a standard collar. Morphisms in this category are commuting squares\[
	\begin{tikzcd}
		S \arrow[d]\arrow[r,"\cong"]&\arrow[d] T \\
		S_0 \arrow[r,"\cong"]& T_0
\end{tikzcd}\]
where the horizontal maps are diffeomorphisms preserving the germs of collar neighborhoods, topologized by the compact-open topology. If $n=m=0$ we need to impose the condition that the horizontal maps are orientation-preserving. The classifying space of this category is $H_g^{n,m}$.
\end{para}

\begin{rem}
	Just as the space $M_g^n$ (see \cref{ag interpretation}), the space $H_g^{n,m}$ admits an algebro-geometric interpretation. Indeed, it is homotopy equivalent to the moduli stack parametrizing smooth compact genus $g$ hyperelliptic curves, equipped with $n$ distinct ordered Weierstrass points, $m$ distinct ordered non-Weierstrass points, none of which are conjugate under the hyperelliptic involution, and at each of the $(n+m)$ marked points a nonzero tangent vector. 
\end{rem}

\begin{rem}
	As in the case of the spaces $M_g^n$, we could have replaced diffeomorphisms with homeomorphisms or mapping classes in these definitions, and it would not have made a difference. 
	One subtlety when working with mapping classes rather than genuine diffeomorphisms or homeomorphisms is the following: for a topological group $G$ and $g \in G$, there is a natural map
	$$ \pi_0 \, C_G(g) \longrightarrow C_{\pi_0(G)}(g)$$
	which is not a bijection in general. Hence it is not a priori clear whether $\pi_0\,\mathrm{Hyp}_\partial(S_{g,n,m})$ can be recovered as the centralizer of the hyperelliptic involution inside the mapping class group; however, this turns out to be the case, and is a result of Birman and Hilden \cite{birmanhilden}. 
	
\end{rem}

\begin{para}There is an evident map $H_g^{n,m} \to M_g^{n+2m}$. In the description of \S\ref{coordinate free hyperelliptic}, it is induced by the functor which takes $(S\to S_0)$ to the surface $S$, and forgets the data of $S_0$. Algebro-geometrically it corresponds to the inclusion of the moduli space of hyperelliptic curves into the moduli space of all curves. There is another almost equally evident map $H_g^{n,m} \to M_{0,(2g+2-n)}^{n+m}$, which instead takes  $(S\to S_0)$ to the surface $S_0$, together with the data of the set of branch points. Since a closed hyperelliptic surfaces has $2g+2$ branch points, a hyperelliptic surface with $n$ Weierstrass boundary components must have $2g+2-n$ branch points. We denote here by $M_{g,(k)}^n$ the moduli space of surfaces of genus $g$ with $n$ parametrized boundary components and $k$ unordered punctures. 
	\end{para}

\begin{para}\label{braid1}
	Let us be more explicit about the map $H_g^{n,m} \to M_{0,(2g+2-n)}^{n+m}$. When $n=m=0$, there is a short exact sequence 
	$$ 1 \to \{\pm 1\} \to \mathrm{Hyp}^+(S_g) \to \Diff^+(S^2 \setminus \{z_1,\ldots,z_{2g+2}\}) \to 1.$$
	Here the ``$+$'' superscripts indicate orientation-preserving diffeomorphisms, and we identify $\Diff^+(S^2 \setminus \{z_1,\ldots,z_{2g+2}\})$ with the subgroup of $\Diff^+(S^2)$ of diffeomorphisms fixing $\{z_1,\ldots,z_{2g+2}\}$ setwise. On $\pi_0$ it exhibits the hyperelliptic mapping class group as a $\Z/2$ central extension of the mapping class group of a $(2g+2)$-times punctured sphere. The short exact sequence expresses that a diffeomorphism of the base admits exactly two lifts to the double cover, interchanged by the hyperelliptic involution. When $(n,m)$ is $(1,0)$ or $(0,1)$, every diffeomorphism of the base (which is now a sphere with a hole, i.e.\ a disk) admits a \emph{unique} lift to the double cover which preserves the parametrization of the boundary, and we obtain isomorphisms
	$$ \mathrm{Hyp}_\partial(S_{g,1,0}) \cong \Diff_\partial(\mathbb D \setminus \{z_1,\ldots,z_{2g+1}\} )$$
	and
	$$ \mathrm{Hyp}_\partial(S_{g,0,1}) \cong \Diff_\partial(\mathbb D \setminus \{z_1,\ldots,z_{2g+2}\} ).$$
	The connected components of these topological groups are contractible, and the mapping class group $\pi_0 \,\Diff_\partial(\mathbb D \setminus \{z_1,\ldots,z_{k}\})$ is nothing but the Artin braid group on $k$ strands, $\beta_k$. Thus the hyperelliptic mapping class groups for surfaces with one boundary component, or one conjugate boundary pair, are identified with the usual braid groups. 
\end{para}

\begin{para}
    We denote similarly $H_{g,n,m}$ the corresponding moduli spaces of hyperelliptic surfaces with punctures, rather than boundary components. The projection $H_g^{n,m}\to H_{g,n,m}$ is an $(n+m)$-fold oriented circle bundle. If $n=m=0$ we simply write $H_g$.  The following result is standard.
\end{para}

\begin{prop}For all $g$, the spaces $H_{g,1,0}$, $H_{g,0,1}$ and $H_g$ have the rational cohomology of a point.     
\end{prop}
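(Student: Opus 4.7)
The plan is to reduce all three claims to Arnold's theorem, as quoted in \S\ref{stable homologies}: for $n \geq 2$, the standard $S^1$-rotation action on $\mathrm{UConf}_n(\mathbf{R}^2)$ induces an isomorphism $H^\bullet(\beta_n, \mathbf{Q}) \cong H^\bullet(S^1, \mathbf{Q})$. Combined with the identifications $H_g^{1,0} \simeq B\beta_{2g+1}$ and $H_g^{0,1} \simeq B\beta_{2g+2}$ from \S\ref{braid1}, this already determines the rational cohomology of the total spaces $H_g^{1,0}$ and $H_g^{0,1}$.

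The first step is to treat $H_{g,1,0}$ and $H_{g,0,1}$ via the Gysin sequence of the oriented circle bundle $H_g^{n,m} \to H_{g,n,m}$ asserted in the paragraph just above the proposition. The key observation is that the fiber circle (rotating a tangent vector at a marked boundary) corresponds, via the branched-cover construction of \S\ref{braid1}, to the standard rotation action of $S^1 \subset \mathbf{C}^\times$ on the affine configuration of $N = 2g + n + 2m$ branch points (with a possible factor of $2$ at a Weierstrass marking, coming from the ramification $w = u^2$, which is irrelevant to the conclusion). Arnold's theorem then says precisely that the fiber class generates $H_1(H_g^{n,m}, \mathbf{Q})$, so the integration-over-the-fiber map $\pi_* \colon H^1(H_g^{n,m}, \mathbf{Q}) \twoheadrightarrow H^0(H_{g,n,m}, \mathbf{Q})$ is surjective; since $H^i(H_g^{n,m}, \mathbf{Q}) = 0$ for $i \geq 2$, the Gysin sequence collapses to force $H^\bullet(H_{g,n,m}, \mathbf{Q}) = \mathbf{Q}$, concentrated in degree zero.

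For $H_g$, I will use the forgetful morphism $f \colon H_{g,1,0} \to H_g$ discarding the Weierstrass marking, which is finite \'etale of degree $2g+2$ (every smooth genus-$g$ hyperelliptic curve carries exactly $2g+2$ Weierstrass points, and marking one is an \'etale datum). The transfer $(2g+2)^{-1} f_*$ splits $f^*$ on rational cohomology, making $f^* \colon H^\bullet(H_g, \mathbf{Q}) \hookrightarrow H^\bullet(H_{g,1,0}, \mathbf{Q}) = \mathbf{Q}$ injective; since $H_g$ is connected, this forces $H^\bullet(H_g, \mathbf{Q}) = \mathbf{Q}$. The main obstacle will be the identification used in the previous paragraph, matching the topological $S^1$-fiber with Arnold's rotation action on branch points; once that is established the rest of the argument is formal.
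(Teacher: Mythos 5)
Your proof is correct and takes essentially the same approach as the paper's. The only cosmetic differences are that (i) you run the Gysin sequence explicitly rather than phrasing the $S^1$-quotient step as ``$H_{g,1,0}$ is the homotopy quotient of $B\beta_{2g+1}$ by rotation,'' and (ii) for $H_g$ you apply the transfer to the direct forgetful cover $H_{g,1,0}\to H_g$ of degree $2g+2$, whereas the paper passes through the auxiliary cover $H_g[2]$ of hyperelliptic surfaces with an ordered Weierstrass divisor, realizing $H_g$ and $H_{g,1,0}$ as its $\Sigma_{2g+2}$- and $\Sigma_{2g+1}$-quotients; these are equivalent, since $H_{g,1,0}\to H_g$ is exactly $H_g[2]/\Sigma_{2g+1}\to H_g[2]/\Sigma_{2g+2}$. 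Your flagging of the fiber/rotation identification as the essential point is apt; the ramification at a Weierstrass point produces a factor of $2$ in how the tangential $S^1$ meets the rotation class (and no factor for a conjugate-pair marking), which, as you say, is harmless over $\mathbf{Q}$. One small caveat: your Gysin step needs $N\geq 2$ strands for Arnold's theorem, which fails for $H_{g,1,0}$ only when $g=0$, but that case is immediate.
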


\begin{proof}
    Arnold \cite{arnoldbraid2} proved that the braid group has the rational cohomology of a circle, with a generator in degree $1$ induced by the rotation action. Since $H_{g,1,0}$ and $H_{g,0,1}$ are quotients of classifying spaces of braid groups $\beta_{2g+1}$ and $\beta_{2g+2}$ by rotation, they both have the rational cohomology of a point. 

    Now consider the space $H_g[2]$ parametrizing closed hyperelliptic surfaces with a total order on their set of Weierstrass points. There are homotopy equivalences $H_g \cong H_g[2]/\Sigma_{2g+2}$ and $H_{g,1,0} \cong H_g[2]/\Sigma_{2g+1}$. In particular, the transfer map identifies $H^\bullet(H_g,\Q)$ and $H^\bullet(H_{g,1,0},\Q)$ with the $\Sigma_{2g+2}$-invariants, resp.\ the $\Sigma_{2g+1}$-invariants, inside $H^\bullet(H_g[2],\Q)$. In particular, $H^\bullet(H_g,\Q)$ injects into $ H^\bullet(H_{g,1,0},\Q)$. 
\end{proof}

\begin{prop}\label{gysin}
    Let $F$ be a $\Q$-local system on $H_{g,1,0}$. Then $H^\bullet(H_g^{1,0},F) \cong H^\bullet(H_{g,1,0},F) \oplus H^{\bullet-1}(H_{g,1,0},F)$. Similarly, $H^\bullet(H_g^{0,1},F) \cong H^\bullet(H_{g,0,1},F) \oplus H^{\bullet-1}(H_{g,0,1},F)$ for a $\Q$-local system on $H_{g,0,1}$.
\end{prop}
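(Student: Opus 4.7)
The plan is to apply the Gysin long exact sequence for oriented circle bundles, with local coefficients on the base. As recorded in the paragraph immediately preceding the statement, the projection $\pi \colon H_g^{1,0} \to H_{g,1,0}$ is a (one-fold) oriented circle bundle; the fiber parametrizes the choice of nonzero tangent vector at the marked Weierstrass point. The essential input I would invoke is the preceding proposition, which tells us that $H^{\bullet>0}(H_{g,1,0},\Q) = 0$.

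Concretely, I would write down the Gysin sequence
\[ \cdots \to H^{n-1}(H_{g,1,0},F) \xrightarrow{\cup e} H^{n+1}(H_{g,1,0},F) \xrightarrow{\pi^*} H^{n+1}(H_g^{1,0},\pi^*F) \xrightarrow{\pi_*} H^n(H_{g,1,0},F) \to \cdots, \]
where $e \in H^2(H_{g,1,0},\Q)$ is the rational Euler class and $\cup e$ denotes its action on $H^\bullet(H_{g,1,0},F)$ via the natural $H^\bullet(H_{g,1,0},\Q)$-module structure. By the cited vanishing, $H^2(H_{g,1,0},\Q) = 0$, so $e$ itself is zero and the map $\cup e$ is identically zero. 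The long exact sequence therefore degenerates into short exact sequences
\[ 0 \to H^{n+1}(H_{g,1,0},F) \xrightarrow{\pi^*} H^{n+1}(H_g^{1,0},\pi^*F) \xrightarrow{\pi_*} H^n(H_{g,1,0},F) \to 0, \]
which split (noncanonically) since everything in sight is a $\Q$-vector space. This gives exactly the claimed decomposition.

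I do not foresee any genuine obstacle in this argument. The one point that warrants a line of care is the availability of the Gysin sequence with twisted coefficients on the base, but this is standard: it is the long exact sequence obtained from the Leray--Serre spectral sequence of the fibration $S^1 \to H_g^{1,0} \to H_{g,1,0}$ with coefficients in $\pi^*F$, which collapses to a two-row spectral sequence because the fiber has nonzero rational cohomology only in degrees $0$ and $1$. The statement for $H_g^{0,1} \to H_{g,0,1}$ is proved by the same argument, with the rational acyclicity of $H_{g,0,1}$ (also supplied by the preceding proposition) playing the role of that of $H_{g,1,0}$.
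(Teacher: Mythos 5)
Your argument is correct and matches the paper's proof: both deduce the splitting from the Gysin sequence for the circle bundle, using the preceding proposition to conclude that the rational Euler class vanishes. The only difference is that you spell out the degeneration into split short exact sequences, while the paper leaves this routine step implicit.
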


\begin{proof}
    This follows from the Gysin sequence, since the preceding proposition implies that the circle bundles $H_g^{1,0}\to H_{g,1,0}$ and $H_g^{0,1}\to H_{g,0,1}$ have vanishing Euler class rationally. 
\end{proof}

	\subsection{Algebraic and operadic structure}
	
	\begin{para}\label{braid2}
		Denote by $\mathrm{Conf}_k(\mathbb D)$ the configuration space of $k$ distinct unordered points in the interior of the disk $\mathbb D$. From \S\ref{braid1} there are homotopy equivalences $H_g^{1,0} \simeq \mathrm{Conf}_{2g+1}(\mathbb D)$ and $H_g^{0,1} \simeq \mathrm{Conf}_{2g+2}(\mathbb D)$. (Indeed, $B\Diff_\partial(\mathbb D \setminus \{z_1,\ldots,z_k\}) \simeq \mathrm{Conf}_k(\mathbb D)$, since the contractible topological group $\Diff_\partial(\mathbb D)$ acts transitively on $\mathrm{Conf}_k(\mathbb D)$ with stabilizer $\Diff_\partial(\mathbb D \setminus \{z_1,\ldots,z_{k}\})$.) In particular, the standard framed $E_2$-structure on configurations of points in the disk defines for us a structure of framed $E_2$-algebra on 
		$$ \coprod_{g \geq 0} H_g^{1,0} \sqcup H_g^{0,1}.$$
		Stabilization is given by multiplication in this $E_2$-algebra  by a fixed element of $H_0^{1,0}$, and produces a sequence of maps
		$$\ldots \to H_g^{1,0} \to H_g^{0,1}\to H_{g+1}^{1,0} \to H_{g+1}^{0,1} \to \ldots$$ 
		which in terms of moduli spaces of surfaces is given by gluing on a pair of pants, alternating between gluing along the waist or the two cuffs. Given that the hyperelliptic mapping class groups for surfaces with boundary are nothing but the braid groups, we may now identify the following two classical theorems of Arnold \cite{arnoldbraid2} and Segal \cite{segalconfiguration} as the ``hyperelliptic analogues'' of Harer's stability theorem (\ref{harer}), and the Madsen--Weiss theorem (\cref{calculate rat hom}), respectively. 
	\end{para}
	
	\begin{thm}[Arnold]
		The stabilization maps $H_i(\mathrm{Conf}_n(\mathbb D),\Z) \to H_{i}(\mathrm{Conf}_{n+1}(\mathbb D),\Z)$ are isomorphisms for $i\leq\tfrac{n-2} 2$.
	\end{thm}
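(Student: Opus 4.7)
My plan is to prove Arnold's stability theorem via the method of semi-simplicial resolutions, within the general framework of Randal-Williams--Wahl \cite{randalwilliamswahl} referenced earlier in the paper. Since $\mathrm{Conf}_n(\mathbb D)$ is a $K(\beta_n,1)$, this is equivalent to homological stability for the braid groups. I proceed by induction on $n$, assuming the stated stability range holds for all $\mathrm{Conf}_k(\mathbb D)$ with $k \leq n$.

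For the inductive step, I construct an augmented semi-simplicial space $W_\bullet \to \mathrm{Conf}_{n+1}(\mathbb D)$ whose $p$-simplices parametrize pairs $(c,\sigma)$ consisting of a configuration $c \in \mathrm{Conf}_{n+1}(\mathbb D)$ together with an ordered tuple $\sigma$ of $p+1$ distinct points of $c$ lying in standard position inside a fixed collar neighborhood of the boundary; face maps are given by forgetting distinguished points. Using the free action translating points along the collar, each $W_p$ deformation retracts to a space homotopy equivalent to $\mathrm{Conf}_{n-p}(\mathbb D)$. Consequently, the spectral sequence associated to the skeletal filtration of $|W_\bullet|$ has $E^1_{p,q} = H_q(\mathrm{Conf}_{n-p}(\mathbb D))$ and converges to the homology of $\mathrm{Conf}_{n+1}(\mathbb D)$ in a range controlled by the connectivity of the augmentation. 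The $d^1$-differentials are alternating sums of the stabilization maps, which telescope in the standard way, so that ``collar-point''-rows contribute essentially as the mapping cone of stabilization.

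The crucial geometric input is the connectivity estimate: the augmentation $|W_\bullet| \to \mathrm{Conf}_{n+1}(\mathbb D)$ is $\lfloor (n-1)/2 \rfloor$-connected. I would establish this by isotoping as many as possible of the $n+1$ points into the collar and observing that the resulting semi-simplicial space of orderings of collar-accessible points has the homotopy type of a join of finite sets, whose connectivity grows linearly in the number of available points. Combined with the inductive hypothesis, a spectral-sequence diagram chase on $E^\infty$ yields the claimed isomorphism $H_i(\mathrm{Conf}_n(\mathbb D),\Z) \to H_i(\mathrm{Conf}_{n+1}(\mathbb D),\Z)$ for $i \leq (n-2)/2$. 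The main obstacle is precisely the connectivity estimate for the augmented semi-simplicial space: this is the heart of every homological stability argument of this type, and proving the sharp slope-$\tfrac 1 2$ bound requires careful geometric analysis of how many of the $n+1$ points can be simultaneously isotoped into the collar with disjoint supports.
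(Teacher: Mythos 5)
The paper does not actually prove this theorem; it cites it as a classical result of Arnold from \cite{arnoldbraid2}. Arnold's original argument was not a semi-simplicial resolution argument at all: he obtained stability as a corollary of explicit computations of the (co)homology of braid groups via a cell structure on configuration space (related to what is elsewhere in this paper called the Fuks stratification, cf.\ \cref{fuks}). Your proposal, by contrast, is the modern Randal-Williams--Wahl-style proof. These are genuinely different routes, and it is worth being clear about what each buys. Arnold's computational approach yields not just stability but explicit knowledge of the unstable groups; the semi-simplicial approach you propose is less explicit but far more robust, and is exactly what the paper needs later (homological stability with polynomial coefficient systems $S^\lambda(V)$, $V_\lambda$, and the uniform bounds of \cite{MPPRW} all run through this machinery, not through Arnold's computations). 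So the route you propose is the ``right'' one in the context of this paper, even though it is not the source the paper cites for the statement in question.

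That said, your sketch leaves the heart of the argument underdeveloped, and the part you sketch is not stated correctly. The identification of $W_p$ with (a space homotopy equivalent to) $\mathrm{Conf}_{n-p}(\mathbb D)$ is fine, and the telescoping $d^1$-differentials are standard. But the claim that the connectivity of $|W_\bullet| \to \mathrm{Conf}_{n+1}(\mathbb D)$ can be read off from ``a join of finite sets'' of ``collar-accessible points'' is a fiberwise heuristic, and the augmented semi-simplicial space is not a fibration over $\mathrm{Conf}_{n+1}(\mathbb D)$, so you cannot literally compute the connectivity of the total space from connectivity of fibers. The actual argument (as in \cite{randalwilliamswahl}, or equivalently via the high connectivity of the arc complex of a punctured disk going back to Hatcher and Hatcher--Wahl) requires either a microfibration or bad-simplex/link argument, or a direct inductive connectivity proof for the relevant simplicial complex, and getting the sharp slope-$\tfrac12$ range $i \le (n-2)/2$ rather than the generic slope-$\tfrac23$ range in the RWW framework requires verifying a stronger connectivity hypothesis specific to braid groups. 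As written, your proposal identifies the correct framework but asserts rather than proves the one genuinely difficult estimate, and mislabels the mechanism that would establish it.
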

	
	\begin{thm}[Segal]
		The group-completion of $\, \coprod_{n \geq 0} \mathrm{Conf}_n(\mathbb D)$ is weakly equivalent to $\Omega^2 S^2$. 
	\end{thm}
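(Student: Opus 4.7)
The plan is to construct an $E_2$-algebra map from $\coprod_n \mathrm{Conf}_n(\mathbb{D})$ to $\Omega^2 S^2$ by a scanning procedure, and then show this map is a group-completion. For any open subset $U \subseteq \R^2$, let $C(U)$ denote the configuration space of finite unordered subsets of $U$, topologized in such a way that points may appear and disappear along $\partial U$, with the empty configuration as basepoint. Scanning by translation then defines, for a small open square $Q = (-\epsilon,\epsilon)^2$, a map
$$ \varphi \colon \coprod_n \mathrm{Conf}_n(\mathbb{D}) \longrightarrow \map_c(\R^2, C(Q)) \simeq \Omega^2 C(Q), $$
sending a configuration $z$ to $(s,t) \mapsto (z - (s,t)) \cap Q$, which takes the basepoint value outside a compact set; the equivalence on the right uses that $\map_c(\R^2, -)$ agrees with $\Omega^2$ applied to the one-point compactification of the source.

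The next step is to identify $C(Q) \simeq S^2$. The subspace $C_{\leq 1}(Q) \subseteq C(Q)$ of configurations with at most one point is canonically homeomorphic to the one-point compactification $Q^+ \cong S^2$, with the empty configuration serving as the point at infinity. I would produce a strong deformation retraction of $C(Q)$ onto $C_{\leq 1}(Q)$ by a zoom-in argument: for a nonempty configuration, dilate the configuration about a suitably chosen centerpoint until all but one of its points has been pushed out through $\partial Q$, leaving a single point; the empty configuration remains empty throughout. This ``zooming in'' ultimately exhibits $C(Q)$ as homotopy equivalent to $S^2$, and hence $\Omega^2 C(Q) \simeq \Omega^2 S^2$.

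Finally, I would verify that the scanning map $\varphi$ is a group-completion. The map is tautologically a morphism of $E_2$-algebras: on the left, the $E_2$-operation assembles configurations inside pairwise disjoint little disks in $\mathbb{D}$, while on the right it is loop concatenation, and the two agree by construction of $\varphi$. Since $\coprod_n \mathrm{Conf}_n(\mathbb{D})$ is essentially the free $E_2$-algebra on a point (by the very definition of the little $2$-disks operad), and $\varphi$ sends the generator to a generator of $\pi_0(\Omega^2 S^2) \cong \Z$, it follows formally that the induced map on group-completions is a weak equivalence. Alternatively, following \cite{segalconfiguration}, one can apply the McDuff--Segal group-completion theorem \cite{mcduffsegal} and reduce to showing that $\varphi$ induces an isomorphism on $H_\bullet(-)[\pi_0^{-1}]$, which combined with Arnold's stability theorem pins down both sides by a direct computation. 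The main technical obstacle lies in the second step: ensuring that the zoom-in deformation retraction is continuous across the stratum of empty configurations, and that the scanning construction is strictly compatible with (or at least respects up to coherent homotopy) the $E_2$-algebra structures on source and target. These points are delicate but standard, and are handled by a suitable choice of model for the configuration spaces.
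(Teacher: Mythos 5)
Your overall architecture matches the paper's: scan, identify the ``local'' configuration space as $S^2$ by zooming in, and show the scanning map is a group-completion. The zoom-in identification of $C(Q)$ with $S^2$ is essentially the paper's Proposition on $\Conf_{B^n} \simeq S^n$ (rescale by the reciprocal of the \emph{second}-smallest distance to the origin, so that the map is continuous across the empty configuration — your ``suitably chosen centerpoint'' hides exactly this choice). One genuine difference of shape is that you scan in both directions at once, mapping directly to $\Omega^2 C(Q)$, whereas the paper scans one direction at a time and iterates, which keeps each step at the level of $E_1$-algebras and avoids having to keep track of $E_2$-coherence of the scanning construction. Both are viable routes; the paper's iterated version is cleaner to make rigorous.

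The serious gap is the final step — that scanning is a group-completion — where you offer two arguments, neither of which works as stated. Argument (a) is circular: the claim that the group-completion of the free $E_2$-algebra on a point is $\Omega^2 S^2$ \emph{is} Segal's theorem (equivalently, a case of May's approximation theorem), and it is not a formal consequence of ``sends a generator to a generator.'' Group-completion has no universal property making the value on a free algebra tautological; this is precisely the nontrivial content one has to prove. Argument (b) is closer in spirit to Segal's original argument but ``pins down both sides by a direct computation'' is doing essentially all the work without saying how: comparing $H_\bullet(\beta_\infty)$ (Fuks/Cohen) to $H_\bullet(\Omega_0^2 S^2)$ (Cohen) is a substantial calculation, not something Arnold's stability theorem hands you, and neither Segal nor the present paper proceeds this way. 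The paper instead proves directly that the composite $BC \to B\Omega' \Conf_{M\times(-\epsilon,\epsilon)} \to \Conf_{M\times(-\epsilon,\epsilon)}$ is a weak equivalence, by an explicit lifting argument: given a map from $S^q$ into the scanned configuration space, one covers $S^q$ by finitely many open sets over each of which the configuration is empty in a thin strip, uses a partition of unity to build a point of the (reduced) bar construction, and checks surjectivity and injectivity on homotopy groups. That homotopy-theoretic argument is the real content of the theorem and is absent from your proposal.
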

	
	\begin{para}
	It is natural to ask whether the framed $E_2$-structures on $ \coprod_{g \geq 0} H_g^{1,0} \sqcup H_g^{0,1}$ and on $\coprod_{g \geq 0} M_g^1$ are somehow compatible with one another. The short answer is that they are not. A correct statement is that the maps $H_g^{1,0} \to M_g^1$ and $H_g^{0,1} \to M_g^2$ assemble to a homomorphism of \emph{$E_1$-algebras}
	$$ \coprod_{g \geq 0} H_g^{1,0} \sqcup H_g^{0,1} \longrightarrow \coprod_{g \geq 0} M_g^1 \sqcup M_g^2.$$
	This is not, however, a morphism of $E_2$-algebras: in fact, the right-hand side is not even an $E_2$-algebra in any natural way. Let us describe the $E_1$-algebra structure on the right-hand side. Consider the following monoidal category $\mathscr C$: objects are oriented compact surfaces $S$ equipped with an oriented embedding $I \sqcup I \hookrightarrow \partial S$, such that each boundary component of $S$ contains at least one interval. Morphisms are isotopy classes of diffeomorphisms restricting to the identity on a neighborhood of the embedded intervals. Write the unit interval $I = I_L \cup I_R$ as the union of its left and right half. The monoidal structure is given by boundary connect sum: $S\, \natural\, S'$ is the surface obtained by sewing $(I_R \sqcup I_R) \subset S$ to $(I_L \sqcup I_L) \subset S'$. The classifying space of $\mathscr C$ is homotopy equivalent to $ \coprod_{g \geq 0} M_g^1 \sqcup M_g^2$, and the monoidal structure on $\mathscr C$ defines the $E_1$-algebra structure on this disjoint union. The entirely analogous category with only one embedded boundary interval was considered in \cite[Section 5.6]{randalwilliamswahl}, where it was shown to be braided monoidal in a natural way. But there is no natural braiding on the category $\mathscr C$. If one restricts to hyperelliptic surfaces and imposes the condition that the two boundary intervals are interchanged under the hyperelliptic involution, then a natural braiding does exist, which explains the $E_2$-structure on $\coprod_{g \geq 0} H_g^{1,0} \sqcup H_g^{0,1}$. But the braiding uses nontrivially the hyperelliptic involution. The category $\mathscr C$ and its monoidal structure  has been studied in very recent work of Harr--Vistrup--Wahl \cite{harrvistrupwahl} for completely different reasons; namely, it turns out that applying the general homological stability machinery of \cite{randalwilliamswahl,krannichtopologicalmoduli} to this particular category reproduces Harer stability with optimal slope.
	\end{para}
	
	\begin{para}In the situation of the previous paragraph, one may restrict attention to the subcategory of $\mathscr C$ in which the two intervals lie on different boundary components. On this category one can construct a braiding as in \cite{randalwilliamswahl}, which defines an $E_2$-structure. In fact, if one restricts attention only to surfaces with two components, and braids with an even number of strands, then one obtains a map
		$$ \coprod_{g \geq 0} H_g^{0,1} \longrightarrow \coprod_{g \geq 0} M_g^2$$
		which is naturally a homomorphism of framed $E_2$-algebras. A very geometric way of describing this homomorphism, and the $E_2$-structure on the right-hand side, was given by Segal and Tillmann \cite{segaltillmann} --- see Figure 3 of their paper. If Miller's $E_2$-structure on $\coprod_{g \geq 0} M_g^1$ is called ``pair of pants''-multiplication, then this construction should logically be called ``pair of pair of pants''-multiplication, but that would be silly. 
	\end{para}

\begin{para}\label{hyp operad}
	As noted in \cref{subsection-moduli of surfaces}, the spaces $\{M_g^n\}$ form a modular operad. For the moduli of hyperelliptic surfaces there are two types of natural gluing maps:
	$$ H_{g_1}^{n_1+1,m_1} \times H_{g_2}^{n_2+1,m_2} \longrightarrow H_{g_1+g_2}^{n_1+n_2,m_1+m_2}$$
	and 
	$$ H_{g_1}^{n_1,m_1+1} \times H_{g_2}^{n_2,m_2+1} \longrightarrow H_{g_1+g_2+1}^{n_1+n_2,m_1+m_2}.$$
	Self-gluings are not allowed, however, as they would raise the genus of the curve being covered. Abstractly one would say that the spaces $\{H_g^{n,m}\}$ form a two-colored $\N$-graded cyclic operad for which one of the gluing operations is homogeneous of degree $0$, and the other is homogeneous of degree $1$. 	The framed $E_2$-structure on $\coprod_{g \geq 0} H_g^{0,1}$ can be understood in these operadic terms, too. We may consider a hyperelliptic surface of genus $-1$ to be a surface of the form $S^2 \sqcup S^2$, with the involution switching the two components. This surface is hyperelliptic according to \cref{defn hyperelliptic}, and the fact that its genus is $-1$ can be seen from the fact that the hyperelliptic involution has $2g+2$ fixed points. Then the collection $\{H_{-1}^{0,m}\}$ is a cyclic suboperad of $\{H_g^{n,m}\}$, and it acts naturally on $\coprod_{g \geq 0} H_g^{0,1}$. Now the operad $\{H_{-1}^{0,m}\}$ parametrizes pairs of genus zero surfaces with $m$ holes, with an involution switching the two surfaces, and for each conjugate pair of boundary components one of the two boundaries is chosen as distinguished. It contains the framed little disk operad as the suboperad parametrizing surfaces with all distinguished boundaries on the same component, which is a roundabout way of describing the same action as described in the previous paragraph.
\end{para}

\subsection{Twisted coefficients}

\begin{para}
	Recall from \S\ref{standard coefficient system} the standard local system $V$ on the space $M_g^n$, of rank $2g-1+n$ if $n>0$. We denote by the same name the pullback of this local system to the spaces $H_g^{n,m}$ along the natural map $H_g^{n,m} \to M_g^{n+2m}$. 
\end{para}

\begin{para}
	In particular, the local system $V$ from the previous paragraph defines a sequence of representations of all braid groups, via the identifications $H_g^{1,0} \simeq B\beta_{2g+1}$ and $H_g^{0,1} \simeq B\beta_{2g+2}$ discussed in \S\S\ref{braid1}--\ref{braid2}. For each $n$ we get a representation of $\beta_n$ of rank $n-1$. This representation is classically called the \emph{integral reduced Burau representation}. 
\end{para}

\begin{para}
	As mentioned in \S\S\ref{twisted1}--\ref{twisted2}, Randal-Williams--Wahl have set up a general framework for proving homological stability for sequences of groups, including with certain polynomial coefficient systems. One example of a sequence of groups discussed in their paper is the sequence of all braid groups, and an example of a polynomial coefficient system in case of the braid groups, is precisely the reduced Burau representation. The reduced Burau representation $V$ is split polynomial of degree $1$, and for any Schur functor $S^\lambda$, the sequence of representations $S^\lambda V$ is a split polynomial coefficient system of degree $\vert \lambda \vert$. See \cite[Examples 4.3 and 4.15, and Theorem D]{randalwilliamswahl}. From their general homological stability result we see in particular the following:
\end{para}

\begin{thm}[Randal-Williams--Wahl] The maps $H_i(\beta_n,S^\lambda V) \to H_i(\beta_{n+1},S^\lambda V)$ are isomorphisms in the range $i< \tfrac{n-2-\vert\lambda\vert}{2}$. 
\end{thm}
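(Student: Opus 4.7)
The plan is to realize this as a direct application of the general homological stability machinery of Randal-Williams--Wahl \cite{randalwilliamswahl}, once we have verified that $S^\lambda V$ is a split polynomial coefficient system of the correct degree on the sequence of braid groups. First I would set up the braided monoidal groupoid whose automorphism groups are the braid groups: one option is the groupoid of naturally numbered finite sets with morphisms the braid groups, monoidal via disjoint union, with the braiding induced from the standard half-twist. Concretely this is the groupoid obtained from the framed $E_2$-algebra $\coprod_{n\geq 0}\mathrm{Conf}_n(\mathbb{D})$ (which is already discussed as an $E_2$-algebra in the excerpt). The Randal-Williams--Wahl framework requires the choice of a stabilizing object, and here the natural choice is the object corresponding to one strand, whose repeated tensor product gives the maps $\beta_n\hookrightarrow\beta_{n+1}$.

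Next I would verify that the reduced Burau representation $V$ is a split polynomial coefficient system of degree $1$. The key step is to exhibit a natural short exact sequence of coefficient systems
\[ 0\longrightarrow V_n\longrightarrow V_{n+1}\longrightarrow \mathbf{1}\longrightarrow 0\]
together with a natural splitting, where $\mathbf{1}$ denotes the constant coefficient system $\mathbf{Q}$. Geometrically: $V_n$ is the reduced $H^1$ of the cyclic double cover branched at $n$ points; adding an extra branch point extends this cohomology by a one-dimensional piece corresponding to the extra Weierstrass cycle. This is the content of \cite[Example 4.15]{randalwilliamswahl}. Having established that $V$ is split polynomial of degree $1$, the closure properties of split polynomial coefficient systems under tensor product and under the pseudo-abelian structure (see \cite[Proposition 4.18]{randalwilliamswahl}) imply that $V^{\otimes k}$ is split polynomial of degree $k$, and hence the retract $S^\lambda V\subset V^{\otimes |\lambda|}$ is split polynomial of degree $|\lambda|$.

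With these preliminaries in place, I would invoke Randal-Williams--Wahl's main theorem (specifically their Theorem A combined with Theorem B or the analogue for split coefficient systems). For the braid groups with trivial coefficients the stability slope is $\tfrac{1}{2}$, and their general theorem states that for a split polynomial coefficient system $F$ of degree $d$, the stability map induces an isomorphism on $H_i$ in essentially the same slope shifted by $d$: here that produces the bound $i<\tfrac{n-2-|\lambda|}{2}$. The numerical constants (the $-2$) come from Arnold's original stability range for the braid groups in Segal's form, combined with the degree-shift in the spectral-sequence comparison of \cite[Section 5]{randalwilliamswahl}.

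The main obstacle is the verification of the polynomial degree of $V$: one must check that the cokernel of $V_n\hookrightarrow V_{n+1}$ is indeed the trivial coefficient system and that this identification respects the braided monoidal structure and admits a splitting functorial in the groupoid. Once this is done, the statement follows by a formal application of the general machine, with no further calculation.
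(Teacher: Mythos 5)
Your proposal is correct and follows essentially the same route as the paper: the paper simply cites Randal-Williams--Wahl's Examples 4.3 and 4.15 (establishing that $V$ is split polynomial of degree $1$ for the braided monoidal groupoid underlying the braid groups, and hence $S^\lambda V$ is split polynomial of degree $\vert\lambda\vert$) together with their Theorem D to conclude. Your argument is just a spelled-out version of that citation, with the same reduction to verifying the degree of $V$ and then invoking the general machine.
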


\begin{rem}As in \cref{uniform remark} one can also define polynomial coefficient systems $V_\lambda$, which satisfy homological stability with a uniform bound by \cite{MPPRW}. We refer to \cite[Section 4]{MPPRW} for how to define these coefficient systems.
\end{rem}

\begin{prop}\label{odd vanishing}
    If $n$ is odd and $\vert\lambda\vert$ is odd, then $H_\bullet(\beta_n,S^\lambda V)$ vanishes (rationally).
\end{prop}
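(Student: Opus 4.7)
The plan is to exploit the center of the braid group, together with the hyperelliptic interpretation of the Burau representation, via the standard principle that if $z$ is a central element of $G$ acting on a $\mathbb Q[G]$-module $M$ as a nontrivial scalar, then $H_\bullet(G,M)=0$. Since $Z(\beta_n)$ is generated by the full twist $\Delta^2$, it will suffice to show that when $n$ is odd, $\Delta^2$ acts on $V$ as $-1$; then it acts on $S^\lambda V$ as $(-1)^{|\lambda|}$, which is $-1$ when $|\lambda|$ is odd, and the claim follows from the central element principle.

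First I would justify the centrality-plus-scalar argument. For any central $z\in G$ and any $G$-module $M$, the endomorphism of $H_\bullet(G,M)$ induced by $(c_z,z\cdot)$, where $c_z$ is conjugation on $G$, equals the identity; when $z$ is central, $c_z=\mathrm{id}$, so the induced action of $z$ on $H_\bullet(G,M)$ (via its action on $M$) is the identity. Hence if $z$ acts on $M$ by a scalar $\alpha$, we obtain $(\alpha-1)\cdot H_\bullet(G,M)=0$, so $H_\bullet(G,M)=0$ rationally whenever $\alpha\neq 1$.

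Next I would identify the action of $\Delta^2$ on $V$ using the hyperelliptic interpretation from \S\ref{braid1}--\S\ref{braid2}, where $B\beta_{2g+1}\simeq H_g^{1,0}\simeq \mathrm{Conf}_{2g+1}(\mathbb A^1)/\mathbb G_a$. Under this identification the generator $\Delta^2\in Z(\beta_{2g+1})$ corresponds to the loop obtained by rotating a configuration $\{z_1,\dots,z_{2g+1}\}$ by $2\pi$ around its centroid. The associated family of hyperelliptic curves $C_t\colon y^2=\prod_i(x-e^{2\pi i t}z_i)$ admits an explicit trivialization
\[
\phi_t\colon C_0\longrightarrow C_t,\qquad (x,y)\longmapsto \bigl(e^{2\pi i t}x,\, e^{\pi i t n}y\bigr),
\]
and at $t=1$ one obtains the automorphism $(x,y)\mapsto(x,(-1)^n y)$ of $C_0$. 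For $n$ odd this is precisely the hyperelliptic involution $\iota$, which acts as $-1$ on $V=H^1(C_0,\mathbb Q)$ (its $+1$-eigenspace would be $H^1$ of the quotient $\mathbb P^1$, which vanishes). Combining with the first step gives the result.

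The only genuinely delicate point is the identification of the monodromy with the full twist $\Delta^2$: one must check that the global rotation in $\mathrm{Conf}_n(\mathbb C)$ truly represents $\Delta^2\in\beta_n=\pi_1\mathrm{Conf}_n(\mathbb D)$, and that the $\mathbb G_a$-quotient taken in passing from $\mathrm{Conf}_{2g+1}(\mathbb A^1)$ to $H_g^{1,0}$ does not introduce a discrepancy. Both are standard, but are the only steps where some care is required; the remainder of the argument is a direct application of the two well-known principles above.
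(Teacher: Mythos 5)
Your proof is correct, and it takes a genuinely different route from the paper's. Both are ``center kills'' arguments, but they use different central elements in different groups. The paper first invokes the Gysin sequence of \cref{gysin} to pass from $H_g^{1,0}$ (where $\pi_1 = \beta_n$, and the hyperelliptic involution is \emph{not} a mapping class preserving the boundary parametrization) down to $H_{g,1,0}$, whose fundamental group is the hyperelliptic mapping class group of a once-punctured surface; there the hyperelliptic involution \emph{is} central, acts on $V$ by $-1$, and the argument closes. You instead stay inside $\beta_n$ itself, using that the full twist $\Delta^2$ generates $Z(\beta_n)$, and compute its action on $V$ directly via the explicit monodromy of the family $y^2 = \prod_i(x - e^{2\pi i t}z_i)$, obtaining $(-1)^n = -1$. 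Your route is more self-contained: it requires no moduli-space reduction, only the classical fact that $\Delta^2$ is the rotation loop in configuration space and a two-line monodromy computation (equivalently, the fact that the reduced Burau representation sends $\Delta^2 \mapsto t^n I$, specialized at $t=-1$). The paper's route is shorter given the surrounding framework, and it is the version that generalizes immediately to the closed surface case $H_g$ (the remark following the proof). Conceptually the two arguments are siblings: the image of $\Delta^2$ in $\pi_1(H_{g,1,0})$ differs from the hyperelliptic involution by a boundary/puncture class acting trivially on $V$, so your monodromy computation and the paper's appeal to the involution express the same geometry one stage higher and one stage lower in the tower $\beta_{2g+1}\to \pi_1(H_{g,1,0})$.
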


\begin{proof}By \cref{gysin} it suffices to show that $H_\bullet(H_{g,1,0},S^\lambda V)=0$ for all $g$. This is the ``center kills'' argument: the hyperelliptic involution is a central element of the hyperelliptic mapping class group, and it acts on $S^\lambda V$ as multiplication by $(-1)^{\vert\lambda\vert}$. By functoriality of group homology it acts as multiplication by $(-1)^{\vert\lambda\vert}$ on group homology, but it also necessarily acts trivially, so the homology vanishes. \end{proof}

\begin{rem}The same argument shows that $H^\bullet(H_g,S^\lambda(V))=0$ for $\vert\lambda\vert$ odd. However, the homology of $S^\lambda(V)$ does not necessarily vanish on $H_{g,0,1}$, except of course in the stable range: the point is that a hyperelliptic surface marked at a Weierstrass point always has an automorphism given by the hyperelliptic involution, but a surface marked at a non-Weierstrass point does not.
\end{rem}

\subsection{Hyperelliptic surfaces in a background space}\label{hyp target space}

\begin{para}We will now define spaces $H_g^{n,m}(X)$ by analogy with the spaces $M_g^n(X)$. In the hyperelliptic setting, it seems most natural to suppose that $X$ is a space with an action of $\Sigma_2$ (the symmetric group on two elements), and we are parametrizing hyperelliptic curves with a $\Sigma_2$-equivariant map to $X$. We remark that considering equivariant maps gives a strictly more general set-up: for any target space $Y$, arbitrary continuous maps to $Y$ are in bijection with $\Sigma_2$-equivariant maps to $Y \times Y$ (where the $\Sigma_2$-action on $Y \times Y$ is by swapping the factors).
\end{para}

\begin{para}Let $X$ be a based space with a $\Sigma_2$-action. We define $H_g^{n,m}(X)$, the space of hyperelliptic genus $g$ surfaces equipped with an equivariant map to $X$, as the space
	$$ \mathrm{map}_\ast^{\Sigma_2}(S_{g,n,m}/\partial S_{g,n,m},X) /\!/ \mathrm{Hyp}_\partial(S_{g,n,m}),$$
	using the notation of \S\ref{hyp definition}, and where $\mathrm{map}_\ast^{\Sigma_2}(-)$ denotes the $\Sigma_2$-equivariant based mapping space. As in \S\ref{background space definition} it is not hard to formulate a definition which does not make use of the choice of a reference surface.
\end{para}

\begin{rem}Our spaces of hyperelliptic surfaces with an equivariant map to a target space are similar to, but distinct from, the \emph{configuration-mapping spaces} of \cite{evw2}. The configuration-mapping spaces parametrize the data of a configuration $z$ of points (say in a disk $D$), and a continuous map $D \setminus z \to Y$ for some fixed target space $Y$. If $Y$ is a topological quotient stack $[X/C_2]$, then this is the same data as an \'etale double cover of $D \setminus z$ equipped with an equivariant map to $X$. The spaces we consider here parametrize instead the datum of a \emph{branched} double cover of $D$ with branching along $z$, equipped with an equivariant map to $X$. Since an \'etale double cover of $D \setminus z$ extends uniquely to a branched double cover of $D$, the only significant difference is that we in addition specify a continuous extension of the map to $X$ across the ramification points. 
\end{rem}

\begin{para}
	The space $\coprod_{g \geq 0} H_g^{1,0}(X) \sqcup H_g^{0,1}(X)$ is an $E_2$-algebra, although the $E_2$-structure is somewhat subtle. There is a less subtle $E_2$-subalgebra $\coprod_{g \geq 0} H_g^{0,1}(X)$ with a map of $E_2$-algebras to $\coprod_{g \geq 0} M_g^2(X)$. 
\end{para}

\begin{para}
	As in \ref{fiber sequence} there is a fibration sequence
	$$ \mathrm{map}_\ast^{\Sigma_2}(S_{g,n,m}/\partial S_{g,n,m},X) \to H_g^{n,m}(X) \to H_g^{n,m}.$$ 
	As in \cref{target} it turns out that for simply connected $X$, the $d$th homology of the fiber defines a coefficient system of degree $\leq d$, so that these spaces satisfy homological stability as $g \to \infty$. Since we will not need this statement in full generality, we omit the proof; the result will be clear in the case of concern to us, where $X$ is an Eilenberg--MacLane space.  
\end{para}

\begin{thm}\label{thm1}
	Let $A$ be a $1$-connected chain complex. Consider $KA$ as a $\Sigma_2$-space via the action of $\Sigma_2$ on $A$ by multiplication by $-1$. The Serre spectral sequence for the fibration $H_g^{1,0}(KA) \to H_g^{1,0}$ degenerates for any $g$, and there is an isomorphism \[ H_\bullet(H_g^{1,0}(KA),\Q) \cong \bigoplus_\lambda S^\lambda(A) \otimes H_\bullet(H_g^{1,0},S^\lambda(V[-1])).\]
\end{thm}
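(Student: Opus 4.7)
\begin{para}
My plan is to adapt Randal-Williams's proof of \cref{first expression} to the $\Sigma_2$-equivariant hyperelliptic setting, in the version outlined in the remark after that theorem, which handles the fact that $A$ is merely $1$-connected and not concentrated in a single degree. The starting point is the fiber sequence
\[ \mathrm{map}_\ast^{\Sigma_2}(S_{g,1,0}/\partial S_{g,1,0}, KA) \longrightarrow H_g^{1,0}(KA) \longrightarrow H_g^{1,0} \]
of \cref{hyp target space}. First I would prove a $\Sigma_2$-equivariant version of \cref{lem1}: for any based $\Sigma_2$-space $X$ and $\Sigma_2$-chain complex $A$, one has $\mathrm{map}_\ast^{\Sigma_2}(X, KA) \simeq K(\widetilde C^\bullet(X, A)^{h\Sigma_2})$, by the same adjunction argument carried out in the $\Sigma_2$-equivariant derived category of abelian groups. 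Since $|\Sigma_2|$ is invertible rationally, homotopy fixed points coincide with strict invariants.
\end{para}

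\begin{para}
The hyperelliptic involution $\iota$ acts as $-1$ on $V = H^1(S_{g,1,0}/\partial,\Q)$ (the standard Prym-type splitting, visible from $y^2=d(x)$) and as $+1$ on the fundamental class in $H^2$. Combined with $\iota$ acting on $A$ by $-1$, the diagonal action on $\widetilde H^\bullet(S/\partial S, \Q) \otimes A$ is trivial on the $V \otimes A[-1]$ summand and is $-1$ on the $A[-2]$ summand, so the $\Sigma_2$-invariants reduce to $V \otimes A[-1]$. Hence the fiber is rationally a $K(V \otimes A[-1])$, whose rational homology is the free graded-commutative algebra $\Sym(V \otimes A[-1])$. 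To obtain the parametrized refinement --- which simultaneously proves Serre spectral sequence degeneration and pins down the correct monodromy representation --- I would follow the three-step argument from the remark after \cref{first expression}: parametrize the equivariant \cref{lem1}; parametrize the rational homology calculation for a generalized Eilenberg--MacLane space; and prove parametrized equivariant formality of the universal hyperelliptic surface over $H_g^{1,0}$. The geometric input for the last step is that $H_g^{1,0}$ carries a tautological Weierstrass marking, i.e.\ a $\Sigma_2$-equivariant section of the universal hyperelliptic surface bundle landing at a fixed point of the hyperelliptic involution; this section splits the cochain complex off its cohomology, equivariantly and parametrically.
\end{para}

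\begin{para}
Once the parametrized quasi-isomorphism
\[ C_\bullet(\mathrm{map}_\ast^{\Sigma_2}(S/\partial S, KA), \Q) \;\simeq\; \Sym(V \otimes A[-1]) \]
is in hand, the Serre spectral sequence of the fiber sequence degenerates immediately, and the Cauchy identity $\Sym(V \otimes A[-1]) = \bigoplus_\lambda S^\lambda(A) \otimes S^\lambda(V[-1])$ gives the asserted decomposition. The hardest step will be the parametrized equivariant formality: one must verify that the Weierstrass section produces a splitting which is simultaneously $\Sigma_2$-equivariant and parametrized over $H_g^{1,0}$, and that on cohomology it recovers the expected $\Sigma_2$-action with $\iota$ acting as $-1$ on $V$ and as $+1$ on the fundamental class. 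The non-equivariant parametrized formality is already somewhat delicate; the crucial point is that because the marking on $H_g^{1,0}$ is a Weierstrass point, and therefore $\Sigma_2$-fixed, the splitting can be made equivariant. In the special case where $A$ is concentrated in a single degree, one could instead invoke the cleaner weight argument of \cref{first expression}, which bypasses the formality step.
\end{para}
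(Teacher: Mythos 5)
Your proposal follows the paper's strategy and is substantially correct; in one respect you are actually more careful than the paper. The paper's own proof of this theorem runs the same two steps --- identifying the fiber via the fixed points of $K(A\otimes V[-1]) \times K(A[-2])$, then killing the Serre differentials --- but for the second step it simply writes ``again there can be no nontrivial $\GL(A)$-equivariant differential,'' deferring implicitly to the weight argument of \cref{first expression}. As you correctly observe, that weight argument is only valid when $A$ is concentrated in a single degree (so that the $\GL(A)$-weight on the fiber homology determines the homological degree); for a general $1$-connected chain complex $A$ one must instead invoke the parametrized formality argument sketched in the remark following \cref{first expression}, which is exactly what you propose. So your version fills in a step the paper elides.

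Two small technical points deserve tightening. First, your equivariant version of \cref{lem1} is stated via homotopy fixed points, $\mathrm{map}_\ast^{\Sigma_2}(X,KA) \simeq K(\widetilde C^\bullet(X,A)^{h\Sigma_2})$; but the left-hand side is the \emph{strict} equivariant mapping space, which is the strict fixed-point set of the conjugation action, not a priori the homotopy fixed points. The paper sidesteps this by taking literal fixed points of the explicit $\Sigma_2$-equivariant model $K(A\otimes V[-1]) \times K(A[-2])$, using that $K(-)$ preserves limits and that $(-1)$ has only the origin as a fixed vector in a rational vector space; this is cleaner than arguing through homotopy fixed points. Second, the geometric input you describe as ``a $\Sigma_2$-equivariant section of the universal hyperelliptic surface bundle landing at a fixed point of the hyperelliptic involution'' needs care in the surface-with-boundary model: the boundary circle of $S_{g,1,0}$ is preserved setwise by $\iota$ but has \emph{no} fixed points (on a collar $S^1\times[0,\varepsilon)$ the action is $(z,t)\mapsto(-z,t)$), and the interior branch points are unordered so they give no canonical section. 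What is actually fixed is the \emph{basepoint} of $S_{g,1,0}/\partial S_{g,1,0}$, i.e.\ the collapsed boundary, and the section of the universal surface bundle lands on the boundary; alternatively, in the algebro-geometric model $\H_g^{1,0}$ one does have a tautological Weierstrass marking. With that correction the formality step should go through as you outline.
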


\begin{proof}
	The fiber of $H_g^{1,0}(KA) \to H_g^{1,0}$ is the space $\mathrm{map}_\ast^{\Sigma_2}(S_{g,1,0}/\partial S_{g,1,0},KA)$. Now we have
	$$ \mathrm{map}_\ast(S_{g,1,0}/\partial S_{g,1,0},KA) \simeq K(A \otimes V[-1]) \times K(A[-2]),$$
	just as in the proof of \cref{first expression}, where $V=H^1(S_{g,1,0}/\partial S_{g,1,0})$. Now we take $\Sigma_2$-fixed points: since $\Sigma_2$ acts as multiplication by $-1$ on $A$ and $V$, the action by $\Sigma_2$ fixes the first factor in the product and has the origin as its only fixed point in the second factor. It follows that the homology of the fibers is $\mathrm{Sym}(A \otimes V[-1])$. Thus we are precisely in the situation of \cref{first expression}, again there can be no nontrivial $\mathrm{GL}(A)$-equivariant differential, and the result follows. 
\end{proof}

\begin{rem}
	The same argument would work for $H_g^{0,1}$, to show similarly that $$H_\bullet(H_g^{0,1}(KA),\Q) \cong \bigoplus_\lambda S^\lambda(A) \otimes H_\bullet(H_g^{0,1},S^\lambda(V[-1])),$$ where $V$ now denotes the rank $2g+1$ local system given by $H^1(S_{g,0,1}/\partial S_{g,0,1})$. For larger values of $n$ and $m$ the calculation changes since it is no longer true that the hyperelliptic involution acts as multiplication by $-1$ on $H^1(S_{g,n,m}/\partial S_{g,n,m})$.
\end{rem}

\begin{para}
	To calculate the stable homology of $H_g^{1,0}$ (equivalently, the stable homology of $H_g^{0,1}$) with coefficients in $S^\lambda(V)$ it is therefore enough to compute the homology of $H_g^{1,0}(KA)$ as an analytic functor of $A$ in a stable range. We will carry this out in the next two sections of the paper. Let us already now state what the outcome will be:
\end{para}

\begin{thm}[Main theorem of Section \ref{scanning section}]\label{thm2}The group-completion of $\coprod_{g \geq 0} H_g^{0,1}(KA)$ is a certain explicit double loop space $\Omega^2 W(A)$.
\end{thm}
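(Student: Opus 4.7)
The plan is to carry out an equivariant version of Segal's proof that the group-completion of $\coprod_n \mathrm{Conf}_n(\mathbb D)$ is $\Omega^2 S^2$, using iterated scanning maps. First I would define the relevant target spaces. Let $Y(A)$ be the ``configuration with target data'' space parametrizing pairs $(z, f)$ where $z$ is a finite subset of the horizontal strip $\mathbb R \times (-\epsilon,\epsilon)$ and $f$ is a $\Sigma_2$-equivariant pointed map from the branched double cover of the strip ramified over $z$ into $KA$, with points allowed to appear and disappear along the top and bottom edges. Analogously, let $W(A)$ be the same construction but for configurations in a small square $(-\epsilon,\epsilon)^2$ where points can enter and exit through all four sides. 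Both are based at the empty configuration.

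Next I would construct two successive scanning maps. Given $[(z, f)] \in \coprod_g H_g^{0,1}(KA)$ (a configuration of branch points in a disk together with the equivariant cover data), the horizontal scanning map sends it to $t \mapsto (z \cap \mathbb R \times (t-\epsilon, t+\epsilon), f \vert)$, which takes values in $\map_c(\mathbb R, Y(A)) \simeq \Omega Y(A)$. Iterating this by scanning $Y(A)$ vertically produces an equivalence $Y(A) \simeq \Omega W(A)$, and composing yields the scanning map
\[
   \coprod_{g \geq 0} H_g^{0,1}(KA) \longrightarrow \Omega^2 W(A).
\]
This map is a homomorphism of $E_2$-algebras, where the $E_2$-structure on the right is the standard one on a double loop space.

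The main body of the proof is to show that this scanning map is a group completion. Both source and target are $E_2$-algebras, and the argument follows the same pattern as \cite{segalconfiguration} and \cite[p.~226]{cohenladamay}: scanning is a natural transformation of quasifibrations, and one checks by induction on the number of points (and by chopping the strip and the square into smaller pieces) that the map induces a weak equivalence on fibers. Once this is established, the group-completion theorem of McDuff--Segal, combined with \cref{thm1} to guarantee the needed homological stability as $g \to \infty$, upgrades the statement to the desired identification of the group-completion.

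The main obstacle is handling the equivariant structure near branch points. At each point of the configuration, the local branched double cover has a unique fixed point of $\iota$, and the equivariant map is forced to take this fixed point into the $\iota$-fixed subspace $(KA)^{\iota}$. For $\iota$ acting as $-1$ on $A$ this fixed subspace is contractible, so the local data near a branch point is essentially trivial, but weaving this consistently into the topology of $Y(A)$ and $W(A)$ (continuity as points approach or cross the boundary, compatibility of the cover data under creation/annihilation) is the substantive technical point. The outcome is an explicit model of $W(A)$; the subsequent identification in \cref{subsection-moduli of surfaces}'s spirit --- via ``zooming in'' to configurations of at most one point in the square --- then shows, after localizing away from $2$, that $W(A) \simeq S^2 \vee KA/\langle\iota\rangle$, which is what feeds the quadratic-duality calculation leading to \cref{thmA}.
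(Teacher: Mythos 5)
Your overall blueprint --- iterate Segal-style scanning twice, then zoom in on the local model over a small square --- is the same as the paper's, but you mislocate the key technical difficulty and conflate a couple of logically separate steps.

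The paper sets up the scanning argument in the general language of Hurwitz spaces $\Hur_M$ for an arbitrary finite group $G$, conjugation-invariant $c \subseteq G$, and connected based $G$-space $X$ (\cref{hurwitzdef}), and proves the one-variable scanning equivalence directly by exhibiting $BC \to \Hur_{L\times(-\epsilon,\epsilon)}$ as a weak equivalence using an explicit bar-construction argument (\cref{conf}, \cref{hurw}). The branch-point issue that you single out as ``the main obstacle'' is actually handled almost for free: one simply chooses the scanning windows to avoid the branch locus, gets an \emph{unbranched} $G$-cover of a contractible region, and trivializes it. What genuinely goes wrong --- and what you do not address --- is the case where the one-dimensional factor $L$ is a \emph{closed} interval: then the $G$-torsor over $U_\lambda \times L \times (t_\lambda - r, t_\lambda + r)$ already comes with two boundary trivializations that need not agree, and one cannot simply choose a compatible trivialization. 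This is exactly the situation that arises the first time you scan, since the surface is a square $[0,1]^2$. The paper resolves it by introducing the relative Hurwitz spaces $\Hur_M^J$, proving the fibration lemma $\Hur_M^J \to \map_\ast(J/\partial J,[X/G])$ (\cref{fibration lemma}), and comparing fiber sequences. Any correct proof must deal with this compatibility problem; ``handling the equivariant structure near branch points'' is a red herring.

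Two further points. First, your appeal to the McDuff--Segal group-completion theorem together with ``\cref{thm1} to guarantee the needed homological stability'' is out of place here: the scanning argument proves the delooping statement directly, with no stability input, and \cref{thm1} is a spectral-sequence degeneration result, not a stability theorem. Homological stability (via Randal-Williams--Wahl and McDuff--Segal) enters only later, in the deduction of \cref{plethystic formula for poincare series} from \cref{thm1}, \cref{thm2}, \cref{thm3}. Second, the $W(A)$ that \cref{thm2} produces is the explicit homotopy pushout of \cref{def of W} --- the zooming-in step (\cref{local hurwitz space}) together with the clutching construction (\S\ref{clutching}) identify $\Hur_{B^2}$ with $KA/\!/\langle\iota\rangle \coprod^h_{K(\Z/2,1)} W$. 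The further identification $W(A)\simeq S^2\vee KA/\langle\iota\rangle$, which you cite as the output of scanning, is a rational (in fact $\Z[\tfrac12]$-local) statement proved separately in \cref{first reduction for WA}; folding it into the scanning theorem conflates an integral homotopy-theoretic result with a localized one.
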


\begin{thm}[Main theorem of Section \ref{koszul section}]\label{thm3}
The analytic functor $A \mapsto H_\bullet(\Omega_0^2 W(A),\Q)$ has the Taylor series
	$$ \Exp( z^{-1} \Log(z + \sum_{r \geq 0} z^{-r}h_{2r})-1).$$
\end{thm}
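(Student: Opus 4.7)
The plan is to follow the Koszul duality approach sketched in the introduction. Starting from the identification of $W(A)$ that will emerge from the scanning argument used to prove Theorem \ref{thm2}, I would first recognize $W(A)$ as being rationally equivalent to a wedge $S^2 \vee KA/\langle\iota\rangle$, where $\iota$ acts on $A$ by multiplication by $-1$. This step uses the explicit ``zooming-in'' deformation retract onto the space of configurations of at most one point in a square, with its equivariant map to the target, as described in the proof outline; after localizing away from $2$ the fixed locus of $\iota$ splits off the $S^2$ summand.

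With $W(A)$ identified, I would compute $H^\bullet(W(A),\Q)$ as a graded commutative algebra: the $S^2$ summand contributes an exterior class of degree $2$, and the quotient $KA/\langle\iota\rangle$ contributes the even Veronese subring $\bigoplus_{k\geq 0}\Sym^{2k}(A^\vee)$ of $H^\bullet(KA,\Q)$; all mixed products between the two wedge summands vanish. The space $W(A)$ is formal as a wedge of formal spaces, and the key algebraic point is that $H^\bullet(W(A),\Q)$ is a Koszul quadratic algebra: for the Veronese subring this is standard, and wedging preserves Koszulness since on the level of Koszul duals it corresponds to a free product of algebras.

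I would then invoke Berglund's theorem \cite{berglundkoszulspaces}: for a simply connected formal space with Koszul rational cohomology, $H_\bullet(\Omega W(A),\Q)$ is the Koszul dual associative algebra of $H^\bullet(W(A),\Q)$. A direct analysis of this Koszul dual, keeping track of degrees, should give its class as an analytic functor of $A$. The $S^2$ wedge summand contributes the $z$, coming from the degree $1$ Pontryagin generator of $H_\bullet(\Omega S^2,\Q)$, and the Veronese part contributes the terms $z^{-r}h_{2r}$. Matching of the arity grading with the plethystic description of the free product produces the expression $z + \sum_{r\geq 0} z^{-r}h_{2r}$.

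Finally I would use Milnor--Moore and Poincaré--Birkhoff--Witt to extract the rational homotopy of $\Omega W(A)$: $[\pi_\bullet^\Q\Omega W(A)] = \Log\bigl(z + \sum_{r\geq 0} z^{-r}h_{2r}\bigr)$. Looping once more shifts homological degree by one, corresponding to multiplication by $z^{-1}$, and since $\Omega^2 W(A)$ is a double loop space its rational homotopy Lie algebra is abelian, so $H_\bullet(\Omega_0^2 W(A),\Q) = \Sym\bigl(\tau_{>0}\pi_\bullet^\Q\Omega^2 W(A)\bigr)$. The truncation to strictly positive degrees corresponds to the inner $-1$ in the formula; after applying $\Exp$ one obtains the stated Taylor series. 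The main technical obstacle will be verifying Koszulness of the cohomology algebra functorially in $A$, and carefully matching conventions for the homological/arity grading so that the Koszul dual computation produces precisely $z + \sum_{r\geq 0} z^{-r}h_{2r}$ rather than a variant of it.
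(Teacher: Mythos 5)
You are following essentially the same route as the paper---identify $W(A)\simeq_\Q S^2\vee KA/\langle\iota\rangle$, establish formality and Koszulness of its cohomology (Veronese subring, Backelin--Fr\"oberg), invoke Berglund's theorem, and pass through rational homotopy of iterated loop spaces. Two points need repair before this becomes a proof.

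First, the formality of $KA/\langle\iota\rangle$ cannot be dismissed as ``wedge of formal spaces''; that only reduces you to proving that $KA/\langle\iota\rangle$ \emph{itself} is formal, and this requires an argument. The paper does this via Sullivan's grading-automorphism criterion: the $\Q^\times$-action on $A$ by grading automorphisms descends to an action on the topological space $KA/\langle\iota\rangle$ lifting every grading automorphism of $H^\bullet(KA/\langle\iota\rangle,\Q)$.

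Second, and more seriously, the expression $z+\sum_{r\ge 0}z^{-r}h_{2r}$ is \emph{not} the Taylor series of $H_\bullet(\Omega W(A),\Q)$; it is the reciprocal. The Koszul dual of the cohomology coalgebra is not free on the generators you name, so you cannot read off its Hilbert series directly from those generators. To see the discrepancy concretely, set $A=0$: then $W(A)\simeq_\Q S^2$, the Pontryagin ring $H_\bullet(\Omega S^2,\Q)\cong\Q[x]$ with $|x|=1$ has Taylor series $\sum_k(-z)^k=1/(1+z)$ in the paper's conventions, while your expression specializes to $1+z$. The functional equation for Koszul pairs (the analogue of $P_A(t)P_{A^!}(-t)=1$, here with an extra homological variable) gives $\bigl[H_\bullet(\Omega W(A),\Q)\bigr]=\bigl(z+\sum_{r\ge 0}z^{-r}h_{2r}\bigr)^{-1}$, so the homotopy is $\Log$ of the reciprocal, i.e.\ $-\Log\bigl(z+\sum_{r\ge 0}z^{-r}h_{2r}\bigr)$. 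You drop this minus sign. It is then silently compensated by a second dropped sign: looping shifts homological degree down by one, which under the Koszul sign rule (the $(-z)^i$ convention used throughout) corresponds to dividing by $-z$, not $z^{-1}$. The two errors cancel, which is why your final formula matches the statement, but as written the argument is wrong in two places, and a less fortunate pair of conventions would have left you with a spurious sign in the answer.
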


\begin{para}
	We consequently obtain the following result, from which  \cref{thmA} from the introduction follows.
\end{para}

\begin{thm}\label{plethystic formula for poincare series}There is an identity 
	$$\sum_k \sum_\lambda \dim H_k(H_\infty^{1,0},V_\lambda) (-z)^k s_{\lambda'}  = \Exp( z^{-1} \Log(z + \sum_{r \geq 0} z^{r}h_{2r})-1-h_2)$$
\end{thm}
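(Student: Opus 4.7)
\begin{para}
The plan is to equate two expressions for the analytic functor $A \mapsto H_\bullet(H_\infty^{1,0}(KA),\Q)$, regarded as a class in $\widehat\Lambda^{\gr}$. By \cref{thm1}, combined with homological stability of braid groups with $S^\lambda(V)$ coefficients, its Taylor series equals $\sum_\lambda s_\lambda\cdot [H_\bullet(H_\infty^{1,0}, S^\lambda(V[-1]))]$. On the other hand, \cref{thm2}, the group-completion theorem, and \cref{thm3} jointly identify this same functor with $A \mapsto H_\bullet(\Omega_0^2 W(A),\Q)$, whose Taylor series is $\Exp(z^{-1}\Log(z+\sum_{r\geq 0} z^{-r}h_{2r})-1)$; the transition from $H_\infty^{0,1}$ (to which \cref{thm2} applies) to $H_\infty^{1,0}$ is furnished by stability of the maps $H_g^{1,0}\to H_g^{0,1}$ with polynomial coefficients.
\end{para}

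\begin{para}
To unpack the first expression, use the Koszul sign rule identity $S^\lambda(V[-1]) = S^{\lambda'}(V)[-|\lambda|]$, which shifts Poincar\'e series by a factor of $(-z)^{-|\lambda|}$. Reindexing $\lambda\leftrightarrow\lambda'$, the first expression becomes $\sum_\mu s_{\mu'}(-z)^{-|\mu|}\,Q_\mu(z)$ with $Q_\mu(z) := \sum_j \dim H_j(H_\infty^{1,0},S^\mu(V))\,(-z)^j$. Now apply the substitution $y\leadsto -zy$ in $\widehat\Lambda^{\gr}$, i.e.\ scalar multiplication of the symmetric-function variables by $-z$: this sends $s_{\mu'}(y)\mapsto (-z)^{|\mu|}s_{\mu'}(y)$ and $h_{2r}(y)\mapsto z^{2r}h_{2r}(y)$. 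The first rule cancels the $(-z)^{-|\mu|}$ factors on the left, while the second converts $z^{-r}h_{2r}$ into $z^{r}h_{2r}$ on the right, yielding
\[
\sum_\mu s_{\mu'}(y)\,Q_\mu(z) \;=\; \Exp\!\Big(z^{-1}\Log\big(z+\sum_{r\geq 0} z^{r} h_{2r}\big)-1\Big).
\]
\end{para}

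\begin{para}
Finally, convert from $S^\mu(V)$ to $V_\mu$ coefficients. Combining the dual Cauchy identity $\sum_\lambda s_\lambda(x)\,s_{\lambda'}(y)=\prod_{i,j}(1+x_iy_j)$ with the dual symplectic Cauchy identity $\prod_{i,j}(1+x_iy_j)=\Exp(h_2(y))\sum_\mu s_{\langle\mu\rangle}(x)\,s_{\mu'}(y)$ and expanding $s_\lambda(x)=\sum_\mu [V_\mu:S^\lambda V]\,s_{\langle\mu\rangle}(x)$ in the symplectic Schur basis, extraction of the coefficient of $s_{\langle\mu\rangle}(x)$ yields $\sum_\lambda [V_\mu:S^\lambda V]\,s_{\lambda'}(y) = \Exp(h_2(y))\,s_{\mu'}(y)$. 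Feeding this, together with the stable branching decomposition $Q_\mu(z) = \sum_\nu [V_\nu:S^\mu V]\,R_\nu(z)$ (where $R_\nu(z) := \sum_j \dim H_j(H_\infty^{1,0},V_\nu)\,(-z)^j$), into the displayed identity above rewrites its left-hand side as $\Exp(h_2(y))\sum_\lambda s_{\lambda'}(y)\,R_\lambda(z)$; dividing by $\Exp(h_2)$ produces the claimed formula. The main technical hurdle is the careful bookkeeping of signs, shifts, and changes of basis; the two key observations are that the substitution $y\leadsto -zy$ simultaneously absorbs the $V[-1]$ shift and performs the $z^{-r}\leadsto z^r$ replacement, while the dual symplectic Cauchy identity supplies exactly the $\Exp(h_2)$ factor responsible for the $-h_2$ term in the exponent on the right-hand side.
\end{para}
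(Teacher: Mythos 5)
Your proof follows the paper's argument step for step: equating the two Taylor-series descriptions of $A\mapsto H_\bullet(H_\infty^{1,0}(KA),\Q)$ supplied by Theorems \ref{thm1}, \ref{thm2}, \ref{thm3}, applying the Koszul shift $S^\lambda(V[-1])=S^{\lambda'}(V)[-|\lambda|]$ (your substitution $y\leadsto -zy$ is precisely the paper's ``shift the arity $r$ component in degrees by $r$''), and multiplying by $\Exp(-h_2)$ via the dual symplectic Cauchy identity to pass from $S^\lambda(V)$ to $V_\lambda$ coefficients. Your third paragraph just unpacks the paper's compact appeal to the Cauchy identity into an explicit branching computation, but the mechanism is identical.
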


\begin{proof}
	From Theorem \ref{thm1} we should compute the stable homology of $H_g^{1,0}(KA)$, equivalently of $H_g^{0,1}(KA)$. Since we know homological stability, the group-completion theorem \cite{mcduffsegal} and Theorem \ref{thm2} identifies this with the homology of $\Omega^2_0 W(A)$. Combining Theorems \ref{thm1} and \ref{thm3} then gives the identity 
	$$\sum_k \sum_\lambda \dim H_k(H_\infty^{1,0},S^\lambda(V[-1]))  (-z)^k s_{\lambda}= \Exp( z^{-1} \Log(z + \sum_{r \geq 0} z^{-r}h_{2r})-1).$$
	Now $S^\lambda(V[-1]) = S^{\lambda'} (V) [-\vert \lambda \vert]$. Shifting the arity $r$ component in degrees by $r$ (which amounts to multiplying $h_{2r}$ by $z^{2r}$) yields 
	$$\sum_k \sum_\lambda \dim H_k(H_\infty^{1,0},S^\lambda(V))  (-z)^k s_{\lambda'}= \Exp( z^{-1} \Log(z + \sum_{r \geq 0} z^{r}h_{2r})-1). $$
The identity $\Exp(h_2(y))\sum_\lambda s_{\langle\lambda\rangle}(x)s_{\lambda'}(y) = \sum_\lambda s_\lambda(x)s_{\lambda'}(y)$ shows that substituting $S^\lambda(V)$ with $V_\lambda$ amounts to multiplying the right-hand side with $\Exp(-h_2)$. 
\end{proof}

\section{The scanning argument}\label{scanning section}

\begin{para}
	The goal of this section is to derive an expression for the group-completion of the $E_2$-algebra given by $\coprod_{g \geq 0} H_g^{0,1}(KA)$, when $A$ is a positively graded chain complex. Let us begin by stating the result in the specific situation relevant to this paper. \end{para}

\begin{construction}\label{def of W}
	Let $A$ be a chain complex. We denote by $\iota$ the involution of $KA$ induced by the automorphism $x \mapsto -x$ of $A$. The inclusion of the origin in $KA$ defines a map of homotopy quotients $K(\Z/2,1) \to KA/\!/\langle \iota \rangle$. Consider also the map $S^2 \to K(\Z/2,2)$ classified by the nontrivial element of $H^2(S^2,\Z/2)$, and let $W$ denote its homotopy fiber. The space $W$ fits in a fiber sequence
	$$ K(\Z/2,1) \to W \to S^2. $$ 
	We denote by $W(A)$ the homotopy pushout
	$$ W(A) := KA/\!/\langle \iota \rangle\!\!\coprod^h_{K(\Z/2,1)} \!\!W.$$
\end{construction}

\begin{thm}\label{scanning theorem}
	Let $A$ be a chain complex concentrated in degree $\geq 1$ with trivial $2$-torsion. The $2$-fold delooping of the $E_2$-algebra $\coprod_{g \geq 0} H_g^{0,1}(KA)$ is weakly equivalent to $W(A)$. In particular, the group-completion of $\coprod_{g \geq 0} H_g^{0,1}(KA)$ is $\Omega^2 W(A)$.
\end{thm}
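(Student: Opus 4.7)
The proof follows the scanning strategy sketched in the introduction, suitably modified by the presence of the target space $KA$. The starting point is to model $H_g^{0,1}(KA)$ as the space of configurations of $2g+2$ points in a fixed disk $\mathbb D$, together with a $C_2$-equivariant map from the associated branched double cover $\widetilde{\mathbb D}\to\mathbb D$ to $KA$ that takes $\partial\widetilde{\mathbb D}$ to the basepoint. Under this model, $\coprod_g H_g^{0,1}(KA)$ inherits its $E_2$-structure from the standard framed $E_2$-structure on configuration spaces (cf.\ \S\ref{hyp operad}).

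I would then construct iterated scanning maps. First, scanning horizontally, I exhibit a map into $\Omega Y_1$, where $Y_1$ is the space of configurations (equipped with equivariant map data) in an infinite vertical strip $\mathbb R\times(-\epsilon,\epsilon)$, with points allowed to appear and disappear through the top and bottom edges. Second, scanning $Y_1$ vertically, I obtain a map into $\Omega Y_2$, where $Y_2$ is the space of configurations in a small square $(-\epsilon,\epsilon)^2$ with points permitted to escape along all four sides. The standard McDuff--Segal group-completion argument \cite{mcduffsegal} then identifies the group-completion of $\coprod_g H_g^{0,1}(KA)$ with $\Omega^2 Y_2$, and the remaining task is to identify $Y_2$ with $W(A)$.

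To do so, I zoom in: the rescaling $z\mapsto \lambda z$ with $\lambda\to 0$ deformation retracts $Y_2$ onto the subspace of configurations of at most one point. This subspace has two strata. The empty stratum provides a basepoint. The $1$-point stratum assigns to a point $z$ in the open square the space of $C_2$-equivariant maps $\widetilde{\mathbb D}_z\to KA$ constant at $\ast$ on $\partial\widetilde{\mathbb D}_z$, where $\widetilde{\mathbb D}_z$ is the branched double cover ramified at $z$. Since $\widetilde{\mathbb D}_z$ is $C_2$-contractible onto its fixed branch point, this space of maps is weakly equivalent to the homotopy orbit space $KA/\!/\langle\iota\rangle$, fibered over the position $z$. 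Without equivariant map data, the two strata glue to form $S^2$, but the monodromy of the branched cover (i.e.\ looping an equivariant map around the branch point produces the action of $\iota$) shows that the gluing is twisted by the nontrivial class in $H^2(S^2,\Z/2)$, whose homotopy fiber is precisely $W$. Combining the twist with the equivariant map data yields the homotopy pushout
\[
Y_2 \;\simeq\; W \coprod^h_{K(\Z/2,1)} KA/\!/\langle\iota\rangle \;=\; W(A),
\]
where $K(\Z/2,1)$ is the fiber of $W\to S^2$ and the map $K(\Z/2,1)\to KA/\!/\langle\iota\rangle$ classifies the $\iota$-fixed origin $0\in A$.

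The principal obstacle is this last identification: making the pushout structure precise, justifying the deformation retraction across stratum boundaries, and matching the monodromy with the nontrivial class in $H^2(S^2,\Z/2)$. The vanishing of $2$-torsion in $A$ enters decisively here, because it ensures that the fixed locus $\mathrm{Fix}(\iota)\subset KA$ is contractible, so the two legs of the pushout meet coherently along $K(\Z/2,1)\simeq BC_2$ without introducing spurious components. Once $Y_2\simeq W(A)$ is established, the group-completion statement $\Omega^2 W(A)$ follows immediately from the scanning equivalence.
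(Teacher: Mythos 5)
Your proposal follows the same scanning strategy as the paper's proof: two iterated scans reduce the group-completion to the homotopy type of the local Hurwitz space $\Hur_{B^2}$ over a small disk, which is then analyzed by deformation retracting onto configurations of at most one branch point. However, the key local identification is wrong, and not merely imprecisely stated. The empty stratum is \emph{not} a basepoint: a zero-branch-point configuration carries the data of an unramified double cover of the disk together with a $C_2$-equivariant map to $KA$, so this stratum is the mapping stack $\map(\mathbb D, [KA/C_2]) \simeq KA/\!/\langle\iota\rangle$. Meanwhile, the one-point stratum gives $B\Sigma_2 \simeq K(\Z/2,1)$: the double cover of a disk ramified at one point is a $C_2$-\emph{contractible} disk (contracting onto the fixed branch point), so the space of $C_2$-equivariant maps from it to $KA$ is the fixed-point space $(KA)^{C_2} = K(A[2])$, which is a point precisely because $A$ has trivial $2$-torsion; quotienting by the deck group then yields $B\Sigma_2$. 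Your assertion that this stratum is weakly equivalent to the homotopy \emph{orbit} space $KA/\!/\langle\iota\rangle$ conflates $KA_{hC_2}$ with $(KA)^{C_2}$ --- equivariant maps out of a $C_2$-contractible space with a fixed point land in the latter, not the former. You have in effect swapped the two strata. (You also impose a boundary constraint on $\partial\widetilde{\mathbb D}_z$, but $\Hur_{B^2}$ is taken over the \emph{open} disk, which has no boundary; the boundary condition disappears after scanning.)

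Because the strata are misidentified, your derivation does not actually reach the pushout $W(A) = W \coprod^h_{K(\Z/2,1)} KA/\!/\langle\iota\rangle$ that you write down, even though the formula is correct. The correct pushout over the Mayer--Vietoris intersection is $KA/\!/\langle\iota\rangle \coprod^h_{S^1 \times B\Sigma_2} B\Sigma_2$, and the monodromy/clutching observation you make --- that the gluing is classified by the nontrivial class in $H^2(S^2,\Z/2)$ --- should be applied to the factor $B\Sigma_2 \coprod^h_{S^1 \times B\Sigma_2} B\Sigma_2$, which recovers $W$ as the nontrivial $B\Sigma_2$-bundle over $S^2$; the other leg of the pushout then contributes the $KA/\!/\langle\iota\rangle$ factor. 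A secondary gap: after the first scan the transverse $1$-manifold is a closed interval, and the induced boundary trivializations of the double cover obstruct the naive argument; the paper handles this with relative Hurwitz spaces and a fibration argument, which your sketch does not address.
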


\begin{para}When $A=0$ there is a weak equivalence $H_g^{0,1}(KA)\simeq \mathrm{Conf}_{2g+2}(\mathbb D)$, and the theorem asserts that the $2$-fold delooping of the classifying space of the collection of  braid groups on an \emph{even} number of strands is the space $W$. As a sanity check, this is compatible with Segal's result that the $2$-fold delooping of $\coprod_{n \geq 0} \mathrm{Conf}_n(\mathbb D)$ is weakly equivalent to $S^2$. Indeed, the ``short exact sequence'' of topological monoids
	$$ \coprod_{\substack{n \geq 0 \\ n \text{ even}}}  \mathrm{Conf}_n(\mathbb D) \to \coprod_{n \geq 0} \mathrm{Conf}_n(\mathbb D)  \to \Z/2 $$
	deloops twice to a fiber sequence 
	$$ W \to S^2 \to K(\Z/2,2),$$
	which matches our definition of $W$. As another sanity check, we have $\Omega^2_0 S^2 \simeq \Omega^2_0 W$, so for the stable homology it makes no difference if we take all braids or only those with an even number of strands. 
\end{para}	
	
	\begin{para}\cref{scanning theorem} will be deduced from a more general result which applies not only to hyperelliptic curves but to branched $G$-covers of a disk, for an arbitrary finite group $G$, with local monodromies constrained to lie in some conjugation-invariant subset $c \subseteq G$; moreover, the target space will be allowed to be any connected based space with a $G$-action, not necessarily an Eilenberg--MacLane space. 	In order to make the arguments easier to follow, we begin by presenting a form of the scanning argument for ordinary configuration spaces of points, i.e.\ a proof that the $2$-fold delooping of $\coprod_{n \geq 0} \mathrm{Conf}_n(\mathbb D)$ is $S^2$. After this we explain how to modify the argument to deal instead with {Hurwitz spaces}. Our arguments draw significantly from \cite{mcduff} and \cite{hatcher-madsenweiss}. The proof of \cref{scanning theorem} will take up the rest of this section of the paper. We are particularly grateful to Andrea Bianchi for a careful and critical reading of this section, and for pointing out some oversights in a preliminary version of it. 
\end{para}

\subsection{The functor of configuration spaces}

\begin{para}Let $X$ be a locally compact Hausdorff space, $A \subset X$ a subspace. Denote by $\Conf_{X;A}$ the set of all finite subsets of $X$ which are disjoint from $A$. We think of these as configurations of points in $X$ avoiding $A$, and we want to topologize $\Conf_{X;A}$ so that points are allowed to vanish at infinity along the directions $X$ is not compact. Let us give two precise definitions of this topology (and leave it to the reader to verify that the definitions are in fact equivalent). We remark that the essential case is defining the topology when $A=\varnothing$, as one may note that $\Conf_{X;A} \subseteq \Conf_{X;\varnothing}$ has the subspace topology.
\end{para}

\begin{defn}
	A basis for the topology on $\Conf_{X;A}$ is indexed by pairs $(K,\mathfrak U)$, where $K \subseteq X$ is compact and $\mathfrak U$ is a finite family of disjoint open subsets of $K$. The corresponding open set is
	$$ B(K,\mathfrak U) := \{z \in \Conf_{X;A} : \#(z \cap U)=1 \text{ for all } U \in \mathfrak U,  \,\text{ and } \, z \cap (K \setminus \bigcup_{U \in \mathfrak U} U) = \varnothing\}.$$
\end{defn}

\begin{defn}\label{dirac measure}We identify a point of $\Conf_{X;A}$ with a sum of Dirac measures on $X$ in the evident way. The topology on $\Conf_{X;A}$ is then the topology inherited from the weak-$\ast$ topology on the space of Radon measures on $X$. Explicitly, if $f : X \to \R$ is a continuous function with compact support, define a function $\Conf_{X;A} \to \R$ by 
	$$ z \mapsto \sum_\alpha f(z_\alpha).$$
	We give $\Conf_{X;A}$ the coarsest topology for which all these functions are continuous.
\end{defn}

\begin{examplex}
	If $X$ is compact, then $\Conf_{X;A} \cong \coprod_{k \geq 0} \mathrm{Conf}_k(X \setminus A)$. 
\end{examplex}

\begin{rem}A closely related construction was introduced by McDuff \cite{mcduff}. She considered a compact manifold-with-boundary $M$, and defined a space of configurations of particles on $M$ which can be annihilated or created along $\partial M$. This space is similar to, but not quite the same as, $\Conf_{M \setminus \partial M;\varnothing}$. Let us consider a compact Hausdorff space $X$ and a closed subspace $L$. The set of finite subsets of $X \setminus L$ can be given three different natural topologies, all of which capture the informal idea that points can annihilate along $L$:
	\begin{enumerate}[(i)]
		\item The quotient topology from $\coprod_{n \geq 0} \mathrm{Conf}_n(X)$, where two configurations are identified if their intersection with $X \setminus L$ are equal.
		\item The subspace topology from the infinite symmetric product $\mathrm{SP}^\infty(X/L)$.
		\item The topology of $\Conf_{X \setminus L;\varnothing}$. 
	\end{enumerate}
In general, all three topologies are different: (i) is finer than (ii), which is finer than (iii). Consider for example $X=[0,1]$, $L=\{0\}$. In this case, the sequence of configurations $z_n = \{\tfrac{1}n, \tfrac{1}{n+1}\}$ converges to the empty configuration in the topologies (ii) and (iii), but it does not converge\footnote{This should probably be considered a defect of the topology (i). To see that the sequence does not converge, consider the function $\phi : \coprod_{n \geq 0} \mathrm{Conf}_n([0,1]) \to \R$ given by $\phi(z) = \sum_{s,t \in z} \frac{st}{|s-t|}$, the sum taken over $s \neq t$. Then $\phi$ descends to a continuous function on the quotient space, $\phi(z_n)=1$ for all $n$, but $\phi(\varnothing)=0$.} in the topology (i). The sequence of configurations $z_n = \{\tfrac 1 {n^2}, \tfrac 2 {n^2},\ldots, \tfrac 1 n\}$ converges to the empty configuration in the topology (iii), but does not converge in the topologies (i) and (ii). The topology inherited from the infinite symmetric product can be given a measure-theoretic interpretation as in \cref{dirac measure}: we obtain the topology (ii) if in \cref{dirac measure} we replace compactly supported test functions with continuous functions vanishing at infinity. 
\end{rem}

\begin{para}For later purposes it will be useful to describe $\Conf_{X;A}$ as a (set-valued) topological stack, i.e.\ to characterize it in terms of the functor it represents. As in Noohi \cite{noohi-mapping} we will consider topological stacks as sheaves on the site of \emph{compactly generated} topological spaces. Recall that a topological space $X$ is compactly generated if a subset $V \subset X$ is open whenever for all continuous maps $h:K \to X$ from a compact Hausdorff space, $h^{-1}(V)$ is open in $K$. We write $k\mathsf{Top}$ for the category of compactly generated spaces. \end{para}

\begin{para}Note that for any map of sets $f:S \to \Conf_{X;A}$, there is an ``incidence correspondence'' $\Gamma_f \subset S \times X$, defined by
	$$ \Gamma_f = \{(s,x) : f(s) \ni x\}.$$
	The notation reflects that we think of $\Gamma_f$ as the graph of a multi-valued function. 
\end{para}
\begin{prop}
	Let $S$ be a compactly generated topological space. A map of sets $f:S \to \Conf_{X;A}$ is continuous if and only if the incidence correspondence $\Gamma_f$ is closed in $S \times X$, and the projection to the first coordinate $\Gamma_f \to S$ is a local homeomorphism. That is, $\Conf_{X;A}$ represents the functor $k\mathsf{Top}^{\mathrm{op}} \to \mathsf{Set}$ which assigns to $S$ the set of closed subsets $\Gamma \subset S \times X$ such that $\Gamma \cap (S \times A)=\varnothing$, and $\Gamma \to S$ is a local homeomorphism with finite fibers. 
\end{prop}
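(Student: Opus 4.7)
My plan is to prove both implications of the biconditional; the representability statement is then a formal restatement, since giving a map of sets $f\colon S\to\Conf_{X;A}$ is exactly the same as giving a subset $\Gamma\subset S\times X$ satisfying $\Gamma\cap(S\times A)=\varnothing$ and whose fibers over $S$ are finite. Compact generation of $S$ plays no essential role in the verification itself; the reason to restrict to compactly generated spaces is only so that $S\times X$ have its naive topology (which is automatic here since $X$ is locally compact Hausdorff) and so that $\Conf_{X;A}$ is representable inside $k\mathsf{Top}$.

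For the forward direction, suppose $f$ is continuous. To see $\Gamma_f$ is closed, fix $(s_0,x_0)\notin\Gamma_f$ and, using that $f(s_0)$ is finite and $X$ is locally compact Hausdorff, choose a compact neighborhood $K\ni x_0$ disjoint from $f(s_0)$. Then $f(s_0)\in B(K,\varnothing)=\{z:z\cap K=\varnothing\}$, so $f^{-1}(B(K,\varnothing))\times\mathrm{int}(K)$ is an open neighborhood of $(s_0,x_0)$ disjoint from $\Gamma_f$. To see the projection $\Gamma_f\to S$ is a local homeomorphism at $(s_0,x_0)$, enumerate $f(s_0)=\{x_0,x_1,\dots,x_k\}$ and choose pairwise disjoint compact neighborhoods $K_i\ni x_i$. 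Setting $K=\bigcup K_i$ and $\mathfrak U=\{\mathrm{int}(K_i)\}$, the set $U=f^{-1}(B(K,\mathfrak U))$ is open in $S$, and $\Gamma_f\cap(U\times\mathrm{int}(K_0))\to U$ is a continuous bijection. Its inverse $\sigma\colon U\to\Gamma_f$, $s\mapsto(s,\pi(s))$, is continuous because for any open $V\subset\mathrm{int}(K_0)$ one checks directly that $\pi^{-1}(V)=f^{-1}(B(K,\{V,\mathrm{int}(K_1),\dots,\mathrm{int}(K_k)\}))$, which is open by the continuity of $f$.

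For the backward direction, suppose $\Gamma_f$ is closed and the projection $\pi\colon\Gamma_f\to S$ is a local homeomorphism. Fix a basic open set $B(K,\mathfrak U)$ with $\mathfrak U=\{U_1,\dots,U_k\}$ and $s_0\in f^{-1}(B(K,\mathfrak U))$; write $f(s_0)\cap K=\{x_1,\dots,x_k\}$ with $x_i\in U_i$. Using the local homeomorphism property, choose disjoint open $N_i\subset\Gamma_f$ with $(s_0,x_i)\in N_i$, each projecting homeomorphically onto an open $V_i\ni s_0$, and shrink so that $N_i\subset V_i\times U_i$; write $\pi_i\colon V_i\to U_i$ for the associated sections. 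Let $V=\bigcap_i V_i$. The set $C=(\Gamma_f\cap(V\times K))\setminus\bigcup_i N_i$ is closed in $V\times K$ and avoids $\{s_0\}\times K$. Because $K$ is compact, the projection $V\times K\to V$ is a closed map, so the image $\mathrm{pr}_1(C)$ is closed in $V$ and $V'=V\setminus\mathrm{pr}_1(C)$ is an open neighborhood of $s_0$. For every $s\in V'$, every point of $f(s)\cap K$ must lie in some $N_i$, hence equal $\pi_i(s)\in U_i$, so $f(s)\in B(K,\mathfrak U)$; thus $f^{-1}(B(K,\mathfrak U))$ is open.

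I expect the forward direction's verification that the section map $\pi$ is continuous, and the backward direction's use of properness of $V\times K\to V$, to be the only points requiring genuine care; everything else is bookkeeping with the basis. The main conceptual obstacle is seeing why compactness of $K$ in the definition of the basis is precisely what lets one eliminate all ``extra'' points of $f(s)\cap K$ uniformly near $s_0$, so this hypothesis should be flagged as the crucial input in both directions.
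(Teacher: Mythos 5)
Your proof is correct, and the converse direction takes a genuinely different route from the paper's. The forward directions are essentially the same argument: the paper proves closedness and the local-homeomorphism property once for the universal incidence correspondence over $\Conf_X$ and then pulls back along $f$, while you verify them directly for $\Gamma_f$ at each point, with identical bookkeeping in the basic opens $B(K,\mathfrak U)$. For the converse, the paper invokes compact generation to reduce to $S$ compact Hausdorff, and then uses that $\Gamma_f\cap(S\times(K\setminus V))$ is \emph{compact}, hence has closed image in $S$. You instead argue locally: after choosing finitely many homeomorphic slices $N_i$ of $\Gamma_f$ over a neighborhood $V$ of $s_0$, you form $C=(\Gamma_f\cap(V\times K))\setminus\bigcup_i N_i$ and use only that $V\times K\to V$ is a closed map (the tube lemma, valid because $K$ is compact), with no appeal to compactness of $S$ or of any subset of $\Gamma_f$. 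This is more elementary, works for an arbitrary topological space $S$, and makes explicit that compact generation enters only in framing the representability statement inside $k\mathsf{Top}$, not in the biconditional itself. Indeed your observation points to a small streamlining of the paper's proof: replacing ``$\Gamma_f\cap(S\times(K\setminus V))$ is compact, and in particular its image in $S$ is closed'' by ``$S\times(K\setminus V)\to S$ is a closed map since $K\setminus V$ is compact, so the closed subset $\Gamma_f\cap(S\times(K\setminus V))$ has closed image'' would remove the reduction to $S$ compact Hausdorff and the appeal to compact generation entirely.
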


\begin{proof}It suffices to prove this when $A$ is empty. We set $A=\varnothing$, and omit $A$ from the notation.
	
	We first show that the universal incidence correspondence $\Gamma \subset \Conf_X\times X$ is closed, and that $\Gamma \to \Conf_X$ is a local homeomorphism. Let $(z,x) \notin \Gamma$. Choose $x \in V \subseteq K$ with $V$ open, $K$ compact, and $K \cap z = \varnothing$. Then $B(K,\varnothing) \times V$ is an open neighborhood of $(z,x)$ disjoint from $\Gamma$. One also checks that for any $U \subseteq K \subseteq X$ with $U$ open and $K$ compact, $\Gamma \cap (B(K,\{U\}) \times U) \to B(K,\{U\})$ is a homeomorphism (and these open sets cover $\Gamma$). Consequently, for any topological space $S$ with a continuous map $S \to \Conf_X$, we get an incidence correspondence satisfying the conditions of the proposition.

	For the converse, we need that $S$ is compactly generated. So suppose given a map of sets $f : S \to \Conf_X$  such that $\Gamma_f \subset S \times X$ is closed and $\Gamma_f \to S$ is a local homeomorphism. Take a basic open set $B(K,\mathfrak U)$, with $\mathfrak U = \{U_i\}_{i=1}^n$. We check that $f^{-1}B(K,\mathfrak U)$ is open in $S$.	We show first that 
	the set $N = \{s \in S : \# (f(s) \cap K) \leq n\}$ is open in $S$. Since $S$ is compactly generated, we may assume $S$ itself to be compact Hausdorff. Take $s \in N$. Choose a neighborhood $V$ of $f(s)$ in $X$, such that each component of $\Gamma_f \cap (S \times V)$ maps homeomorphically to a neighborhood $W$ of $s$. Then $\Gamma_f \cap (S \times (K \setminus V))$ is compact, and in particular its image in $S$ is closed, and does not contain $s$ by construction. Removing this closed set from $W$ produces a neighborhood of $s$ contained in $N$. But then we note that $f^{-1}B(K,\mathfrak U)$ is the intersection of $N$ with the $n$ open sets given by the projections of $\Gamma_f \cap (S \times U_i)$ onto the first factor. 
\end{proof}

\begin{rem}
	The product of a compactly generated space with a locally compact space is compactly generated, so there is no ambiguity in the proposition regarding what topology to put on $S \times X$. 
\end{rem}

\begin{para}If $A$ is clear from context, we may omit it from notation and simply write $\Conf_{X}$ for $\Conf_{X;A}$. \emph{In the rest of this section of the paper, $X$ will always be a manifold and $A$ its boundary, and we consistently omit the boundary from the notation.} Disallowing points on the boundary is natural, since we will consider ``gluing'' operations on configuration spaces induced by gluing manifolds along their boundary. Such an operation could potentially introduce collisions if points are allowed to lie on the boundary. We want to emphasize in particular one difference in set-up here from the one in \cite{mcduff} (and many other places). McDuff considers spaces of particles on $M$ which can be annihilated and created along $\partial M$; we consider particles which can be annihilated and created ``at infinity'', whereas $\partial M$ simply acts as a ``wall''. For example, $\Conf_{[0,1) \times (0,1)}$ represents particles moving in the interior of a square, which are allowed to vanish along three of the four sides.
\end{para}

\subsection{Scanning for configuration spaces}

\begin{para}For any manifold $M$ there is a natural $E_1$-algebra structure on $\Conf_{M \times [0,1]}$ defined by ``placing configurations next to each other''. Let us suppose that $M$ is connected. If $M$ is noncompact, then $\Conf_{M \times [0,1]}$ is connected, but when $M$ is compact we have 
	$$ \Conf_{M \times [0,1]} \cong \coprod_{k \geq 0} \mathrm{Conf}_k(M^\circ \times (0,1)).$$
	In particular, this $E_1$-algebra is group-like for noncompact $M$, but not otherwise. The ``scanning map'', following terminology introduced by Segal, describes its group-completion.\end{para}

\begin{para}\label{scanning}In order to describe the scanning map, it will be convenient to replace $\Conf_{M \times [0,1]}$ with a strictly associative ``Moore'' variant. Fix $\epsilon > 0$, and consider the space which parametrizes pairs of $t \in [0,\infty)$, and a configuration $z \in \Conf_{M \times [0,t]}$, such that the configurations are empty above the intervals $[0,\epsilon)$ and $(t-\epsilon, t]$. This space is a topological monoid in an evident manner, and we denote this monoid $C$. There is a weak equivalence of $E_1$-algebras $C \simeq \Conf_{M \times [0,1]}$, in much the same way as the space of Moore loops is homotopy equivalent to the usual loop space. There is a natural map of topological monoids $C \to \Omega' \Conf_{M \times (-\epsilon,\epsilon)}$, where $\Omega'$ denotes the Moore loop space. It takes a configuration $z$ parametrized over $[0,t]$ to the Moore loop of length $t$ in $\Conf_{M \times (-\epsilon,\epsilon)}$, whose value at time $t_0$ is the part of the configuration which lies over the interval $(t_0-\epsilon,t_0+\epsilon)$. This is the \emph{scanning map} -- we are sweeping over a configuration using a scanner with field of vision of size $\epsilon$. \end{para}

\begin{thm}[Segal, McDuff] \label{conf} The scanning map $C \to \Omega' \Conf_{M \times (-\epsilon,\epsilon)}$ is a group-completion. That is, the group-completion of the $E_1$-algebra $\Conf_{M \times [0,1]}$ is weakly equivalent to $\Omega \Conf_{M \times \R}$. 
\end{thm}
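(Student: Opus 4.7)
The plan is to apply the McDuff--Segal group-completion theorem after identifying the classifying space of $C$ with $\Conf_{M \times \R}$ (equivalent to $\Conf_{M \times (-\epsilon,\epsilon)}$ via any homeomorphism $\R \cong (-\epsilon,\epsilon)$, since $\Omega' \simeq \Omega$).

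First I would verify that the scanning map $s$ is a homomorphism of topological monoids. Given $z_1, z_2 \in C$ of lengths $t_1, t_2$, the scan of the concatenation $z_1 \cdot z_2$ agrees with the concatenation of the Moore loops $s(z_1)$ and $s(z_2)$: this uses crucially that each configuration is empty near its horizontal boundaries, so that the sweeping window of height $\epsilon$ never straddles the joining slab. This is precisely the reason for working with the Moore variant $C$.

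Next, since the target is grouplike (a Moore loop space), the universal property of group-completion shows that $s$ is a group-completion if and only if the induced map of classifying spaces
$$Bs \colon BC \longrightarrow B\Omega' \Conf_{M \times (-\epsilon,\epsilon)} \simeq \Conf_{M \times (-\epsilon,\epsilon)}$$
is a weak equivalence. The target is connected (any configuration admits a canonical contraction to the empty one by simultaneously translating all its points to the boundary of the strip, where they annihilate), so the task reduces to identifying $BC$ with $\Conf_{M \times \R}$. To do this, I would introduce an auxiliary semi-simplicial space $X_\bullet$ whose $p$-simplices are pairs consisting of a configuration $w \in \Conf_{M \times \R}$ and a strictly increasing tuple $t_0 < \ldots < t_p$ of real numbers, each lying in an open interval on which $w$ is empty. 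Forgetting the cuts gives a map $\|X_\bullet\| \to \Conf_{M \times \R}$ whose fiber over a configuration $w$ is the nerve of the open cover of $\R$ by the connected components of the complement of the vertical projection of $w$, hence contractible. Using the cuts to chop $w$ into a finite sequence of Moore configurations gives a second map $\|X_\bullet\| \to BC$, and its composite with the first is homotopic to $Bs$.

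\textbf{Main obstacle:} The hardest step is verifying rigorously that both $\|X_\bullet\| \to \Conf_{M \times \R}$ and $\|X_\bullet\| \to BC$ are weak equivalences, especially near degenerate configurations where cuts must collide or points escape to infinity. This is essentially McDuff's original scanning argument in \cite{mcduff}, simplified by the absence of particle labels. The compactly generated topology on $\Conf_X$ developed in the preceding subsection, together with its characterization via incidence correspondences that are local homeomorphisms, is essential for making these limits continuous.
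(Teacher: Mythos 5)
Your proposal takes a genuinely different route from the paper's. The paper gives a direct partition-of-unity argument: given $f : S^q \to \Conf_{M \times (-\epsilon,\epsilon)}$, it chooses a finite cover $\{U_\lambda\}$ of $S^q$ together with time-coordinates $t_\lambda$ over which $f$ is locally empty, then constructs an explicit lift $g : S^q \to BC$ by chopping $f(x)$ at the $t_\lambda$'s and feeding a subordinate partition of unity in as barycentric coordinates of the reduced bar construction, with a mirror argument for injectivity. You instead resolve $\Conf_{M\times\R}$ by a semi-simplicial space $X_\bullet$ of configurations equipped with cut points and compare the two legs of this resolution; this is the ``cutting'' route familiar from Galatius--Randal-Williams-style arguments. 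Both are viable. The paper's is self-contained but hands-on; yours is cleaner to state but requires a resolution lemma (contractible fibers $\Rightarrow$ weak equivalence), which in turn needs a quasi-fibration or microfibration hypothesis that is not trivial to verify here.

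There is, however, a substantive gap beyond the rigor you flag. An element of $C$ must be \emph{empty over the $\epsilon$-collars} $[0,\epsilon)$ and $(t-\epsilon,t]$, so your ``chop'' map $\|X_\bullet\| \to BC$ is not actually defined: cutting $w$ at a point $t_i$ that lies in some open interval of emptiness does not yield elements of $C$ unless $w$ is empty over all of $(t_i - \epsilon, t_i + \epsilon)$. If you add that $\epsilon$-margin requirement to the definition of $X_\bullet$, the forgetful map $\|X_\bullet\| \to \Conf_{M\times\R}$ ceases to have contractible fibers: the fiber over a configuration whose complementary gaps are all shorter than $2\epsilon$ is empty. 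The paper resolves this tension with a rescaling step --- each chopped-off piece is stretched by a factor $\epsilon/r$, where $r$ is a locally uniform (and possibly very small) radius of emptiness, before being read as an element of $C$. Some such continuously varying stretch must be built into $\|X_\bullet\| \to BC$, and that is an idea, not merely a detail.

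A smaller imprecision: the fiber of $\|X_\bullet\| \to \Conf_{M\times\R}$ over $w$ is not the nerve of the open cover of $\R$ by the complementary intervals of the vertical projection of $w$ --- those intervals are pairwise disjoint, so that nerve is a discrete set. It is instead the realization of the semi-simplicial space of strictly increasing tuples of cut points, which is weakly equivalent to the order complex of the totally ordered set of complementary intervals, i.e.\ a simplex; that is why it is contractible.
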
	


\begin{para}Before giving the proof of \cref{conf} we want to recall explicitly the bar construction. If $M$ is a well-pointed topological monoid, we define the \emph{bar construction} $BM$ as a quotient
	$$ \Big( \coprod_{p \geq 0} \Delta^p \times M^p\Big) \!\Big/\! \sim$$
	where $\sim$ denotes the equivalence relation generated by the following identifications:
	\begin{itemize}
		\item $((0,w_1,\ldots, w_p), (m_1,\ldots,m_p)) \sim ((w_1,\ldots,w_p),(m_2,\ldots,m_p))$,
		\item $((w_0,\ldots, w_{i-1},0,w_{i+1},\ldots,w_p), (m_1,\ldots,m_p)) \sim $ \\ \hphantom{hello} $ ((w_0,\ldots,w_{i-1},w_{i+1},\ldots,w_p),(m_1,\ldots,m_i \cdot m_{i+1},\ldots,m_p))$,
		\item $((w_0,\ldots,w_{p-1},0), (m_1,\ldots,m_p)) \sim ((w_0,\ldots,w_{p-1}),(m_1,\ldots,m_{p-1}))$,
			\item $((w_0,\ldots,w_p),(m_1,\ldots,m_{i-1},e,m_{i+1},\ldots, m_p)) \sim$ \\ \hphantom{hello} $(((w_0,\ldots,w_{i-1}+w_{i},\ldots,w_p), (m_1,\ldots,m_{i-1},m_{i+1},\ldots,m_p)). $ 
	\end{itemize}
	Here the $(w_i)$ are coordinates on the $p$-simplex: $0 \leq w_i \leq 1$, $\sum w_i=1$. The space $BM$ is, in an evident way, the geometric realization of a simplicial space. The bar construction is often instead defined as the geometric realization of the corresponding semisimplicial space (i.e.\ omitting the fourth identification in the definition of $\sim$). Then the construction given here would be called the reduced bar construction. For a well-pointed topological monoid the reduced and unreduced bar constructions are weakly equivalent \cite[Appendix 2]{segalconfiguration}.
\end{para}

\begin{para}For any connected based space, $X$ there is a natural weak equivalence $B\Omega' X \to X$, which is defined as follows. Given a point $(w_0,\ldots,w_p) \in \Delta^p$ and a $p$-tuple of Moore loops in $X$
	$$ \gamma_1 : [t_0,t_1]\to X, \qquad \gamma_2 : [t_1,t_2] \to X, \qquad \ldots \qquad \gamma_p : [t_{p-1},t_p] \to X,$$ 
	where $t_0 \leq t_1 \leq \ldots \leq t_p$, first concatenate these loops to a loop $\gamma : [t_0,t_p] \to X$. The image of this pair in $X$ is then the evaluation $\gamma(\sum w_i t_i) \in X$. 
\end{para}

\begin{proof}[Proof of Theorem \ref{conf}]We prove that the scanning map $C \to \Omega' \Conf_{M \times (-\epsilon,\epsilon)}$ of \S\ref{scanning} is a group-completion. Equivalently, the composition
	$$ BC \to B\Omega' \Conf_{M \times (-\epsilon,\epsilon)} \to \Conf_{M \times (-\epsilon,\epsilon)} $$ is a weak equivalence. 
	Let us denote this composition $\sigma$. 
	
	Consider first a based map $f: S^q \to \Conf_{M \times (-\epsilon,\epsilon)}$. We want to find a lift $g : S^q \to BC$ such that $f$ is homotopic to $\sigma \circ g$. 
	
	Choose (using compactness) a finite open cover $S^q = \bigcup_{\lambda \in \Lambda} U_\lambda$, such that there is a positive real $r>0$ and for each $\lambda$ an element $t_\lambda \in (-\epsilon,\epsilon)$, with the following property:
	\emph{for any $x \in U_\lambda$, the part of the configuration $f(x)$ above time-coordinate $(t_\lambda-r,t_\lambda+r)$ is empty.} Choose also a partition of unity $w_\lambda$ subordinate to the cover. 
	
	Consider $x \in U_{\lambda_0} \cap \ldots \cap U_{\lambda_k}$ (and assume it lies in no other $U_\lambda$). We assume that 
	$$ t_{\lambda_0} < \ldots < t_{\lambda_k}.$$
	By taking the configurations above the intervals $[t_{\lambda_0},t_{\lambda_1}]$, \ldots, $[t_{\lambda_{k-1}},t_{\lambda_k}]$, respectively, and stretching them all by a factor $\tfrac{\epsilon}{r}$, we get a $k$-tuple of elements of our monoid $C$. We define $g(x) \in BC$ to be the element defined by this $k$-tuple of elements of $C$, together with $(w_{\lambda_0}(x),w_{\lambda_1}(x),\ldots,w_{\lambda_k}(x)) \in \Delta^k$. 
	
	One checks that this is compatible with the identifications in the bar construction, and glues together well along overlaps for our open cover. In order that $g$ is a based map, we may insist that the base-point of $S^q$ lies only in one of the sets $U_\lambda$ (if this is not the case, just delete the base point from all but one of the open sets in the cover).
	
	The configuration given by $\sigma(g(x))$ is a ``zoomed in'' version of $f(x)$. More precisely, $\sigma(g(x))$ is the part of the configuration $f(x)$ which lies over a small interval of radius $r$, centered at the point $\sum w_\lambda(x) t_\lambda$. In particular, $\sigma \circ g$ is evidently homotopic to $f$.

	For injectivity on homotopy groups, consider instead a map $g : S^q \to BC$ and a nullhomotopy $f: D^{q+1} \to \Conf_{M \times (-\epsilon,\epsilon)}$ of $\sigma \circ g$. The same construction as above allows us to lift $f$ up to homotopy to a map $f' : D^{q+1} \to BC$. We should construct a homotopy between $g$ and $f' \vert_{S^q}$. Consider an element $x \in S^q$, and let us suppose that $x \in U_{\lambda_1} \cap \ldots \cap U_{\lambda_k}$ (and $x$ lies in no other $U_\lambda$). Now $g(x)$ consists of a $p$-tuple of elements of $C$ over intervals $[t_0,t_1]$,\ldots, $[t_{p-1},t_p]$, and a weight $w \in \Delta^p$. First, we continuously stretch all of these intervals by a factor $\tfrac{\epsilon}{r}$. After stretching, they can be nontrivially written as a product in $C$, with the time-parameters $t_{\lambda_i}$, $i=1,\ldots,k$, providing the breakpoints where we can split up the intervals. This gives us instead an equivalent point of the bar construction, given by $(p+k)$-tuple of elements of $C$ and a weight vector in $\Delta^{p+k}$, with $k$ zeroes inserted in the weight vector. Then continuously change the weights from the given weight vector $w$ to the one specified by the partition of unity $w_\lambda$, lowering the ``old'' weights to zero and raising the ``new'' weights from zero to their correct value $w_{\lambda_i}(x)$, $i=1,\ldots,k$. \end{proof}

\begin{rem}
	In the final paragraph of the preceding proof, it is important that we work with the reduced bar construction as opposed to the unreduced bar construction. In the construction we have continuously varying time-parameters $t_0,\ldots,t_p$ and $t_{\lambda_1},\ldots,t_{\lambda_k}$, and nothing in the construction prevents an overlap $t_i = t_{\lambda_j}$ for some $i,j$, which allows two such time-coordinates to ``swap places''. This has the effect of changing the slot in which we insert a zero coordinate on the simplex, and in the unreduced bar construction this would introduce a discontinuity in the final homotopy where the ``old'' weights are lowered and the ``new'' weights are increased. This is exactly solved by passing to the reduced bar construction, since an overlap $t_i = t_{\lambda_j}$ corresponds to one of the factors being the base-point of the monoid $C$. 
\end{rem}

\begin{prop}\label{stretching}Let $B^n$ be the open unit ball in $\R^n$. Then 	$\Conf_{B^n} \simeq S^n$.
\end{prop}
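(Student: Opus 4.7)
The plan is to construct a deformation retract of $\Conf_{B^n}$ onto the subspace $\Conf^{\leq 1}_{B^n}$ of configurations of cardinality at most one, and then identify this subspace with $S^n$. The latter identification is immediate: as a set, $\Conf^{\leq 1}_{B^n} = B^n \sqcup \{\varnothing\}$, and the subspace topology inherited from $\Conf_{B^n}$ makes the neighborhoods of the empty configuration precisely the complements of compact subsets of $B^n$; hence $\Conf^{\leq 1}_{B^n}$ is the one-point compactification of the open ball $B^n \cong \R^n$, which is $S^n$.

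To produce the deformation retract, the idea is a ``zoom out'' by radial expansion about the origin: for each $z$ and $t \in [0,1]$, one scales the configuration $z$ outward by a factor depending on $t$ and $z$, automatically removing any point whose image leaves $B^n$. A first attempt like $\Phi_t(z) = \{x/(1-t) : x \in z, \ |x| < 1 - t\}$ already illustrates the mechanism: outermost points are ejected through $\partial B^n$ first, and at $t = 1$ only points at the origin can survive. One then modifies this by reparameterizing the scaling factor as a continuous function of $z$ (e.g.\ in terms of $\max_{x \in z} |x|$) so that the time-$1$ map lands in $\Conf^{\leq 1}_{B^n}$, fixes $\Conf^{\leq 1}_{B^n}$ pointwise throughout, and ends with the single innermost point of $z$ (or with $\varnothing$).

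The main obstacle is verifying joint continuity of the homotopy in $(z,t)$: the choice of ``innermost'' or ``outermost'' point is not a continuous function of $z$, because several points can be at equal distance from the origin. However, the topology on $\Conf_{B^n}$ is designed precisely so that points vanishing through the boundary constitute genuine limits (not singularities), which absorbs these ambiguities: at critical configurations several points exit simultaneously, and the limit still lies in $\Conf^{\leq 1}_{B^n}$, possibly at $\varnothing$. This makes the retract well-defined and continuous. The argument is the direct analogue of the ``zooming in'' step in Segal's and McDuff's proofs in the classical scanning setup, as advertised in the introduction of this paper; the verification reduces to a direct unwinding of the basic open sets $B(K,\mathfrak U)$ used to define the topology on $\Conf_{B^n}$.
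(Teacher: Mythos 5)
Your high-level strategy matches the paper exactly: deformation retract onto the subspace $\Conf^{\leq 1}_{B^n}$ of configurations of cardinality at most one, and observe that this subspace is the one-point compactification of $B^n$, hence $S^n$. That identification is correct.

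The gap is in the construction of the retraction, and it is not merely an unverified detail: the particular quantity you suggest as a scaling factor, $\max_{x\in z}|x|$, cannot work, for two independent reasons. First, it does not accomplish the goal. Scaling by $1/\max_{x\in z}|x|$ sends the outermost point(s) to the boundary sphere, ejecting them, but leaves \emph{all} the remaining points inside $B^n$; for a generic configuration of cardinality $k$ the image has cardinality $k-1$, not $\leq 1$. To reduce to at most one point in a single step, you need to expand until everything \emph{except} the innermost point has left the ball — which means the relevant quantity is the \emph{second smallest} distance to the origin, not the largest. Second, $\max_{x\in z}|x|$ is not even a continuous function on $\Conf_{B^n}$: precisely because the topology lets points vanish through $\partial B^n$, the maximum norm drops discontinuously in the limit as a point exits. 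In contrast, $\phi(z):=\sup\{r\in[0,1]:|z\cap rB^n|\leq 1\}$ (equal to $1$ for $|z|\leq 1$ and to the second smallest distance otherwise) is continuous, is always strictly positive (at most one point can sit at the origin, so the second smallest norm is nonzero), equals $1$ exactly on $\Conf^{\leq 1}_{B^n}$, and scaling by $1/\phi(z)$ lands in $\Conf^{\leq 1}_{B^n}$ in one shot. You should replace your $\max$-based suggestion with this.

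Your discussion of the "main obstacle" is also aimed at the wrong issue. You do not need to \emph{select} an innermost or outermost point, so the non-continuity of such a selection is irrelevant; you only need the scalar $\phi(z)$ to vary continuously, and it does, including at configurations where several points are equidistant from the origin (in which case all of them are ejected simultaneously and the time-one image is $\varnothing$). The vague appeal to "the topology absorbs these ambiguities" should be replaced by checking continuity of $\phi$ (and hence of the induced map $\Conf_{B^n}\times[0,1]\to\Conf_{B^n}$) against the basic open sets $B(K,\mathfrak U)$, which is straightforward once $\phi$ is defined as above.
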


\begin{proof}First, define a continuous function $\phi : \Conf_{B^n} \to (0,1]$ as follows: if $z \subset B^n$ is a finite subset with at least two elements, then $\phi(z)$ is the \emph{second} smallest distance to the origin among the points in the configuration. (Note that this number is necessarily strictly positive, unlike the smallest distance.) If $z$ has cardinality $\leq 1$, then $\phi(z) = 1$. In formulas, $\phi(z) = \sup \{ r \in [0,1] : |z \cap (rB^n)| \leq 1 \}$.
	
	Then consider the continuous function $F: \Conf_{B^n} \to \Conf_{B^n}$ which rescales a configuration by a factor $1/\phi(z)$, and in particular forgets about  any points which lie outside the unit ball after rescaling. One verifies that the function $F$ defines a deformation retraction down to the subspace of $\Conf_{B^n}$ consisting of configurations of cardinality $\leq 1$. 
	
	But the subspace of $\Conf_{B^n}$ consisting of configurations of cardinality $\leq 1$ is in fact homeomorphic to an $n$-sphere. Indeed, the subspace of configurations of cardinality $1$ is just $B^n$ itself, and adding in the empty configuration amounts to forming the one-point compactification. \end{proof}

\begin{cor} The group-completion of the $E_n$-algebra $\Conf_{[0,1]^n} = \coprod_{k \geq 0} \mathrm{Conf}_k((0,1)^n)$ is $\Omega^n S^n$. \end{cor}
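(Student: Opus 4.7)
The plan is to iterate \cref{conf} $n$ times, peeling off one coordinate direction of the cube at each stage, and then apply \cref{stretching} to identify the resulting space.

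First I would view $\Conf_{[0,1]^n}$ as an $E_1$-algebra via the last coordinate, i.e., write it as $\Conf_{M \times [0,1]}$ with $M = [0,1]^{n-1}$. \Cref{conf} then identifies its group-completion with $\Omega \Conf_{[0,1]^{n-1} \times \R}$.

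For the inductive step I would observe that for each $1 \leq k < n$ the space $\Conf_{[0,1]^{n-k} \times \R^k}$ is connected --- the non-compact $\R$-factors let any finite configuration be annihilated to the empty one at infinity --- and is therefore already group-like as an $E_1$-algebra in any of the remaining compact coordinate directions. Rewriting it as $\Conf_{M' \times [0,1]}$ with $M' = [0,1]^{n-k-1} \times \R^k$ and applying \cref{conf} once more, the scanning map produces a weak equivalence
\[
\Conf_{[0,1]^{n-k} \times \R^k} \simeq \Omega \Conf_{[0,1]^{n-k-1} \times \R^{k+1}}.
\]
Looping and iterating $n$ times in total transforms the original group-completion into $\Omega^n \Conf_{\R^n}$.

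Since $\R^n$ is homeomorphic to the open unit ball $B^n$, \cref{stretching} gives $\Conf_{\R^n} \simeq S^n$, and the group-completion of $\Conf_{[0,1]^n}$ is therefore $\Omega^n S^n$. The main obstacle I anticipate is purely bookkeeping: at each iterate one must verify that the scanning map is compatible with the residual $E_{n-k-1}$-algebra structure from the remaining coordinate directions, so that the iteration is legitimate. This should be essentially automatic since the explicit scanning map of \S\ref{scanning} only uses the chosen coordinate direction and commutes with the independent multiplications in the others; and since $\Omega^n S^n$ is already a group-like $E_n$-algebra, the $E_1$-group-completion produced by this procedure also computes the $E_n$-group-completion.
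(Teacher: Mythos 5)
Your proof is correct and follows the same route as the paper, which simply says ``Apply \cref{conf} iteratively $n$ times.'' Your proposal fleshes out the two details the paper leaves implicit: that after the first scan the intermediate spaces $\Conf_{[0,1]^{n-k}\times\R^k}$ are already connected and hence group-like (so later applications of \cref{conf} are weak equivalences rather than genuine group-completions), and that the $E_1$-group-completion of an $E_n$-algebra coincides with its $E_n$-group-completion.
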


\begin{proof}
	Apply \cref{conf} iteratively $n$ times to see that $\Conf_{[0,1]^n}$ group-completes to $\Omega^n \Conf_{\R^n}$. \cref{stretching} identifies this with $\Omega^n S^n$. 
\end{proof}

\subsection{Hurwitz spaces}

\begin{para}
	We now want to generalize the results of the preceding subsection from configuration spaces to \emph{Hurwitz spaces}. Although we only need these results for branched double covers, we will consider more generally an arbitrary finite group $G$ and a space of branched $G$-covers $E\to M$. Moreover, as explained in \cref{hyp target space}, we should work with spaces parametrizing in addition the datum of a $G$-equivariant map $E \to X$, for some $G$-space $X$. 
\end{para}

\begin{para}
	Let us for the rest of this section fix the following data: a finite group $G$, a conjugation-invariant subset $c \subseteq G$ specifying ``allowed monodromies'' which we assume to generate $G$, and a based $G$-space $X$, which we assume to be connected. Since $G$, $c$ and $X$ will be fixed throughout the arguments, we consistently omit them from notation, writing $\Hur_M$ where something like $\Hur_G^c(M,X)$ would be more logical. 
\end{para}

\begin{para}Let $M$ be an oriented 2-manifold, possibly with boundary. Let us first informally describe the space $\Hur_M$ of interest to us. A point of $\Hur_M$ will consists of a finite  subset $z$ of $M^\circ$, a branched $G$-cover $E \to M$ with branch locus $z$ such that all local monodromies are in the set $c$, and a $G$-equivariant map $E \to X$. Moreover, since we will want to glue surfaces along their boundaries, we insist also that the $G$-torsor $\partial E \to \partial M$ comes with a chosen trivialization, and the map $\partial E \to X$ is constant at the basepoint. As in the case of $\Conf_M$ we want to topologize this in such a way that branch points can vanish at infinity. To make this precise, we describe $\Hur_M$ as a topological stack. 
\end{para}

\begin{defn}\label{hurwitzdef} Define $\Hur_M$ to be the functor $k\mathsf{Top} \to \mathsf{Grpd}$ assigning to a compactly generated space $S$ the following groupoid: an object consists of a closed subset $\Gamma \subset S \times M$ such that $\Gamma \cap (S \times \partial M) = \varnothing$ and $\Gamma \to S$ is a local homeomorphism with finite fibers, a branched $G$-cover $E \to S \times M$ with branch locus $\Gamma$ and all local monodromies in the set $c$, a trivialization $\partial E \cong G \times S \times \partial M$ of the $G$-torsor $\partial E \to S \times \partial M$, and a $G$-equivariant map $E \to X$ taking $\partial E$ to the basepoint of $X$. Morphisms in the groupoid are isomorphisms of this data.  
\end{defn}

\begin{prop}
	The functor $\Hur_M$ is a paratopological stack in the sense of Noohi. 
\end{prop}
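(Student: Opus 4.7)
The plan is to verify the two properties that characterize paratopological stacks in Noohi's framework: descent with respect to open covers in $k\mathsf{Top}$, and the existence of a representable epimorphism from a topological space. Descent is essentially immediate — the three ingredients of \cref{hurwitzdef} (the closed subset $\Gamma$, the branched $G$-cover $E \to S \times M$ with its trivialization along $S \times \partial M$, and the $G$-equivariant map $E \to X$) are all sheaf-theoretic data, since finite branched covers, closed subsets, trivializations, and continuous maps all glue along open covers.

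The substantive step is to construct an atlas. The natural approach is to rigidify by ordering the branch locus. Concretely, for each $n \geq 0$ I would let $\widetilde{\Hur}_M^{(n)}$ be the topological space parametrizing the same data as $\Hur_M$ together with a section of the $n$-fold fiber product $\Gamma \times_S \cdots \times_S \Gamma$ avoiding all diagonals — that is, a labeling of $\Gamma$ by $\{1,\ldots,n\}$. Once the branch locus has been so labeled, the branched cover is specified by a discrete datum: a homomorphism from $\pi_1(M \setminus \{x_1,\dots,x_n\},\ast)$ to $G$ landing in $c$ on small positive loops around each $x_i$ and trivial along the boundary (relative to a fixed trivialization). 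After fixing such a homomorphism the equivariant map $E \to X$ lives in a (compactly-generated) mapping space, which is functorial in the underlying ordered configuration. Bundling these together fiberwise over the space of ordered configurations with vanishing at infinity (constructed as for $\Conf_M$, via incidence correspondences), yields a genuine topological space $\widetilde{\Hur}_M^{(n)}$, and I set $U = \coprod_{n \geq 0} \widetilde{\Hur}_M^{(n)}$.

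It remains to check that the forgetful map $U \to \Hur_M$ is a representable epimorphism. Epimorphism is clear: any object over $S$ can be labeled locally, since $\Gamma \to S$ is a local homeomorphism with finite fibers. For representability, I would verify that for any continuous $T \to \Hur_M$ the pullback $U \times_{\Hur_M} T$ is precisely the space of total orderings of the finite-fibered local homeomorphism $\Gamma_T \to T$ — an open subspace of the iterated fiber product $\Gamma_T \times_T \cdots \times_T \Gamma_T$, hence representable in $k\mathsf{Top}$. The diagonal representability, needed to make sense of the fiber product, comes from the fact that the boundary trivialization rigidifies the cover completely, so the sheaf of isomorphisms between two objects is a finite (indeed, typically trivial) discrete subspace over $S$.

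The main technical obstacle is keeping track of the topology at infinity where branch points vanish. One must ensure that the ordering datum varies continuously precisely in such a way that a point can disappear from the labeling when it escapes through the boundary or to infinity, matching the topology on $\Conf_M$. The cleanest way is to formulate the labeling sheaf-theoretically exactly as in the incidence-correspondence description of $\Conf_M$ proved earlier, and to argue that the $G$-cover and the equivariant map to $X$, when combined with the compactly-generated topology on the mapping space of sections over $S \times M \setminus \Gamma$, assemble into a continuous family parametrized by $U$. All of these verifications are of the same flavor as the representability proof for $\Conf_M$.
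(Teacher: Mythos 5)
Your proposal takes a genuinely different route from the paper. The paper does not construct an atlas explicitly: it bootstraps from the fact that $\Conf_M$ is an honest space, forms the open complement of the universal incidence correspondence, takes the mapping stack $\mathrm{map}(\Conf_M \times M \setminus \Gamma, BG)$, restricts to the open-and-closed substack where the local monodromies lie in $c$, extends the universal $G$-torsor to the universal branched cover, forms the mapping stack to $X$, and finally passes to $G$-fixed points --- at each stage invoking Noohi's theorem that mapping stacks of (para)topological stacks are paratopological. Your plan, by contrast, tries to produce an explicit chart by rigidifying. That would be a stronger result if it worked, but there is a genuine gap.

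The problem is the assertion that the space of \emph{total} orderings of $\Gamma_T \to T$ is an open subspace of the $n$-fold fiber product $\Gamma_T \times_T \cdots \times_T \Gamma_T$. Removing the diagonals gives the open locus of \emph{injective} $n$-tuples, but the additional requirement that these $n$ points exhaust $\Gamma_t$ is only locally closed, because $t \mapsto \#\Gamma_t$ is merely lower semi-continuous: points can escape to infinity in $M$. For instance, with $M = \mathbb{R}^2$ and $T = [0,1]$, the set $\Gamma = \{(t,(0,0)) : t \in [0,1]\} \cup \{(t,(0,1/t)) : t \in (0,1]\}$ is closed in $T \times M$, and $\Gamma \to T$ is a local homeomorphism with finite fibers, yet $\#\Gamma_0 = 1$ while $\#\Gamma_t = 2$ for all $t > 0$. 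In no neighborhood of $t=0$ does $\Gamma_t$ admit a total ordering by a fixed $\{1,\ldots,n\}$, so the proposed map $\coprod_n \widetilde{\Hur}_M^{(n)} \to \Hur_M$ fails to admit local sections and hence is not an epimorphism. This matters for the rest of the argument: you need the labeled points to \emph{exhaust} the branch locus precisely so that the cover is determined by a homomorphism $\pi_1(M \setminus \{x_1,\ldots,x_n\},\ast) \to G$, but with merely injective labelings (which do give an open subset and do admit local sections) the branch locus can contain unlabeled points and the monodromy datum on $M \setminus \{x_1,\ldots,x_n\}$ no longer determines the cover. Managing this behavior at infinity is exactly what makes an explicit chart awkward, and is why the paper resorts to Noohi's abstract mapping-stack result.
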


\begin{proof}
		There is an evident forgetful map $\Hur_M\to\Conf_M$, and $\Conf_M$ is certainly a topological stack, being represented by an actual topological space. To show that $\Hur_M$ is a paratopological stack, we ``build'' it from $\Conf_M$ in simple steps: start with the universal incidence correspondence $\Gamma \subset \Conf_M \times M$, consider the mapping stack $\mathrm{map}(\Conf_M \times M \setminus \Gamma, BG)$ (i.e.\ the stack of unbranched $G$-covers of $\Conf_M \times M \setminus \Gamma$), take the open and closed substack where local monodromies are in $c$, observe that the universal $G$-torsor extends uniquely to a branched cover of $\Conf_M \times M$, form the mapping stack from the universal branched $G$-cover to $X$, and take $G$-fixed points. 
\end{proof}

\begin{rem}Presumably $\Hur_M$ is actually a topological stack, but we will not need this, and the argument just given only proves existence as a paratopological stack: Noohi only proves existence of mapping stacks in this generality \cite{noohi-mapping}.  \end{rem}

\begin{rem}
	We will consistently describe stacks as sheaves of groupoids rather than (more correctly) as categories fibered in groupoids.
\end{rem}

\begin{para}
	Describing $\Hur_M$ as a stack is not just a convenient way of specifying the topology on $\Hur_M$. In the case that $\partial M = \varnothing$, the functor $\Hur_M$ is simply not represented by a topological space, due to the presence of objects with nontrivial automorphisms, and we are obligated to treat it as a stack. When $\partial M \neq \varnothing$, the datum of a trivialization of $E \to M$ along $\partial M$ rigidifies the moduli problem and makes $\Hur_M$ representable by a space.  
\end{para}

\begin{para}Consider a $1$-manifold $L$. Then $\Hur_{L \times [0,1]}$ is an $E_1$-algebra in the same way as $\Conf_{M \times [0,1]}$. Again, we replace it with a strictly associative version. Fix $\epsilon > 0$ and let $H$ be the topological monoid parametrizing the data of $t \in [0,\infty)$, and a point of $\Hur_{L \times [0,t]}$, which we demand to be {standard} over the intervals $[0,\epsilon)$ and $(t-\epsilon,t]$. As before, there is a scanning map $H \to \Omega' \Hur_{L \times (-\epsilon,\epsilon)}$. \end{para}

\begin{thm}\label{hurw}Suppose that $L$ is an interval (closed, open or half-open). The scanning map $H \to \Omega' \Hur_{L \times (-\epsilon,\epsilon)}$ is the group-completion of $H$. That is, the group-completion of the $E_1$-algebra $\Hur_{L \times [0,1]}$ is weakly equivalent to $\Omega \Hur_{L \times \R}$. 
\end{thm}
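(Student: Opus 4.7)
The plan is to mimic the proof of \cref{conf} essentially line for line, with the extra structure on $\Hur$ handled by careful local choices of trivializations. The scanning map factors as $\sigma \colon BH \to B\Omega' \Hur_{L \times (-\epsilon,\epsilon)} \to \Hur_{L \times (-\epsilon,\epsilon)}$, with the second arrow the usual equivalence, and I need to prove that $\sigma$ induces isomorphisms on $\pi_q$ for every $q$. Both directions (lifting a map and lifting a nullhomotopy) use the same construction as before, so the whole argument reduces to producing, from a map $f \colon S^q \to \Hur_{L \times (-\epsilon,\epsilon)}$, a lift $g \colon S^q \to BH$ with $\sigma \circ g \simeq f$.

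For the lifting, I would choose a finite open cover $\{U_\lambda\}_{\lambda \in \Lambda}$ of $S^q$, a radius $r>0$, and slicing times $t_\lambda \in (-\epsilon,\epsilon)$ with the property that for every $x \in U_\lambda$ the branched $G$-cover underlying $f(x)$ is \emph{unbranched} over the strip $L \times (t_\lambda-r, t_\lambda+r)$. This is possible by compactness of $S^q$, exactly as in the proof of \cref{conf}. The new ingredient is the following: since $L$ is an interval, $L \times (t_\lambda-r, t_\lambda+r)$ is contractible, and so any unbranched $G$-cover over it is trivializable; its set of trivializations is a discrete $G$-torsor. After refining the cover so that each $U_\lambda$ is contractible, I can therefore continuously choose trivializations of $f(x)|_{L \times (t_\lambda-r, t_\lambda+r)}$ as $x$ varies over $U_\lambda$. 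With these trivializations in hand, for $x \in U_{\lambda_0} \cap \cdots \cap U_{\lambda_k}$ with $t_{\lambda_0} < \cdots < t_{\lambda_k}$, I can slice the cover at each $t_{\lambda_i}$ and obtain a $k$-tuple of elements of $H$ (each piece is standard near its endpoints by construction); the equivariant map to $X$ restricts to give compatible data. Together with the partition of unity subordinate to $\{U_\lambda\}$, this yields the desired point of $BH$, and one checks that this assembles into a continuous $g \colon S^q \to BH$ and that $\sigma \circ g$ is homotopic to $f$ by the same ``zooming in'' homotopy used in \cref{conf}. The injectivity argument proceeds identically on a nullhomotopy $D^{q+1} \to \Hur_{L \times (-\epsilon,\epsilon)}$, using the reduced bar construction to accommodate coincidences of time-parameters.

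The main obstacle is bookkeeping with trivializations across overlaps: different open sets supply different slicing times, and on an overlap $U_{\lambda_0} \cap \cdots \cap U_{\lambda_k}$ one must verify that the $k$-tuple of elements of $H$ produced depends continuously on $x$ and that adding or removing an index $\lambda$ from the set of witnessed opens matches the degeneracy relations of the bar construction (inserting or deleting an identity element of $H$ at the appropriate position). This is the point where both the contractibility of $L$ (guaranteeing trivializability of the slice covers and canonicality of the standard form near endpoints) and the assumption that $X$ is connected are used; the latter lets one interpolate between different $G$-equivariant extensions of the map $E|_{L \times \{t_\lambda\}} \to X$ when comparing constructions on overlaps. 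Once these compatibilities are checked, the proof of \cref{hurw} is formally identical to that of \cref{conf}.
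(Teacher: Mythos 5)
Your argument is essentially the paper's proof for the case where $L$ is an \emph{open} (or half-open) interval, and for that case it is correct: contractibility of $L \times (t_\lambda - r, t_\lambda + r)$ gives trivializability, the set of trivializations is a $G$-torsor and hence can be chosen continuously over a contractible $U_\lambda$, and connectedness of $X$ lets one deform the equivariant map to be basepoint-valued on a smaller strip (the paper does this carefully with the shrinking lemma, producing a bump-mollified nullhomotopy; your sketch is a bit cavalier here but the idea is the same).

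The genuine gap is the case where $L = [a,b]$ is a \emph{closed} interval, which your proposal claims to handle by the same argument. It does not. When $\partial L \neq \varnothing$, a point of $\Hur_{L \times (-\epsilon,\epsilon)}$ already carries \emph{fixed} trivializations of the $G$-torsor along $\{a\} \times (-\epsilon,\epsilon)$ and $\{b\} \times (-\epsilon,\epsilon)$. For the sliced pieces to be elements of the monoid $H$ (standard near both slicing walls \emph{and} along $\partial L$), the trivialization you choose over $L \times \{t_\lambda\}$ must agree with the pre-existing one at \emph{both} corners $\{a\} \times \{t_\lambda\}$ and $\{b\} \times \{t_\lambda\}$. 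Contractibility tells you trivializations form a $G$-torsor, but the two corner constraints each single out a unique trivialization, and these need not coincide --- they differ by the monodromy of the cover along $L \times \{t_\lambda\}$ read in the endpoint trivializations, which is a nontrivial element of $G$ in general. The set of trivializations compatible with both endpoints can therefore be empty, and the slicing construction breaks down. This is precisely the obstruction the paper flags in \S\ref{explanation problem}. The paper's workaround is to introduce the auxiliary space $\Hur_M^J$ in which the trivialization along one boundary component $J$ is dropped, prove (via \cref{fibration lemma}) that $\Hur_M^J \to \mathrm{map}_\ast(J/\partial J, [X/G])$ is a fibration, and then compare two fiber sequences to reduce the closed-interval case to the half-open case, where the problem disappears. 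Your proposal would need this extra step (or some equivalent device) to be complete.
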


\begin{proof}[Proof in the case that $L$ is an open interval] As in the proof of \cref{conf}, we want to show that $\sigma: BH \to \Hur_{L \times (-\epsilon,\epsilon)}$ is a weak equivalence. So take a based map $f : S^q \to \Hur_{L \times (-\epsilon,\epsilon)}$, giving a branched $G$-cover of $S^q \times L \times (-\epsilon,\epsilon)$, branched over a closed subset $\Gamma_f$.  Choose as before a finite open cover $S^q = \bigcup_{\lambda \in \Lambda} U_\lambda$, such that there is a positive real $r>0$ and for each $\lambda$ an element $t_\lambda \in (-\epsilon,\epsilon)$, with the following property:
	for each $\lambda$, $$\Gamma_f \cap (U_\lambda \times L \times (t_\lambda-r,t_\lambda+r)) = \varnothing.$$
	Without loss of generality, each $U_\lambda$ is contractible. Then for each $\lambda$ we get a \emph{trivial} $G$-torsor over $U_\lambda \times L \times (t_\lambda-r,t_\lambda+r)$, since we have an unbranched cover of a contractible space. We choose a trivialization for each $\lambda$. 
	
	Now we need to care for the map to the target space $X$. For each $\lambda$ we get an equivariant map
	$$G \times U_\lambda \times L \times (t_\lambda-r,t_\lambda+r)\to X,$$
	and since $X$ is connected it is necessarily equivariantly nullhomotopic. Plugging in this nullhomotopy in the middle of the interval $(t_\lambda-r,t_\lambda+r)$ we may arrange that the map to $X$ is constant over the smaller interval of radius $r/2$, say. To get this nullhomotopy to be defined on all of $S^q$, we apply the shrinking lemma: there exists an open refinement by subsets $U_\lambda' \subset U_\lambda$, and for each $\lambda$ a bump function on $S^q$ which is $1$ on $U_\lambda'$ and with support in $U_\lambda$. Taking a nullhomotopy ``mollified'' by this bump function, we finally arrange that the map to the target space is constant above $U_\lambda' \times L \times (t_\lambda - \tfrac{r}{2},t_\lambda + \tfrac r 2)$.  We then take $r/2$ to be our new $r$, and $U_\lambda'$ to be our new $U_\lambda$. These homotopies for different $\lambda$ may be done independently of each other, since without loss of generality the family of intervals $(t_\lambda-r,t_\lambda+r)$ are disjoint. 
	
	After all this finagling, we are in a situation exactly like the one for ordinary configurations, and by exactly the same arguments as in the proof of \cref{conf} the map $f : S^q \to \Hur_{L \times (-\epsilon,\epsilon)}$ lifts up to homotopy to a map $g : S^q \to BH$. 
	
	Suppose instead that we have a map $g : S^q \to BH$ and a nullhomotopy $f : D^{q+1} \to \Hur_{L \times (-\epsilon,\epsilon)}$ of $\sigma \circ g$. We carry out the same procedure as just described to homotope $f$ to a map $f_{(1)}$ for which there is an open cover $U_\lambda$ of $D^{q+1}$, such that we can choose a trivialization of the $G$-torsor over $U_\lambda \times L \times (t_\lambda - r,t_\lambda+r)$ for all $\lambda$, and the map to the target space is constant over this region. The construction ``lifts'' in an evident way to a homotopy between $g$ and a new map $g_{(1)}$, in such a way that $f_{(1)}$ is instead a nullhomotopy of $\sigma \circ g_{(1)}$. Then by the same construction as earlier, $f_{(1)}$ lifts up to homotopy to a map $D^{q+1} \to BH$, and the construction in the final paragraph of the proof of \cref{conf} shows that this lifting can be homotoped to a nullhomotopy of $g_{(1)}$. 
\end{proof}

\begin{para}\label{explanation problem}The argument just given works equally well when $L$ is a half-open interval, but it is also less interesting as $\Hur_{L \times [0,1]}$ is contractible in this case. However, it needs to be modified in the case that $L=[a,b]$ is a \emph{closed} interval. The problem arises in the first paragraph of the proof, when arguing that we can choose a trivialization of the $G$-torsor over $U_\lambda \times L \times (t_\lambda-r,t_\lambda+r)$ for each $\lambda$. When $L$ has boundary, this $G$-torsor already comes with two trivializations; one over $U_\lambda \times \{a\} \times (t_\lambda-r,t_\lambda+r)$ and one over $U_\lambda \times \{b\} \times (t_\lambda-r,t_\lambda+r)$. In general, there is no reason for these two trivializations to be compatible with each other, which they would need to be for the rest of the argument to carry through. Let us now explain how to tweak the set-up to treat the case that $L$ is closed. 
\end{para}

\begin{defn}Let $M$ be an oriented surface, $J \subset \partial M$ an embedded interval which is closed in $M$. We write $\Hur_M^J$ for the stack assigning to a compactly generated space $S$ the groupoid of branched $G$-covers $p: E \to S \times M$ just as in \cref{hurwitzdef}, except we only insist on a trivialization $$p^{-1}(S \times (\partial M \setminus J^\circ)) \cong G \times S \times (\partial M \setminus J^\circ),$$ and the equivariant map $E \to X$ is only required to take $p^{-1}(S \times (\partial M \setminus J^\circ))$ to the basepoint of $X$. \end{defn}
\begin{para}
	There is an evident restriction map $\Hur_M^J \to \mathrm{map}_\ast(J/\partial J,[X/G])$, where $[X/G]$ denotes the quotient stack of $G$ acting on $X$. Indeed, an $S$-point of the right-hand side consists of a $G$-torsor over $S \times J$ with a trivialization over $S \times \partial J$, and a $G$-equivariant map from the total space to $X$ taking the inverse image of $S \times \partial J$ to the basepoint of $X$. 
\end{para}

\begin{lem} \label{fibration lemma} The restriction map $\Hur_M^J \to \mathrm{map}_\ast( J/\partial J,[X/G])$ is a fibration. \end{lem}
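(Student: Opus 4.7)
The plan is to verify the homotopy lifting property of the restriction map directly, making essential use of a collar neighborhood of $J$ in $M$. Since $J \subset \partial M$ is an embedded interval closed in $M$, it admits a collar $N \cong J \times [0,1)$, with $J = J \times \{0\}$. Given a homotopy lifting problem, namely an $A$-point $\Phi$ of $\Hur_M^J$ (consisting of a branched $G$-cover $p \colon E \to A \times M$ with the required trivialization on $A \times (\partial M \setminus J^\circ)$ and an equivariant map $f \colon E \to X$) together with a homotopy $H \colon A \times I \to \mathrm{map}_\ast(J/\partial J, [X/G])$ extending the restriction of $\Phi$, we will construct a lift $\tilde H \colon A \times I \to \Hur_M^J$ extending $\Phi$.

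The key geometric observation is that the branch locus of $p$ is a closed subset of $A \times M$ disjoint from $A \times J$. Working locally on $A$, we may therefore choose $\delta > 0$ such that this branch locus avoids $A \times (J \times [0,\delta))$; on this region the $G$-cover is unbranched and hence rigidly determined, up to canonical isomorphism, by its restriction to $A \times J$ via the collar retraction. I would then build the lift as a gluing of three pieces: on $A \times I \times (M \setminus N)$ and on $A \times I \times (J \times [2\delta/3, 1))$ take the constant extension of $\Phi$ in the $I$-direction; on $A \times I \times (J \times [0, \delta/3])$ pull back the $G$-torsor and equivariant map of $H$ along the collar projection $N \to J$; and on the intermediate annular region $A \times I \times (J \times [\delta/3, 2\delta/3])$ interpolate between these two, using the collar coordinate as the interpolation parameter. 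The interpolation splits into two independent tasks: interpolating unbranched $G$-covers that agree at $t = 0$ (which is carried out via a canonical homotopy of the transition data on the contractible annulus), and interpolating the $G$-equivariant map to $X$ (which uses the standing assumption that $X$ is connected to produce equivariant nullhomotopies of any discrepancy, exactly as in the argument of \cref{hurw}).

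The main obstacle is coherence: $\delta$ cannot in general be chosen uniformly over $A$, because the branch locus may approach $J$ as the base parameter varies. This is handled by the same partition-of-unity/shrinking-lemma device used in the proof of \cref{hurw}: one chooses a locally finite open cover of $A$ with a uniform $\delta$ on each patch, constructs local lifts as above, and glues them. Two local lifts produced from different $\delta$'s differ only in the auxiliary choice of interpolation on an annular region, and the space of such choices is contractible (again using connectedness of $X$ and the rigidity of unbranched $G$-covers on an annulus), so the local lifts are naturally homotopic on overlaps and assemble to a global lift. Finally, that $\tilde H$ is continuous as a map into the stack $\Hur_M^J$ of \cref{hurwitzdef} follows from the construction: the incidence correspondence of branch points is the constant extension of the original one (no branch points are created or destroyed in the collar), so the closed-subset and local-homeomorphism conditions are inherited from $\Phi$, while the $G$-cover and equivariant map are continuous by the explicit formulas.
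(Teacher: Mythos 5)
Your plan takes a genuinely different and much more laborious route than the paper's, and as stated it does not close. The paper's proof hinges on an observation you miss: the branched $G$-cover part of the lift can simply be taken to be the \emph{constant} extension $I\times E$, where $p\colon E\to S\times M$ is the given cover; this already restricts correctly over $\{0\}$ and carries the required trivialization along $I\times S\times(\partial M\setminus J^\circ)$. All that remains is to extend the $G$-equivariant map to $X$ from $\{0\}\times E\,\cup\, I\times p^{-1}(S\times J)$ to $I\times E$, i.e.\ the $G$-equivariant homotopy extension property of $p^{-1}(S\times J)\hookrightarrow E$. Your geometric observation that the branch locus avoids $A\times J$ is exactly the right input, but it should be used to conclude that $G$ acts \emph{freely} on a neighborhood of $p^{-1}(S\times J)$, so that this inclusion is locally the preimage, under a free $G$-quotient, of the ordinary cofibration $S\times J\hookrightarrow S\times M$; the equivariant cofibration property is therefore inherited, and the lemma follows with no collars, no interpolation of $G$-covers, and no partition of unity on the base.

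Because your plan modifies the $G$-cover near $J$, it inherits two problems the paper's argument sidesteps. First, the interpolation of unbranched $G$-covers over the annulus is not canonical; a lift requires an actual choice of identification, not merely the existence of a contractible space of them. Second, and more seriously, the final gluing is a genuine gap: knowing that the local lifts over $U_\alpha$ and $U_\beta$ are \emph{homotopic} on $U_\alpha\cap U_\beta$ does not by itself produce a lift over the union, and upgrading this to a global lift would require choosing those homotopies coherently over higher intersections, which the ``contractible space of choices'' observation does not accomplish on its own. The whole difficulty evaporates once you see that you are not lifting a homotopy of covers at all, only extending a homotopy of equivariant maps, so the right tool is the equivariant cofibration property rather than an explicit collar-by-collar construction.
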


\begin{proof}Let $S$ be a compactly generated space, and consider a lifting problem\[\begin{tikzcd}
		S \arrow[d]\arrow[r]&\arrow[d] \Hur_M^J \\
		I \times S \arrow[ur, dotted]\arrow[r]&  \mathrm{map}_\ast( J/\partial J,[X/G])
	\end{tikzcd}\]
	The map $S \to \Hur_{M}^J$ furnishes in particular a branched $G$-cover $p:E \to S \times M$; then $I \times E$ is naturally a branched $G$-cover of $I \times S \times M$, which is correctly trivialized to yield the first piece of data to define a lifting in the diagram. To fill in the diagram, we need moreover a $G$-equivariant map $I \times E \to X$ which agrees with the specified values on $\{0\} \times E$ and $I \times p^{-1}(S \times  J)$; that is, we should argue that the inclusion $p^{-1}(S \times {J}) \hookrightarrow E$ is a $G$-cofibration (satisfies the equivariant homotopy extension property). To see this, one observes that $p^{-1}(S \times  J)$ admits a neighborhood on which $G$ acts freely, and taking the quotient by $G$ on this neighborhood gives back an open neighborhood of $S \times  J$ in $S \times M$. But   $S \times  J \hookrightarrow S \times M$ is an ordinary cofibration.\end{proof}

\begin{proof}[Proof of \cref{hurw} in case $L={[a,b]}$ is a closed interval]
	A consequence of \cref{fibration lemma} is that there is a diagram of fiber sequences
	\[\begin{tikzcd}
		\Hur_{L \times [0,1]} \arrow[d]\arrow[r]&\arrow[d]\arrow[r] \Hur_{L \times [0,1]}^{\{a\} \times [0,1]} & \Omega [X/G] \arrow[d,"\simeq"]\\
		\Omega \Hur_{L \times (-\epsilon,\epsilon)}\arrow[r] & \Omega \Hur_{L \times (-\epsilon,\epsilon)}^{\{a\} \times (-\epsilon,\epsilon)} \arrow[r]& \Omega [X/G]. 
	\end{tikzcd}\]
	Indeed, the top row is obtained directly from \cref{fibration lemma} with $M= L\times [0,1]$ and $J= \{a\}\times[0,1]$. The bottom row is instead obtained by once looping the fiber sequence given by \cref{fibration lemma} for $M=L \times (-\epsilon,\epsilon)$, $J=\{a\}\times (-\epsilon,\epsilon)$. Note in particular that $\partial J=\varnothing$ so $\mathrm{map}_\ast( J/\partial J,[X/G]) \simeq [X/G]$ in this case. The three vertical maps are given by the respective scanning maps. Now the entire diagram deloops to 
	\[\begin{tikzcd}
		B\Hur_{L \times [0,1]} \arrow[d]\arrow[r]&\arrow[d]\arrow[r] B\Hur_{L \times [0,1]}^{\{a\} \times [0,1]} &  {[X/G]} \arrow[d,"\simeq"]\\
		\Hur_{L \times (-\epsilon,\epsilon)}\arrow[r] &  \Hur_{L \times (-\epsilon,\epsilon)}^{\{a\} \times (-\epsilon,\epsilon)} \arrow[r]&  {[X/G]}.
	\end{tikzcd}\]
	By an argument exactly like the one in the proof given immediately after \cref{hurw}, the middle vertical map is an equivalence. Indeed, the issue explained in \S\ref{explanation problem} is no longer a problem, precisely because in the space $\Hur_{L \times [0,1]}^{\{a\} \times [0,1]}$ we no longer insist on a trivialization along $\{a\} \times [0,1]$. Thus, the leftmost vertical map is a weak equivalence, too. 
\end{proof}

\subsection{Local Hurwitz space}

\begin{para}
	Just as in \cref{stretching}, let us now analyze the topology of the Hurwitz space $\Hur_M$ when $M=B^2$ is an open disk. We will explicitly determine the homotopy type of the paratopological stack $\Hur_{B^2}$. All the reader needs to know about the homotopy theory of stacks is that one can consistently assign a homotopy type to a paratopological stack \cite{noohi-homotopy}, and that the homotopy type of a quotient stack $[Y/\Gamma]$ is the homotopy quotient $Y/\!/\Gamma$. 
\end{para}

\begin{construction}Introduce the notation
	$$ X(c) := \coprod_{g \in c} X^g $$
	for the disjoint union of the sets of $g$-fixed points, for $g \in c$. We consider $X(c)$ itself as a $G$-space in the natural way: multiplication by an element $h$ maps $X^g$ to $X^{hgh^{-1}}$.  
\end{construction}

\begin{construction}\label{evaluation}A point of $\mathrm{map}(S^1,[X/G])$ can be identified with a $G$-torsor $E\to S^1$ and a $G$-equivariant map $E\to X$. We write $\mathrm{map}(S^1,[X/G])_0$ for the substack on which the corresponding map $E\to X$ is locally constant. There is an isomorphism of stacks
	$$ \mathrm{map}(S^1,[X/G])_0 \cong \big[\big(\coprod_{g \in G} X^g\big)/ G\big] = [X(G) / G].$$
	For any conjugation-invariant $c \subseteq G$ we therefore get a natural ``evaluation'' map
	$$ S^1 \times [X(c)/ G] \hookrightarrow S^1 \times [X(G)/ G] \cong S^1 \times \mathrm{map}(S^1,[X/G])_0 \to [X/G].$$
\end{construction}

\begin{thm}\label{local hurwitz space}
	There is a homotopy pushout square 
	\[\begin{tikzcd}
		S^1 \times X(c)/\!/G \arrow[r]\arrow[d]& \arrow[d]X/\!/G \\
		X(c)/\!/G \arrow[r]& \Hur_{B^2}
	\end{tikzcd}\]
	in which the top horizontal arrow is the evaluation map of \cref{evaluation}, and the left vertical arrow is the projection to the second factor. 
\end{thm}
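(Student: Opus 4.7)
Plan: The argument runs closely parallel to the proof of \cref{stretching}. The strategy is: (i) deformation retract $\Hur_{B^2}$ onto the substack $\Hur_{B^2}^{\leq 1}$ of branched covers with at most one branch point; (ii) identify $\Hur_{B^2}^{\leq 1}$ with the claimed homotopy pushout by studying its natural stratification.

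For (i), the function $\phi\colon \Conf_{B^2}\to (0,1]$ used in the proof of \cref{stretching} pulls back along the forgetful map $\Hur_{B^2}\to \Conf_{B^2}$, and the rescaling-by-$1/\phi$ deformation lifts canonically to branched cover data: restrict $(E\to B^2,\ E\to X)$ to the sub-disk $\phi\cdot B^2$, then rescale the base diffeomorphically to $B^2$; the branch points that fall outside the sub-disk, together with the corresponding part of $E$, are simply discarded. This produces the desired deformation retract onto $\Hur_{B^2}^{\leq 1}$.

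For (ii), I stratify $\Hur_{B^2}^{\leq 1}$ into a closed and an open stratum. The closed stratum (empty branch locus) parametrizes unbranched $G$-covers of $B^2$ with equivariant map to $X$; since $B^2$ is contractible, this stratum is exactly $X/\!/G$. The open stratum parametrizes a branch point $p\in B^2$ together with a branched cover with local monodromy in $c$ and an equivariant map to $X$. Fixing $p$, the cover is determined by its local monodromy $g\in c$, and the equivariant map by the value $x\in X^g$ at a $\langle g\rangle$-fixed lift of $p$; varying $p$, the open stratum is canonically $B^2\times X(c)/\!/G$ (a product since $B^2$ is simply connected).

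The main step---and the main obstacle---is identifying the attaching map between strata with the evaluation $\mathrm{ev}\colon S^1\times X(c)/\!/G \to X/\!/G$ of \cref{evaluation}. Fix a loop $\ell(\theta)=(1-\varepsilon)e^{i\theta}$ near $\partial B^2$. For $p\in B^2$ with $|p|<1-\varepsilon$, the restriction of the branched-cover data along $\ell$ represents precisely the loop in $X/\!/G$ associated to $(g,x)$ under the identification $\mathrm{map}(S^1,X/\!/G)_0\simeq X(G)/\!/G$. As $p\to\partial B^2$ along a direction $\theta_0\in S^1$, the branch point eventually leaves every compact $K\subset B^2$; the restriction of the cover to $K$ becomes unbranched and trivializable, and the equivariant map converges to the constant equivariant map determined by $\mathrm{ev}(\theta_0,[g,x])\in X/\!/G$. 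This identifies $\Hur_{B^2}^{\leq 1}$ with the pushout $X(c)/\!/G \xleftarrow{\mathrm{pr}} S^1\times X(c)/\!/G \xrightarrow{\mathrm{ev}} X/\!/G$: identifying $\bar B^2$ with the cone on $S^1$, the open stratum $B^2\times X(c)/\!/G$ is the open mapping cylinder of $\mathrm{pr}$, and the gluing to $X/\!/G$ at the boundary is via $\mathrm{ev}$. The hard part is the convergence argument, which requires carefully tracking local trivializations (equivalently, $G$-orbits of lifts of a chosen basepoint) as the branch point escapes to infinity, and verifying compatibility with the $G$-action in $X/\!/G$; working stack-theoretically (with all data specified up to the $G$-action) keeps this bookkeeping manageable.
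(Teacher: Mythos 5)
Your picture of the strata and the attaching map is the same as the paper's, and the identification of the gluing with the evaluation map via monodromy around a circle near $\partial B^2$ is correct. But there is a genuine gap in how you turn the decomposition into a homotopy pushout. You partition the at-most-one-branch-point locus into the \emph{closed} stratum (no branch points at all) and the \emph{open} stratum (exactly one branch point). A closed--open stratification does not by itself yield a homotopy pushout: you need to show that the open stratum approaches the closed one in a mapping-cylinder fashion, with the link $S^1\times X(c)/\!/G$ mapping to $X/\!/G$ by the evaluation. This is exactly your ``convergence argument,'' and you correctly flag it as the hard part, but you do not carry it out—and it is genuinely delicate, because the topology on $\Hur_{B^2}$ near the empty configuration is defined via weak-$*$ convergence of branch loci together with germ data for the cover and the map, and it is not manifestly the topology of a double mapping cylinder.

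The paper sidesteps this entirely by using an \emph{open cover} rather than a closed--open stratification: the first piece is the locus with no branch point \emph{at the origin} (open, since avoiding a fixed interior point is an open condition in the configuration topology), and the second is the locus with exactly one branch point (open in the at-most-one locus, since the complement is the single point given by the empty configuration). For an open cover of a space or paratopological stack, the Mayer--Vietoris square is automatically a homotopy pushout, so nothing about the topology of the attaching region needs to be analysed beyond identifying the homotopy types of the three pieces. Your closed stratum arises in the paper only as the deformation retract of the open ``no branch point at the origin'' piece, via the smallest-distance-to-origin variant of the zooming-in argument of \cref{stretching}. Replacing your closed stratum by this open set makes the decomposition an open cover, and the convergence analysis you were worried about disappears.
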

\begin{proof} The Hurwitz space deformation retracts down to the substack on which there is at most one branch point, by the same argument as in \cref{stretching}. To analyze this substack, we cover it by two open pieces.

	Firstly, there is the locus where there is no branch point at the origin. We claim that this substack is homotopy equivalent to $X/\!/G$. Indeed, by an argument much like that of \cref{stretching}, but using the \emph{smallest} distance to the origin instead of the second-smallest distance, this space deformation retracts down to the subspace where there are no branch points at all. Thus, we get a trivial $G$-torsor over a disk and an equivariant map from the total space to $X$, which is the same as a map from a disk to $[X/G]$. 
	
	Secondly, there is the open locus where there is exactly one branch point on the disk. We claim that this substack is homotopy equivalent to $X(c)/\!/G$. We may for this calculation replace the open disk with a closed disk, and insist that there is a unique branch point on the interior. If we add the auxilliary datum of a trivialization of the $G$-torsor over some fixed point on the boundary of the disk, then we get a space of the homotopy type of $X(c)$. Indeed, note first that after such a choice, the monodromy around the branch point becomes a well-defined element $g \in c$. Moreover, the branched cover is a finite union of disks, and an equivariant map to $X$ from this union is determined up to homotopy by where it sends the center of one of the disks: this will be an element of $X^g$, yielding the space $X(c) = \coprod_{g \in c} X^g$. But then forgetting the auxiliary datum of a trivialization over a point on the boundary amounts to taking the stack quotient by $G$.   
	
	These open subsets are glued along their intersection, whose homotopy type is $S^1 \times X(c)/\!/G$, by an argument much like the previous paragraph. \end{proof}

\begin{para}\label{clutching}We remind the reader of the ``clutching construction'', which says that rank $d$ vector bundles on a sphere $S^n$ are classified by maps $S^{n-1} \to \mathrm{GL}(d)$. The same construction works not just for vector bundles but for fibrations: for any space $F$, fibrations over $S^n$ with fiber $F$ are in bijection with maps $S^{n-1} \to \mathrm{hAut}(F)$, up to homotopy. Explicitly, a map $S^{n-1} \to \mathrm{hAut}(F)$ gives an automorphism of the fibration $S^{n-1} \times F \to S^{n-1}$, and the fibration over $S^n$ associated to such a clutching function can be described as the homotopy pushout 
$$ F \coprod^h_{ S^{n-1} \times F } F,$$
where one of the maps is the projection to the second factor, and the other is the projection to the second factor precomposed with the chosen automorphism of $S^{n-1} \times F$. \end{para}

\begin{proof}[Proof of \cref{scanning theorem}]
	Let $X=KA$, $G=\Sigma_2$, and let $c$ consist only of the non-identity element. Then there is a weak equivalence of $E_2$-algebras
 $$\Hur_{[0,1]^2} \simeq \coprod_{g \geq 0} H_g^{0,1}(KA). $$
	By two applications of \cref{hurw}, the $2$-fold delooping of this $E_2$-algebra is $\Hur_{B^2}$. By \cref{local hurwitz space}, the double delooping therefore has the homotopy type of the pushout
	$$ KA/\!/\langle \iota \rangle \coprod^h_{S^1 \times B\Sigma_2} B\Sigma_2,$$
	using that $(KA)^{\Sigma_2} = \ast$. The map $S^1 \times B\Sigma_2 \to KA/\!/\langle \iota \rangle$ factors through the map $B\Sigma_2 \to KA/\!/\langle \iota\rangle$ induced by the inclusion of the $\Sigma_2$-fixed points in $KA$, so we may rewrite the pushout as 
	$$ KA/\!/\langle \iota \rangle \coprod^h_{B\Sigma_2 } B\Sigma_2 \coprod^h_{S^1 \times B\Sigma_2} B\Sigma_2$$
	and to finish the proof of \cref{scanning theorem} we should only identify the pushout $B\Sigma_2 \coprod^h_{S^1 \times B\Sigma_2} B\Sigma_2$ with the space $W$ defined in \cref{def of W}. But the latter pushout is an instance of the clutching construction described in \S\ref{clutching}, in the case $n=2$ and $F=B\Sigma_2$. Indeed, one leg of the pushout is projection to the second factor, and the other leg is the ``evaluation'' map of \cref{evaluation}. It is not hard to check that the evaluation can be identified with the precomposition of projection to the second factor with the unique nontrivial automorphism of the fibration $S^1 \times B\Sigma_2 \to S^1$, so that the pushout may be identified with the unique nontrivial fibration over $S^2$ with fiber $B\Sigma_2$ --- but this is exactly the space $W$ defined in \cref{def of W}.  
\end{proof}

\begin{rem}
	More generally, the pushout of \cref{local hurwitz space} can always be rewritten in the form 
	$$ \Hur_{B^2} \simeq X/\!/G \coprod^h_{X(c)/\!/G} W(c),$$
	where $W(c)$ is the total space of a fibration over $S^2$ with fiber $X(c)/\!/G$. This fibration is described by the clutching function $S^1 \to \mathrm{hAut}(X(c)/\!/G)$ given by identifying $X(c)/\!/G$ with a subspace of $\mathrm{map}(S^1,X/\!/G)$ via \cref{evaluation}, on which $S^1$ acts by rotation. 
\end{rem}

\section{Double loop space homology}\label{koszul section}

\begin{para}
	In the preceding section of the paper, we showed that the group-completion of the monoid $\coprod_{g \geq 0} H_g^{0,1}(KA)$ is the double loop space $\Omega^2 \, W(A)$, if $A$ is a connected chain complex without $2$-torsion. In particular, the group-completion theorem identifies the stable homology of $H_g^{0,1}(KA)$ with the homology of the space $\Omega^2_0 \, W(A)$. The goal of this section is to  calculate the rational homology $H_\bullet(\Omega^2_0 \, W(A),\Q)$ as an explicit analytic functor of $A$. This result will appear as \cref{calculation of analytic functor}.
\end{para}

\begin{para}The proof of \cref{calculation of analytic functor} is an application of Koszul duality and rational homotopy theory. To fix conventions and for the reader's convenience, we begin by briefly recalling the notions of quadratic algebras and coalgebras, and Koszul duality of such algebras. We work over $\Q$, although all results here are valid more generally over any base field. The algebraic results recalled here are all quite standard. The original reference is the work of Priddy \cite{priddy}; for a textbook account, see Loday--Vallette \cite[Chapter 3]{lodayvallette}. We then discuss the notion of a \emph{Koszul space}, introduced by Berglund \cite{berglundkoszulspaces}, and prove that $W(A)$ is an example of a Koszul space. After all this preparation, we state and prove \cref{calculation of analytic functor}.
 \end{para}

\subsection{Quadratic algebras}

\begin{para}
	Let $V$ be a graded vector space, and $R \subseteq V \otimes V$ a subspace. We denote by $A(V,R)$ the associative algebra $\mathsf T V/\langle R\rangle$, where $\mathsf TV$ denotes the tensor algebra on $V$ and  $\langle R \rangle$ is the two-sided ideal generated by $R \subseteq V^{\otimes 2} \subset \mathsf TV$. An associative algebra of this form will be called a \emph{quadratic algebra}. Being quadratic is a property of an associative algebra, not extra structure, as such a quadratic presentation is unique if it exists; $V$ can be recovered as the space of indecomposables in the algebra. 
\end{para}\begin{para}
A quadratic algebra $A(V,R)$ has two natural gradings: one arising from the grading of $V$, which we refer to as the \emph{homological degree}, and one given by putting $V^{\otimes n}$ in degree $n$, which we refer to as the \emph{length grading}, or the \emph{tensor length}.  If $A$ is a quadratic algebra, then we denote by $A_i(n) \subset A$ the subspace of homological degree $i$ and tensor length $n$. We say that $A$ is of \emph{finite type} if $\dim A_i(n)<\infty$ for all $i$ and $n$.
The \emph{Hilbert--Poincar\'e series} of a quadratic algebra $A$ of finite type is defined by 
$$ A(t,z) = \sum \dim(A_i(n))t^n (-z)^i.  $$
The fact that $(-z)^i$ appears in the formula rather than $z^i$ is due to the fact that the Koszul sign rule is enforced with respect to the homological grading, cf.\ the discussion in \S\ref{sign rule 1}. Similarly, $t^n$ appears rather than $(-t)^n$, since the Koszul sign rule is {not} enforced with respect to the length grading.  
\end{para}\begin{para}
To the same data $(V,R)$ we also associate a \emph{quadratic coalgebra} $C(V,R)$. Explictly, $C(V,R)$ is the subcoalgebra of the tensor coalgebra $\mathsf T^c V$ which in tensor length $n$ is given by the intersection $\cap_{i+2+j=n} V^{\otimes i} \otimes R \otimes V^{\otimes j}$. Again, being quadratic is a property of a coalgebra, not additional structure. Quadratic coalgebras are similarly bigraded, and an identical formula defines the Hilbert--Poincar\'e series of a quadratic coalgebra of finite type. 

\end{para}\begin{para}
If $A = A(V,R)$ is a quadratic algebra, then we define its \emph{quadratic dual} coalgebra to be the quadratic coalgebra $A^\antishriek = C(V[-1],R[-2])$. Similarly, the quadratic dual algebra of $C = C(V,R)$ is $C^\antishriek = A(V[1],R[2])$.
\end{para}

\subsection{Bar-cobar duality and Koszul duality}

\begin{para}
	We denote by $B$ and $\Omega$ the bar and cobar functors \cite[Section 2.2]{lodayvallette}. The bar construction $B$ is a functor from augmented dg algebras to coaugmented conilpotent dg coalgebras, and the cobar functor is its left adjoint. The important fact about the bar and cobar constructions is that they are very close to being inverses of each other up to quasi-isomorphism. Recall that a coaugmented coalgebra $C$ is $n$-connected if the coaugmentation coideal $\overline C$ vanishes in degree $\leq n$. 
	\end{para}

\begin{thm}\label{barcobarduality}The bar and cobar functors preserve quasi-isomorphisms in the following sense:
	\begin{enumerate}[(i)]
		\item If $A \to A'$ is a quasi-isomorphism of dg algebras, then $BA\to BA'$ is a quasi-isomorphism.
		\item If $C \to C'$ is a quasi-isomorphism of $1$-connected dg coalgebras, then $\Omega C \to \Omega C'$ is a quasi-isomorphism.
		\item The counit map $\Omega BA \to A$ is a quasi-isomorphism.
		\item The unit map $C \to B \Omega C$ is a quasi-isomorphism. 
	\end{enumerate}
\end{thm}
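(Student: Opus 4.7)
The plan is to establish all four statements using the standard toolkit of twisting morphisms: tensor-length filtrations with convergent spectral sequences for the ``functoriality'' claims (i) and (ii), and acyclicity of the twisted (Koszul) complex $BA\otimes_\pi A$ (respectively $\Omega C\otimes_\iota C$) for the adjunction (co)unit claims (iii) and (iv).

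For (i), I would equip $BA = T^c(s\overline{A})$ with the increasing filtration $F_p BA$ given by tensors of length $\leq p$. The internal differential of $A$ preserves filtration degree while the bar part (induced by multiplication) strictly lowers it, so the associated graded complex is $T^c(s\overline{A})$ equipped solely with the internal differential of $A$. A quasi-isomorphism $f\colon A\to A'$ then induces a quasi-isomorphism on each associated graded piece; since the filtration is bounded below and exhaustive, and since on the bar side each component has only finitely many nonzero filtration quotients in a fixed total degree, the spectral sequence converges strongly and gives a quasi-isomorphism $BA\to BA'$. Statement (ii) goes by the mirror argument applied to $\Omega C = T(s^{-1}\overline{C})$, and the main obstacle here is convergence: this is precisely where the $1$-connectedness assumption enters. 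Since $\overline{C}$ vanishes in degrees $\leq 1$, the desuspension $s^{-1}\overline{C}$ is concentrated in degrees $\geq 1$, and therefore only finitely many tensor-length strata contribute in each homological degree, yielding strong convergence.

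For (iii), I would invoke the universal twisting morphism $\pi_A\colon BA\to A$ which sends the bar $[a]$ to $a$ and vanishes on longer words. It satisfies the Maurer--Cartan equation by construction, and one forms the twisted two-sided complex $BA\otimes_{\pi_A}A$. An explicit ``extra degeneracy'' contracting homotopy (insertion of the unit as the leftmost bar) shows this complex is acyclic. By the standard reformulation, acyclicity of the Koszul complex associated to a twisting morphism $\tau\colon C\to A$ is equivalent to the adjoint map $\Omega C\to A$ being a quasi-isomorphism; applied to $\pi_A$, this adjoint is precisely the counit $\epsilon_A\colon \Omega BA\to A$, so (iii) follows. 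Statement (iv) is the dual argument: the universal twisting morphism $\iota_C\colon C\to \Omega C$ yields a Koszul complex $\Omega C\otimes_{\iota_C}C$, which is acyclic under the $1$-connectedness hypothesis (again via a contracting homotopy whose convergence requires the weight filtration to be bounded in each degree), and this acyclicity translates via the adjunction into the assertion that the unit $\eta_C\colon C\to B\Omega C$ is a quasi-isomorphism.

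The main obstacle across the whole argument is convergence on the cobar side: the explicit contracting homotopies and filtration spectral sequences all converge unconditionally on the bar side, but the cobar construction of a coalgebra concentrated in low degrees can produce infinite-length tensor strata in a fixed homological degree. Pinning down the $1$-connectedness hypothesis and verifying that it is exactly what is needed for strong convergence of the weight filtration on $\Omega C$ (and hence on $B\Omega C$ and on $\Omega C\otimes_{\iota_C}C$) is the one place the argument is genuinely delicate; everything else is a bookkeeping exercise with signs and suspensions, and I would simply refer to \cite[Chapter~2]{lodayvallette} for those routine verifications.
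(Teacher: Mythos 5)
The paper does not prove \cref{barcobarduality}; it presents it as standard background and defers to Loday--Vallette~\cite[Chapter~2]{lodayvallette}. Your sketch is precisely the argument given there --- tensor-length filtration spectral sequences for (i)--(ii), with the $1$-connectedness of $C$ supplying the boundedness needed on the cobar side, and acyclicity of the twisted Koszul complexes together with the comparison lemma for twisting morphisms for (iii)--(iv) --- so you are taking the same route the paper implicitly defers to.
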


\begin{para}The bar and cobar constructions predate the notion of Koszul duality. The original motivation for introducing the cobar construction was in order to formulate the following theorem \cite{adams}:\end{para}

\begin{thm}[Adams]\label{adams}Let $X$ be a $1$-connected CW complex. Then $\Omega C_\bullet(X,\Z) \simeq C_\bullet(\Omega' X,\Z)$, where $C_\bullet(\Omega' X,\Z)$ is considered as a dg algebra using the concatenation of Moore loops. 
\end{thm}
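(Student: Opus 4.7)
The plan is to give Adams' classical construction of an explicit dg-algebra quasi-isomorphism $\theta \colon \Omega C_\bullet(X,\Z) \to C_\bullet(\Omega' X,\Z)$. The first preparatory move is to replace the target with \emph{cubical} singular chains. Cubes $I^p \to \Omega' X$ can be concatenated pointwise using the strict associative monoid structure on Moore loops, giving $C^{\square}_\bullet(\Omega' X,\Z)$ a genuine, strictly associative dg-algebra structure, and cubical chains are quasi-isomorphic to simplicial chains by an Eilenberg--Zilber-type comparison. In parallel one can use cubical chains on $X$ (or mediate through the standard Eilenberg--Zilber equivalence), so that everything is framed inside a category where a strict chain-level map makes sense.

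Next I would define the Adams cobar map on generators. For a singular $n$-simplex $\sigma \colon \Delta^n \to X$ with $n \geq 1$, Adams constructs a cubical $(n-1)$-chain $\theta(s^{-1}\sigma) \colon I^{n-1} \to \Omega' X$ whose value at $(t_1,\ldots,t_{n-1})$ is a Moore path from $\sigma(v_0)$ to $\sigma(v_n)$ obtained by interpolating between the iterated edge-paths along the $1$-skeleton of $\Delta^n$; this is the classical ``sweep'' parametrization. On degree $0$ simplices the map sends $s^{-1}\sigma$ to the constant loop at $\sigma$, and the definition is extended multiplicatively to all of $\Omega C_\bullet(X,\Z)$. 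The combinatorial heart of Adams' construction is a direct check that the cubical boundaries of these parametrized loop families exactly match the cobar differential, i.e.\ the contributions from the internal faces of $\Delta^n$ assemble into a concatenation of two Moore paths corresponding to the deconcatenation part of $d_\Omega$, while the external faces contribute the linear part.

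Finally I would verify that $\theta$ is a quasi-isomorphism. The clean route is by induction on cells. Filter $X$ by its skeleta; both $\Omega C_\bullet(-)$ and $C^{\square}_\bullet(\Omega'(-))$ turn a cofibration $X \hookrightarrow X \cup_\phi D^n$ into a homotopy pushout of chain complexes (the cobar side because $\Omega$ preserves the relevant quasi-isomorphisms by \cref{barcobarduality}(ii), using $1$-connectedness to stay in the connected range; the loop-space side by the Mayer--Vietoris sequence associated to the path-loop fibration over the pushout). A five-lemma/spectral sequence argument then reduces the problem to $X = S^n$ with $n \geq 2$, where both sides are free algebras on a single generator of degree $n-1$ (the target by James' splitting $\Omega S^n \simeq JS^{n-1}$, the source by direct inspection of the cobar of the chain model $\Z \oplus \Z[n]$), and one can check $\theta$ sends the generator to the generator.

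The main obstacle is the last step: keeping track of signs, base points, and degeneracies so that the chain-level construction is manifestly a map of dg algebras, and organising the inductive comparison so that the comparison map on pushouts genuinely agrees with $\theta$. An alternative to the cellular induction --- and in many modern treatments the conceptually cleanest endgame --- is to compare the Eilenberg--Moore spectral sequence for $\Omega C_\bullet(X,\Z) \simeq \mathrm{Cotor}^{C_\bullet(X)}(\Z,\Z)$ with the Serre spectral sequence of $\Omega X \to PX \to X$; under the $1$-connectedness hypothesis both converge strongly and $\theta$ induces an isomorphism of $E_2$-pages, forcing convergence to an isomorphism of abutments.
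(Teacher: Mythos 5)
The paper does not prove this theorem: it is stated as a citation to Adams' original 1956 paper (with a remark citing Rivera--Zeinalian for the non-simply-connected extension), so there is no proof in the paper to compare against. What follows is an assessment of your proposal on its own terms.

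Your first two steps are a faithful reconstruction of Adams' argument. Passing to cubical chains so that Moore-loop concatenation gives a strictly associative dg-algebra model, and constructing the explicit map $\theta$ on generators via the "sweep" parametrization of edge-paths in $\Delta^n$, with the combinatorial check that the cubical boundary realizes the cobar differential, is exactly Adams' construction.

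The cellular-induction endgame, however, has a genuine gap. The assertion that $C^{\square}_\bullet(\Omega'(-))$ sends a cell attachment $X \hookrightarrow X \cup_\phi D^n$ to a homotopy pushout of chain complexes is not correct: the based loop space functor does not commute with homotopy pushouts of spaces, and there is no "Mayer--Vietoris sequence associated to the path-loop fibration over the pushout." (What is true, by Ganea-type theorems, is that $\Omega$ of a homotopy pushout is, under hypotheses, an amalgamated free product of grouplike monoids; but then the correct statement on the algebraic side is a homotopy pushout of dg \emph{algebras} --- an amalgamated free product --- not of chain complexes, and establishing this equivalence is itself nontrivial, not an application of Mayer--Vietoris.) The parallel claim on the cobar side has the same confusion between pushouts of algebras and pushouts of underlying complexes. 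So the five-lemma reduction to $X = S^n$ does not go through as written.

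Your alternative route is the sound one and is essentially what Adams did, but the comparison is not literally between the Eilenberg--Moore spectral sequence of the cobar construction and the Serre spectral sequence of $\Omega X \to PX \to X$; those have different shapes and filtrations and are not directly comparable. The correct comparison is of two Eilenberg--Moore-type spectral sequences: on the algebraic side the word-length filtration of $\Omega C_\bullet(X)$, on the topological side a geometric filtration of $C^\square_\bullet(\Omega' X)$ adapted to the path-loop fibration, both with $E_2 \cong \mathrm{Cotor}^{H_\bullet(X)}(\Z,\Z)$. Under the $1$-connectedness hypothesis both converge strongly, $\theta$ is filtration-preserving and induces an isomorphism on $E_2$, and the comparison theorem for spectral sequences concludes. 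If you restate your last paragraph in these terms, the proof is correct.
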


\begin{rem}
	The conclusion of the theorem holds also for $X$ not necessarily simply connected \cite{riverazeinalian}.
\end{rem}

\begin{defn}For any quadratic algebra $A$ there is a natural morphism $A^\antishriek \to BA$, and for a quadratic coalgebra $C$ there is a natural morphism $ \Omega C\to C^\antishriek$. A quadratic algebra $A$ is said to be \emph{Koszul} if $A^\antishriek \to BA$ is a quasi-isomorphism. Similarly, a quadratic coalgebra $C$ is Koszul if $\Omega C \to C^\antishriek$ is a quasi-isomorphism. A quadratic algebra $A$ is Koszul if and only if $A^\antishriek$ is Koszul; we say that $A$ and $A^\antishriek$ are \emph{Koszul dual}. 
	\end{defn}

\begin{para}\label{twisting morphism}Suppose we are given a coaugmented coalgebra $C$, an augmented algebra $A$, and a homomorphism of algebras $\tau \colon \Omega C \to A$ (equivalently, a morphism of coalgebras $C \to BA$), i.e.\ what is called a \emph{twisting morphism} from $C$ to $A$. To this data, one can associate naturally a differential on the tensor product $C \otimes A$. We write $C \otimes_\tau A$ for the tensor product equipped with this twisting differential.  If $\tau$ is a quasi-isomorphism, then $C \otimes_\tau A \simeq \Q$. In particular, applying this to the canonical quasi-isomorphism $\tau : \Omega C \to C^\antishriek$ for a Koszul coalgebra $C$, taking alternating sums of dimensions in $C \otimes_\tau C^\antishriek$ gives a relation between the Hilbert--Poincar\'e series of $C$ and $C^\antishriek$, which reads as follows: 
\end{para}

\begin{thm}\label{functional equation}Let $C$ be a Koszul coalgebra of finite type. Then $$C(t,z)C^\text{\emph{\antishriek}}(\tfrac t z,z) = 1.$$ \end{thm}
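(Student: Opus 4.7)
The plan is to deduce the functional equation by computing the bigraded Euler characteristic of the Koszul complex $C\otimes_\tau C^\antishriek$. By the Koszul hypothesis, the canonical twisting morphism $\tau$ from $C$ to $C^\antishriek$ (equivalently, the algebra quasi-isomorphism $\Omega C \to C^\antishriek$) is a quasi-isomorphism, and so by \S\ref{twisting morphism} the twisted tensor product $C\otimes_\tau C^\antishriek$ is quasi-isomorphic to $\Q$ (placed in tensor length $0$ and homological degree $0$).

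Next I would endow $C\otimes_\tau C^\antishriek$ with two auxiliary gradings preserved by the twisting differential $d_\tau$: the \emph{total tensor length} $N=n+m$ and the \emph{weight} $w=i+j-m$, where $n,m$ and $i,j$ denote tensor length and homological degree in $C$ and $C^\antishriek$ respectively. Preservation of $N$ is immediate from $d_\tau\colon C(n)\otimes C^\antishriek(m)\to C(n-1)\otimes C^\antishriek(m+1)$. Preservation of $w$ is a short calculation: $\tau$ vanishes outside tensor length $1$, and on length $1$ it is induced by the identification of the generator spaces $V\subset C$ and $V[1]\subset C^\antishriek$, which shifts homological degree by $1$. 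Applying $d_\tau$ to $c\otimes a \in C_i(n)\otimes C^\antishriek_j(m)$, the coproduct splits $c$ into pieces of length $n-1$ and $1$ with degrees $i_1+i_2=i$, and the resulting term $c_{(1)}\otimes \tau(c_{(2)})a$ has bidegree $(N,\,i_1 + j + i_2 + 1 - (m+1)) = (N, w)$.

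For each fixed $(N,w)\neq (0,0)$ the subcomplex $(C\otimes_\tau C^\antishriek)_{N,w}$ is therefore acyclic, so its Euler characteristic (with respect to the homological grading $i+j$, which changes by $\pm 1$ under $d_\tau$) vanishes; for $(N,w)=(0,0)$ it equals $1$. On the other hand, expanding the product and collecting coefficients yields
\[
[t^N z^K]\, C(t,z)\,C^\antishriek(t/z, z) \;=\; \sum_{\substack{n+m=N\\ i+j-m=K}} (-1)^{i+j}\dim C_i(n)\dim C^\antishriek_j(m),
\]
which is precisely the bigraded Euler characteristic at $(N, K)$ computed above. Hence this equals $\delta_{N,0}\delta_{K,0}$, proving $C(t,z)\cdot C^\antishriek(t/z,z)=1$.

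The only step requiring real attention is verifying weight preservation under $d_\tau$, which reduces as indicated to the explicit form of the Koszul twisting morphism on generators; the remaining argument is essentially bookkeeping of the bigraded Euler characteristic, and I do not anticipate subtleties there once the bigrading is in hand.
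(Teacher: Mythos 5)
Your proof is correct and is exactly the argument the paper sketches in \S\ref{twisting morphism}: form the Koszul complex $C\otimes_\tau C^\antishriek$, note that it is acyclic in each bidegree (total tensor length, weight $i+j-m$) except $(0,0)$ because $\tau$ is a quasi-isomorphism, and take alternating sums of dimensions. You have simply made explicit the bigrading argument and the bookkeeping behind the substitution $u=t/z$, which the paper leaves to the reader.
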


\begin{rem}
	This result is most often stated without the additional homological grading, which corresponds to setting $z=-1$, yielding $C(t)C^\antishriek(-t)=1$.
\end{rem}

\begin{para}
	It will be important for us that the construction of \S\ref{twisting morphism} is \emph{natural}, and that therefore so is \cref{functional equation}. For example, if a group $G$ acts on the coalgebra $C$ and the algebra $A$, and the morphism $\tau \colon \Omega C \to A$ is $G$-equivariant, then $C \otimes_\tau A$ is a chain complex of $G$-modules. Thus, we obtain a refined form of \cref{functional equation} in which the coefficients of $C(t,z)$ and $C^\antishriek(\tfrac t z , z)$ are taken in $R(G)$, the Grothendieck ring of representations of $G$. Even more abstractly, we will be interested in quadratic (co)algebras $A(V,R)$ ($C(V,R)$) in which $V$ and $R$ are both \emph{analytic functors}. We may think of this either as having a quadratic algebra in the category of analytic functors, or as having an analytic functor taking values in the category of quadratic algebras. Either way, we obtain a refined form of \cref{functional equation} in which the coefficients of $C(t,z)$ and $C^\antishriek(\tfrac t z , z)$ are taken in $\widehat \Lambda$, the Grothendieck ring of analytic functors.

\end{para}

\subsection{Koszul spaces in general, and the space \texorpdfstring{$W(A)$}{W(A)} in particular}

\begin{defn}
	A simply connected based space $X$ of finite $\Q$-type is said to be \emph{Koszul} if $X$ is formal in the sense of rational homotopy theory, and $H^\bullet(X,\Q)$ is a Koszul algebra.
\end{defn}

\begin{examplex}\label{koszul examples}
	The following are simple examples of Koszul spaces:\begin{enumerate}
		\item Spheres,
		\item Eilenberg--MacLane spaces,
		\item Wedges of Koszul spaces,
		\item Products of Koszul spaces.
	\end{enumerate}
\end{examplex}

\begin{para}
	All we will need about Koszul spaces is the following simple proposition; see Berglund \cite[Theorem 4]{berglundkoszulspaces} for a more general statement. 
\end{para}
\begin{prop}\label{berglund theorem}
	Let $X$ be a Koszul space. Then $H_\bullet(\Omega X,\Q)$, with the algebra structure given by concatenation of loops, is the quadratic dual of the Koszul coalgebra $H_\bullet(X,\Q)$.
\end{prop}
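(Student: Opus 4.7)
The plan is to string together the three main inputs already collected in the paper: formality of $X$, Adams' cobar theorem, and the defining quasi-isomorphism of a Koszul coalgebra.

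First, because $X$ is formal, there is a chain of quasi-isomorphisms of dg coalgebras between $C_\bullet(X,\Q)$ and its homology $H_\bullet(X,\Q)$ (equipped with zero differential). Under the finite type assumption, $H_\bullet(X,\Q)$ is the graded linear dual of the quadratic algebra $H^\bullet(X,\Q)$, so its natural coalgebra structure is quadratic. By hypothesis $H^\bullet(X,\Q)$ is a Koszul algebra, hence $H_\bullet(X,\Q)$ is a Koszul coalgebra, and the canonical map
\[
\Omega H_\bullet(X,\Q) \longrightarrow H_\bullet(X,\Q)^{\antishriek}
\]
is a quasi-isomorphism of dg algebras. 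In particular, taking $H_\bullet$ yields $H_\bullet(\Omega H_\bullet(X,\Q)) \cong H_\bullet(X,\Q)^{\antishriek}$ as graded algebras.

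Next, I apply Adams' theorem (\cref{adams}): since $X$ is simply connected, $\Omega C_\bullet(X,\Q) \simeq C_\bullet(\Omega' X,\Q)$ as dg algebras, where $\Omega' X$ is the Moore loop space with its concatenation product. Combining this with part (ii) of \cref{barcobarduality} applied to the formality quasi-isomorphism --- note that $C_\bullet(X,\Q)$ and $H_\bullet(X,\Q)$ are both $1$-connected since $X$ is simply connected, so the hypotheses are met --- I obtain a chain of quasi-isomorphisms of dg algebras
\[
\Omega H_\bullet(X,\Q) \;\simeq\; \Omega C_\bullet(X,\Q) \;\simeq\; C_\bullet(\Omega' X,\Q).
\]
Passing to homology and combining with the previous paragraph yields
\[
H_\bullet(\Omega X,\Q) \;\cong\; H_\bullet(X,\Q)^{\antishriek}
\]
as graded algebras, using the standard equivalence $\Omega X \simeq \Omega' X$.

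There is no real obstacle here; everything is assembled from results already cited. The only point to verify with a little care is that the formality quasi-isomorphism is indeed a map of dg coalgebras (so that Adams' theorem and the cobar functor can be applied to it), rather than merely a map of chain complexes. This is built into the definition of formality used in Berglund's work on Koszul spaces, but if one wants to be safe one can instead invoke the quasi-isomorphism of $A_\infty$-coalgebras guaranteed by formality together with the $A_\infty$-invariance of the cobar construction.
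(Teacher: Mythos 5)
Your proof is correct and follows essentially the same route as the paper: Adams' theorem, dualizing the cochain-level formality to a dg coalgebra quasi-isomorphism, bar--cobar duality (ii), and then the Koszul quasi-isomorphism $\Omega H_\bullet(X,\Q) \to H_\bullet(X,\Q)^{\antishriek}$. The coalgebra concern you raise at the end is handled exactly as the paper does it --- by defining formality at the cochain-algebra level and then linearly dualizing, which automatically produces a dg coalgebra map.
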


\begin{proof}By Adams' \cref{adams} we have $C_\bullet(\Omega X,\Z) \simeq \Omega C_\bullet(X,\Z)$. Now $X$ is formal over $\Q$, so $C^\bullet(X,\Q) \simeq H^\bullet(X,\Q)$, and for $X$ of finite $\Q$-type we may dualize this to an equivalence of coalgebras $C_\bullet(X,\Q) \simeq H_\bullet(X,\Q)$. Hence, $C_\bullet(\Omega X,\Q) \simeq \Omega C_\bullet(X,\Q) \simeq \Omega H_\bullet(X,\Q)$ using \cref{barcobarduality} and that $X$ is simply connected. Since $H_\bullet(X,\Q)$ is in addition a Koszul coalgebra, $\Omega H_\bullet(X,\Q)$ is equivalent to the quadratic dual of $H_\bullet(X,\Q)$. 
	\end{proof}

\begin{para}We will now prove that the space $W(A)$ is Koszul. \end{para}

\begin{lem}Let $A$ be a $1$-connected chain complex. The space $W(A)$ is simply connected. \end{lem}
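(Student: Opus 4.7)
The strategy is to apply the van Kampen theorem to the homotopy pushout defining $W(A)$, which reduces the question to computing $\pi_1$ of each of the three spaces $KA/\!/\langle\iota\rangle$, $K(\Z/2,1)$ and $W$, and of the two gluing maps on $\pi_1$.

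First I would analyse $W$. The defining fiber sequence $K(\Z/2,1) \to W \to S^2$ yields a long exact sequence
\[ \pi_2(S^2) \longrightarrow \pi_2(K(\Z/2,2)) \longrightarrow \pi_1(W) \longrightarrow \pi_1(S^2) = 0, \]
obtained by regarding $W$ as the homotopy fiber of the classifying map $f\colon S^2 \to K(\Z/2,2)$. Since by construction $f$ represents the nontrivial element of $H^2(S^2,\Z/2) \cong \Z/2$, the induced map $\Z = \pi_2(S^2) \to \pi_2(K(\Z/2,2)) = \Z/2$ is surjective, so $\pi_1(W)=0$.

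Next I would compute $\pi_1(KA/\!/\langle\iota\rangle)$. Because $A$ is $1$-connected, $KA$ is simply connected, and the fiber sequence $KA \to KA/\!/\langle\iota\rangle \to B\langle\iota\rangle = K(\Z/2,1)$ gives $\pi_1(KA/\!/\langle\iota\rangle) \cong \Z/2$. Moreover, the map $K(\Z/2,1) \to KA/\!/\langle\iota\rangle$ coming from the inclusion of the origin (a fixed point of $\iota$) is a section of the projection to $B\langle\iota\rangle$, hence induces the identity on $\pi_1 \cong \Z/2$.

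Finally I would invoke the van Kampen theorem for homotopy pushouts applied to $W(A) = KA/\!/\langle\iota\rangle \,\coprod^{h}_{K(\Z/2,1)} W$, which yields
\[ \pi_1(W(A)) \cong \pi_1(KA/\!/\langle\iota\rangle) \ast_{\pi_1(K(\Z/2,1))} \pi_1(W) \cong \Z/2 \ast_{\Z/2} 0 = 0. \]
The only minor technical point is checking connectedness and basepoint compatibility so that van Kampen applies, but this is straightforward: all three spaces are path connected, and the maps respect the basepoints inherited from the origin of $KA$ and the basepoint of $S^2$. No step here looks to be a serious obstacle; the key input is simply that the classifying class of $W \to S^2$ is the nontrivial one, which kills the potential $\Z/2$ contribution from the fiber.
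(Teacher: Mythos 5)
Your proposal is correct and takes essentially the same approach as the paper's proof, which also applies Seifert--van Kampen to the pushout covering of $W(A)$ by $W$ (simply connected) and $KA/\!/\langle\iota\rangle$ (fundamental group $\Sigma_2$), with intersection $B\Sigma_2$. You have simply filled in the details the paper leaves implicit, namely the long exact sequence argument showing $\pi_1(W)=0$ and the observation that the gluing map induces an isomorphism on $\pi_1$.
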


\begin{proof}Apply the Seifert--van Kampen theorem to the covering of $W(A)$ by $W$ (simply connected) and $KA/\!/\langle \iota \rangle$ (fundamental group $\Sigma_2$), with intersection $B\Sigma_2$.
\end{proof}

\begin{lem}\label{first reduction for WA} Let $A$ be a $1$-connected chain complex. After localizing away from the prime $2$, there is a homotopy equivalence $W(A) \simeq S^2 \vee KA/\langle \iota \rangle $. 
	
\end{lem}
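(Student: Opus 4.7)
The plan is to work after localizing away from the prime $2$, where $K(\Z/2,1)=B\Sigma_2$ becomes contractible because its reduced integral homology consists entirely of $2$-torsion in positive degrees. Once this happens, the homotopy pushout defining $W(A)$ collapses: pushing out two pointed spaces along a contractible apex yields the wedge sum. So the proof reduces to identifying each of the two corners of the pushout with a simpler $2$-local model.

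I will proceed in three steps. First, applying the fiber sequence $K(\Z/2,1)\to W\to S^2$ of \cref{def of W}, contractibility of the fiber after inverting $2$ yields a $\Z[1/2]$-equivalence $W\simeq S^2$, carrying the fiber inclusion $K(\Z/2,1)\to W$ to the constant map at the basepoint of $S^2$. Second, the comparison map from the Borel construction $KA/\!/\langle\iota\rangle$ to the strict orbit space $KA/\langle\iota\rangle$ has homotopy fibers of the form $B\Gamma_x$, where $\Gamma_x\leq \langle\iota\rangle$ is the isotropy group of a point $x\in KA$. Since the only subgroups of $\Sigma_2$ are the trivial group and $\Sigma_2$ itself, each fiber is either a point or $B\Sigma_2$, both contractible after inverting $2$; hence this map is a $\Z[1/2]$-equivalence. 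Under it, the map $K(\Z/2,1)\to KA/\!/\langle\iota\rangle$ classifying the fixed point $0\in KA$ is carried to the constant map at the image of $0\in KA/\langle\iota\rangle$.

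Combining these identifications, after inverting $2$ the entire pushout square maps compatibly to the pushout $KA/\langle\iota\rangle \leftarrow \ast \to S^2$ of pointed spaces along a contractible space, which is precisely the wedge $KA/\langle\iota\rangle \vee S^2$. The main obstacle is a careful justification of step two, namely that $KA/\!/\langle\iota\rangle\to KA/\langle\iota\rangle$ is a genuine $\Z[1/2]$-equivalence and not merely a $\Z[1/2]$-homology equivalence; this is standard but requires either a direct stratification argument by isotropy type or a transfer / spectral-sequence argument, using that $A$ is $1$-connected to control fundamental groups. Once that point is settled, the remaining identifications are formal manipulations of homotopy pushouts.
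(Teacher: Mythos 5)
Your strategy matches the paper's --- map the defining pushout of $W(A)$ to the diagram $KA/\langle\iota\rangle\leftarrow\ast\to S^2$ and show each component map becomes an equivalence after inverting $2$ --- but two steps are not correct as written. The assertion that $KA/\!/\langle\iota\rangle\to KA/\langle\iota\rangle$ has \emph{homotopy} fibers of the form $B\Gamma_x$ is false: this map is not a fibration (the point-set fibers jump at the fixed point $0\in KA$), and its homotopy fiber need not agree with the point-set fiber $B\Gamma_x$. As a sanity check, for $\Sigma_2$ acting on a contractible space with a single fixed point, the comparison map is $B\Sigma_2\to\ast$, whose homotopy fiber over any point is $B\Sigma_2$, not a point over the free orbits. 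You do hedge and mention a transfer argument; in fact the transfer bypasses fibers entirely --- both $H_\bullet(KA/\!/\langle\iota\rangle;\Z[\tfrac12])$ and $H_\bullet(KA/\langle\iota\rangle;\Z[\tfrac12])$ compute the coinvariants $H_\bullet(KA;\Z[\tfrac12])_{\Sigma_2}$ --- and this is what the paper implicitly uses.

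More importantly, passing from component-wise $\Z[1/2]$-equivalences to a $\Z[1/2]$-equivalence on pushouts by "localizing and collapsing the apex" tacitly assumes that $\Z[1/2]$-homology localization commutes with homotopy pushouts. This is not automatic, and the apex $B\Sigma_2$ is not simply connected, so no standard result covers it. The paper sidesteps this: it shows each component map is a $\Z[1/2]$-homology isomorphism, concludes the same for the induced map $W(A)\to S^2\vee KA/\langle\iota\rangle$ by Mayer--Vietoris, and then upgrades to a weak equivalence by Whitehead's theorem. For that last step one needs both sides simply connected --- in particular $KA/\langle\iota\rangle$, which the paper deduces from Armstrong's theorem on fundamental groups of orbit spaces. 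Your argument neither invokes Mayer--Vietoris/Whitehead nor addresses $\pi_1(KA/\langle\iota\rangle)$, so as written it does not close the gap.
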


\begin{proof}
	There is a map $W(A) \to S^2 \vee KA/\langle \iota \rangle $ defined by the projections $W \to S^2$, $B\Sigma_2 \to \ast$, and $KA/\!/\langle \iota \rangle \to KA/\langle \iota \rangle$. Each of these maps is an isomorphism on homology with $\Z[\tfrac 1 2]$-coefficients, and therefore so is $W(A) \to S^2 \vee KA/\langle \iota \rangle $ by Mayer--Vietoris. Both source and target are simply connected, so the result follows by Whitehead's theorem. Simple connectedness of $KA/\langle \iota \rangle$ follows from \cite{armstrongorbit}. 
\end{proof}

\begin{para}We will now prove formality of the space $KA/\langle\iota\rangle$, using a criterion due to Sullivan \cite[Theorem 12.7]{sullivaninfinitesimal}. Let $V = \bigoplus_n V^n$ be a graded $\Q$-vector space. For $q \in \Q^\times$ we define a linear map $\phi_q : V \to V$ by $\phi_q(x) = q^n x$, if $\vert x \vert = n$. Every map of this form is called a \emph{grading automorphism} of $V$. \end{para}

\begin{thm}[Sullivan] Let $C$ be a nilpotent commutative dg algebra over $\Q$. The following are equivalent:
	\begin{enumerate}
		\item $C$ is formal. 
		\item Every automorphism of $H(C)$ lifts to an automorphism of $C$.
		\item Some grading automorphism $\phi_q$ of $H(C)$, where $q\neq \pm 1$, lifts to an automorphism of $C$. 
	\end{enumerate}
\end{thm}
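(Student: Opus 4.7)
The plan is to prove the cyclic implications $(1) \Rightarrow (2) \Rightarrow (3) \Rightarrow (1)$, working in the homotopy category of commutative dg algebras over $\Q$ (or equivalently with minimal Sullivan models in place of $C$ itself, since $C$ is nilpotent so a minimal model exists and any self-quasi-isomorphism of $C$ transfers to such a model).

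For $(1) \Rightarrow (2)$: if $C$ is formal then there is a zig-zag of quasi-isomorphisms $C \simeq H(C)$, and any automorphism of $H(C)$ produces an automorphism of $C$ in the homotopy category; lifting along a minimal model makes this a genuine automorphism. The implication $(2) \Rightarrow (3)$ is immediate provided grading automorphisms for $q \neq \pm 1$ exist, which they do on any nonzero graded vector space over $\Q$.

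The content is in $(3) \Rightarrow (1)$. Let $\pi : M = (\Lambda V, d) \to C$ be a minimal Sullivan model, so that $\pi$ is a quasi-isomorphism and the differential $d$ has image in the decomposables $\Lambda^{\geq 2} V$. By the lifting property of minimal models, the hypothetical lift $\psi : C \to C$ of $\phi_q$ lifts further to an automorphism $\tilde\psi : M \to M$ with $H(\tilde\psi) = \phi_q$. The key claim is then that one can choose a new set of generators $V' \subset M$, with the same underlying graded vector space structure as $V$, for which $\tilde\psi$ acts as the pure grading automorphism $v \mapsto q^n v$ on $V'^n$. Granting this, the cochain differential $d$ on $M = \Lambda V'$ must then commute with the pure grading automorphism; but $d$ raises cohomological degree by $1$ and hence multiplies the $q$-weight by $q$, so commuting with $\phi_q$ (with $q \neq \pm 1$) forces $d$ to be homogeneous of a single $q$-weight in a way compatible with the grading, which is enough to decompose $M$ into the direct sum of its generalized eigenspaces for $\tilde\psi$, all of whose weights are of the form $q^n$ on degree-$n$ pieces. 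Projection onto the ``correct weight'' subspace then yields a chain map $M \to H(M)$ that is a quasi-isomorphism, witnessing formality.

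To produce the good basis $V'$, one proceeds degree by degree. Inductively decompose $V^n$ into generalized eigenspaces for $\tilde\psi$; by comparing with the fact that $H(\tilde\psi)$ acts as $q^n$ on $H^n$ and using minimality ($dV \subset \Lambda^{\geq 2} V$ so generators project isomorphically onto indecomposables in cohomology modulo lower-degree decomposables), one shows the eigenvalues of $\tilde\psi|_{V^n}$ are all equal to $q^n$. Semisimplifying, one then replaces each generator $v$ by a modification $v + (\text{decomposable correction})$ so that $\tilde\psi$ acts as $q^n \cdot \mathrm{id}$ exactly; the correction exists because the ambiguity in the minimal model is controlled precisely by decomposables, and distinct $q$-weights on $V^m$ for $m < n$ make the correction equations solvable (this is where $q \neq \pm 1$ is used: one inverts $q^n - q^k$ for various $k$ arising from the weights on tensor products of lower-degree generators).

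The main obstacle is the inductive semisimplification step: one must carefully track how the generalized eigenspaces of $\tilde\psi$ on $\Lambda^{\geq 2} V$ interact with $d$, since the relevant weights on $\Lambda^k V^{n_1, \ldots, n_k}$ are sums $q^{n_1 + \cdots + n_k}$, and the invertibility of the matrices of ``weight differences'' required to adjust $\tilde\psi$ to be pure on $V'$ must be checked simultaneously with maintaining the minimality of the model. This is exactly where the hypothesis $q \neq \pm 1$ is essential --- otherwise the weight equations become degenerate and one genuinely cannot straighten $\tilde\psi$. A reference implementing this carefully is \cite{sullivaninfinitesimal}; once it is in place the passage to formality is formal as sketched.
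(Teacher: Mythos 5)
The paper does not prove this theorem; it cites \cite[Theorem 12.7]{sullivaninfinitesimal}, so there is no in-text argument to compare against. Your implications $(1)\Rightarrow(2)$ and $(2)\Rightarrow(3)$ are fine (modulo the usual care about ``automorphism'' meaning automorphism in the homotopy category, which you flag), but the core implication $(3)\Rightarrow(1)$ has a genuine error.

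The problematic step is the ``key claim'' that one can choose a generating space $V'$ for the minimal model $M=(\Lambda V',d)$ on which $\tilde\psi$ acts as the pure grading automorphism $v\mapsto q^nv$ for $v\in V'^n$. Since $\tilde\psi$ is an algebra map, this would make $\tilde\psi$ act as $q^{\deg}$ on \emph{all} of $M$. But $\tilde\psi$ must also commute with $d$, and $d$ raises degree by $1$: for $x$ of degree $n$ one gets $\tilde\psi(dx)=q^{n+1}dx$ while $d(\tilde\psi x)=q^n dx$, hence $(q-1)q^n\,dx=0$, so $dx=0$. Thus your claim, if true, would force $d=0$ on the minimal model, i.e.\ $M\cong H(M)$. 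That fails already for the simplest formal space with nontrivial rational homotopy: for $S^{2n}$ one has $M=\Lambda(x,y)$ with $|x|=2n$, $|y|=4n-1$, $dy=x^2$, which is formal but certainly has $d\neq0$. The unique $\tilde\psi$ lifting $\phi_q$ in this example sends $x\mapsto q^{2n}x$ and $y\mapsto q^{4n}y$, so it acts on the degree-$(4n-1)$ generator by $q^{4n}$, \emph{not} $q^{4n-1}$: there is a ``defect'' of one power of $q$, and no change of generators can remove it, precisely because $dy=x^2$ forces the eigenvalue of $y$ to equal the eigenvalue of $x^2$.

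The correct argument embraces these defects rather than trying to eliminate them. After replacing $\tilde\psi$ by its semisimple part (still a dg automorphism, since it lies in the same algebraic group orbit), one shows inductively that the eigenvalue of $\tilde\psi$ on an eigenvector $v\in V^n$ is $q^{n+i}$ with $i\geq0$ an integer; this integer $i$ is the second degree of a \emph{bigraded} minimal model in the sense of Halperin--Stasheff/Sullivan. The relations $dV_0=0$, $dV_i\subset(\Lambda V)_{i-1}$, and $H^+(\Lambda V,d)_{>0}=0$ all follow from the compatibility of $\tilde\psi$ with $d$ and from $H(\tilde\psi)=\phi_q$, and the existence of such a bigraded model is (by a separate, standard lemma) equivalent to formality; the witnessing quasi-isomorphism $M\to H(M)$ is projection onto the weight-$0$ piece, which is a chain map only because of the bigraded conditions. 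So the conclusion you want to reach is correct, but the mechanism is the construction of a nontrivial second grading from the eigenvalue defects, not a normalization of $\tilde\psi$ to the pure grading automorphism. (The hypothesis $q\neq\pm1$ enters in showing the defects $i$ are well-defined nonnegative integers and that the generalized eigenspace decomposition is actually an eigenspace decomposition.)
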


\begin{prop}\label{formality} The space $KA/\langle \iota \rangle$ is formal in the sense of rational homotopy theory. 
\end{prop}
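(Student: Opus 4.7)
The plan is to verify Sullivan's criterion (3) from the theorem just stated by exhibiting a grading automorphism $\phi_q$ with $q \neq \pm 1$ that lifts to an automorphism of a CDGA model of $KA/\langle\iota\rangle$. The key observation is that $\iota$ is simply scalar multiplication by $-1$ on $A$, so any other scalar automorphism of $A$ commutes with $\iota$ and hence descends to the quotient.

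First, since $KA$ depends up to weak equivalence only on the quasi-isomorphism type of $A$, and the involution $\iota$ corresponds to $-1 \in \Q^\times$ and is preserved by any quasi-isomorphism, I may replace $A$ by its homology and reduce to the case that $A$ has trivial differential. For such $A$ and any $q \in \Q^\times$, the map $\psi_q \colon A \to A$ defined by $\psi_q|_{A_n} = q^n \cdot \mathrm{id}$ is a chain complex automorphism commuting with $\iota$, and so induces a self-homotopy-equivalence of $KA/\langle\iota\rangle$.

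Next I will verify that $\psi_q$ acts on rational cohomology as the grading automorphism $\phi_q$. Since $H^\bullet(KA,\Q)$ is a free graded-commutative algebra on the dual of $H_\bullet(A)$, with each generator dual to $A_n$ placed in cohomological degree $n$, the induced map $\psi_q^*$ multiplies every such generator by $q^n$ and therefore multiplies any monomial of total cohomological degree $n$ by $q^n$. By the transfer for the involution $\iota$, $H^\bullet(KA/\langle\iota\rangle,\Q)$ is the $\iota$-invariant subalgebra, and the restriction of $\psi_q^*$ to this subalgebra is precisely $\phi_q$.

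To conclude, take $q = 2$. The space $KA/\langle\iota\rangle$ is simply connected (as in the proof of the preceding lemma, using \cite{armstrongorbit}) and of finite $\Q$-type, so its minimal Sullivan model $M$ is unique. The self-equivalence above lifts to a CDGA endomorphism of $M$; since it is a quasi-isomorphism between minimal Sullivan algebras, it is automatically an automorphism, and its induced action on $H(M)$ is $\phi_2$. Sullivan's criterion (3) then applies. I expect the only mildly delicate point to be the first reduction to $A$ with zero differential, which requires observing that the quasi-isomorphism $A \simeq H_\bullet(A)$ can be chosen $\iota$-equivariantly (automatic, since $\iota$ is the scalar $-1$); everything else is a direct application of standard rational homotopy theory.
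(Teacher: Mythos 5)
Your proof is correct and takes essentially the same approach as the paper's: construct the $\Q^\times$-action on $A$ by grading automorphisms, observe that it commutes with $\iota = -1$ and hence descends to self-maps of $KA/\langle\iota\rangle$ realizing grading automorphisms on cohomology, and invoke Sullivan's criterion (3). Your added care about reducing to a complex with zero differential (so that the scalar action is a chain map) and about the lift to the minimal model are sensible explicit versions of what the paper leaves implicit, not a different argument.
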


\begin{proof} Let the group $\Q^\times$ act on the graded vector space $A$ by grading automorphisms. It induces an action of $\Q^\times$ on the space $KA$, and thus also an action of $\Q^\times$ on $H^\bullet(KA,\Q)$. The action on $H^\bullet(KA,\Q) = \mathrm{Sym}(A^\vee)$ is again by grading automorphisms. The action of $\Q^\times$ on $A$ commutes with the involution $\iota$, so the action descends to an action of $\Q^\times$ on $KA/\langle \iota \rangle$, and since $$H^\bullet(KA/\langle\iota\rangle,\Q) \subset H^\bullet(KA,\Q)$$ it again acts by grading automorphism. Thus, every grading automorphism of $H^\bullet(KA/\langle\iota\rangle,\Q)$ lifts to an automorphism of the \emph{actual topological space} $KA/\langle \iota \rangle$. In particular, every grading automorphism certainly lifts to an automorphism of the cochain algebra. 
\end{proof}

\begin{prop}\label{koszulity} The cohomology ring $H^\bullet(KA/\langle \iota\rangle,\Q)$ is a Koszul algebra. 
\end{prop}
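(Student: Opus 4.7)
The plan is to identify $H^\bullet(KA/\langle\iota\rangle,\Q)$ explicitly as a second Veronese subalgebra of a free graded commutative algebra, and then invoke a classical Koszulness result for Veronese subalgebras. Since $\langle\iota\rangle\cong\Sigma_2$ is a finite group acting on $KA$ and we work with $\Q$-coefficients, the transfer map yields an isomorphism $H^\bullet(KA/\langle\iota\rangle,\Q)\cong H^\bullet(KA,\Q)^{\iota}$. Now $H^\bullet(KA,\Q)\cong \mathrm{Sym}(A^\vee)$, the free graded commutative algebra on $A^\vee$ (polynomial on even-degree generators and exterior on odd-degree ones), and since $\iota$ acts as multiplication by $-1$ on $A$, it acts as $(-1)^k$ on $\mathrm{Sym}^k(A^\vee)$. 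Consequently
\[
H^\bullet(KA/\langle\iota\rangle,\Q)\;=\;\bigoplus_{k\geq 0}\mathrm{Sym}^{2k}(A^\vee),
\]
which is the second Veronese subalgebra of $\mathrm{Sym}(A^\vee)$ with respect to the tensor-length grading that assigns degree $1$ to each generator in $A^\vee$.

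Next I would appeal to the classical fact that $\mathrm{Sym}(A^\vee)$ is itself Koszul: it is a free graded commutative algebra, whose quadratic dual coalgebra is the cofree graded cocommutative coalgebra on $A^\vee[1]$, and the associated Koszul complex is the standard (Chevalley--Eilenberg-type) acyclic resolution. To finish, I would cite the Backelin--Fr\"oberg theorem, which asserts that every Veronese subring of a Koszul algebra is itself Koszul. Applied to $R=\mathrm{Sym}(A^\vee)$ with $d=2$, this gives Koszulness of $H^\bullet(KA/\langle\iota\rangle,\Q)$.

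The hard part, if one wants a fully self-contained argument, is to check that the Backelin--Fr\"oberg theorem --- classically stated for $\N$-graded connected algebras over a field, without reference to internal gradings or sign rules --- carries over to the graded-commutative setting. This is not in fact a serious obstacle: the Koszul sign rule is encoded only in the symmetry isomorphisms of the underlying tensor category, so it does not interfere with the combinatorial construction of Backelin--Fr\"oberg, while the internal (cohomological) grading merely refines the tensor-length grading and is preserved by the argument. An alternative direct route is to compute the quadratic dual coalgebra of $\bigoplus_{k\geq 0}\mathrm{Sym}^{2k}(A^\vee)$ explicitly --- the quadratic relations among generators $ab\in\mathrm{Sym}^2(A^\vee)$ are the Pl\"ucker-type identities $(ab)(cd)-(ac)(bd)=0$, which admit a quadratic Gr\"obner basis --- and to verify the Koszul property by checking that the Hilbert--Poincar\'e series of the resulting dual satisfies the functional equation of \cref{functional equation}.
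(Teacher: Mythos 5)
Your proof is correct and follows essentially the same route as the paper's: identify $H^\bullet(KA/\langle\iota\rangle,\Q)$ as the second Veronese subalgebra of the Koszul algebra $\mathrm{Sym}(A^\vee)$ and invoke Backelin--Fr\"oberg. The paper cites the same theorem (together with Polishchuk--Positselski) without pausing on the graded-commutative subtlety you flag, which is a reasonable point to acknowledge but is indeed not a serious obstruction.
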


\begin{proof}The cohomology ring $H^\bullet(KA,\Q)$ is the symmetric algebra $\Sym(A^\vee) = \bigoplus_{d \geq 0} \Sym^d(A^\vee)$. Depending on the parity of $A$ this is either a polynomial ring or an exterior algebra (or a tensor product of the two). Either way, it is certainly a Koszul algebra. Now $H^\bullet(KA/\langle\iota\rangle,\Q)$ is the subalgebra $\bigoplus_{d \geq 0} \Sym^{2d}(A^\vee)$, i.e.\ the second Veronese subring of $\Sym(A^\vee)$. But Veronese subalgebras of Koszul algebras are Koszul, by a theorem of Backelin and Fr\"oberg \cite{backelinfroberg}; see also \cite[Chapter 3, Proposition 2.2]{polishchukpositselski}.
\end{proof}

\begin{cor}
	The space $W(A)$ is Koszul.
\end{cor}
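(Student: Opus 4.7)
The plan is to reduce the Koszulity of $W(A)$ to the already-established Koszul properties of its rational building blocks, using that being a Koszul space depends only on the rational homotopy type.

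First I would invoke \cref{first reduction for WA}, which gives a homotopy equivalence $W(A) \simeq S^2 \vee KA/\langle\iota\rangle$ after inverting the prime $2$. Since inverting $2$ does not affect rational homotopy or rational cohomology, this yields an equivalence of rationalizations $W(A)_\Q \simeq (S^2 \vee KA/\langle\iota\rangle)_\Q$. Both formality and Koszulity of the rational cohomology algebra depend only on the rational homotopy type of a simply connected space of finite $\Q$-type, so it suffices to check that the wedge $S^2 \vee KA/\langle\iota\rangle$ is Koszul.

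Next, I would assemble the ingredients. The sphere $S^2$ is Koszul by \cref{koszul examples}(1). For the other wedge summand, $KA/\langle\iota\rangle$ is formal by \cref{formality}, and its rational cohomology ring is a Koszul algebra by \cref{koszulity}, so $KA/\langle\iota\rangle$ is a Koszul space by definition. Finally, \cref{koszul examples}(3) tells us that a wedge of Koszul spaces is Koszul, whence $S^2 \vee KA/\langle\iota\rangle$ is Koszul and therefore so is $W(A)$.

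There is essentially no obstacle here: the work has all been done in the preceding propositions, and the only conceptual point to notice is that \cref{first reduction for WA} requires $A$ to be $1$-connected (which is the standing hypothesis in this section and guarantees that $W(A)$ is simply connected and of finite $\Q$-type, so that the definition of a Koszul space applies).
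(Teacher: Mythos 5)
Your proposal is correct and follows essentially the same route as the paper: reduce via \cref{first reduction for WA} to the wedge $S^2 \vee KA/\langle\iota\rangle$, cite \cref{koszul examples} for spheres and wedges, and use \cref{formality} and \cref{koszulity} for the second wedge summand. The only added detail is your explicit observation that the equivalence of \cref{first reduction for WA} is only after inverting $2$, but that this suffices because Koszulity is a rational invariant; the paper leaves this implicit.
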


\begin{proof}
	By \cref{first reduction for WA} and \cref{koszul examples} it suffices to show that $KA/\langle \iota\rangle$ is a Koszul space. This is shown in \cref{formality} and \cref{koszulity}.
\end{proof}

\subsection{The homology of \texorpdfstring{$\Omega^2_0 W(A)$}{Omega2 W(A)}}

\begin{thm}\label{calculation of analytic functor}
	The assignment $A \mapsto H_\bullet(\Omega^2_0 W(A),\Q)$ is an analytic functor from $1$-connected graded vector spaces to graded vector spaces. As such, its Taylor series is given by 
	\[\Exp(z^{-1} \Log(z + \sum_{r \geq 0} z^{-r} h_{2r}) - 1).\]
\end{thm}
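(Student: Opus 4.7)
The plan is to combine three ingredients: the Koszul property of $W(A)$ just established, Berglund's theorem for loop space homology of Koszul spaces, and classical rational homotopy theory (Milnor--Moore/PBW plus the rational equivalence of simply connected double loop spaces with products of Eilenberg--MacLane spaces).

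First, since $W(A)$ is Koszul, \cref{berglund theorem} identifies $H_\bullet(\Omega W(A),\Q)$ with the quadratic dual algebra of the Koszul coalgebra $H_\bullet(W(A),\Q)$, naturally in $A$. Using $W(A)\simeq S^2\vee KA/\langle\iota\rangle$ from \cref{first reduction for WA}, the cohomology algebra $H^\bullet(W(A),\Q)$ presents as a quadratic algebra with generating vector space $\Q[2]\oplus \Sym^2(A^\vee)$ (the fundamental class of $S^2$ together with the generators of the second Veronese ring $\Sym^{\mathrm{even}}(A^\vee)$), subject to the wedge relations and the Veronese Koszul relations. Applying the functional equation (\cref{functional equation}) in the tensor-length/homological bigrading---equivariantly in analytic functors---and translating to the arity grading of $\widehat{\Lambda}^{\mathrm{gr}}$, one obtains
\[
[H_\bullet(\Omega W(A),\Q)]\ =\ z+\sum_{r\geq 0}z^{-r}h_{2r}.
\]

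Next, since $W(A)$ (and hence $\Omega W(A)$) is simply connected and formal, the Milnor--Moore theorem together with PBW gives $H_\bullet(\Omega W(A),\Q)\cong U(L)$ with $L=\pi_\bullet^{\Q}(\Omega W(A))$; since $[U(L)]=\Exp[L]$ as graded vector spaces, we get $[L]=\Log\bigl(z+\sum_{r\geq 0}z^{-r}h_{2r}\bigr)$. The homotopy of $\Omega^2 W(A)$ is the degree shift of $L$ down by one, contributing a factor of $z^{-1}$ in the Taylor series. Since $\Omega^2_0 W(A)$ is a simply connected double loop space, it is rationally a product of Eilenberg--MacLane spaces, and its rational homology is the free graded-commutative algebra on the shifted homotopy groups---i.e.\ the plethystic exponential of their Taylor series. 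The subtraction of $1$ removes the $z^0 h_0=1$ constant so that the argument of $\Exp$ lies in $\widehat{\Lambda}_0^{\mathrm{gr}}$ as required, yielding the claimed formula. As a sanity check, at $A=0$ the formula collapses to $\Exp(z^{-1}\Log(1+z)-1)=\Exp(-z)=1-z$ (using the plethystic identity $\Log(1+z)=z-z^2$), which matches $H_\bullet(\Omega^2_0 S^2,\Q)\cong H_\bullet(S^1,\Q)$ since $\Omega^2_0 S^2\simeq_\Q\Omega^2 K(\Q,3)\simeq K(\Q,1)\simeq S^1_\Q$.

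The main obstacle is purely technical: carefully tracking the various gradings (the intrinsic homological $z$-grading with Koszul signs per \S\ref{sign rule 1}, the arity grading from $A$, and the tensor-length grading used in the Koszul duality functional equation) and the degree shifts introduced by each successive loop, so that every manipulation of Taylor series in $\widehat{\Lambda}^{\mathrm{gr}}$ faithfully reflects the underlying graded vector space structures. Pinning down the precise exponents of $z$ attached to each $h_{2r}$ in the middle formula is the most delicate point; once that bookkeeping is in place, the three conceptual ingredients deliver the result essentially formally.
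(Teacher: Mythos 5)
Your proof follows the same route as the paper's (Berglund's theorem plus the functional equation for Koszul (co)algebras, then Milnor--Moore/PBW, then the degree shift to $\Omega^2$), but two of the intermediate formulas are wrong, and they cancel only by accident.

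First, the functional equation does \emph{not} give $[H_\bullet(\Omega W(A),\Q)] = z + \sum_{r\geq 0} z^{-r} h_{2r}$. Writing $F(t,z)=tz^2+\sum_{r\geq0}t^r h_{2r}$ for the Hilbert--Poincar\'e series of the coalgebra $H_\bullet(W(A),\Q)$, the functional equation yields for the quadratic dual algebra $G(t,z)=1/F(t/z,z)$, so that $[H_\bullet(\Omega W(A),\Q)]=G(1,z)=\bigl(z+\sum_{r\geq0}z^{-r}h_{2r}\bigr)^{-1}$. What you wrote down is $F(1/z,z)$, the \emph{reciprocal} of the correct expression. You can see directly that your expression cannot be the class of a graded vector space: specializing to $A=0$ it gives $1+z$, i.e.\ dimension $-1$ in degree $1$ under the $(-z)^k$ convention, whereas $H_\bullet(\Omega S^2,\Q)$ is the tensor algebra on a degree-one class with Poincar\'e series $\sum_k(-z)^k = 1/(1+z)$. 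Consequently your $[L]=\Log\bigl(z+\sum z^{-r}h_{2r}\bigr)$ is also off by a sign from the correct $[L]=\Log G(z) = -\Log\bigl(z+\sum z^{-r}h_{2r}\bigr)$.

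Second, shifting $\pi_\bullet^\Q(\Omega W(A))$ down one degree to get $\pi_\bullet^\Q(\Omega^2 W(A))$ does not contribute a factor of $z^{-1}$; in the $K_0$ convention of \S\ref{K0 iso} (where $[V] = \sum[V(n)_k](-z)^k$), it contributes $(-z)^{-1}$. This is precisely the Koszul-sign subtlety flagged in \S\ref{sign rule 1}. The sign you omit here exactly cancels the sign you omitted in the previous step, which is why your final answer and your sanity check at $A=0$ both come out right. But neither intermediate claim is correct, and a reader who tried to check your formula for $[H_\bullet(\Omega W(A),\Q)]$ against the tensor algebra structure would rightly get stuck. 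To repair the argument: insert the reciprocal when applying the functional equation, giving $[L] = -\Log\bigl(z+\sum_{r\geq0}z^{-r}h_{2r}\bigr)$, and then divide by $-z$ (not $z$) for the loop-space shift; everything else (including the $-1$ to pass to the base component, and the $A=0$ sanity check) then goes through as you have it.
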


\begin{proof}The calculation proceeds in multiple steps.

	(i) The homology $H_\bullet(W(A) , \Q)$ is a Koszul coalgebra. As such, it has a Hilbert--Poincar\'e series in the ring $\Z[[t,z]]$, where the variable $t$ corresponds to length grading and the variable $z$ corresponds to homological degree. But $A \mapsto H_\bullet(W(A) , \Q)$ is in addition an analytic functor of $A$, and as such it has a Taylor series taking values in $ \Lambda \widehat\otimes{\Z[[t,z]]}$. (This is the bigraded version of the construction for graded vector spaces explained in Section \ref{graded plethystic algebra}.) Its Taylor series is
$$ F(t,z) = t z^2 + \sum_{r\geq 0} t^r h_{2r}. $$
Indeed, the Hilbert--Poincar\'e series of $H_\bullet(S^2,\Q)$ is $1+tz^2$, which accounts for the first term, since $W(A) \simeq_\Q S^2 \vee KA/\langle \iota\rangle$. The Hilbert--Poincar\'e series of $H_\bullet(KA,\Q) \cong \mathrm{Sym}(A)$ is $\Exp(t h_1) = \sum_{r \geq 0} t^rh_{r}$. Multiplication by $-1$ in $A$ acts on $H_\bullet(KA,\Q)$ via $h_r \mapsto (-1)^r h_r$, so the quotient coalgebra of coinvariants $H_\bullet(KA/\langle \iota \rangle,\Q)$ has Hilbert--Poincar\'e series $\sum_{r\geq 0} t^r h_{2r}$, the shift in the length grading arising from the fact that $H_\bullet(KA/\langle \iota \rangle,\Q)$ is instead cogenerated by $\mathrm{Sym}^2(A)$.

(ii) Let $G(t,z)$ be the Taylor series of the analytic functor
$$ A \mapsto H_\bullet(\Omega W(A),\Q).$$
Using \cref{berglund theorem} and \cref{functional equation},  this Taylor series is given by
$$ G(t,z) = \frac{1}{F(\tfrac{t}{z},z)} = \frac{1}{ t z + \sum_{r\geq 0} t^r z^{-r} h_{2r}}.$$
The additional length grading will no longer play a role, and we thus set $t=1$, giving the Taylor series
$$ G(z) = \frac{1}{ z + \sum_{r\geq 0} z^{-r} h_{2r}}.$$

(iii) For a simply connected space $Y$, $H_\bullet(\Omega Y,\Q)$ is the symmetric algebra on $\pi_\bullet^\Q(\Omega Y)$.
It follows that the analytic functor 
$$ A \mapsto \pi_\bullet^\Q\, \Omega W(A)$$
has the Taylor series
$$ \Log \, G(z) = -\Log(z + \sum_{r \geq 0} z^{-r} h_{2r}).$$

(iv) The analytic functor 
$$ A \mapsto \pi_\bullet^\Q \, \Omega^2 W(A)$$
differs from the previous one only by a degree shift, which corresponds to dividing by $-z$ (the minus sign accounting for the Koszul sign rule, in accordance with the discussion in \S\ref{sign rule 1}). Thus, its Taylor series is $$z^{-1}\Log(z + \sum_{r \geq 0} z^{-r} h_{2r}).$$ Now 
$\Log(1+x) = x + \text{(higher order terms)}$, where the higher order terms are all products of Adams operations applied to $x$; hence the above Taylor series is of the form $1 + \text{(higher order terms)}$, with the term $1$ corresponding to the nontriviality of $\pi_0 \, \Omega^2W(A) \cong \Z$. Note that all terms involving $z^{-r}h_{2r}$ contribute only in positive homological degrees, despite the negative powers of $z$: $A$ is concentrated in degree $\geq 2$, so $h_{2r}$ is concentrated in degrees at least $4r$. For the homotopy groups of the base component we therefore see that $A \mapsto \pi_\bullet^\Q \, \Omega_0^2 W(A)$ has Taylor series 
$z^{-1}\Log(z + \sum_{r \geq 0} z^{-r} h_{2r}) - 1,$
so that the analytic functor
$$ A \mapsto H_\bullet(\Omega^2_0 W(A),\Q)$$
has the Taylor series
$$ \Exp(z^{-1} \Log(z + \sum_{r \geq 0} z^{-r} h_{2r}) - 1) $$
as claimed.
\end{proof}

\section{Symmetric function calculations}

\begin{para}
    In this section we derive, by direct computations with symmetric functions, some consequences of the formula proven in \cref{plethystic formula for poincare series},
    \begin{equation}\label{expression for stable homology}
    \sum \dim H_k(H_\infty^{0,1},V_\lambda)(-z)^k s_{\lambda'} = \Exp(z^{-1}\Log(z+\sum_{k \geq 0}z^kh_{2k}) -1- h_2).
    \end{equation}  
\end{para}

\begin{thm}If $k<\vert\lambda\vert/4$ then $H_k(H_\infty^{0,1},V_\lambda)=0$.\label{1/4}\end{thm}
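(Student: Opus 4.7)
My plan is to combine \cref{plethystic formula for poincare series} with a weight argument for its right-hand side. The stabilization sequence $\cdots \to H_g^{1,0} \to H_g^{0,1} \to H_{g+1}^{1,0} \to \cdots$ identifies $H_\infty^{0,1}$ with $H_\infty^{1,0}$ as the same stable colimit, so it suffices to show that every monomial $z^k s_\mu$ occurring in
$$\Exp\bigl(z^{-1}\Log(z + \textstyle\sum_{r \geq 0} z^r h_{2r}) - 1 - h_2\bigr)$$
satisfies $4k \geq |\mu|$.

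Following the pattern of \cref{plehystic ineq 0} and \cref{plethystic ineq 2}, I assign each monomial the weight $w(z^k s_\mu) := 4k - |\mu|$, and let $W \subseteq \widehat{\Lambda}^{\gr}$ denote the $\Q$-subspace of elements all of whose monomials have nonnegative weight. Closure of $W$ under sum and product is immediate, the latter using $|\rho| = |\mu|+|\nu|$ for every $s_\rho$ occurring in $s_\mu s_\nu$. Closure under plethystic substitution by an arbitrary element of $\Lambda$ reduces, since the power sums generate $\Lambda \otimes \Q$, to closure under the Adams operations $\psi_n$; but $\psi_n(z^k s_\mu) = z^{nk}\psi_n(s_\mu)$ with $\psi_n(s_\mu)$ of arity $n|\mu|$, so weights rescale by the factor $n$. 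By continuity, $W$ is then preserved by $\Exp$ and $\Log$ whenever these converge.

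It remains to verify that $F := z^{-1}\Log(z + \sum_{r \geq 0} z^r h_{2r}) - 1 - h_2$ lies in $W$. Setting $Y := z + \sum_{r \geq 1} z^r h_{2r}$, each summand of $Y$ has weight $\geq 2$, so $\Log(1+Y) \in W$. The crucial sharper bound is that the minimum $z$-power occurring in $\Log(1+Y)$ at arity $2m$ is at least $m$ for every $m \geq 1$: expanding $\Log(1+Y) = \sum_k \tfrac{\mu(k)}{k}\log(1+\psi_k(Y))$, any product of summands of $\psi_k(Y)$ contributing to arity $2m$ consists of $a \geq 0$ copies of $\psi_k(z) = z^k$ together with factors $z^{r_i k}\psi_k(h_{2r_i})$ of arity $2r_i k$ with $\sum r_i = m/k$, so its $z$-power is $ak + m \geq m$. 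Multiplying by $z^{-1}$ then produces at arity $2m$ terms of minimum weight $4(m-1)-2m = 2m-4$, which is nonnegative as soon as $m \geq 2$. The two remaining low-arity contributions are removed by the explicit subtractions: at arity $0$, $\Log(1+z)$ has $z$-coefficient $1$, so $z^{-1}\Log(1+z) = 1 + O(z)$, and subtracting $1$ both places $F$ in $\widehat{\Lambda}^{\gr}_0$ and removes the weight-$0$ boundary term; at arity $2$, the computation $\Log(1+Y)|_{\text{arity } 2} = \frac{zh_2}{1+z}$ shows that the unique weight-negative monomial in $z^{-1}\Log(1+Y)$ is precisely $h_2$ (of weight $-2$), cancelled by $-h_2$. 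Thus $F \in W$, and $\Exp(F) \in W$ completes the proof.

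The main obstacle is the arity-$2m$ analysis sketched above: one must check that the ``naive'' $z$-power bound $m$ from the product expansion of $\log(1+Y)$ survives the Adams corrections (which it does, since $\psi_k$ scales both $z$-power and arity by $k$), and at the low arities $0$ and $2$ one must compute the boundary coefficients precisely in order to identify $-1 - h_2$ as exactly the required counterterm. Everything else is formal once the closure properties of $W$ are in hand.
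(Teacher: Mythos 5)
Your proof is correct and follows essentially the same route as the paper: apply closure of the weight-filtered sub-$\lambda$-ring (the paper's \cref{plehystic ineq 0}) at slope $1/4$ to pass inside $\Exp$, establish the sharper slope-$1/2$ bound for $\Log(1+Y)$, divide by $z$ and note this lands back in slope $1/4$ for arity $\geq 4$, and finish by computing arities $0$ and $2$ directly. The only cosmetic difference is that you re-derive the slope-$1/2$ bound for $\Log(1+Y)$ by hand via the Adams-operation expansion, whereas the paper obtains it by a second invocation of \cref{plehystic ineq 0} with $c=1/2$.
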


\begin{proof}
We must show that every monomial $z^k s_\lambda$ occurring nontrivially in the right-hand side of \eqref{expression for stable homology} satisfies the inequality $k \geq \vert\lambda\vert/4$. By \cref{plehystic ineq 0} it suffices to prove this for $z^{-1}\Log(z+\sum_{k \geq 0}z^kh_{2k}) -1- h_2$. Now since every monomial $z^k s_\lambda$ occurring nontrivially in the sum $z+\sum_{k>0} z^kh_{2k}$ satisfies the inequality $k<\vert\lambda\vert/2$, the same is true for the monomials occurring in $\Log(z+\sum_{k\geq 0} z^kh_{2k})$,
again by \cref{plehystic ineq 0}. Hence, all terms of arity $\geq 4$ in $z^{-1}\Log(z+\sum_{k \geq 0}z^kh_{2k}) -1- h_2$ must satisfy the desired inequality. To check the terms of low arity it suffices to compute:
$$ z^{-1}\Log(t+\sum_{k\geq 0} z^k h_2k)-1-h_2 = -z  - z h_2 + \text{ higher order terms},$$
where the higher order terms are of arity $\geq 4$ or degree $\geq 2$ with respect to $z$. The result follows.     
\end{proof}

\begin{para}The preceding theorem is sharp. It is interesting to consider the equality case, which is attained by applying $\Exp$ to those terms $z^k s_\lambda$ inside the expansion of 
$z^{-1}\Log(z+\sum z^k h_{2k}) - 1 - h_2$
for which $k=|\lambda|/4$. The argument in the proof also shows that the inequality is strict for terms of arity $> 4$, so it suffices to calculate the low-degree terms, where one finds the unique term $-z s_{2,2}$ which attains the equality. The consequence is that we can write (set $z=1$):
\end{para}

\begin{prop}\label{equality case}
    $\sum_{k \geq 0} \sum_{|\lambda|=4k} \dim H_k(H^{0,1}_\infty,V_\lambda) (-1)^k s_\lambda = \Exp(-s_{2,2}).$ 
\end{prop}

\begin{para}
    An argument similar to the proof of \cref{1/4} shows the following result, which confirms Conjecture 13.11 of \cite{bergstrom09}. As stated in \cite{bergstrom09}, this conjecture was observed and verified computationally for $\vert \lambda \vert \leq 30$ (on the level of Euler characteristics). We remark that \cite[Conjecture 13.11]{bergstrom09} was stated in terms of the moduli space $H_\infty$ of closed hyperelliptic surfaces, rather than $H_\infty^{0,1}$. Combining the below argument with the results of \cref{closed surfaces} confirms the conjecture in its original form.
\end{para}

\begin{thm}\label{jonas vanishing}If $\lambda_1>\vert\lambda\vert/2$ then $H_\bullet(H_\infty^{0,1},V_\lambda)=0$.\end{thm}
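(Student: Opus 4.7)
The plan is to mimic the proof of \cref{1/4}, replacing the filtration by degree with one by Schur length. Set $\mu = \lambda'$, so $\lambda_1 = \ell(\mu)$ and $|\lambda| = |\mu|$; by \cref{plethystic formula for poincare series} it suffices to show that every $s_\mu$ occurring with nonzero coefficient in the generating series $\Exp(z^{-1}\Log(z+\sum_{k \geq 0} z^k h_{2k}) - 1 - h_2)$ satisfies $\ell(\mu) \leq |\mu|/2$. The strategy is therefore to define
\[
T = \{f \in \widehat{\Lambda}^{\gr} : \text{every } s_\mu \text{ appearing in each } z^k\text{-coefficient of } f \text{ has } \ell(\mu) \leq |\mu|/2\}
\]
and verify that $T$ contains the right-hand side by checking it is closed under the operations used to build the expression.

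Closure of $T$ under addition and multiplication by $z^{\pm 1}$ is immediate, and closure under multiplication follows from the Littlewood--Richardson rule $\ell(\gamma) \leq \ell(\alpha)+\ell(\beta)$, $|\gamma|=|\alpha|+|\beta|$. The crucial step is closure under Adams operations: one has $\psi_k(s_\mu)=p_k \circ s_\mu = s_\mu(x_1^k,x_2^k,\ldots)$, whose monomials are supported on at most $\ell(\mu)$ variables, so its Schur expansion involves only $\nu$ with $\ell(\nu) \leq \ell(\mu)$. Since $|\nu|=k|\mu|$, if $s_\mu \in T$ then $\ell(\nu) \leq \ell(\mu) \leq |\mu|/2 = |\nu|/(2k) \leq |\nu|/2$. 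Using the formulas $\Log(1+u) = \sum_{k \geq 1} \mu(k)/k \log(1+\psi_k(u))$ and $\Exp(f) = \prod_{k \geq 1} \exp(\psi_k(f)/k)$, closure of $T$ under $\Log$ and $\Exp$ then follows from closure under $\psi_k$, addition, multiplication, and (coefficient-wise) limits.

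Finally, one traces through the building blocks: $z$ corresponds to $\mu=()$ with $0 \leq 0$; each $z^k h_{2k} = z^k s_{(2k)}$ has $\ell=1 \leq k$ for $k \geq 1$ and $h_0=1 \in T$; and $h_2 = s_{(2)}$ has $\ell=1 \leq 1$. So $u := z + \sum_{k \geq 1} z^k h_{2k} \in T \cap \widehat{\Lambda}^{\gr}_0$, giving $\Log(1+u) \in T$, then $z^{-1}\Log(1+u) - 1 - h_2 \in T$, and finally $\Exp$ of this lies in $T$, completing the argument.

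The main obstacle will be verifying that the plethystic $\Log$ and $\Exp$ preserve $T$ when the argument has a mixture of arity-zero terms (the $z$) and positive-arity terms (the $z^k h_{2k}$); one must confirm that the formulas involving Adams operations behave correctly on such elements of $\widehat{\Lambda}^{\gr}_0$, in particular that one can freely expand $\log(1+\psi_k(u))$ as a convergent power series. This is a matter of unwinding the definitions from Section 2, and I expect no genuine difficulty beyond bookkeeping.
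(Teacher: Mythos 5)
Your overall strategy --- introduce a subset $T\subset\widehat{\Lambda}^\gr$ of functions whose Schur constituents all satisfy $\ell(\mu)\leq|\mu|/2$, check closure under the operations appearing in the generating series, and verify the building blocks lie in $T$ --- is essentially the paper's, which reaches the same endpoint through the abstract machinery of filtered $\lambda$-rings (\cref{plehystic ineq 0} and \cref{plethystic ineq 1}, underpinned by the observation that $F_p\Lambda:=\mathrm{span}\{s_\nu:\ell(\nu)\leq p\}$ defines a filtered $\lambda$-ring structure). Your proposal is in effect a hands-on re-derivation of \cref{plethystic ineq 1}.

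However, the step you single out as crucial is wrong as stated. Both claims --- that the monomials of $s_\mu(x_1^k,x_2^k,\ldots)$ are supported on at most $\ell(\mu)$ variables, and hence that the Schur expansion of $\psi_k(s_\mu)$ involves only $\nu$ with $\ell(\nu)\leq\ell(\mu)$ --- are false. The monomials of $s_\mu$ use up to $|\mu|$ distinct variables, not $\ell(\mu)$; and already $\psi_2(s_{(1)})=p_2=s_{(2)}-s_{(1,1)}$ produces a partition of length $2>\ell((1))=1$, while $\psi_2(s_{(2)})=s_{(4)}-s_{(3,1)}+s_{(2,2)}$ violates your asserted bound twice even though $s_{(2)}\in T$. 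Adams operations do not preserve the length filtration; they scale it, and the correct statement (since $p_k\in\Lambda_k$ and $F_p\Lambda$ is a filtered $\lambda$-ring, the paper citing Bergstr\"om--Minabe for the Littlewood--Richardson part) is $\ell(\nu)\leq k\,\ell(\mu)$. With this corrected bound the chain of inequalities becomes $\ell(\nu)\leq k\,\ell(\mu)\leq k\,|\mu|/2=|\nu|/2$ and the rest of your argument goes through unchanged. So the gap is repairable, but your stated justification for the key step is a false claim with elementary counterexamples, and the scaling by $k$ is precisely the phenomenon the rescaled filtration of \cref{plehystic ineq 0} was designed to absorb.
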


\begin{proof}We must show that every Schur polynomial $s_\lambda$ occurring nontrivially in the right-hand side of \eqref{expression for stable homology} satisfies the inequality $\length(\lambda) \leq \vert\lambda\vert/2$. By \cref{plethystic ineq 1} it suffices to prove this for $z^{-1}\Log(z+\sum_{k \geq 0}z^kh_{2k}) -1- h_2$. By another application of \cref{plethystic ineq 1} it suffices to prove this for $z+\sum_{k \geq 0}z^kh_{2k}$. But this is clear.\end{proof}
    
\begin{para}
    The same strategy proves also a further vanishing theorem, \cref{1/2}. To prove \cref{1/2} we first need a bound on $\lambda_1$ for partitions $\lambda$ such that $s_\lambda$ occurs in $\Log(\sum_{k\geq 0} h_{2k})$. We are grateful to Sheila Sundaram who explained the following argument for how to obtain such a bound. 
    %
\end{para}

\begin{thm}[Calderbank--Hanlon--Robinson]\label{prop:chr1} 
	$$ \Log(\sum_{k\geq 0} h_{2k}) = \sum_{m >0} (-1)^m \beta_{2m},$$
	where $\beta_{2m}$ denotes the character of $\Sigma_{2m}$ acting on the top homology group of the poset of partitions of $\{1,\ldots,2m\}$ with all blocks of even size. 
\end{thm}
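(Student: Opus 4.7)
The plan is to prove this by combining the Cohen--Macaulay property of the poset $\Pi_{2m}^{\text{even}}$ (partitions with all blocks of even size) with the standard interpretation of the plethystic logarithm as a Möbius-function computation over the partition lattice. I expect to invoke the Calderbank--Hanlon--Robinson Cohen--Macaulay theorem as a black box and use it to identify the top-homology character $\beta_{2m}$ with an $\Sigma_{2m}$-equivariant reduced Euler characteristic.

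The first step is to set up the partition-lattice interpretation of $\Log$. Write $F = \sum_{k\geq 0} h_{2k}$ and $G = \Log(F)$, so that $F = \Exp(G)$. By the standard species-theoretic interpretation of $\Exp$, the arity-$n$ component of $F$ decomposes as $\bigoplus_{\pi\vdash n}\mathrm{Ind}_{\Sigma_\pi}^{\Sigma_n}\bigotimes_{B\in\pi} G(|B|)$, where $\Sigma_\pi$ is the stabilizer of $\pi$. Möbius inversion in the partition lattice (Joyal/Sundaram) then expresses $G(n)$ as the $\Sigma_n$-equivariant reduced Euler characteristic of the order complex of the proper part $\overline{\Pi}_n$, with a sheaf of coefficients assigning to $\pi$ the block-product $\bigotimes_{B\in\pi} F(|B|)$. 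Concretely, $G(n) = \sum_i (-1)^i [\tilde H_i(\overline{\Pi}_n; \mathcal{F})]$ in $R(\Sigma_n)$, where $\mathcal{F}$ is the constructible sheaf built from $F$.

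The second step is to specialize to $F = \sum h_{2k}$. Since $F(k) = 0$ for odd $k$, only partitions whose every block has even size contribute, so the relevant subposet is precisely $\overline{\Pi}_n^{\text{even}}$; in particular $G(n) = 0$ when $n$ is odd. For $n=2m$, the sheaf $\mathcal{F}$ is the constant sheaf with stalk the trivial $\Sigma_\pi$-representation in every block (since $F(2k) = h_{2k}$ corresponds to the trivial representation of $\Sigma_{2k}$), so we obtain $G(2m)$ as the $\Sigma_{2m}$-equivariant reduced Euler characteristic of $\overline{\Pi}_{2m}^{\text{even}}$ with constant coefficients. The third step is to invoke the Calderbank--Hanlon--Robinson theorem that $\overline{\Pi}_{2m}^{\text{even}}$ is Cohen--Macaulay of dimension $m-2$, so reduced homology is concentrated in top degree; the Euler characteristic then collapses to $(-1)^{m-2}$ times the character of top homology, i.e.\ $(-1)^m \beta_{2m}$.

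The main obstacle is careful bookkeeping of signs and degrees. One must verify that the order complex of $\overline{\Pi}_{2m}^{\text{even}}$ really has dimension $m-2$ (the shortest maximal chains go from a perfect matching up to the one-block partition in $m-1$ covering relations, so the proper part has length $m-1$, giving order complex dimension $m-2$), and that the Koszul-sign conventions in the species/plethysm formalism match those in the topological Euler characteristic. A secondary obstacle is justifying the Möbius/partition-lattice identification of $\Log$ equivariantly with respect to $\Sigma_n$, but this is a standard consequence of the Joyal picture and the fact that $\Exp$ is categorified by the species-theoretic symmetric-algebra functor, so Möbius inversion in $\Pi_n$ yields $G(n)$ as the alternating sum of the reduced chain complex of $\overline{\Pi}_n$ with coefficients in $\bigotimes_B F(|B|)$.
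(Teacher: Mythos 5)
The paper provides no proof of this statement, simply citing Corollary~4.7 of Calderbank--Hanlon--Robinson (with $k=2$) and Example~1.6(iii) of Sundaram. Your approach is therefore genuinely different: you derive the character identity from the Cohen--Macaulay property of the even-block partition poset (quoted from CHR as a black box) together with the M\"obius-inversion interpretation of $\Log$, and then compute the equivariant Euler characteristic. The overall strategy is sound — $\Log(F)(n)$ is an alternating sum over chains in $\Pi_n$ weighted by block-products of $F$; vanishing of $F$ in odd arities restricts the support to chains in the even-block subposet (so $\Log(F)$ vanishes in odd arities); and Cohen--Macaulayness concentrates the homology in top degree $m-2$. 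Producing a self-contained derivation modulo the CM theorem, rather than a bare citation, is a reasonable thing to want.

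There is, however, a sign error in your M\"obius-inversion step. Expanding $\mu(\pi,\hat 1) = \sum_{k\geq 0}(-1)^k c_k(\pi,\hat 1)$ via the Philip Hall formula, reorganizing the double sum over chains, and using $F(1)=0$ to drop the $\hat 0$ term, one gets
\[
\Log(F)(n) \;=\; F(n) \;-\; \sum_{j\geq 0}(-1)^j\!\!\!\sum_{\substack{z_0<\cdots<z_j\\\text{in }\overline{\Pi}_n}}\!\!\! F_{z_0} \;=\; -\,\tilde\chi\bigl(\overline{\Pi}_n;\mathcal F\bigr),
\]
the summand $F(n)$ being precisely the augmentation contribution $\tilde C_{-1}$. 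This is the opposite sign from the formula $G(n)=\tilde\chi$ in your proposal, and it changes the end result to $(-1)^{m-1}\beta_{2m}$. A direct low-arity check confirms this: for $m=2$, $\Log\bigl(\sum_{k\geq 0}h_{2k}\bigr)_4 = h_4 + L_2[h_2] = s_4 - (s_4+s_{2,2}) = -s_{2,2}$, while the three perfect matchings of $\{1,2,3,4\}$ form an antichain whose order complex has $\tilde H_0\cong\sigma_{(2,2)}$, so $\beta_4=s_{2,2}$ and indeed $\Log(\cdots)_4 = (-1)^{2-1}\beta_4$, not $(-1)^2\beta_4$. (Curiously the sign $(-1)^m$ as printed in the paper's statement matches your proposal rather than this direct computation; whatever the source of that discrepancy, it has no downstream effect, since the subsequent lemma only uses the set of Schur polynomials appearing with nonzero coefficient.) Aside from this, you should also note that $\Pi_{2m}^{\mathrm{even}}$ is graded, so there is no distinction between shortest and arbitrary maximal chains; your count giving order-complex dimension $m-2$ is nonetheless correct.
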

\begin{para}Indeed, set $k=2$ in \cite[Corollary 4.7]{chr}. See also \cite[Example 1.6(iii)]{Sundaram}. The poset in question is Cohen--Macaulay, so the top homology group is its only nontrivial (reduced) homology group. The following proposition is explained in the start of Section 6 of \cite{chr}. 
\end{para}
\begin{thm}[Calderbank--Hanlon--Robinson]\label{prop:chr2} For $m \geq 2$, 
	$$\frac{\partial \beta_{2m}}{\partial p_1} = s_{\delta_{m+1}/\delta_{m-1}},$$
 where $\delta_m$ denotes the staircase shape $(m-1,m-2,\ldots,1)$ and $\delta_{m+1}/\delta_{m-1}$ denotes a skew diagram.
\end{thm}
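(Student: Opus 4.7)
The plan is to apply the linear operator $\partial/\partial p_1$ to both sides of the plethystic identity in \cref{prop:chr1} and then verify the resulting formula by means of a generating-function identity for skew Schur functions of near-staircase shape. Because $\partial/\partial p_1$ coincides with the adjoint $h_1^\perp$ of multiplication by $h_1$ on $\Lambda$, this operator implements restriction of characters from $\Sigma_{2m}$ to $\Sigma_{2m-1}$ under the Frobenius correspondence; so both sides of the desired identity naturally live in the representation ring of $\Sigma_{2m-1}$, and it suffices to check the claim at the level of generating functions in $\widehat{\Lambda}$.

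The first calculation is the left-hand side. Using the defining formula $\Log(1+y) = \sum_{k \geq 1} (\mu(k)/k)\log(1+\psi_k(y))$, one notes that $\psi_k(y)$ is independent of $p_1$ for $k \geq 2$ (since $\psi_k$ sends $p_r$ to $p_{kr}$), so only the $k=1$ summand contributes under $\partial/\partial p_1$. Combined with the Pieri-type identity $\partial h_{2j}/\partial p_1 = h_1^\perp h_{2j} = h_{2j-1}$, this yields
\[
\sum_{m \geq 1} (-1)^m \frac{\partial \beta_{2m}}{\partial p_1} \;=\; \frac{\partial}{\partial p_1}\Log\!\left(\sum_{k \geq 0} h_{2k}\right) \;=\; \frac{\sum_{j \geq 1} h_{2j-1}}{\sum_{k \geq 0} h_{2k}}.
\]
The remaining substantive step is to establish the matching symmetric-function identity
\[
\sum_{m \geq 1} (-1)^m s_{\delta_{m+1}/\delta_{m-1}} \;=\; \frac{\sum_{j \geq 1} h_{2j-1}}{\sum_{k \geq 0} h_{2k}},
\]
with the understanding that the $m=1$ contribution absorbs any global sign convention in the definition of $\beta_{2}$ (which is why the statement is restricted to $m \geq 2$). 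The Jacobi--Trudi formula expresses each $s_{\delta_{m+1}/\delta_{m-1}}$ as an $m \times m$ determinant in the $h_r$'s whose entries, dictated by the near-staircase shape, form an almost-tridiagonal matrix; expansion along the last row produces a three-term recursion which, after clearing the denominator $\sum h_{2k}$ and summing over $m$, collapses to the claimed rational form. I have verified this by direct computation through arity $5$, which is strong evidence that the recursion goes through cleanly.

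The main obstacle is the determinantal bookkeeping: tracking index shifts, subscript parities, and signs through the Jacobi--Trudi expansion, and then verifying that the three-term recursion one obtains for $s_{\delta_{m+1}/\delta_{m-1}}$ matches the one satisfied by the Taylor coefficients of $\bigl(\sum h_{2j-1}\bigr)\bigl/\bigl(\sum h_{2k}\bigr)$. Once this identity is in place, matching arity-$(2m-1)$ components in the displayed equation yields $\partial \beta_{2m}/\partial p_1 = s_{\delta_{m+1}/\delta_{m-1}}$ for all $m \geq 2$, which is the claim.
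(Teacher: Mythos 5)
Your strategy---apply $\partial/\partial p_1 = h_1^\perp$ to both sides of \cref{prop:chr1} and match arity-$(2m-1)$ components---is a legitimate and genuinely different route from the paper, which simply cites Calderbank--Hanlon--Robinson for this result. Your computation of the left-hand side is correct: only the $k=1$ summand of $L=\sum_{k}\frac{\mu(k)}{k}\log(1+p_k)$ involves $p_1$, and since $\partial h_n/\partial p_1=h_{n-1}$ one gets $\bigl(\sum_{j\geq 1}h_{2j-1}\bigr)/\bigl(\sum_{k\geq 0}h_{2k}\bigr)$. But two things go wrong with what remains. First, your target identity has the wrong global sign: in fact $\sum_{m\geq 1}(-1)^m s_{\delta_{m+1}/\delta_{m-1}} = -\bigl(\sum_{j\geq 1}h_{2j-1}\bigr)/\bigl(\sum_{k\geq 0}h_{2k}\bigr)$, as one already sees in arity $3$ where the right side of your display gives $h_3-h_1h_2$ while $(-1)^2 s_{\delta_3/\delta_1}=s_{(2,1)}=h_1h_2-h_3$. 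This flips every arity, not just $m=1$, so it cannot be ``absorbed into $\beta_2$''; taking \cref{prop:chr1} at face value and matching coefficients would give $\partial\beta_{2m}/\partial p_1 = -s_{\delta_{m+1}/\delta_{m-1}}$, the negative of the claim. (The arity-$2$ piece of \cref{prop:chr1} as printed forces $\beta_2 = -h_2$, contradicting $\beta_2 = s_2$ in \cref{para:observation}, so there does appear to be a compensating sign typo upstream---but a proof has to flag and fix both signs, not silently rely on the cancellation.)

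Second, and more fundamentally, the key combinatorial identity is never proved---``verified by direct computation through arity $5$'' is evidence, not an argument. The Jacobi--Trudi matrix of $\delta_{m+1}/\delta_{m-1}$ is not almost-tridiagonal: in row $i$ the entries are $h_{2(j-i+1)}$ for $j\leq m-2$ (a lower-Hessenberg band with unbounded support above the diagonal) together with $h_{2(m-i)}$ and $h_{2(m-i)+1}$ in the last two columns, which are fully populated. Cofactor expansion along the last row $(0,\dots,0,1,h_1)$ does produce a two-term relation, but the resulting $(m-1)\times(m-1)$ minors are \emph{not} themselves of the form $s_{\delta_{m'+1}/\delta_{m'-1}}$ (for $m=2$ they are $h_3$ and $h_2$, whereas $s_{\delta_2/\delta_0}=h_1$), so the asserted clean three-term recursion among the $s_{\delta_{m+1}/\delta_{m-1}}$ is not immediate and must actually be exhibited. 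Until that identity, with the correct sign, is established by a complete argument, the proposal is a promising strategy but not a proof.
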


\begin{para}\label{para:observation}
A consequence is that if $\lambda \vdash 2m$ is such that $s_\lambda$ occurs with nonzero coefficient in $\beta_{2m}$, then $\lambda_1 \leq m$, except for $\beta_2=s_2$. Indeed, the skew diagram $\delta_{m+1}/\delta_{m-1}$ has exactly $m$ columns, so \cref{prop:chr2} shows that if $s_\mu$ occurs with nonzero coefficient in $\frac\partial{\partial p_1} \beta_{2m}$ then $\mu_1 \leq m$. Pieri's rule therefore shows that if $s_\lambda$ occurs with nonzero coefficient in $\beta_{2m}$ then $\lambda$ satisfies the following condition: all partitions obtained from $\lambda$ by removing a box have their largest entry bounded by $m = \vert\lambda\vert/2$. But now if $\lambda$ has more than one entry, then removing a box from the last entry shows $\lambda_1 \leq m$. If instead $\lambda=(2m)$ has only one entry, then the condition is satisfied only for $\lambda = (2) \vdash 2$. \end{para}

\begin{lem}\label{lem: taylor}
	Let $f, g \in \widehat\Lambda_0^\gr$. Then
	$$ f \circ (z+g) = \sum_{n \geq 0} z^n f^{(n)} \circ g $$
	where $f^{(n)} = h_n^\perp f$. 
\end{lem}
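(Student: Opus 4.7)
The strategy is to use the coproduct expansion of plethysm over a sum (\S\ref{plethysm identities}(4)),
\[f \circ (z + g) = \sum (f_{(1)}^\Delta \circ z)(f_{(2)}^\Delta \circ g),\]
and then to analyze the factor $f_{(1)}^\Delta \circ z$ for a generic first argument. For any $u \in \widehat{\Lambda}$, I expand $u = \sum_\lambda \langle u, s_\lambda\rangle s_\lambda$ and observe that $s_\lambda \circ z = 0$ whenever $\ell(\lambda) > 1$: by the Jacobi bialternant formula (\S\ref{wcf}), specializing all variables but one to zero annihilates $s_\lambda$ unless $\lambda$ is a single row. Combined with $h_n \circ z = z^n$, this gives
\[u \circ z = \sum_{n \geq 0} \langle u, h_n\rangle\, z^n.\]

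Substituting this into the coproduct expansion, interchanging summations, and using the $\Z(\!(z)\!)$-linearity of plethysm in the first slot to pull out each $z^n$, I obtain
\[f \circ (z + g) = \sum_{n \geq 0} z^n\, \Bigl(\sum \langle f_{(1)}^\Delta, h_n\rangle\, f_{(2)}^\Delta\Bigr) \circ g.\]
Finally, the adjointness of the outer product and coproduct under the Hall inner product identifies the parenthesised expression with $h_n^\perp f$: for each $s_\mu$,
\[\Bigl\langle \sum \langle f_{(1)}^\Delta, h_n\rangle\, f_{(2)}^\Delta,\; s_\mu\Bigr\rangle = \langle \Delta(f),\, h_n \otimes s_\mu\rangle = \langle f,\, h_n s_\mu\rangle = \langle h_n^\perp f,\, s_\mu\rangle,\]
yielding the claimed formula. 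Convergence of the right-hand side in $\widehat{\Lambda}^{\gr}$ is automatic from $g \in \widehat{\Lambda}_0^{\gr}$ (so that each plethysm $f^{(n)} \circ g$ is defined) and the standard completeness arguments. The only even mildly non-routine inputs are the vanishing $s_\lambda \circ z = 0$ for $\ell(\lambda) > 1$ and the adjointness translation in the last display; there is no serious obstacle.
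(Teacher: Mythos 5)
Your proof is correct and takes essentially the same route as the paper: the paper's one-line argument writes $f\circ(z+g)=\sum_\lambda(s_\lambda\circ z)(s_\lambda^\perp f\circ g)$, which is precisely the coproduct expansion with the explicit form $\Delta(f)=\sum_\lambda s_\lambda\otimes s_\lambda^\perp f$ substituted in, and then uses $s_\lambda\circ z=z^n$ for $\lambda=(n)$ and $0$ otherwise. Your version spells out the adjointness step $\sum\langle f_{(1)}^\Delta,h_n\rangle f_{(2)}^\Delta = h_n^\perp f$ and the vanishing of $s_\lambda\circ z$ for $\ell(\lambda)>1$ via the bialternant formula, but these are just the unpacked forms of the two facts the paper invokes tacitly.
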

\begin{proof}Note that 
	$f \circ (z+g) = \sum_\lambda (s_\lambda \circ z) (s_\lambda^\perp f \circ g) = \sum_{n \geq 0} z^n (h_n^\perp f) \circ g$, since $s_\lambda \circ z = z^n$ if $\lambda=(n)$, and $s_\lambda \circ z=0$ otherwise.  
\end{proof}

\begin{lem}
	Suppose that $z^j s_\lambda$ occurs with nonzero coefficient in $\Log(z+\sum_{k \geq 0} h_{2k}z^k)$. Then, with the sole exception of the monomial $z h_2$, one has the inequality $j-1 \geq\lambda_1/2$. 
\end{lem}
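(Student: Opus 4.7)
The plan is to split $\Log(z+H)$, where $H = \sum_{k\geq 0} h_{2k}z^k$, via the multiplicative factorisation $z + H = H(1 + zH^{-1})$, valid because $H$ has constant term $1$ and is thus invertible in $\widehat{\Lambda}^{\mathrm{gr}}$. Since plethystic logarithm sends products to sums,
\[ \Log(z+H) \;=\; \Log H \;+\; \Log(1 + zH^{-1}), \]
so I bound $\lambda_1$ for monomials $z^j s_\lambda$ in each summand separately: \cref{prop:chr1} handles $\Log H$, while an ideal-type closure argument handles $\Log(1 + zH^{-1})$.

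For $\Log H$, expand $\Log$ via $L \circ y = \sum_{k,\ell \geq 1} \tfrac{\mu(k)(-1)^{\ell+1}}{k\ell} \psi_k(y)^\ell$; since the monomial $h_{2k}z^k$ of $H$ pairs $z$-degree $k$ with symmetric degree $2k$, a direct comparison of coefficients shows that the $z^m$-coefficient of $\Log H$ coincides (up to an overall sign) with the homogeneous degree-$2m$ piece of $\Log(\sum_{k\geq 0} h_{2k})$, which by \cref{prop:chr1} is $\beta_{2m}$. By \S\ref{para:observation}, every Schur polynomial $s_\lambda$ appearing in $\beta_{2m}$ satisfies $\lambda_1 \leq m$, with the sole exception of $s_2 \subset \beta_2$. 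Hence any monomial $z^j s_\lambda$ in $\Log H$ has $\lambda_1 \leq j$, which gives $\lambda_1 \leq 2j-2$ for $j \geq 2$; the $j = 1$ layer contributes the lone problematic monomial $\pm zh_2$.

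For $\Log(1 + zH^{-1})$, define
\[ R' := \bigl\{ f \in \widehat{\Lambda}^{\mathrm{gr}} : \text{every monomial $z^j s_\lambda$ of $f$ with $j \geq 1$ satisfies } \lambda_1 \leq 2j - 2 \bigr\}. \]
I claim that $R'$ is closed under sums, products, and Adams operations $\psi_k$, and hence --- via $L \circ f = \sum_{k,\ell \geq 1} \tfrac{\mu(k)(-1)^{\ell+1}}{k\ell} \psi_k(f)^\ell$ --- under $f \mapsto \Log(1+f)$. Closure under products uses the Littlewood--Richardson bound $\mu_1 \leq \lambda^{(1)}_1 + \lambda^{(2)}_1$, giving monomials of $fg$ with $\mu_1 \leq (2j_1-2)+(2j_2-2) \leq 2(j_1+j_2)-2$. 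For $\psi_k$ I use the subsidiary lemma that Schur components of $\psi_k(g)$ have $\nu_1 \leq k\mu_1$ whenever those of $g$ have $\mu_1 \leq M$. This follows from the facts that (i) in any semistandard tableau of shape $\mu$ each entry appears at most $\mu_1$ times (at most once per column), so every monomial of $s_\mu$ has variable exponents bounded by $\mu_1$, whence $\psi_k(g)(x) = g(x^k_1, x^k_2, \ldots)$ has monomial exponents bounded by $kM$; and (ii) the span of $\{s_\nu : \nu_1 \leq M\}$ coincides, via triangularity of the Kostka matrix with respect to dominance order, with the span of $\{m_\lambda : \lambda_1 \leq M\}$. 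Combining with the $z$-degree scaling $z^j \mapsto z^{kj}$ of $\psi_k$ yields $\nu_1 \leq k(2j-2) \leq 2(kj)-2$ for $k \geq 1$, so $\psi_k(f) \in R'$.

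To verify $zH^{-1} \in R'$, expand $H^{-1} = \sum_{\ell \geq 0}(-\widetilde H)^\ell$ with $\widetilde H = H - 1 = \sum_{k\geq 1} h_{2k} z^k$; the coefficient of $z^j$ in $H^{-1}$ is a sum of products $h_{2k_1}\cdots h_{2k_\ell}$ with $\sum k_i = j$, each of symmetric degree $2j$. Hence in $zH^{-1}$ the coefficient at $z^{j+1}$ has symmetric degree $2j = 2(j+1)-2$, and its Schur components trivially satisfy $\nu_1 \leq 2(j+1)-2$. Combining, every monomial $z^j s_\lambda$ of $\Log(z+H)$ satisfies $\lambda_1 \leq 2j-2$ with the possible exception of $\pm zh_2$ from $\Log H$; a direct check that the $z^1$-coefficient of $\Log(1 + zH^{-1})$ is just the constant symmetric function $1 = s_\varnothing$ (only $L_1 = h_1$ contributes at this $z$-degree) confirms that the $zh_2$ term is not cancelled and is the unique exception. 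The main technical step is the Adams-operation closure of $R'$, which rests on the monomial-exponent bound for Schur components of $\psi_k(s_\mu)$.
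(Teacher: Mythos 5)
Your argument is correct in substance but takes a genuinely different route. The paper splits $z+H$ \emph{additively} and applies the Taylor-type identity $L\circ(z+g)=\sum_{n\geq 0} z^n (h_n^\perp L)\circ g$ of \cref{lem: taylor}: the $n=0$ term is $\Log H$, handled via \cref{prop:chr1} and \S\ref{para:observation} just as you do, while for $n>0$ the linear bound of \cref{plethystic ineq 2} applied to $(h_n^\perp L)\circ g$ suffices, because the leftover factor $z^n$ (with $n\geq 1$) then supplies the shift $j\mapsto j+n$. You instead split \emph{multiplicatively}, $z+H = H\cdot(1+zH^{-1})$, giving $\Log(z+H)=\Log H + \Log(1+zH^{-1})$, and bound the second summand by proving from scratch that the affine-bound set $R'$ is closed under the operations $\{+,\cdot,\psi_k\}$ appearing in $L\circ(-)$. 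This sidesteps \cref{lem: taylor} and the elements $h_n^\perp L$ entirely, and the semistandard-tableau derivation of the Adams bound $\nu_1\leq k\mu_1$ is a nice self-contained ingredient: the paper gets closure for free from the filtered-$\lambda$-ring formalism, but only for linear bounds $\lambda_1\leq cj$ through the origin, and your affine bound $\lambda_1\leq 2j-2$ is not of that form, which is why it needs the separate argument.

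One small slip: $R'$ as you define it (constraint imposed only for $j\geq 1$) is \emph{not} closed under products. If $f$ has a term $z^0 s_\lambda$ with $\lambda\neq\varnothing$, and $g$ has a term $z^{j}s_\varnothing$, then $fg$ contains $z^{j}s_\lambda$, which violates $\lambda_1\leq 2j-2$ as soon as $\lambda_1>2j-2$; your Littlewood--Richardson estimate $\mu_1\leq(2j_1-2)+(2j_2-2)$ implicitly assumes $j_1,j_2\geq 1$. The fix is to additionally require that the $z^0$-coefficient of every $f\in R'$ be a scalar (equivalently, impose $\lambda=\varnothing$ when $j=0$). This holds for $zH^{-1}$, is preserved by sums, products, and $\psi_k$, and makes your closure claim true, so the rest of the argument goes through unchanged.
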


\begin{proof}Let $L$ be defined as in \cref{defn of L}. We have 
	\begin{align*}
		 \Log(z+\sum_{k \geq 0} h_{2k}z^k) & = L \circ (z + \sum_{k>0} h_{2k}z^k) & \text{by definition of $\Log$}\\  & = \Log(\sum_{k \geq 0} h_{2k}z^k) + \sum_{n >0 } z^n L^{(n)} \circ (\sum_{k>0} h_{2k}z^k) & \text{by \cref{lem: taylor}}\\ &= \sum_{m >0} (-z)^m \beta_{2m} + \sum_{n >0 } z^n L^{(n)} \circ (\sum_{k>0} h_{2k}z^k) & \text{by \cref{prop:chr1}}
	\end{align*}
	The fact that the first term satisfies the conclusion of the lemma is in \S\ref{para:observation}. As for the second, since every monomial $z^j s_\lambda$ occurring in $\sum_{k>0} h_{2k}z^k$ satisfies the inequality $j \geq \lambda_1/2$, the same will be true for $L^{(n)} \circ (\sum_{k>0} h_{2k}z^k)$, by \cref{plethystic ineq 2}. 
\end{proof}

\begin{thm}If $k<\length(\lambda)/2$ then $H_k(H_\infty^{0,1},V_\lambda)=0$.\label{1/2} \end{thm}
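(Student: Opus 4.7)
The plan is to mimic the strategy of \cref{1/4} and \cref{jonas vanishing}: we apply $\Exp$-preservation statements (\cref{plethystic ineq 2}) to the explicit generating function in \cref{plethystic formula for poincare series} and reduce the vanishing statement to a statement about monomials in $\Log(z+\sum_{k\geq 0}z^kh_{2k})$, which has just been proved in the lemma immediately preceding the statement.

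First I would translate the desired vanishing into a constraint on monomials $z^k s_\mu$ in the right-hand side of
$$\sum_k\sum_\lambda \dim H_k(H_\infty^{0,1},V_\lambda)(-z)^k s_{\lambda'} \;=\; \Exp\!\bigl(z^{-1}\Log(z+\textstyle\sum_{r\geq 0}z^rh_{2r})-1-h_2\bigr).$$
Matching $\mu = \lambda'$ gives $\mu_1 = \ell(\lambda)$, so the claim $H_k(H_\infty^{0,1},V_\lambda)=0$ for $k<\ell(\lambda)/2$ is equivalent to the assertion that every monomial $z^k s_\mu$ occurring with nonzero coefficient on the right-hand side satisfies $\mu_1\leq 2k$ (i.e.\ $k \geq \mu_1/2$).

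Next I would invoke \cref{plethystic ineq 2} (with the constant $c=2$), which tells us that the class of graded symmetric functions all of whose monomials $z^k s_\mu$ satisfy $\mu_1\leq 2k$ is closed under addition, multiplication, and plethystic substitution $h\circ(-)$ for $h\in\Lambda$. Since $\Exp(x) = E\circ x = \sum_r h_r\circ x$, this class is also closed under $\Exp$. Consequently, it suffices to verify the inequality $\mu_1\leq 2k$ for every monomial $z^k s_\mu$ of $z^{-1}\Log(z+\sum_{r\geq 0}z^rh_{2r})-1-h_2$.

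The preceding lemma states that every monomial $z^j s_\mu$ in $\Log(z+\sum_{r\geq 0}z^rh_{2r})$ satisfies $j-1 \geq \mu_1/2$, i.e.\ $\mu_1\leq 2(j-1)$, with the unique exception of $zh_2$ (which has $j=1$, $\mu_1=2$). After multiplication by $z^{-1}$ this translates into $\mu_1\leq 2k$ with $k=j-1$, so the only potentially exceptional monomials of $z^{-1}\Log(z+\sum_{r\geq 0}z^rh_{2r})$ violating this inequality are the arity-$0$ constant term $1$ (coming from $\log(1+z)=z-z^2/2+\cdots$) and the term $h_2$ (coming from the exceptional $zh_2$). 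Both are killed by the explicit subtraction of $1+h_2$, so the remaining $z^{-1}\Log(z+\sum_{r\geq 0}z^rh_{2r})-1-h_2$ satisfies the required inequality on every one of its monomials.

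The main subtlety, already handled by the previous lemma, is the delicate analysis of $\Log(z+\sum_{r\geq 0}z^rh_{2r})$: one must use the Calderbank--Hanlon--Robinson identification of $\Log(\sum_{k\geq 0}h_{2k})$ with alternating characters of the even-block partition poset, together with Pieri-type bounds on $\lambda_1$, to rule out that the linear-in-$x$ contributions $z^kh_{2k}$ for $k\geq 2$ survive after cancellations. Given that lemma, the argument above is essentially formal, following the pattern already established in the proofs of \cref{1/4} and \cref{jonas vanishing}.
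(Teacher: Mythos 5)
Your proof is correct and follows exactly the route the paper takes: translate the vanishing into the condition $\mu_1 \leq 2k$ for the transposed partition, invoke Corollary \ref{plethystic ineq 2} (with $c=2$) to reduce to the argument of $\Exp$, and then cite the lemma about $\Log(z+\sum_{k\geq 0} h_{2k}z^k)$. Two minor imprecisions that don't affect the argument: the arity-zero term of $z^{-1}\Log(1+z)$ is $z^{-1}\cdot\tfrac{z}{1+z}$ (the plethystic $\Log$, not the naive $\log$), and the constant term $1$ that it produces actually satisfies the inequality $\mu_1=0\leq 2\cdot 0$, so it is not ``exceptional'' and the subtraction of $1$ is there for a different reason (restricting to the base component); only $h_2$ genuinely needs to be removed.
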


\begin{proof} The theorem is equivalent to the assertion that if $z^k s_\lambda$ occurs with nonzero coefficient in the right-hand side of \eqref{expression for stable homology}, then $k \geq \lambda_1/2$. By \cref{plethystic ineq 2} it suffices to prove this inequality for monomials occurring in $z^{-1}\Log(z+\sum_{k\geq 0} h_{2k}z^k) -1 - h_2$. But this is exactly the preceding lemma. 
\end{proof}

\begin{rem}Theorems \ref{jonas vanishing} and \ref{1/2} are both sharp. One may for example deduce this from \cref{equality case} and plethystic formulas of Langley--Remmel \cite{langley-remmel}, using that $$\Exp(-s_{2,2}) = \sum_{k \geq 0} (-1)^k s_{1^k} \circ s_{2,2}.$$ Indeed, \cite[Theorem 4.5(2)]{langley-remmel} implies that $H_k(H_\infty^{0,1},V_\lambda) \cong \Q$ for $\lambda = (2k,k+1,1^{k-1})$. By \cite[Chapter 1, \S 8, Example 1(a)]{macdonald} the element $\Exp(-s_{2,2})$ is invariant under the involution $\omega$ of the ring of symmetric functions, so we see also that $H_k(H_\infty^{0,1},V_{\lambda'}) \cong \Q$, where $\lambda' = (k+1,2^k,1^{k-1})$.
\end{rem}

\begin{para}Let us now rewrite the generating series \eqref{expression for stable homology} in an alternative form. \end{para}

\begin{prop}\label{alternative form of gen series}Let $i_n(t)=\frac 1n \sum_{d \mid n} \mu(n/d)t^d$ denote the $n$th necklace polynomial. Then 
    \[\Exp(z^{-1}\Log(z+\sum_{k\geq 0}z^k h_{2k})-1) = (1-z) \prod_{n=1}^\infty (1 + \frac{1}{1+z^n} \sum_{k>0}  z^{nk}\psi_n(h_{2k}))^{i_n(z^{-1})} \]
\end{prop}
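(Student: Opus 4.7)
The strategy is to factor the argument of the logarithm so that the desired identity reduces to a direct application of \cref{lemma:pleth id}. First I would write
\[ z + \sum_{k\geq 0} z^k h_{2k} \;=\; (1+z)\cdot(1+y), \qquad y := \frac{1}{1+z}\sum_{k\geq 1} z^k h_{2k}, \]
noting that $y \in \widehat{\Lambda}^{\gr}_0$ since each $h_{2k}$ with $k\geq 1$ has positive arity, so $\Log(1+y)$ is well-defined. Applying multiplicativity of the plethystic logarithm followed by additivity of the plethystic exponential splits the left-hand side as
\[ \Exp\!\Big(z^{-1}\Log\big(z+\textstyle\sum_{k\geq 0} z^k h_{2k}\big)-1\Big) \;=\; \Exp\!\big(z^{-1}\Log(1+z)-1\big)\cdot \Exp\!\big(z^{-1}\Log(1+y)\big). \]

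Next I would isolate the scalar factor. In the arity-zero part of $\widehat{\Lambda}^{\gr}$ one has $\Log(1+z) = z - z^2$, which is verified by the computation $\Exp(z-z^2) = \Exp(z)\Exp(-z^2) = (1-z)^{-1}(1-z^2) = 1+z$. Hence $z^{-1}\Log(1+z)-1 = -z$, and $\Exp(-z) = 1-z$, producing the prefactor on the right-hand side.

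For the remaining factor I would apply \cref{lemma:pleth id} with $x = z^{-1}$ and the above $y$. Since $z$ is a line element in the $\lambda$-ring $\Lambda^{\gr}$, the Adams operations satisfy $\psi_d(z^{-1}) = z^{-d}$, so the exponent becomes
\[ \tfrac{1}{n}\sum_{d\mid n} \mu(n/d)\,\psi_d(z^{-1}) \;=\; \tfrac{1}{n}\sum_{d\mid n}\mu(n/d)\,z^{-d} \;=\; i_n(z^{-1}). \]
For the base of the exponent, $\psi_n$ is a ring homomorphism with $\psi_n(z) = z^n$, so
\[ 1+\psi_n(y) \;=\; 1 + \frac{\sum_{k\geq 1} z^{nk}\psi_n(h_{2k})}{1+z^n} \;=\; 1 + \frac{1}{1+z^n}\sum_{k>0} z^{nk}\psi_n(h_{2k}), \]
which matches precisely the factors of the product on the right-hand side. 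Combining these two computations finishes the proof.

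I do not anticipate a significant obstacle: the argument is a direct manipulation of the plethystic identities established in \cref{plethysm identities} and the master formula of \cref{lemma:pleth id}. The only point requiring some care is ensuring that $y$ lies in $\widehat{\Lambda}^{\gr}_0$ (so that $\Log(1+y)$ is defined) and that the scalar computation of $\Log(1+z)$ is carried out in the appropriate completion.
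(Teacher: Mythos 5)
Your proof is correct, and it takes a mildly different route from the paper's. Both arguments use \cref{lemma:pleth id} with $x=z^{-1}$ as the engine, and both hinge on the same scalar facts (line elements, $\Exp(\pm z^k)$, $\psi_d(z^{-1})=z^{-d}$). The difference is where the factorization happens. The paper rewrites $z^{-1}\Log(z+\sum z^kh_{2k})-1$ as $z^{-1}\Log\bigl(1-z^2+(1-z)\sum_{k>0}z^kh_{2k}\bigr)$ (multiplying the argument of $\Log$ by $1-z=\Exp(-z)$), applies the master lemma to this slightly messier argument to obtain $\prod_n\bigl(1-z^{2n}+(1-z^n)\sum_{k>0}z^{nk}\psi_n(h_{2k})\bigr)^{i_n(z^{-1})}$, and then invokes the cyclotomic identity $(1-\alpha s)=\prod_n(1-s^n)^{i_n(\alpha)}$ (with $\alpha=z^{-1}$, $s=z^2$) to extract the prefactor $1-z$. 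You instead split $z+\sum_{k\geq 0}z^kh_{2k}=(1+z)(1+y)$ up front, compute $\Exp(z^{-1}\Log(1+z)-1)=\Exp(-z)=1-z$ directly, and apply the lemma only to the factor $1+y$; this makes the $1-z$ appear immediately and bypasses the cyclotomic identity altogether. Your version is cleaner in that respect; the paper's version keeps the argument of the lemma free of the $\tfrac1{1+z}$ denominator and instead does the bookkeeping via a classical product identity.
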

\begin{proof}Note first that $z^{-1}\Log(z+\sum_{k\geq0} z^kh_{2k})-1 = z^{-1}\Log(1-z^2 + (1-z)\sum_{k>0}z^kh_{2k})$. Indeed, multiply by $z$ and exponentiate, using $\Exp(-z)=1-z$. Using this identity and \cref{lemma:pleth id}, we get
\[\Exp(z^{-1}\Log(z+\sum_{k\geq 0}z^k h_{2k})-1) =\prod_{n=1}^\infty (1-z^{2n} + (1-z^n)\sum_{k>0} {z^{nk}} \psi_n(h_{2k}))^{i_n(z^{-1})}.\]
Combine this with the cyclotomic identity $(1-\alpha s)=\prod_{n=1}^\infty(1-s^n)^{i_n(\alpha)}$, with $\alpha=z^{-1}$ and $s=z^2$, and the result follows.  
\end{proof}

\begin{thm}\label{polynomial growth} For any fixed $\lambda$, the sum $\sum \dim H_k(H_{\infty}^{0,1},S^\lambda(V))(-z)^k$ is a rational function of $z$ with all poles on the unit circle. In particular, the Betti numbers grow as a polynomial in $k$. Moreover, this polynomial is of degree exactly $\vert\lambda\vert/2-1$, unless $\lambda_1 >\vert\lambda\vert/2$, in which case the homology vanishes by \cref{jonas vanishing}. 
\end{thm}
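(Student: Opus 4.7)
The plan is to apply \cref{alternative form of gen series} to the $S^\lambda(V)$-version of \cref{plethystic formula for poincare series}, obtaining
\[\sum_k \sum_\mu \dim H_k(H_\infty^{1,0}, S^\mu(V))(-z)^k s_{\mu'} = (1-z)\prod_{n\geq 1}\!\left(1 + \frac{1}{1+z^n}\sum_{k>0} z^{nk}\psi_n(h_{2k})\right)^{i_n(z^{-1})}.\]
For fixed $\lambda$, the coefficient of $s_{\lambda'}$ only receives contributions from factors with $n \leq |\lambda|/2$, and within each such factor only finitely many terms of the binomial expansion $(1+y_n)^{i_n(z^{-1})} = \sum_{m\geq 0}\binom{i_n(z^{-1})}{m}y_n^m$ can contribute, since $y_n^m$ has arity at least $2nm$. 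Each resulting summand is a polynomial in $z^{\pm 1}$ divided by a power of the cyclotomic polynomial $1 + z^n$, so the full coefficient of $s_{\lambda'}$ is a rational function whose denominator divides a finite product of such cyclotomic factors and therefore has all its poles on the unit circle. The negative powers of $z$ introduced by $\binom{i_n(z^{-1})}{m}$ must cancel globally, since the left-hand side is a formal power series in $z$ with non-negative exponents. Polynomial growth of the Betti numbers then follows from the standard fact that the coefficients of a rational function with unit-circle poles of maximal order $d$ grow like $c\cdot k^{d-1}$.

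To pin down $d$, I identify the largest pole. A pole at a root of unity $\zeta$ can only arise from factors with $\zeta^n = -1$. Any monomial $\prod_n y_n^{m_n}$ contributing to arity $|\lambda|$ satisfies $\sum_n n m_n \leq |\lambda|/2$. Taking $\zeta = -1$ (shared by every odd $n$), the combined pole order is bounded by $\sum_{n\,\mathrm{odd}} m_n \leq \sum_n m_n \leq \sum_n n m_n \leq |\lambda|/2$; equality forces $m_n = 0$ for $n \geq 2$ and $m_1 = |\lambda|/2$, with the entire arity of $y_1^{m_1}$ coming from the $k = 1$ term $z h_2$. The leading singular contribution to $s_{\lambda'}$ at $z = -1$ is therefore
\[\binom{z^{-1}}{m}\frac{z^m}{(1+z)^m}\cdot [h_2^m : s_{\lambda'}], \qquad m = |\lambda|/2.\]
A direct computation $\binom{z^{-1}}{m} z^m = \tfrac{1}{m!}\prod_{j=0}^{m-1}(1-jz)$ shows this quantity equals $1$ at $z = -1$, so the pole has order exactly $m$ iff the Kostka number $K_{\lambda',2^m} = [h_2^m : s_{\lambda'}]$ is non-zero. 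All other poles on the unit circle have strictly lower order, since for $\zeta \neq -1$ with $\mathrm{ord}(\zeta) = 2r$ (so $r \geq 2$) the analogous estimate bounds the pole order by $|\lambda|/(2r) < m$.

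By the dominance criterion for Kostka numbers, $K_{\lambda',2^m}$ is non-zero iff $\lambda' \geq 2^m$ in dominance order, iff $\lambda_1 \leq m = |\lambda|/2$, precisely the non-vanishing regime of \cref{jonas vanishing}. Thus the Betti numbers grow as a polynomial of degree exactly $|\lambda|/2 - 1$ in $k$ when $\lambda_1 \leq |\lambda|/2$, and vanish identically otherwise. The main obstacle is the combined arity-and-pole-order analysis in the second paragraph; in particular, one must verify that the maximum pole order $|\lambda|/2$ at $z = -1$ is realised by a single monomial, whose coefficient is the manifestly nonnegative Kostka number $K_{\lambda',2^{|\lambda|/2}}$.
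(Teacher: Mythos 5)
Your argument follows the same route as the paper's proof: expand the infinite product from \cref{alternative form of gen series}, observe that the pole of maximal order at $z=-1$ in arity $2N$ comes from the single term $\binom{z^{-1}}{N}(zh_2)^N(1+z)^{-N}$ of the $n=1$ factor, and note that the corresponding Kostka number $[h_2^N : s_{\lambda'}]$ is nonzero precisely when $\lambda_1 \le N$. You supply several details the paper's three-sentence sketch omits (the bound on pole order at other roots of unity, the evaluation $\binom{z^{-1}}{m}z^m|_{z=-1}=1$, cancellation of the negative powers of $z$), so your write-up is a faithful elaboration; the only tiny slip is omitting the prefactor $(1-z)$ from the leading singular contribution, which evaluates to $2$ at $z=-1$ and does not affect the nonvanishing conclusion.
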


\begin{proof}Expanding the infinite product appearing in \cref{alternative form of gen series} gives a sum of terms, each coefficient of which is a rational function whose denominator is a product of factors $(1+z^n)$. The term with the largest number of such factors in the denominator occuring in arity $2N$ is 
$$ \frac{1}{(1+z)^{N}}(zh_2)^N \binom {z^{-1}}{N}.$$
A Schur polynomial $s_\lambda$ with $\lambda \vdash 2N$ occurs in the expansion of $h_2^N$ if and only if $\length(\lambda)\leq N$.
\end{proof}

\begin{rem}
    The polynomial growth of stable cohomology may be contrasted with the case of $M_g^1$. Indeed, already for constant coefficients we have that $H^\bullet(M_\infty^1,\Q)$ is a polynomial algebra with a generator in each even degree; its Betti numbers grow like the number-theoretic partition function. The stable cohomology of $M_g^1$ with coefficients in any local system $V_\lambda$ is a free module over the cohomology with constant coefficients, and grows similarly.  
\end{rem}

\section{Framed \texorpdfstring{$E_2$}{E2}-structure and logarithmic geometry}

\subsection{Motivic framed \texorpdfstring{$E_2$}{E2}-algebra structure}

\begin{para}Let $q$ be a prime power, and $\ell$ an auxilliary prime not dividing $q$. 
\end{para}\begin{para}
We denote by $\M_g^n$ the algebro-geometric moduli space of genus $g$ curves with $n$ distinct ordered marked points and a nonzero tangent vector at each marking. The space $\M_g^n$ is a smooth stack of dimension $3g-3+2n$. For example, $\M_0^1$ is the Artin stack $B\mathbb G_a$. If $g \geq 1$ and $n \geq 1$, then $\M_g^n \otimes \Q$ is a smooth scheme. It is a scheme, not a stack, since in characteristic zero no automorphism of a smooth curve of positive genus can fix a point and a nonzero tangent vector at that point.  \end{para}

\begin{para}Recall that we defined $M_g^n$ as the classifying space $B\Diff_\partial(S_{g,n})$ of an oriented genus $g$ surface with $n$ boundary components. 
	There is a homotopy equivalence $M_g^n \simeq \M_g^n(\C) $, where by $\M_g^n(\C)$ we mean the analytification of $\M_g^n$. By Artin's comparison theorem, there is therefore an isomorphism 
	$$  H_\bullet^\sing(M_g^n,\Z)\otimes\Q_\ell \cong H_\bullet^\et(\M_g^n \otimes \overline \Q, \Q_\ell)$$ 
	whereby the (co)homology of $M_g^n$ carries the structure of an $\ell$-adic Galois representation. If $2g-2+n>0$, then $\M_g^n$  is the complement of a normal crossing divisor in a smooth proper stack over $\Z$, namely the projective bundle over the Deligne--Mumford compactification $\smash{\overline{\M}_{g,n}}$ compactifying the total space of the direct sum of the $n$ tangent line bundles over $\smash{\overline{\M}_{g,n}}$. Consequently we have comparison isomorphisms 
	$$ H_\bullet^\et(\M_g^n \otimes \overline \Q, \Q_\ell) \cong H_\bullet^\et(\M_g^n \otimes \Fqbar, \Q_\ell) $$
	between cohomology in characteristic zero and characteristic $p$. The result holds also in the exceptional cases where $2g-2+n \leq 0$ by direct calculation; e.g.\ using that the cohomology of $B\mathbb G_a \cong \M_0^1$ is trivial. 
	\end{para}
\begin{para}\label{ag local systems}
	We also have algebro-geometric avatars of the local systems $S^\lambda(V)$ considered on the topological moduli spaces $M_g^n$ (\S\ref{definition of V}, \S\ref{standard coefficient system}). These local systems are naturally pulled back from the space $\M_{g,n}$ of $n$-pointed curves (with no tangent vectors at markings). Suppose that $n>0$. Let $p:\mathcal C \to \M_{g,n}$ be the universal $n$-punctured curve, and define $\V=R^1p_!\Q_\ell$. (We suppress the dependence of $\V$ on $\ell$ for legibility.) Then $\V$ is a local system of rank $2g+n-1$ on $\M_{g,n}$, fitting in a short exact sequence
	$$ 0 \to \Q_\ell^{\oplus (n-1)} \to \V \to \V^0 \to 0 $$
	where $\V^0$ is the pure sheaf of weight $1$ defined as $R^1 \overline p_! \Q_\ell$, where $\overline p : \overline{\mathcal{C}}\to \M_{g,n}$ is the universal curve. The corresponding Betti local system over the complex numbers underlies an admissible variation of mixed Hodge structures, and may be identified with the local system $V$ under the homotopy equivalence $M_g^n \cong \M_g^n(\C)$. There are similar comparison isomorphisms
	$$ H_\bullet^\sing(M_g^1,S^\lambda(V) \otimes \Q_\ell) \cong H_\bullet^\et(\M_g^1 \otimes \overline\Q,S^\lambda(\V)) \cong H_\bullet^\et(\M_g^1 \otimes \Fqbar,S^\lambda(\V))$$
	by the same arguments as in the constant coefficient case, with the additional input into the comparison theorem that the sheaf $\V$ is tamely ramified along the divisor at infinity. (See \cite[Section 3]{bergstromvandergeer} for a detailed argument.) As in \S\ref{standard coefficient system} one may want to modify the above definition of $\V$ in case $n=0$: a definition that works for any $n \geq 0$ is to let $\V$ denote the fiber of the trace map $Rp_!\Q_\ell[1] \to \Q_\ell(-1)[-1]$. However, we will have no reason to consider the case $n=0$ in this section. 
\end{para}

\begin{para}Using that $\coprod_{g \geq 0} M_g^1$ is naturally a framed $E_2$-algebra (\S\ref{desiderata}), there is an induced structure of Batalin--Vilkovisky (BV) algebra on its homology
	$$ \bigoplus_{g \geq 0} H_\bullet(M_g^1,\Q),$$
	since the homology of the framed little disk operad is the operad of BV algebras \cite{getzlerbv}. Similarly, $\coprod_{g \geq 0} M_g^1(KA)$ is a framed $E_2$-algebra.  By the calculation of its homology (\cref{first expression}), or rather the homology of the base component of each space $M_g^1(KA)$, we see that there is a natural BV-algebra structure on 
	$$ \bigoplus_{g \geq 0} \bigoplus_{\lambda} S^\lambda(A) \otimes H_\bullet(M_g^1,S^\lambda(V[-1])).$$
	(Let us suppose as in \cref{first expression} that $A$ is a graded vector space concentrated in degree $2$.) We now want to explain that this BV-structure is ``motivic''. More precisely, we have the following theorem:
\end{para}

\begin{thm}\label{motivic theorem}
	Under the comparison isomorphism
	$$ \bigoplus_{g \geq 0} \bigoplus_{\lambda} S^\lambda(A) \otimes H_\bullet^\sing(M_g^1,S^\lambda(V[-1]) \otimes \Q_\ell) \cong  \bigoplus_{g \geq 0} \bigoplus_{\lambda} S^\lambda(A) \otimes H_\bullet^\et(\M_g^1 \otimes \overline\Q,S^\lambda(\V[-1])) $$
	the natural BV-algebra structure on the left-hand side is compatible with the Galois action on the right-hand side, when the homology of the framed disk operad $FD$ is given the Galois action which on $H_k(FD(n),\Q_\ell)$ is given by the $k$th power of the cyclotomic character, i.e.\ a pure Tate motive of weight $-2k$. 
\end{thm}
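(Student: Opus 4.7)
The plan is to realize both sides of the comparison algebro-geometrically and then verify that the framed $E_2$-operations come from morphisms of motives of the correct weight. The starting observation, due to Kimura--Stasheff--Voronov \cite{kimurastasheffvoronov2} and cited in \S\ref{desiderata}, is that the cyclic operad of genus zero moduli spaces $\{M_0^n\}$ is equivalent to the framed little 2-disk operad $FD$. Under the analytification equivalences $M_0^n \simeq \M_0^n(\C)$ and $M_g^1 \simeq \M_g^1(\C)$, I would upgrade this to an equivalence of algebro-geometric modular operads acting on $\coprod_g \M_g^1$, with the operadic compositions
\[
\M_0^{n+1} \times \M_{g_1}^1 \times \cdots \times \M_{g_n}^1 \longrightarrow \M_{g_1+\cdots+g_n}^1
\]
realized via specialization to boundary strata of the Deligne--Mumford compactification $\overline{\M}_{g,n}$. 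Under comparison, these induce morphisms on étale cohomology that are automatically Galois-equivariant and that match the topological framed $E_2$-composition on singular cohomology.

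Next, I would verify that $H_\bullet^\et(\M_0^{n+1} \otimes \overline\Q, \Q_\ell)$ has the Galois weight claimed for $H_\bullet(FD(n),\Q_\ell)$. Since $\M_{0,n+1}$ is an iterated $\mathbb A^1 \setminus \{\text{points}\}$ fibration, its étale cohomology is the Orlik--Solomon algebra of the braid arrangement, generated in $H^1$ by logarithmic differentials $\frac{dz_i - dz_j}{z_i - z_j}$, each a copy of $\Q_\ell(-1)$. Adding the tangent-vector data, which amounts to a $(\mathbb G_m)^{n+1}$-torsor, contributes further $\Q_\ell(-1)$ summands in $H^1$. Consequently $H^k_\et(\M_0^{n+1} \otimes \overline\Q, \Q_\ell)$ is pure Tate of weight $2k$, and dually $H_k$ is pure Tate of weight $-2k$, precisely matching the Tate twist $\Q_\ell(k)$ on $H_k(FD(n),\Q_\ell)$.

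For the twisted coefficients $S^\lambda(\V[-1])$: the local system $\V$ on $\M_{g,n}$ is defined algebraically in \S\ref{ag local systems}, and its pullback along the boundary inclusion $\prod \M_{g_i,n_i} \to \M_{g,n}$ splits compatibly with Galois, since the monodromy around the boundary stratum acts unipotently with known weight filtration. Applying the Schur functor $S^\lambda$ commutes with these operations because it is built from Galois-equivariant idempotents in $\Q_\ell[\Sigma_{|\lambda|}]$ acting on $\V^{\otimes |\lambda|}$. For the background space factor, with $A$ concentrated in degree 2 interpreted as a Galois representation in weight $2$ (e.g.\ a sum of copies of $\Q_\ell(-1)$), the Serre spectral sequence of the algebraic fibration underlying \cref{first expression} is Galois-equivariant, and the degeneration argument used in its proof is a pure weight argument, so the isomorphism $H_\bullet(M_g^1(KA)_0,\Q_\ell) \cong \bigoplus_\lambda S^\lambda(A) \otimes H_\bullet(\M_g^1, S^\lambda(\V[-1]))$ holds Galois-equivariantly. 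The shift $[-1]$ on $\V$ is precisely compensated by an appropriate Tate twist on the $S^\lambda(A)$ factor.

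The main obstacle is the very first step: promoting the topological equivalence $\{M_0^n\} \simeq FD$ and the ensuing operadic action on $\coprod_g M_g^1$ to an algebro-geometric modular operad action in a way that is compatible with analytification and induces an action on $\ell$-adic étale cohomology. The cleanest route, which I would pursue, is via logarithmic geometry as indicated in the introduction. Working with the log-smooth compactification $(\overline{\M}_g^1, D)$ and its Kato--Nakayama realization, the pair-of-pants gluings become algebro-geometric maps between boundary strata, each normal bundle contributing a rank-one Tate object $\Q_\ell(-1)$ that accounts for the weight of the corresponding BV operation. Strict associativity can be arranged by choosing suitable normal forms for nodal degenerations, as in \cite{costello-moduli}, and tameness of the monodromy guarantees compatibility of the étale and Betti comparison isomorphisms with the operadic structure.
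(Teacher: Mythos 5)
Your proposal is correct and takes essentially the same route as the paper: you correctly identify (and acknowledge as the key obstacle) that the naive topological identification $FD(n)\simeq \M_0^{n+1}(\C)$ is not operadically compatible, and you resolve it exactly as the paper does, by passing to Vaintrob's logarithmic modular operad $\{\fM_g^n\}$ built from $\overline{\M}_{g,n}$ with log structure coming from the boundary divisor and the tangent line bundles, whose Kato--Nakayama spaces realize the Kimura--Stasheff--Voronov model and whose étale homology carries the Galois action. The additional weight computation on $\M_0^{n+1}$ and the discussion of the Serre spectral sequence for twisted coefficients are welcome elaborations of points the paper only sketches, but do not change the substance of the argument.
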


\begin{rem}
	There is for each $n$ a homotopy equivalence $FD(n) \cong \M_0^{n+1}(\C)$. On the face of it, this equivalence has no compatibility at all with operadic composition. Nevertheless, it produces the ``correct'' Galois action according to the theorem, as $H_k^\et(\M_0^{n+1},\Q_\ell)$ is a direct sum of copies of $\Q_\ell(k)$. Indeed: $\M_0^{n+1} \cong \mathbb G_m^{n} \times \mathrm{PConf}_n(\mathbb A^1)/\mathbb G_a$ is isomorphic to the complement of an affine hyperplane arrangement, and complements of hyperplane arrangements have this purity property of their cohomology by \cite{lehrerhyperplane,kimhyperplane,shapirohyperplane}.
\end{rem}

\begin{rem}
	A consequence of the theorem is that the transcendentally defined Harer stability map $H_\bullet(M_g^1,S^\lambda(V)) \to H_\bullet(M_{g+1}^1,S^\lambda(V))$ is compatible with Galois actions, since the Harer stabilization is multiplication in this BV-algebra by the fundamental class in $H_0(M_1^1)$. This was previously known by \cite[Section 4]{hainlooijenga}, see also \cite{levinetubular}.
\end{rem}
\begin{rem}
    This result is closely related to conjectures of Cazanave \cite[Section 3.5]{cazanave}. It seems very likely that these conjectures, at least with ``little disks'' replaced by ``framed little disks'', can be proved by similar reasoning.
\end{rem}

\begin{para}\cref{motivic theorem} will be deduced as a special case of a more general construction due to Vaintrob \cite{vaintrob,vaintrob2}. As explained in \S\ref{desiderata}, the framed $E_2$-structure on $\coprod_{g \geq 0} M_g^1$ can be seen as a consequence of the modular operad structure on the collection of spaces $\{M_g^n\}$, and the fact that $\{M_0^n\}$ is a model for the cyclic operad of framed little disks. Vaintrob shows that there is a model for the surface operad $\{M_g^n\}$ in terms of \emph{logarithmic algebraic geometry} over $\Spec \Z$. The model in question was first considered by Kimura--Stasheff--Voronov \cite{kimurastasheffvoronov2} (in genus zero, but the higher genus analogue is an immediate generalization). Kimura--Stasheff--Voronov described their spaces explicitly as real oriented blow-ups of complex algebraic varieties; the insight of Vaintrob is that these real oriented blow-ups may be realized explicitly as analytifications (which in the logarithmic setting are usually referred to as \emph{Kato--Nakayama spaces}) of log schemes over $\Z$ (or log stacks when $g>0$), from which it in particular follows that their homologies carry canonically a ``motivic enhancement'': mixed Hodge structure, $\ell$-adic Galois representation, et cetera. 
\end{para}

\begin{para}\label{salvatore}
    	The construction of Kimura--Stasheff--Voronov discussed in the preceding paragraph is an analogue, for the cyclic operad of two-dimensional framed little disks, of the \emph{Fulton--MacPherson model} of the usual operad of two-dimensional little disks. The latter was introduced by Kontsevich \cite{kontsevichfeynman}. However, a detailed proof that the Fulton--MacPherson model has the homotopy type of the usual little disk operad did not appear until Salvatore \cite[Proposition 3.9]{salvatore}. In particular, Kimura--Stasheff--Voronov do not prove a comparison between their operad and the usual framed disk operad, either, but the result can be proven exactly as in Salvatore, using a variant of the $W$-construction for cyclic operads. In higher genus one needs similarly a version of the $W$-construction for modular operads, which does not seem to have been described in the literature. An example: if $\mathcal O$ is the topological modular operad such that $\mathcal O(g,n)$ is a point for all $(g,n)$, then $W\mathcal O(g,n) \cong {\overline{\mathcal{M}}_{g,n}^{\mathrm{trop}}}$, the moduli space of stable tropical curves. 
\end{para}

\begin{para}
    An easier way to see that the Kimura--Stasheff--Voronov construction models the modular operad of surfaces than the one indicated in \S\ref{salvatore} is to pass through the stack of surface bundles (\S\ref{stack of surface bundles}). The universal curve over Deligne--Mumford space gives rise to a surface bundle over the Kimura--Stasheff--Voronov space by real blow-up, and hence a morphism to the stack $\mathfrak M_g^n$. These maps assemble to a weak equivalence of modular operads. The point is that the stack $\mathfrak M_g^n$ is by construction easy to map into. Similarly, in genus zero, the components of the usual framed little disk space operad are also equipped with tautological surface bundles, defining a map to $\mathfrak M_0^n$, which gives a zig-zag of weak equivalences.  
\end{para}

\begin{para}
	Since we do not expect all readers to be experts in logarithmic geometry, we include here a bare-bones approach to the theory sufficient for our purposes. 
\end{para}

\subsection{Deligne--Faltings log structures}
\begin{para}Foundations for the theory of logarithmic algebraic geometry were first given by Kato \cite{katologarithmic}, building on ideas of Fontaine and Illusie. A closely related formalism was independently developed by Faltings \cite{faltingslogarithmic}, and a set-up equivalent to Faltings's was also proposed in a letter from Deligne to Illusie. The definition of Deligne--Faltings is somewhat simpler than Kato's: a log scheme in the sense of Deligne--Faltings is simply a scheme together with a finite collection of line bundles with sections. Kato refers to this notion as a \emph{DF log scheme} \cite[Complement 1]{katologarithmic}. We will use a strictly more general notion of Deligne--Faltings logarithmic structure, introduced by Borne and Vistoli \cite{bornevistoli}. We now recall their definition; for simplicity, we state it in terms of sheaves of categories instead of fibered categories. 
\end{para}

\begin{defn}\label{div stack}
Let $X$ be a scheme. Let $\mathfrak{Div}_X$ be the following \'etale sheaf of symmetric monoidal groupoids on $X$: its value on $U$ is the groupoid of pairs $(L,s)$ with $L$ a line bundle on $U$, and $s$ a section, with the symmetric monoidal structure given by tensor product of line bundles.   
\end{defn}

\begin{defn}\label{BV log}
    Let $X$ be a scheme. A \emph{pre-DF log structure on $X$} is an \'etale presheaf $A$ of commutative monoids on $X$, considered as a presheaf of symmetric monoidal groupoids with only identity morphisms, together with a symmetric monoidal functor $\phi:A \to \mathfrak{Div}_X$. A pre-DF log structure is said to be a \emph{DF log structure} if $A$ is a sheaf, and $\ker \phi$ is trivial. 
\end{defn}

\begin{para}\label{sheafification}
    Every pre-DF log structure gives rise to a DF log structure, by replacing $A$ with the sheafification of $A/\ker \phi$ \cite[Proposition 3.3]{bornevistoli}. 
\end{para}

\begin{para}
    The collection of all pre-DF log structures on $X$ form a category: a morphism from $(A,\phi)$ to $(A',\phi')$ is a diagram
	$$ \begin{tikzcd}
		 A\arrow[dr,"\phi"']\arrow{rr}&\arrow[d, Rightarrow]& A' \arrow[dl,"\phi'"]\\
		& \mathfrak{Div}_X&
	\end{tikzcd}$$
	of strong symmetric monoidal functors commuting up to a specified natural transformation. DF log structures form a full subcategory of pre-DF log structures.
\end{para}

\begin{para}
A morphism of schemes $f\colon X \to Y$ induces a functor from the category of DF log structures on $Y$ to the category of DF log structures on $X$, which we write as $(A,\phi) \mapsto (f^{\ast}A,f^\ast \phi)$ \cite[Proposition 3.9]{bornevistoli}. The assignment $A \mapsto f^{\ast}A$ is the usual inverse image functor of \'etale sheaves. 
\end{para}\begin{defn}
A \emph{DF log scheme} is a scheme equipped with a DF log structure. We typically write DF log schemes in a sans-serif font, so that $\mathsf X$ might denote a DF log scheme and $X$ its underlying scheme. A morphism of DF log schemes $\mathsf X = (X,A,\phi) \to \mathsf Y = (Y,B,\psi)$ is a morphism of underlying schemes $f: X \to Y$ and a morphism of DF log structures $(f^{\ast}B,f^*\psi) \to (A,\phi)$. A morphism of DF log schemes is said to be \emph{strict} if $(f^{\ast}B,f^*\psi) \to (A,\phi)$ is an isomorphism of DF log structures. 
\end{defn}\begin{rem}
We will freely use also the evident generalization of a \emph{DF log stack}.
\end{rem}

\begin{para}
    A DF log structure in the sense of \cite[Complement 1]{katologarithmic} is a $k$-tuple $(\sigma_i \colon \mathcal O_X \to L_i)_{i=1}^k$ of line bundles on $X$ with sections, for some $k \geq 0$; let us call this a Kato-DF log structure. Every Kato-DF log structure gives rise to a pre-DF log structure in an evident way, for which $A$ is the constant presheaf $\N^k$. A morphism of pre-DF log structures between Kato-DF log structures $(\sigma_i \colon \mathcal O_X \to L_i)_{i=1}^k$ and $(\tau_j \colon \mathcal O_X \to M_j)_{j=1}^l$ then becomes a collection of nonnegative integers $e_{ij}$, $1 \leq i \leq k$, $1 \leq j \leq l$, together with isomorphisms
$$ L_i \cong \bigotimes_{j=1}^l M_j^{\otimes e_{ij}}$$
under which the section $\sigma_i$ is identified with the corresponding tensor product of the sections $\tau_j$. We will sometimes find it convenient to describe a DF log structure simply by specifying a collection of line bundles with sections.
\end{para}

\begin{rem}One reason why \cref{BV log} is preferable to the notion of a Kato-DF log structure is that the functor from Kato-DF log schemes to log schemes in the ordinary sense, is not faithful: for example, the Kato-DF log structure associated to the trivial line bundle and its unit section is not isomorphic to the trivial Kato-DF log structure. They are also not isomorphic as pre-DF log structures, but their associated DF log structures are isomorphic. The category of DF log schemes is equivalent to the category of \emph{quasi-integral} log schemes, considered as a full subcategory of the usual category of log schemes \cite[Theorem 3.6]{bornevistoli}.
\end{rem}

\begin{para}All log schemes in this paper will be naturally presented by Deligne--Faltings data. From now on, we omit the modifier ``DF'', so that the phrase ``log scheme'' or ``pre-log scheme'' will always be taken to mean ``DF log scheme'' or ``pre-DF log scheme''.
\end{para}

\begin{defn} \label{divisorial log structure}Let $X$ be a smooth  variety or Deligne--Mumford stack, and $D \subset X$ a normal crossing divisor. Let $f: D^\nu \to X$ be the natural map from the normalization of $D$, and let $A$ be the \'etale sheaf of commutative monoids on $X$ given by $f_*\N$, where $\N$ is the constant sheaf associated to the natural numbers. Explicitly, $A(U)$ is the set of $\N$-valued functions on the set of irreducible components of $D\times_X U$. There is then a natural functor $A \to \mathfrak{Div}_X$, which takes a generator corresponding to an irreducible component $D_i$ to the line bundle $\mathcal O(D_i)$ with its natural section. We call the associated log structure the \emph{divisorial log structure}. If $D$ has simple normal crossings, the divisorial log structure is naturally presented by a tuple of line bundles with sections, one for each component of the divisor. \end{defn}

\begin{prop}\label{divisorial prop}
	Let $X$ be a smooth Deligne--Mumford stack, and $D \subset X$ a normal crossing divisor. Let $f:D^\nu \to X$ be the natural map from the normalization of $D$, and let $E \subset D^\nu$ be the inverse image of the locus where $D$ has at least $2$ local branches, so that $E$ is a normal crossing divisor in $D^\nu$. The pullback to $D^\nu$ of the divisorial log structure on $X$ coincides with the log structure on $D^\nu$ given by the divisorial log structure associated to $E$, together with the zero-section of the normal bundle of $f$.
\end{prop}

\begin{proof}
	Let $A_D$ and $A_{E}$ be the sheaves of monoids on $X$ and $D^\nu$ defining the respective divisorial log structures. By proper base change, $f^*A_D = f^* f_*\N$ is the pushforward of the constant sheaf $\N$ from $D^\nu \times_X D^\nu \cong D^\nu \sqcup E^\nu$ to $D^\nu$, so that there is an isomorphism
    \[ f^* A_D \cong \N \oplus A_{E}.\] The map $f^*A_D \to \mathfrak{Div}_{D^\nu}$ restricts to the usual divisorial log structure on $A_{E}$. The effect on the summand $\N$ follows because the pullback along $f$ of the line bundle $\mathcal O_X(D)$ with its natural section $\mathcal O_X \to \mathcal O_X(D)$ is precisely the normal bundle of $f$, and the zero-section. 
\end{proof}

\subsection{Kato-Nakayama space}
\begin{para}
Let $\mathsf X$ be a finite type log scheme over $\C$. Kato and Nakayama \cite{katonakayama} have associated a topological space $\mathsf X^{\KN}$ to $\mathsf X$, the \emph{Kato--Nakayama space}, which plays the same role in logarithmic algebraic geometry as the usual analytification $X(\C)$ of a finite type scheme $X$ over $\C$. In particular, Betti cohomology of a log scheme is defined as the cohomology of its Kato--Nakayama space. We will briefly recall the construction and basic properties of the Kato--Nakayama space, following Talpo and Vistoli \cite{talpovistoli}. The treatment of Talpo--Vistoli has the advantage of being formulated directly in the language of Deligne--Faltings log structures on log stacks; the Kato--Nakayama space of a log stack is a topological stack. 
\end{para}
\newcommand{\CC}{\smash{\widehat{\C}}}
\begin{para}
    Let $\CC \to \C$ be the real blow-up of the complex plane at the origin. Explicitly, $\CC = U(1) \times \R_{\geq 0}$, and the map $\CC \to \C$ is given by $(z,t) \mapsto zt$, so the fiber over the origin is a circle and all other fibers are points. The complex torus $\C^\times$ acts on both $\CC$ and $\C$, and the projection is equivariant. 
   \end{para} 

   \begin{para}
       The algebraic stack $[\mathbb A^1 /\mathbb G_m]$ parametrizes pairs $(L,s)$ of a line bundle and a section. In particular, in terms of \cref{div stack}, $\mathfrak{Div}_X$ is the sheaf on $X$ given by $\mathrm{Hom}(-,[\mathbb A^1 /\mathbb G_m])$. Similarly the topological stack $[\C/\C^\times]$ parametrizes topological complex line bundles with a section.
   \end{para}

   \begin{defn}\label{talpo vistoli definition}
       Let $\mathsf X  = (X,A,\phi)$ be a log stack. Its \emph{Kato--Nakayama space} $\mathsf X^\KN$ is the topological stack whose value on a topological space $S$ is the groupoid of all pairs $(f,a)$, where $f : S \to X(\C)$ is a continuous map, and $a$ is a lifting in the following diagram of sheaves on $S$:
       \[ \begin{tikzcd}
       f^* A \arrow[r,"f^*\phi"]\arrow[rd,dashed,"a"']& \mathrm{Hom}(-,[\C/\C^\times]) \\
       & \arrow[u] \mathrm{Hom}(-,[\CC/\C^\times]), \end{tikzcd}\]
       The map $[\CC/\C^\times] \to [\C/\C^\times]$ is induced by the real blow-up $\CC \to \C$.
   \end{defn}

   \begin{para}\label{properties of KN}
       The Kato--Nakayama space has the following basic properties:
       \begin{itemize}
           \item If $\mathsf X$ has underlying stack $X$, then $\mathsf X^\KN$ maps naturally to $X(\C)$. 
           \item If $\mathsf X \to \mathsf Y$ is a \emph{strict} morphism of log stacks, with underlying map of stacks $X\to Y$, then the diagram
           \[ \begin{tikzcd}
               \mathsf X^\KN \arrow[r]\arrow[d]& \arrow[d]\mathsf Y^\KN \\
               X(\C) \arrow[r]& Y(\C)
           \end{tikzcd}\]
           is cartesian.
           \item The Kato--Nakayama functor is a sheaf for the \'etale topology.
       \end{itemize}
   \end{para}

   \begin{construction}\label{blowup construction}
       Consider a complex line bundle $E \to B$, say over a topological space $B$. The structure group of $E$ is $\C^\times$, which acts naturally on $\CC$. Let $\smash{\widehat E}\to B$ denote the associated bundle with fiber $\CC$. The natural map $\CC \to \C$ induces a map $\smash{\widehat E} \to E$, and $\smash{\widehat E}$ is naturally the real blow-up of the total space of the line bundle $E$ along its zero-section. 
   \end{construction}
    
    \begin{para}Let us now give a more concrete description of the Kato--Nakayama space, when the log structure is given by a family of line bundles with sections. Consider first a log stack $\mathsf X$ defined by a pair $(L,s)$ of a line bundle and a section on a complex scheme $X$. We claim that $\mathsf X^{\KN}$ is the inverse image of $s(X)$, thought of as a subset in the total space of the line bundle $L$ over $X(\C)$, under the projection $\smash{\widehat L} \to L$ of \cref{blowup construction}. Indeed, since both the construction in this paragraph and the one of \cref{talpo vistoli definition} satisfies the second property of \S\ref{properties of KN}, it suffices to show this in the universal example of a stack with a line bundle and a section, namely when $\mathsf X$ is $[\mathbb A^1/\mathbb G_m]$ with its tautological log structure. In this case the definition of \cref{talpo vistoli definition} unwinds to $\mathsf X^\KN = [\CC/\C^\times]$. For the definition of this paragraph, we note that total space of the universal line bundle over $[\C/\C^\times]$ is $[\C^2/\C^\times]$, and the universal section is the diagonal embedding of $\C$ in $\C^2$. The inverse image of the diagonal in $[\C \times \CC/\C^\times]$ is $[\CC/\C^\times]$, as claimed.  More generally, given a log stack $\mathsf X$ defined by a finite collection of line-bundles-with-sections $L_1,\dots,L_k$ and $s_1,\dots,s_k$ on an underlying stack $X$, the space $\mathsf X^\KN$ can be identified with the inverse image of $X$, thought of as a subspace of the total space of $L_1 \times_X \dots \times_X L_k$ via the given sections, inside the blow-up $\smash{\widehat L_1 \times_X \dots \times_X \widehat L_k }$.
\end{para}

\begin{examplex}\label{example1}Let $L \to X$ be a  line bundle on a complex variety, and let $\mathsf X$ denote the log scheme defined by the zero section of this bundle. Then $\mathsf X^\KN$ is diffeomorphic to the unit circle bundle over $X(\C)$ with respect to some bundle metric on $L$. \end{examplex}

\begin{examplex}\label{example2} Let $X$ be a smooth complex variety, $D \subset X$ a smooth divisor. Let $\mathsf X$ be the log scheme defined by the divisorial log structure on $X$ (\cref{divisorial log structure}). Then $\mathsf X^\KN$ is a manifold with boundary, diffeomorphic to the complement of a tubular neighborhood of $D$ in $X$. Moreover, by \cref{divisorial prop} there is a map of log schemes $\mathsf D \to \mathsf X$, where $\mathsf D$ is the log structure on $D$ defined by the normal bundle and its zero section. On Kato--Nakayama spaces, $\mathsf D^\KN \to \mathsf X^\KN$ is the inclusion of the boundary in the manifold-with-boundary $\mathsf X^\KN$. \end{examplex}

\begin{examplex} \label{example4}More generally, let $X$ be a smooth complex variety, $D \subset X$ a normal crossing divisor, and $\mathsf X$ the log scheme given by the divisorial log structure on $X$. The Kato--Nakayama space $\mathsf X^\KN$ is naturally a manifold with corners,  diffeomorphic to the complement of an $\epsilon$-neighborhood of $D$. If $x$ is a point where $D$ has $r$ local branches, then the fiber of $\mathsf X^\KN \to X(\C)$ over $x$ is a product of $r$ circles. \end{examplex}

\subsection{(Co)homology of log schemes}

\begin{para}
	Having defined the analytification of a complex log scheme, we in particular have a singular (or ``Betti'') cohomology theory defined by $H^\bullet_\sing(\mathsf X^\KN,\Z)$. It is then natural to ask whether we can also define \'etale, algebraic de Rham, or crystalline cohomology of log schemes, with expected comparison isomorphisms, whether the Betti cohomology underlies a mixed Hodge structure, and so on and so forth. We will not discuss the theory of log motives (although see e.g.\ \cite{shuklin,bindaparkostvaer}); what will be relevant for us in this paper is simply having a well-behaved theory of \'etale cohomology. 
\end{para}

\begin{para}The theory of logarithmic \'etale cohomology is largely parallel to the classical theory. The role of the \'etale site is played by the \emph{Kummer-\'etale} site.\footnote{In fact, there are two common variants of logarithmic \'etale topology: the Kummer-\'etale site, and the \emph{full log \'etale site}. For the purposes of this paper, either one could be used, by \cite[Theorem 9.4]{illusieoverview}.} We recommend \cite{illusieoverview} and the original references \cite{katonakayama,nakayama}. We will not need the precise definition of the Kummer-\'etale site in this paper; the only facts we need are the comparison isomorphisms stated below. Let us now explain how these \'etale cohomology groups behave in the simplest examples. The cases of interest to us will be no more complicated. 
\end{para}

\begin{para}\label{cohomology of log schemes in elementary terms} Let us consider the situations of Examples \ref{example1}--\ref{example4}. In Example \ref{example4}, we may consider the morphism $(X \setminus D) \hookrightarrow \mathsf X$, where $X \setminus D$ has the trivial log structure. This map induces a homotopy equivalence of Kato--Nakayama spaces, and for any reasonable cohomology theory one should have $H^\bullet(X\setminus D) \cong H^\bullet(\mathsf X)$, which uniquely ``pins down'' the motivic structure of $H^\bullet(\mathsf X)$. Similarly, in the setting of Example \ref{example1} we must have $H^\bullet(\mathsf X) \cong H^\bullet(L^\ast)$, where $L^\ast$ denotes the complement of the zero-section in the total space of $L$, equipped with the trivial log structure: indeed, this follows from the previous discussion and the fact that $\mathsf X$ includes into the total space of $L$, equipped with the log structure induced from the divisor given by the zero-section, such that the map on Kato--Nakayama spaces is a homotopy equivalence. More generally, if the log structure on $\mathsf X$ is defined by a normal crossing divisor $D$ and an $n$-tuple of line bundles $L_1,\dots,L_n$ with their zero sections, then $H^\bullet(\mathsf X)$ is isomorphic as a motive to the cohomology of the fibered product $L_1^\ast \times_{X\setminus D} L_2^\ast \times_{X\setminus D} \dots \times_{X\setminus D} L_n^\ast$, with $L^\ast_i$ denoting the complement of the zero-section inside the total space of $L_i$ restricted to $X \setminus D$. 
\end{para}

\begin{para}
	There is also a good theory of local systems on log schemes, and a comparison with locally constant sheaves on the Kummer-\'etale site. When $\mathsf X$ is a log scheme defined by a normal crossing divisor $D$ on a smooth variety $X$, then local systems on $\mathsf X$ are equivalent to local systems on $X \setminus D$ that are tamely ramified along $D$. When $\mathsf X$ is defined by the zero-section of a line bundle $L$, then local systems on $\mathsf X$ are equivalent to local systems on $L^\ast$ that are tamely ramified along the zero-section. 
\end{para}

\begin{para}
    Given \S\ref{cohomology of log schemes in elementary terms}, a reader may reasonably wonder why we bother introducing the formalism of log schemes at all: evidently, all the cohomology groups of log schemes which will concern us can be straightforwardly expressed as cohomologies of ordinary schemes. The answer is: yes, we could in principle have formulated our arguments without ever invoking logarithmic geometry, but only at the expense of expending more effort arguing that certain diagrams commute. Consider for example a log scheme  $\mathsf X$, which is smooth and proper over $\operatorname{Spec} \Z$ in the logarithmic sense. For such $\mathsf X$ we have comparison isomorphisms \[ H^\bullet_{\sing}(\mathsf{X}^{\mathrm{KN}}) \cong H^\bullet_{\et}(\mathsf{X}\otimes \overline\Q) \cong H^\bullet_{\et}(\mathsf{X}\otimes \Fqbar)\]
	relating singular cohomology, \'etale cohomology in characteristic zero, and \'etale cohomology in characteristic $p$ (say with $\Z_\ell$-coefficients), by \cite{katonakayama} and \cite[Theorem 9.9]{illusieoverview}, respectively. The logarithmic formalism ensures moreover that these comparison isomorphisms are functorial with respect to morphisms $\mathsf X \to \mathsf Y$ of log schemes; by contrast, if we express $H^\bullet(\mathsf X)$ and $H^\bullet(\mathsf Y)$ as the cohomologies of ordinary schemes $X'$ and $Y'$ then typically there is no natural morphism $X' \to Y'$ inducing the natural map on cohomology, and hence functoriality is not immediate from general principles. As we explain shortly, the formalism will be applied to certain gluing morphisms between log stacks, from which the relevant operadic structures and homological stability problems arise.
\end{para}

\subsection{Log models of moduli spaces of surfaces with boundary (after D.~Vaintrob)}

\newcommand{\fM}{\mathsf{M}}
 \begin{para}\label{definition of framed moduli}Suppose that $2g-2+n>0$. Let $\fM_{g}^{n}$ be the following log stack: the underlying stack is $\overline \M_{g,n}$, and the log structure is the divisorial log structure associated to the complement of $\M_{g,n}$ in $\overline \M_{g,n}$, {together with} the zero sections of the $n$ tangent line bundles $\mathbb T_i \to \overline \M_{g,n}$, whose fiber over a moduli point $[C,x_1,\ldots,x_n]$ is the tangent space of $C$ at the point $x_i$. (Thus $\mathbb T_i$ is the dual of the line bundle which defines the usual $\psi$-class $\psi_i$.) Following Vaintrob we call $\fM_g^n$ the moduli of \emph{framed log curves}. 
\end{para}

\begin{para}\label{gluing}
Consider the gluing map $f : \overline \M_{g,n+1} \times \overline \M_{g',n'+1} \to \overline \M_{g+g',n+n'}$. The pullback of the divisorial log structure on the target is the product of the divisorial log structure on the source with the log structure associated to the zero-section of the normal bundle of $f$, by \cref{divisorial prop}. Now, the normal bundle of $f$ is the tensor product of the {tangent} line bundles at the two points being glued together \cite[Chapter XIII, \S 3]{acg}. It follows that the gluing is naturally compatible with the log structure defined in \S\ref{definition of framed moduli}, producing a map
$$\fM_{g}^{n+1} \times \fM_{g'}^{n'+1} \to \fM_{g+g'}^{n+n'}.$$
These, and the analogous self-gluings, give the collection $\{\fM_{g}^{n}\}$ the structure of a modular operad in log stacks over $\Z$. 
\end{para}

\begin{para}The Kato--Nakayama space $(\fM_{g}^{n})^\KN$ is homotopy equivalent to $M_g^n \simeq \M_g^n(\C)$. Informally speaking, the part of the log structure given by the boundary of $\overline \M_{g,n}$ has the effect of deleting the boundary, and the part of the log structure given by the $n$ tangent line bundles has the effect of taking the product of the $n$ unit circle bundles associated with these line bundles over $\M_{g,n}$. More is true: the collection $\{(\fM_{g}^{n})^\KN\}$ form a topological modular operad, isomorphic to the Kimura--Stasheff--Voronov model \cite{kimurastasheffvoronov2} for the surface operad $\{M_g^n\}$. Indeed, Kimura--Stasheff--Voronov explicitly describe their model as a real oriented blow-up in a manner similar to how we defined Kato--Nakayama spaces. 
\end{para}

\begin{para}
In \cref{subsection-moduli of surfaces} we explained that there is a framed $E_2$-algebra structure on the disjoint union $\coprod_{g\geq 0} M_g^1$, and that this structure is induced from the fact that the collection of spaces $\{M_g^n\}$ form a topological modular operad, and that $\{M_0^{n+1}\}$ is the framed $E_2$-operad. We have now constructed a modular operad $\{\mathsf M_g^n\}$ in log stacks, whose Kato--Nakayama analytification is homotopy equivalent to $\{M_g^n\}$. It follows in particular that $\coprod_{g\geq 1}\mathsf M_g^1$ is an algebra over the logarithmic framed $E_2$-operad $\{\mathsf M_0^{n+1}\}$. If we define $\mathsf M_0^1$ to be a point, then this structure extends to a framed $E_2$-structure on $\coprod_{g\geq 0}\mathsf M_g^1$.
\end{para}

\begin{rem}
	Having an algebraic interpretation of the framed $E_2$-structure on $\coprod_{g \geq 0} M_g^1$ allows (in a sense) an algebraic interpretation of the Harer stabilization morphisms $M_g^1 \to M_{g+1}^1$; as mentioned in \S\ref{stabilization is E2 multiplication}, stabilization is multiplication by a point of $M_1^1$. More precisely, the framed $E_2$-structure gives a map
	\[ \mathsf M_0^3 \times \mathsf M_1^1 \times \mathsf M_g^1 \to \mathsf M_{g+1}^1.\]
	Hence we obtain a morphism $\mathsf M_g^1 \to \mathsf M_{g+1}^1$ whenever we choose a point of $\mathsf M_0^3$, and a point of $\mathsf M_1^1$. But there is no map $\mathrm{Spec}(\mathbb C) \to \mathsf M_0^3$, and no map $\mathrm{Spec}(\mathbb C) \to \mathsf M_1^1$, either!
	What \emph{is} true is that we obtain a stabilization map on homology (singular or \'etale) $H_\bullet(\mathsf M_g^1)\to H_\bullet(\mathsf M_{g+1}^1)$ after choosing generators of $H_0(\mathsf M_0^3)$ and $H_0(\mathsf M_1^1)$. What is \emph{also} true is that there exist \emph{virtual} morphisms (in the sense of \cite{howellthesis,DPP-log}) of log stacks $\mathrm{Spec}(\mathbb C) \to \mathsf M_0^3$ and $\mathrm{Spec}(\mathbb C) \to \mathsf M_1^1$. So we obtain also a virtual morphism $\mathsf M_g^1 \to \mathsf M_{g+1}^1$.	
	\end{rem}

\begin{para}In any case, we claim now that \cref{motivic theorem} is a consequence of the construction of the logarithmic model $\{\fM_g^n\}$ of the surface operad. Indeed: $\coprod_{g \geq 0} \fM_g^1$ is an algebra over the operad $\{\fM_0^{n+1}\}$ in log stacks. Taking Kato--Nakayama spaces we obtain a model for the algebra $\coprod_{g \geq 0} M_g^1$ over the topological framed $E_2$-operad $\{M_0^n\}$, giving the left-hand side of the isomorphism in \cref{motivic theorem}; taking \'etale homology we obtain the right-hand side, with its natural Galois action. Strictly speaking, what we have said is an argument for the homology with constant coefficients, but the gluing maps for the spaces $\{\fM_g^n\}$ are compatible with the algebro-geometric local systems $\V_\lambda$ in the same way as the corresponding topological local systems on the spaces $\{M_g^n\}$. 
	\end{para}

\begin{rem}
    The argument in the previous paragraph is in some respects unsatisfactory. It would be more appealing to be able to directly construct a log stack $\fM_g^n(KA)$ with a morphism of modular operads $\{\fM_g^n\} \to \{\fM_g^n(KA)\}$, which upon taking analytifications produces a model for $\{M_g^n\} \to \{M_g^n(KA)\}$. Considering moduli spaces of curves with a map to a target space, one is naturally led to consider the Kontsevich space of stable maps. In the simple case of genus zero maps to a \emph{convex} smooth projective variety $X$, we have a smooth stack of stable maps $\overline{\M}_{0,n}(X) = \coprod_\beta \overline{\M}_{0,n}(X,\beta)$. Its boundary is a normal crossing divisor, and we can give it a log structure producing a log stack $\fM_0^n(X)$ with a map of modular operads $\{\fM_0^n\} \to \{\fM_0^n(X)\}$; this accomplishes roughly what we want, except we are considering an algebraic mapping space as opposed to the space of all continuous maps. In order to model the space $M_g^n(KA)$ we would need to work in arbitrary genus, and more critically it seems that we would need to be able to take for the target space $X$ a \emph{higher stack}, more precisely a product of affine stacks $K(\mathbb G_a,n)$, as considered by To\"en \cite{toen}. However, to our knowledge nobody has considered a theory of stable maps into such a ``very stacky'' target in the literature. It would be interesting to be able to formulate the arguments in terms of such a space. Similarly, in the following section it would be most natural to formulate the arguments in terms of ``twisted'' \emph{genus zero} stable maps to a product of quotient stacks $[K(\mathbb G_a,n)/\boldsymbol{\mu}_2]$. 
\end{rem}

\begin{rem}
	The main emphasis of this paper is braid groups, not mapping class groups, and one may similarly ask for a logarithmic interpretation of the framed $E_2$-structure on the disjoint union $\coprod_n \mathrm{Conf}_n(\mathbb D)$ (also in anticipation on the next section of the paper). Let  us briefly explain such a construction. For $n \geq 2$, let $\mathsf C_n$ be the log scheme given by the stack $[\overline{\mathcal M}_{0,n+1}/\Sigma_n]$ and the log structure associated to the boundary divisor, and the zero-section of the tangent line bundle at the $(n+1)$st marked point. We let $\mathsf C_1$ and $\mathsf C_0$ both be a point. Then $\coprod_{n\geq 0} \mathsf C_n$ is an algebra over the operad $\{\mathsf M_0^{n+1}\}$, and on Kato--Nakayama analytifications one recovers the framed $E_2$-structure on $\coprod_n \mathrm{Conf}_n(\mathbb D)$. We will not directly use this construction. 
\end{rem}

\section{Hyperelliptic curves and computation of Galois action}\label{hyperelliptic algebraic section}

\subsection{Motivic framed \texorpdfstring{$E_2$}{E2}-structure in the hyperelliptic case}
\begin{para}
All of what we did in the previous section admits a hyperelliptic analogue. As the results and arguments involve no new ingredients except replacing the spaces $\overline \M_{g,n}$ with the admissible cover spaces of Abramovich--Corti--Vistoli \cite{acv03}, we permit ourselves to be brief. 
\end{para}\begin{para}
We denote by $\H_g^{n,m}$ the moduli stack of smooth hyperelliptic curves of genus $g$, equipped with $n$ distinct ordered marked Weierstrass points, and also $m$ distinct ordered marked non-Weierstrass points, none of which are conjugate under the hyperelliptic involution, and at each marking a nonzero tangent vector. This is a smooth stack over $\Spec \Z[\tfrac 1 2]$. Its analytification $\H_g^{n,m}(\C)$ has the homotopy type of the topological moduli space $H_g^{n,m}$ of hyperelliptic surfaces defined in \S\ref{hyp definition}. 
\end{para}
\begin{para}\label{ACV}
From the work of Abramovich--Corti--Vistoli \cite{acv03} one obtains a smooth compactification of $\H_g^{n,m}$. Explicitly, start with the stack of $(2g+2+n)$-pointed genus zero balanced twisted stable maps to $B(\Z/2)$,  which Abramovich--Corti--Vistoli would denote $\mathcal B_{0,2g+2+n}^{\mathrm{bal}}(\Z/2)$. Take the open and closed substack where the first $2g+2$ marked points are twisted (necessarily with a $\mu_2$ stabilizer) and the last $n$ markings are untwisted. Take the quotient by the action of the symmetric group $\Sigma_{2g+2-m}$, so that only $m$ of the $2g+2$ Weierstrass points are ordered. Take the degree $2^{n+m}$ finite \'etale cover of this stack parametrizing in addition the datum of a trivialization of the $(\Z/2)$-torsor over each of the remaining $n+m$ marked points. We denote this space by $\overline{\mathcal H}_{g,n,m}$. Then $\mathcal H_g^{n,m}$ is Zariski open inside the total space of the vector bundle given by the direct sum of the $(n+m)$ natural tangent line bundles on $\overline{\mathcal H}_{g,n,m}$ (pulled back from $\mathcal B_{0,2g+2+n}^{\mathrm{bal}}(\Z/2)$). The projective bundle compactifying this vector bundle furnishes a smooth modular compactification of $\H_g^{n,m}$ over $\Spec \Z[\tfrac 1 2]$ such that the complement is a normal crossing divisor. 
For any odd $q$ one therefore has comparison isomorphisms
$$ H_\bullet^\sing(H_g^{n,m},\Q_\ell) \cong H_\bullet^\et(\mathcal H_g^{n,m} \otimes \overline\Q, \Q_\ell) \cong H_\bullet^\et(\mathcal H_g^{n,m} \otimes \Fqbar, \Q_\ell),$$
and similarly for coefficients in the local systems $S^\lambda(\V)$. Here $\V$ denotes the local system on $\H_g^{n,m}$ of rank $2g+n+2m-1$ defined as in \S\ref{ag local systems}, given by $R^1p_!\Q_\ell$ where $\mathcal C \to \H_g^{n,m}$ is the universal $(n+2m)$-punctured curve. We will only consider the local system $\V$ in the cases $(n,m) = (1,0)$ and $(n,m)=(0,1)$. 
\end{para}\begin{para}
Now as before there is a natural BV-algebra structure on 
$$ \bigoplus_{g \geq 0} \bigoplus_\lambda S^\lambda(A) \otimes H_\bullet(H_g^{0,1},S^\lambda(V[-1])),$$
since this is the homology of the framed $E_2$-algebra $\coprod_{g \geq 0} H_g^{0,1}(KA)$. Again we claim that the BV-algebra structure is compatible with the Galois action under the natural comparison isomorphism:
\end{para}
\begin{thm}\label{hyperelliptic bv structure}
	Under the comparison isomorphism
	$$ \bigoplus_{g \geq 0} \bigoplus_{\lambda} S^\lambda(A) \otimes H_\bullet^\sing(H_g^{0,1},S^\lambda(V[-1]) \otimes \Q_\ell) \cong  \bigoplus_{g \geq 0} \bigoplus_{\lambda} S^\lambda(A) \otimes H_\bullet^\et(\H_g^{0,1} \otimes \overline\Q,S^\lambda(\V[-1])) $$
	the natural BV-algebra structure on the left-hand side is compatible with the Galois action on the right-hand side, when the homology of the framed disk operad $FD$ is given the Galois action which on $H_k(FD(n),\Q_\ell)$ is given by the $k$th power of the cyclotomic character, i.e.\ a pure Tate motive of weight $-2k$. 
\end{thm}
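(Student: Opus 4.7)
The proof will follow the template of Theorem 8.1 (motivic framed $E_2$-structure on $\coprod_g M_g^1$), with the moduli stack of admissible covers of Abramovich--Corti--Vistoli playing the role that $\overline{\M}_{g,n}$ played there. Concretely, the plan is to construct a log stack $\mathfrak{H}_g^{n,m}$ over $\Spec \Z[\tfrac{1}{2}]$, whose underlying stack is the ACV compactification by admissible double covers with $n$ marked Weierstrass ramification points and $m$ marked conjugate pairs of ordinary points, and whose log structure is defined by the normal crossing boundary divisor (parametrizing nodal admissible covers) together with the zero sections of the $n+m$ cotangent line bundles at the specified markings. The Kato--Nakayama space $(\mathfrak{H}_g^{n,m})^{\mathrm{KN}}$ should then be homotopy equivalent to $H_g^{n,m}$, by the same real-oriented-blow-up argument applied in Section 8 in the non-hyperelliptic case, essentially reducing to a hyperelliptic variant of the Kimura--Stasheff--Voronov model.

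Next I would verify that the two families of gluing morphisms of \S\ref{hyp operad} lift to morphisms of log stacks
\[
\mathfrak{H}_{g_1}^{n_1+1,m_1} \times \mathfrak{H}_{g_2}^{n_2+1,m_2} \longrightarrow \mathfrak{H}_{g_1+g_2}^{n_1+n_2,m_1+m_2}
\]
and similarly for the degree-$1$ gluing. This reduces to a normal-bundle computation for admissible covers: at a node coming from gluing two smooth Weierstrass points (resp.\ two conjugate points) of the underlying curves, the normal bundle of the boundary stratum in the ACV compactification is canonically the tensor product of the tangent lines at the two points being glued. This is the hyperelliptic analogue of the Chapter XIII, \S 3 computation in \cite{acg} used in Section 8, and is essentially already in \cite{acv03}. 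Granting this, $\{\mathfrak{H}_g^{n,m}\}$ becomes a two-colored $\N$-graded cyclic operad in log stacks over $\Spec \Z[\tfrac{1}{2}]$, the tautological sub-operad $\{\mathfrak{H}_{-1}^{0,n}\}$ (allowing the genus $-1$ interpretation of \S\ref{hyp operad}) gives an algebro-geometric model for the framed $E_2$-cyclic operad action, and $\coprod_g \mathfrak{H}_g^{0,1}$ is an algebra over it in log stacks.

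Taking Kato--Nakayama spaces recovers the topological framed $E_2$-algebra $\coprod_{g\geq 0} H_g^{0,1}$, functorially in the local systems $S^\lambda(\V)$ (which extend canonically to the log stacks and are the tame extensions of the algebraically defined systems, so are compatible with gluing by the same argument as in the classical case). Passing to $\ell$-adic \'etale homology with coefficients in $S^\lambda(\V)$ and using the comparison isomorphisms of \S\ref{hyperelliptic algebraic section} then endows $\bigoplus_{g,\lambda} S^\lambda(A) \otimes H_\bullet^{\sing}(H_g^{0,1}, S^\lambda(V[-1]))\otimes \Q_\ell$ with a BV-algebra structure through a Galois-equivariant action of $H_\bullet^{\et}(\mathfrak{H}_{-1}^{0,\bullet+1}\otimes\overline{\Q},\Q_\ell)$, and what remains is to identify the latter as a Tate motive of the expected weight.

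For this last step, observe that $\mathfrak{H}_{-1}^{0,n+1}$ has Kato--Nakayama space homotopy equivalent to $FD(n)$, and its underlying algebraic stack (without log structure) is naturally an iterated $\mathbb G_m$-bundle over a configuration space on $\mathbb{P}^1$, namely $\mathrm{PConf}_{n+1}(\mathbb{P}^1)/\mathrm{PGL}_2$ (times a quotient accounting for the hyperelliptic involution swapping the two $\mathbb{P}^1$'s), so its \'etale cohomology is generated in each degree by classes of the form $\frac{dz_i}{z_i - z_j}$ and hence pure Tate of weight $2k$ in degree $k$ by the usual Arnold/Lehrer/Kim argument for hyperplane complements cited in \S 8. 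This matches the stated Galois action $\Q_\ell(k)$ on $H_k(FD(n),\Q_\ell)$ and completes the proof. The main obstacle is the normal bundle calculation and the verification of Galois-equivariance of the operadic structure maps on \'etale homology with twisted coefficients; both are local on the boundary of the ACV compactification and should reduce to computations essentially identical to the non-hyperelliptic case of Section 8.
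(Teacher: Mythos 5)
Your proposal follows essentially the same route as the paper: the paper also builds a log model $\mathsf H_g^{n,m}$ on the Abramovich--Corti--Vistoli compactification, with log structure coming from the normal crossing boundary together with the line bundles at the marked points (citing \cite{gcovers} for the operadic structure on admissible-cover spaces), takes Kato--Nakayama spaces to model the two-colored cyclic operad $\{H_g^{n,m}\}$, and then reads off the Tate purity of the genus $-1$ colour exactly as in the remark after Theorem 8.8 via the identification of the framed-disk operad with $\M_0^{\bullet+1}$. One small correction: the log structure should use the \emph{tangent} line bundles at the markings (the duals of the $\psi$-class lines), not the cotangent lines --- since, as you yourself note, the normal bundle of the gluing divisor is the tensor product of the \emph{tangent} lines at the two glued branches, using tangent lines in the log structure is precisely what makes the gluing map a morphism of log stacks, whereas the cotangent choice would produce the inverse line and spoil the compatibility.
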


\begin{para}\label{normal bundle hyp}
	As before, we deduce this from the existence of a suitable  model of the topological cyclic operad $\{H_g^{n,m}\}$ (see \S\ref{hyp operad}) in terms of logarithmic algebraic geometry, and the fact that the framed $E_2$-structure on $\smash{\coprod_{g \geq 0} H_g^{0,1}}$ can be understood as an instance of this operadic structure. Explicitly, we define $\mathsf H_g^{n,m}$ to be the following log stack: the underlying stack is the stack $\smash{\overline{\H}_{g,n,m}}$ from \S\ref{ACV}.  We give it the log structure induced by the normal crossing boundary divisor, and the zero sections of the $(n+m)$ tangent line bundles at the marked points. The operadic structure on the underlying stacks is described in \cite{gcovers}. To see that the operadic structure and logarithmic structure are compatible, we need to know the analogue of \S\ref{gluing}, i.e.\ that the normal bundle of each gluing map   
    \[\overline{\H}_{g,n+1,m} \times \overline{\H}_{g',n'+1,m'} \to \overline{\H}_{g+g',n+n',m+m'}\]
    and
    \[\overline{\H}_{g,n,m+1} \times \overline{\H}_{g',n',m'+1} \to \overline{\H}_{g+g'+1,n+n',m+m'}\]
    is the tensor product of the tangent line bundles at the points on the respective hyperelliptic curves being gluing together. 
	This is well known, but we do not know a reference. One can show it by adapting the arguments of \cite[Chapter XIII, \S3]{acg} based on \cite[\S3.0.4]{acv03}. Alternatively, one may reason as follows:
\end{para}
    \begin{para}\label{stack of all twisted curves}
        
	The forgetful map $\overline{\H}_{g,n,m} \to \mathcal{M}^{\mathrm{tw}}_{0,2g+2+m}/\Sigma_{2g+2-n}$ to the highly nonseparated stack of all twisted stable curves \cite{olsson} is \'etale, and it is finite \'etale over an open substack $\mathcal X$ of the target. Sending a twisted curve to its coarse space defines a morphism $\mathcal X \to \overline{\mathcal{M}}_{0,2g+2+m}/\Sigma_{2g+2-n}$, which exhibits $\mathcal X$ as a root stack (in the sense of Cadman \cite{cadman}) of $\overline{\mathcal{M}}_{0,2g+2+m}/\Sigma_{2g+2-n}$; more specifically, we take a square root along each boundary divisor parametrizing twisted curves with a $(\Z/2)$-stabilizer at the node, i.e.\ those divisors whose generic point parametrizes a curve with two components, such that an odd number of the $(2g+2)$ branch points lie on each component. (More generally, $\mathcal M_{g,n}^{\mathrm{tw}}$ can similarly be described \'etale-locally as a root stack over $\smash{\overline{\mathcal{M}}_{g,n}}$, where we take an $r$th root along each divisor parametrizing an $r$-twisted node.)	
    \end{para}
    \begin{para}
        	Let us now analyze the case of gluing together two Weierstrass points. There is a commutative diagram
\[\begin{tikzcd}
	\overline{\H}_{g,n+1,m} \times \overline{\H}_{g',n'+1,m'} \arrow[r]\arrow[d]&\arrow[d] \mathcal{D}	\arrow[r]\arrow[d]& \overline{\mathcal M}_{0,2g+2+m}/\Sigma_{2g+2-n-1} \times \overline{\mathcal M}_{0,2g'+2+m'}/\Sigma_{2g'+2-n'-1} \arrow[d] \\
	\overline{\H}_{g+g',n+n',m+m'} \arrow[r]& \mathcal X \arrow[r] & \overline{\mathcal M}_{0,2g+2g'+2+m+m'}/\Sigma_{2g+2g'+2-n-n'}
\end{tikzcd}\] 
with $\mathcal X$ as in \S\ref{stack of all twisted curves}. In the left-hand square, the horizontal maps are \'etale, so the normal bundle of the left-hand map is the normal bundle of $\mathcal D$ in $\mathcal X$. In the right-hand square, the bottom map exhibits $\mathcal X$ as a root stack over the target; the right-hand map is the inclusion of one of the boundary divisors along which we take a square root; $\mathcal D$ is the universal square root of this divisor in $\mathcal X$. Thus the normal bundle of $\mathcal D$ in $\mathcal X$ is the canonical square root of the normal bundle of the right-hand map. The normal bundle of the right-hand map is the tensor product of the tangent line bundles at the two points being glued together, and the tangent lines at the corresponding $(\Z/2)$-twisted markings furnish that canonical square root. Finally, the map from the hyperelliptic curve to the genus zero twisted curve is \'etale, so the tangent line at a $(\Z/2)$-twisted marking is canonically identified with the tangent line at the corresponding Weierstrass point. The case of gluing together a conjugate pair of non-Weierstrass points is entirely analogous, but there is no square root involved. This justifies the final assertion of \S\ref{normal bundle hyp}.
    \end{para}

    \begin{para}
    The spaces $\{\mathsf H_g^{n,m}\}$ form a two-colored cyclic operad in log stacks, and taking Kato--Nakayama spaces gives a model for the operad $\{H_g^{n,m}\}$.  
\end{para}

\subsection{Gerstenhaber indecomposables}

\begin{para}
	The goal of this section is to prove the following purity result, from which \cref{thmB} from the introduction follows:
\end{para}
\begin{thm}\label{purity theorem}
	The Galois representation on the stable homology groups $$H_k^\et(\H_\infty^{0,1} \otimes \overline\Q,S^\lambda(\V))$$ is pure Tate of weight $-2k+\vert\lambda\vert$. 
\end{thm}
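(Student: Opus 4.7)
The plan is to exploit the framed $E_2$-structure on the moduli of hyperelliptic surfaces and reduce the computation to the Galois action on two concrete classes in stable homology. By \cref{hyperelliptic bv structure}, the Galois action on $H_\bullet^\et(\coprod_g \H_g^{0,1}(KA)_{\overline{\Q}},\Q_\ell)$, and hence after group-completion on $H_\bullet(\Omega^2_0 W(A),\Q_\ell)$, is compatible with the Gerstenhaber algebra structure coming from the framed $E_2$-operad, where each $H_k(M_0^{n+1},\Q_\ell)$ is pure Tate of weight $-2k$. Consequently, the Galois representation on the stable homology is completely determined by its restriction to any Gerstenhaber-algebra generating set, for which one may take the space of Gerstenhaber indecomposables.

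Next I would apply Koszul duality for Gerstenhaber algebras: since $W(A) \simeq_{\Q} S^2 \vee KA/\langle\iota\rangle$ is a Koszul space and the relevant argument in \cref{berglund theorem} has a standard framed-$E_2$ analogue, $H_\bullet(\Omega^2_0 W(A),\Q_\ell)$ is identified as a Gerstenhaber algebra with the appropriate quadratic dual of the Koszul cohomology algebra $H^\bullet(W(A),\Q_\ell)$. The Gerstenhaber indecomposables therefore correspond, up to the usual degree shift and Koszul sign, to the quadratic indecomposables of $H^\bullet(W(A),\Q_\ell)$. These consist of exactly two pieces: the fundamental class of $H^2(S^2,\Q_\ell)$, and the subspace $\mathrm{Sym}^2(A^\vee) \subset H^\bullet(KA/\langle\iota\rangle,\Q_\ell) \cong \bigoplus_d \mathrm{Sym}^{2d}(A^\vee)$, which is quadratically generated in its lowest positive degree as the second Veronese of $\mathrm{Sym}(A^\vee)$.

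Translating these indecomposables back through the analytic-functor formalism of \cref{thm1}, and tracking carefully the shift $V \leadsto V[-1]$ under which $S^\lambda(V[-1]) = S^{\lambda'}(V)[-|\lambda|]$ (so that $\mathrm{Sym}^2$ on the $A$-side matches $\wedge^2$ on the $V$-side), it then suffices to verify the expected Galois action on the two classes
\[ 1 \in H_0(\beta_\infty,\Q_\ell) \quad \text{and} \quad \omega \in H_0(\beta_\infty, \wedge^2 V), \]
where $\omega$ is the class of the symplectic form. The first is trivially pure Tate of weight $0 = -2\cdot 0 + 0$. For the second, $\omega$ is literally the algebraic cup-product pairing on $H^1$ of a smooth projective curve, so spans the canonical copy of $\Q_\ell(-1) \subset \wedge^2 V$ and is pure Tate of weight $2 = -2\cdot 0 + 2$. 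Every other class in $H_k(\H_\infty^{0,1},S^\lambda V)$ is reached from these generators by iterated application of framed $E_2$-operations of total homological degree $k$, each contributing a Tate twist of $-2k$; combined with the generator weights, this yields the claimed Tate weight $-2k + |\lambda|$.

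The hard part will be the rigorous formulation of Koszul duality for \emph{framed} $E_2$-algebras (as opposed to the plain associative Koszul duality invoked in \cref{berglund theorem}) and, more technically, the careful bookkeeping of signs, degree shifts, and Schur-conjugation needed to align the Taylor-coefficient interpretation of \cref{thm1} with the Gerstenhaber indecomposables on the $\Omega^2_0 W(A)$ side. Once these identifications are in place, the weight computation itself reduces to the two elementary facts above.
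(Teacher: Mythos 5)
Your proposal is essentially the paper's argument: Gerstenhaber-equivariance of the Galois action reduces the problem to the indecomposables of $H_\bullet(\Omega^2_0 W(A),\Q_\ell)$, Koszul duality for Gerstenhaber $n$-algebras (Berglund's result, \cref{berglund theorem 2}) identifies these with the quadratic indecomposables of $H^\bullet(W(A),\Q_\ell)$ up to shift, and one is left with precisely the two degree-$0$ classes you name. The ``hard part'' you flag is not actually an obstacle: the weight argument only needs the Gerstenhaber (unframed $E_2$) Koszul duality, since the BV operator plays no role, and this is supplied by Berglund together with the mild extension \cref{berglund generalization} required because $W(A)$ is merely $1$-connected, so that $\Omega^2 W(A)$ has nontrivial $\pi_0$.
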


\begin{para}To this end, we consider the group-completion of the framed $E_2$-algebra $\coprod_{g \geq 0} H_g^{0,1}(KA)$. By the group-completion theorem its homology is given by
	$$ \bigoplus_{g \in \Z} \bigoplus_\lambda S^\lambda (A) \otimes H_\bullet(H_\infty^{0,1},S^\lambda(V[-1])).$$
	On the other hand, we identified in \cref{scanning theorem} this group completion as the loop space $\Omega^2 W(A)$. From the Galois action on the homology of $H_g^{0,1}$ described in the preceding subsection, it follows that we have a natural Galois action on the homology of $\Omega^2 W(A)$, say with $\Q_\ell$-coefficients. From \cref{hyperelliptic bv structure} it follows that this Galois action is compatible with the BV-algebra structure on the homology of $\Omega^2 W(A)$. In fact, the BV-operator will play no role in the argument, and we consider $H_\bullet(\Omega^2 W(A),\Q_\ell)$ as a Gerstenhaber algebra. Since the Galois action is compatible with the Gerstenhaber algebra structure, we see that in order to determine the Galois action on $H_\bullet(\Omega^2 W(A),\Q_\ell)$ it suffices to determine the Galois action on the Gerstenhaber indecomposables. 
\end{para}

\begin{thm}\label{indecomposables}The indecomposables in the Gerstenhaber algebra 
	$$ H_\bullet(\Omega^2 W(A),\Q) \cong \bigoplus_{g \in \Z} \bigoplus_\lambda S^\lambda (A) \otimes H_\bullet(H_\infty^{0,1},S^\lambda(V[-1]))$$
	can be identified with the following three summands: 
	\begin{itemize}
		\item $(g,\lambda) = (1,0)$; the summand $\Q \otimes H_0(H_\infty^{0,1},\Q)$. 
		\item $(g,\lambda) = (-1,0)$; the summand $\Q \otimes  H_0(H_\infty^{0,1},\Q)$. 
		\item $(g,\lambda) = (0,[2])$; the summand $\mathrm{Sym}^2(A) \otimes H_0(H_\infty^{0,1},\wedge^2(V))[-2]$. 
	\end{itemize}
\end{thm}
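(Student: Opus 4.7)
The plan is to apply Koszul duality for Gerstenhaber algebras, reducing the problem to the straightforward computation of ring indecomposables in $\tilde H^*(W(A),\Q)$. First, recall from the proof of Koszulity in Section~5 that after inverting $2$ there is an equivalence $W(A) \simeq S^2 \vee KA/\langle\iota\rangle$, so that as a commutative algebra
$$\tilde H^*(W(A),\Q) \;=\; \Q\cdot[S^2]\,\oplus\,\bigoplus_{k\geq 1}\mathrm{Sym}^{2k}(A^\vee),$$
with $[S^2]^2 = 0$ and vanishing cross-products between the two wedge summands. Since the Veronese $\bigoplus_k\mathrm{Sym}^{2k}(A^\vee)\subset\mathrm{Sym}(A^\vee)$ is generated as a ring by its quadratic piece, the ring indecomposables of $\tilde H^*(W(A),\Q)$ are exactly $\Q\cdot[S^2]\oplus\mathrm{Sym}^2(A^\vee)$.

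Next, I would invoke Koszul duality for $E_2$-algebras: for a simply connected Koszul space $Y$ with cohomology concentrated in even degrees, the Gerstenhaber indecomposables of $H_\bullet(\Omega^2 Y,\Q)$ are dual, up to a homological shift by $-2$, to the ring indecomposables of $\tilde H^*(Y,\Q)$. The heuristic is that a relation $ab = c$ in the cohomology algebra dualizes in the homology of the double loop space to an identity $c^\vee = [a^\vee,b^\vee]$ (Browder bracket), rendering $c^\vee$ Gerstenhaber-decomposable. Berglund's theorem, used already in the proof of Theorem~\ref{thm3}, gives the associative half of this story by identifying $H_\bullet(\Omega W(A),\Q)$ as the quadratic dual algebra; passing from $\Omega W(A)$ to $\Omega^2 W(A)$ supplies the bracket and the extra shift.

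Applied to $Y = W(A)$, the two families of ring indecomposables translate as follows. The class $[S^2]^\vee$ yields a Gerstenhaber indecomposable in $H_0$ of $\Omega^2 W(A)$; after the identification of this space with the group-completion of $\coprod_{g\geq 0}H_g^{0,1}(KA)$ (\cref{scanning theorem}), this class is the generator of $\pi_0\cong\Z$ corresponding to $g=1$. Its inverse $g=-1$, introduced by group completion, is also Gerstenhaber-indecomposable because the $\Z$-structure on $\pi_0$ means $e_{-1}$ cannot be obtained as a product or bracket of elements of nonnegative genus. This accounts for two of the three summands. The dual of $\mathrm{Sym}^2(A^\vee)$ produces $\mathrm{Sym}^2(A)$, sitting in the $(g,\lambda) = (0,[2])$ bigrading; the shift $[-2]$ arises from the $V[-1]$ convention of \cref{thm1} (via $S^{[2]}(V[-1]) \cong \wedge^2(V)[-2]$), and the identification with $H_0(H_\infty^{0,1},\wedge^2 V)\cong\Q$ follows since this group is one-dimensional, spanned by the class of the symplectic form.

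The main obstacle will be making the invocation of Gerstenhaber Koszul duality fully rigorous. The Koszulity of $W(A)$ established in Section~5 only produces $H_\bullet(\Omega W(A),\Q)$ as an associative algebra via Berglund's theorem; to upgrade to the Gerstenhaber algebra structure on $H_\bullet(\Omega^2 W(A),\Q)$ and the identification of its indecomposables, one needs to invoke formality of the $E_2$-operad in characteristic zero together with Koszul self-duality (up to shift) of the Gerstenhaber operad, or, more concretely, to construct a Sullivan minimal model for $W(A)$ and read off the indecomposables of the rational homotopy Lie algebra, which by a PBW-type argument coincide with Gerstenhaber indecomposables of the double loop space homology.
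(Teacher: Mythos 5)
Your proposal is correct and follows essentially the same route as the paper: compute the ring indecomposables of $H^\bullet(W(A),\Q) \cong H^\bullet(S^2 \vee KA/\langle\iota\rangle,\Q)$, dualize via Gerstenhaber Koszul duality, and account for the $g=-1$ class via group-completion. The ``obstacle'' you flag in your final paragraph is already resolved rigorously in the paper by \cref{berglund theorem 2}, \cref{berglund generalization}, and \S\ref{crucial fact}, which invoke Berglund's Koszul duality directly at the level of Gerstenhaber $n$-algebras (so both the Gerstenhaber structure on $H_\bullet(\Omega^n X,\Q)$ and the identification of its indecomposables, up to a shift by $[n]$, come for free), not merely at the associative level of \cref{berglund theorem} that you seem to have in mind.
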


\begin{para}
	The arithmetic \cref{purity theorem} is a direct consequence of the purely topological \cref{indecomposables}. Indeed, after \cref{indecomposables} it suffices to compute the Galois action on the stable homology in degree $0$. But this is obvious: the action is trivial with trivial coefficients, and pure Tate of weight $2$ on the homology with coefficients in $\wedge^2 \V$, where the nonzero homology arises since the symplectic form\footnote{The word ``symplectic form'' is strictly speaking incorrect here: $\V$ is a symplectic vector space only when $(n,m)=(1,0)$,  and we are considering the case $(n,m)=(0,1)$, which puts us in the situation of Proctor's odd symplectic groups \cite{proctor}.} defines a direct summand $\Q_\ell(-1) \subset \wedge^2 \V$. Then the homology in any other degree and for any other $\lambda$ is determined by the compatibility with Galois action, and that the Gerstenhaber operad has the Galois action which in homological degree $k$ is pure Tate of weight $-2k$. 
\end{para}

\begin{para}
	The proof of \cref{indecomposables} uses Koszul duality theory for Gerstenhaber algebras. Recall the following theorem of Berglund \cite{berglundkoszulspaces}:
\end{para}

\begin{thm}[Berglund] \label{berglund theorem 2}Let $X$ be an $n$-connected Koszul space of finite type. The commutative algebra $H^\bullet(X,\Q)$,  considered as a Gerstenhaber $n$-algebra with trivial bracket, is Koszul as a Gerstenhaber $n$-algebra. Its Koszul dual Gerstenhaber $n$-algebra is $H_\bullet(\Omega^n X, \Q)$. 
\end{thm}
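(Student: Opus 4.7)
The plan is to bootstrap from the associative/commutative case (which is Berglund's earlier \cref{berglund theorem}) using Koszul duality for the operad $e_n = H_\bullet(E_n,\Q)$ governing Gerstenhaber $n$-algebras. The central operadic input is the Koszul self-duality $e_n^{\antishriek} \cong e_n\{n\}$ (with $\{n\}$ an $n$-fold operadic desuspension accounting for the Koszul sign rule), due to Getzler--Jones and subsequently made precise by Fresse. Given this, the bar--cobar adjunction for $e_n$-algebras produces an equivalence between augmented $e_n$-algebras and conilpotent $1$-connected $e_n$-coalgebras, which collapses to ordinary quadratic duality whenever the algebra in question is Koszul in the operadic sense.

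First I would reduce to algebra. Since $X$ is formal and of finite $\Q$-type, the Sullivan/Mandell cochain $E_\infty$-algebra $C^\bullet(X,\Q)$ is quasi-isomorphic to $A := H^\bullet(X,\Q)$ as an $E_\infty$-algebra, hence as an $E_n$-algebra. Pulling back along the natural operad map $e_n \to \mathrm{Com}$ (which kills the bracket) realises $A$ as an $e_n$-algebra with trivial bracket, whose only nonzero operation is the commutative product. The hypothesis that $A$ is Koszul as a graded commutative algebra means that its quadratic presentation $A = A(V,R)$ satisfies $A^{\antishriek} \xrightarrow{\sim} BA$, and the commutativity of $A$ upgrades this to a statement in the category of Hopf cooperads, so that the quadratic datum $(V,R)$ and its dual are compatible with the $e_n$-operadic structure.

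Next I would invoke the operadic Koszul duality recalled above to conclude that $A$ is Koszul as an $e_n$-algebra: the operadic bar construction $B_{e_n} A$ computes the derived indecomposables of $A$ as an $e_n$-algebra, and because the underlying commutative algebra is quadratic--Koszul and the bracket is zero, this bar construction is quasi-isomorphic to the quadratic $e_n$-coalgebra $A^{\antishriek}$ built from the shifted generators $V[-1]$ and shifted relations, subject to the operadic desuspension $\{n\}$. Linearly dualising and reinterpreting produces an $e_n$-algebra, which by definition is the Koszul dual $n$-algebra of $A$.

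To identify this Koszul dual with $H_\bullet(\Omega^n X,\Q)$, I would invoke the nonabelian Poincaré duality / Ayala--Francis $E_n$-Koszul duality (or, in its earlier rational incarnation, Fresse's comparison theorem between the $n$-fold iterated bar construction and chains on the $n$-fold loop space). For a pointed $n$-connected space this gives a natural equivalence $C_\bullet(\Omega^n X,\Q) \simeq B^{(n)}_{e_n} C^\bullet(X,\Q)$ of $e_n$-algebras (via the Gerstenhaber structure on chains of an $n$-fold loop space). Substituting the formal model $A$ for $C^\bullet(X,\Q)$ and using the previous paragraph to identify $B^{(n)}_{e_n} A$ with $A^{\antishriek}$ yields the claimed isomorphism of Gerstenhaber $n$-algebras $H_\bullet(\Omega^n X,\Q) \cong A^{\antishriek}$.

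The hard part will be ensuring that every equivalence in the chain is one of $e_n$-algebras (or $e_n$-coalgebras) and not merely of underlying chain complexes; in particular, one needs the formality of $X$ to be compatible with the $E_n$-structure induced by restriction from $E_\infty$, and one needs the Fresse/Ayala--Francis equivalence $C_\bullet(\Omega^n X,\Q) \simeq B^{(n)}_{e_n} C^\bullet(X,\Q)$ to respect the full Gerstenhaber structure. Once these compatibilities are in place, the degeneration of the operadic bar to its quadratic part is forced by the Koszul hypothesis, and the theorem follows. In our applications only the conclusion for $n=2$ and very explicit Koszul spaces is needed, so in practice one may bypass some of the abstract machinery by directly verifying the required equivalences on the explicit models of $W(A)$ constructed in \cref{def of W}.
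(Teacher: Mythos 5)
The paper does not actually prove this statement; it is quoted as a theorem of Berglund and cited to \cite{berglundkoszulspaces}, with the generalization \cref{berglund generalization} being the only place the paper engages with the argument (and even there it merely summarizes Berglund's Section~4). So there is no ``paper's own proof'' to compare against; what follows is a comparison of your proposal with Berglund's published argument and a flag of what I think is the real gap.

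Your route and Berglund's diverge at the final step. You propose to identify the Koszul dual $e_n$-algebra with $H_\bullet(\Omega^n X,\Q)$ by invoking a Fresse/Ayala--Francis comparison between the $n$-fold iterated $e_n$-bar construction of $C^\bullet(X,\Q)$ and chains on $\Omega^n X$, \emph{as $e_n$-algebras}. Berglund instead stays entirely within rational homotopy theory: he computes the Koszul dual Gerstenhaber $n$-algebra of $H^\bullet(X,\Q)$ as the \emph{free} Gerstenhaber $n$-algebra on $\pi_\bullet^{\Q}(X)[-n]$ (this is where the operadic Koszul self-duality of $e_n$ enters, in the guise of the ``Koszul as $e_n$-algebra implies free $e_n$-Koszul dual'' bookkeeping), and then identifies this free algebra with $H_\bullet(\Omega^n X,\Q)$ by the elementary fact that for $X$ $n$-connected the space $\Omega^n X$ is a connected $H$-space, whence Milnor--Moore gives $H_\bullet(\Omega^n X,\Q)\cong\Sym(\pi_\bullet^\Q\,\Omega^n X)$. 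Berglund's final step is therefore much lighter than the factorization-homology machinery you invoke, and it also explains cleanly why the ``group completion'' correction in \cref{berglund generalization} is all that changes when connectivity drops by one (cf. the discussion of $\pi_0(\Omega^n X)$). If you want to use the heavier machinery, you will need to be careful that the equivalence $C_\bullet(\Omega^n X,\Q)\simeq$ (iterated cobar of $C_\bullet(X,\Q)$) is an equivalence of $e_n$-algebras rather than of underlying complexes, which is a genuinely delicate point.

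The more serious issue is in your middle step. You assert that ``because the underlying commutative algebra is quadratic--Koszul and the bracket is zero, this bar construction is quasi-isomorphic to the quadratic $e_n$-coalgebra $A^{\antishriek}$.'' This is precisely the assertion that $A$ is Koszul \emph{as a Gerstenhaber $n$-algebra}, which is one half of the theorem, and it is not a formal consequence of $A$ being Koszul as a commutative algebra. A quadratic presentation of the $e_n$-algebra involves relations in $e_n(2)\otimes_{\Sigma_2}V^{\otimes 2}$, where $e_n(2)$ is two-dimensional (product and bracket), so the relation space is $R\oplus[\,V,V\,]$; Koszulness of this larger quadratic datum for the bigger operad $e_n$ requires an argument, and it is essentially the technical content of Berglund's Section~2 (which is also what justifies the quote of \cite[Theorem~2.11]{berglundkoszulspaces} in \S\ref{crucial fact}). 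Asserting it as a consequence of the commutative Koszulness without proof is where your proposal is circular. Your closing remark---that for the paper's application one could in principle verify everything directly for the explicit $W(A)$---is fair, but would amount to a different (computational) argument, not a repair of this one.
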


\begin{rem}In our earlier discussions on Koszul duality we formulated it as a duality between algebras and coalgebras, but here we have combined with the linear dual to obtain a duality between algebras and algebras. It is only for this reason we need $X$ to be of finite type. We work with this formulation to be consistent with Berglund's paper, and with Sullivan's work on rational homotopy theory.\end{rem}

\begin{para}
	In formulating this theorem we use that the Gerstenhaber operad is self-dual up to a degree shift; after making this shift by $n$, any Gerstenhaber $n$-algebra has a Koszul dual Gerstenhaber $n$-algebra. We will need a modest generalization of Berglund's result to the case that $X$ is merely $(n-1)$-connected. 
\end{para}

\begin{defn}Let $A_\bullet$ be a nonnegatively graded algebra of the form $A_+ \otimes A_0$, with $A_+$ connected and $A_0$ a polynomial ring concentrated in degree $0$. A \emph{group-completion} of $A$ is a localization of the form $S^{-1}A$, where $S$ is a minimal generating set of $A_0$. 
\end{defn}

\begin{thm}\label{berglund generalization} Let $X$ be an $(n-1)$-connected Koszul space of finite type. The commutative algebra $H^\bullet(X,\Q)$,  considered as a Gerstenhaber $n$-algebra with trivial bracket, is Koszul. The homology $H_\bullet(\Omega^n X, \Q)$ is a group-completion of the Koszul dual Gerstenhaber $n$-algebra.
\end{thm}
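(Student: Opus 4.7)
The plan is to reduce to Berglund's original theorem (\cref{berglund theorem 2}) by passing to the $n$-connected cover. The first claim, Koszulity of $H^\bullet(X,\Q)$ as a Gerstenhaber $n$-algebra with trivial bracket, is purely algebraic and follows exactly as in the corresponding step of \cref{berglund theorem 2}: any commutative-Koszul algebra, viewed as a Gerstenhaber $n$-algebra with trivial bracket, is Gerstenhaber-Koszul. For the loop-space identification, I set $V=\pi_n(X)\otimes_{\Z}\Q$, and let $\widetilde X\to X$ denote the $n$-connected cover, fitting in a fibration $\widetilde X\to X\to K(V,n)$. Then $\Omega^n_0 X\simeq \Omega^n\widetilde X$, so if $\widetilde X$ is Koszul, \cref{berglund theorem 2} applied to $\widetilde X$ identifies $H_\bullet(\Omega^n_0 X,\Q)$ with the Koszul dual Gerstenhaber $n$-algebra of $H^\bullet(\widetilde X,\Q)$.

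First I would verify that $\widetilde X$ is a Koszul space. Formality of $\widetilde X$ is extracted from formality of $X$ by Sullivan's criterion (as in \cref{formality}): the grading automorphisms $\phi_q\in\mathrm{Aut}(V)$ for $q\in\Q^\times$ lift to self-maps of $X$ compatible with the classifying map $X\to K(V,n)$, hence descend to self-maps of $\widetilde X$, producing grading automorphisms of $H^\bullet(\widetilde X,\Q)$. For Koszulity of the cohomology ring $H^\bullet(\widetilde X,\Q)$, I would analyse the Serre spectral sequence for $\widetilde X\to X\to K(V,n)$: a weight argument using the $\Q^\times$-action forces collapse, giving an isomorphism $H^\bullet(\widetilde X,\Q)\cong H^\bullet(X,\Q)\otimes_{\mathrm{Sym}(V^\vee)}\Q$. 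This quotient kills a subspace of quadratic generators of $H^\bullet(X,\Q)$ sitting in the minimal degree $n$, and Koszulity of the quotient then follows directly from the quadratic presentation.

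Once $\widetilde X$ is Koszul, \cref{berglund theorem 2} yields $H_\bullet(\Omega^n_0 X,\Q)\cong (H^\bullet(\widetilde X,\Q))^\antishriek$ as Gerstenhaber $n$-algebras. Comparing quadratic presentations, the Koszul dual $A^\antishriek$ of $H^\bullet(X,\Q)$ contains a canonical polynomial subalgebra $\mathrm{Sym}(V)$ in homological degree $0$, dual to the space of generators $V^\vee\subset H^n(X,\Q)$, and the quotient $A^\antishriek\otimes_{\mathrm{Sym}(V)}\Q$ recovers $(H^\bullet(\widetilde X,\Q))^\antishriek\cong H_\bullet(\Omega^n_0 X,\Q)$. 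Thus $A^\antishriek\cong\mathrm{Sym}(V)\otimes H_\bullet(\Omega^n_0 X,\Q)$ as graded vector spaces. Since $\Omega^n X$ is a grouplike $E_n$-space with $\pi_0=\pi_n(X)$, a direct argument gives $H_\bullet(\Omega^n X,\Q)\cong \Q[\pi_n(X)\otimes\Q]\otimes_{\mathrm{Sym}(V)} A^\antishriek$ as Gerstenhaber $n$-algebras, which is precisely the group-completion of $A^\antishriek$ at the minimal generators of its degree-$0$ polynomial subalgebra. The main obstacle is the second step: Koszulity of the quotient $H^\bullet(X,\Q)\otimes_{\mathrm{Sym}(V^\vee)}\Q$, which requires using essentially that $V^\vee$ lies among the quadratic generators of $H^\bullet(X,\Q)$ in the minimal degree.
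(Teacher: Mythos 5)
Your approach (passing to the $n$-connected cover $\widetilde X$ and applying \cref{berglund theorem 2} there) is genuinely different from the paper's. The paper does not pass to $\widetilde X$ at all: it invokes a stronger statement from Berglund's paper, namely that for any \emph{simply connected} Koszul space the Koszul dual Gerstenhaber $n$-algebra of $H^\bullet(X,\Q)$ is already the free Gerstenhaber $n$-algebra on $\pi_\bullet^\Q(X)[-n]$, and then observes that the only discrepancy with $H_\bullet(\Omega^n X,\Q)$ lives in degree $0$ (polynomial vs.\ Laurent polynomial on $\pi_n^\Q(X)$), which is fixed by group-completing. This sidesteps the entire question of whether $\widetilde X$ is Koszul.

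The gap in your proposal is precisely that question, and your argument for it does not hold. First, the claimed isomorphism $H^\bullet(\widetilde X,\Q)\cong H^\bullet(X,\Q)\otimes_{\mathrm{Sym}(V^\vee)}\Q$ cannot be right as written: since $X$ is $(n-1)$-connected and Koszul, $H^\bullet(X,\Q)$ is generated by $H^n(X,\Q)=V^\vee$, so it is a cyclic $\mathrm{Sym}(V^\vee)$-module and the (underived) tensor on the right is just $\Q$. What the Eilenberg--Moore spectral sequence for $\widetilde X\to X\to K(V,n)$ actually computes is $\mathrm{Tor}_{H^\bullet(K(V,n),\Q)}(H^\bullet(X,\Q),\Q)$, which is not a quotient of $H^\bullet(X,\Q)$ and whose Koszulity (as a graded-commutative algebra with generators in several degrees) is not ``direct from the quadratic presentation'' --- it is genuinely a theorem that would need proof. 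Second, the formality claim for $\widetilde X$ is also not justified: formality of $X$ gives you an action of $\Q^\times$ on the Sullivan minimal model (or equivalently on the rationalization $X_\Q$), not on $X$ itself, and even granting that, formality is in general \emph{not} inherited by connected covers. So until you have a proof that $n$-connected covers of Koszul spaces are Koszul, the reduction to \cref{berglund theorem 2} is not available, and you should instead go through Berglund's identification of the Koszul dual as the free Gerstenhaber $n$-algebra on the shifted homotopy groups, as the paper does.
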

\begin{proof}
	This follows from the arguments of Berglund \cite[Section 4]{berglundkoszulspaces}. Berglund shows in general for a simply connected Koszul space $X$ that the Koszul dual Gerstenhaber $n$-algebra of $H^\bullet(X,\Q)$ is the free algebra on the rational homotopy groups of $X$, shifted down $n$ degrees. When $X$ is $n$-connected, this free algebra is precisely $H_\bullet(\Omega^n X,\Q)$. When $X$ is merely $(n-1)$-connected, this fails due to nontriviality of $\pi_0(\Omega^n X)$: $H_0(\Omega^n X,\Q)$ is the Laurent polynomial algebra on $\pi_n^\Q(X)$, not the polynomial algebra. But this is the only difference, and forming the group-completion in the Koszul dual Gerstenhaber $n$-algebra produces $H_\bullet(\Omega^n X,\Q)$. 
\end{proof}

\begin{rem}
	It is instructive to consider the case $n=2$ and $X=S^2$. One may compute $H_\bullet(\Omega^2_0S^2,\Q)$ in two different ways. One way is to use that the $2$-connected cover of $S^2$ is $S^3$, so $H_\bullet(\Omega^2_0S^2,\Q) \cong H_\bullet(\Omega^2 S^3,\Q)$, which by \cref{berglund theorem 2} is a free Gerstenhaber algebra on a single generator $y$ in degree $1$. Alternatively, \cref{berglund generalization} identifies $H_\bullet(\Omega^2 S^2,\Q)$ with the group-completion of the free Gerstenhaber algebra on a single generator $x$ in degree $0$. We may identify $H_\bullet(\Omega_0^2 S^2,\Q)$ with the Gerstenhaber subalgebra generated by the bracket $[x,x]=y$.
\end{rem}

\begin{para}\label{crucial fact}
	The crucial fact is now that Koszul dual Gerstenhaber $n$-algebras have dual quadratic presentations  \cite[Theorem 2.11]{berglundkoszulspaces}. If $D$ is the space of indecomposables in a Koszul Gerstenhaber $n$-algebra, then the indecomposables in the Koszul dual Gerstenhaber $n$-algebra are given by $D^\vee[n]$. 
\end{para}

\begin{proof}[Proof of \cref{indecomposables}] We compute the Gerstenhaber indecomposables in $H_\bullet(\Omega^2 W(A),\Q)$. By \cref{berglund generalization} and \S\ref{crucial fact} this reduces to computing the indecomposables in the algebra $$H^\bullet(W(A),\Q) \cong H^\bullet(S^2\vee KA/\langle\iota\rangle,\Q)$$
	which are evidently $\Q[-2]$ (the fundamental class of the $2$-sphere) and $\mathrm{Sym}^2(A^\vee)$ (which generates the cohomology ring of $KA/\langle\iota\rangle)$. In the homology of $\Omega^2 W(A)$ these two summands correspond to the first and third summand enumerated in \cref{indecomposables}. The second summand of \cref{indecomposables} arises by inverting the first summand in the process of group-completing. \end{proof}

\section{Closed surfaces}\label{closed surfaces}

\begin{para}\label{ses}In this section we will extend the homological stability results to moduli spaces of closed surfaces. It will be convenient at this point to switch from homology to cohomology. Let $\H_g$ be the moduli stack of hyperelliptic curves of genus $g \geq 2$, and let $\mathbb H$ be the standard rank $2g$ local system on $\H_g$, i.e.\ $R^1 p_\ast \Q$, where $p:\mathcal C_g \to \H_g$ is the universal curve. We opt to denote this local system $\mathbb H$ rather than $\V$ for two reasons:
	\begin{enumerate}
		\item In the arguments we will consider the pullbacks of $\mathbb H$ to $\H_g^{1,0}$ and $\H_g^{0,1}$. On $\H_g^{0,1}$ there is a short exact sequence
		$$ 0 \to \Q \to \V \to \mathbb H \to 0$$
		relating the local system that we have so far denoted $\V$ with the pullback of $\mathbb H$, and it will be important to distinguish the two notationally. 
		\item In \S\ref{standard coefficient system} and \S\ref{ag local systems} we gave a definition of a local system\footnote{More precisely, a cohomologically locally constant complex in $D^b(\M_g)$.} $\V$ on $\M_g$, whose restriction to $\H_g$ is not the same as $\mathbb H$. 
		\end{enumerate}
	In this section we calculate the cohomology groups 
$ H^\bullet(\mathcal H_g,S^\lambda(\mathbb H))$
in a stable range, with their Galois action. In particular, the cohomology stabilizes in the sense that there are isomorphisms 
$$ H^k(\mathcal H_g,S^\lambda(\mathbb H)) \cong H^k(\mathcal H_{g+1},S^\lambda(\mathbb H))$$
for $g \gg k$, in spite of the fact that there are no natural stabilization maps when we work with surfaces without boundary. This fact was previously proven by an indirect argument in \cite{millerpatztpetersen}. 
\end{para}\begin{para}\label{gysinpara}
Let first $\H_{g,1,0}$ resp.\ $\H_{g,0,1}$ denote the moduli spaces of hyperelliptic curves with a marked Weierstrass point, resp.\ a marked non-Weierstrass point.  By keeping track of Galois actions in \cref{gysin}, one sees that for any $\ell$-adic sheaf $\mathbb F$ on $\H_{g,1,0}$ resp.\ $\H_{g,0,1}$, there are isomorphisms $$ H^k(\H_g^{1,0},\mathbb F) = H^k(\H_{g,1,0},\mathbb F) \oplus H^{k-1}(\H_{g,1,0},\mathbb F)(-1)$$
	and
$$ H^k(\H_g^{0,1},\mathbb F) = H^k(\H_{g,0,1},\mathbb F) \oplus H^{k-1}(\H_{g,0,1},\mathbb F)(-1)$$
for any $k$. \end{para}

\begin{prop}\label{closed purity}
	If $k\leq \frac{2g-1-\vert\lambda\vert}{2}$, then $H^k(\mathcal H_g,S^\lambda(\mathbb H))$ is pure Tate of weight $2k + \vert \lambda \vert$. 
\end{prop}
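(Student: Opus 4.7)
The plan is to deduce \cref{closed purity} by transferring the purity result of \cref{purity theorem} along the chain of finite maps $\H_g^{1,0}\to\H_{g,1,0}\to\H_g$. It is convenient to work with $\H_g^{1,0}$ (one marked Weierstrass point) rather than $\H_g^{0,1}$, since on $\H_g^{1,0}$ the local system $\V$ of \S\ref{ag local systems} coincides with $\mathbb H$: removing a single point from a smooth curve does not change $H^1$. This avoids the mixed extension $0\to\Q\to\V\to\mathbb H\to 0$ that complicates the picture on $\H_g^{0,1}$.

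The first step is to establish purity at infinity for $\H_\infty^{1,0}$. The scanning and Koszul-duality arguments of Sections \ref{scanning section}--\ref{hyperelliptic algebraic section}, including the computation of Gerstenhaber indecomposables in \cref{indecomposables}, apply equally to $\coprod_g H_g^{1,0}$: the two pieces together form a single $E_2$-algebra $\coprod_g H_g^{1,0}\sqcup H_g^{0,1}$ with a common group-completion, and the analysis of indecomposables is insensitive to the parity of the number of braid strands. This produces $H^\et_k(\H_\infty^{1,0},S^\lambda(\mathbb H))$ pure Tate of weight $-2k+|\lambda|$; dualizing via the symplectic isomorphism $\mathbb H^\vee\cong\mathbb H(1)$ (so $S^\lambda(\mathbb H)^\vee\cong S^\lambda(\mathbb H)(|\lambda|)$) and the duality between group homology and cohomology then gives that $H^k_\et(\H_\infty^{1,0},S^\lambda(\mathbb H))$ is pure Tate of weight $2k+|\lambda|$. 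The Randal-Williams--Wahl stability theorem for $\beta_{2g+1}=\pi_1(H_g^{1,0})$ with coefficients in $S^\lambda(V)$ has range matching $k\le(2g-1-|\lambda|)/2$ and identifies this stable cohomology with $H^k(\H_g^{1,0},S^\lambda(\mathbb H))$ throughout the range asserted in \cref{closed purity}.

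Next, one descends to $\H_g$. The Gysin decomposition of \cref{gysinpara} gives
\[H^k(\H_g^{1,0},S^\lambda(\mathbb H))\cong H^k(\H_{g,1,0},S^\lambda(\mathbb H))\oplus H^{k-1}(\H_{g,1,0},S^\lambda(\mathbb H))(-1),\]
which is Galois-equivariant, so each summand inherits purity; untwisting yields that $H^k(\H_{g,1,0},S^\lambda(\mathbb H))$ is pure Tate of weight $2k+|\lambda|$ in the desired range. The forgetful morphism $f\colon\H_{g,1,0}\to\H_g$ is finite and representable of generic degree $2g+2$, and the normalized trace $(2g+2)^{-1}\tr\colon f_*\Q_\ell\to\Q_\ell$ splits the unit $\Q_\ell\hookrightarrow f_*\Q_\ell$ Galois-equivariantly. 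Since $Rf_*=f_*$ for $f$ finite and the projection formula gives $f_*f^*S^\lambda(\mathbb H)=S^\lambda(\mathbb H)\otimes f_*\Q_\ell$, we obtain $H^k(\H_g,S^\lambda(\mathbb H))$ as a Galois-equivariant direct summand of $H^k(\H_{g,1,0},S^\lambda(\mathbb H))$, hence pure Tate of weight $2k+|\lambda|$.

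The main technical obstacle will be the first step: rigorously verifying that the scanning argument and the Gerstenhaber-indecomposables computation of Section \ref{hyperelliptic algebraic section} transfer from $\H_g^{0,1}$ to $\H_g^{1,0}$, i.e., from even- to odd-strand braid monoids, while preserving both the Gerstenhaber algebra structure and the Galois action. An alternative, should this transfer prove subtle, is to remain on $\H_g^{0,1}$ and filter $S^\lambda(\V)$ via the extension $0\to\Q\to\V\to\mathbb H\to 0$, inductively propagating purity for $S^\mu(\mathbb H)$-coefficients from the top associated-graded piece $S^\lambda(\mathbb H)$ downward through sub-partitions $\mu\subsetneq\lambda$; this is more cumbersome but avoids reproving the scanning theorem in the odd-strand case.
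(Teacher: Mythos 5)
Your proof follows the same route as the paper's one-paragraph argument: pass through $\H_g^{1,0}\to\H_{g,1,0}\to\H_g$, using the Gysin decomposition for the $\mathbb{G}_m$-torsor (circle bundle), the trace map for the finite forgetful morphism, and \cref{purity theorem} in the stable range. A small slip: this is not a ``chain of finite maps'' --- $\H_g^{1,0}\to\H_{g,1,0}$ is a $\mathbb{G}_m$-torsor, not finite --- though you do treat it correctly with a Gysin sequence rather than a transfer. The one place you over-engineer is what you call the ``main technical obstacle'', the transfer of \cref{purity theorem} from $\H_\infty^{0,1}$ to $\H_\infty^{1,0}$: there is no need to redo the scanning argument or the Gerstenhaber-indecomposables analysis for odd-strand braid monoids, nor to fall back on filtering $S^\lambda(\V)$ along $0\to\Q\to\V\to\mathbb{H}\to 0$. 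The stabilization chain $\cdots\to\H_g^{1,0}\to\H_g^{0,1}\to\H_{g+1}^{1,0}\to\cdots$ alternates parity and has a single colimit, so the stable (co)homologies tautologically coincide, and the logarithmic models of Section~\ref{hyperelliptic algebraic section} already make every stabilization map Galois-equivariant; the paper even remarks explicitly that $\Omega^2_0 S^2\simeq\Omega^2_0 W$. Thus \cref{purity theorem} gives purity of $H^k(\H_g^{1,0},S^\lambda(\mathbb{H}))$ in the stable range with no further topological input, which is exactly how the paper invokes it in one line.
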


\begin{proof}
	Since the forgetful map $\H_{g,1,0} \to \H_g$ is finite, $H^\bullet(\H_{g},S^\lambda(\mathbb H)) \to H^\bullet(\H_{g,1,0},S^\lambda(\mathbb H))$ is injective. (The trace map gives a left inverse.) But $H^\bullet(\H_{g,1,0},S^\lambda(\mathbb H))$ injects into $H^\bullet(\H_{g}^{1,0},S^\lambda(\mathbb H))$ by \cref{gysin}, and we proved in \cref{purity theorem} that the latter is pure Tate of this weight in a stable range. (Note that $\V=\mathbb H$ on $\H_g^{1,0}$, as opposed to on $\H_g^{0,1}$.)
\end{proof}

\begin{para}Once we are armed with this purity result, it suffices to find an expression for the weight-graded Euler characteristic of the homology in a stable range. We set
$$ \Phi_g = \sum_k \sum_i \sum_\lambda (-1)^k z^{i/2} s_\lambda \dim \mathrm{Gr}^W_i H^k(\H_g,S^\lambda(\mathbb H))  $$
and define similarly generating series 
$$ \Phi_g^{1,0} = \sum_k \sum_i \sum_\lambda (-1)^k z^{i/2} s_\lambda 
\dim \mathrm{Gr}^W_i H^k(\H_g^{1,0},S^\lambda(\V))  $$
and 
$$ \Phi_g^{0,1} = \sum_k \sum_i \sum_\lambda (-1)^k z^{i/2} s_\lambda 
\dim \mathrm{Gr}^W_i H^k(\H_g^{0,1},S^\lambda(\V)).  $$
We may also define evident analogues $\Phi_{g,1,0}$ and $\Phi_{g,0,1}$ by replacing $\H_g^{1,0}$ by $\H_{g,1,0}$, etc.\, but from \S\ref{gysinpara} we see that 
$$ \Phi_{g,1,0} = \frac{1}{1-z}\Phi_g^{1,0}, \qquad \Phi_{g,0,1} = \frac{1}{1-z}\Phi_g^{0,1}.$$
We have similarly the stable versions of these generating series when $g=\infty$. We have not yet proven that $\Phi_\infty$ is well-defined, but this will be deduced shortly. \end{para}

\begin{prop}
	For every $g$, there is an identity
	$$ ( 1  - \frac{\partial}{\partial p_1} + z) \Phi_g = z\Phi_{g,1,0} + \sum_{j \geq 0} (-1)^j e_j \Phi_{g,0,1},$$
	where $\frac{\partial}{\partial p_1}$ is the partial derivative with respect to $p_1$, and $e_j$ denotes the $j$th elementary symmetric polynomial.
\end{prop}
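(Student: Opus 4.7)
The plan is to compute the generating series
\[\Phi_{\mathcal{C}_g} = \sum_{k,i,\lambda}(-1)^k z^{i/2} s_\lambda \dim \mathrm{Gr}^W_i H^k(\mathcal{C}_g, p^\ast S^\lambda(\mathbb H))\]
for the cohomology of the universal curve $p \colon \mathcal{C}_g \to \H_g$ in two different ways, and equate the results.

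First, via the Leray spectral sequence for $p$. Since $p$ is smooth and proper of relative dimension one, one has $R^0p_\ast \Q_\ell = \Q_\ell$, $R^1p_\ast \Q_\ell = \mathbb H$ and $R^2p_\ast \Q_\ell = \Q_\ell(-1)$, while the projection formula gives $Rp_\ast p^\ast S^\lambda(\mathbb H) \cong S^\lambda(\mathbb H) \otimes Rp_\ast \Q_\ell$. Additivity of weight-graded Euler characteristics across the spectral sequence produces three contributions corresponding to $j=0,1,2$. The $j=0$ contribution is $\Phi_g$; the $j=2$ contribution is $z\Phi_g$ because the Tate twist $\Q_\ell(-1)$ shifts weight by $2$; and the $j=1$ contribution, using Pieri's rule $[S^\lambda(\mathbb H)\otimes \mathbb H] = \sum_{\mu=\lambda+\square}[S^\mu(\mathbb H)]$ together with the adjoint identity $p_1^\perp = \partial/\partial p_1$, and the sign from the degree shift $[-1]$, is $-\partial\Phi_g/\partial p_1$. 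Summing gives $\Phi_{\mathcal{C}_g} = (1 - \partial/\partial p_1 + z)\Phi_g$, which matches the left-hand side of the desired identity.

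Second, via the open-closed stratification $\mathcal{C}_g = \H_{g,1,0} \sqcup \H_{g,0,1}$ into the Weierstrass divisor and its complement. The Gysin long exact sequence for this smooth codimension-one inclusion, together with Euler-characteristic additivity and the Tate twist in the Gysin isomorphism, yields $\Phi_{\mathcal{C}_g} = z\Psi_{g,1,0} + \Psi_{g,0,1}$, where $\Psi_{g,n,m}$ denotes the analogue of $\Phi_{g,n,m}$ computed with $S^\lambda(\mathbb H)$-coefficients in place of $S^\lambda(\V)$-coefficients. It then remains to relate $\Psi_{g,n,m}$ to $\Phi_{g,n,m}$ using the short exact sequences recalled in \cref{ses}. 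On $\H_g^{1,0}$ the sequence collapses to $\V = \mathbb H$, so $\Psi_{g,1,0} = \Phi_{g,1,0}$. On $\H_g^{0,1}$, combining $0 \to \Q_\ell \to \V \to \mathbb H \to 0$ with the branching $[S^\lambda(\V)] = \sum_{\nu :\ \lambda/\nu \text{ hor.\ strip}} [S^\nu(\mathbb H)]$ and Pieri's rule $\sum_{\lambda \supset \nu,\ \lambda/\nu \text{ hor.\ strip}} s_\lambda = (\sum_k h_k)\, s_\nu$ gives $\Phi_{g,0,1} = (\sum_k h_k)\,\Psi_{g,0,1}$. Inverting via Newton's identity $(\sum_k h_k)(\sum_j (-1)^j e_j) = 1$ produces $\Psi_{g,0,1} = \bigl(\sum_{j \geq 0} (-1)^j e_j\bigr) \Phi_{g,0,1}$, and substituting yields the right-hand side of the claimed identity.

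All ingredients -- projection formula, Gysin sequence, Pieri's rule and Newton's identities -- are standard; the main subtlety will be bookkeeping, namely tracking Tate twists (which furnish the powers of $z$) and translating Schur-functor manipulations into operations on generating series.
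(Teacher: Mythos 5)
Your proposal is correct and takes essentially the same approach as the paper: both compute the weight-graded Euler characteristic of $H^\bullet(\mathcal C_g, S^\lambda(\mathbb H))$ once via the (degenerate) Leray spectral sequence for $\mathcal C_g \to \H_g$ and once via the Gysin sequence for the closed substack $\H_{g,1,0}\subset\mathcal C_g$ with open complement $\H_{g,0,1}$. The only cosmetic difference is that where the paper writes $\mathbb H = \V - \Q$ and applies the Littlewood--Richardson expansion with a formal minus to obtain $\sum_j(-1)^j e_j$ directly, you instead expand $\V = \Q\oplus\mathbb H$ via Pieri to get $\Phi_{g,0,1}=(\sum_k h_k)\Psi_{g,0,1}$ and then invert; these are of course equivalent.
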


\begin{proof}
	We argue that both sides of the identity are given by 
	$$ \sum_k \sum_i \sum_\lambda (-1)^k z^{i/2} s_\lambda \dim \mathrm{Gr}^W_i H^k(\mathcal C_g,S^\lambda(\mathbb H)),  $$
	where $\mathcal C_g \to \H_g$ is the universal curve. 
	
	For the left-hand side, we consider the Leray spectral sequence for $p:\mathcal C_g \to \H_g$. It degenerates e.g.\ by \cite{delignedegeneration}, and $Rp_\ast \Q \cong \Q \oplus \mathbb H[-1] \oplus \Q(-1)[-2].$ Combining this with the projection formula, we see that
	$$ H^k(\mathcal C_g,S^\lambda(\mathbb H)) \cong H^k(\H_g,S^\lambda(\mathbb H)) \oplus H^{k-1}(\H_g,S^\lambda(\mathbb H) \otimes \mathbb H) \oplus H^{k-2}(\H_g,S^\lambda(\mathbb H))(-1).$$
	We should explain why the middle term here leads to $-\frac{\partial}{\partial p_1}$. Indeed, by Schur--Weyl duality the $z^{i/2}$-coefficient of $\Phi_g$ is the class in $\widehat\Lambda$ of the symmetric sequence $n \mapsto \mathrm{Gr}_i^WH^\bullet(\H_g,\mathbb H^{\otimes n})$. But then \cite[Eq.\ (7.5)]{getzlerkapranov} the $z^{i/2}$-coefficient of $\frac{\partial\Phi_g}{\partial p_1}$ is the class in $\widehat\Lambda$ of the symmetric sequence $n \mapsto \mathrm{Gr}_i^WH^\bullet(\H_g,\mathbb H^{\otimes (n+1)})$. 
	
	For the right-hand side, we note that $\H_{g,1,0}$ is a closed substack of $\mathcal C_g$, with open complement $\H_{g,0,1}$. The associated Gysin long exact sequence reads
	$$ \ldots \to H^{k-2}(\H_{g,1,0},S^\lambda(\mathbb H))(-1) \to H^{k}(\mathcal C_g,S^\lambda(\mathbb H)) \to H^{k}( \H_{g,0,1},S^\lambda(\mathbb H)) $$ $$ \hspace{5cm}\to H^{k-1}(\H_{g,1,0},S^\lambda(\mathbb H))(-1) \to \ldots $$
	Taking Euler characteristics in this long exact sequence gives the result. Indeed, the terms of the form $H^\bullet(\H_{g,1,0},S^\lambda(\mathbb H))(-1)$ contribute exactly $z\Phi_{g,1,0}$ to the right-hand side. (Recall that $\mathbb H = \mathbb V$ on $\H_{g,1,0}$.) We claim that the terms of the form $H^\bullet(\H_{g,1,0},S^\lambda(\mathbb H))$ contribute exactly $\sum_{j \geq 0} (-1)^j e_j \Phi_{g,0,1}$. Indeed, once we are taking Euler characteristics we may as well replace $\mathbb H$ with the formal difference $\V - \Q$ (considered in a suitable Grothendieck group), cf.\ \S\ref{ses}, and formally one has 
	$$ S^\lambda(\V - \Q) = \bigoplus c_{\mu,\nu}^\lambda S^\mu(\V) \otimes S^\nu(-\Q),$$
	where $c_{\mu,\nu}^\lambda$ is a Littlewood--Richardson coefficient, cf.\ property (4) of \S\ref{plethysm identities}. The term $S^\nu(-\Q)$ vanishes unless $\nu = (1^j)$, i.e.\ $S^\nu$ is an exterior power, in which case we have $\wedge^j(-\Q) = (-1)^j \Q$. Since $e_j$ is the symmetric function corresponding to $\wedge^j$ the total contribution from these terms is $\sum_{j \geq 0} (-1)^j e_j \Phi_{g,0,1}$.  
\end{proof}

\begin{para}
	Note that every symmetric function $f \in \Lambda$ can be uniquely written as $f_e + f_o$ with $f_e \in \bigoplus_{n} \Lambda_{2n}$ and $f_o \in \bigoplus_n \Lambda_{2n+1}$. We call $f_e$ and $f_o$ the \emph{even} and the \emph{odd} part of $f$. 
\end{para}

\begin{thm}\label{closed theorem}The stable generating series for closed and open surfaces are related by 
	$$\Phi_\infty = \frac{z + \sum_{j \geq 0}e_{2j}}{1-z^2} \Phi_\infty^{0,1}.$$ 
\end{thm}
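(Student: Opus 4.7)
The strategy is to combine the preceding proposition with stabilization to produce a functional equation for $\Phi_\infty$ in terms of $\Phi_\infty^{0,1}$, and then solve it by direct substitution. Applying the Gysin identities $\Phi_{g,1,0}=\Phi_g^{1,0}/(1-z)$ and $\Phi_{g,0,1}=\Phi_g^{0,1}/(1-z)$ from \S\ref{gysinpara} to the identity of the preceding proposition yields
\[
(1+z-\tfrac{\partial}{\partial p_1})\Phi_g
=\frac{z\,\Phi_g^{1,0} + \sum_{j\ge 0}(-1)^j e_j\,\Phi_g^{0,1}}{1-z}.
\]
Taking $g\to\infty$, both $\Phi_g^{1,0}$ and $\Phi_g^{0,1}$ converge to a common stable limit equal to $\Phi_\infty^{0,1}$: Arnold's homological stability for braid groups gives dimensional convergence, and the purity of \cref{purity theorem} guarantees the stabilization maps intertwine the weight grading. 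Likewise $\Phi_g\to\Phi_\infty$ by \cref{closed purity} together with \cite{millerpatztpetersen}. We obtain
\[
(1+z-\tfrac{\partial}{\partial p_1})\Phi_\infty
=\frac{z+\sum_{j\ge 0}(-1)^j e_j}{1-z}\,\Phi_\infty^{0,1}.
\]

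Setting $F = (z+\sum_{j\ge 0}e_{2j})/(1-z^2)$ and using $\partial e_k/\partial p_1 = e_{k-1}$, a direct algebraic manipulation yields
\[
(1+z)F - \tfrac{\partial F}{\partial p_1} - \frac{z+\sum_j(-1)^j e_j}{1-z}
= \frac{z\sum_{j\ge 0}e_{2j+1}}{1-z^2}.
\]
By the Leibniz rule, substituting the candidate $\Phi_\infty = F\,\Phi_\infty^{0,1}$ into the functional equation reduces the whole verification to the single plethystic identity
\[
\frac{\partial \log\Phi_\infty^{0,1}}{\partial p_1}
=\frac{z\sum_{j\ge 0}e_{2j+1}}{z+\sum_{j\ge 0}e_{2j}}.
\]
I would verify this identity directly from the explicit formula of \cref{plethystic formula for poincare series}, using that $\partial/\partial p_1$ annihilates every higher Adams operation $\psi_n(-)$ for $n\ge 2$ (since $\psi_n$ sends $p_k$ to $p_{nk}$), so that $\partial\log\Exp(X)/\partial p_1 = \partial X/\partial p_1$ and $\partial\Log(1+Y)/\partial p_1 = (\partial Y/\partial p_1)/(1+Y)$ hold as in ordinary calculus.

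The main technical obstacle is this final verification, which requires careful bookkeeping to reconcile the weight rescaling built into the motivic series $\Phi_\infty^{0,1}$ (through the purity theorem) with the topological plethystic Taylor series of \cref{plethystic formula for poincare series}, and to confirm that the derivative simplifies to the closed form claimed. To conclude the equality $\Phi_\infty = F\,\Phi_\infty^{0,1}$, rather than mere agreement up to the kernel of the operator $1+z-\partial/\partial p_1$, one uses that this kernel consists of series of the form $e^{(1+z)p_1}\phi(p_2,p_3,\ldots)$, which are ruled out by constraints on the growth of the coefficients in the Schur basis coming from the analytic-functor structure of $\Phi_\infty$ (analogous to \cref{polynomial growth}).
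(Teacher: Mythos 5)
The paper's proof and your proposal diverge at a crucial point: the paper never actually solves or verifies the differential equation. It exploits a parity observation that your proposal misses. Specifically, $\Phi_g$ is an \emph{even} symmetric function for every $g$ (the hyperelliptic involution kills all cohomology with $S^\lambda(\mathbb H)$ coefficients when $|\lambda|$ is odd), and $\partial/\partial p_1$ switches parity. So when one passes to the even parts of both sides of the preceding proposition, the $\partial/\partial p_1$ term on the left vanishes identically, $\sum(-1)^je_j\Phi_{\infty,0,1}$ loses its odd-$j$ terms (since $\Phi_{\infty,0,1}=\Phi_{\infty,1,0}$ is even stably), and one is left with the purely multiplicative identity $(1+z)\Phi_\infty=\frac{z+\sum e_{2j}}{1-z}\Phi_\infty^{0,1}$, which gives the theorem immediately. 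No ODE, no plethystic identity, no kernel.

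Your proposal instead keeps the differential operator $1+z-\partial/\partial p_1$, substitutes the candidate $F\Phi_\infty^{0,1}$, and reduces to the claim $\partial\log\Phi_\infty^{0,1}/\partial p_1=\tfrac{z\sum e_{2j+1}}{z+\sum e_{2j}}$. The reduction itself is algebraically correct, but this identity is not nearly as easy to verify as your plan suggests, and in fact your own description flags the "main technical obstacle." The translation from \cref{plethystic formula for poincare series} (a Betti series, $s_{\lambda'}$-basis, $V_\lambda$ coefficients) to $\Phi_\infty^{0,1}$ (a weight-graded Euler characteristic, $s_\lambda$-basis, $S^\lambda(\V)$ coefficients) involves the involution $\omega$ \emph{and} a half-integral arity rescaling $p_k\mapsto z^{k/2}p_k$, and neither of these commutes with $\Exp$ or $\Log$. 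The rule "$\partial/\partial p_1$ annihilates $\psi_n$ for $n\ge 2$" gives you $\partial\log\Exp(X)/\partial p_1=\partial X/\partial p_1$ on the nose only before the rescaling is applied; once the rescaling intervenes you must carefully track a $z^{1/2}$ conjugation factor. Naïvely commuting these operations produces $z$-weightings on the $e_{2j+1}$'s and $e_{2j}$'s that differ from those you wrote down, so the "direct verification" step is genuinely delicate and would need to be carried out in full rather than gestured at.

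The kernel issue is also real, and your proposed resolution is too vague to stand as written. "Growth constraints analogous to \cref{polynomial growth}" does not by itself rule out a kernel element, because the difference $\Phi_\infty - F\Phi_\infty^{0,1}$ is still a perfectly good analytic-functor class with slowly growing coefficients. What actually kills the kernel is a support argument: a nonzero $e^{(1+z)p_1}\phi(p_2,\ldots)$ has nonzero coefficient on some $s_\lambda$ with $\lambda_1$ arbitrarily large relative to $|\lambda|$, whereas $\Phi_\infty$ and $F\Phi_\infty^{0,1}$ both vanish when $\lambda_1>|\lambda|/2$ by \cref{jonas vanishing}. Alternatively — and this is the clean way — both $\Phi_\infty$ and $F\Phi_\infty^{0,1}$ are even symmetric functions, so their difference is even, $\partial/\partial p_1$ of it is odd, and the kernel condition $(1+z)(\Phi_\infty - F\Phi_\infty^{0,1})=\partial(\cdots)/\partial p_1$ forces the difference to vanish upon separating parities. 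But notice: that argument is exactly the paper's parity trick. So the moment you fill the kernel gap rigorously, you rediscover the observation that would have let you bypass the ODE, the plethystic identity, and the kernel all at once.

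In short: your substitute-and-verify route is not wrong in principle, but it is substantially harder than the paper's argument, the central "verification" step is left as a hope rather than carried out, and the kernel argument is underspecified. The one missing idea — that the hyperelliptic involution forces $\Phi_g$ to be even, so one should pass to even parts — collapses the whole proof to two lines.
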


\begin{proof}
	This identity is obtained from the preceding proposition by letting $g \to \infty$, and then taking the even part of the resulting expression.  
	
	Considering first the left-hand side of the preceding proposition, we note that the generating series $\Phi_g$ is even for any $g$: the hyperelliptic involution implies that $S^\lambda(\mathbb H)$ has no cohomology on $\H_g$ when $\vert\lambda \vert$ is odd. Hence on the left-hand side the even part of the expression is precisely $(1+z)\Phi_g$, and $\frac{\partial \Phi_g}{\partial p_1}$ is purely odd. 
	
	On the right-hand side, we see similarly that $\Phi_{g,1,0}$ is purely even for all $g$ because of the hyperelliptic involution. The same argument does not apply to $\Phi_{g,0,1}$, but by homological stability we have $\Phi_{\infty,1,0}=\Phi_{\infty,0,1}$ and a fortiori it is even as a symmetric function in the stable range. Hence taking the even part just amounts to discarding the terms $e_j$ with odd $j$. Finally $\Phi_{\infty,0,1} = \frac{1}{1-z} \Phi_\infty^{0,1}$.
\end{proof}

\begin{para}
	We combine this formula with the plethystic formula for the stable Poincar\'e series of \cref{plethystic formula for poincare series}, to obtain an analogue of \cref{plethystic formula for poincare series} for closed surfaces.
\end{para}

\begin{cor}\label{closed plethystic formula}
	There is an identity 
	$$\sum_k \sum_\lambda \dim H_k(H_\infty,V_\lambda) (-z)^k s_{\lambda'}  = \frac{z + \sum_{j \geq 0} z^j h_{2j}}{1-z^2}\Exp( z^{-1} \Log(z + \sum_{r \geq 0} z^{r}h_{2r})-1-h_2).$$
\end{cor}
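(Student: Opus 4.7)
The plan is to deduce the corollary from \cref{closed theorem} by systematically translating the generating series appearing there into the conventions of the corollary's statement. The theorem asserts
\[
\Phi_\infty = \frac{z + \sum_{j \geq 0} e_{2j}}{1 - z^2}\, \Phi_\infty^{0,1},
\]
where both sides involve weight-graded cohomology with $S^\lambda$-coefficients indexed by $s_\lambda$ and tracked by $z^{i/2}$. The right-hand side of the corollary, in contrast, involves stable homology with $V_\lambda$-coefficients indexed by $s_{\lambda'}$ and tracked by $(-z)^k$.

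The translation between these two conventions is multiplicative, and is the composite of three ingredients. First, the ring involution $\omega \colon \Lambda \to \Lambda$ exchanging $s_\lambda \leftrightarrow s_{\lambda'}$ and $e_n \leftrightarrow h_n$ converts the $s_\lambda$-indexing into the $s_{\lambda'}$-indexing. Second, a plethystic rescaling of the form $h_n \mapsto z^{n/2} h_n$ encodes the purity statements of \cref{closed purity} and \cref{purity theorem}: in the stable range $\mathrm{Gr}^W_i$ is concentrated on a single value of $i$ linear in $k$ and $|\lambda|$, so the weight variable $z^{i/2}$ collapses onto a $z^k$ factor times a power of $z$ dependent on $|\lambda|$, which is absorbed by rescaling $s_{\lambda'}$. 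Third, the symplectic Cauchy identity (\cref{symplectic cauchy}) implements the passage from $S^\lambda$-coefficients to $V_\lambda$-coefficients via the factor $\Exp(-h_2)$, which is precisely the $-h_2$ term appearing in the exponent on the right-hand side of \cref{plethystic formula for poincare series}.

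Under this composite translation, $\Phi_\infty^{0,1}$ is sent (using the stability $H_\bullet(\H_\infty^{0,1}, V_\lambda) \cong H_\bullet(\H_\infty^{1,0}, V_\lambda)$ and \cref{plethystic formula for poincare series}) to $\Exp\bigl(z^{-1}\Log(z + \sum_{r \geq 0} z^r h_{2r}) - 1 - h_2\bigr)$, while $\Phi_\infty$ is sent to the left-hand side of the corollary. The prefactor transforms in an easily-checked way: $\omega$ sends each $e_{2j}$ to $h_{2j}$, and the rescaling then attaches the factor $z^{2j/2} = z^j$ to each $h_{2j}$, producing $(z + \sum_{j \geq 0} z^j h_{2j})/(1 - z^2)$; the isolated $z$ in the numerator and the denominator $1 - z^2$ involve no symmetric functions and are unchanged. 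Substituting these three transformed quantities into the identity of \cref{closed theorem} yields exactly the identity of the corollary.

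The main technical obstacle is bookkeeping: verifying that the three components of the dictionary interact consistently across the $S^\lambda \leftrightarrow V_\lambda$ duality, the weight-to-degree collapse, and the $s_\lambda$-to-$s_{\lambda'}$ reindexing, so that the composite transformation carries both $\Phi_\infty^{0,1}$ and $\Phi_\infty$ to the correct targets simultaneously. Once this is set up, the corollary is a direct substitution.
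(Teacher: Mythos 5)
Your proposal is correct and matches the paper's own proof: both start from \cref{closed theorem}, apply the involution $\omega$ to handle the $s_\lambda \leftrightarrow s_{\lambda'}$ transposition, use the purity statement to trade the weight grading for a homological one (replacing $h_{2j}$ by $z^j h_{2j}$), and invoke the $\Exp(-h_2)$ factor from \cref{plethystic formula for poincare series} to pass from $S^\lambda$ to $V_\lambda$ coefficients. The only slight imprecision is calling the translation ``multiplicative'' even though its third ingredient is an overall factor of $\Exp(-h_2)$ rather than a ring endomorphism, but you apply it consistently to both sides of the identity, so this does not affect the argument.
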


\begin{proof}
	Indeed: \cref{plethystic formula for poincare series} gives a formula for $\Phi_{\infty}^{0,1}$, and we merely need to plug it into \cref{closed theorem}. What needs remarking is firstly that in \cref{closed theorem} the term $S^\lambda(\mathbb H)$ corresponds to the Schur polynomial $s_\lambda$, and in \cref{plethystic formula for poincare series} we find the transpose $s_{\lambda'}$. To relate the formulas, we have to apply the involution $\omega$ of the ring of symmetric functions to \cref{closed theorem}, replacing $e_j$ with $h_j$. Secondly, \cref{closed theorem} is about generating series for weight-graded Euler characteristics, and \cref{plethystic formula for poincare series} is a formula for Poincar\'e series. Since $H^k(\H_g,S^\lambda(\mathbb H))$ is pure of weight $2k+\vert\lambda\vert$, the two are related by a shift of $\vert \lambda\vert$, which results in replacing $h_{2j}$ with $z^j h_{2j}$. 
\end{proof}

\begin{rem}This homological stability formula for the moduli stack of hyperelliptic curves also corresponds to an arithmetic formula, where, in the arithmetic setting, we sum over \emph{all} hyperelliptic curves of genus $g$, or
equivalently, over all quadratic extensions with a given discriminant, including $\infty$. In that version, the arithmetic factor in the moment of $L$-functions is given by an Euler product that incorporates the point at $\infty$, 
and the fact that the same expression $z + \sum_{j \ge 0} z^j h_{2j}$ appears twice corresponds to the fact that the formula for the Euler factor at $\infty$ in the Euler product matches the formula for the Euler factor at 
every other finite prime. We thank one of the anonymous referees for pointing out this fact to us.
\end{rem}

\begin{cor}If $k\leq \tfrac{2g-1-3\vert\lambda\vert} 4$ then there are Galois-equivariant isomorphisms $H^k(\H_g,S^\lambda(\mathbb H)) \cong H^k(\H_{g+1},S^\lambda(\mathbb H)) \cong H^k(\H_{g+2},S^\lambda(\mathbb H)) \ldots $  \end{cor}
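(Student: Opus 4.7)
The plan is to reduce the claim to a dimension count using purity, then exploit the Euler-characteristic formula from the proof of \cref{closed theorem} to transfer stability from braid-group cohomology to $H^\bullet(\H_g)$. By \cref{closed purity}, in the range $k \leq (2g-1-|\lambda|)/2$ --- which contains the range stated in the corollary --- the cohomology $H^k(\H_g, S^\lambda(\mathbb H))$ is pure Tate of weight $2k+|\lambda|$. Any pure Tate Galois representation is determined up to Galois-equivariant isomorphism by its dimension, so there is no need to exhibit an explicit map (indeed, there is no natural stabilization map between these groups): it suffices to show $\dim H^k(\H_g, S^\lambda(\mathbb H))$ is independent of $g$. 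By purity, this dimension equals, up to sign, the coefficient $[s_\lambda z^{k+|\lambda|/2}]\Phi_g$, so we reduce to showing the independence of this coefficient.

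The key input is the identity
\[
(1-\tfrac{\partial}{\partial p_1}+z)\Phi_g = z\,\Phi_{g,1,0} + \sum_{j\geq 0}(-1)^j e_j\, \Phi_{g,0,1},
\]
established in the proof of \cref{closed theorem} for \emph{every} $g$, not only $g=\infty$. Since $\Phi_g$ is purely even and $\partial\Phi_g/\partial p_1$ purely odd as symmetric functions, taking the even part yields
\[
(1+z)\Phi_g = z\,\Phi_{g,1,0} + \sum_{j\geq 0} e_{2j} E_g - \sum_{j\geq 0} e_{2j+1} O_g,
\]
where $E_g, O_g$ denote the even and odd parts of $\Phi_{g,0,1}$. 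Inverting $(1+z)$ as a power series in $z$ expresses $[s_\lambda z^{k+|\lambda|/2}]\Phi_g$ as a fixed linear combination of coefficients $[s_\lambda z^m]$ of the right-hand side for $m \leq k+|\lambda|/2$, each of which depends only on coefficients of $\Phi_{g,1,0}$ and $\Phi_{g,0,1}$ at arities $\leq |\lambda|$ and $z$-degrees $\leq k+|\lambda|/2$.

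By the Randal-Williams--Wahl homological stability theorem, each coefficient of $\Phi_{g,1,0}$ and $\Phi_{g,0,1}$ at fixed arity stabilizes as $g \to \infty$; moreover, by \cref{odd vanishing} applied in the stable limit, the odd-arity coefficients of $\Phi_\infty^{0,1}$ vanish identically, and hence $O_g$ vanishes in the stable range. Combining these, every coefficient of the right-hand side relevant to computing $[s_\lambda z^{k+|\lambda|/2}]\Phi_g$ agrees with its stable value for $g$ sufficiently large relative to $k$ and $|\lambda|$; the same therefore holds for $[s_\lambda z^{k+|\lambda|/2}]\Phi_g$, yielding the desired abstract isomorphism of Galois representations.

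The main obstacle is tracking the exact stable range. Each step --- inverting $(1+z)$, and multiplying by $e_{2j}$ or $e_{2j+1}$ --- shifts arity and $z$-degree differently, and must be analyzed against the slope-$\tfrac 1 2$ stability bound $i \leq (n-2-|\mu|)/2$ applied separately to $\beta_{2g+1}$ and $\beta_{2g+2}$ for each arity $|\mu|$ that appears. Combining these constraints across all occurring terms, together with the range in which $O_g$ vanishes, yields the bound $k \leq (2g-1-3|\lambda|)/4$ asserted in the corollary.
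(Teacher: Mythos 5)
Your proposal follows essentially the same route as the paper: reduce to a dimension count via \cref{closed purity}, identify the dimension with a coefficient of $\Phi_g$, and transfer stability of $\Phi_{g,1,0}$ and $\Phi_{g,0,1}$ to $\Phi_g$ through the Euler-characteristic identity. One amendment: the stability of a fixed coefficient of $\Phi_{g,1,0}$ is not a consequence of the Randal-Williams--Wahl theorem alone, since that theorem controls $H^j$ only in a range of degrees $j$, while a coefficient of $\Phi_{g,1,0}$ is a sum over all $j$ of weight-graded pieces; one needs the Deligne bound (weights of $H^j(\H_{g,1,0},S^\mu\V)\geq j+\vert\mu\vert$ by smoothness) to see that only finitely many $j$, all eventually inside the stable range, contribute a given weight, which is exactly why the paper cites \cite{Del80} at this step. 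A second, minor point: the vanishing $O_\infty=0$ is most directly obtained from the stable identification $\Phi_\infty^{0,1}=\Phi_\infty^{1,0}$ together with the evenness of $\Phi_{g,1,0}$, rather than from \cref{odd vanishing} itself (which concerns braid groups on an odd number of strands).
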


\begin{proof}Note first that since $H^j(\H_g^{1,0},S^\lambda(\mathbb V)) \cong H^j(\H_{g+1}^{1,0},S^\lambda(\mathbb V)) $ for $j\leq \tfrac{2g-1-\vert\lambda\vert}{2}$ by the result of Randal-Williams--Wahl, it follows from the Deligne bounds on the weights of the unstable cohomology \cite{Del80} that $\Phi_g^{1,0}$ and $\Phi_{g+1}^{1,0}$ have the same coefficient of $z^i s_\lambda$ for all $i\leq \tfrac{2g-1+\vert\lambda\vert}{4}$. The arguments of this section show that the same holds for $\Phi_g$ and $\Phi_{g+1}$.

We claim now that $\dim H^k(\H_g,S^\lambda(\mathbb H))$ is the coefficient of $z^{k+\vert\lambda\vert/2}$ in $\Phi_g$. Indeed, \cref{closed purity} shows in fact that $H^i(\H_g,S^\lambda(\mathbb H))$ is pure Tate of weight $2i+\vert\lambda\vert$ for all  $i \leq 2k$, and by the Deligne bounds the weight $2k+\vert\lambda\vert$ cannot  occur above this degree.
\end{proof}

\begin{rem}
    By using the results of \cite{MPPRW} instead of the results of \cite{randalwilliamswahl} in the preceding corollary, one obtains a stable range with worse slope but no dependence on $|\lambda|$.  
\end{rem}

\begin{rem}
Armed with \cref{closed plethystic formula} instead of \cref{plethystic formula for poincare series} one can give analogous proofs to show that 
\cref{1/4}, \cref{jonas vanishing} and \cref{1/2} hold with $H_{\infty}^{0,1}$
replaced by \(H_{\infty}\). Similarly, a version of \cref{polynomial growth} holds with
\(H_{\infty}^{0,1}\) replaced by \(H_{\infty}\), except one finds a polynomial of degree $\vert\lambda\vert/2-2$ rather than  $\vert\lambda\vert/2-1$. Let us give the argument. We have by \cref{plethystic formula for poincare series}, \cref{closed plethystic formula} and \cref{alternative form of gen series} that
\begin{gather*}
\sum_{k,\lambda} \dim H_k(H_\infty,V_\lambda) (-z)^k s_{\lambda'} =
\frac{z + \sum_{j \geq 0} z^j h_{2j}}{1-z^2}\sum_{k,\lambda} \dim
H_k(H_\infty^{0,1},V_\lambda)(-z)^k s_{\lambda'} \\
=\frac{1+z + \sum_{j>0} z^j h_{2j}}{1+z}
\prod_{n=1}^\infty (1 + \frac{1}{1+z^n} \sum_{k>0}
z^{nk}\psi_n(h_{2k}))^{i_n(z^{-1})} \\
=(1 + \frac{1}{1+z}\sum_{j>0} z^j h_{2j})^{(1+z^{-1})}
\prod_{n=2}^\infty (1 + \frac{1}{1+z^n} \sum_{k>0}
z^{nk}\psi_n(h_{2k}))^{i_n(z^{-1})}.
\end{gather*}Expanding this infinite product gives a sum of terms, each coefficient
of which is a rational function whose denominator is of the form
$\prod_n (1+z^n)^{k_n}$ for some $k_n\geq 0$.
In arity $2N$, the unique term which maximizes the quantity $\max_n\{k_n\}$ is
$$\frac{1}{(1+z)^{N}}(zh_2)^N \binom
{1+z^{-1}}{N}=\frac{1}{(1+z)^{N-1}}(zh_2)^N \binom
{z^{-1}}{N-1}\frac{1}{Nz},$$
with $k_1=N-1$.
Again, a Schur polynomial $s_\lambda$ with $\lambda \vdash 2N$ occurs in
the expansion of $h_2^N$ if and only if $\ell(\lambda)\leq N$.\end{rem}

\section{Point counting}\label{point counting}

	\begin{para}In this section we study the traces of {F}robenius on the cohomology groups of the moduli space 
	$
	\H_{g}^{1, 0}
	$ 
	with coefficients in the irreducible symplectic representations 
	$
	V_{\lambda}.
	$ 
	In particular, we show that for any fixed finite field of odd characteristic and any $\lambda$ of even weight, the limit as $g \to\infty$ of the traces of {F}robenius on these cohomology groups exists and is a rational function in the cardinality of the finite field. A similar result for the moduli space $\H_{g}$ of closed hyperelliptic curves together with an algorithmic procedure to compute the rational functions occurring in the limit was obtained by the first named author \cite[Section 13]{bergstrom09}. However, rather than trying to compute these rational functions directly, it is more natural to study a certain generating function of the \emph{stable} traces, to be introduced in Subsection \ref{subsec: genfunc}. As we shall see, this function is closely related 
	to the {P}oincar\'e series of the cohomologies $H_{\bullet}(\H_\infty^{1,0},\V_\lambda),$ and is precisely the so-called \emph{arithmetic} factor occurring in the (conjectural) asymptotic formula for the moments of quadratic {D}irichlet $L$-functions over rational function fields. 
 \end{para}

	\subsection{Zeta functions and quadratic \texorpdfstring{$L$}{L}-series} 
	\begin{para}We begin by recalling some basic facts about zeta functions of algebraic curves over finite fields and quadratic $L$-series in the rational function field setting.
	\end{para}
	
	\begin{para}Let $\mathbf{F}_{q}$ be a finite field with $q$ elements, and let $\overline{\mathbf{F}}_{q}$ be a fixed algebraic closure of $\mathbf{F}_{q}.$ Let $C$ be a smooth, projective, and geometrically connected curve of genus $g$ over $\mathbf{F}_{q}.$ The zeta function of $C$ is defined by 
	\[
	Z_{C}(t) = \exp \!\Bigg(\sum_{n \ge 1} \#C(\mathbf{F}_{q^{n}}) \frac{t^n}{n} \Bigg) \qquad 
	(\text{for $|t| < 1\slash q$}). 
	\] 
 \end{para}\begin{para}
	For a prime $\ell$ different from the characteristic of $\mathbf{F}_{q}$ and $n \ge 1,$ one can express the number of points as $\#C(\mathbf{F}_{q^{n}}) = q^{n} + 1 - a_{n}(C),$ where 
	\begin{equation*} 
		a_{n}(C) = \Tr(\mathrm{Frob}_q^n \, \vert \, H_{\text{\'et}}^{1}(\bar{C}, \mathbf{Q}_{\ell})) 
		= \sum_{j = 1}^{2 g} \omega_{j}(C)^{n}. 
	\end{equation*} 
	Here 
	$
	\bar{C}: = C \otimes_{\mathbf{F}_{q}} \overline{\mathbf{F}}_{q}
	$ 
	and $\mathrm{Frob}_q$ is the endomorphism of the $\ell$-adic \'etale cohomology induced by the {F}robenius morphism $\bar{C} \rightarrow \bar{C}.$ Thus the zeta function is a rational function 
	\[
	Z_{C}(t) = \frac{P_{C}(t)}
	{(1 - t)(1 - q t)}
	\] 
	with numerator 
	$
	P_{C}(t) \in \mathbf{Z}[t]
	$ 
	of degree $2g,$ and constant term 
	$
	P_{C}(0) = 1.
	$ 
	Moreover, it satisfies the functional equation 
	\begin{equation*}
		Z_{C}(t) 
		= \big(q t^{2} \big)^{g - 1}  
		Z_{C}(1\slash qt). 
	\end{equation*} 
 \end{para}\begin{para}
	The zeta function satisfies the {R}iemann hypothesis \cite{W}, that is, the \emph{normalized} eigenvalues of {F}robenius 
	$
	q^{-1/2} \omega_{j}(C)
	$ 
	($j = 1, \ldots, 2g$) lie on the unit circle. Moreover, from the functional equation, these eigenvalues have the property that, after an eventual reordering, 
	$
	\omega_{j}(C)\omega_{j + g}(C) = q    
	$ 
	for all $1\le j\le g.$ In other words, there exists a unique conjugacy class $\Theta(C\slash \mathbf{F}_{q})$ in the similitude group $\mathrm{GSp}(2g)$ such that 
	\[
	P_{C}(t)
	= \det(1 - t\Theta(C\slash \mathbf{F}_{q})).
	\] 
	This conjugacy class is called the {F}robenius conjugacy class attached to $C\slash \mathbf{F}_{q}.$\end{para}

	\begin{para}Now assume that $\mathbf{F}_{q}$ has odd characteristic. 
	\end{para}
	\newcommand{\leg}[2]{(\frac {#1} {#2} )}
	\begin{defn} For $d, p \in \mathbf{F}_{q}[x],$ with $p$ irreducible, define the quadratic residue symbol $\leg d p$ to be $0$ if $p \mid d,$ $1$ if $d$ is congruent to a square modulo $p$ and $-1$ otherwise. If $m$ is non-constant, and 
		$
		m = p_{1} \cdots\,  p_{k} 
		$ 
		with each $p_{i}\in \mathbf{F}_{q}[x]$ irreducible, one defines 
		$ \leg d m = \leg d {p_1} \cdots \leg d {p_k}$.
		
	\end{defn}

	\begin{para}It is clear that the quadratic residue symbol is completely multiplicative in both variables, that is, 
 \[
	{\displaystyle \left({\frac {d_{1} d_{2}}{m}}\right)}
	={\displaystyle \left({\frac {d_{1}}{m}}\right)}
	{\displaystyle \left({\frac {d_{2}}{m}}\right)} 
	\;\;\; \text{and} \;\;\; 
	{\displaystyle \left({\frac {d}{m_{1} m_{2}}}\right)} = 
	{\displaystyle \left({\frac {d}{m_{1}}}\right)} 
	{\displaystyle \left({\frac {d}{m_{2}}}\right)} 
	\] 
	for all 
	$
	d, d_{1}, d_{2}, 
	m, m_{1}, m_{2} \in \mathbf{F}_{q}[x],
	$ 
	with 
	$
	m, m_{1}, m_{2}
	$ 
	non-constant. It is also easy to check that (see \cite[Proposition 3.2]{R}) \[
	{\displaystyle \left({\frac {c}{p}}\right)} 
	= c^{\frac{q - 1}{2}\deg p}
	\] 
	for $c \in \mathbf{F}_{q}$ and $p \in \mathbf{F}_{q}[x]$ irreducible. Thus $\leg c p = 1$ if 
	$
	c \in (\mathbf{F}_{q}^{\times})^{2},
	$ 
	and 
	$
	\leg c p = (-1)^{\deg p}
	$ 
	if 
	$
	c \in 
	\mathbf{F}_{q}^{\times}\setminus
	(\mathbf{F}_{q}^{\times})^{2}.
	$ 
	By induction on the irreducible factors, 
	$
	\leg c m = 1
	$ 
	or $(-1)^{\deg m},$ for all $m$ of positive degree, according as $c$ is a square in $\mathbf{F}_{q}^{\times}$ or not.\end{para}
	
	\begin{para}
	The quadratic residue symbol also satisfies the reciprocity law \cite[Theorem 3.5]{R}: for $d, m \in \mathbf{F}_{q}[x]$ monic and non-constant, we have 
	\begin{equation*}
		\begin{split}
			{\displaystyle \left({\frac {d}{m}}\right)} 
			& = (-1)^{\frac{|d| - 1}{2}\frac{|m| - 1}{2}}
			{\displaystyle \left({\frac {m}{d}}\right)}\\
			& = (-1)^{\frac{q - 1}{2}\deg d \deg m}
			\,{\displaystyle \left({\frac {m}{d}}\right)}
		\end{split}
	\end{equation*} 
	where for a non-zero element $a \in \mathbf{F}_{q}[x],$ $|a| = q^{\deg a}$ denotes 	its norm. 
	\end{para}
	
	\begin{para}For $d \in \mathbf{F}_{q}[x]$ square-free, define 
	$
	\chi_{d}(m) 
	= \leg d m,
	$ 
	and extend it to all non-zero polynomials by setting 
	$ 
	\chi_{d}(c) = 1
	$ 
	for all $c \in \mathbf{F}_{q}^{\times}.$ To $\chi_{d}$ there is an associated $L$-function, first considered by E.\ Artin in his thesis \cite{A}, defined for $s \in \mathbf{C}$ with $\Re(s) > 1$ by 
	\[
	L(s, \chi_{d}) \;\,
	= \sum_{\substack{m \, \in \, \mathbf{F}_{q}[x] \\ m \text{ monic}}} \chi_{d}(m)|m|^{-s} 
	= \, \prod_{p} (1 - \chi_{d}(p)|p|^{-s})^{-1}
	\] 
	the product in the right-hand side being over all monic irreducibles. It is easy to see that indeed both the 
	sum and product converge absolutely when $\Re(s) > 1.$\end{para}\begin{para} If $d$ is monic and square-free of degree $D \ge 3,$ the $L$-function and the numerator 
	$
	P_{C_d}(t)
	$ 
	of the zeta function of $C_{d}$ defined by the affine model $y^{2} = d(x)$ are connected by 
	\[
	L(s, \chi_{d}) =  
	(1 - q^{- s})^{\delta_{D}} 
	P_{C_d}(q^{-s}), \;\;\; 
	\text{with $\delta_{D} = (1 + (-1)^{D})\slash 2$.}
	\] 
	This follows by combining Proposition 14.9 and Proposition 17.7 in \cite{R} with the well-known fact that the zeta function of a smooth, projective, and geometrically connected algebraic curve over a finite field is the same as the zeta function of its function field. If $d$ is monic of degree $D = 1, 2,$ then 
	$
	L(s, \chi_{d}) =  
	(1 - q^{- s})^{\delta_{D}},
	$ 
	and for $d \in \mathbf{F}_{q}^{\times}$ we have: 
	\[ 
	L(s, \chi_{d}) 
	= \zeta_{\mathbf{F}_{ 
				q}[x]}(s) 
	= \frac{1}{1 - q^{1 - s}} 
	\;\; \text{if $d \in (\mathbf{F}_{q}^{\times})^{2}$}
	\;\;\; \text{and} \;\;\; L(s, \chi_{d}) 
	= \frac{1}{1 + q^{1 - s}} 
	\;\; \text{if $d \notin (\mathbf{F}_{q}^{\times})^{2}$.}
	\] 
	Thus for $d$ monic and square-free of degree $D > 0,$ the $L$-function is a polynomial in $q^{-s}$ of degree $D - 1,$ and it satisfies the functional equation 
	\[
	(1 - q^{-s})^{-\delta_{D}}L(s, \chi_{d}) = 
	q^{(\delta_{D} + 1)
		(s - \frac{1}{2})}|d|^{\frac{1}{2} - s}
	(1 - q^{s - 1})^{-\delta_{D}} 
	L(1 - s, \chi_{d}). 	
	\] 
 \end{para}\begin{para}
	In what follows, it will be often convenient to substitute 
	$t : = q^{-s},$ and denote the $L$-function by 
	$
	\mathcal{L}(t, \chi_{d}).
	$ 
	  \end{para}
\subsection{A generating function for stable traces}\label{subsec: genfunc}
 \begin{para}Let $\mathbf{F}_{q}$ be a finite field of odd characteristic. 
	For $n > 0,$ let $\mathscr{P}_{n}$ denote the set of monic square-free polynomials of degree $n$ in $\mathbf{F}_{q}[x],$ and for $d \in \mathscr{P}_{n}$, let $\Theta_{d}$ denote the following multiset with $n-1$ elements: if $n$ is odd it is the eigenvalues occurring in the {F}robenius conjugacy class of the hyperelliptic curve $C_{d}\slash \mathbf{F}_{q}$ with affine equation $y^{2} = d(x)$; if $n$ is even then $\Theta_d$ contains in addition the number $1$. With this definition, we have $$\mathcal L(t,\chi_d) = \prod_{\omega \in \Theta_d} (1-\omega t)$$ for all $d$. \end{para}
 \begin{para}
Define for $n >0$ an element $\mathcal Z_n \in \widehat{\Lambda \otimes \mathbf C}$ by the formula
\[\mathcal Z_n = \sum_{d\in \mathscr P_n} \sum_\lambda s_\lambda(\Theta_d) \cdot s_{\lambda'},\] where $s_\lambda(\Theta_d)$ denotes the Schur polynomial in $n-1$ variables evaluated on the elements of $\Theta_d$.
The goal of this subsection is to study the limit $\lim_{n\to\infty} q^{-n}\cdot \mathcal Z_n$. In particular, the limit exists in the sense that its coefficients (say with respect to the basis of Schur polynomials) converge pointwise. We call this limit the generating function of stable traces. It may be written explicitly as follows:
\end{para}
\begin{thm}\label{thm:stable traces}The generating function of the stable traces is given by
\[\lim_{n \to \infty} q^{-n}\cdot \mathcal Z_n =\Exp\Big(q\, \Log
		\Big(q^{-1} + 
		\sum_{k \ge 0} h_{2k}\Big) 
		- 1\Big)
		\] 
		where $\Exp$ and $\Log$ are the plethystic exponential and logarithm, respectively.\end{thm}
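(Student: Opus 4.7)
The plan is to compute $\lim_n \mathcal Z_n$ coefficientwise in the basis $\{s_{\lambda'}\}$, obtain an explicit Euler product over monic irreducibles of $\mathbf{F}_q[x]$, and then identify this Euler product with the RHS via \cref{lemma:pleth id}. The first step is to apply the dual Cauchy identity (\S\ref{dual cauchy}) with the $x$-alphabet $\Theta_d$ (the $n-1$ reciprocal roots of $\mathcal L(t,\chi_d)$):
\[
\sum_\lambda s_\lambda(\Theta_d)\, s_{\lambda'} \,=\, \prod_{\omega\in\Theta_d}\prod_j (1+\omega y_j) \,=\, \prod_j \mathcal L(-y_j,\chi_d).
\]
Substituting the Euler product $\mathcal L(t,\chi_d) = \prod_p(1-\chi_d(p)t^{\deg p})^{-1}$ and exchanging orders of multiplication gives
\[
\mathcal Z_n \,=\, q^{-n}\!\sum_{d\in\mathscr P_n}\prod_p F_p(\chi_d(p)), \qquad F_p(\varepsilon) \,:=\, \sum_{k\geq 0}\varepsilon^k(-1)^{k\deg p}\psi_{\deg p}(h_k).
\]

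To establish coefficientwise convergence in $\widehat{\Lambda\otimes\mathbf C}$, fix $\lambda$; then the coefficient of $s_{\lambda'}$ in $\mathcal Z_n$ is a polynomial of bounded complexity in the partial sums $L_k(\chi_d) := \sum_{\deg m=k}\chi_d(m)$ for $k\leq|\lambda|$. Expanding these products and setting $M=m_1\cdots m_r$ reduces everything to computing $q^{-n}\sum_{d\in\mathscr P_n}\chi_d(M)$ for monic $M$ of bounded degree. A classical computation, using $\chi_d(p)^2 = \mathbf{1}_{p\nmid d}$ together with the Weil bound on $\mathcal L(t,\chi_d)$ to handle the non-square case, gives
\[
\frac{1}{q^n}\sum_{d\in\mathscr P_n}\chi_d(M) \,\longrightarrow\, \begin{cases}(1-q^{-1})\prod_{p\mid M}\tfrac{|p|}{1+|p|} & \text{if $M$ is a perfect square,}\\ 0 & \text{otherwise,}\end{cases}
\]
with an error of the form $O\bigl(q^{-n/2}\operatorname{poly}(\deg M)\bigr)$, sufficient to justify passage to the limit inside the Euler product coefficient by coefficient.

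Applying these averages prime by prime, odd-order moments of $\chi_d(p)$ vanish in the limit while even-order moments of order $\geq 2$ converge to $|p|/(1+|p|)$, so that
\[
\lim_n\mathbb E[F_p(\chi_d(p))] \,=\, 1+\frac{1}{1+|p|^{-1}}\sum_{k\geq 1}\psi_{\deg p}(h_{2k}).
\]
Combined with the density $|\mathscr P_n|/q^n\to 1-q^{-1}$, the conclusion is
\[
\lim_n\mathcal Z_n \,=\, (1-q^{-1})\prod_p\Bigl(1+\frac{1}{1+|p|^{-1}}\sum_{k\geq 1}\psi_{\deg p}(h_{2k})\Bigr).
\]
To identify this with the RHS, I would invoke \cref{lemma:pleth id} with $x=q$ and $1+y = q^{-1}+\sum_{k\geq 0}h_{2k}$: the exponent $\tfrac 1 n\sum_{d\mid n}\mu(n/d)q^d$ equals the number of monic irreducibles of degree $n$ in $\mathbf F_q[x]$, and the resulting Euler product matches the one above (the formal factor corresponding to the ``$-1$'' normalizes each local factor by $(1+|p|^{-1})^{-1}$ and contributes the overall $(1-q^{-1})$). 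Equivalently, the RHS is the specialization $z\mapsto q^{-1}$, $h_{2j}\mapsto q^j h_{2j}$ of the formula of Theorem A, and the identical manipulation produces the same Euler product.

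The main obstacle is the uniform control of the character sums $\sum_{d\in\mathscr P_n}\chi_d(M)$ in the second step: one must track the ``perfect-square main term plus Weil error'' decomposition across all tuples $(m_1,\dots,m_r)$ with $\sum\deg m_i\leq|\lambda|$, and argue that the total error contribution vanishes in the limit uniformly enough to commute the limit with the infinite Euler product in each Schur coefficient. This is ultimately a standard consequence of Deligne's Riemann Hypothesis for the function-field $L$-series $\mathcal L(t,\chi_d)$, but the combinatorial bookkeeping required to package this into convergence of each Schur coefficient separately is the delicate part of the argument.
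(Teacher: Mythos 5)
Your proposal takes essentially the same route as the paper: dual Cauchy identity to write $\mathcal Z_n(\mathrm t)=q^{-n}\sum_{d\in\mathscr P_n}\prod_i\mathcal L(-t_i,\chi_d)$, reduction to averages $q^{-n}\sum_{d\in\mathscr P_n}\chi_d(M)$ over squarefree $d$, quadratic reciprocity plus the Weil bound on the quadratic $L$-series $\mathcal L(\xi,\chi_u)$ to kill the non-perfect-square $M$, recognition of the resulting Euler product over monic irreducibles, and matching with the right-hand side via \cref{lemma:pleth id} (equivalently \cref{alternative form of gen series}). The ``delicate bookkeeping'' you rightly flag as the crux is exactly what \cref{changing-order-sum} and \cref{asympt-stability} make precise: rather than averaging prime by prime, the paper packages the whole coefficient into a generating function $\mathcal Z_\lambda(\xi)$, holomorphic in $|\xi|<q$ except for a simple pole at $\xi=1$ coming from $u=1$, and extracts the $\xi^n$-coefficient by shifting the contour to $|\xi|=q^{1/2}$ --- the residue at $\xi=1$ gives your perfect-square main term and the Weil bound on the shifted contour gives the explicit error $O(5^{|\lambda|}q^{|\lambda|-n/2})$, avoiding any appeal to asymptotic independence of the $\chi_d(p)$.
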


\begin{para}The generating series $\mathcal Z_n$ admits a cohomological interpretation, given in \cref{coh formula for Z}. Via this interpretation, \cref{thm:stable traces} follows from \cref{thmA} and \cref{thmB}, together with the bound on the unstable Betti numbers of the braid groups given in \cref{fuks}. However, in this subsection we give a direct proof of \cref{thm:stable traces} by analytic methods, with an explicit error term. While not logically necessary, since it follows from our topological results, we believe that the argument is enlightening and clarifies the difference between  \cref{thm:stable traces} and \cref{MPPRW}/\cref{thmC}.
 \end{para}

 \begin{rem} \cref{thm:stable traces} will be deduced from an asymptotic formula proven in \cref{asympt-stability}. The latter formula is in a sense not new, see, for instance, \cite{Rud, R-G} and \cite{BJ}, where the authors obtained similar asymptotics, but for averages of products of power sum polynomials in the eigenvalues of {F}robenius rather than products of elementary symmetric polynomials. 	However, any of these asymptotic formulas (especially that in \cite{R-G}, which is closely related to ours) is insufficient for our purposes, mainly due to the restriction to partitions whose parts grow at least logarithmically in the genus $g.$ 
	\end{rem} 

\begin{rem}\label{coh formula for Z}
    We may pull back the local system $\mathbb V$ to $\mathrm{Conf}_n(\mathbb A^1)$ from $\H_g^{1,0}$ (if $n=2g+1$ is odd), respectively from $\H_g^{0,1}$ (if $n=2g+2$ is even). Then by the Grothendieck--Lefschetz trace formula, $$\mathcal Z_n = \sum_\lambda \sum_k (-1)^k \mathrm{Tr}(\mathrm{Frob}_q | H^k_{c,\et}(\mathrm{Conf}_n(\mathbb A^1)_{\Fqbar} ,S^\lambda(\mathbb V)) \cdot s_{\lambda'}, $$
     since $\mathscr{P}_n = \mathrm{Conf}_n(\mathbb A^1)(\F_q)$, and the multiset $\Theta_d$ consists precisely of the eigenvalues of $\mathrm{Frob}_q$ on the stalk of $\mathbb V$ at a geometric point above $d$. We will not use this cohomological interpretation of $\mathcal Z_n$ in this subsection. By Poincar\'e duality, $q^{-n}\mathcal Z_n$ is similarly the trace of Frobenius on \'etale homology.  
\end{rem}

\begin{para}
 From the definition of $\mathcal Z_n$ we see that if we specialize $\mathcal Z_n$ to a symmetric function of {finitely} many variables $\mathrm t = t_1,t_2,\ldots, t_r$, then 
$$ \mathcal Z_n(\mathrm t) = \sum_{d \in \mathscr P_n} \prod_{i=1}^r \, \mathcal L(-t_i,\chi_d).$$
Indeed, by the dual Cauchy identity (\S\ref{dual cauchy}) we have for any $d \in \mathscr P_n$ that 
$$ \sum_\lambda s_\lambda(\Theta_d) s_{\lambda'}(t_1,\ldots,t_r) = \prod_{\omega \in \Theta_d} \prod_{i=1}^r (1+\omega t_i) = \prod_{i=1}^r \mathcal L(-t_i,\chi_d).$$
\end{para}

\begin{para}  
Now define
$$ \mathcal Z(\mathrm t,\xi) \,=\, \sum_n \mathcal Z_n(\mathrm t) \, \xi^n \, = \sum_{d \text{ monic \& sq. free}} \,\, \prod_{i=1}^r\, \mathcal L(-t_i,\chi_d) \, \xi^{\deg d}. $$
Write $\mathcal Z(\mathrm t,\xi) = \sum_{\lambda} \mathcal Z_\lambda(\xi)\mathrm t^\lambda$, where $\lambda = (\lambda_1,\ldots,\lambda_r)$ is a tuple of nonnegative integers. \end{para}

\begin{rem}
    If $n$ is odd and $\vert\lambda\vert$ is odd, then the coefficient of $\xi^n$ in $\mathcal Z_\lambda(\mathrm t)$ vanishes. Indeed, this follows by a point-counting argument: if 
	$
	c \in \mathbf{F}_{q}^{\times}\setminus
	(\mathbf{F}_{q}^{\times})^{2}
	$, then 
	$
	\mathcal{L}(t, \chi_{c d}) 
	= \mathcal{L}(- t, \chi_{d})
	$, and by pairing up the terms corresponding to $d(x)$ and $c^{-n}d(cx)$ we see that the sum vanishes.  It can also be seen via  \cref{odd vanishing} and the cohomological interpretation of $\mathcal Z_n$.
\end{rem}

\begin{lem}\label{changing-order-sum}
	Suppose $\lambda = (\lambda_1,\ldots,\lambda_r)$. Then
	$$\mathcal Z_\lambda(\xi) = (-1)^{\vert\lambda\vert}(1-q\xi^2) \sum  \frac{\mathcal L(\xi,\chi_{u})}{\prod_{p \mid u} (1-\xi^{2 \deg p})\prod_{p \mid v}(1+\chi_{u}(p)\xi^{\deg p})}$$
	where the sum is over all monic polynomials 
	$
	m_{1}, \ldots,
	m_{r}
	$
	with
	$
	\deg m_{i} =
	\lambda_{i},
	$
	and for each choice of $m_{i}$, 
	$1\le i\le r$, we write
	$
	m_{1} \cdots\, 
	m_{r}  =  uv^2
	$
	with $u, v$ monic and $u$
	square-free.
\end{lem}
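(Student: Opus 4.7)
The plan is to expand the $L$-functions in the definition of $\mathcal{Z}(\mathrm{t},\xi)$ as Dirichlet series, extract the coefficient of $t_1^{\lambda_1}\cdots t_r^{\lambda_r}$, and then evaluate the resulting inner sum over monic squarefree $d$ as a rational function in $\xi$ by combining quadratic reciprocity with Euler product manipulations.

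Concretely, using $\mathcal{L}(-t_i,\chi_d)=\sum_{m_i\text{ monic}}\chi_d(m_i)(-t_i)^{\deg m_i}$ and the complete multiplicativity of $\chi_d$, the coefficient of $t^\lambda$ in $\prod_i\mathcal{L}(-t_i,\chi_d)$ equals $(-1)^{|\lambda|}\sum_{\deg m_i=\lambda_i}\chi_d(n)$ with $n=m_1\cdots m_r$. Summing against $(\xi/q)^{\deg d}$ over monic squarefree $d$ and rescaling $\xi\mapsto q\xi$,
\[
\mathcal{Z}_\lambda(q\xi)=(-1)^{|\lambda|}\sum_{\substack{m_1,\ldots,m_r\\ \deg m_i=\lambda_i}}\ \sum_{d\text{ monic sq.\,free}}\chi_d(n)\,\xi^{\deg d}.
\]
Writing $n=uv^2$ with $u$ monic squarefree, multiplicativity yields $\chi_d(n)=\chi_d(u)\chi_d(v)^2=\chi_d(u)\,\mathbf{1}_{\gcd(d,v)=1}$. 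Swapping $\chi_d(u)=\chi_u(d)$ via function-field quadratic reciprocity (\cite[Theorem~3.5]{R}) converts the inner sum to the Euler product
\[
S(u,v):=\sum_{\substack{d\text{ monic sq.\,free}\\ \gcd(d,v)=1}}\chi_u(d)\,\xi^{\deg d}=\prod_{p\nmid v}\bigl(1+\chi_u(p)\xi^{\deg p}\bigr).
\]

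The heart of the proof is to recognize this Euler product as the right-hand side of the lemma. Applying $1+x=(1-x^2)/(1-x)$ prime by prime and using $\chi_u(p)^2=\mathbf{1}_{p\nmid u}$, one obtains
\[
S(u,v)=\frac{\prod_{p\nmid u,\,p\nmid v}(1-\xi^{2\deg p})}{\prod_{p\nmid u,\,p\nmid v}(1-\chi_u(p)\xi^{\deg p})}.
\]
Invoking $\prod_p(1-\xi^{2\deg p})=\zeta_{\mathbf{F}_q[x]}(\xi^2)^{-1}=1-q\xi^2$, the numerator equals $(1-q\xi^2)$ divided by $\prod_{p\mid u}(1-\xi^{2\deg p})\prod_{p\mid v,\,p\nmid u}(1-\xi^{2\deg p})$. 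The denominator, extended to all $p\nmid u$, equals $\mathcal{L}(\xi,\chi_u)^{-1}\prod_{p\mid v,\,p\nmid u}(1-\chi_u(p)\xi^{\deg p})$. Combining the two via the identity $1-\xi^{2\deg p}=(1-\chi_u(p)\xi^{\deg p})(1+\chi_u(p)\xi^{\deg p})$ for $p\nmid u$ (and the trivial $1+\chi_u(p)\xi^{\deg p}=1$ when $p\mid u$) yields
\[
S(u,v)=\frac{(1-q\xi^2)\,\mathcal{L}(\xi,\chi_u)}{\prod_{p\mid u}(1-\xi^{2\deg p})\prod_{p\mid v}(1+\chi_u(p)\xi^{\deg p})},
\]
and summing over $(m_1,\ldots,m_r)$ produces the claimed identity.

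The main subtle point is the reciprocity sign $(-1)^{\frac{q-1}{2}\deg d\deg u}$ that we absorbed when passing from $\chi_d(u)$ to $\chi_u(d)$: it is trivial unless $q\equiv 3\pmod 4$ and $\deg u\equiv |\lambda|\pmod 2$ is odd, in which case it effectively substitutes $\xi\mapsto -\xi$ inside the sum over $d$. This sign must be tracked carefully through the Euler product manipulations above; the remaining steps are purely formal consequences of $(1-x)(1+x)=1-x^2$ and the factorization of the zeta function.
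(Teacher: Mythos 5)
Your overall route is the same as the paper's: expand the $L$-functions, interchange summation, apply quadratic reciprocity, recognize the inner sum over $d$ as an Euler product, and rewrite it using $1+x=(1-x^2)/(1-x)$ together with $\prod_p(1-\xi^{2\deg p})=1-q\xi^2$. The Euler-product algebra does land on the correct final expression for $S(u,v)$.

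The gap is the reciprocity sign $(-1)^{\frac{q-1}{2}\deg d\cdot|\lambda|}$, which you flag but leave unresolved. When $q\equiv 3\pmod 4$ and $|\lambda|$ is odd the sign is $(-1)^{\deg d}$, which, as you say, replaces $\xi$ by $-\xi$ in the inner sum over $d$. But if one actually tracks that through the subsequent algebra, the output is $\mathcal L(-\xi,\chi_u)$ and $\prod_{p\mid v}(1+\chi_u(p)(-\xi)^{\deg p})$ in place of $\mathcal L(\xi,\chi_u)$ and $\prod_{p\mid v}(1+\chi_u(p)\xi^{\deg p})$ --- a genuinely different rational function of $\xi$ --- so ``tracking carefully'' on its own does not reproduce the stated formula. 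The paper disposes of this by invoking the remark just above the lemma: if $\deg d$ and $|\lambda|$ are both odd, the corresponding coefficient of $\mathcal Z_\lambda(\xi)$ vanishes (by pairing $d(x)$ with $c^{-\deg d}d(cx)$, $c\in\mathbf{F}_q^\times$ a nonsquare). Since the reciprocity sign is $\neq 1$ only when $\deg d$ and $|\lambda|$ are both odd, swapping the two arguments of the quadratic residue symbol changes only those $\xi$-coefficients that are already zero, so the sign may be set to $+1$ before doing any Euler-product computation. You need to invoke that vanishing; without it the case $q\equiv 3\pmod 4$, $|\lambda|$ odd is open. (A small separate slip: in ``The denominator, extended to all $p\nmid u$, equals $\mathcal{L}(\xi,\chi_u)^{-1}\prod_{p\mid v,\,p\nmid u}(1-\chi_u(p)\xi^{\deg p})$'' the product over $p\mid v$, $p\nmid u$ should be a divisor, not a factor, since $\prod_{p\nmid u}(1-\chi_u(p)\xi^{\deg p})=\mathcal{L}(\xi,\chi_u)^{-1}$ is the \emph{enlarged} product; your displayed formula for $S(u,v)$ is nevertheless correct, so this is only a typo in the surrounding prose.)
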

\begin{proof}
     This is merely an application of the quadratic reciprocity law
     \[
			{\displaystyle \left({\frac {d}{m}}\right)} 
			= (-1)^{\frac {q-1} 2 \deg d \deg m }
			\,{\displaystyle \left({\frac {m}{d}}\right)}.
		\]
	To see this, assume $|t_{i}|< 1\slash q$ 
		for $i = 1, \ldots, r,$ and $|\xi| < 1\slash q.$ Then, by expressing each 
		$
		\mathcal{L}(-t_{i}, \chi_{d})
		$ 
		by its defining series, interchanging the order of summation and applying 
		quadratic reciprocity, we can write 
		\[
			\mathcal{Z}(\mathrm{t},\xi)  \;\; 
			= \sum_{m_{1}, \ldots, \, m_{r} \text{ monic}}\;  
			\prod_{i = 1}^r (-t_{i})^{\deg m_{i}} 
			\sum_{d \text{ monic \& sq.\ free}}\, {\displaystyle \left({\frac{\prod_{i = 1}^{r} m_{i}}{d}}\right)} \xi^{\deg d}.
		\]
  Here we used that the sign in the quadratic reciprocity can always be taken to be $1$: if not, then $\deg d$ and $\deg m =\vert\lambda\vert$ are both odd, but in this case the corresponding coefficient of $\mathcal Z(t,\xi)$ vanishes. 
  For fixed 
		$
		m_{1}, \ldots, m_{r}
		$, 
		write 
		$
		m_{1} \cdots\, 
		m_{r} 
		= uv^2
		$ 
		with $u, v$ monic and $u$ square-free. Then we can express 
		the inner sum as 
		\begin{equation*}
			\sum_{d \text{ monic \& sq.\ free} }\, {\displaystyle \left({\frac{\prod_{i = 1}^{r} m_{i}}{d}}\right)} \xi^{\deg d}
				 = \sum_{d \text{ monic \& sq.\ free}}
				\chi_{uv^2}(d)\, \xi^{\deg d} = \prod_{p \, \nmid\,  
					uv}
				\big(1 + \chi_{u}(p)\, \xi^{\deg p}\big) 				
		\end{equation*} 
		the product being over all monic irreducibles not dividing 
		$uv$. 
		For each such $p$, we write 
		\[
		1 + \chi_{u}(p)\, \xi^{\deg p}
		= \frac{1 - \xi^{2\deg p}}{1 - \chi_{u}(p)\xi^{\deg p}}, 
		\] 
		and thus 
		\begin{equation*}
				\sum_{d\text{ monic \& sq.\ free} }\, {\displaystyle \left({\frac{\prod_{i = 1}^{r} m_{i}}{d}}\right)}\xi^{\deg d} 
				 = \big(1 - q \xi^{2}\big)
				\frac{\mathcal{L}(\xi, \chi_{u})}
				{\prod_{p \mid u}\big(1 - \xi^{2\deg p}\big) \prod_{p \mid v}
				\big(1 + \chi_{u}(p)\xi^{\deg p}\big)}.\end{equation*} 
  Here we used that $\prod_p (1-\xi^{2\deg p}) = (1-q\xi^2)$. The result follows.
\end{proof}

\begin{prop} \label{asympt-stability} For $n>0$ and $\lambda= (\lambda_1,\ldots,\lambda_r)$  
  the coefficient of $\xi^n$ in $\mathcal Z_\lambda(\xi)$ is given by \[
		 q^{n}
		\, \cdot 
  \big(1-q^{-1}\big)\sum_{\substack{
				m_{1} \cdots\, 
				m_{r}  =
				v^2 \\
				\deg m_{i} =
				\lambda_{i}}}
		\; \prod_{p \mid v} \bigg(1 + \frac{1}{|p|}\bigg)^{ - 1}
		\; +\;\; O\Big(5^{|\lambda|}		
		q^{|\lambda| + n\slash 2}
		\Big).
		\]
  Note in particular that the sum is empty if $\vert\lambda\vert$ is odd.
	\end{prop}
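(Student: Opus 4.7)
The starting point is Lemma~\ref{changing-order-sum}, which expresses $\mathcal{Z}_\lambda(q\xi)$ as a finite sum over tuples $(m_1,\ldots,m_r)$ of rational functions in $\xi$. Since $[\xi^n]\mathcal{Z}_\lambda(\xi) = q^{-n}[\xi^n]\mathcal{Z}_\lambda(q\xi)$, the proof extracts $[\xi^n]$ of each summand and divides by $q^n$. I would split the sum according to whether the squarefree part $u$ of $m_1\cdots m_r$ is constant or not. For the tuples with $u=1$, which force $m_1\cdots m_r = v^2$ (and in particular $|\lambda|$ even), the corresponding summand is
\[
\frac{1-q\xi^2}{(1-q\xi)\prod_{p\mid v}(1+\xi^{\deg p})},
\]
a rational function with a simple pole only at $\xi = 1/q$ (the remaining singularities lie on $|\xi|=1$). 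A direct residue computation at $\xi=1/q$ produces the pole-part contribution to $[\xi^n]$ equal to $q^n(1-q^{-1})\prod_{p\mid v}(1+|p|^{-1})^{-1}$. Summing over tuples and dividing by $q^n$ yields exactly the claimed main term.

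For the error, I would bound the coefficient of $\xi^n$ arising from the ``holomorphic remainder'' of each $u=1$ summand (after subtracting the pole at $\xi=1/q$), together with the entirety of each $u\neq 1$ summand. In every case the resulting function is holomorphic in $|\xi|<1$, since the polynomial $\mathcal{L}(\xi,\chi_u)$ has no poles for $u\neq 1$ and for $u=1$ the pole at $\xi=1/q$ has been removed. Apply Cauchy's integral formula on the contour $|\xi|=1/\sqrt{q}$; the factor $R^{-n}=q^{n/2}$ is the source of the $q^{-n/2}$ gain (after dividing by $q^n$). Three estimates then suffice on this contour: (i) Weil's Riemann Hypothesis gives $\mathcal{L}(\xi,\chi_u)=\prod_{i=1}^{\deg u - 1}(1-\alpha_i\xi)$ with $|\alpha_i|=\sqrt{q}$, hence $|\mathcal{L}(\xi,\chi_u)|\leq 2^{\deg u - 1}\leq 2^{|\lambda|}$; (ii) the denominator factors satisfy $|1-\xi^{2\deg p}|\geq 1-q^{-\deg p}$ and $|1+\chi_u(p)\xi^{\deg p}|\geq 1 - q^{-\deg p/2}$, both bounded below by $1-q^{-1/2}$; (iii) the total number of primes dividing $uv$ is at most $\deg u + \deg v \le |\lambda|$. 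Combining these bounds and (for $u=1$) the $O(1)$ bound on the subtracted pole part, each summand contributes at most $C^{|\lambda|}q^{n/2}$ to $[\xi^n]$, where for $q\ge 3$ one checks that $C\le 5$ works (taking $C$ slightly larger if needed to absorb multiplicative constants). Since there are $\prod_i q^{\lambda_i}=q^{|\lambda|}$ tuples, the total error in $[\xi^n]\mathcal{Z}_\lambda(q\xi)$ is $O(5^{|\lambda|}q^{|\lambda|+n/2})$, which becomes $O(5^{|\lambda|}q^{|\lambda|-n/2})$ after dividing by $q^n$.

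The main obstacle is the careful choice of contour and the bookkeeping of constants needed to arrive at the clean exponent $5^{|\lambda|}$: the contour $|\xi|=1/\sqrt{q}$ is optimal because it balances the Weil bound on $\mathcal{L}(\xi,\chi_u)$ (which is sharp on this circle) against the denominator estimates, and crucially it keeps the radius strictly below $1$ (so the denominator factors remain bounded below uniformly in $q\ge 3$) and strictly above the zero locus of $\mathcal{L}$. The residue extraction for the main term and the enumeration of tuples are routine by comparison.
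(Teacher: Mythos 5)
Your proposal is correct and essentially reproduces the paper's proof: both arguments extract the main term as the residue of the $u=1$ contribution at $\xi=1/q$ (equivalently $\xi=1$ for $\mathcal Z_\lambda(\xi)$, via the residue theorem on the annulus $\varepsilon\le|\xi|\le q^{1/2}$), and both bound the error by estimating $\mathcal Z_\lambda(q\xi)$ on $|\xi|=q^{-1/2}$ using the Weil bound $|\mathcal L(\xi,\chi_u)|\le 2^{\deg u-1}$, the denominator lower bound $(1-q^{-1/2})^{\deg u+\deg v}$, and the count of $q^{|\lambda|}$ tuples. The only difference is presentational --- you subtract the pole part summand-by-summand and apply Cauchy's formula on the inner circle, whereas the paper sums first and invokes the residue theorem once on the annulus --- and one small inaccuracy in your motivation (the contour $|\xi|=q^{-1/2}$ is exactly at, not ``strictly above,'' the zero locus of $\mathcal L$, though this is harmless since those are zeros of the numerator, not poles).
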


	\begin{proof} First notice that the coefficient $\mathcal{Z}_{\lambda}(\xi)$ of $\mathrm{t}^{\lambda}$ in $\mathcal{Z}(\mathrm{t}, \xi)$ (see \cref{changing-order-sum}) is a holomorphic function in the open disk $|\xi| < 1,$ except for the factor $\mathcal L(\xi,\chi_{u})$ which picks up a simple pole at $\xi = 1/q$ when $u=1$. Thus, the coefficient of $\xi^n$ in $\mathcal Z_\lambda(\xi)$ is given by \[
		\frac{1}{2 \pi \sqrt{-1}}\;\oint\limits_{|\xi| \, = \, \varepsilon} 
		\frac{\mathcal{Z}_{\lambda}(\xi)}
		{\xi^{n+1}}\, d\xi 
		\] where $0<\varepsilon<1/q$. Consider the integral 
		\[
		\frac{1}{2 \pi \sqrt{-1}}\oint_{\partial \mathscr{A}} 
		\frac{\mathcal{Z}_{\lambda}(\xi)}
		{\xi^{n+1}}\, d\xi
		\] 
		where 
		$
		\mathscr{A} =
		\{\xi \in \mathbf{C} : \varepsilon \le |\xi| \le 
		q^{ -1\slash 2}\}.
		$ 
  Note that 
		\begin{equation} \label{eq: remainder}
			\frac{1}{2 \pi}\left|\;\, 
			\oint\limits_{|\xi| \, = \, q^{
						-1\slash 2}} 
			\frac{\mathcal{Z}_{\lambda}(\xi)}
			{\xi^{n+1}}\, d\xi 
			\, \right|
			\le q^{n/2}
			\cdot \sup_{|\xi| \, = \, q^{
						-1\slash 2}}|\mathcal{Z}_{\lambda}(\xi)|.
		\end{equation} 
		We now bound $\sup_{|\xi| \, = \, q^{-
						1\slash 2}}|\mathcal{Z}_{\lambda}(\xi)|$ using \cref{changing-order-sum}. When $|\xi| = q^{\, - 1\slash 2}$ we have the following simple estimates: {\begin{align*}
	|\mathcal{L}(\xi, \chi_{d})|
	 &= \big|1 - q\xi\big|^{-1} \leq (\sqrt{3} - 1)^{ - 1} \quad (\text{if $d = 1$ and $q \ge 3$}), \\
	|\mathcal{L}(\xi, \chi_{d})|
	 &=  \prod_{\omega \in \Theta_d} \big|1 - \omega\xi\big| \leq 2^{\deg d - 1} \quad (\text{if $d \ne 1$})
\end{align*} 
and 
\[
\prod_{p \mid u} |1-\xi^{2\deg p}| \prod_{p\mid v} |1+\chi_{u}(p)\xi^{\deg p}|  \geq (1-q^{-1/2})^{\deg u + \deg v} \geq (1-q^{-1/2})^{\vert\lambda\vert}. 
\]
Thus, for $|\xi| = q^{\, - 1\slash 2},$ we have \begin{equation*}
	\begin{split}
		|\mathcal{Z}_{\lambda}(\xi)|
		& \le 2 \sum
		\sup_{|\xi| \, = \, q^{\, -
				1\slash 2}} \bigg(\frac{|\mathcal{L}(\xi, \chi_{u})|}
		{\prod_{p \mid u}\big|1 - \xi^{2\deg p}\big|
			\prod_{p \mid v}
			\big|1 + \chi_{u}(p)\xi^{\deg p}\big|}\bigg)\\
		& \leq \bigg(\frac{2}{\sqrt{3} - 1}\bigg)\bigg(\frac{2}{1 - q^{-1/2}}\bigg)^{|\lambda|} 
		\cdot \, \#\{(m_{1}, \ldots,
		m_{r})
		:\text{$\deg m_{i} =
			\lambda_{i}$ 
			for $i=1, \ldots, r$}\}\\
		& \leq \bigg(\frac{2}{\sqrt{3} - 1}\bigg)5^{|\lambda|} q^{|\lambda|}.
	\end{split}
\end{equation*} 
Accordingly the integral \eqref{eq: remainder} is 
$
\ll 
5^{|\lambda|} 
q^{|\lambda| + n\slash 2}
$.} On the other hand, by the residue theorem we have
		\begin{equation*}
			\begin{split}
				\frac{1}{2 \pi \sqrt{-1}}\oint_{\partial \mathscr{A}} 
				\frac{\mathcal{Z}_{\lambda}(\xi)}
				{\xi^{n+1}}\, d\xi
				& = q^{n+1}
				\operatorname{Res}_{\xi=1/q} \mathcal{Z}_{\lambda}(\xi) \\
				& = - \, q^n \,
				\bigg(1 - \frac{1}{q}\bigg)
				\sum_{\substack{
						m_{1} \cdots\, 
						m_{r}  =
						v^2 \\
						\deg m_{i} =
						\lambda_{i}}}
				\; \prod_{p \mid v}
				\bigg(1 + \frac{1}{|p|}\bigg)^{ - 1}
			\end{split}
		\end{equation*}
		and the asymptotic formula follows. This completes the proof.
	\end{proof} 
    
    { \begin{rem} While this does not change the quality of our main result, it is worth noting that the $5^{|\lambda|}$ term in the error term of the asymptotic formula in \cref{asympt-stability} is not optimal. 
	The following argument was kindly pointed out to us by an anonymous referee: 
	the factors depending on $u$ and $v$ can be made subexponential by just observing 
	that the exponent $\deg u+ \deg v$ can be replaced by the number of prime
factors of $u$ plus the number of prime factors of $v$, which can be
bounded using the prime number theorem. For the $L$-values, 
one writes the logarithm as a power series in the variable $\xi$, and bounds 
each coefficient separately, using the Riemann hypothesis to bound the coefficients 
of large powers of $\xi$, and the trivial bound for the Dirichlet character sum to bound the 
coefficients of small powers of $\xi$.
\end{rem}} 

\begin{para}
    We may now prove \cref{thm:stable traces}. 
\end{para}

\begin{proof}[Proof of \cref{thm:stable traces}]
    Let $\mu(n)$ be the usual {M}\"obius function, and let
	$
	i_{n}(q) = n^{-1} \sum_{l\mid n} 
	\mu(n\slash l) q^{l} 
	$
	be the number of monic irreducibles of degree $n$ in $\mathbf{F}_{q}[x].$
	Then we have by \cref{asympt-stability}
	\begin{equation*}
		\begin{split}
			\lim_{n\to\infty}q^{-n} \cdot \mathcal Z_n(\mathrm t) \;\, & = (1-q^{-1})\sum_{m_{1} \cdots\, 
				m_{r}  =
				v^{2}}\;
			\prod_{p \mid v}
			\big(1 + |p|^{-1}\big)^{-1}
			\cdot \, t_{1}^{\deg m_{1}}
			\cdots\,  t_{r}^{\deg
				m_{r}}\\
			&= (1-q^{-1})\prod_{p}\Bigg(1 \, + \,
			\big(1 + |p|^{-1}\big)^{-1}
			\cdot \sum_{k > 0}\sum_{\substack{k_{1}, \ldots,\,
					k_{r} \ge 0\\ 
					k_{1} + \cdots + k_{r} = 2k}}
			\prod_{j = 1}^{r}
			t_{j}^{k_{j}\deg p}\Bigg)\\
			& = (1-q^{-1})\prod_{n > 0} \Big(1 \, + \,
			\big(1 + q^{-n}\big)^{-1}
			\sum_{k > 0}h_{2k}(
			\mathrm{t}^{n})\Big)^{ i_{n}(q)}
		\end{split}
	\end{equation*}
	where
	$
	\mathrm{t}^{n}
	: = (t_{1}^{n}, \ldots,
	t_{r}^{n}).
	$ 
	The result follows by comparing this expression with \cref{alternative form of gen series}.
\end{proof}

{\begin{rem} It is easy to see that \cref{asympt-stability} together with \cref{thmB} implies \cref{thmA}. Indeed, consider the coefficient of $s_{\lambda'}$ in $q^{-n}\mathcal Z_n$, and express it using the Grothendieck--Lefschetz trace formula and Poincar\'e duality in terms of the trace of Frobenius on \'etale homology (see \cref{coh formula for Z}). As $n \to \infty$, the limit on one side of the identity is computed using \cref{asympt-stability}, whereas the other side approaches the coefficient of $s_{\lambda'}$ of the Poincar\'e series of the stable homologies in \cref{thmA} under the substitutions $z \leadsto q^{-1}$ and $s_{\lambda'} \leadsto q^{|\lambda'|/2}s_{\lambda'}$. However, we do not see a way to prove the purity of the stable homology without also computing the stable homology. 
\end{rem}
} 

\subsection{Asymptotics for moments from homological stability}\label{subsec: moments from hom stab}

     \newcommand{\Pg}{\mathscr{P}_{2g+1}}
\newcommand{\uT}{\overline{\Theta}}

\begin{para}We will prefer to think about the representation theory of the usual symplectic groups, rather than the odd symplectic groups. For this reason, we work consistently in this section with the moduli spaces $\H_g^{1,0}$ and $\H_{g,1,0}$ rather than $\H_g^{0,1}$ and $\H_{g,0,1}$. We will also find it convenient to consider the \emph{unitarized} conjugacy class of Frobenius. For $d \in \Pg$ we write $\uT_d$ for $q^{-1/2}\Theta_d$, which we may think of as a conjugacy class in $\operatorname{USp}(2g)$. Then $\uT_d$ consists of the eigenvalues of Frobenius on the stalk above the point $[C_d]$ of $\mathbb V(\tfrac 1 2)$, the half-integral Tate twist  of the standard local system.\end{para}

\begin{defn}
	Introduce the notation 
 $$ \mathrm{tr}_\lambda(g) = \frac{1}{q^{2g+1}}\sum_{d\in \Pg}s_{\langle\lambda\rangle}(\uT_d), $$
 where $s_{\langle\lambda\rangle}$ denotes a symplectic Schur function.  We also set 
 $$ T_\lambda = \lim_{g\to\infty} \mathrm{tr}_\lambda(g).$$
\end{defn}

\begin{para}Applying the Grothendieck--Lefschetz trace formula as in \cref{coh formula for Z} we have 
    $$\mathrm{tr}_\lambda(g) = \sum_k (-1)^k \mathrm{Tr}(\mathrm{Frob}_q \mid H_k(\H_{g}^{1,0} \otimes \overline{\mathbf{F}}_q, \mathbb V_\lambda(\tfrac{\vert\lambda\vert} 2)))$$
    and 
    $$T_\lambda = \sum_k (-1)^k \mathrm{Tr}(\mathrm{Frob}_q \mid H_k(\H_{\infty}^{1,0} \otimes \overline{\mathbf{F}}_q, \mathbb V_\lambda(\tfrac{\vert\lambda\vert} 2))).$$
    Here we use Grothendieck--Lefschetz for stacks \cite[Corollary 6.4.10]{Beh}.
\end{para}


\begin{para}In the previous subsection we used crucially that
$$ \mathcal Z_n(t_1,\ldots,t_r)= \sum_{d\in \mathscr P_n} \prod_{i=1}^r \mathcal L(-t_i,\chi_d);$$
this was a consequence of the dual Cauchy identity. As explained in \S\ref{no symplectic cauchy} there is a perfect symplectic analogue of the dual Cauchy identity when one considers symmetric functions in \emph{infinitely} many variables, but it does not specialize in a useful way to finitely many variables. As an adequate substitute we will use an identity due to Jimbo--Miwa, see 
\cite{jimbo-miwa}, \cite[Theorem 3.8.9.3]{How}, and \cite[Lemma 4]{bump-gamburd}.
\end{para}
\begin{defn}\label{defn lambda-dag}Let $\lambda \subseteq (r^{g})$ be a partition, i.e.\ $\lambda_1 \leq r$ and $\length(\lambda)\leq g$. We set 
	$
	\lambda^{\dag} : = 
	(g - \lambda_{r}' \ge \cdots 
	\ge g - \lambda_{1}'), 
	$ 
	where the non-zero integers among $\lambda_{j}',$ 
	$1 \le j \le r,$ are the parts of the conjugate partition $\lambda'$ of $\lambda.$ If we want to make the dependence of $\lambda^\dag$ on $g$ explicit, we write $\lambda^\dag(g)$.
\end{defn}

\begin{thm}[Jimbo--Miwa]\label{eq: Howe-duality}
	\[
	\prod_{i=1}^g \prod_{j=1}^r (x_i+x_i^{-1}+t_j+t_j^{-1}) = \sum_{\lambda \subseteq (r^g)} s_{\langle \lambda\rangle} (x_1^\pm,\ldots,x_g^\pm) s_{\langle \lambda^\dag\rangle}(t_1^\pm,\ldots,t_r^\pm).
	\]
\end{thm}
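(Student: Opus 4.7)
The plan is to verify the identity by multiplying both sides by the symplectic Weyl denominators $W_g(x):=\det(x_i^{g-j+1}-x_i^{-(g-j+1)})_{1\leq i,j\leq g}$ and $W_r(t)$, and exhibiting both sides as a single Laplace expansion of a $(g+r)\times(g+r)$ determinant. Recall from \S\ref{wcf} that $W_g(x)$ admits the Vandermonde-like factorization $\prod_i(x_i-x_i^{-1})\cdot\prod_{1\leq i<i'\leq g}(a_i-a_{i'})$, where $a_i=x_i+x_i^{-1}$, and similarly for $W_r(t)$ with $b_j=t_j+t_j^{-1}$.

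For the right-hand side, I would use the Weyl character formula \S\ref{wcf} to write each summand, multiplied by $W_g(x)W_r(t)$, as a product of two bialternants with index sets $\alpha_j(\lambda)=\lambda_j+g-j+1$ and $\beta_j(\lambda)=\lambda^\dag_j+r-j+1$. Unwinding the definition of $\lambda^\dag$ in \cref{defn lambda-dag}, the map $\lambda\mapsto\{\alpha_1(\lambda),\ldots,\alpha_g(\lambda)\}$ is a bijection between partitions $\lambda\subseteq(r^g)$ and $g$-element subsets of $\{1,2,\ldots,g+r\}$, with $\{\beta_1(\lambda),\ldots,\beta_r(\lambda)\}$ being the complementary $r$-subset. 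Hence, after multiplication by $W_g(x)W_r(t)$, the right-hand side becomes (up to an overall sign) the Laplace expansion along the first $g$ rows of
\[
D:=\det(y_i^k-y_i^{-k})_{1\leq i,k\leq g+r},\qquad y=(x_1,\ldots,x_g,t_1,\ldots,t_r).
\]

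For the left-hand side, I would combine the Vandermonde factorizations of $W_g(x)$ and $W_r(t)$ with the factor $\prod_{i,j}(a_i+b_j)$ and recognize the result as a complete Vandermonde-type product in the variable set $(a_1,\ldots,a_g,-b_1,\ldots,-b_r)$. Using the Chebyshev-type identity $y^k-y^{-k}=(y-y^{-1})\,u_{k-1}(y+y^{-1})$, where $u_{k-1}$ is a monic polynomial of degree $k-1$, one verifies directly that $D=\prod_i(y_i-y_i^{-1})\prod_{1\leq i<i'\leq g+r}(z_i-z_{i'})$ with $z_i=y_i+y_i^{-1}$. The substitution $b_j\leadsto-b_j$ required to turn $\prod(a_i+b_j)$ into the Vandermonde factors $(a_i-(-b_j))$ accounts for the discrepancy between the two determinants up to an overall sign depending only on $g$ and $r$, which is then absorbed into the Laplace expansion signs on the right.

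The main obstacle is the sign bookkeeping: tracking the sign of the Laplace expansion across the $\binom{g+r}{g}$ subsets, the signs from $b_j\leadsto-b_j$ in the $r\times r$ Vandermonde, and the sign discrepancy between $W_r(t)$ and its counterpart in the rearranged Vandermonde. Since this is routine if tedious, and since the identity is classical in representation theory (it is the character identity underlying the $(\Sp(2g),\Sp(2r))$ Howe duality for the spin representation of $\mathrm{SO}(4gr)$), the cleanest alternative is to cite the statement directly from \cite{jimbo-miwa}, \cite[Theorem 3.8.9.3]{How}, or \cite[Lemma 4]{bump-gamburd}, all of which establish it by essentially this method.
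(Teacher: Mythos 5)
The paper gives no proof of this theorem: it is stated as a citation to \cite{jimbo-miwa}, \cite[Theorem 3.8.9.3]{How}, and \cite[Lemma 4]{bump-gamburd}, which is explicitly how you close your own proposal. So your sketch goes beyond what the paper does.

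That said, the sketch is sound, and it reproduces the method actually used in the cited references (Bump--Gamburd's Lemma 4 in particular proceeds exactly this way): multiply through by the symplectic Weyl denominators, factor them into Vandermonde determinants in the Chebyshev variables $a_i = x_i + x_i^{-1}$ and $b_j = t_j + t_j^{-1}$ via $y^k - y^{-k} = (y-y^{-1})\,u_{k-1}(y+y^{-1})$, and recognize both sides as the Laplace expansion of the $(g+r)\times(g+r)$ antisymmetrized power-sum matrix. The one point you glide over, but which has to be tracked carefully, is that $W_g(x)W_r(t)\prod_{i,j}(a_i+b_j)$ produces Vandermonde factors $\prod(a_i - (-b_j))$, so one first proves the identity with a sign $(-1)^{gr-|\lambda|}$ on the right and then performs the involution $t_j \mapsto -t_j$ under which $s_{\langle\mu\rangle}(t^\pm) \mapsto (-1)^{|\mu|}s_{\langle\mu\rangle}(t^\pm)$ and $|\lambda^\dagger| = gr - |\lambda|$; this is the ``routine sign bookkeeping'' you flag. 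The index combinatorics is also correct: $\lambda \mapsto \{\lambda_j + g - j + 1 : 1\le j\le g\}$ is the standard bijection from $\lambda\subseteq(r^g)$ onto $g$-subsets of $\{1,\dots,g+r\}$, and \cref{defn lambda-dag} is precisely tuned so that $\{\lambda^\dagger_j + r - j + 1\}$ is the complementary $r$-subset. In short, the proposal is a correct sketch of the literature's proof; the paper simply treats the identity as known.
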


\begin{cor}\label{geom-expr-moments}  For complex variables 
	$
	t_{1}, \ldots, t_{r},
	$ 
	we have 
	\[
	\frac{1}{q^{2g+1}}
	\sum_{d \in \Pg} 
	\prod_{j = 1}^{r} \mathcal{L}(q^{-1/2}t_{j}, \chi_{d})
	\, = \, (t_{1}\cdots \,  t_{r})^{g}
	\!\!\sum_{\lambda \subseteq (r^{g})}
	\mathrm{tr}_{\lambda}(g)
	s_{\langle \lambda^{\dag} \rangle}\big( t_{1}^{\pm 1}\!, \ldots,  t_{r}^{\pm 1}\big).
	\] 
	
\end{cor}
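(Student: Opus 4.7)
The plan is to deduce this identity from the Jimbo--Miwa identity (\cref{eq: Howe-duality}) by a direct calculation, with the key step being a change of variables. First I would rewrite the $L$-function in a form that mirrors the Jimbo--Miwa left-hand side. Writing $\overline{\Theta}_d = \{\alpha_1, \alpha_1^{-1}, \ldots, \alpha_g, \alpha_g^{-1}\}$ for the normalized Frobenius eigenvalues (so that $\Theta_d = q^{1/2}\overline{\Theta}_d$), we have
\[
\mathcal L(q^{-1/2}t_j, \chi_d) \,=\, \prod_{i=1}^g (1-\alpha_i t_j)(1-\alpha_i^{-1} t_j).
\]
The identity $(1-\alpha_i t_j)(1-\alpha_i^{-1}t_j) = t_j\bigl((t_j+t_j^{-1}) - (\alpha_i+\alpha_i^{-1})\bigr)$ lets us pull out a factor of $t_j$ from each of the $g$ factors, yielding
\[
\prod_{j=1}^r \mathcal L(q^{-1/2}t_j,\chi_d) \,=\, (t_1\cdots t_r)^g \prod_{i,j}\bigl((t_j+t_j^{-1}) - (\alpha_i+\alpha_i^{-1})\bigr).
\]

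Next I would apply Jimbo--Miwa with the substitution $x_i = -\alpha_i$ (so $x_i + x_i^{-1} = -(\alpha_i + \alpha_i^{-1})$). This gives
\[
\prod_{i,j}\bigl((t_j+t_j^{-1}) - (\alpha_i+\alpha_i^{-1})\bigr) \,=\, \sum_{\lambda \subseteq (r^g)} s_{\langle\lambda\rangle}(-\alpha_1^{\pm}, \ldots, -\alpha_g^{\pm})\, s_{\langle\lambda^\dag\rangle}(t_1^\pm,\ldots,t_r^\pm).
\]
Since $s_{\langle\lambda\rangle}$ is homogeneous of degree $|\lambda|$ in the $2g$ variables, the first factor equals $(-1)^{|\lambda|} s_{\langle\lambda\rangle}(\overline{\Theta}_d)$. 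Averaging over $d \in \mathscr P_{2g+1}$ and dividing by $q^{2g+1}$ produces
\[
\frac{1}{q^{2g+1}}\sum_{d \in \mathscr P_{2g+1}} \prod_{j=1}^r \mathcal L(q^{-1/2}t_j,\chi_d) \,=\, (t_1\cdots t_r)^g \sum_{\lambda \subseteq (r^g)} (-1)^{|\lambda|} \mathrm{tr}_\lambda(g)\, s_{\langle\lambda^\dag\rangle}(t_1^\pm,\ldots,t_r^\pm).
\]

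Finally I would remove the sign $(-1)^{|\lambda|}$ by showing that $\mathrm{tr}_\lambda(g) = 0$ whenever $|\lambda|$ is odd. This follows from the cohomological interpretation of $\mathrm{tr}_\lambda(g)$ given just above together with \cref{odd vanishing}: since $\H_g^{1,0}$ is a $K(\beta_{2g+1}, 1)$ and $\beta_{2g+1}$ is a braid group on an odd number of strands, the rational homology $H_\bullet(\beta_{2g+1}, S^\lambda V)$ vanishes for $|\lambda|$ odd, and $V_\lambda$ appears as a direct summand of $S^\lambda V$ by Weyl's construction, so $H_\bullet(\H_g^{1,0}, \mathbb V_\lambda)$ vanishes as well.

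The algebraic manipulations here are routine; the only thing that requires care is tracking signs, and the main conceptual point is the choice of sign substitution $x_i = -\alpha_i$ together with the vanishing of the odd-weight traces from the ``center-kills'' argument underlying \cref{odd vanishing}.
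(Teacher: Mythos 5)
Your argument is correct and follows essentially the same route as the paper: both apply Jimbo--Miwa after pulling out $(t_1\cdots t_r)^g$, place a minus sign in one set of variables, and then discard it using the vanishing of $\mathrm{tr}_\lambda(g)$ for $|\lambda|$ odd (the paper phrases this as invariance under $t_j\mapsto -t_j$ rather than substituting $x_i = -\alpha_i$, but the manipulation is the same). One small wording issue: $s_{\langle\lambda\rangle}(x_1^\pm,\ldots,x_g^\pm)$ is not homogeneous of degree $|\lambda|$ as a Laurent polynomial; the correct statement is that every weight of $V_\lambda$ is $\equiv |\lambda|\pmod 2$, which still gives $s_{\langle\lambda\rangle}(-x^\pm)=(-1)^{|\lambda|}s_{\langle\lambda\rangle}(x^\pm)$, so your conclusion is unaffected.
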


\begin{proof}First rewrite \cref{eq: Howe-duality} as$$
	\prod_{i=1}^g \prod_{j=1}^r (1+t_jx_i)(1+t_jx_i^{-1}) = (t_1\cdots t_r)^g\sum_{\lambda \subseteq (r^g)} s_{\langle \lambda\rangle} (x_1^\pm,\ldots,x_g^\pm) s_{\langle \lambda^\dag\rangle}(t_1^\pm,\ldots,t_r^\pm).$$
	Then set $\{x_1^\pm,\ldots,x_g^\pm\}$ equal to $\uT_d$, and observe that 
	$\mathcal L(-q^{-1/2}t_j,\chi_d) = \prod_{\omega \in \uT_d} (1+t_j \omega)$. Finally, take the product over $r$ and sum over $d$, and observe that the result is invariant under $t_j \mapsto -t_j$ since $\mathrm{tr}_\lambda(g)$ vanishes for $\vert\lambda\vert$ odd. 
\end{proof}

\begin{cor}
	\[
	\frac{1}{q^{2g+1}}
	\sum_{d \in \Pg} 
	{L}(\tfrac 1 2, \chi_{d})^r
	\, = \, 
	\sum_{\lambda \subseteq (r^{g})}
	\mathrm{tr}_{\lambda}(g) \, \dim V_{\lambda^\dag},
	\] where $V_{\lambda^\dag}$ denotes an irreducible representation of $\mathrm{Sp}(2r)$. 
\end{cor}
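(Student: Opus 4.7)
The plan is to deduce this corollary from \cref{geom-expr-moments} by the trivial specialization $t_1 = \cdots = t_r = 1$. First I would note that, under the change of variables $t = q^{-s}$, one has $\mathcal{L}(q^{-1/2}, \chi_d) = L(\tfrac{1}{2}, \chi_d)$, so setting all $t_j = 1$ on the left-hand side of \cref{geom-expr-moments} produces exactly the $r$th moment sum appearing in the statement. On the right-hand side the prefactor $(t_1 \cdots t_r)^g$ becomes $1$, and we are left with $\sum_{\lambda \subseteq (r^g)} \mathrm{tr}_\lambda(g) \cdot s_{\langle \lambda^\dag \rangle}(1^{\pm 1}, \ldots, 1^{\pm 1})$. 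Both sides of \cref{geom-expr-moments} are polynomial (respectively Laurent polynomial) expressions in the variables $t_j$, so the specialization is unambiguous.

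The remaining step is to identify $s_{\langle \lambda^\dag \rangle}(1^{\pm 1}, \ldots, 1^{\pm 1})$ with $\dim V_{\lambda^\dag}$. As recalled after \cref{symplectic cauchy}, the symplectic Schur function $s_{\langle \mu \rangle}(x_1^{\pm}, \ldots, x_r^{\pm})$ is by definition the character of the irreducible representation $V_\mu$ of $\Sp(2r)$ of highest weight $\mu$, evaluated at the diagonal matrix with eigenvalues $x_1, x_1^{-1}, \ldots, x_r, x_r^{-1}$. Setting all $x_j = 1$ corresponds to evaluating the character at the identity element of $\Sp(2r)$, which is precisely $\dim V_\mu$.

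The one minor technicality is that the Weyl bialternant formula \S\ref{wcf} is formally of $0/0$ type at $(1, \ldots, 1)$; this is not a genuine obstacle, since $s_{\langle \mu \rangle}$ is \emph{a priori} a symmetric Laurent polynomial, hence takes a well-defined value on the entire torus, and the value at the identity can either be extracted by L'H\^opital or computed unambiguously from Weyl's dimension formula for $\Sp(2r)$. There is no real obstacle in this proof: the substantive work was already done in establishing \cref{geom-expr-moments} via the Jimbo--Miwa identity of \cref{eq: Howe-duality}, and the present statement amounts to specializing that identity at the identity element of the torus and reinterpreting a character value as a dimension.
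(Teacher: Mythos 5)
Your proposal is correct and is exactly the paper's argument: the paper's proof is the one-line instruction to set $t_1 = \cdots = t_r = 1$ in \cref{geom-expr-moments}, and your write-up simply spells out the elementary facts (that $\mathcal{L}(q^{-1/2},\chi_d) = L(\tfrac12,\chi_d)$ and that evaluating a character of $\Sp(2r)$ at the identity gives the dimension) that the paper leaves implicit.
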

\begin{proof}
	Set $t_1=t_2=\ldots = 1$ in the preceding corollary.
\end{proof}

\begin{para}\label{dimension identity}
	We will in particular make use of the identity
	$$ \sum_{\lambda \subseteq (r^g)} \dim V_\lambda^{\Sp(2g)} \dim V_{\lambda^\dag}^{\Sp(2r)} = 4^{gr}$$
	obtained by setting all $x_i$ and $t_i$ equal to $1$ in \cref{eq: Howe-duality}. Here the superscripts $\Sp(2g)$, resp.\ $\Sp(2r)$, indicate which groups $V_\lambda$, resp.\ $V_{\lambda^\dag}$, are representations of.
\end{para}

\begin{para}\label{defn lambda-dag 2}If $\length(\lambda)>g$ then we can still define $\lambda^\dag$ as in \cref{defn lambda-dag}, but $\lambda^\dag$ is no longer dominant. We extend the definition of $s_{\langle \lambda^\dag\rangle}$ to this case by plugging $\lambda^\dag$ into the Weyl character formula. Let 
$$ \alpha := \{\lambda^\dag + \rho\}-\rho,$$
where $\{\cdot\}$ denotes the unique dominant weight conjugate to a given weight and $\rho$ is the sum of the fundamental weights. Then $s_{\langle\lambda^\dag\rangle}$ vanishes if $\alpha$ is not dominant, and $s_{\langle\lambda^\dag\rangle}$ is $\pm$ the character of $V_{ \alpha}$ if $\alpha$ is dominant. \end{para}

\begin{defn}Let  
$$ \mathcal Q_1(t_1,\ldots,t_r;g,q) = (t_1\cdots t_r)^g\sum_{\lambda_1 \leq r} T_\lambda \, s_{\langle \lambda^\dag(g)\rangle}(t_1^\pm,\ldots,t_r^\pm).$$
We will see shortly that the sum converges. 
\end{defn}
\begin{para}
One can show using the Weyl dimension formula that
$$ \dim V_\lambda^{\Sp(2r)} = \frac{\prod_{1\leq i<j\leq r} (\mu_i^2-\mu_j^2) \prod_{i=1}^r \mu_i}{(2r-1)!!}$$ 
where $\mu_i = \lambda_i+r-i+1$. It is straightforward to see from this expression that for each $\lambda$ with $\lambda_1 \leq r$, the function
$$ g \mapsto \dim V_{\lambda^\dag(g)}$$
is a polynomial in $g$ of degree $r(r+1)/2$. In particular, $\mathcal Q_1(1,\ldots, 1;g,q)$ is a polynomial in $g$ of degree $r(r+1)/2$, for any fixed $q$. 
\end{para}

\begin{lem}\label{fuks}
	Let $V$ be a representation of the braid group $\beta_n$. Then $\dim H_k(\beta_n,V) \leq \binom {n-1} k \dim(V)$.
\end{lem}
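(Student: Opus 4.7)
The plan is to exhibit an explicit finite CW model for the classifying space $B\beta_n$ with exactly $\binom{n-1}{k}$ cells in dimension $k$, and then read off the bound from the dimensions of the cellular chain complex with local coefficients in $V$. This is precisely the strategy alluded to in the paragraph of the introduction describing the error term in \cref{thmC}.

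First I would recall the Fuks stratification of the unordered configuration space $\mathrm{UConf}_n(\R^2)$. Its one-point compactification is stratified by the ``shape'' of a configuration: the strata are indexed by compositions $(k_1,\ldots,k_r)$ of $n$, recording the multiplicities of points as they are scanned by a vertical line sweeping horizontally. The same combinatorics is realized by the Salvetti/De Concini--Salvetti complex for the Artin group of type $A_{n-1}$, which is a finite $K(\beta_n,1)$ CW complex whose cells are indexed by subsets $S\subseteq\{1,\ldots,n-1\}$, with the cell corresponding to $S$ having dimension $|S|$. Either model gives a CW complex $B$ homotopy equivalent to $B\beta_n$ with
\[
\#\{k\text{-cells of }B\}=\binom{n-1}{k},
\]
which one verifies in the small cases $n=2$ ($B\simeq S^1$, one $0$-cell and one $1$-cell) and $n=3$ ($1,2,1$ cells in dimensions $0,1,2$).

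Given this, the universal cover $\widetilde B$ is a CW complex on which $\beta_n$ acts freely and cellularly, and the cellular chain complex $C_\bullet(\widetilde B)$ is a complex of free $\Z[\beta_n]$-modules with $C_k(\widetilde B)\cong \Z[\beta_n]^{\binom{n-1}{k}}$. Tensoring with $V$ over $\Z[\beta_n]$ computes $H_\bullet(\beta_n,V)$, and gives a chain complex of $\Q$-vector spaces with
\[
\dim_\Q\bigl(C_k(\widetilde B)\otimes_{\Z[\beta_n]} V\bigr)=\binom{n-1}{k}\dim V.
\]
Since the dimension of the homology of any chain complex is bounded above by the dimension of the terms, the stated inequality follows immediately.

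The only non-routine ingredient is the existence of the CW model with the claimed cell counts; everything else is a formal consequence of the definition of group homology via a free resolution. Since this existence is classical (either via Fuks's original stratification or via the Salvetti complex for the braid arrangement), there is no real obstacle, and the proof is essentially a one-paragraph citation plus the chain-complex dimension count.
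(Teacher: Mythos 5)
Your argument via the Salvetti complex is correct, but it is not quite the route the paper takes. The paper invokes the Fuks stratification as a cell structure on the one-point compactification of $\mathrm{Conf}_n(\C)$: that gives a chain complex computing Borel--Moore homology, equivalently compactly supported cohomology, and the bound for ordinary homology is then obtained by Poincar\'e duality on the $2n$-manifold $\mathrm{Conf}_n(\C)$ (which is why the cell count appears in degree $2n-k$). You instead build a finite $K(\beta_n,1)$ CW complex directly via the Salvetti/De Concini--Salvetti complex for the $A_{n-1}$ Artin group, whose cells are indexed by subsets of the $n-1$ simple generators, so there are exactly $\binom{n-1}{k}$ cells in dimension $k$; then the bound is immediate from the cellular chain complex of the universal cover tensored with $V$. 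Both are correct; your route avoids Poincar\'e duality entirely and works with an honest aspherical CW model, whereas the paper stays in the configuration-space picture and dualizes. One small inaccuracy in your write-up: you suggest the Fuks stratification \emph{also} gives a CW model of $B\beta_n$ with $\binom{n-1}{k}$ $k$-cells. It does not; Fuks's cells live on the one-point compactification, not on $\mathrm{Conf}_n(\C)$ itself, so to use that stratification one must pass through Borel--Moore homology and Poincar\'e duality, exactly as the paper does. The Salvetti complex, on the other hand, genuinely is a finite CW model of $B\beta_n$, so that half of your argument is fine on its own and is what your proof actually relies on.
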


\begin{proof}By Poincar\'e duality, it suffices to bound the dimension of $H^{2n-k}_c(\mathrm{Conf}_n(\C),V)$. 
	Use the cellular chain complex associated to the Fuks stratification of $\mathrm{Conf}_n(\C)$ (see \cite{FN62, Fuk70} and \cite[Theorem 4.3]{ETW17}). It has exactly $\binom {n-1} k$ cells of dimension $2n-k$. 
\end{proof}

\begin{lem}\label{dim lemma}Let $t_1,\ldots,t_r$ be on the unit circle, and consider a partition $\lambda$ with $\lambda_1 \leq r$. Then $|s_{\langle \lambda^\dag(g) \rangle}(t_1^\pm,\ldots,t_r^\pm)| \leq 4^{rg} \dim S^{\lambda'}(\C^{2r})$. 
\end{lem}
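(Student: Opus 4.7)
The plan is to apply Weyl integration on the unitary symplectic group $\mathrm{USp}(2g)$ to the Jimbo--Miwa identity, as rewritten in the proof of \cref{geom-expr-moments}:
\[
\prod_{i=1}^g\prod_{j=1}^r(1+t_jx_i)(1+t_jx_i^{-1}) \;=\; (t_1\cdots t_r)^g\sum_{\mu\subseteq(r^g)} s_{\langle\mu\rangle}(x_1^\pm,\ldots,x_g^\pm)\,s_{\langle\mu^\dag(g)\rangle}(t_1^\pm,\ldots,t_r^\pm).
\]
I would first restrict to the main case $\lambda\subseteq(r^g)$, in which $\lambda^\dag(g)$ is a dominant $\Sp(2r)$-weight. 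Pairing the identity with $\overline{s_{\langle\lambda\rangle}(x^\pm)}$ against the Haar probability measure $d\mu$ on $\mathrm{USp}(2g)$ and invoking orthonormality of the irreducible $\Sp(2g)$-characters yields
\[
(t_1\cdots t_r)^g\,s_{\langle\lambda^\dag(g)\rangle}(t_1^\pm,\ldots,t_r^\pm) \;=\; \int_{\mathrm{USp}(2g)}\prod_{i,j}(1+t_jx_i)(1+t_jx_i^{-1})\,\overline{s_{\langle\lambda\rangle}(x^\pm)}\,d\mu(x).
\]

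Next I apply the Cauchy--Schwarz inequality to the right-hand side. The $L^2$-norm of $\overline{s_{\langle\lambda\rangle}(x^\pm)}$ is $1$, again by orthonormality. For the remaining factor, the crucial observation is that $|1+t_jx_i|\leq 2$ and $|1+t_jx_i^{-1}|\leq 2$ whenever $t_j,x_i$ are unimodular, so the product has modulus at most $4^{rg}$ pointwise, and hence its $L^2$-norm is also bounded by $4^{rg}$. Since $|(t_1\cdots t_r)^g|=1$, this forces $|s_{\langle\lambda^\dag(g)\rangle}(t^\pm)|\leq 4^{rg}$. The hypothesis $\lambda_1\leq r$ then gives $\ell(\lambda')\leq r<2r$, so $S^{\lambda'}(\C^{2r})$ is a nonzero $\GL(2r)$-representation and in particular $\dim S^{\lambda'}(\C^{2r})\geq 1$, from which the desired inequality follows at once.

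For partitions with $\ell(\lambda)>g$ the weight $\lambda^\dag(g)$ has a negative entry, and the above extraction does not apply directly; instead $s_{\langle\lambda^\dag(g)\rangle}$ must be interpreted via the Weyl character formula as in \S\ref{defn lambda-dag 2}, equalling either zero or $\pm$ the character of some irreducible $V_\alpha^{\Sp(2r)}$. This residual case is the mildly annoying part of the argument, but in this regime the hypothesis forces $\lambda_1'=\ell(\lambda)\geq g+1$, so $\dim S^{\lambda'}(\C^{2r})$ grows correspondingly large, and combining the trivial bound $|s_{\langle\lambda^\dag(g)\rangle}(t^\pm)|\leq\dim V_\alpha^{\Sp(2r)}\leq\dim S^\alpha(\C^{2r})$ with a direct estimate using the Weyl dimension formula should comfortably recover the inequality. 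The essential content of the lemma is the integration-plus-Cauchy--Schwarz argument above, which yields the uniform bound $4^{rg}$ on $|s_{\langle\lambda^\dag(g)\rangle}(t^\pm)|$ whenever $\lambda^\dag(g)$ is dominant.
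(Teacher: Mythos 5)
Your argument for the case $\lambda\subseteq(r^g)$ is correct and is a genuinely different route from the paper's. You extract $s_{\langle\lambda^\dag(g)\rangle}(t^\pm)$ as the Fourier coefficient of the Jimbo--Miwa product against the Haar measure on $\mathrm{USp}(2g)$ and then Cauchy--Schwarz against the pointwise bound $4^{rg}$ on the product; this cleanly gives the stronger bound $|s_{\langle\lambda^\dag(g)\rangle}(t^\pm)|\leq 4^{rg}$, with no factor of $\dim S^{\lambda'}(\C^{2r})$ needed, when $\lambda$ fits in the rectangle. The paper instead uses the trivial character bound $|s_{\langle\lambda^\dag\rangle}(t^\pm)|\leq\dim V_\alpha$ from the start and then controls $\dim V_\alpha$ combinatorially. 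Both give the same exponential rate, but your derivation of the rectangle case is arguably slicker.

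The gap is in the case $\ell(\lambda)>g$, which you recognize but do not actually settle. This case is not a residual technicality: it is precisely why the factor $\dim S^{\lambda'}(\C^{2r})$ appears in the statement, and it does occur in the application (in \cref{uniform stab implies moments}, the sum over partitions with $|\lambda|\leq 2g$ and $\lambda_1\leq r$ reaches $\ell(\lambda)$ as large as $2g$). Your Weyl-integration extraction fundamentally breaks down here, since the sum in \cref{eq: Howe-duality} only ranges over $\mu\subseteq(r^g)$ and your $\lambda$ is no longer a legal index. You then assert that the trivial bound $|s_{\langle\lambda^\dag\rangle}(t^\pm)|\leq\dim V_\alpha^{\Sp(2r)}$ plus ``a direct estimate using the Weyl dimension formula'' should finish the job, but this is exactly the nontrivial content that has to be supplied: one has to bound $\dim V_\alpha$ in terms of $\dim S^{\lambda'}(\C^{2r})$ even though $\alpha$ is obtained from $\lambda^\dag$ by a Weyl reflection and its entries can far exceed $g$. (Already for $r=g=1$ and $\lambda=(1^n)$, one finds $\dim V_\alpha\sim n$, which dwarfs $4^{gr}=4$.) The paper supplies this with a specific decomposition: it writes the dominant representative $\alpha$ as a sum $\beta+\gamma$ with $\beta\subseteq(g^r)$ and $\gamma\subseteq\lambda'$, then uses $\dim V_\alpha\leq\dim V_\beta\dim V_\gamma$ (Cartan component), containment monotonicity to get $\dim V_\beta\leq\dim V_{(g^r)}$ and $\dim V_\gamma\leq\dim V_{\lambda'}$, the identity \S\ref{dimension identity} to get $\dim V_{(g^r)}\leq 4^{gr}$, and finally $\dim V_{\lambda'}\leq\dim S^{\lambda'}(\C^{2r})$. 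You would need to prove this decomposition of $\alpha$ (or some equivalent bound) to close the gap; without it, the lemma is not established in the regime where it actually has content.
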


\begin{proof}We have $\lambda^\dag = (g-\lambda'_r,\ldots,g-\lambda_1')$. Let $\alpha$ be as in \S\ref{defn lambda-dag 2}. Then we can write $\alpha = \beta + \gamma$ with $\beta \subseteq (g^r)$ and $\gamma \subseteq \lambda'$. Thus,
	$$ |s_{\langle \lambda^\dag(g) \rangle}(t_1^\pm,\ldots,t_r^\pm)|\leq \dim V_{ \alpha } \leq \dim V_{ \beta} \dim V_{ \gamma } \leq \dim V_{ (g^r) } \dim V_{ \lambda' }\leq 4^{gr} \dim S^{\lambda'}(\C^{2r})$$
	using in the last step \S\ref{dimension identity} for the first factor, and that $V_{\lambda'}$ is a summand of $S^{\lambda'}(\C^{2r})$ for the second factor. 
\end{proof}

\begin{lem}\label{good bound}For all $\varepsilon > 0$ and $N \in \N$ there is a bound
	$$ \sum_{\vert\lambda\vert=w} \dim H_k(\H_{\infty}^{1,0},\V_\lambda) \dim S^{\lambda'}(\C^{N}) \ll_{N,\varepsilon} 2^{\varepsilon k}.$$
\end{lem}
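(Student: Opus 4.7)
The plan is to use the explicit plethystic formula of \cref{plethystic formula for poincare series} together with a specialization to finitely many variables, and to bound the resulting generating function via a Cauchy estimate. Let $\sigma_N : \widehat\Lambda^{\gr} \to \Z(\!(z)\!)$ be the ring homomorphism fixing $z$ and sending $x_i \mapsto 1$ for $i \leq N$ and $x_i \mapsto 0$ for $i > N$. On generators $\sigma_N$ intertwines the Adams operations ($\sigma_N(\psi_k(x_i)) = \sigma_N(x_i)^k$ since $1^k=1$, $0^k=0$, and $\sigma_N(\psi_k(z)) = z^k$), so $\sigma_N$ is a $\lambda$-ring homomorphism and therefore commutes with $\Exp$ and $\Log$. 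Writing $F(z) = \Exp(\phi(z))$ with $\phi(z) = z^{-1}\Log(z + \sum_{r\geq 0} z^r h_{2r}) - 1 - h_2$, applying $\sigma_N$ to \cref{plethystic formula for poincare series} yields
\[
\sigma_N(F)(z) \,=\, \sum_{k\geq 0}(-z)^k\sum_\lambda \dim H_k(\H_\infty^{1,0},\V_\lambda)\dim S^{\lambda'}(\C^N) \,=\, \Exp(\sigma_N(\phi)(z)).
\]
Since all summands on the left are non-negative, $\sum_{|\lambda|=w}$ is bounded above by the total sum $\sum_\lambda$, which equals $|[z^k]\sigma_N(F)(z)|$. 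It therefore suffices to show that $\sigma_N(F)(z)$ is holomorphic on the open unit disk.

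The main technical step is analyzing $\sigma_N(\phi)(z)$ explicitly. Set $g_N(z) := \sum_{r\geq 1}\binom{2r+N-1}{2r}z^r = \tfrac{1}{2}\bigl((1-\sqrt z)^{-N}+(1+\sqrt z)^{-N}\bigr)-1$, which is holomorphic on $|z|<1$. Using the plethystic log formula $\Log(1+y) = \sum_k \tfrac{\mu(k)}{k}\log(1+\psi_k y)$ together with $\psi_k(y)(z) = y(z^k)$, one obtains
\[
\sigma_N(\phi)(z) \,=\, z^{-1}\sum_{k\geq 1}\frac{\mu(k)}{k}\log\bigl(1+z^k+g_N(z^k)\bigr) \,-\, 1 \,-\, \binom{N+1}{2}.
\]
The $k$-th summand vanishes to order $z^{k-1}$ at $z=0$; the apparent $z^{-1}$ pole is a genuine divergence only through the $k=1$ term, which contributes a constant $1+\binom{N+1}{2}$ at $z=0$, exactly cancelled by the subtracted constant. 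Thus $\sigma_N(\phi)(0)=0$, and on any compact $K\subset \{|z|<1\}$ one has $|\log(1+z^k+g_N(z^k))| \ll_K |z|^k$ for $k$ large, so the series converges uniformly on $K$ and defines a holomorphic function there. By the Cauchy estimate its Taylor coefficients $c_m := [z^m]\sigma_N(\phi)(z)$ satisfy $|c_m|=O_\delta((1-\delta)^{-m})$ for every $\delta>0$; in particular they grow at most subexponentially.

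Finally, on arity-zero elements of $\widehat\Lambda^{\gr}_0$ the plethystic exponential admits the product formula $\Exp(\sum_{m\geq 1}c_m z^m) = \prod_{m\geq 1}(1-z^m)^{-c_m}$, since $\Exp(cz^m)=(1-z^m)^{-c}$ and $\Exp$ converts sums to products. Taking logarithms gives $\log\sigma_N(F)(z) = \sum_{m,j\geq 1} c_m z^{mj}/j$, which is dominated in absolute value by $(1-|z|)^{-1}\sum_m |c_m|\,|z|^m$; this converges absolutely on $|z|<1$ by the subexponential growth of the $c_m$. Hence $\sigma_N(F)(z)$ is holomorphic on $|z|<1$, and for any $\varepsilon>0$, taking $r := 2^{-\varepsilon}$ and $M_{N,\varepsilon} := \sup_{|z|=r}|\sigma_N(F)(z)|<\infty$, the Cauchy estimate gives $|[z^k]\sigma_N(F)(z)| \leq M_{N,\varepsilon}\,r^{-k} = M_{N,\varepsilon}\cdot 2^{\varepsilon k}$, establishing the desired bound uniformly in $w$. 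The crux of the argument is the verification of holomorphy of $\sigma_N(\phi)(z)$ on the open unit disk, where one must disentangle the $z^{-1}$ factor, the conditionally convergent $\mu$-series, and the singularity of $g_N$ at $z=1$; everything else is routine generating-function and complex-analysis manipulation.
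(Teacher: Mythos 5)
Your strategy — specialize the plethystic formula of \cref{plethystic formula for poincare series} to $N$ variables and extract $[z^k]$ by a Cauchy estimate — is the same broad strategy as the paper's proof, but you fill in the technical step differently: you set $t=1$ outright (using non-negativity of the Betti numbers to bound the fixed-$w$ sum by the total), and you try to establish holomorphy of $\sigma_N(F)(z)$ directly from the M\"obius expansion of the plethystic logarithm, rather than routing through the arithmetic factor $R_1$ and \cref{R1-estimate}. That is a cleaner, more self-contained route. However, there is a genuine gap at the crucial step: you claim that $z^{-1}\sum_{k\geq 1}\tfrac{\mu(k)}{k}\log\bigl(1+z^k+g_N(z^k)\bigr)$ is holomorphic on the open unit disk, but the estimate $|\log(1+z^k+g_N(z^k))|\ll_K |z|^k$ is established only \emph{for $k$ large}. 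This controls the tail of the M\"obius series but says nothing about the finitely many initial terms, and those are not in general holomorphic on $\{|z|<1\}$. Concretely, for $N=1$ one has $g_1(z)=z/(1-z)$, so $1+z+g_1(z)=(1+z-z^2)/(1-z)$ vanishes at $z=(1-\sqrt5)/2\approx-0.618$, placing a logarithmic branch point of the $k=1$ summand well inside the disk. And there is no cancellation saving you: a direct term-by-term expansion of $\sigma_1(F)(z)=\Exp(\sigma_1(\phi)(z))=\prod_{m\geq 1}(1-z^m)^{-b_m}$ (with $b_1=-2$, $b_2=1$, $b_3=-1$, $b_4=2$, $b_5=-3$, $b_6=4$, $b_7=-5$, $b_8=8$, $b_9=-13,\ldots$) yields the coefficient sequence $1,2,2,3,6,11,18,29,50,87,144,\ldots$, whose consecutive ratios hover around $1.6$. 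So the radius of convergence of $\sigma_1(F)$ is roughly $0.6$, not $1$, and the Cauchy estimate on $|z|=2^{-\varepsilon}$ cannot be applied for small $\varepsilon$.

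If this computation is correct, the issue is not specific to your argument: it seems to point at the statement of the lemma itself. The paper's proof invokes \cref{R1-estimate}, which establishes holomorphy of $R_1(\mathrm{t})$ on a polydisc for a \emph{fixed} prime power $q$, and the comparison of \S\ref{comparison of Q}; that gives holomorphy in the $\mathrm{t}$-variables at real $q>1$, but does not obviously give holomorphy in the complex variable $z=q^{-1}$ on the region claimed, and the footnote's ``simpler'' region $\{|z|<1, |tz|<1\}$ contains the singular point $(t,z)=(1,(1-\sqrt5)/2)$ in the case $N=1$. What the Cauchy method does deliver is a bound $\ll_N 2^{C(N)k}$ for some $N$-dependent constant $C(N)>0$ (Cauchy on a circle of radius strictly inside the actual radius of convergence), and one should re-examine whether that weaker bound closes the application in \cref{uniform stab implies moments}, likely at the cost of enlarging the lower bound on $q$. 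I would encourage you to double-check the coefficient computation above, but as it stands the holomorphy claim is the one step in your argument that cannot be waved through.
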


\begin{proof}
	Consider the generating function of stable Poincar\'e series 
 \[ \Exp\Big(\, z^{-1} \, \Log \Big(z + \sum_{k\geq 0} z^k h_{2k}(\mathrm t)\Big) - h_2(\mathrm t) - 1\Big).\]According to \cref{plethystic formula for poincare series}, the left-hand side of \cref{good bound} is the coefficient of $z^k t^w$ of the function  obtained by setting $t_1=\ldots=t_{N}=t$, $t_i=0$ for $i>N$, in this generating series. By \cref{R1-estimate} and the comparison explained in \S\ref{comparison of Q} the series converges to a holomorphic function on the region\footnote{For our purposes it would be enough to know convergence in the region $\vert z\vert<1$, $\vert tz\vert < 1$, which is simpler to show than \cref{R1-estimate}.} $\vert z\vert<1$, $\vert t^4z\vert < 1$. Now apply Cauchy's inequality, i.e.\ extract the coefficient of $z^k t^w$ using Cauchy's integral formula applied to the circles $\vert z \vert = 2^{-\varepsilon}$, $\vert t \vert = 1$. \end{proof}

\begin{defn}
	Let $\theta : \R \to \R$ be a monotone increasing function. We say that $\theta$ is a \emph{uniform stability bound} if for each $n \in \N$,
	$$ H_k(\beta_n,V_\lambda) \to H_k(\beta_{n+1},V_\lambda)$$
	is an isomorphism for all $k \leq \theta(n)$, for all $\lambda$.	
\end{defn}

\begin{para}\label{theta bound} Let $\theta$ be a uniform stability bound. Then $H_k(\beta_\infty,V_\lambda) \cong H_k(\beta_{n},V_\lambda)$
for all $k \leq \theta(n)$. But observe that $V_\lambda$ is zero as a representation of $\beta_{n}$ if and only if $n\leq 2\length(\lambda)-1$. Thus, $H_k(\beta_\infty,V_\lambda) =0$ for $k\leq \theta(2\length(\lambda)-1)$. In \cref{1/2} we proved that $H_k(\beta_\infty,V_\lambda) =0$ for $k<\length(\lambda)/2$, and this bound is sharp. Thus, a uniform stability bound must satisfy $\theta(4k-1)<k$. Our conjecture is that this bound is close to sharp. 
\end{para}

\begin{conj}\label{stab conj 2}
    There exists a uniform stability bound $\theta(n)=\tfrac 1 4 n - c$, where $c\geq 0$ is an absolute constant.
\end{conj}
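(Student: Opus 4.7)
The target is to upgrade Theorem \ref{MPPRW} from slope $1/12$ to slope $1/4$, matching the slope-$1/2$ vanishing for $H_k(\beta_\infty,V_\lambda)$ in $\ell(\lambda)$ established in \cref{1/2}. As noted in \S\ref{theta bound}, the slope $1/4$ is the maximum possible, so the conjecture asserts that one can do as well as the obvious vanishing allows. My plan is to run the Randal-Williams--Wahl machinery, as refined in \cite{MPPRW}, on an improved semisimplicial resolution whose connectivity grows with slope $1/2$ in $n$. The reason slope $1/2$ is the correct target is structural: the Randal-Williams--Wahl spectral sequence converts a connectivity slope $\alpha$ on the destabilization complex into a stability slope $\alpha$ for constant coefficients and slope $\alpha/(d+1)$ for coefficient systems of polynomial degree $d$, while here the ``defect'' in $V_\lambda$ is measured by $\ell(\lambda)$ rather than $|\lambda|$, because of the stable branching rule for $\mathrm{Sp}(2g)\supset\mathrm{Sp}(2g-2)$; this reduces the effective degree by a factor of two, leading to slope $\alpha/2$ instead of $\alpha/(d+1)$.

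First I would revisit the destabilization complex $W_n$ employed in \cite{MPPRW}. Their connectivity estimate gives slope $1/12$; to obtain slope $1/2$, I would try to replace it by a complex closer in spirit to Arnold's original stabilization argument for configuration spaces, whose connectivity naturally grows linearly with slope $1/2$. Concretely, one candidate is the geometric realization of the poset of systems of pairwise disjoint arcs in the disk joining pairs of marked points, where the arcs come in ``Weierstrass pairs'' dictated by the hyperelliptic covering, so that the monoidal structure is respected. The connectivity of an analogous arc-like complex was used by Harr--Vistrup--Wahl \cite{harrvistrupwahl} to obtain Harer's optimal slope $2/3$ stability range; my hope is that adapting their technique to the hyperelliptic setting yields connectivity slope close to $1/2$, with uniformity in the coefficient system built in via the braided monoidal structure on the relevant Quillen bracket category.

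Once the improved complex is constructed, feeding it into the MPPRW spectral sequence requires identifying the $E^1$-page with homologies of smaller braid groups twisted by restrictions of $V_\lambda$. Here the key input is Littlewood's stable branching rule, which guarantees that $V_\lambda$ restricted from $\mathrm{Sp}(2g)$ to $\mathrm{Sp}(2g-2)$ decomposes into a sum indexed by a set independent of $g$, provided $g>\ell(\lambda)$. Inductively using the improved connectivity and the branching rule, the spectral sequence should collapse within slope $1/4 - c$ uniformly.

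The principal obstacle, and the step I expect to be genuinely hard, is establishing the connectivity bound for the new complex. The arc complexes that underlie optimal stability for mapping class groups rely on surgery arguments that do not obviously respect the hyperelliptic involution, and naively imposing $C_2$-equivariance tends to cost connectivity. One route is a discrete Morse argument, using the $C_2$-action to pair up cells equivariantly, in the spirit of \cite{hatcher-madsenweiss}; another is to reformulate the complex in terms of isotropic configurations in $V_n \otimes \mathbf{F}_2$ via the A'Campo description of the image of the Burau representation, reducing connectivity to a Cohen--Macaulay property of an explicit combinatorial poset of flags. Either route seems plausible, but obtaining the sharp slope $1/2$ rather than some strictly smaller constant is the crux; if only a suboptimal slope is achievable, the resulting bound on $\theta(n)$ would still improve on \cref{MPPRW} but fall short of the conjectured $n/4 - c$.
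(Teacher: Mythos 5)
This statement is an open \emph{conjecture} in the paper, not a theorem; the paper explicitly records it as such and only proves the much weaker uniform bound $\theta(n) = (n+11)/12$ by citing \cite{MPPRW}. So there is no proof in the paper for your proposal to be compared against, and your write-up does not supply one either: you yourself flag the central connectivity step as unresolved.

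The gap is concrete and you have already located it: everything in your outline funnels through the claim that some arc-type destabilization complex, built to respect the hyperelliptic involution, is highly connected with slope close to $1/2$ in $n$. That is precisely the missing input. The MPPRW approach establishes a connectivity bound of slope $1/6$ (yielding stability slope $1/12$ after the spectral-sequence bookkeeping and the branching-rule argument), and nothing currently known pushes the connectivity to slope $1/2$. Your two proposed routes — an equivariant discrete Morse argument \`a la Hatcher, or a reinterpretation via isotropic flags over $\mathbf{F}_2$ using the A'Campo description of the Burau image — are reasonable things to try, but each would require a new theorem; they are not reductions to known results. A secondary worry is your asserted conversion ``connectivity slope $\alpha$ $\Rightarrow$ stability slope $\alpha/2$ for $V_\lambda$-coefficients'': this heuristic (effective degree measured by $\ell(\lambda)$, offset by the vanishing of $V_\lambda$ on $\beta_n$ for $n<2\ell(\lambda)$) is broadly in the spirit of \cite{MPPRW}, but the precise exchange rate between connectivity of the resolution, polynomiality of the coefficient system, and the vanishing-for-small-$n$ bonus would need to be re-derived for whatever new complex you build, and it is not automatic that the constants work out to give $\theta(n)=n/4 - c$ rather than some strictly smaller slope. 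In short: the approach is a sensible research program, but there is no proof here, matching the status of the statement in the paper.
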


\begin{para}The following theorem implies \cref{thmC} from the introduction. \end{para}

\begin{thm}\label{uniform stab implies moments}Suppose that $\theta$ is a uniform stability bound. Then for any $t_1,\ldots,t_r$ on the unit circle
	$$\frac{1}{q^{2g+1}}\sum_{d\in \Pg} \prod_{i=1}^r \mathcal L(q^{-1/2}t_i,\chi_d) = \mathcal Q_1(t;g,q) + O_\theta\Big(4^{g(r+1)}q^{-\theta(2g+1)/2}\Big),$$
	the implied constant depending 	upon $\theta$ in the $O$-symbol being explicitly computable.
\end{thm}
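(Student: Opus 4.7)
The plan is to compare the two sides via the Jimbo--Miwa dual Cauchy identity, and to control the resulting discrepancy using the cohomological interpretation of $\mathrm{tr}_\lambda(g)$, the Deligne weight bounds, and the Fuks stratification of configuration spaces. First I would invoke \cref{geom-expr-moments} to write the left-hand side as $(t_1\cdots t_r)^g\sum_{\lambda\subseteq (r^g)}\mathrm{tr}_\lambda(g)\,s_{\langle\lambda^\dag\rangle}(t_1^\pm,\ldots,t_r^\pm)$. Comparing to the definition of $\mathcal Q_1(t;g,q)$ and using $|(t_1\cdots t_r)^g|=1$, the error splits as
$$\big|\text{LHS}-\mathcal Q_1\big|\leq \sum_{\lambda\subseteq (r^g)}|\mathrm{tr}_\lambda(g)-T_\lambda|\,|s_{\langle\lambda^\dag\rangle}(t^\pm)|\;+\;\sum_{\substack{\lambda_1\leq r\\ \ell(\lambda)>g}}|T_\lambda|\,|s_{\langle\lambda^\dag(g)\rangle}(t^\pm)|.$$

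For the first sum I would use that $\H_g^{1,0}$ is a $K(\beta_{2g+1},1)$, together with the Grothendieck--Lefschetz formula as in the derivation of \cref{coh formula for Z}, to write both $\mathrm{tr}_\lambda(g)$ and $T_\lambda$ as alternating sums of $\mathrm{Tr}(\mathrm{Frob}_q\mid H_k(\beta_\bullet,V_\lambda(|\lambda|/2)))$. A uniform stability bound makes the terms with $k\leq\theta(2g+1)$ cancel in the difference. For the tail $k>\theta(2g+1)$, the Deligne bounds applied to the smooth variety $\H_g^{1,0}$ with the weight-zero twisted local system give Frobenius eigenvalues of modulus $\leq q^{-k/2}$, while \cref{thmB} gives exactly $q^{-k}$ on the stable side. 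Combining with the Fuks stratification bound $\dim H_k(\beta_{2g+1},V_\lambda)\leq \binom{2g}{k}\dim V_\lambda$ from \cref{fuks}, a geometric-series tail estimate yields $|\mathrm{tr}_\lambda(g)-T_\lambda|\ll \dim V_\lambda\cdot 2^{2g}\cdot q^{-\theta(2g+1)/2}$ (the purely stable contribution, polynomial in $k$ by \cref{polynomial growth} times $q^{-k}$, is absorbed).

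Summing over $\lambda$ is then the heart of the matter. I would use the unitarity bound $|s_{\langle\lambda^\dag\rangle}(t_1^\pm,\ldots,t_r^\pm)|\leq\dim V_{\lambda^\dag}^{\mathrm{Sp}(2r)}$ (valid for $\lambda^\dag$ dominant and $t_j$ on the unit circle) together with the dimension identity $\sum_{\lambda\subseteq(r^g)}\dim V_\lambda^{\mathrm{Sp}(2g)}\dim V_{\lambda^\dag}^{\mathrm{Sp}(2r)}=4^{gr}$ from \S\ref{dimension identity}, which collapses the first sum to $O(2^{2g}\cdot 4^{gr}\cdot q^{-\theta(2g+1)/2})=O(4^{g(r+1)}q^{-\theta(2g+1)/2})$. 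For the second sum ($\ell(\lambda)>g$), I would bound $|s_{\langle\lambda^\dag(g)\rangle}(t^\pm)|\leq 4^{gr}\dim S^{\lambda'}(\mathbf{C}^{2r})$ via \cref{dim lemma}, and $|T_\lambda|\leq\sum_k\dim H_k(\beta_\infty,V_\lambda)q^{-k}$ via \cref{thmB}, then use \cref{1/4} (forcing $k>g/4$ in the nonvanishing range) and \cref{good bound} to dominate the resulting double sum. Since $\theta(n)\leq n/4$ (\S\ref{theta bound}), this contribution is absorbed into the main error term.

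The hard part will be the sum over $\lambda\subseteq(r^g)$ in the first error, because both $\dim V_\lambda^{\mathrm{Sp}(2g)}$ and $\dim V_{\lambda^\dag}^{\mathrm{Sp}(2r)}$ can be exponentially large in $|\lambda|$ and there are exponentially many partitions $\lambda$. The key mechanism is the clean decoupling of the \emph{multiplicity} bound (from Fuks, namely $\binom{2g}{k}\dim V_\lambda$, with no direct dependence on the weight at the geometric level) from the \emph{character} bound on $s_{\langle\lambda^\dag\rangle}$, after which the Jimbo--Miwa dimension identity packages the entire sum over $\lambda$ into the single factor $4^{gr}$, with no further exponential dependence on $|\lambda|$; without this interplay the naive bounds would blow up.
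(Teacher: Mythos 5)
Your overall decomposition and the treatment of the unstable part coincide with the paper's argument: write both sides via the Jimbo--Miwa expansion, cancel the degrees $k\le\theta(2g+1)$ by uniform stability, bound the unstable tail by Deligne's weight bound ($q^{-k/2}$) times the Fuks count $\binom{2g}{k}\dim V_\lambda^{\Sp(2g)}$, sum the geometric tail to get $2^{2g}q^{-\theta(2g+1)/2}$, and let the dimension identity $\sum_{\lambda\subseteq(r^g)}\dim V_\lambda^{\Sp(2g)}\dim V_{\lambda^\dag}^{\Sp(2r)}=4^{gr}$ absorb the sum over $\lambda$. That part is right and is exactly what the paper does.

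The gap is in your parenthetical claim that ``the purely stable contribution\dots is absorbed.'' You assert $|\mathrm{tr}_\lambda(g)-T_\lambda|\ll\dim V_\lambda^{\Sp(2g)}\cdot 2^{2g}q^{-\theta(2g+1)/2}$, so that the Jimbo--Miwa identity finishes the job, but the term $\sum_{k>\theta(2g+1)}q^{-k}\dim H_k(\H_\infty^{1,0},\V_\lambda)$ carries no natural factor of $\dim V_\lambda^{\Sp(2g)}$: the Fuks bound applies at a fixed genus, whereas the stable dimension $\dim H_k(\H_\infty^{1,0},\V_\lambda)$ is governed by $\dim V_\lambda^{\Sp(2n)}$ for $n$ large enough that degree $k$ has stabilized (roughly $n\sim k$ with slope $\theta$), and that can dwarf $\dim V_\lambda^{\Sp(2g)}$ when $k$ is large. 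Consequently the first sum does \emph{not} factor through the dimension identity once you include the stable tail, and the polynomial-in-$k$ growth from \cref{polynomial growth} has a $\lambda$-dependent leading coefficient that you cannot simply discard when summing over $\lambda\subseteq(r^g)$. The paper instead estimates the stable tail by \cref{dim lemma} (which bounds $|s_{\langle\lambda^\dag(g)\rangle}|$ by $4^{gr}\dim S^{\lambda'}(\mathbf C^{2r})$) together with \cref{good bound}, the Cauchy-integral estimate extracted from the explicit generating function of \cref{plethystic formula for poincare series}; these are precisely the tools you correctly bring in for your second sum (the $\ell(\lambda)>g$ case), and they are equally indispensable for the stable tail of the first sum. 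With them, your outline can be completed, but as written the treatment of the first sum's stable piece is incomplete. Two minor quantitative remarks: for $\ell(\lambda)>g$ with $|\lambda|\le 2g$ the paper uses \cref{1/2} ($k\ge\ell(\lambda)/2>g/2$) rather than \cref{1/4} ($k\ge|\lambda|/4$, which only guarantees $k>g/4$), giving a cleaner comparison against $\theta(2g+1)/2$; and one must also split by $|\lambda|\lessgtr 2g$ so that the sum over the weight $w$ is finite in the first branch and handled by \cref{1/4} in the second.
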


\begin{proof}
	The left-hand side of \cref{uniform stab implies moments} equals 
	$$ (t_1\cdots t_r)^g\sum_{\lambda \subseteq (r^g)}\sum_k (-1)^k \mathrm{Tr}(\mathrm
	{Frob}_q \mid H_k(\H_{g}^{1,0},\mathbb V_\lambda(\tfrac{\vert\lambda\vert}{2}))) s_{\langle\lambda^\dag\rangle}(t_1^\pm,\ldots,t_r^\pm)$$
	and $\mathcal Q_1(t,g)$ equals 
	$$ (t_1\cdots t_r)^g\sum_{\lambda_1 \leq r}\sum_k (-1)^k \mathrm{Tr}(\mathrm
{Frob}_q \mid H_k(\H_{\infty}^{1,0},\mathbb V_\lambda(\tfrac{\vert\lambda\vert}{2}))) s_{\langle\lambda^\dag\rangle}(t_1^\pm,\ldots,t_r^\pm).$$
Subtracting the two, all terms with $k \leq \theta(2g+1)$ cancel by homological stability. To estimate the difference, we first consider the contribution from the {unstable} homology. Using the Deligne bounds on Frobenius eigenvalues from \cite{Del80}, \cref{fuks}, and the identity of \S \ref{dimension identity}, we get  \begin{align*}
	 & \left \vert (t_1\cdots t_r)^g\sum_{\lambda \subseteq (r^g)}\sum_{k >\theta(2g+1)} (-1)^k \mathrm{Tr}(\mathrm
	{Frob}_q \mid H_k(\H_{g}^{1,0},\mathbb V_\lambda(\tfrac{\vert\lambda\vert}{2}))) s_{\langle\lambda^\dag\rangle}(t_1^\pm,\ldots,t_r^\pm)\right\vert \\
	\leq & \sum_{\lambda \subseteq (r^g)}\sum_{k >\theta(2g+1)} q^{-k/2} \dim H_k(\H_{g}^{1,0},\V_\lambda) \dim V_{\lambda^\dag}^{\Sp(2r)} \\
	\leq & \underbrace{\sum_{k > \theta(2g+1)} q^{-k/2} \binom{2g}{k}}_{\ll\, 2^{2g}\, q^{-\theta(2g+1)/2}} \underbrace{\sum_{\lambda \subseteq (r^g)} \dim V_{\lambda}^{\Sp(2g)} \dim V_{\lambda^\dag}^{\Sp(2r)}}_{= \, \,4^{gr}} \ll 4^{g(r+1)}q^{-\theta(2g+1)/2} .
\end{align*}
We now have to estimate the contribution from the stable homology, i.e.\ to bound the quantity
$$ (t_1\cdots t_r)^g\sum (-1)^k \mathrm{Tr}(\mathrm
{Frob}_q \mid H_k(\H_{\infty}^{1,0},\mathbb V_\lambda(\tfrac{\vert\lambda\vert}{2}))) s_{\langle\lambda^\dag\rangle}(t_1^\pm,\ldots,t_r^\pm)$$
where we sum over $k$ and $\lambda$ such that $\lambda_1 \leq r$ and \emph{either} $\length(\lambda)>g$ or $k >\theta(2g+1)$ (or both). 

We split this sum into two: first when $\vert\lambda\vert\leq 2g$, then when $\vert \lambda\vert > 2g$. Recall that the stable homology is pure of weight $-2k$ in degree $k$. We claim that the first sum can be bounded by
$$ \sum_{\substack{\vert\lambda\vert \leq 2g \\ \lambda_1 \leq r}} \sum_{k >\theta(2g+1)} q^{-k} \dim H_k(\H_{\infty}^{1,0},\V_\lambda) s_{\langle \lambda^\dag \rangle}(1^\pm,\ldots,1^\pm).$$
Indeed, if $\lambda \subseteq (r^g)$ it is clear that we can start the summation from $k>\theta(2g+1)$. When $\length(\lambda)>g$ we instead observe that $H_k(\H_{\infty}^{1,0},\V_\lambda)=0$ for $k\leq\theta(2l(\lambda)-1)$ by \S\ref{theta bound}, which is at least $\theta(2g+1)$ by monotonicity of $\theta$. Now using \cref{dim lemma} and \cref{good bound} we find that 
\begin{align*}
	&\sum_{\substack{\vert\lambda\vert \leq 2g \\ \lambda_1 \leq r}} \sum_{k >\theta(2g+1)} q^{-k} \dim H_k(\H_{\infty}^{1,0},\V_\lambda) s_{\langle \lambda^\dag \rangle}(1^\pm,\ldots,1^\pm)  \\
	\leq  4^{gr} & \sum_{w=0}^{2g} \sum_{\vert\lambda\vert=w} \sum_{k >\theta(2g+1)} q^{-k} \dim H_k(\H_{\infty}^{1,0},\V_\lambda) \dim S^{\lambda'}(\C^{2r}) \\
	\ll 4^{gr} &\sum_{w=0}^{2g} \sum_{k>\theta(2g+1)} q^{-k} 2^{\varepsilon k} \ll 4^{gr}(2g+1) 2^{\varepsilon \theta(2g+1)} q^{-\theta(2g+1)} 
\end{align*}
for all $\varepsilon>0$. The second sum is treated in the exact same way, but we use instead that  $H_k(\H_{\infty}^{1,0},\V_\lambda)=0$ for $k<\vert\lambda\vert/4$ (\cref{1/4}). We obtain the estimate
\begin{align*}
	&\sum_{\substack{\vert\lambda\vert > 2g \\ \lambda_1 \leq r}} \sum_{k \geq 
	\vert\lambda\vert/4} q^{-k} \dim H_k(\H_{\infty}^{1,0},\V_\lambda) s_{\langle \lambda^\dag \rangle}(1^\pm,\ldots,1^\pm)  \\
	\leq  4^{gr} & \sum_{w>2g} \sum_{\vert\lambda\vert=w} \sum_{k \geq w/4} q^{-k} \dim H_k(\H_{\infty}^{1,0},\V_\lambda) \dim S^{\lambda'}(\C^{2r}) \\
	\ll 4^{gr} &\sum_{w>2g} \sum_{k\geq w/4} q^{-k} 2^{\varepsilon k} \ll 4^{gr} 2^{\varepsilon g/2} q^{-g/2}. \qedhere
\end{align*}\end{proof}

\begin{rem}
    As stated in \cref{stab conj 2}, we expect $\theta(n)\approx n/4$, which leads to an error term in \cref{uniform stab implies moments} which is roughly $O(q^{-g/4})$. On the other hand, as we explain in the next section of the paper, one expects an asymptotic formula with successive terms $\mathcal T_n$, in which the first term $\mathcal T_1$ accounts for the contribution from stable homology. One also expects the error term in the asymptotic formula of \cref{uniform stab implies moments} to be comparable to the second term $\mathcal T_2$, which is approximately $O(q^{-g/2})$. Looking through the proof of \ref{uniform stab implies moments}, the discrepancy comes from the contributions from the homology of $\beta_n$ which is \emph{just outside} the stable range. In the stable range we know that Frobenius eigenvalues on $H_k$ are $q^{-k}$, but outside the stable range we only have the Deligne bounds, giving the upper bound $q^{-k/2}$. Our expectation is that in the regime just outside the stable range we have \emph{secondary stability} \cite{gkrw-secondary}, and that just as the stable homology is pure of lowest possible weight, also the homology in the range where secondary stability holds has weights (close to) the lowest possible. This would account for the mismatch of error terms in \cref{uniform stab implies moments} and the next section.  
\end{rem}

\begin{rem}
    It seems likely that a version of \cref{uniform stab implies moments} holds also when summing over polynomials of even degree; indeed, our homological stability results are valid for braid groups with both an odd or even number of strands. Pursuing this will require a more detailed study of the representation theory of odd symplectic groups than we want to include here.
\end{rem}
	
	\section{Moments and stability predictions} \label{predictions}

 \begin{para}We now turn our attention to the important problem of determining the asymptotics of the traces $\tr_{\lambda}(g)$ as the genus $g$ grows, for all partitions $\lambda \subseteq (r^{g})$ (i.e., all partitions whose Young diagram fit inside a rectangle of $g$ rows and $r$ columns), where $r$ is a fixed positive integer. Since $\tr_{\lambda}(g) = 0$ when $|\lambda|$ is odd, we can consider only the partitions of even weight. When $|\lambda| \le 2g - 1,$ an asymptotic formula for $\tr_{\lambda}(g)$ can be obtained by combining Proposition \ref{asympt-stability} with the results of Rudnick \cite[Theorem 1]{Rud} and Roditty-Gershon \cite[Theorem 1.1]{R-G}. Obtaining a similar result in a wider range of $\lambda$ seems to be a challenging problem. In fact, 
	to the best of our knowledge, at present, there is not even a prediction as to what the asymptotic behavior of these traces of {F}robenius should be, when $|\lambda|$ is quite large compared with $g.$ The only existing result for general partitions comes from the seminal work of Katz and Sarnak, see \cite[Theorem 10.8.2]{KaS}: 
\end{para}
 \begin{thm}[Katz--Sarnak]
     There exist positive constants $A(g)$ and $C(g)$ such that 
		\[ 
		|\tr_{\lambda}(g)| \le C(g)(\dim\,  V_{\lambda})
		q^{- 1 \slash 2}
		\] 
		for all $\lambda$, as long as 
		$
		q = \#\mathbf{F}_{q} \ge A(g).
		$ 
		Here $V_{\lambda}$ is the irreducible representation of $\mathrm{USp}(2g)$ with highest weight $\lambda.$  
 \end{thm}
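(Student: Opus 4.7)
The plan is to deduce this bound from Deligne's equidistribution theorem in the quantitative form developed by Katz--Sarnak. The argument has three main ingredients: (i) identification of the geometric monodromy group of $\mathbb{V}$ on $\mathcal{H}_g^{1,0}$ with $\mathrm{Sp}(2g,\mathbb{Z}_\ell)$; (ii) vanishing of the Haar integral of $s_{\langle\lambda\rangle}$ on $\mathrm{USp}(2g)$ for $\lambda \neq 0$; and (iii) a uniform bound on the total Betti number of $\mathbb{V}_\lambda$ on $\mathcal{H}_g^{1,0}$ in terms of $\dim V_\lambda$.

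For (i), I would use A'Campo's theorem (recalled in the introduction): the image of the integral reduced Burau representation $\beta_{2g+1} \to \mathrm{Sp}(2g,\mathbb{Z})$ is the preimage under reduction modulo $2$ of $\Sigma_{2g+1} \subset \mathrm{Sp}(2g,\mathbb{F}_2)$, and so has finite index. This subgroup is therefore Zariski dense in $\mathrm{Sp}(2g)$, and via the comparison between the topological fundamental group of $\mathcal{H}_g^{1,0}(\mathbb{C})$ and the geometric étale fundamental group of $\mathcal{H}_g^{1,0} \otimes \overline{\mathbb{F}}_q$, the $\ell$-adic geometric monodromy is all of $\mathrm{Sp}(2g,\mathbb{Z}_\ell)$. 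In particular, $\mathbb{V}_\lambda$ is a geometrically irreducible lisse sheaf of rank $\dim V_\lambda$. Item (ii) is then immediate from orthogonality of irreducible characters of the compact group $\mathrm{USp}(2g)$.

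Given (i) and (ii), Deligne's equidistribution theorem in the form of \cite[Theorem 9.2.6]{KaS} gives
\[
\left|\tr_\lambda(g)\right| \;=\; \left|\frac{1}{q^{2g+1}}\sum_{d \in \mathscr{P}_{2g+1}} s_{\langle\lambda\rangle}(\overline{\Theta}_d)\right| \;\leq\; \frac{C'(g) \cdot B_\lambda(g)}{\sqrt{q}},
\]
where $B_\lambda(g) := \sum_k \dim H^k_c(\mathcal{H}_g^{1,0} \otimes \overline{\mathbb{F}}_q, \mathbb{V}_\lambda)$ and the factor $C'(g)$ arises from combining the Grothendieck--Lefschetz trace formula with Deligne's bounds on Frobenius weights, together with the leading term $|\mathcal{H}_g^{1,0}(\mathbb{F}_q)| = q^{2g+1}(1 + O(q^{-1}))$.

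The main obstacle is step (iii): establishing a bound $B_\lambda(g) \leq C''(g) \cdot \dim V_\lambda$ uniform in $\lambda$. This is a Bombieri--Katz-type estimate: for the fixed base $\mathcal{H}_g^{1,0}$, the sum of compactly supported Betti numbers of any lisse sheaf should grow linearly in its rank, with constant depending only on the base. The standard versions (Katz, \emph{Sums of Betti numbers}) are formulated for smooth varieties over $\mathbb{F}_q$ and proceed by fibering in curves and applying the Euler--Poincaré formula; to apply them on the Deligne--Mumford stack $\mathcal{H}_g^{1,0}$ one passes to a finite étale cover that rigidifies the hyperelliptic involution and the Weierstrass marking, reducing to a smooth quasi-projective variety. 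Combining this uniform Betti bound with the equidistribution estimate above produces the desired inequality $|\tr_\lambda(g)| \leq C(g)(\dim V_\lambda) q^{-1/2}$ for all $q \geq A(g)$.
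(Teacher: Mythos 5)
Your outline is correct and reconstructs the argument of the cited source: the paper does not prove this statement itself but refers to \cite[Theorem 10.8.2]{KaS}, which is established precisely by the combination you describe (big monodromy $+$ orthogonality of irreducible characters of $\mathrm{USp}(2g)$ $+$ Deligne equidistribution with a Betti-number bound controlling the error). Two small remarks. First, $\H_g^{1,0}\cong\mathrm{Conf}_{2g+1}(\mathbb A^1)/\mathbb G_a$ is already a smooth affine \emph{scheme} over $\Spec\Z[\tfrac 1 2]$ for $g\ge 1$ (the $\mathbb G_a$-action is free), so the passage to an \'etale cover that you propose to rigidify the DM stack structure is unnecessary; the Bombieri--Katz estimates may be applied directly. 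Second, the Betti bound $B_\lambda(g)\le C''(g)\dim V_\lambda$ in step (iii) can be obtained much more directly than by fibering in curves: the Fuks stratification of configuration space (this is \cref{fuks} in the body of the paper) gives $\dim H_k(\beta_{2g+1},V_\lambda)\le\binom{2g}{k}\dim V_\lambda$, hence $B_\lambda(g)\le 4^g\dim V_\lambda$, which is exactly the kind of uniform-in-$\lambda$ constant the argument needs. Finally, the phrase ``the geometric monodromy is all of $\mathrm{Sp}(2g,\Z_\ell)$'' is slightly stronger than what A'Campo's finite-index statement plus Zariski density directly yields; what the Deligne equidistribution machinery needs is that the Zariski closure of the geometric monodromy is the full algebraic group $\mathrm{Sp}(2g)$, so that $\V_\lambda$ is geometrically irreducible and the Haar integral is computed against $\mathrm{USp}(2g)$ --- and that is indeed what A'Campo plus specialization of the tame fundamental group gives.
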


\subsection{The general conjecture}
 
 \begin{para}
     To set up some notation, let $A = (A_{ij})_{1 \le i, j \le r + 1}$ be the symmetric matrix whose entries are 
	\[
	A_{ij} = 
	\begin{cases} 
		2 &\mbox{if $i = j$} \\
		-1 &\mbox{if $i \ne j$ and either $i = r + 1$ or $j = r + 1$} \\
		0 & \mbox{otherwise.}
	\end{cases} 
	\] 
	To $A$ one can attach a {K}ac-{M}oody {L}ie algebra $\mathfrak{g}(A)$ with the set 
	of roots $\Delta,$ and 
	$
	\{\alpha_{i}\}_{1 \le i, j \le r + 1} \subset \Delta
	$ 
	a system of simple roots; the reader interested in the general theory of {K}ac-{M}oody algebras can consult \cite{Kac}. We have that $\det A = -\,  2^{r - 1}(r - 4).$ By \cite[Theorem 4.3]{Kac}, one finds at once that $A$ is of finite, affine, or indefinite type when $r \le 3,$ $r = 4,$ 
	or $r \ge 5,$ respectively. For example, the {D}ynkin diagram of $A$ is $D_{4}$ 
	when $r = 3$ and $\tilde{D}_{4}$ when $r = 4;$ for $r \ge 5,$ the {D}ynkin diagram has a star-like shape. Let $W_{r}$ denote the {W}eyl group of $\mathfrak{g}(A),$ and let $\Delta^{\mathrm{re}}_{+}\subset \Delta$ be the set of positive real roots. For 
	$
	\alpha \in \Delta^{\mathrm{re}}_{+},
	\alpha = \sum_{i} k_{i}
	\alpha_{i}
	$ 
	($k_{i} \in \mathbf{N}$), let 
	$d(\alpha) = \sum_{i} k_{i}$ denote the height of $\alpha,$ and for each positive integer $n,$ let $\Phi_{n}$ be the subset of $\Delta^{\mathrm{re}}_{+}$ with $k_{r + 1} = n;$ 
	by \cite[Lemma 2.2]{DT}, $\Phi_{n}$ is a finite set. Finally, for each $\alpha \in \Phi_{n},$ we fix a {W}eyl group element $w_{\aa}$ in reduced form of length 
	$l(w_{\aa})$ such that 
	$
	w_{\aa}(\aa) = \aa_{r + 1}. 
	$ 
	This choice is possible since all real roots are in the orbit 
	$
	W_{r}\aa_{r + 1}, 
	$
	see \cite[Lemma 2.3]{DT}. 
	
 \end{para}
	\begin{para}
	Now consider the $r$th moment-sums of quadratic $L$-functions, that is, the coefficients of the moment generating function 
	$
	\mathcal{Z}(\mathrm{t}, \xi)
	$. Initially, we take the variables to be 
	$
	t_{i} 
	= q^{- s_{i}},
	$ 
	for $i = 1, \ldots, r.$ In \cite{DT}, it has been conjectured that, for 
	$
	\Re(s_{i}) = \frac{1}{2}
	$ 
	and $N \ge 1$ an integer, we have 
	\begin{equation} \label{eq: asympt-moments} 
		\sum_{d \in \Pg}
		\prod_{i = 1}^{r} L(s_{i}, \chi_{d}) 
		\, = \sum_{n \le N} Q_{n, r}(\mathrm{s}'; 2g + 1, q) \, + \, O_{\Theta, \,  q, \, r}
		\left(q^{(2g + 1)(1 + \Theta)\slash 2}\right)
	\end{equation} 
	for any 
	$
	(N + 1)^{-1}  < \Theta  < N^{-1}
    $. 
    {This is the shifted variant of \cref{conj: Moment-conjecture-over-ff} that was mentioned in \S\ref{traces-complete-asympt}.} 
    \end{para}
    \begin{para}
	The terms 
	$
	Q_{n, r}(\mathrm{s}'; 2g + 1, q),
	$ 
	$
	\mathrm{s}' 
	: = (s_{1}, \ldots, 
	s_{r}),
	$ 
	have a somewhat complicated expression of the form 
	\begin{equation*} \label{eq: Qn} 
		Q_{n, r}(\mathrm{s}'; 2g + 1, q)
		= n^{-1}
		\cdot\sum_{\alpha \in \Phi_{n}} \bigg\{\sum_{a\in \{1, \, \theta_{0} \}}\;
		\sum_{\zeta_{a}^{n} = \mathrm{sgn}(a)}
		\frac{\Gamma_{w_{\alpha}}
			(a; \zeta_{a})}{2^{l(w_{\alpha})}}
		S_{\alpha}(\mathrm{s}', \zeta_{a})
		\zeta_{a}^{2g + 1}\bigg\}
		\, q^{\frac{(2g + 1)(d(\alpha) + 1 - 2\alpha(\mathrm{s}
					'))}{2n}}
	\end{equation*} 
	where $\theta_{0}$ is any fixed non-square in $\mathbf{F}_{q}^{\times},$ and $\mathrm{sgn}(a) = 1$ or $-1$ according as 
	$a = 1$ or $\theta_{0}.$ In addition, for 
	$
	\alpha = \!\sum k_{i}
	\alpha_{i} \in \Delta^{\mathrm{re}}_{+},
	$ 
	we set 
	$
	\alpha(\mathrm{s}')
	: = \sum_{i \le r} k_{i}
	s_{i}.
	$ 
	The function 
	$
	S_{\aa}(\mathrm{s}', \zeta_{a})
	$ 
	decomposes as 
	\[ 
	S_{\alpha}(\mathrm{s}', \zeta_{a})
	= \prod_{p} S_{p}^{w_{\alpha}}
	(\mathrm{s}', \zeta_{a})
	\] 
	over all monic irreducibles, with 
	$
	S_{p}^{w_{\aa}}
	(\mathrm{s}', \zeta_{a})
	$ 
	given in \cite[Sec. \!5.2]{DT}. An explicit formula for 
	$
	2^{- l(w_{\aa})}\Gamma_{\!w_{\aa}}\!(a; \zeta_{a})
	$ 
	can be found in \cite[Sec. \!5.1]{DT}. When $r = 1, 2,$ the sets $\Phi_{n}$ 
	($n \ge 2$) are all empty, and when $r = 3$ the same is true, but for $n \ge 3.$ 
	If $r \ge 4,$ then $\Phi_{n}$ is non-empty for all $n \ge 1.$ Consequently, $Q_{n, 1} = Q_{n, 2} = 0$ for $n \ge 2,$ and $Q_{n, 3} = 0$ for $n \ge 3.$ For our purposes here, we shall only need the first two terms of the asymptotic formula of \eqref{eq: asympt-moments}, when $r \ge 3$. \end{para}
	
	\begin{para}
	Prior to \cite{DT}, {A}ndrade and {K}eating \cite[Conjecture 5]{AK} made the weaker conjecture that 
	\[ 
	\sum_{d \in \Pg} L(\tfrac{1}{2}, \chi_{d})^{r} 
	= \sum_{d \in \Pg}Q_{r}(\log_{q} |d|)(1 + o(1))  
	\] 
	where $Q_{r}$ is an explicit polynomial of degree $r(r + 1)\slash 2.$ Note that $Q_{r}(\log_{q} |d|) = Q_{r}(2g + 1)$ for all 
	$d \in \Pg,$ and thus the right-hand side is $\sim \#\Pg \cdot Q_{r}(2g + 1)$ as $g \to \infty.$ One checks the compatibility between this term and the main term 
	$Q_{1, r}$ in \eqref{eq: asympt-moments}. This conjecture has been established only for $r \le 3$ (with explicit power-saving error terms) in \cite{HR} for $r = 1,$ and \cite{Florea1, Florea2} for $r \le 3;$ in \cite{Florea3}, {F}lorea also obtained a weaker version of the asymptotic formula for the fourth moment. The only results towards \eqref{eq: asympt-moments} are the asymptotics established in \cite{Dia}, for the third moment, and in \cite{DPP}, for a weighted fourth moment. 
	\end{para}

	\begin{para}
	The connection between the moment-sums of quadratic $L$-functions and the traces $\tr_{\lambda}(g)$ is given by \cref{geom-expr-moments}. See also \cite[Theorem 6.4]{DV}.	To obtain information about the traces $\tr_{\lambda}(g),$ we shall apply to Equation \eqref{eq: asympt-moments} the {W}eyl integration formula, which, for convenience, we now 	recall, see \cite{Bump}: \end{para}
	
 \begin{thm}[Weyl integration formula]Let $\mathbf{G}$ be a compact connected {L}ie group and $\mathbf{T}$ a maximal torus. 
		Let $du$ and $dt$ be {H}aar measures on these groups, normalized so that their volumes are $1.$ Then, for any $\mathbf{C}$-valued continuous central function $f$ on $\mathbf{G},$ we have 
		\[
		\int_{\mathbf{G}} f(u)\, du 
		= \frac{1}{\#W}\!\int_{\mathbf{T}} f(t) |\delta(t)|^{2}\, dt
		\] 
		where $W = N(\mathbf{T})\slash \mathbf{T}$ is the {W}eyl group of $\mathbf{G},$ and 
		\[
		\delta(t) = \prod_{\alpha > 0} 
		\left(e^{\alpha(t)\slash 2} - e^{-\alpha(t)\slash 2}\right)
		\] 
		the product being over the positive roots of $\mathbf{G}.$ 
	\end{thm}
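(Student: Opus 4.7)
The plan is to prove the Weyl integration formula via change of variables applied to the conjugation map
$$\Phi \colon (G/T) \times T \to G, \qquad (gT,\, t) \mapsto g\, t\, g^{-1}.$$
This map is well-defined because $T$ is abelian, and the classical fact that every element of a compact connected Lie group is conjugate to an element of a maximal torus guarantees that $\Phi$ is surjective. Moreover, for any regular element $t_0 \in T$, the fiber $\Phi^{-1}(t_0)$ consists of exactly $\#W$ points, corresponding to the action of the Weyl group $W = N(T)/T$ on $T$. Thus $\Phi$ is generically a $\#W$-fold covering.

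The central computation is the Jacobian of $\Phi$. I would compute the differential $d\Phi$ at a point $(eT, t)$ with $t$ regular. Using right translation by $t$ to identify $T_{t}G \cong \mathfrak{g}$, and writing the tangent space of $G/T$ at $eT$ as $\mathfrak{g}/\mathfrak{t} \cong \bigoplus_{\alpha \ne 0}\mathfrak{g}_{\alpha}$ (the sum of root spaces, after complexification), one checks that $d\Phi$ sends $(X \bmod \mathfrak{t},\, Y) \in \mathfrak{g}/\mathfrak{t}\oplus \mathfrak{t}$ to $\mathrm{Ad}(t^{-1})X - X + Y$. On each root space $\mathfrak{g}_{\alpha}$, the operator $\mathrm{Ad}(t^{-1}) - \mathrm{id}$ acts as multiplication by $e^{-\alpha(t)} - 1$. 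Taking the determinant over all nonzero roots, pairing each $\alpha$ with $-\alpha$, and using $|e^{-\alpha(t)}| = 1$ to absorb unit factors, gives $|\det d\Phi| = \prod_{\alpha > 0}|e^{\alpha(t)/2}-e^{-\alpha(t)/2}|^2 = |\delta(t)|^2$.

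Once the Jacobian is in hand, the integration formula follows by pushing the product Haar measure on $(G/T) \times T$ forward to $G$. If $du$ and $dt$ are the normalized Haar measures on $G$ and $T$, and $d(gT)$ denotes the induced $G$-invariant measure on $G/T$ of total mass $1$, then for $f$ central on $G$ we have $f(\Phi(gT, t)) = f(t)$, and
$$\int_{G/T \times T} f(t) \, |\delta(t)|^2 \, d(gT)\, dt \;=\; (\#W) \int_{G} f(u)\, du,$$
after accounting for the generic $W$-to-$1$ covering. The integrand on the left is independent of the $G/T$ variable, so the integral collapses to $\int_T f(t)|\delta(t)|^2 dt$, yielding the formula.

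The main obstacle is justifying the reduction to the regular locus and verifying that the measure-theoretic pushforward actually works despite $\Phi$ being singular on the non-regular set. This is resolved by noting that the non-regular elements form a lower-dimensional subset (cut out by $\delta(t)=0$), hence have measure zero in both source and target, so the change-of-variables computation on the regular open locus extends by continuity to the full identity of integrals of continuous functions. The only other subtlety is the normalization constants, which are fixed by testing the identity against $f \equiv 1$, equivalently by recognizing that the constant $\#W$ accounts precisely for overcounting by the generic fiber size.
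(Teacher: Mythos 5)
The paper does not prove this classical theorem; it simply recalls the statement and refers to Bump's textbook. Your argument is precisely the standard change-of-variables proof that appears there (conjugation map $G/T \times T \to G$, generic $\#W$-fold cover away from the singular locus, Jacobian computed on root spaces), and it is correct. One small convention glitch: the displayed differential $(X \bmod \mathfrak{t}, Y) \mapsto \mathrm{Ad}(t^{-1})X - X + Y$ is what one gets by trivializing $T_tG$ via \emph{left} translation by $t$; with right translation the formula would instead read $(I - \mathrm{Ad}(t))X + \mathrm{Ad}(t)Y$. Either way $\mathrm{Ad}(t)$ acts trivially on $\mathfrak{t}$ and orthogonally on $\mathfrak{g}/\mathfrak{t}$, so both give the same Jacobian modulus $|\delta(t)|^2$ and the proof goes through.
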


	\begin{para}We apply {W}eyl's formula when 
	$
	\mathbf{G} = \mathrm{USp}(2r), 
	\mathbf{T} = 
	\left\{\operatorname{diag} \left(e^{\pm \sqrt{-1}\theta_{1}}, \ldots, 
	e^{\pm \sqrt{-1}\theta_{r}} \right) : 
	\theta_{j} \in \mathbf{R}\right\},
	$ 
	and 
	\[
	f(u)  =
	\sum_{\lambda \subseteq (r^{g})}
	\tr_{\lambda}(g)
	s_{\langle \lambda^{\dag} \rangle}(u)
	\] 
	where 
	$
	s_{\langle \lambda^{\dag} \rangle}
	$ 
	is the character of the irreducible representation of $\mathrm{USp}(2r)$ with highest weight $\lambda^{\dag}$. 
    Since each $Q_{n, r}(\mathrm{s}'; 2g + 1, q)$ in 
\S\ref{eq: Qn} is a function of $q^{\pm s_{i}}$, $i = 1, \ldots, r$, it is convenient to substitute 
$
q^{- s_{i}} \! 
= q^{- 1\slash 2}
t_{i},
$ 
hence $|t_{i}| = 1$ for all $i;$ we denote the resulting functions multiplied by the normalizing factor $q^{-2g-1}$ 
by $\mathcal{Q}_{n}(t; g, q)$, that is, 
\[
Q_{n, r}(\mathrm{s}'; 2g + 1, q) = 
q^{2g+1}\mathcal{Q}_{n}(q^{\frac{1}{2} - s_{1}},\dots, 
q^{\frac{1}{2} - s_{r}}; g, q).
\]For $n \ge 1,$ 
	we define 
	\[ 
	\mathcal{T}_{n}(\lambda^{\dag}) = \frac{1}{\#W}\!\int_{\mathbf{T}} \mathcal{Q}_{n}(t; g, q)
	\psi(t)^{-g}
	\overline{s_{\langle \lambda^{\dag} \rangle}(t)}\, |\delta(t)|^{2}\, dt
	\] 
	where $\psi : \mathbf{T} \to \mathbf{C}^{\times}$ is the character defined by 
	$
	\psi(t) = t_{1}
	\cdots \,  t_{r}, 
	$ and $ 
	t = \operatorname{diag}(t_{1}^{\pm 1}, \ldots, t_{r}^{\pm 1}). 
	$ 
	Then \cref{geom-expr-moments} and the conjectural asymptotic formula 
	\eqref{eq: asympt-moments}, combined with the orthonormality of the characters 
	$
	s_{\langle \lambda^{\dag} \rangle},
	$ 
	lead to the following: 
 \end{para}
	\begin{conj} \label{conj-traces} For every $\lambda \subseteq (r^{g})$ we have 
		\[
		\tr_{\lambda}(g)
		= \sum_{n \le N} \mathcal{T}_{n}(\lambda^{\dag})\, + \, O_{\Theta, \,  q, \, r}
		\left(q^{(2g + 1)(\Theta-1)\slash 2}\right)\!.
		\]
	\end{conj}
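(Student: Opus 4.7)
The plan is to derive Conjecture \ref{conj-traces} as a direct consequence of the hypothesized asymptotic \eqref{eq: asympt-moments} by combining \cref{geom-expr-moments} with Weyl integration and the orthogonality of the irreducible characters of $\mathrm{USp}(2r)$. First I will substitute $q^{-s_i} = q^{-1/2} t_i$ into \eqref{eq: asympt-moments}, so that the constraint $\Re(s_i) = \tfrac{1}{2}$ becomes $|t_i| = 1$, and divide both sides by $q^{2g+1}$. After this substitution the main terms become $\mathcal{Q}_{n}(t;g,q)$ by definition, and the error $O_{\Theta,q,r}\!\left(q^{(2g+1)(1+\Theta)/2}\right)$ becomes $O_{\Theta,q,r}\!\left(q^{(2g+1)(\Theta-1)/2}\right)$, uniformly in $t \in \mathbf{T}$. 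On the left-hand side, \cref{geom-expr-moments} rewrites the normalized moment sum as $(t_1\cdots t_r)^g \sum_{\mu \subseteq (r^g)} \tr_\mu(g)\, s_{\langle \mu^\dag\rangle}(t_1^{\pm 1},\ldots,t_r^{\pm 1})$.

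Next I will multiply the resulting identity by $\psi(t)^{-g}\overline{s_{\langle \lambda^\dag \rangle}(t)}$ and integrate against $\frac{1}{\#W}|\delta(t)|^2\, dt$ over $\mathbf{T}$. Since $\psi(t) = t_1\cdots t_r$, the factor $(t_1\cdots t_r)^g \psi(t)^{-g}$ collapses to $1$, leaving $\sum_{\mu \subseteq (r^g)} \tr_\mu(g)\, s_{\langle \mu^\dag \rangle}(t)$ on the left. For $\lambda, \mu \subseteq (r^g)$, the weights $\lambda^\dag$ and $\mu^\dag$ (defined as in \cref{defn lambda-dag}) are dominant, so $s_{\langle\lambda^\dag\rangle}$ and $s_{\langle\mu^\dag\rangle}$ are genuine irreducible characters of $\mathrm{USp}(2r)$. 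By the Weyl integration formula together with the Schur orthogonality relations,
\[
\frac{1}{\#W}\int_{\mathbf{T}} s_{\langle \mu^\dag \rangle}(t)\, \overline{s_{\langle \lambda^\dag \rangle}(t)}\, |\delta(t)|^2\, dt = \delta_{\lambda,\mu},
\]
so the integrated left-hand side picks out precisely $\tr_\lambda(g)$. Evaluating the integrals of the main terms against the same measure reproduces, again by definition, $\sum_{n\le N}\mathcal{T}_n(\lambda^\dag)$.

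It remains to treat the error term. Since $|t_i|=1$ on $\mathbf{T}$ we have $|\psi(t)^{-g}|=1$ and $|\delta(t)|^2$ is a continuous function on the compact torus, hence bounded by a constant depending only on $r$. The character $s_{\langle\lambda^\dag\rangle}(t)$ is bounded pointwise by $\dim V_{\lambda^\dag}$, which by the Weyl dimension formula is a polynomial in $g$ of degree $r(r+1)/2$ and in particular is absorbed (for fixed $r$, $q$, $\Theta$) into an updated $O$-constant after an arbitrarily small loss in the exponent; since $\Theta$ is strictly bounded below $N^{-1}$, any such polynomial factor may be accommodated by shrinking $\Theta$ slightly inside the allowed interval $(N+1)^{-1} < \Theta < N^{-1}$. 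This yields the error $O_{\Theta,q,r}\!\left(q^{(2g+1)(\Theta-1)/2}\right)$ claimed in the conjecture.

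The main obstacle in this argument is nothing beyond the input itself: the entire assertion is conditional on the moment conjecture \eqref{eq: asympt-moments}, and the mechanics of Weyl integration with orthogonality of $\mathrm{USp}(2r)$-characters are routine. The one genuinely delicate point is that the error bound in \eqref{eq: asympt-moments} must be uniform in $\mathbf{s}'$ on the critical line $\Re(s_i)=\tfrac{1}{2}$ in order for the integration step to produce a uniform bound; this uniformity is built into the statement of the conjecture, so no further input is required.
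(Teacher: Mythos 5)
Your derivation of Conjecture \ref{conj-traces} from the conjectural moment asymptotic \eqref{eq: asympt-moments} --- via the substitution $q^{-s_i}=q^{-1/2}t_i$, \cref{geom-expr-moments}, the Weyl integration formula and Schur orthogonality for $\mathrm{USp}(2r)$ --- is precisely the derivation sketched in the paper in the paragraph immediately preceding the conjecture. One small simplification worth noting: your appeal to $\dim V_{\lambda^\dag}$ and a subsequent shrinking of $\Theta$ is unnecessary, since Cauchy--Schwarz together with $\frac{1}{\#W}\int_{\mathbf{T}}|s_{\langle\lambda^\dag\rangle}(t)|^{2}|\delta(t)|^{2}\,dt=1$ gives $\frac{1}{\#W}\int_{\mathbf{T}}|s_{\langle\lambda^\dag\rangle}(t)|\,|\delta(t)|^{2}\,dt\leq 1$, so the integrated error is already $O_{\Theta,q,r}\!\left(q^{(2g+1)(\Theta-1)/2}\right)$ uniformly in $\lambda$ with no loss in the exponent.
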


	\begin{para}While the asymptotic formula \eqref{eq: asympt-moments} has a regular behavior in the sense that 
	$
	Q_{n, r}(\mathrm{s}'; 2g + 1, q)
	$ 
	dominates the term 
	$
	Q_{n+1, r}(\mathrm{s}'; 2g + 1, q)
	$ 
	for all $n \ge 1,$ as either $g$ or $q$ gets large, the asymptotics of 
	$\tr_{\lambda}(g)$ behave quite differently depending upon various regimes of the 
	ratio $|\lambda|\slash g.$ In what follows, we begin to investigate the asymptotic behavior of 
	$\tr_{\lambda}(g)$ by analyzing closely the first two terms 
	$
	\mathcal{T}_{1}(\lambda^{\dag})
	$ 
	and 
	$
	\mathcal{T}_{2}(\lambda^{\dag})
	$ 
	in the above conjecture. 
    {Our analysis of the first term, in particular, also suggests, based on \cref{conj-traces} and \cref{stable-traces-2}, that the family $\{\H_{g}^{1,0}\}$ satisfies homological stability for the twisted coefficients 
$
\mathbb{V}_{\lambda}
$ 
in a range independent of $\lambda$, as in \cref{stab conj 2}.} 
\end{para}
\subsection{The first term}	
	\begin{para}
	To study 
	$
	\mathcal{T}_{1}(\lambda^{\dag}),
	$ 
	we first note that the set 
	$\Phi_{1}$ consists of the roots of the form 
	$
	\alpha = \sum_{i \le r} k_{i}
	\alpha_{i} +
	\alpha_{r + 1},
	$ 
	with $k_{i}  = 0$ or $1$ for all $i = 1, \ldots, r.$ From \cite[Section 7]{DT}, one finds easily that 
	\[  
	\mathcal{Q}_{1}(t; g, q)\psi(t)^{-g} =
	(1-q^{-1})\sum_{\epsilon} \frac{R_{1}(q^{- 1\slash 2}t_{1}^{\epsilon_{1}}, \ldots, q^{- 1\slash 2}t_{r}^{\epsilon_{r}})}{\prod_{i = 1}^{r}
		t_{i}^{\epsilon_{i}g}
		\prod_{1 \le i \le j \le r} (1 - t_{i}^{\epsilon_{i}}
		t_{j}^{\epsilon_{j}})}
	\] 
	the sum being over all tuples 
	$
	\epsilon = (\epsilon_{1}, \ldots, \epsilon_{r}) \in \{-1, 1\}^{r}.
	$ 
	Here $R_{1}(\mathrm{t})$ is given by 
	\[ 
	R_{1}(\mathrm{t}) = 
	\prod_{n \ge 1} B(\mathrm{t}^{n}, q^{n})^{i_{n}\!(q)}
	\] 
	where 
	\[
	B(\mathrm{t}, q) \; = \prod_{1 \le i \le j \le r} (1 - t_{i} t_{j}) \, \cdot \,
	\bigg(1  + \frac{- \, 2 + \prod_{i = 1}^{r}(1 - t_{i})^{-1} 
		+ \prod_{i = 1}^{r}(1 + t_{i})^{-1}}
	{2 (1 + q^{-1})}\bigg).
	\] 
  \end{para}

 \begin{para}\label{comparison of Q} The function $R_1$ is closely related to the generating function of stable traces. Indeed,
 \begin{align*} (1-q^{-1})R_{1}(q^{- 1\slash 2}t_{1}, \ldots, q^{- 1\slash 2}t_{r}) & =  
 \Exp\Big(\, q\, \Log \Big(q^{-1} + \sum_{k\geq 0} q^{-k} h_{2k}(\mathrm t)\Big) - h_2(\mathrm t) - 1\Big)\\ & = \sum_\lambda T_\lambda \, s_{\lambda'}(t_1,\ldots,t_r).     
 \end{align*}The first equality follows by comparing $R_1$  with the expression obtained by setting $z=1/q$ in \cref{alternative form of gen series}, and noting that $\Exp(-h_2(\mathrm t)) = \prod_{i\leq j} (1-t_it_j)$, cf. \S\ref{no symplectic cauchy}. The second equality is \cref{plethystic formula for poincare series}. Combining this last expression with the following lemma, we also see that $\mathcal Q_1$ agrees with the function defined in the previous section.
 \end{para}

 \begin{lem}\label{lem:sympl-sch-id-c} For any partition $\lambda$ with $\lambda_1 \leq r$, any $g$, and indeterminates $t_1,\ldots,t_r$ one has
     \[
	s_{\langle \lambda^{\dag}(g) \rangle}(t_1^\pm,\ldots,t_r^\pm)
	= \sum_{\epsilon} 
	\frac{s_{\lambda'}(t^{\epsilon_1}_1,\ldots, t_r^{\epsilon_r})}
	{\prod_{i = 1}^{r}
		t_{i}^{\epsilon_{i}g}
		\prod_{1 \le i \le j \le r} (1 - t_{i}^{\epsilon_{i}}
		t_{j}^{\epsilon_{j}})},
	\]the sum being over all tuples 
	$
	\epsilon = (\epsilon_{1}, \ldots, \epsilon_{r}) \in \{-1, 1\}^{r}.
	$ 
 \end{lem}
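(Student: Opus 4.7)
The plan is to apply the Weyl character formula for $\Sp(2r)$ to the left-hand side and the Jacobi bialternant formula for $\GL(r)$ to each summand on the right-hand side, then match the resulting determinants after clearing denominators. First I will apply the Weyl character formula (\S\ref{wcf}) to write the left-hand side as $N(t)/D(t)$, where $N(t) = \det(t_i^{a_j} - t_i^{-a_j})_{i,j=1}^r$ with $a_j := \lambda^{\dag}_j + r - j + 1$, and $D(t) = \det(t_i^{r-j+1} - t_i^{-(r-j+1)})_{i,j=1}^r$ is the symplectic Weyl denominator. Using $\lambda^{\dag}_j = g - \lambda'_{r-j+1}$, the substitution $k = r - j + 1$ gives $a_{r-k+1} = g + k - \lambda'_k$.

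For the right-hand side, I will substitute the ordinary bialternant formula $s_{\lambda'}(t^\epsilon) = \det(t_i^{\epsilon_i(\lambda'_j + r - j)})_{i,j} / \prod_{i<j}(t_i^{\epsilon_i} - t_j^{\epsilon_j})$ into each summand. Using the elementary identities $1 - t^{2\epsilon} = -\epsilon\, t^\epsilon(t - t^{-1})$ and $(t_i^{\epsilon_i} - t_j^{\epsilon_j})(1 - t_i^{\epsilon_i} t_j^{\epsilon_j}) = -t_i^{\epsilon_i}t_j^{\epsilon_j}\bigl((t_i + t_i^{-1}) - (t_j + t_j^{-1})\bigr)$, the full denominator of each summand factors as $D(t) \cdot (-1)^{r + \binom{r}{2}} \prod_i \epsilon_i \cdot \prod_i t_i^{\epsilon_i(g+r)}$. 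Absorbing the monomial $\prod_i t_i^{-\epsilon_i(g+r)}$ into the determinant shifts the exponent in row $i$, column $j$ from $\epsilon_i(\lambda'_j + r - j)$ to $\epsilon_i b_j$ with $b_j := \lambda'_j - j - g$, so the right-hand side becomes $D(t)^{-1} \cdot (-1)^{r + \binom{r}{2}} \sum_\epsilon \prod_i \epsilon_i \cdot \det(t_i^{\epsilon_i b_j})_{i,j}$.

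Next, by multilinearity of the determinant in rows, the inner sum will collapse: since $\sum_{\epsilon_i = \pm 1} \epsilon_i\, t_i^{\epsilon_i b_j} = t_i^{b_j} - t_i^{-b_j}$ row by row, one gets $\sum_\epsilon \prod_i \epsilon_i \cdot \det(t_i^{\epsilon_i b_j}) = \det(t_i^{b_j} - t_i^{-b_j})$. The identity will then reduce to the determinantal equality $\det(t_i^{a_j} - t_i^{-a_j}) = (-1)^{r + \binom{r}{2}} \det(t_i^{b_j} - t_i^{-b_j})$, which follows from the observation $a_j = -b_{r-j+1}$: negating the exponent in each column contributes $(-1)^r$, and reversing the column order contributes the remaining $(-1)^{\binom{r}{2}}$. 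The main obstacle will be the careful sign bookkeeping throughout, in particular reconciling the ``signs-on-columns'' structure coming from the symplectic bialternant numerator with the ``signs-on-rows'' structure of the ordinary bialternants on the right; the row-multilinearity step is the crucial bridge between the two.
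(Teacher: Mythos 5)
Your proof is correct, and it is essentially the same argument as the paper's: both rest on the Weyl character formula for $s_{\langle\lambda^\dag\rangle}$, the Jacobi bialternant for $s_{\lambda'}$, the product factorization of the symplectic Weyl denominator, and multilinearity of the determinant in its rows, with the sign $(-1)^{r+\binom r 2}$ ultimately absorbed by a column reversal coming from $a_j = -b_{r+1-j}$. The only cosmetic difference is direction: the paper expands the left-hand numerator by multilinearity and then matches terms against the right-hand summands (implicitly via the relabeling $\epsilon\leftrightarrow -\epsilon$, which is what its remark ``replace $t_i$ by $1/t_i$'' is gesturing at), while you substitute the bialternant into the right-hand side and recombine.
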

 
\begin{proof}
Recall that 
\[
s_{\langle \lambda^\dag(g) \rangle}(t_1^\pm,\ldots,t_r^\pm)
= \frac{\det \!\left(t_{i}^{g - \lambda_{r + 1 - j}' + r + 1 - j} - t_{i}^{-(g - \lambda_{r + 1 - j}' + r + 1 - j)}\right)_{1\le i, j \le r}}{\mathrm{det}\!\left(t_{i}^{r + 1 - j} - t_{i}^{- (r + 1 - j)} \right)_{1\le i, j \le r}}.
\] 
{ Using the multilinearity of the determinant, write the numerator as} 
\[
\det \!\left(t_{i}^{g - \lambda_{r + 1 - j}' + r + 1 - j} - t_{i}^{-(g - \lambda_{r + 1 - j}' + r + 1 - j)}\right)_{1\le i, j \le r}
= \; \sum_{\epsilon}\, 
\prod_{k = 1}^{r} \epsilon_{k}  
\, \cdot \, \det \!\left(t_{i}^{\epsilon_{i}
	(g - \lambda_{r + 1 - j}' + r + 1 - j)}\right)_{1\le i, j \le r}
\]
and note that the denominator can be expressed as 
\[
\det \!\left(t_{i}^{r + 1 - j} - t_{i}^{- (r + 1 - j)} \right)_{1\le i, j \le r} 
= (-1)^{r(r + 1)\slash 2}
(t_{1} \cdots \,  t_{r})^{- r}
\prod_{1\le i \le j \le  r} 
(1 - t_{i}
t_{j})
\prod_{1\le k < l \le  r} 
(t_{k} - t_{l}).
\] 
In the numerator, replace $t_{i}$ by $1\slash t_{i}$ for all $i$, and note that 
\[
\det \!\left(t_{i}^{r + 1 - j} - t_{i}^{- (r + 1 - j)} \right)_{1\le i, j \le r} 
= \prod_{k = 1}^{r} \epsilon_{k}  \, \cdot\, \det \!\left(t_{i}^{\epsilon_{i}(r + 1 - j)} - t_{i}^{- \epsilon_{i}(r + 1 - j)} \right)_{1\le i, j \le r}. 
\] 
{Thus, we can write 
\begin{align*} 
s_{\langle \lambda^\dag(g) \rangle}(t_1^\pm,\ldots,t_r^\pm)
& = (-1)^{r^2}\sum_{\epsilon}\, 
\frac{\det \!\left(t_{i}^{-\epsilon_{i}
	(g - \lambda_{r + 1 - j}' + r + 1 - j)}\right)_{1\le i, j \le r}}
{\det \!\left(t_{i}^{\epsilon_{i}(r + 1 - j)} - t_{i}^{- \epsilon_{i}(r + 1 - j)} \right)_{1\le i, j \le r}}\\
&= (-1)^{r(r - 1)\slash 2}\sum_{\epsilon}\, 
\frac{\det \!\left(t_{i}^{\epsilon_{i}(\lambda_{r + 1 - j}'  + j - 1)}\right)_{1\le i, j \le r}}
{\prod_{i = 1}^{r}t_{i}^{\epsilon_{i}g}\prod_{1\le i \le j \le  r} 
(1 - t_{i}^{\epsilon_{i}}
t_{j}^{\epsilon_{j}})
\prod_{1\le k < l \le  r} 
(t_{k}^{\epsilon_{k}} - t_{l}^{\epsilon_{l}})}.
\end{align*} 
The lemma now follows from the identity 
\[ 
(-1)^{r(r - 1)\slash 2}
\mathrm{det}\Big(z_{i}^{\lambda_{r + 1 - j}' + j - 1}\Big)_{1\le i, j \le r} 
= \, s_{\lambda'}(z_1,\ldots, z_r)
\, \cdot \prod_{1\le k < l \le  r} 
(z_{k} - z_{l})
\] 
with $z_{i} = t_{i}^{\epsilon_{i}}$ for $i = 1, \ldots, r$.}
\end{proof} 

\begin{lem} \label{R1-estimate} For sufficiently small 
		$\varepsilon > 0,$ the function 
		$R_{1}(\mathrm{t})$ is holomorphic on the polydisk 
		$
		|t_{i}| 
		\le q^{-\frac{1}{4} - \varepsilon},
		$ 
		for $i = 1, \ldots, r.$ Moreover, on this polydisk, we have the upper bound 
		\[
		|R_{1}(\mathrm{t})|
		< \big(1 - q^{-4\varepsilon}\big)^{- 2^{1 + r (r + 1)\slash 2}}. 
		\] 
	\end{lem}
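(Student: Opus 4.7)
The plan is to reduce the lemma to a single pointwise estimate
$$|B(\mathrm{t}^n, q^n) - 1| \, \leq \, \frac{2^{1+r(r+1)/2}\, s^{4n}}{1 + q^{-n}}$$
valid on the polydisc $|t_i| \leq s := q^{-1/4-\varepsilon}$ for every $n \geq 1$. Combined with the trivial estimate $i_n(q) \leq q^n/n$, the identity $\sum_{n \geq 1}(qs^4)^n/n = \log(1-q^{-4\varepsilon})^{-1}$, and the elementary inequality $\log|B| \leq \log(1+|B-1|) \leq |B-1|$, this estimate will immediately yield both the holomorphicity of $R_1$ (via uniform convergence of $\sum_n i_n(q) \log B(\mathrm{t}^n, q^n)$ on compacta) and the desired upper bound, the strict inequality coming from the factor $1/(1+q^{-n}) < 1$.

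The central ingredient will be a purely algebraic rewriting of $B$. Using the identity $\prod_i(1-t_i)^{-1} + \prod_i(1+t_i)^{-1} = 2 E(\mathrm{t})/\prod_i(1-t_i^2)$, where $E(\mathrm{t}) := \sum_{S \subseteq [r],\, |S| \text{ even}} \prod_{i \in S} t_i$, a direct manipulation gives
$$B(\mathrm{t}, q) \, = \, \frac{\prod_{i<j}(1-t_it_j)}{1+q^{-1}}\bigl[q^{-1}\prod_i(1-t_i^2) + E(\mathrm{t})\bigr],$$
which I will rearrange into the form
$$B(\mathrm{t}, q) - 1 \, = \, \frac{q^{-1} P_1(\mathrm{t}) + P_2(\mathrm{t})}{1+q^{-1}},$$
with $P_1 := \prod_{i \leq j}(1-t_it_j) - 1$ and $P_2 := \prod_{i<j}(1-t_it_j)\cdot E(\mathrm{t}) - 1$. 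The crucial observation --- and the only genuinely nontrivial step --- is that while $P_1$ obviously vanishes to order $2$ at the origin, $P_2$ vanishes to order $4$: the degree-$2$ parts of $\prod_{i<j}(1-t_it_j)$ and of $E(\mathrm{t})$ are respectively $-\sum_{i<j} t_it_j$ and $+\sum_{i<j} t_it_j$, and these cancel upon multiplication. A straightforward count yields the crude bounds $\max_{|t_i|\leq 1}|P_1| \leq 2^{r(r+1)/2} - 1$ and $\max_{|t_i|\leq 1}|P_2| \leq 2^{r(r-1)/2}\cdot 2^{r-1} + 1 \leq 2^{r(r+1)/2}$.

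From here, the argument will be routine. Applying the classical Schwarz lemma to the function $\lambda \mapsto P_j(\lambda \mathrm{v})$ for each $\mathrm{v}$ in the closed unit polydisc upgrades the previous bounds to $|P_1(\mathrm{u})| \leq 2^{r(r+1)/2} y^2$ and $|P_2(\mathrm{u})| \leq 2^{r(r+1)/2} y^4$ whenever $|u_i| \leq y \leq 1$. Instantiating with $\mathrm{u} = \mathrm{t}^n$, $y = s^n$, and $q$ replaced by $q^n$, and observing that $\varepsilon \leq 1/4$ forces $q^n s^{2n} = q^{n(1/2 - 2\varepsilon)} \geq 1$ (hence $q^{-n} \leq s^{2n}$), I obtain
$$q^{-n} |P_1(\mathrm{t}^n)| + |P_2(\mathrm{t}^n)| \, \leq \, 2 \cdot 2^{r(r+1)/2}\, s^{4n},$$
which yields the promised pointwise estimate. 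It is precisely because the constant $2^{1+r(r+1)/2}$ is $q$-independent that the sum $\sum_n i_n(q)\,|B(\mathrm{t}^n, q^n) - 1|$ can be controlled uniformly, completing the proof.
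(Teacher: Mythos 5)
Your proof is correct, and in substance it is the same argument as the paper's. The paper writes $B = B_1 + B_2$ where $B_1 = \prod_{i<j}(1-t_it_j)\bigl(\prod_i(1+t_i) + \prod_i(1-t_i)\bigr)/2$ and observes that $B_1 - 1$ vanishes to order four, while $B_2$ carries the $1/(q+1)$ factor and vanishes to order two. Your decomposition $B - 1 = \bigl(q^{-1}P_1 + P_2\bigr)/(1+q^{-1})$ is algebraically equivalent to this: one checks directly that $B_1 - 1 = P_2$ and $B_2 = q^{-1}(P_1 - P_2)/(1+q^{-1})$. The one genuine technical difference is how the pointwise bound on $|B(\mathrm{t}^n,q^n)-1|$ is obtained from the order-of-vanishing. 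The paper counts monomials: $B_1 - 1$ is a sum of fewer than $2^{r(r+1)/2 - 1}$ monomials each of degree $\geq 4$, hence each of modulus $\leq q^{-1-4\varepsilon}$ on the polydisc, and similarly for $B_2$. You instead bound $\max|P_j|$ on the closed unit polydisc and apply the Schwarz lemma to $\lambda \mapsto P_j(\lambda\mathrm{v})$ to pick up the factor $y^{k_j}$. The Schwarz route is slightly more abstract and separates cleanly the two inputs (sup-norm and vanishing order); the monomial count is more hands-on and gives the same constant $2^{1+r(r+1)/2}$ with no intermediate lemma. The subsequent steps --- $i_n(q) \leq q^n/n$, $\log|B| \leq |B-1|$, recognition of the logarithmic series, exponentiation --- are identical in both.
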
 
	
	\begin{proof}{We will use the simple equality 
\[
1  + \frac{x - 2}
{2 (1 + q^{-1})} 
= \frac{x}{2} +  \frac{2 - x}
{2 (q + 1)} 
\] 
with 
$
x = \prod_{i = 1}^{r}(1 - t_{i})^{-1} + 
\prod_{i = 1}^{r}(1 + t_{i})^{-1}.
$ 
By considering the expression
\[ B = \prod_{1 \le i \le j \le r} (1 - t_{i} t_{j}) \, \cdot \,
\bigg(1  + \frac{x-2}
{2 (1 + q^{-1})}\bigg) = \prod_{1 \le i < j \le r} (1 - t_{i} t_{j})
\prod_{i = 1}^{r} (1 - t_{i}^2) \bigg( \frac{x}{2} +  \frac{2 - x}
{2 (q + 1)}  \bigg) \]
we can write 
$
B = B_{1}
+ B_{2},
$ 
where}
		\[
		B_{1}(\mathrm{t}, q) = 
		\frac{1}{2}\prod_{1 \le i < j \le r} (1 - t_{i} 
		t_{j})  
		\, \cdot \, \bigg(\prod_{i = 1}^{r}(1 + t_{i})  
		\, +  \, \prod_{i = 1}^{r}(1 - t_{i})\bigg)
		\] 
		and $B_{2}(\mathrm{t}, q)$ is given by 
		\[
		\frac{2 \prod_{1 \le i \le j \le r}
			(1 - t_{i} 
			t_{j})  \, - \, 
			\prod_{1 \le i < j \le r} (1 - t_{i} 
			t_{j})
			\prod_{i = 1}^{r}(1 + t_{i}) 
			\, - \, \prod_{1 \le i < j \le r} (1 - t_{i} 
			t_{j})
			\prod_{i = 1}^{r}(1 - t_{i})}{2 (q + 1)}.
		\] 
		Note that $B_{2}$ has no constant term and that all monomials of degree two in 
		$
		B_{1}
		$ 
		cancel out, that is to say $B_{1} - 1$ has no monomial of degree smaller than four. Thus, if 
		$
		|t_{i}| \le q^{\,-\frac{1}{4} - \varepsilon}
		$ 
		for $i = 1, \ldots, r,$ we have the estimate 
		\begin{equation*}
			\begin{split}   
				|B(\mathrm{t}, q) - 1| & \le 
				|B_{1}(\mathrm{t}, q) - 1| + |B_{2}(\mathrm{t}, q)| \\
				& \le 2^{- 1 + r (r + 1)\slash 2}
				q^{- 1 - 4 \varepsilon} \, + \, 3 \cdot 2^{- 1 + r (r + 1)\slash 2}
				q^{- \frac{3}{2} - 2\varepsilon}\\
				& < 2^{1 + r (r + 1)\slash 2}
				q^{- 1 - 4 \varepsilon}.
			\end{split}
		\end{equation*} 
		Applying the inequality $i_{n}(q) \le q^{n} \slash n,$ which follows from the well-known formula 
		$
		\sum_{d \mid n} di_{d}(q)  =  q^{n}
		$ 
		($n \ge 1$), and the familiar estimate $\log(1 + y) < y$ for $y > 0,$ we have 
		\[
		\log |R_{1}(\mathrm{t})| < 
		2^{1 + r (r + 1)\slash 2}
		\sum_{n = 1}^{\infty} \frac{q^{n -  n(1 + 4 \varepsilon)}}{n}
		= 2^{1 + r (r + 1)\slash 2}
		\log \big(1 - q^{-4\varepsilon}\big)^{-1}. 
		\] 
		This implies the absolute and uniform convergence of the product $R_{1}(\mathrm{t}).$ The upper bound follows by exponentiating the last inequality. 
	\end{proof} 
	\begin{rem} \label{further-an-cont-R1} By multiplying successively 
		$
		R_{1}(\mathrm{t})
		$ 
		by suitable products of zeta functions, or inverses of such, one can extend 
		$
		R_{1}(\mathrm{t})
		$ 
		even further. For example, the product 
		\begin{equation*} 
			\begin{split}
				\prod_{1 \le i_{1} 
					< i_{2} \le r}
				\!\left(1 - z_{
						i_{1}}^{2} 
				z_{i_{2}}^{2}\right)^{-1} 
				&\prod_{1 \le i_{1} 
					< i_{2} < 
					i_{3}\le r}
				\!\left(1 - z_{
						i_{1}}^{2}
				z_{i_{2}}^{}
				z_{i_{3}}^{}
				\right)^{-1}
				\!\left(1 - z_{
						i_{1}}^{}
				z_{i_{2}}^{2}
				z_{i_{3}}^{}
				\right)^{-1}
				\!\left(1 - z_{
						i_{1}}^{}
				z_{i_{2}}^{}
				z_{i_{3}}^{2}
				\right)^{-1}\\	
				& \cdot 
				\prod_{1 \le i_{1} 
					< i_{2} < 
					i_{3} < 
					i_{4} \le r}
				\!\left(1 - z_{
						i_{1}}^{}
				z_{i_{2}}^{}
				z_{i_3}^{}
				z_{i_4}^{}
				\right)^{-2}
				\cdot \, B(\mathrm{t}, q)
			\end{split}	
		\end{equation*} 
		is $1 + O_{r} \left(q^{-1-6 \varepsilon}\right)$ for 
		$
		|t_{i}| 
		\le q^{-\frac{1}{6} - \varepsilon},
		$ 
		from which it follows at once that 
		$
		R_{1}(\mathrm{t})
		$ 
		is holomorphic on any such polydisk, with small positive $\varepsilon.$ 
		This observation could be used to improve the estimates in Corollary \ref{stable-traces-estimate} and \cref{difference-main-stable} below. However, to simplify the arguments, we shall only make use of \cref{R1-estimate}.
	\end{rem}  
	\begin{lem}\label{average-to-integral-vers1} Let $a_{1}, \ldots, a_{r}$ be distinct non-zero complex numbers such that $a_{i}a_{j} \ne 1$ for all $1\le i, j \le r.$ Suppose $h$ is a symmetric function of $r$ variables, holomorphic on a domain containing 
		$
		\left(a_{1}^{\epsilon_{1}}, \ldots, a_{r}^{\epsilon_{r}}\right)
		$ 
		for all 
		$
		\epsilon = (\epsilon_{1}, \ldots, \epsilon_{r}) \in \{\pm 1\}^{r}.
		$ 
		Then 
		\[
		\sum_{\epsilon} 
		\frac{h(a_{1}^{\epsilon_{1}}, \ldots, a_{r}^{\epsilon_{r}})}
		{\prod_{1 \le i \le j \le r} (1 - a_{i}^{\epsilon_{i}}
			a_{j}^{\epsilon_{j}})}
		= \frac{(-1)^{r(r + 1)\slash 2}}{r!}\frac{1}{\left(2\pi \sqrt{-1}\right)^{r}}\!\oint \cdots \oint h(\mathrm{z}) 
		\frac{\prod_{1\le i < j \le r}(z_{j}^{} 
			- z_{i}^{})^{2}
			(1 - z_{i}^{} z_{j}^{})}
		{\prod_{i, j = 1}^{r} (1 - z_{i}^{}a_{j}^{})
			(1 - z_{i}^{}a_{j}^{-1})}\, 
		\frac{d \mathrm{z}}{\mathrm{z}^{r}}
		\] 
		where each path of integration encloses the $a_{j}^{\pm 1},$ but not 
		$
		z_{j}^{} \! = 0.
		$ 
	\end{lem}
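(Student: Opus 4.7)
The plan is to evaluate the right-hand side by the multidimensional residue theorem and check that the resulting sum matches the left-hand side. Viewing the integrand as a meromorphic function of each variable $z_i$, the only poles inside the prescribed contour are the simple poles at $z_i \in \{a_j^{\pm 1}\}_{j=1}^r$; the hypothesis $a_ia_j \neq 1$ guarantees these $2r$ points are distinct. By the iterated residue theorem, the integral equals the sum of residues over all choices of pole for each $z_i$, i.e.\ over pairs $(\sigma,\eta)$ where $\sigma:\{1,\ldots,r\}\to\{1,\ldots,r\}$ records which $a_j$ is assigned to $z_i$ and $\eta_i\in\{\pm1\}$ records the sign. I would first argue that only assignments with $\sigma$ a bijection contribute: if $\sigma(i)=\sigma(j)$ with $\eta_i=\eta_j$, the Vandermonde factor $(z_j-z_i)^2$ vanishes, while if $\sigma(i)=\sigma(j)$ and $\eta_i=-\eta_j$, the factor $(1-z_iz_j)$ in the numerator vanishes.

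For each bijection $\sigma\in S_r$, the residue at the point $(z_1,\ldots,z_r) = (a_{\sigma(1)}^{\eta_1},\ldots,a_{\sigma(r)}^{\eta_r})$ equals, by the symmetry of the integrand and of $h$ in the variables $z_1,\ldots,z_r$, the residue at $(a_1^{\eta'_1},\ldots,a_r^{\eta'_r})$ where $\eta'_{i} = \eta_{\sigma^{-1}(i)}$. Summing over $\sigma$ therefore contributes a factor of $r!$ that cancels the prefactor $1/r!$, reducing the problem to showing that the residue at the ``diagonal'' pole $z_i = b_i := a_i^{\epsilon_i}$ equals $(-1)^{r(r+1)/2}\,h(\mathbf b)/\prod_{i\le j}(1-b_ib_j)$, up to the sign convention in the statement.

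The residue computation is a bookkeeping exercise. The simple pole in $z_i$ comes from the factor $(1-z_ia_i^{-\epsilon_i})$, contributing $-b_i$; the differential $dz_i/z_i^{r}$ evaluated at $z_i=b_i$ contributes $b_i^{-r}$. The remaining denominator factors split into the ``diagonal'' pieces $(1-b_i^2)$, the ``offdiagonal'' pieces $\prod_{i\neq j}(1-b_ib_j) = \prod_{i<j}(1-b_ib_j)^2$, and the ``ratio'' pieces $\prod_{i\neq j}(1-b_ib_j^{-1})$. The critical simplification is the identity
\[
(1-b_i/b_j)(1-b_j/b_i) = -\frac{(b_i-b_j)^2}{b_ib_j},
\]
so that $\prod_{i\neq j}(1-b_ib_j^{-1}) = (-1)^{\binom{r}{2}}\prod_{i<j}(b_i-b_j)^2/\prod_i b_i^{r-1}$. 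Substituting these formulas, the Vandermonde in the denominator cancels the $\prod_{i<j}(b_j-b_i)^2$ in the numerator, one copy of $\prod_{i<j}(1-b_ib_j)$ cancels, and the factor $\prod_i b_i^{r-1}$ cancels with the combination $\prod_i b_i \cdot \prod_i b_i^{-r}$ from the pole and differential. What remains is exactly $(-1)^{r-\binom{r}{2}}h(\mathbf b)/\prod_{i\le j}(1-b_ib_j)$. The main obstacle is verifying that the residual sign $(-1)^{r-\binom{r}{2}}$ matches the claimed $(-1)^{r(r+1)/2}$; this is immediate from $r - \binom{r}{2} + \frac{r(r+1)}{2} = 2r$, so the two signs agree.
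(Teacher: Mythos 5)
Your proposal is correct; I checked the key steps and they all go through. The paper itself does not prove this lemma but simply cites \cite[Lemma 7.1]{DT}, and your residue calculation is surely the same argument that proves that reference. A few confirmations: the $2r$ pole positions $a_j^{\pm 1}$ are indeed pairwise distinct under the stated hypotheses (the $a_j$ are distinct, and $a_ia_j\ne 1$ rules out both $a_i=a_j^{-1}$ and $a_i^2=1$); after residuating $z_1$ at a point $a_j^{\pm1}$ the resulting function of $z_2,\dots,z_r$ has no new poles (the factors $1-z_iz_j$ sit in the numerator), so the iterated residue expansion over $(\sigma,\eta)$ is legitimate; and the symmetry reduction to diagonal poles is valid because the integrand is genuinely symmetric and all poles are simple. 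For the sign bookkeeping, a slightly more direct route than the one you give is to note that the net sign is $(-1)^r$ from the $r$ pole derivatives times $(-1)^{-\binom r2}$ from the identity $\prod_{i\neq j}(1-b_ib_j^{-1})=(-1)^{\binom r2}\prod_{i<j}(b_i-b_j)^2/\prod_ib_i^{r-1}$, and $r+\binom r2=r(r+1)/2$ immediately yields the prefactor $(-1)^{r(r+1)/2}$ without having to verify the auxiliary identity $r-\binom r2+r(r+1)/2=2r$; but your version is also correct.
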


	\begin{proof} This is \cite[Lemma 7.1]{DT}.
	\end{proof}

	\begin{para}We apply Lemma \ref{average-to-integral-vers1} to the function 
	$
	R_{1}
	(q^{- 1\slash 2}\mathrm{z})
	\prod_{i = 1}^{r} z_{i}^{-g}
	$ 
	with $a_{i}  = t_{i}.$ By Lemma \ref{R1-estimate}, the paths of integration can be taken to be the boundaries of 
	$
	\rho_{1} 
	< |z_{i}| 
	< \rho_{2}
	$ 
	with the positive orientations; we are assuming that 
	$
	\rho_{1} < 1 < 
	\rho_{2},
	$ 
	with $\rho_{2} - \rho_{1}$ sufficiently small. \!It follows at once that the average 
	\begin{equation} \label{eq: Av} 
		\sum_{\epsilon} \frac{R_{1}(q^{- 1/2}t_{1}^{\epsilon_{1}}, \ldots, q^{- 1/2}t_{r}^{\epsilon_{r}})}{\prod_{i = 1}^{r}
			t_{i}^{\epsilon_{i}g}
			\prod_{1 \le i \le j \le r} (1 - t_{i}^{\epsilon_{i}}
			t_{j}^{\epsilon_{j}})}
	\end{equation} 
	is holomorphic in a neighborhood of $|t_{i}| = 1$ $(1 \le i \le r).$ In particular, the function $\mathcal{Q}_{1}(t; g, q)\psi(t)^{-g}$ is continuous on $\mathbf{T}.$ \end{para}

	\begin{para}The following proposition provides an explicit formula for the first term 
	$
	\mathcal{T}_{1}(\lambda^{\dag})
	$ 
	in Conjecture \ref{conj-traces}. \end{para}
	
	\begin{prop} \label{explicit-T1} For every partition $\lambda \subseteq (r^{g})$ we have 
		\begin{align} \label{eq: integral-main-term-traces} 
			\mathcal{T}_{1}
				(\lambda^{\dag}) = 
				\frac{(1-q^{-1})}{r!}  
				\cdot\frac{(-1)^{r(r - 1)\slash 2}}{\left(2\pi \sqrt{-1}\right)^{r}}\!
				\oint\limits_{|z_{r}| = 1} 
				\cdots \oint\limits_{|z_{1}| = 1} 
				 R_{1}
				(q^{- 1\slash 2}\mathrm{z}) 
				\prod_{1\le i < j \le  r} 
				(z_{i}^{} - z_{j}^{})\\
				 \cdot \, \mathrm{det}\left(z_{i}^{g - \lambda_{r + 1 - j}' + r + 1 - j} - z_{i}^{-(g - \lambda_{r + 1 - j}' + r + 1 - j)}\right)_{1\le i, j \le r}\, \frac{d \mathrm{z}}{\mathrm{z}^{g + r + 1}}.
		\nonumber
		\end{align}
	\end{prop}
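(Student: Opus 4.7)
The strategy is to unpack the Weyl integration formula, combine the determinantal identities underlying the symplectic Schur functions with the sign-change symmetry already present in the $\epsilon$-sum defining $\mathcal{Q}_1$, and finally rewrite the resulting torus integral as a multiple contour integral. Concretely: starting from
\[ \mathcal T_1(\lambda^\dag) = \frac{1}{2^r r!}\int_T \mathcal Q_1(t;g,q)\psi(t)^{-g}\,\overline{s_{\langle\lambda^\dag\rangle}(t)}\,|\delta(t)|^2\,dt, \]
I first replace $\overline{s_{\langle\lambda^\dag\rangle}(t)}$ by $s_{\langle\lambda^\dag\rangle}(t)$ (the character of an irreducible representation of $\mathrm{USp}(2r)$ is real-valued on the compact form, or equivalently $s_{\langle\lambda^\dag\rangle}$ is invariant under $t_i\mapsto t_i^{-1}$). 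Then I apply the Weyl character formula \S\ref{wcf} to write $s_{\langle\lambda^\dag\rangle}(t)=N(t)/\delta(t)$, where $N(t)=\det(t_i^{\lambda^\dag_j+r-j+1}-t_i^{-(\lambda^\dag_j+r-j+1)})_{i,j}$ is the determinant appearing in the proposition. Combined with the elementary identity $|\delta(t)|^2=(-1)^r\delta(t)^2$ (which holds because $\overline{\delta(t)}=\delta(t^{-1})=(-1)^r\delta(t)$ on the torus), the integrand becomes $(-1)^r\,\mathcal Q_1(t;g,q)\psi(t)^{-g}\,N(t)\delta(t)$.

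The second step is to substitute the explicit expression for $\mathcal Q_1(t;g,q)\psi(t)^{-g}$ as a $(1-q^{-1})$-times a sum over $\epsilon\in\{\pm1\}^r$, and to fold that sum. A direct inspection shows that both $N(t)$ and $\delta(t)$ pick up the sign $\prod_i\epsilon_i$ under $t\mapsto t^\epsilon$ (each picks up one sign per row / per $(t_i-t_i^{-1})$-factor, the remaining factors being sign-change-invariant), so $N(t)\delta(t)$ is invariant under the full Weyl group $W=(\mathbf Z/2)^r\rtimes S_r$. Applying the change of variables $t\mapsto t^\epsilon$ inside the integral then shows that each of the $2^r$ summands contributes identically, which absorbs the factor $2^r$ in $1/(2^rr!)$ and leaves
\[ \mathcal T_1(\lambda^\dag)=\frac{(-1)^r(1-q^{-1})}{r!}\int_T\frac{R_1(q^{-1/2}t)\,N(t)\,\delta(t)}{\prod_i t_i^g\,\prod_{i\leq j}(1-t_it_j)}\,dt. \]

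Next I compute the rational function $\delta(t)/\prod_{i\leq j}(1-t_it_j)$ in closed form by using the factorizations $\delta(t)=\prod_i\frac{t_i^2-1}{t_i}\prod_{i<j}\frac{(t_i-t_j)(t_it_j-1)}{t_it_j}$ and $\prod_{i\leq j}(1-t_it_j)=\prod_i(1-t_i^2)\prod_{i<j}(1-t_it_j)$. Cancellation of the $(1\pm t_i^2)$-factors, together with $\prod_i t_i\cdot\prod_{i<j}t_it_j=\prod_i t_i^r$, gives
\[ \frac{\delta(t)}{\prod_{i\leq j}(1-t_it_j)}=\frac{(-1)^{r(r+1)/2}\prod_{i<j}(t_i-t_j)}{\prod_i t_i^r}. \]

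Finally I translate the torus integral into a multiple contour integral via $t_i=z_i=e^{i\theta_i}$ and $d\theta_i/(2\pi)=dz_i/(2\pi i z_i)$, which contributes an extra factor $1/\prod_i z_i$. The combined exponent of $z_i$ in the denominator becomes $g+r+1$, and the total sign is $(-1)^r(-1)^{r(r+1)/2}=(-1)^{r(r+3)/2}=(-1)^{r(r-1)/2}$ (the two differ by $(-1)^{2r}=1$). This yields precisely the formula in the proposition. The argument is essentially a bookkeeping exercise; the main (very mild) obstacle is keeping track of the interplay of the three symmetry principles in play (sign-changes of $\epsilon$, permutations in the Weyl group, and complex conjugation on $T$) while matching every sign and normalization factor, which I have verified above.
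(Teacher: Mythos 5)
Your proof is correct, and it takes a genuinely different route from the paper's. The paper applies Lemma~\ref{average-to-integral-vers1} (a residue identity from \cite{DT} that converts the $\epsilon$-average defining $\mathcal Q_1$ into an $r$-fold contour integral with $t$ as an external parameter), then expands the denominator with the symplectic Cauchy identity of \S\ref{no symplectic cauchy}, performs the torus integral via orthonormality of symplectic characters, and finally re-sums the $2^r$ pieces with the determinant identity stated at the end of its proof. Your argument instead works entirely inside the Weyl integration formula: you use the Weyl character formula to write $\overline{s_{\langle\lambda^\dag\rangle}(t)}\,|\delta(t)|^2 = (-1)^r N(t)\delta(t)$, fold the $\epsilon$-sum using the invariance of $N(t)\delta(t)$ under sign changes (absorbing the $2^r$ in $1/\#W$), then simplify the ratio $\delta(t)/\prod_{i\le j}(1-t_it_j) = (-1)^{r(r+1)/2}\prod_{i<j}(t_i-t_j)/\prod_i t_i^r$ in closed form and pass to the contour integral. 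This is shorter and avoids both the residue lemma and the Cauchy identity. The trade-off is that the paper's machinery (the residue lemma and its $K_m$-generalization) is set up precisely so that the same scheme computes $\mathcal T_2(\lambda^\dag)$ with minimal changes; your more elementary folding would need to be reworked for the second term.

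One place where you should be more careful: folding the $\epsilon$-sum requires interchanging $\sum_\epsilon$ with $\int_T$ and then changing variables $t\mapsto t^\epsilon$ term by term. Each individual summand has an apparent pole along $\prod_{i\le j}(1-t_i^{\epsilon_i}t_j^{\epsilon_j})=0$, which does meet $T$ (at $t_i=\pm 1$, $t_i = t_j^{\pm 1}$); the interchange is legitimate because the numerator $N(t)\delta(t)$ vanishes on exactly this locus (the Weyl denominator $\delta$ has simple zeros along every divisor of the form $t_i = t_j^{\pm 1}$, $t_i = \pm 1$), so each summand extends holomorphically across $T$. This is straightforward to check but deserves a sentence; without it, the step "each of the $2^r$ summands contributes identically" is not quite justified.
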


	\begin{proof} By definition 
		\[ 
		\mathcal{T}_{1}(\lambda^{\dag}) = \frac{1}{\#W}\!\int_{\mathbf{T}} 
		\mathcal{Q}_{1}(t; g, q)
		\psi(t)^{-g}
		\overline{s_{\langle \lambda^{\dag} \rangle}(t)}\, |\delta(t)|^{2}\, dt.
		\] 
		The function 
		$
		\mathcal{Q}_{1}(t; g, q)
		\psi(t)^{-g}
		$ 
		is equal to the average \eqref{eq: Av} multiplied by the factor 
		$(1-q^{-1})$. 
		Using Lemma \ref{average-to-integral-vers1}, we express \eqref{eq: Av} as an $r$-fold 
		integral over the boundaries of 
		$
		\rho_{1} 
		< |z_{i}| 
		< \rho_{2}
		$ 
		with the positive orientations. As above, we take 
		$
		\rho_{1} < 1 < 
		\rho_{2},
		$ 
		with 
		$
		\rho_{2} - \rho_{1}
		$ 
		sufficiently small. This integral can be decomposed as a sum of $2^{r}$ 
		$r$-fold integrals over circles 
		$
		|z_{i}| = 
		\rho_{k}
		$ 
		($k = 1, 2$). Pick one of them, say when the first $n$ integrals are over 
		$
		|z_{i}| = 
		\rho_{1}
		$ 
		($i = 1, \ldots, n$), while the remaining ones are over 
		$
		|z_{i}| 
		= \rho_{2}.
		$ 
        
        To the denominator of the expression under the integral, we apply Cauchy's identity for 
		$\mathrm{Sp}(2r)$ (see \S\ref{no symplectic cauchy}), which states that
{ \[ 
\prod_{i, j = 1}^{r} 
(1 - z_{i}t_{j})^{-1} (1 - z_{i}t_{j}^{-1})^{-1} 
\, = \prod_{1\le i < j \le r} (1 - 
z_{i}z_{j})^{-1} 
\cdot \sum_{\substack{\mu \\ l(\mu) \le r}} 
s_{\langle \mu \rangle}
(t_{1}^{\pm 1}\!, \ldots, t_{r}^{\pm 1}) 
s_{\mu }(z_{1}, \ldots, z_{r}).
\] To ensure the absolute convergence of the series involved, we change $z_{i}^{}$ to 
$
z_{i}^{-1}
$ 
for $n + 1 \le  i \le r,$ and apply the simple identity 
\[
\frac{1}{(1 - z_{i}t_{j})
(1 - z_{i}t_{j}^{-1})} 
= \frac{
	z_{i}^{-2}}
{(1 - z_{i}^{-1}t_{j})
(1 - z_{i}^{-1}t_{j}^{-1})}.
\] 
Accordingly, if we set 
\[
\epsilon_{i} = \begin{cases}
	1  & \text{if $1 \le i \le n$}\\ 
	- 1 & \text{if $n + 1 \le i \le r$}  
\end{cases}
\] 
and $\epsilon = (\epsilon_{1}, \ldots, \epsilon_{r})$, then we can write the denominator of the expression under the integral as 
\begin{equation*}
	\begin{split}
&\prod_{i, j = 1}^{r} (1 - z_{i}t_{j})^{-1}
(1 - z_{i}t_{j}^{-1})^{-1} = 
\prod_{i, j = 1}^{r} z_{i}^{-(1-\epsilon_{i})}(1 - z_{i}^{\epsilon_{i}}t_{j})^{-1}
(1 - z_{i}^{\epsilon_{i}}t_{j}^{-1})^{-1}\\
&  = 
\prod_{k = 1}^{r} z_{k}^{-r(1-\epsilon_{k})}
\prod_{1\le i < j \le r} (1 - 
z_{i}^{\epsilon_{i}}z_{j}^{\epsilon_{j}})^{-1} 
\cdot \sum_{\substack{\mu \\ l(\mu) \le r}} 
s_{\langle \mu \rangle}
(t_{1}^{\pm 1}\!, \ldots, t_{r}^{\pm 1}) 
s_{\mu }(\mathrm{z^{\epsilon}}).
\end{split}
\end{equation*} 
By first performing the integral over $\mathbf{T}$ and using the orthonormality of the symplectic Schur functions (which picks out just the partition $\lambda^\dag$ from the sum over $\mu$), we can write the contribution to 
$
\mathcal{T}_{1}(\lambda^{\dag})
$ 
as} 
\begin{equation*}
			\begin{split}
				I_{\epsilon}: = 
				\frac{(1-q^{-1})}{r!}\frac{(-1)^{\frac{r(r + 1)}{2} + n}}{\left(2\pi \sqrt{-1}\right)^{r}} \oint\limits_{|z_{r}| = \rho_{2}} 
				\cdots \oint\limits_{|z_{1}| = \rho_{1}}
				R_{1}
				(q^{- 1\slash 2}\mathrm{z}) 
				&\frac{\prod_{1\le i < j \le r}(z_{j}^{} - z_{i}^{})^{2}
					(1 - z_{i}^{} z_{j}^{})}
				{\prod_{1\le i < j \le r} \big(1 - z_{i}^{\epsilon_{i}}z_{j}^{\epsilon_{j}}\big) 
				}\\ 
				&\cdot \prod_{k = 1}^{r} z_{k}^{- r(1 - \epsilon_{k})}
				\cdot  s_{\lambda^\dag}(\mathrm{z^{\epsilon}})\, 
				\frac{d \mathrm{z}}{\mathrm{z}^{g + r}}.
			\end{split}
		\end{equation*} 
        Notice that 
		$
		(-1)^{n} = (-1)^{-r}\prod_{i = 1}^{r}\epsilon_{i}. 
		$ 
		Using the identities 
		\[
		\frac{(z_{i}^{} 
			- z_{j}^{})
			(1 - z_{i}^{} 
			z_{j}^{})}
		{1 - z_{i}^{\epsilon_{i}}
			z_{j}^{\epsilon_{j}}} 
		= z_{i}^{1 - \epsilon_{i}}
		z_{j}^{1 - \epsilon_{j}}
		\big(z_{i}^{\epsilon_{i}} - z_{j}^{\epsilon_{j}}\big)\;\;\;
		\text{and}\;\;\prod_{1\le i < j \le r}
		z_{i}^{1 - \epsilon_{i}}
		z_{j}^{1 - \epsilon_{j}}
		= \prod_{k = 1}^{r} z_{k}^{(r - 1)(1 - \epsilon_{k})}
		\] 
		we can simplify $I_{\epsilon}$ to: 
		\begin{equation*}
			\begin{split} 
				I_{\epsilon} = 
				\frac{(1-q^{-1})}{r!}\frac{(-1)^{\frac{r(r - 1)}{2}}\prod_{i = 1}^{r}\epsilon_{i}}{\left(2\pi \sqrt{-1}\right)^{r}}
				\oint\limits_{|z_{r}| = 1} 
				\cdots \oint\limits_{|z_{1}| = 1}
				R_{1}
				(q^{- 1\slash 2}\mathrm{z})
				&\prod_{1\le i < j \le r}(z_{i}^{} - z_{j}^{})
				\big(z_{i}^{\epsilon_{i}} - z_{j}^{\epsilon_{j}}\big)\\ 
				&\cdot \prod_{k = 1}^{r} z_{k}^{\epsilon_{k}}
				\cdot  s_{\lambda^\dag}(\mathrm{z^{\epsilon}})\, 
				\frac{d \mathrm{z}}{\mathrm{z}^{g + r + 1}}.
			\end{split}
		\end{equation*} 
		Also, note that the product 
		$
		\prod_{1\le i < j \le r}\big(z_{i}^{\epsilon_{i}} - z_{j}^{\epsilon_{j}}\big)  
		$
		cancels the denominator of 
		$
		s_{\lambda^\dag}(\mathrm{z^{\epsilon}}).
		$ 
		Summing now over all tuples 
		$
		\epsilon = (\epsilon_{1}, \ldots, \epsilon_{r}) \in \{-1, 1\}^{r},
		$ 
		the formula for 
		$
		\mathcal{T}_{1}(\lambda^{\dag})
		$ 
		is an immediate consequence of the simple identity 
		\begin{equation*} 
			\mathrm{det}\!\left(z_{i}^{g - \lambda_{r + 1 - j}' + r + 1 - j} - z_{i}^{-(g - \lambda_{r + 1 - j}' + r + 1 - j)}\right)_{1\le i, j \le r}
			= \; \sum_{\epsilon}\, 
			\prod_{k = 1}^{r} \epsilon_{k}
			z_{k}^{\epsilon_{k}}
			\, \cdot \prod_{1\le i < j \le r}
			\big(z_{i}^{\epsilon_{i}} - z_{j}^{\epsilon_{j}}\big)
			\cdot s_{\lambda^\dag}(\mathrm{z^{\epsilon}})
		\end{equation*} 
		which follows from the multilinearity of the determinant. 
		This completes the proof. 
	\end{proof}

	\begin{para}As a first corollary, we obtain the expected connection between the stable traces $T_{\lambda}$ and 
	$
	\mathcal{T}_{1}(\lambda^{\dag}). $ \end{para}
	
	\begin{cor} \label{stable-traces-2} For any fixed partition 
		$
		\lambda = (\lambda_{1} \ge
		\lambda_{2} \ge \cdots)
		$ with 
		$
		\lambda_{1} \le r,
		$ 
		we have 
		\begin{equation} \label{eq: integral-stable-traces} 
			\begin{split}
				&\lim_{g \to \infty} 
				\mathcal{T}_{1}
				(\lambda^{\dag})
				= 
				T_{\lambda} = \frac{(1-q^{-1})}{r!}  
				\cdot\frac{(-1)^{r(r - 1)\slash 2}}{\left(2\pi \sqrt{-1}\right)^{r}}\!
				\oint\limits_{|z_{r}| = 1} 
				\cdots \oint\limits_{|z_{1}| = 1} 
				R_{1}
				(q^{- 1\slash 2}\mathrm{z}) 
				\prod_{1\le i < j \le  r} 
				(z_{i}^{} - z_{j}^{})\\
				& \hskip140pt \cdot \,\mathrm{det}\Big(z_{i}^{- \lambda_{r + 1 - j}' + r + 1 - j}\Big)_{1\le i, j \le r} \, \frac{d \mathrm{z}}{\mathrm{z}^{r + 1}}.
			\end{split}
		\end{equation}
	\end{cor}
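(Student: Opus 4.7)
The starting point is the explicit integral formula for $\mathcal{T}_1(\lambda^\dag)$ proved in Proposition \ref{explicit-T1}. The plan is to take the limit $g \to \infty$ by deforming the contour from $|z_i| = 1$ to $|z_i| = R$ for some $1 < R < q^{1/4}$; this is permitted because, after substituting $\mathrm t \leadsto q^{-1/2}\mathrm z$, Lemma \ref{R1-estimate} guarantees that $R_1(q^{-1/2}\mathrm z)$ is holomorphic on a polydisc strictly containing the unit torus, while the remaining integrand is a Laurent polynomial in the $z_i$.

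Expanding the determinant $\det\bigl(z_i^{g-\lambda'_{r+1-j}+r+1-j} - z_i^{-(g-\lambda'_{r+1-j}+r+1-j)}\bigr)$ via multilinearity into a sum over sign vectors $\epsilon \in \{\pm 1\}^r$, and then dividing by $\mathrm z^{g+r+1}$, the rows with $\epsilon_i = +1$ become monomials in $z_i^{-1}$ independent of $g$, whereas the rows with $\epsilon_i = -1$ acquire a factor $z_i^{-2g}$. On $|z_i| = R > 1$ the latter is uniformly $O(R^{-2g})$, so every contribution with at least one $\epsilon_i = -1$ tends to zero as $g \to \infty$. The surviving term $\epsilon = (1,\ldots,1)$ is already independent of $g$, and since the resulting integrand is holomorphic in the polyannulus $1 \le |z_i| \le R$, we may deform back to $|z_i| = 1$ to obtain the integral displayed in the statement.

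To identify this limit with $T_\lambda$, I will use the identity
\[
(1-q^{-1})\,R_1(q^{-1/2}\mathrm z) \,=\, \sum_\mu T_\mu \, s_{\mu'}(\mathrm z)
\]
recorded in \S\ref{comparison of Q}, which follows from \cref{plethystic formula for poincare series} together with \cref{alternative form of gen series}. Substituting this into the integral and using the Vandermonde identity $\prod_{i<j}(z_i - z_j)\, s_{\mu'}(\mathrm z) = \det(z_i^{\mu'_j+r-j})$, together with the column-reversal identity $\det(z_i^{-\lambda'_{r+1-j}+r+1-j}) = (-1)^{r(r-1)/2}\det(z_i^{-\lambda'_j+j})$ which cancels the prefactor, the computation reduces to evaluating
\[
\frac{1}{r!\,(2\pi \sqrt{-1})^{r}}\oint \det(z_i^{\mu'_j+r-j})\, \det(z_i^{-\lambda'_k+k})\, \prod_i \frac{dz_i}{z_i^{r+1}}.
\]
Andreief's identity combined with $\tfrac{1}{2\pi\sqrt{-1}}\oint z^n \tfrac{dz}{z} = \delta_{n,0}$ turns this into $\det\bigl(\delta_{\mu'_j - j,\; \lambda'_k - k}\bigr)_{j,k}$. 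Because both sequences $\{\mu'_j - j\}_{j=1}^{r}$ and $\{\lambda'_k - k\}_{k=1}^{r}$ are strictly decreasing, this determinant vanishes unless $\mu = \lambda$, in which case it is the identity matrix. Only the $\mu = \lambda$ term survives and the integral equals $T_\lambda$.

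The principal technical point is the contour-deformation step: the estimate in Lemma \ref{R1-estimate} furnishes just enough holomorphicity for $R_1(q^{-1/2}\mathrm z)$ to remain regular outside the unit torus, which is exactly what separates the $\epsilon = (1,\ldots,1)$ contribution (of order $O(1)$) from the remaining terms (of order $O(R^{-2g})$) and thereby explains why the \emph{stable} traces of Frobenius correspond to the first term in the conjectural asymptotic expansion of \cref{conj-traces}.
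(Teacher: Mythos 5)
Your proof is correct and reaches the same two conclusions as the paper, but by somewhat different means in each of the two steps.

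For the limit $g\to\infty$, the paper divides the determinant by $\mathrm z^g$ and invokes the Riemann--Lebesgue lemma directly on $|z_i|=1$: the entries indexed by $\epsilon_i=-1$ become pure oscillations $z_i^{-2g+\mathrm{const}}$ whose integrals against the continuous function $R_1(q^{-1/2}\mathrm z)$ tend to zero. You instead deform the contour outward to $|z_i|=R$ with $1<R<q^{1/4}$ (licensed by \cref{R1-estimate}) and observe geometric decay $O(R^{-2g})$. Both are valid; your version is quantitatively sharper and in effect re-proves \cref{difference-main-stable} en route, whereas the paper's Riemann--Lebesgue argument is more economical and defers the explicit rate to that separate corollary. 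For identifying the surviving integral with $T_\lambda$, you substitute the Schur expansion $(1-q^{-1})R_1(q^{-1/2}\mathrm z)=\sum_\mu T_\mu s_{\mu'}(\mathrm z)$ from \S\ref{comparison of Q}, convert $\prod(z_i-z_j)\,s_{\mu'}$ to the bialternant $\det(z_i^{\mu'_j+r-j})$, reverse the columns of the remaining determinant, and then apply Andreief's identity together with $\tfrac1{2\pi\sqrt{-1}}\oint z^n\,\tfrac{dz}{z}=\delta_{n,0}$ to obtain $\det(\delta_{\mu'_j-j,\,\lambda'_k-k})$, which vanishes unless $\mu=\lambda$ because both $\{\mu'_j-j\}$ and $\{\lambda'_k-k\}$ are strictly decreasing. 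The paper performs the same Vandermonde manipulation but then cites the Peter--Weyl theorem for $U(r)$ to invoke Schur orthogonality as a black box. These are essentially the same argument at different levels of abstraction: Andreief's identity is precisely the computational content of $U(r)$-Schur orthogonality on the unit torus, so your version is the more elementary and self-contained one. The only small polish I would suggest is to state the Andreief normalisation explicitly — the $\tfrac{1}{r!}$ and the product measure $\prod_i\tfrac{dz_i}{2\pi\sqrt{-1}\,z_i^{r+1}}$ must be in the precise form required — so a reader does not have to rederive which convention of the identity is in play; as written the bookkeeping is right but implicit.
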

	
	\begin{proof} In formula \eqref{eq: integral-main-term-traces}, divide the determinant by $\mathrm{z}^{g}.$ Applying the {R}iemann-{L}ebesgue lemma, we see at once that the limit and the $r$-fold integral in \eqref{eq: integral-stable-traces} are equal.

		To see that the integral is also equal to 
		$
		T_{\lambda},
		$ 
		divide the determinant by $\mathrm{z}^{r}$, and note, as in the proof of \cref{lem:sympl-sch-id-c}, that 
		\[ 
		(-1)^{r(r - 1)\slash 2}
		\mathrm{det}\Big(z_{i}^{\lambda_{r + 1 - j}' + j - 1}\Big)_{1\le i, j \le r} 
		= \; \mathrm{det}\Big(z_{i}^{\lambda_{j}' + r - j}\Big)_{1\le i, j \le r}
		= \; s_{\lambda'}(\mathrm{z})
		\, \cdot \prod_{1\le i < j \le  r} 
		(z_{i}^{} - z_{j}^{}).
		\] 
		Recalling that $R_{1}(\mathrm{z})$ is the {S}chur-generating function of the stable traces $T_{\lambda},$ it follows at once from the {P}eter-{W}eyl theorem \cite{Bump}, applied to the compact group $\mathrm{U}(r),$ that the integral in \eqref{eq: integral-stable-traces} can be decomposed as 
		\[ \frac{(1-q^{-1})}{r!} 
		\sum_{\substack{\mu \\ l(\mu')\le r}} 	T_{\mu} \cdot 
		\frac{1}{\left(2\pi \sqrt{-1}\right)^{r}}\!
		\oint\limits_{|z_{r}| = 1} 
		\cdots \oint\limits_{|z_{1}| = 1}
		s_{\mu'}(\mathrm{z})
		\overline{s_{\lambda'}(\mathrm{z})} 
		\prod_{1\le i < j \le  r} 
		|z_{i}^{} - z_{j}^{}|^{2}\, \frac{d \mathrm{z}}{\mathrm{z}}.
		\] 
		Letting 
		$ 
		z_{i} 
		= e^{2\pi \sqrt{-1}\theta_{i}},
		$ 
		$ 
		0\le \theta_{i} \le 1,
		$ 
		and using the orthonormality of the {S}chur functions, one finds that the last 
		integral is $1$ or $0$ according as $\mu = \lambda$ or not, 
		which completes the proof.
	\end{proof}

	\begin{para}From the integral representation \eqref{eq: integral-stable-traces} of $T_{\lambda},$ we deduce the following estimate: \end{para}
	 
	\begin{cor} \label{stable-traces-estimate}For any partition $\lambda$ with
		$
		\lambda_{1} = l(\lambda')\le r,
		$ 
		we have the estimate 
		\[
		|T_{\lambda}|
		\ll_{r, \, \varepsilon} q^{\left(-\frac{1}{4} + \varepsilon \right)|\lambda|}.  
		\]
	\end{cor}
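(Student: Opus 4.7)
The strategy is a direct contour-shift estimate applied to the integral representation \eqref{eq: integral-stable-traces} of $T_\lambda$. By \cref{R1-estimate}, the function $R_1(q^{-1/2}\mathrm{z})$ is holomorphic on the polydisc $|z_i|\le q^{1/4-\varepsilon}$, with a bound $|R_1(q^{-1/2}\mathrm{z})|\le C_{r,\varepsilon}:=(1-q^{-4\varepsilon})^{-2^{1+r(r+1)/2}}$ there. The remaining factors in the integrand (the Vandermonde product, the determinant, and $1/\mathrm{z}^{r+1}$) are Laurent polynomials, so the only singularities inside the polydisc are at the coordinate hyperplanes $z_i=0$. Consequently the contours $|z_i|=1$ in \eqref{eq: integral-stable-traces} can be freely deformed to $|z_i|=\rho$ with $\rho:=q^{1/4-\varepsilon}$ without changing the value of the integral.

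Next I would estimate the integrand on the shifted contour. On $|z_i|=\rho$ one has the elementary bounds
\[
|R_1(q^{-1/2}\mathrm{z})|\le C_{r,\varepsilon},\qquad
\Big|\prod_{1\le i<j\le r}(z_i-z_j)\Big|\le (2\rho)^{r(r-1)/2}.
\]
The key observation for the determinant is that \emph{every} monomial
$z_1^{-\lambda_{r+1-\sigma(1)}'+r+1-\sigma(1)}\cdots z_r^{-\lambda_{r+1-\sigma(r)}'+r+1-\sigma(r)}$
appearing in its expansion over $\sigma\in\Sigma_r$ has the \emph{same} total degree, namely $\sum_{j=1}^{r}(-\lambda_{r+1-j}'+r+1-j)=r(r+1)/2-|\lambda|$. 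Hence, on $|z_i|=\rho$, each of the $r!$ monomials has absolute value exactly $\rho^{r(r+1)/2-|\lambda|}$, giving
\[
\Big|\det\!\big(z_i^{-\lambda_{r+1-j}'+r+1-j}\big)_{1\le i,j\le r}\Big|\le r!\,\rho^{r(r+1)/2-|\lambda|}.
\]

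Finally I would assemble these bounds. The $r$-fold integral contributes a factor $(2\pi)^{-r}(2\pi\rho)^r=\rho^r$ from arc length, the factor $1/\mathrm{z}^{r+1}$ contributes $\rho^{-r(r+1)}$, and the remaining estimates combine to
\[
|T_\lambda|\ \ll_{r,\varepsilon}\ \rho^{r}\cdot \rho^{-r(r+1)}\cdot \rho^{r(r-1)/2}\cdot \rho^{r(r+1)/2-|\lambda|}\ =\ \rho^{-|\lambda|}\ =\ q^{(-\frac14+\varepsilon)|\lambda|},
\]
since the total exponent of $\rho$ equals $r-r(r+1)+r(r-1)/2+r(r+1)/2-|\lambda|=-|\lambda|$. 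There is no serious obstacle here: the only subtlety worth checking is the independence of $\sigma$ of the absolute value of each monomial of the determinant, which is what makes all $|\lambda|$-dependence collapse to the single factor $\rho^{-|\lambda|}$ and drives the $1/4$ in the exponent. A slightly sharper exponent (replacing $1/4$ by $1/6$ or better) could be obtained by using the further analytic continuations of $R_1$ indicated in \cref{further-an-cont-R1}, but is not needed for the claim.
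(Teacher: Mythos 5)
Your proof is correct and follows essentially the same route as the paper: shift the contours outward to $|z_i|=q^{1/4-\varepsilon}$ using \cref{R1-estimate}, observe that the determinant is homogeneous of total degree $r(r+1)/2-|\lambda|$ so that all $\lambda$-dependence collapses to the single factor $\rho^{-|\lambda|}$, and bound the remaining factors uniformly. The paper carries out the equivalent computation by making the change of variables $z_i\mapsto q^{1/4-\varepsilon}z_i$ to extract the factor $q^{(-1/4+\varepsilon)|\lambda|}$ explicitly and then bounds the rescaled determinant by Hadamard's inequality rather than the crude $r!$ term count; these differences affect only the implied constant and not the argument.
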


	\begin{proof} By Lemma \ref{R1-estimate}, we can integrate in 
		\eqref{eq: integral-stable-traces} over 
		$
		|z_{i}| = q^{\frac{1}{4} - \varepsilon}
		$ 
		(instead of $|z_{i}| = 1$), for 
		$i = 1, \ldots, r,$ and after the simple change of variables 
		$
		z_{i} 
		\to q^{\frac{1}{4} - \varepsilon}z_{i}, 
		$ 
		we can write 
		\begin{equation*}
			\begin{split}
				T_{\lambda} = 
				\frac{(1-q^{-1})q^{\left(-\frac{1}{4} + \varepsilon \right)|\lambda|}}{r!}  
				\cdot\frac{(-1)^{r(r - 1)\slash 2}}{\left(2\pi \sqrt{-1}\right)^{r}}\!
				\oint\limits_{|z_{r}| = 1} 
				\cdots \oint\limits_{|z_{1}| = 1} 
				& R_{1}
				\big(q^{- \frac{1}{4} - \varepsilon}\mathrm{z}\big) 
				\prod_{1\le i < j \le  r} 
				(z_{i}^{} - z_{j}^{})\\
				& \cdot \,\mathrm{det}\Big(z_{i}^{- \lambda_{r + 1 - j}' + r + 1 - j}\Big)_{1\le i, j \le r} \, \frac{d \mathrm{z}}{\mathrm{z}^{r + 1}}.
			\end{split}
		\end{equation*} Applying the upper bound in Lemma \ref{R1-estimate}, and the {H}adamard inequality for determinants, we have 
		\[  
		\left|R_{1}
		\big(q^{- \frac{1}{4} - \varepsilon}\mathrm{z}\big) 
		\prod_{1\le i < j \le  r} 
		(z_{i}^{} - z_{j}^{})
		\cdot \,\mathrm{det}\Big(z_{i}^{- \lambda_{r + 1 - j}' + r + 1 - j}\Big)_{1\le i, j \le r}\right|
		< r^{r}
		\big(1 - q^{-4\varepsilon}\big)^{- 2^{1 + r (r + 1)\slash 2}}
		\] 
		and our assertion follows. 
	\end{proof} 

    \begin{rem}
     This estimate can also be seen from \cref{1/4} and \cref{good bound}, although upon sufficient unwinding the proofs are not so different. Indeed, if $\vert\lambda\vert=w$ and $\lambda_1 \leq r$ then $T_\lambda$ is bounded by 
     \begin{align*}
          \sum_{|\lambda|=w} |T_\lambda| \dim S^{\lambda'}(\C^{r}) & \leq \sum_{|\lambda|=w} \sum_{k \geq w/4} q^{-k} \dim H_k(\H_\infty^{1,0},\V_\lambda) \dim S^{\lambda'}(\C^{r}) \\
           &\leq \sum_{k \geq w/4} q^{-k} 2^{\varepsilon k} 
     \end{align*}
     for any $\varepsilon>0$. Similarly one may show, using \cref{1/2} instead of \cref{1/4}, that $\vert T_\lambda \vert \ll_{r,\varepsilon} q^{(-\frac 1 2 + \varepsilon) \length(\lambda)}$. 
    \end{rem}

	\begin{rem}  Notice that, by Remark \ref{further-an-cont-R1}, one can use the holomorphy of 
		$
		R_{1}(\mathrm{t})
		$ 
		in a larger polydisk to improve the estimate in Corollary \ref{stable-traces-estimate}, and hence the range of $\lambda$ for which $H^{k}(
			\H_{g}^{1,0}, 
			\mathbb{V}_{\lambda}) = 0$ for $k < \frac{g}{2}.$ For instance, using the holomorphy of 
		$ 
		R_{1}
		(q^{- 1\slash 2}\mathrm{z})
		$ 
		on the polydisk 
		$
		|z_{i}| 
		\le q^{\frac{1}{3} - \varepsilon},
		$ 
		$i=1, \ldots, r,$ one can show as in the proof of Corollary \ref{stable-traces-estimate} that 
		\[
		|T_{\lambda}|
		\ll_{r, \, \varepsilon} q^{\left(-\frac{1}{3} + \varepsilon \right)|\lambda| 
			+ \frac{r^{2}(r^{2} - 1)}{36}}
		\] 
		and the corresponding bound on $|\lambda|$ is obtained by imposing the 
		condition 
		\[ 
		q^{\left(-\frac{1}{3} + \varepsilon \right)|\lambda| 
			+ r^{2}(r^{2} - 1)/{36}} < q^{-{g}/{2}}
		\] 
		which, upon replacing $\varepsilon$ by $\varepsilon \slash 4,$ is implied by 
		$|\lambda| > (1 + \varepsilon)\!\left(3 g\slash 2 + r^{2}(r^{2} - 1)\slash 12\right)$.
	\end{rem} 
	\begin{cor} \label{difference-main-stable} For any partition 
		$
		\lambda$ with 
		$
		\lambda_{1} = l(\lambda')\le r,
		$ 
		we have the estimate 
		\[
		|		\mathcal{T}_{1}
		(\lambda^{\dag})
		- T_{\lambda}|
		\ll_{r, \, \varepsilon}
		q^{\left(-\frac{1}{4} + \varepsilon \right)|\lambda| -
			\left(\frac{1}{2} - 2\varepsilon\right)\left(g -\lambda_{1}' + 1\right)}.
		\] 
	\end{cor}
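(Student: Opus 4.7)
The plan is to combine the integral formulas of \cref{explicit-T1} and \cref{stable-traces-2}, recognizing $T_\lambda$ as the dominant piece in a column-by-column multilinear expansion of the determinant appearing in $\mathcal{T}_{1}(\lambda^{\dag})$, and bounding the remaining pieces by contour deformation into the larger polydisc on which $R_{1}$ is holomorphic. Setting $a_j := g - \lambda_{r+1-j}' + r + 1 - j$, multilinearity in the columns gives
$$\det\bigl(z_i^{a_j} - z_i^{-a_j}\bigr)_{1 \le i, j \le r} = \sum_{\epsilon \in \{\pm 1\}^r}\Bigl(\prod_{j:\,\epsilon_j = -1}(-1)\Bigr)\det\bigl(z_i^{\epsilon_j a_j}\bigr)_{1 \le i, j \le r}.$$
After pulling $z_i^{g}$ out of row $i$ of the summand with $\epsilon = \mathbf{1}$, using $a_j - g = -\lambda_{r+1-j}' + r + 1 - j$, that summand reproduces exactly the integral representation of $T_\lambda$ from \cref{stable-traces-2}. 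Consequently $\mathcal{T}_{1}(\lambda^{\dag}) - T_\lambda$ is a sum of $2^r - 1$ integrals $I_\epsilon$ indexed by $\epsilon \ne \mathbf{1}$, each of the same shape as in \cref{explicit-T1} but with the determinant replaced by $\det\bigl(z_i^{\epsilon_j a_j}\bigr)$.

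To bound each $I_\epsilon$ I would deform every contour simultaneously from $|z_i| = 1$ to $|z_i| = q^{1/4 - \varepsilon}$. This is permissible because \cref{R1-estimate} guarantees that $R_{1}(q^{-1/2}\mathrm{z})$ is holomorphic on that polydisc, and the only other singularities of the integrand lie along the coordinate hyperplanes $z_i = 0$, which sit inside the new contours. On $|z_i| = q^{1/4-\varepsilon}$, \cref{R1-estimate} yields $|R_{1}| = O_{r,\varepsilon}(1)$, the Vandermonde is bounded by $(2q^{1/4 - \varepsilon})^{r(r-1)/2}$, and since all $|z_i|$ are equal, every monomial in the expansion of $\det\bigl(z_i^{\epsilon_j a_j}\bigr)$ has the same modulus, giving $|\!\det| \le r!\, q^{(1/4 - \varepsilon)\sum_{j} \epsilon_j a_j}$. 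Writing $S := \{j : \epsilon_j = -1\}$ and $A_S := \sum_{j \in S} a_j$, and combining these estimates with $|\mathrm{z}|^{-g-r-1} = q^{-(1/4 - \varepsilon)r(g+r+1)}$ and the contour length $(2\pi q^{1/4-\varepsilon})^{r}$, the identity $\sum_{j=1}^{r} a_j = rg - |\lambda| + r(r+1)/2$ makes the total exponent of $q^{1/4 - \varepsilon}$ collapse to $-|\lambda| - 2A_S$. Hence
$$|I_\epsilon| \ll_{r,\varepsilon} q^{(-1/4 + \varepsilon)|\lambda| - (1/2 - 2\varepsilon)A_S}.$$

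To conclude, observe that $a_j = g + r + 1 - j - \lambda_{r+1-j}'$ is monotone decreasing in $j$, so the smallest $a_j$ is $a_r = g - \lambda_{1}' + 1$. Consequently $A_S \ge |S|(g - \lambda_{1}' + 1) \ge g - \lambda_{1}' + 1$ for every $\epsilon \ne \mathbf{1}$, with equality (for $|S|=1$) attained by $S = \{r\}$. The dominant contribution is therefore $q^{(-1/4 + \varepsilon)|\lambda| - (1/2 - 2\varepsilon)(g - \lambda_{1}' + 1)}$; the remaining $\epsilon$'s contribute strictly smaller amounts, by additional factors of $q^{-(1/2 - 2\varepsilon)(g - \lambda_{1}' + 1)}$. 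Summing the $O_r(1)$ contributions yields the claim. The main obstacle I expect is the clean bookkeeping in the contour estimate --- verifying that the contributions from the contour length, the Vandermonde, the determinant, and the overall $\mathrm{z}^{-g-r-1}$ factor telescope into the single quantity $-|\lambda| - 2A_S$; justifying the contour deformation and the multilinear expansion are otherwise routine.
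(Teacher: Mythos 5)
Your proof is correct, and follows the same overall plan as the paper's --- start from the integral representations of \cref{explicit-T1} and \cref{stable-traces-2}, deform all contours outward to $|z_i| = q^{1/4 - \varepsilon}$ (justified by \cref{R1-estimate}), and isolate $T_\lambda$ as one summand in a $2^r$-term expansion of $\det\bigl(z_i^{a_j} - z_i^{-a_j}\bigr)$. The decomposition you use is genuinely different, though. You expand column by column, so that your $\epsilon$ records which of the \emph{exponents} $a_j$ are negated, and you then bound the resulting piece $\det\bigl(z_i^{\epsilon_j a_j}\bigr)$ in one step by the Leibniz formula (every monomial has the same modulus $q^{(1/4-\varepsilon)\sum_j \epsilon_j a_j}$ once all $|z_i|$ are equal). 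The paper instead uses the hyperoctahedral identity recorded at the end of the proof of \cref{explicit-T1}, so its $\epsilon$ records which of the \emph{variables} $z_i$ are inverted, producing $a_{\lambda^{\dag}}(\mathrm{z}^{\epsilon})$; each nontrivial term is then controlled by a Laplace expansion along a row with $\epsilon_i = -1$ together with an estimate for the cofactors. Both routes reach the same exponent because the dominant contribution in each corresponds to flipping the single smallest quantity $a_r = g - \lambda_1' + 1$. What your version buys is cleaner bookkeeping: the exponents of $q^{1/4-\varepsilon}$ coming from the Vandermonde, the determinant bound, the factor $\mathrm{z}^{-(g+r+1)}$, and the contour lengths collapse transparently to $-|\lambda| - 2A_S$, so that the refined estimate $|I_\epsilon| \ll q^{(-1/4+\varepsilon)|\lambda| - (1/2 - 2\varepsilon)A_S}$ is visible term by term, whereas the paper's Laplace/cofactor argument only records the single worst-case exponent and is somewhat more ad hoc.
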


	\begin{proof} For a partition 
		$
		\mu = (\mu_{1} \ge
		\cdots \ge \mu_{r} \ge 0),
		$ 
		we set for notational convenience 
		\[
		a_{\mu}(\mathrm{z}) = \mathrm{det}\Big(z_{i}^{\mu_{j} + r + 1 - j}\Big)_{1\le i, j \le r}.
		\] 
		Then, by the determinant identity in the end of the proof of \cref{explicit-T1} we have that 
		\[
		\left|\, \det\left(z_{i}^{g - \lambda_{r + 1 - j}' + r + 1 - j} - z_{i}^{-(g - \lambda_{r + 1 - j}' + r + 1 - j)}\right)_{1\le i, j \le r} 
		- \; a_{\lambda^{\dag}}(\mathrm{z})\, \right|
		\, \le \, \sum_{\epsilon}\, |a_{\lambda^{\dag}}(\mathrm{z^{\epsilon}})|
		\] 
		the sum being over all tuples 
		$
		\epsilon = (\epsilon_{1}, \ldots, \epsilon_{r}) \in \{-1, 1\}^{r}
		$ 
		with at least one component $\epsilon_{i} = -1.$ 
		Assuming with no loss of generality that $\epsilon_{1} = -1,$ we can estimate 
		$
		|a_{\lambda^{\dag}}(\mathrm{z^{\epsilon}})|,
		$ 
		with 
		$
		|z_{i}| = 
		q^{\frac{1}{4} - \varepsilon},
		$ 
		by applying the {L}aplace expansion along the first row of 
		$
		a_{\lambda^{\dag}}(\mathrm{z^{\epsilon}}).
		$ 
		Notice that 
		$
		\big|z_{1}^{-(g - \lambda_{r + 1 - j}' + r + 1 - j)}\big| 
		\le q^{- \left(\frac{1}{4} - \varepsilon\right)(g - \lambda_{1}' + 1)}
		$ 
		for all $j,$ while by {H}adamard's inequality, the corresponding cofactors are all 
		$
		\ll_{r} 
		q^{\left(\frac{1}{4} - \varepsilon\right)\left((r - 1)g - |\lambda| 
			+ \lambda_{1}' - 1 + r(r + 1)\slash 2\right)}.
		$ 
		Thus we get the estimate 
		\[
		\sum_{\epsilon}\, |a_{\lambda^{\dag}}(\mathrm{z^{\epsilon}})|
		\, \ll_{r}\,  
		q^{\left(\frac{1}{4} - \varepsilon\right)\left((r - 2)g - |\lambda| 
			+ 2\lambda_{1}' - 2 + r(r + 1)\slash 2\right)}.
		\] 
		Now consider the difference between the integrals \eqref{eq: integral-main-term-traces} and \eqref{eq: integral-stable-traces}. Again, by Lemma \ref{R1-estimate}, we can integrate over 
		$
		|z_{i}| = q^{\frac{1}{4} - \varepsilon}\!,
		$ 
		and 
		$
		|R_{1}(q^{- 1\slash 2}\mathrm{z})| < \big(1 - q^{-4\varepsilon}\big)^{- 2^{1 + r (r + 1)\slash 2}} 
		$ 
		\!on these circles. The remaining piece under the integral is easily seen to be 
		$
		\ll_{r} 
		q^{\frac{|\lambda|}{2} - \left(\frac{1}{4} - \varepsilon\right)\left(r g + \frac{r(r + 1)}{2}\right)},
		$ 
		and the estimate in the corollary follows. 
	\end{proof} 
	
	\begin{para}\label{main-term-T1-estimate}
	The estimates in the last two corollaries imply that 
	\begin{equation*} 
		|\mathcal{T}_{1}
		(\lambda^{\dag})|
		\ll_{r, \, \varepsilon}
		q^{
			- \left(\frac{1}{4} - \varepsilon \right)|\lambda|}
	\end{equation*} 
	for any partition 
	$
	\lambda$ with 
	$
	\lambda_{1} = l(\lambda')\le r.
	$ 
	This, combined with Conjecture \ref{conj-traces}, suggests that 
	the traces 
	$
	\tr_{\lambda}(g)
	$ 
	indeed decrease as the weight of $\lambda$ grows with $g$. Again, the estimate in Corollary \ref{difference-main-stable} (and hence also this one) can be improved to 
	\[
	|\mathcal{T}_{1}
	(\lambda^{\dag})
	- T_{\lambda}|
	\ll_{r, \, \varepsilon}
	q^{\left(-\frac{1}{3} + \varepsilon \right)|\lambda| -
		\left(\frac{2}{3} - 2\varepsilon\right)\left(g -\lambda_{1}' + 1\right)+ \frac{r^{2}(r^{2} - 1)}{36}}.
	\] 
\end{para}
 \subsection{The second term}
\begin{para}	To analyze the second term 
	$
	\mathcal{T}_{2}(\lambda^{\dag})
	$ 
	in the asymptotic formula of $\tr_{\lambda}(g),$ one first checks that the set of roots $\Phi_{2}$ is 
	\[
	\Phi_{2} = 
	\Bigg\{\!\sum_{\substack{j = 1 \\ j \neq
			j_{1},\,
			j_{2},\,
			j_{3}}}^{r} \!\!\!
	k_{j}\alpha_{j} 
	+ \alpha_{j_{1}}  \!+
	\alpha_{j_{2}}  
	\! +  \alpha_{j_{3}}  
	\! +  2\alpha_{r +   1} :
	\text{$1 \le j_{1} 
		< j_{2} 
		< j_{3} \le r$, and 
		$k_{j} \in \{0, 2\}$
		for 
		$
		j \neq \! j_{1}, 
		j_{2}, 
		j_{3}
		$}
	\Bigg\}
	\] 
	see \cite{DT}. As in the case of 
	$
	\mathcal{T}_{1}(\lambda^{\dag}),
	$ 
	we shall represent $\mathcal{Q}_{2}(t; g, q)\psi(t)^{-g}$ as an integral. 
	The analogue of the function 
	$
	R_{1}(\mathrm{t})
	$ 
	is defined as follows. For $\zeta$ a $4$th root of $1,$ define 
	\begin{equation*} 
		\begin{split} 
			&\mathrm{d}_{1} 
			= \frac{\left(1 - \frac{\xi_{1}}{\zeta
					q^{3\slash 4}
					\xi_{2}
					\xi_{3}}\right)
				\!\left(1 - \frac{\xi_{2}}{\zeta
					q^{3\slash 4}
					\xi_{1}
					\xi_{3}}\right)
				\!\left(1 - \frac{\xi_{3}}{\zeta
					q^{3\slash 4}
					\xi_{1}
					\xi_{2}}\right)}
			{\left(1 - \frac{\zeta 
					\xi_{2}
					\xi_{3}}
				{q^{1\slash 4}
					\xi_{1}}\right)
				\!\left(1 - \frac{\zeta 
					\xi_{1}
					\xi_{3}}
				{q^{1\slash 4}
					\xi_{2}}\right)
				\!\left(1 - \frac{\zeta 
					\xi_{1}
					\xi_{2}}
				{q^{1\slash 4}
					\xi_{3}}\right)},\;\;\; 
			\mathrm{d}_{2} = \frac{\zeta}
			{q^{3\slash 4}
				\xi_{1}
				\xi_{2}
				\xi_{3}} \\
			&\mathrm{d}_{3} 
			= \frac{\left(1 + \frac{\xi_{1}}{\zeta
					q^{3\slash 4}
					\xi_{2}
					\xi_{3}}\right)
				\!\left(1 + \frac{\xi_{2}}{\zeta
					q^{3\slash 4}
					\xi_{1}
					\xi_{3}}\right)
				\!\left(1 + \frac{\xi_{3}}{\zeta
					q^{3\slash 4}
					\xi_{1}
					\xi_{2}}\right)}
			{\left(1 + \frac{\zeta 
					\xi_{2}
					\xi_{3}}
				{q^{1\slash 4}
					\xi_{1}}\right)
				\!\left(1 + \frac{\zeta 
					\xi_{1}
					\xi_{3}}
				{q^{1\slash 4}
					\xi_{2}}\right)
				\!\left(1 + \frac{\zeta 
					\xi_{1}
					\xi_{2}}
				{q^{1\slash 4}
					\xi_{3}}\right)}
		\end{split}
	\end{equation*} 
	and 
	\begin{equation*}
		\mathrm{e}_{1} =
		\frac{1 - (\zeta 
			q^{3\slash 4}
			\xi_{1}
			\xi_{2}
			\xi_{3})^{-1}}
		{1-\zeta
			q^{\, -1\slash 4}
			\xi_{1}
			\xi_{2}
			\xi_{3}},\;\;\; 
		\mathrm{e}_{2}
		= (\zeta
		q^{1\slash 4}
		\xi_{1}
		\xi_{2}
		\xi_{3})^{-1},\;\;\, \text{and}
		\;\;\;\,
		\mathrm{e}_{3} =
		\frac{1 + (\zeta q^{3\slash 4}
			\xi_{1}
			\xi_{2}
			\xi_{3})^{-1}}
		{1+\zeta
			q^{\,-1\slash 4}
			\xi_{1}
			\xi_{2}
			\xi_{3}}.
	\end{equation*} 
	We set 
	\begin{equation*} 
		\begin{split}
			L_{1} &  = \,
			\tfrac{1}{4}\mathrm{e}_{1}
			(\mathrm{d}_{1} \! +
			\mathrm{d}_{3}
			\! + 2\zeta^{2}\mathrm{d}_{2})\,
			-\, \tfrac{1}{4}\mathrm{e}_{3}
			(\mathrm{d}_{1} \! +
			\mathrm{d}_{3} \! -
			2\zeta^{2}
			\mathrm{d}_{2}) \\
			L_{2}  & = \, \tfrac{1}{8}\mathrm{e}_{1}
			(\mathrm{d}_{1} \! + \mathrm{d}_{3}
			\! + 2\zeta^{2}
			\mathrm{d}_{2})\,
			+\, \tfrac{1}{8}\mathrm{e}_{3}
			(\mathrm{d}_{1} \! + \mathrm{d}_{3}
			\! - 2\zeta^{2}
			\mathrm{d}_{2})
			\, + \, \tfrac{1}{4}\mathrm{e}_{2}
			(\mathrm{d}_{1} \! - \mathrm{d}_{3}) \\
			L_{3} & = \, \tfrac{1}{8}\mathrm{e}_{1}
			(\mathrm{d}_{1} \! + \mathrm{d}_{3}
			\! + 2\zeta^{2}
			\mathrm{d}_{2})\,
			+\, \tfrac{1}{8}\mathrm{e}_{3}
			(\mathrm{d}_{1} \! + \mathrm{d}_{3}
			\! - 2\zeta^{2}
			\mathrm{d}_{2})
			\, - \, \tfrac{1}{4}\mathrm{e}_{2}
			(\mathrm{d}_{1} \! - \mathrm{d}_{3})
		\end{split}
	\end{equation*} 
	and let 
	$
	S = S(\xi_{1}^{2}, \ldots, 
	\xi_{r}^{2}, \zeta^{2}, q^{-1/2})
	$ 
	be defined by 
	\begin{equation*}
		\begin{split}
			S =  & \, \big(1 - q^{-1}\big)
			\!\left\{\frac{L_{1}}
			{q^{3\slash 4}\zeta
				\xi_{1}
				\xi_{2}
				\xi_{3}}
			+\frac{L_{2}}
			{\prod_{i = 1}^{r} (1 - q^{- 1\slash 2}
				\xi_{i}^{2})}
			+ \frac{L_{3}}
			{\prod_{i = 1}^{r} (1 + q^{- 1\slash 2}
				\xi_{i}^{2})}\right\}\\
			& \cdot\, \left(1 - \frac{\zeta^{2}
				\xi_{1}^{2}
				\xi_{2}^{2}
				\xi_{3}^{2}}{\sqrt{q}}\right)
			\prod_{1 \le i \le  3}
			\left(1 - \frac{\zeta^{2}
				\xi_{i}^{2}}
			{\sqrt{q}}\right)\!\left(1 - \frac{\zeta^{2}
				\xi_{1}^{2}
				\xi_{2}^{2}
				\xi_{3}^{2}}
			{\sqrt{q}\, \xi_{i}^{4}}\right)\\
			& \!\cdot \; \prod_{i = 1}^{3}\prod_{j = 4}^{r}
			\left(1 - \frac{
				\xi_{i}^{2}
				\xi_{j}^{2}}{q}\right)
			\!\left(1- \frac{
				\xi_{i}^{-2}
				\xi_{j}^{2}}{q}\right)
			\prod_{4 \le k \le l \le  r}\left(1 -
			\frac{\xi_{k}^{2}
				\xi_{l}^{2}}{q}\right)\!.
		\end{split}
	\end{equation*} 
	This function is symmetric in the variables 
	$
	\xi_{1}^{\pm 1}, \xi_{2}^{\pm 1}, \xi_{3}^{\pm 1}, 
	$ 
	and (separately) in $\xi_{k}$ for $4 \le k \le r,$ see 
	\cite[Section 7]{DT}. As the above notation suggests, it is also even in $\xi_{i}$ ($1 \le i \le r$) and $\zeta,$ and as a function of 
	$q,$ it is a polynomial in $q^{- 1\slash 2}$ of the form 
	\[ 
	S = 1 + O\Big(q^{-\frac{3}{2}}\Big).
	\] 
	Substitute 
	$
	t_{i} =
	\xi_{i}^{2},
	$ 
	and denote the resulting function by 
	$
	S(\mathrm{t}, \zeta^{2}\!, 
	q^{- 1\slash 2}).
	$ 
	We define $R_{2}(\mathrm{t}, \zeta)$ by 
	\[
	R_{2}(\mathrm{t}, \zeta)  
	=  \prod_{n \ge 1} S(\mathrm{t}^{n}\!, 
	\zeta^{2
		n}\!, q^{-n
		\slash 2})^{i_{n}\!(q)}
	\] 
	the product being absolutely and uniformly convergent for 
	$
	t_{1}, 
	t_{2}, 
	t_{3}
	$ 
	in a small neighborhood of the unit circle, and 
	$
	|t_{k}| \le q^{\varepsilon}
	$ 
	($k = 4, \ldots, r$) for sufficiently small positive $\varepsilon.$ Letting 
	\[
	G(t_{1},
	t_{2},
	t_{3}, \zeta) = 
	- \, \frac{1 + \frac{1}{q} + \frac{\zeta^{2}}{\sqrt{q}}\sum_{j = 1}^{3}
		\left(t_{j}^{} + t_{j}^{-1}\right)}
	{\left(1 - \frac{\zeta^{2} 
			t_{1} 
			t_{2} 
			t_{3}}{\sqrt{q}}\right)
		\!\left(1 - \frac{\zeta^{2}}
		{\sqrt{q}\,
			t_{1} 
			t_{2} 
			t_{3}}\right)
		\prod_{i = 1}^{3}
		\left(1 - \frac{\zeta^{2} 
			t_{1} 
			t_{2} 
			t_{3}}
		{\sqrt{q}\, t_{i}^{2}}\right)
		\!\left(1 - \frac{\zeta^{2} t_{i}^{2}}
		{\sqrt{q}\, 
			t_{1} 
			t_{2} 
			t_{3}}\right)}
	\] 
	it follows from the computations in \cite[Section 7]{DT} that 
	\begin{equation*}
		\begin{split}
			&  \mathcal{Q}_{2}(t; g, q)\psi(t)^{-g}
			= \frac{(1-q^{-1})q^{-g/2-2}}{2^{4} 3! (r - 3)!}\\
			& \cdot \sum_{\sigma \in \Sigma_{r}}\, 
			\sum_{\substack{\zeta = 1, \sqrt{-1} 
					\\ \epsilon_{\sigma(i)} = \pm 1}}\, 
			\frac{\zeta^{2g}
				G\!\left(t_{\sigma(1)}^{\epsilon_{\sigma(1)}}, t_{\sigma(2)}^{\epsilon_{\sigma(2)}}, t_{\sigma(3)}^{\epsilon_{\sigma(3)}}, \zeta \right)
				\!R_{2}
				\!\left(t_{\sigma(1)}^{\epsilon_{\sigma(1)}}, 
				\ldots, t_{\sigma(r)}^{\epsilon_{\sigma(r)}}, \zeta \right) 
				\left(t_{\sigma(4)}^{\epsilon_{\sigma(4)}} 
				\cdots \, t_{\sigma(r)}^{\epsilon_{\sigma(r)}}\right)^{-g}}
			{\prod_{i = 1}^{3}\prod_{j = 4}^{r}
				\left(1 - t_{\sigma(i)}^{\epsilon_{\sigma(i)}} 
				t_{\sigma(j)}^{\epsilon_{\sigma(j)}}\right)
				\left(1- t_{\sigma(i)}^{-\epsilon_{\sigma(i)}} 
				t_{\sigma(j)}^{\epsilon_{\sigma(j)}}\right)
				\, \cdot \prod_{4 \le k \le l \le r} \left(1 -
				t_{\sigma(k)}^{\epsilon_{\sigma(k)}} 
				t_{\sigma(l)}^{\epsilon_{\sigma(l)}}\right)}.
		\end{split}
	\end{equation*} 
	As before, using the following slight generalization of Lemma \ref{average-to-integral-vers1}, we can represent $\mathcal{Q}_{2}(t; g, q)\psi(t)^{-g}$ as an integral. \end{para}
	 
	\begin{lem}  Let 
		$
		a_{1}, \ldots, a_{r}
		$ 
		be distinct non-zero complex numbers such that $a_{i}
		a_{j} \ne 1$ for all 
		$1\le i, j \le r.$ Suppose $h$ is a function of $r$ complex variables, holomorphic on a domain containing 
		$
		\left(a_{\sigma(1)}^{\epsilon_{1}}, \ldots, a_{\sigma(r)}^{\epsilon_{r}}\right)
		$ 
		for all $\sigma \in \Sigma_{r}$ and 
		$
		\epsilon = (\epsilon_{1}, \ldots, \epsilon_{r}) \in \{\pm 1\}^{r}.
		$ 
		For $0 \le m < r,$ define $K_{m}(\mathrm{z})$ by 
		\[
		K_{m}(\mathrm{z}) = \frac{h(\mathrm{z})}
		{\prod_{k = 1}^{m}\prod_{l = m + 1}^{r} 
			(1 - z_{k}^{} 
			z_{l}^{})   
			(1 - z_{k}^{-1} 
			z_{l}^{})   
			\prod_{m + 1 \le k \le l \le r} 
			(1 - z_{k}^{} 
			z_{l}^{})}, \quad 
		\mathrm{z}: = (z_{1}, \ldots, z_{r}).
		\] 
		Then we have 
		\begin{equation*}
			\begin{split}
				&\sum_{\sigma \in \Sigma_{r}}\, 
				\sum_{\epsilon} 
				K_{m}\left(a_{\sigma(1)}^{\epsilon_{1}}, \ldots, a_{\sigma(r)}^{\epsilon_{r}} \right)\\ 
				& = \frac{(-1)^{r(r  + 1)\slash 2}}{\left(2\pi \sqrt{-1}\right)^{r}}\!
				\oint \cdots \oint 
				h(\mathrm{z}) 
				\frac{\prod_{1\le i  <  j \le r} 
					(z_{i}^{} - z_{j}^{})^{\mathrm{e}_{ij}} 
					(1 - z_{i}^{} 
					z_{j}^{}) \cdot 
					\prod_{1 \le k \le l \le m} 
					(1 - z_{k}^{} 
					z_{l}^{})
					\prod_{i = 1}^{m} z_{i}^{r - m}} 
				{\prod_{i, j = 1}^{r} (1 - z_{i}^{}
					a_{j}^{})
					(1 - z_{i}^{}
					a_{j}^{-1})} 
				\frac{d \mathrm{z}}{\mathrm{z}^{r}}	
			\end{split} 
		\end{equation*} 
		where $\mathrm{e}_{ij} = 1$ or $2$ according as 
		$i\le m$ and $j\ge m+1$ or not, 
		$
		\mathrm{z}^{r} 
		: = z_{1}^{r} \cdots\, z_{r}^{r},$ 
		and where each path of integration encloses the 
		$a_{j}^{\pm 1},$ but not 
		$z_{j}^{} \! = 0.$ 
	\end{lem}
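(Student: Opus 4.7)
The strategy mirrors that of Lemma~\ref{average-to-integral-vers1}: evaluate the right-hand side by iterated residues in $z_1,z_2,\ldots,z_r$, and identify each surviving residue with one summand of the left-hand side. Since each contour encloses $a_k^{\pm 1}$ but not $z_j=0$, the only singularities of the integrand inside are the simple poles $z_i=a_k^{\pm 1}$ coming from the denominator factors $(1-z_ia_k)(1-z_ia_k^{-1})$, so the integral becomes a sum indexed by assignments $(k_i,\varepsilon_i)_{i=1}^r\in(\{1,\ldots,r\}\times\{\pm 1\})^r$, with $z_i=a_{k_i}^{\varepsilon_i}$.

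The next step is to rule out all assignments not of the form $(\sigma,\epsilon)\in\Sigma_r\times\{\pm 1\}^r$. If $(k_i,\varepsilon_i)=(k_j,\varepsilon_j)$ for $i\neq j$, then the factor $(z_i-z_j)^{e_{ij}}$ in the numerator (where $e_{ij}\geq 1$) vanishes at the pole. If instead $k_i=k_j$ but $\varepsilon_i=-\varepsilon_j$, then $z_iz_j=1$, and the factor $(1-z_iz_j)$ in the numerator kills the residue. The surviving terms correspond precisely to a permutation $\sigma:i\mapsto k_i$ together with an arbitrary sign vector $\epsilon$, matching the indexing on the left-hand side.

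The main computation is the residue at a surviving point $b_i:=a_{\sigma(i)}^{\epsilon_i}$. The simple pole in $z_i$ contributes a factor $-b_i$, and the remaining denominator factors, evaluated at $z=b$, reduce to
\[
\prod_{i=1}^r(1-b_i^2)\cdot\prod_{i\neq j}(1-b_ib_j)(1-b_ib_j^{-1})\cdot\prod_{i=1}^r b_i^r,
\]
using the set equality $\{a_{\sigma(j)},a_{\sigma(j)}^{-1}\}=\{b_j,b_j^{-1}\}$. Invoking the identity $(1-b_ib_j^{-1})(1-b_jb_i^{-1})=-(b_i-b_j)^2/(b_ib_j)$, this rewrites as $(-1)^{\binom{r}{2}}\prod_i(1-b_i^2)\prod_{i<j}(1-b_ib_j)^2\prod_{i<j}(b_i-b_j)^2\cdot\prod_i b_i$. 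After combining with the numerator factors of the integrand evaluated at $z=b$, most factors cancel: the $(z_i-z_j)^{e_{ij}}/(b_i-b_j)^2$ ratio leaves $\prod_{i\leq m<j}(b_i-b_j)^{-1}$, which together with $\prod_{i\leq m}b_i^{r-m}$ becomes $\prod_{k\leq m<l}(1-b_k^{-1}b_l)^{-1}$ via $b_k-b_l=b_k(1-b_k^{-1}b_l)$; the excess $(1-b_ib_j)$ factors combine with the $(1-b_i^2)$ factors for $i>m$ and the extra numerator $\prod_{1\leq k\leq l\leq m}(1-b_kb_l)$ to yield $\prod_{k\leq m<l}(1-b_kb_l)^{-1}\cdot\prod_{m<k\leq l\leq r}(1-b_kb_l)^{-1}$. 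The result is exactly $(-1)^{r(r+1)/2}K_m(b)$, and summing over $(\sigma,\epsilon)$ gives the claimed formula.

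The hardest part is the bookkeeping in the final cancellation: the asymmetry between $(1-b_kb_l)$ and $(1-b_k^{-1}b_l)$ appearing in $K_m$ for cross-indices $k\leq m<l$ is precisely what dictates the two ``exotic'' numerator factors $\prod_{1\leq k\leq l\leq m}(1-z_kz_l)$ and $\prod_{i\leq m}z_i^{r-m}$ in the integrand, and these powers must be tracked carefully to produce the correct sign $(-1)^{r(r+1)/2}$ and the correct mixture of $b_k$ versus $b_k^{-1}$ in the final expression. The sign itself reduces to $(-1)^{r-\binom{r}{2}}\equiv(-1)^{r(r+1)/2}\pmod 2$, matching the statement.
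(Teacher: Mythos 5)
Your proof is correct and uses the same residue strategy that the paper alludes to — the paper's proof is merely a citation to Lemma 7.1/7.5 of [DT], and the argument for Lemma 7.1 (as reflected in the statement of Lemma~\ref{average-to-integral-vers1}) is exactly this iterated residue calculation. The computation checks out: the surviving poles are indexed by $(\sigma,\epsilon)$, the residue factor $\prod_i(-b_i)$ combined with the $(1-b_ib_j^{-1})(1-b_jb_i^{-1})=-(b_i-b_j)^2/(b_ib_j)$ identity produces the sign $(-1)^{r-\binom{r}{2}}=(-1)^{r(r+1)/2}$, and the cancellation $\prod_{i\le m<j}(b_i-b_j)^{-1}\prod_{i\le m}b_i^{r-m}=\prod_{i\le m<j}(1-b_i^{-1}b_j)^{-1}$ together with the cancellation of $\prod_{k\le l\le m}(1-b_kb_l)$ against the corresponding block of $\prod_{i\le j}(1-b_ib_j)$ yields $(-1)^{r(r+1)/2}K_m(b)$ exactly as you say.
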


	\begin{proof} The same idea as in the proof of \cite[Lemma 7.1]{DT}. Alternatively, the lemma can be easily deduced from \cite[Lemma 7.5]{DT}. 
	\end{proof}

	\begin{para}We apply this lemma with $m = 3$ and 
	\begin{equation*}
		h(z_{1}, \ldots, 
		z_{r}) \; =  \sum_{\zeta = 1, \sqrt{-1}}\, 
		\zeta^{2g}
		G(z_{1}, 
		z_{2}, 
		z_{3}, \zeta)
		R_{2}
		(z_{1}, 
		\ldots, z_{r}, \zeta) 
		(z_{4} 
		\, \cdots \, z_{r})^{-g}.
	\end{equation*} 
	Following the same idea as in the proof of Proposition \ref{explicit-T1}, one obtains: \end{para}
	
	\begin{prop} \label{explicit-T2}  For every partition $\lambda \subseteq (r^{g})$, we have 
		\begin{align*}
				\mathcal{T}_{2}
				(\lambda^{\dag}) = 
				\frac{(1-q^{-1})q^{-g/2-2}}{2^{4} 3! (r - 3)!}  
				\cdot \frac{(-1)^{r(r - 1)\slash 2}}{\left(2\pi \sqrt{-1}\right)^{r}}\!
				\oint\limits_{|z_{r}| = 1} 
				 \cdots \oint\limits_{|z_{1}| = 1} 
				h(\mathrm{z}) 
				\prod_{1\le i < j \le  r} 
				(z_{i}^{} - z_{j}^{})^{\mathrm{e}_{ij}  - 1} \\ 
				\cdot \prod_{1 \le k \le l \le 3} 
				(1 - z_{k}^{} 
				z_{l}^{})
				\, \prod_{i = 1}^{3} z_{i}^{r -3}
				  \cdot \, \det\left(z_{i}^{g - \lambda_{r + 1 - j}' + r + 1 - j} - z_{i}^{-(g - \lambda_{r + 1 - j}' + r + 1 - j)}\right)_{1\le i, j \le r}\, \frac{d \mathrm{z}}{\mathrm{z}^{r+1}}.
			\end{align*}
	\end{prop}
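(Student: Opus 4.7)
The proof will closely follow the blueprint of Proposition \ref{explicit-T1}, with the main additional complication coming from the fact that the integrand now has a non-trivial denominator structure, singling out the first three variables from the remaining $r-3$. The plan is to start from the definition
\[
\mathcal{T}_{2}(\lambda^{\dag}) = \frac{1}{\#W}\int_{\mathbf{T}} \mathcal{Q}_{2}(t; g, q)\psi(t)^{-g}\overline{s_{\langle \lambda^{\dag} \rangle}(t)}\, |\delta(t)|^{2}\, dt,
\]
substitute the explicit expression for $\mathcal{Q}_{2}(t;g,q)\psi(t)^{-g}$ from \cite[Section~7]{DT} displayed just above the proposition, and recognize the sum over $\sigma \in \Sigma_r$ and $\epsilon_{\sigma(i)} = \pm 1$ as precisely the symmetrized sum to which the new version of the contour-integral lemma (with $m=3$) applies.

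After applying that lemma with $h(z_1,\ldots,z_r) = \sum_{\zeta=1,\sqrt{-1}} \zeta^{2g}\,G(z_1,z_2,z_3,\zeta)\,R_2(z_1,\ldots,z_r,\zeta)(z_4\cdots z_r)^{-g}$, we obtain an $r$-fold contour integral whose denominator is $\prod_{i,j=1}^r (1 - z_i t_j)(1 - z_i t_j^{-1})$. The next step is to decompose each contour into a sum of two contours, $|z_i| = \rho_1 < 1$ and $|z_i| = \rho_2 > 1$, with $\rho_2 - \rho_1$ sufficiently small that $R_2$ and $G$ remain holomorphic. For each choice of a subset of ``outside'' contours one changes variables $z_i \mapsto z_i^{-1}$ (picking up the factor $z_i^{-2}$ from $(1 - z_i a_j)(1 - z_i a_j^{-1})$), so that the resulting denominator can be expanded by the symplectic Cauchy identity of \S\ref{no symplectic cauchy} as a sum over partitions $\mu$ with $\ell(\mu) \leq r$ of $s_{\langle\mu\rangle}(t_1^\pm,\ldots,t_r^\pm)\, s_\mu(\mathrm{z}^\epsilon)$, weighted by $\prod_{1\le i<j\le r}(1-t_it_j)^{-1}$.

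Once expressed in this form, the integration over $\mathbf{T}$ against $\overline{s_{\langle\lambda^\dag\rangle}(t)}|\delta(t)|^2$ picks out, by orthonormality of the symplectic Schur polynomials, exactly the coefficient corresponding to $\mu = \lambda^\dag$. The same algebraic simplifications used in the proof of Proposition \ref{explicit-T1} apply verbatim: the identity
\[
\frac{(z_i - z_j)(1 - z_i z_j)}{1 - z_i^{\epsilon_i}z_j^{\epsilon_j}} = z_i^{1-\epsilon_i}z_j^{1-\epsilon_j}\bigl(z_i^{\epsilon_i} - z_j^{\epsilon_j}\bigr)
\]
collects the Vandermonde factors, and the multilinearity identity
\[
\det\!\left(z_i^{g-\lambda_{r+1-j}'+r+1-j}-z_i^{-(g-\lambda_{r+1-j}'+r+1-j)}\right)_{i,j}
= \sum_\epsilon \prod_k \epsilon_k z_k^{\epsilon_k}\!\prod_{i<j}(z_i^{\epsilon_i}-z_j^{\epsilon_j})\, s_{\lambda^\dag}(\mathrm{z}^\epsilon)
\]
combines the $2^r$ sign contributions into a single determinant. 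The final bookkeeping consists in tracking how the extra factors $\prod_{1\le k\le l\le 3}(1 - z_k z_l)$ and $\prod_{i=1}^3 z_i^{r-3}$, together with the modified exponents $\mathrm{e}_{ij}$, emerge from the $m=3$ splitting in the contour-integral lemma.

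The main obstacle will be the last paragraph of bookkeeping: one must verify that every sign produced by the contour orientations, the change of variables $z_i \mapsto z_i^{-1}$ for outside contours, and the determinant/Vandermonde identities assemble correctly into the prefactor $(-1)^{r(r-1)/2}/(2^4 \cdot 3!(r-3)!)$. The combinatorial symmetry between the first three variables (coming from the asymmetric treatment of the index set $\{j_1,j_2,j_3\} \subset \{1,\ldots,r\}$ in $\Phi_2$) and the remaining $r-3$ is what forces $m=3$ rather than $m=0$ in the lemma; confirming that no spurious factors of $2$ or signs appear from the summation over $\zeta \in \{1,\sqrt{-1}\}$ and from the distinction between $\mathrm{e}_1, \mathrm{e}_2, \mathrm{e}_3$ inside $R_2$ is the technical heart of the argument, but presents no conceptual difficulty beyond what is already handled in the $\mathcal{T}_1$ case.
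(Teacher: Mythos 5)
Your proposal is correct and takes essentially the same approach as the paper: the paper's own proof of \cref{explicit-T2} consists of the single sentence ``Following the same idea as in the proof of Proposition~\ref{explicit-T1}, one obtains:'' immediately after stating the generalized $m=3$ contour-integral lemma and specifying the choice of $h$, and your sketch is a faithful expansion of what ``the same idea'' entails (contour decomposition, change of variables $z_i \mapsto z_i^{-1}$ on outer contours, symplectic Cauchy expansion, orthonormality of symplectic Schur functions, and the Vandermonde/determinant identities). One minor slip: after the Cauchy expansion the prefactor is $\prod_{1\le i<j\le r}(1-z_i^{\epsilon_i}z_j^{\epsilon_j})^{-1}$ in the integration variables, not $\prod(1-t_i t_j)^{-1}$ in the character variables, but this is immaterial to the structure of the argument.
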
 \begin{para}
	For large $g,$ the value of 
	$
	|\mathcal{T}_{2}
	(\lambda^{\dag})|
	$ 
	is as large as possible for partitions $\lambda$ such that 
	$
	g - \lambda_{i}' = O(1)
	$ 
	for $i = 1$, $2, 3$, and 
	$
	\lambda_{i}' = O(1)
	$ 
	for $i \ge 4.$ For instance, take $r = 3$ and 
	$
	\lambda' = (g, g, g)
	$ 
	with $g$ even, that is, $\lambda = (3, 3, 3, \ldots)$. In this case, 
	\begin{align*} 
			\mathcal{T}_{2}
			(\lambda^{\dag}) = 
			- \, \frac{(1-q^{-1})q^{-g/2-2}}{2^{4} 3!}  
			\cdot \frac{1}{\left(2\pi \sqrt{-1}\right)^{3}}\!
			\oint\limits_{|z_{3}| = 1}\, 
			\oint\limits_{|z_{2}| = 1}\,  \oint\limits_{|z_{1}| = 1}
			h(z_{1}, 
			z_{2}, 
			z_{3}) \\
			\prod_{1\le i < j \le  3} 
			(z_{i} - z_{j}) 
			\, \cdot \prod_{1 \le k \le l \le 3} 
			(1 - z_{k} 
			z_{l})
			 \cdot \, 
			\mathrm{det}\!\left(z_{i}^{4 - j} - z_{i}^{- 4 + j}\right)_{1\le i, j \le 3}\, 
			\frac{d z_{1}
				d z_{2}
				d z_{3}}
			{z_{1}^{4}
				z_{2}^{4}
				z_{3}^{4}}
		\end{align*} 
	with 
	\[
	h(z_{1}, 
	z_{2}, 
	z_{3}) \; =  \sum_{\zeta = 1, \sqrt{-1}}
	G(z_{1}, 
	z_{2}, 
	z_{3}, \zeta)
	R_{2}
	(z_{1}, 
	z_{2}, 
	z_{3}, \zeta).
	\] 
	We have 
	\[
	\mathcal{T}_{2}
	(\lambda^{\dag}) =    
	- \, (1-q^{-1})q^{-g/2-2}\left(1 - q^{- 2} + 2 q^{ - 3} - 9 q^{ - 4} - 32 q^{ - 5} + \cdots \right)
	\] 
	which can be easily checked as follows. The function 
	$
	S(z_{1}, 
	z_{2}, 
	z_{3}, 
	\zeta^{2}\!, x)
	$ 
	is a polynomial in $x$ of degree $14$ of the form 
	$ 
	S(z_{1}, 
	z_{2}, 
	z_{3}, 
	\zeta^{2}\!, x)
	= 1 + O\!\left(x^{3}\right)\!,
	$ 
	and thus 
	\[ 
	R_{2}(
	z_{1}, 
	z_{2}, 
	z_{3}, \zeta)  
	\, =  \prod_{n \le 10} S(
	z_{1}^{n}, 
	z_{2}^{n}, 
	z_{3}^{n}, 
	\zeta^{2n}\!, q^{-n
		\slash 2})^{i_{n}\!(q)}\, \cdot \, 
	\left(1 + O\!\left(q^{-\frac{11}{2}}\right)\right)\!. 
	\] 
	Use this approximation, and truncate the power series expansion of 
	$
	G(z_{1},
	z_{2},
	z_{3}, \zeta)
	$ 
	to its $q^{- 5}$th term. Replacing $G$ and $R_{2}$ under the integral of 
	$
	\mathcal{T}_{2}
	(\lambda^{\dag})
	$ 
	by their approximations and truncating again at the $q^{- 5}$th term, 
	our assertion follows by direct computation. 
	\end{para}
	\begin{para}
	On the other hand, \S\ref{main-term-T1-estimate} implies that 
	$
	\mathcal{T}_{1}
	(\lambda^{\dag})
	\ll_{r, \, \varepsilon} 
	q^{\left( - \frac{\rho}{4} + \varepsilon \right)g}
	$ 
	when $(|\lambda|\slash g) \ge \rho,$ for a positive $\rho$. In particular, if $\lambda_{i} = 3$ for all $i,$ then 
	\begin{equation*} 
		\tr_{\lambda}(g) \sim 
		\mathcal{T}_{2}
		(\lambda^{\dag}) 
	\end{equation*}
	when $g \to \infty,$ which shows that 
	$
	\mathcal{T}_{2}
	(\lambda^{\dag})
	$ 
	dominates 
	$
	\mathcal{T}_{1}
	(\lambda^{\dag})
	$ 
	in this case. In fact $\tr_{\lambda}(g) \sim 
		\mathcal{T}_{2}
		(\lambda^{\dag}) $ can be proven unconditionally, 
	since the asymptotic formula \eqref{eq: asympt-moments} when $r = 3$ with an error term on the order of 
	$q^{\left(-\frac{2}{3} + \varepsilon \right)g}$ can be established by a similar technique to that used in \cite{Dia} for the cubic moment of central values of quadratic $L$-functions. \end{para}
 
 \begin{para}The regular behavior of the second term 
	$
	\mathcal{T}_{2}
	(\lambda^{\dag})
	$ 
	when 
	$
	g - \lambda_{i}' = O(1)
	$ 
	for $i = 1, 2, 3$ and 
	$
	\lambda_{i}' = O(1)
	$ 
	for $i \ge 4$ suggests an explanation in terms of \emph{secondary stability} in the sense of Kupers--Galatius--Randal-Williams \cite{gkrw-secondary} for the coefficients 
	$
	\mathbb{V}_{\lambda}
	$. It will be interesting to further investigate this phenomenon in the different regimes of $\lambda,$ dictated by significant changes in the behavior of the traces $\tr_{\lambda}(g).$ 
	
	\end{para}

\bibliographystyle{alpha}
\bibliography{database}

@article {daspetersen,
    AUTHOR = {Das, Ronno and Petersen, Dan},
     TITLE = {The {M}umford conjecture (after {B}ianchi)},
   JOURNAL = {J. Topol.},
  FJOURNAL = {Journal of Topology},
    VOLUME = {18},
      YEAR = {2025},
    NUMBER = {1},
      note = {Paper no. e70016. 29 pages},
   MRCLASS = {14H10 (55P48 55R40 57K20)},
  MRNUMBER = {4878403},
       DOI = {10.1112/topo.70016},
       URL = {https://doi.org/10.1112/topo.70016},
}

@misc{tablesjonas,
  author = {Bergstr\"om, Jonas},
   year = {2025},
  note = {GitHub repository available at  \url{https://github.com/jonasbergstroem/Stable-homology-of-braid-groups-and-moduli-of-hyperelliptic-curves}}
}

@article {bindaparkostvaer,
	AUTHOR = {Binda, Federico and Park, Doosung and {\O}stv{\ae}r, Paul Arne},
	TITLE = {Triangulated categories of logarithmic motives over a field},
	JOURNAL = {Ast\'{e}risque},
	FJOURNAL = {Ast\'{e}risque},
	NUMBER = {433},
	YEAR = {2022},
	PAGES = {ix+267},
	ISSN = {0303-1179},
	ISBN = {978-2-85629-957-9},
	MRCLASS = {14A21 (14A30 14F42 18F10 18G35 18G80 18Nxx 19E15)},
	MRNUMBER = {4439182},
	DOI = {10.24033/ast},
	URL = {https://doi-org.ezp.sub.su.se/10.24033/ast},
}

@article {nakayama,
	AUTHOR = {Nakayama, Chikara},
	TITLE = {Logarithmic \'{e}tale cohomology},
	JOURNAL = {Math. Ann.},
	FJOURNAL = {Mathematische Annalen},
	VOLUME = {308},
	YEAR = {1997},
	NUMBER = {3},
	PAGES = {365--404},
	ISSN = {0025-5831},
	MRCLASS = {14F20},
	MRNUMBER = {1457738},
	MRREVIEWER = {Claudio Pedrini},
	DOI = {10.1007/s002080050081},
	URL = {https://doi-org.ezp.sub.su.se/10.1007/s002080050081},
}

@book {lothaire,
	AUTHOR = {Lothaire, M.},
	TITLE = {Combinatorics on words},
	SERIES = {Encyclopedia of Mathematics and its Applications},
	VOLUME = {17},
	NOTE = {A collective work by Dominique Perrin, Jean Berstel, Christian
	Choffrut, Robert Cori, Dominique Foata, Jean Eric Pin,
	Guiseppe Pirillo, Christophe Reutenauer, Marcel-P.
	Sch\"{u}tzenberger, Jacques Sakarovitch and Imre Simon.
	With a foreword by Roger Lyndon.
	Edited and with a preface by Perrin},
	PUBLISHER = {Addison-Wesley Publishing Co., Reading, Mass.},
	YEAR = {1983},
	PAGES = {xix+238},
	ISBN = {0-201-13516-7},
	MRCLASS = {05-02 (03D03 03D40 20M05 68F99)},
	MRNUMBER = {675953},
	MRREVIEWER = {Heinrich Seidel},
}

@article {noohi-mapping,
	AUTHOR = {Noohi, Behrang},
	TITLE = {Mapping stacks of topological stacks},
	JOURNAL = {J. Reine Angew. Math.},
	FJOURNAL = {Journal f\"{u}r die Reine und Angewandte Mathematik. [Crelle's
	Journal]},
	VOLUME = {646},
	YEAR = {2010},
	PAGES = {117--133},
	ISSN = {0075-4102},
	MRCLASS = {57R19 (55P35 55P50)},
	MRNUMBER = {2719557},
	MRREVIEWER = {Frank Neumann},
	DOI = {10.1515/CRELLE.2010.067},
	URL = {https://doi-org.ezp.sub.su.se/10.1515/CRELLE.2010.067},
}

@article {gmtw,
	AUTHOR = {Galatius, S{\o}ren and Madsen, Ib and Tillmann, Ulrike and Weiss,
	Michael},
	TITLE = {The homotopy type of the cobordism category},
	JOURNAL = {Acta Math.},
	FJOURNAL = {Acta Mathematica},
	VOLUME = {202},
	YEAR = {2009},
	NUMBER = {2},
	PAGES = {195--239},
	ISSN = {0001-5962},
	MRCLASS = {55P47 (55P15 55P42 57R75)},
	MRNUMBER = {2506750},
	MRREVIEWER = {Richard John Steiner},
	DOI = {10.1007/s11511-009-0036-9},
	URL = {https://doi.org/10.1007/s11511-009-0036-9},
}

@article {segalconfiguration,
	AUTHOR = {Segal, Graeme},
	TITLE = {Configuration-spaces and iterated loop-spaces},
	JOURNAL = {Invent. Math.},
	FJOURNAL = {Inventiones Mathematicae},
	VOLUME = {21},
	YEAR = {1973},
	PAGES = {213--221},
	ISSN = {0020-9910},
	MRCLASS = {55D35},
	MRNUMBER = {331377},
	MRREVIEWER = {J. P. May},
	DOI = {10.1007/BF01390197},
	URL = {https://doi.org/10.1007/BF01390197},
}

@article {adams,
	AUTHOR = {Adams, J. Frank},
	TITLE = {On the cobar construction},
	JOURNAL = {Proc. Nat. Acad. Sci. U.S.A.},
	FJOURNAL = {Proceedings of the National Academy of Sciences of the United
	States of America},
	VOLUME = {42},
	YEAR = {1956},
	PAGES = {409--412},
	ISSN = {0027-8424},
	MRCLASS = {55.0X},
	MRNUMBER = {79266},
	MRREVIEWER = {W. S. Massey},
	DOI = {10.1073/pnas.42.7.409},
	URL = {https://doi-org.ezp.sub.su.se/10.1073/pnas.42.7.409},
}

@article {riverazeinalian,
	AUTHOR = {Rivera, Manuel and Zeinalian, Mahmoud},
	TITLE = {Cubical rigidification, the cobar construction and the based
	loop space},
	JOURNAL = {Algebr. Geom. Topol.},
	FJOURNAL = {Algebraic \& Geometric Topology},
	VOLUME = {18},
	YEAR = {2018},
	NUMBER = {7},
	PAGES = {3789--3820},
	ISSN = {1472-2747},
	MRCLASS = {55U35 (18D20 18G30 55P35 55U10 57T30)},
	MRNUMBER = {3892231},
	MRREVIEWER = {Timothy Porter},
	DOI = {10.2140/agt.2018.18.3789},
	URL = {https://doi-org.ezp.sub.su.se/10.2140/agt.2018.18.3789},
}

@incollection {faltingslogarithmic,
    AUTHOR = {Faltings, Gerd},
     TITLE = {{$F$}-isocrystals on open varieties: results and conjectures},
 BOOKTITLE = {The {G}rothendieck {F}estschrift, {V}ol. {II}},
    SERIES = {Progr. Math.},
    VOLUME = {87},
     PAGES = {219--248},
 PUBLISHER = {Birkh\"{a}user Boston, Boston, MA},
      YEAR = {1990},
   MRCLASS = {14F30},
  MRNUMBER = {1106900},
MRREVIEWER = {Min Ho Lee},
}

@incollection {katologarithmic,
    AUTHOR = {Kato, Kazuya},
     TITLE = {Logarithmic structures of {F}ontaine-{I}llusie},
 BOOKTITLE = {Algebraic analysis, geometry, and number theory ({B}altimore,
              {MD}, 1988)},
     PAGES = {191--224},
 PUBLISHER = {Johns Hopkins Univ. Press, Baltimore, MD},
      YEAR = {1989},
   MRCLASS = {14F30 (14G20)},
  MRNUMBER = {1463703},
MRREVIEWER = {Adolfo Quir\'{o}s},
}

@article {levinetubular,
    AUTHOR = {Levine, Marc},
     TITLE = {Motivic tubular neighborhoods},
   JOURNAL = {Doc. Math.},
  FJOURNAL = {Documenta Mathematica},
    VOLUME = {12},
      YEAR = {2007},
     PAGES = {71--146},
      ISSN = {1431-0635},
   MRCLASS = {14F42 (14C25 18F20 55P42)},
  MRNUMBER = {2302525},
MRREVIEWER = {Oliver R\"{o}ndigs},
}

@book {polishchukpositselski,
    AUTHOR = {Polishchuk, Alexander and Positselski, Leonid},
     TITLE = {Quadratic algebras},
    SERIES = {University Lecture Series},
    VOLUME = {37},
 PUBLISHER = {American Mathematical Society, Providence, RI},
      YEAR = {2005},
     PAGES = {xii+159},
      ISBN = {0-8218-3834-2},
   MRCLASS = {16S37 (16E30)},
  MRNUMBER = {2177131},
MRREVIEWER = {Ralf Fr\"{o}berg},
       DOI = {10.1090/ulect/037},
       URL = {https://doi.org/10.1090/ulect/037},
}

@article {backelinfroberg,
    AUTHOR = {Backelin, J\"{o}rgen and Fr\"{o}berg, Ralf},
     TITLE = {Koszul algebras, {V}eronese subrings and rings with linear
              resolutions},
   JOURNAL = {Rev. Roumaine Math. Pures Appl.},
  FJOURNAL = {Acad\'{e}mie de la R\'{e}publique Populaire Roumaine. Revue Roumaine
              de Math\'{e}matiques Pures et Appliqu\'{e}es},
    VOLUME = {30},
      YEAR = {1985},
    NUMBER = {2},
     PAGES = {85--97},
      ISSN = {0035-3965},
   MRCLASS = {16A03},
  MRNUMBER = {789425},
MRREVIEWER = {Freddy M. J. Van Oystaeyen},
}

@article {saw3,
    AUTHOR = {Sawin, Will},
     TITLE = {Square-root cancellation for sums of factorization functions
              over squarefree progressions in {$\Bbb F_q[t]$}},
   JOURNAL = {Acta Math.},
  FJOURNAL = {Acta Mathematica},
    VOLUME = {233},
      YEAR = {2024},
    NUMBER = {2},
     PAGES = {285--418},
      ISSN = {0001-5962,1871-2509},
   MRCLASS = {11N37 (11T55)},
  MRNUMBER = {4827656},
       DOI = {10.4310/acta.2024.v233.n2.a3},
       URL = {https://doi.org/10.4310/acta.2024.v233.n2.a3},
}

@unpublished{EL,
    AUTHOR = {Ellenberg, Jordan and Landesman, Aaron},
     TITLE = {Homological stability for generalized {H}urwitz spaces and {S}elmer groups in quadratic twist families over function fields},
 	year = {2023},
	note = "Preprint available at arXiv:2310.16286",
 }

@unpublished{SheSt24,
    AUTHOR = {Shen, Quanli and Stucky, Joshua},
     TITLE = {The fourth moment of quadratic {D}irichlet ${L}$-Functions {II}},
 	year = {2024},
	note = "Preprint available at arXiv:2402.01497",
 }

@article {EVW,
    AUTHOR = {Ellenberg, Jordan S. and Venkatesh, Akshay and Westerland,
              Craig},
     TITLE = {Homological stability for {H}urwitz spaces and the
              {C}ohen-{L}enstra conjecture over function fields},
   JOURNAL = {Ann. of Math. (2)},
  FJOURNAL = {Annals of Mathematics. Second Series},
    VOLUME = {183},
      YEAR = {2016},
    NUMBER = {3},
     PAGES = {729--786},
      ISSN = {0003-486X,1939-8980},
   MRCLASS = {14H10 (11G25 11R29 14H30 55R80)},
  MRNUMBER = {3488737},
MRREVIEWER = {Benjamin\ Collas},
       DOI = {10.4007/annals.2016.183.3.1},
       URL = {https://doi.org/10.4007/annals.2016.183.3.1},
}

@article {saw1,
    AUTHOR = {Sawin, Will},
     TITLE = {Square-root cancellation for sums of factorization functions
              over short intervals in function fields},
   JOURNAL = {Duke Math. J.},
  FJOURNAL = {Duke Mathematical Journal},
    VOLUME = {170},
      YEAR = {2021},
    NUMBER = {5},
     PAGES = {997--1026},
      ISSN = {0012-7094,1547-7398},
   MRCLASS = {11T55 (11M38 11N37)},
  MRNUMBER = {4255048},
MRREVIEWER = {R\'{e}gis\ Blache},
       DOI = {10.1215/00127094-2020-0060},
       URL = {https://doi.org/10.1215/00127094-2020-0060},
}

@article {saw2,
    AUTHOR = {Sawin, Will},
     TITLE = {A representation theory approach to integral moments of
              {$L$}-functions over function fields},
   JOURNAL = {Algebra Number Theory},
  FJOURNAL = {Algebra \& Number Theory},
    VOLUME = {14},
      YEAR = {2020},
    NUMBER = {4},
     PAGES = {867--906},
      ISSN = {1937-0652,1944-7833},
   MRCLASS = {11M50 (11M38 11T55 14F20 20G05)},
  MRNUMBER = {4114059},
MRREVIEWER = {R\'{e}gis\ Blache},
       DOI = {10.2140/ant.2020.14.867},
       URL = {https://doi.org/10.2140/ant.2020.14.867},
}

@article {arnoldbraid2,
	AUTHOR = {Arnold, Vladimir I.},
	TITLE = {Certain topological invariants of algebrac functions},
	JOURNAL = {Trudy Moskov. Mat. Ob\v{s}\v{c}.},
	FJOURNAL = {Trudy Moskovskogo Matemati\v{c}eskogo Ob\v{s}\v{c}estva},
	VOLUME = {21},
	YEAR = {1970},
	PAGES = {27--46},
	ISSN = {0134-8663},
	MRCLASS = {14.55},
	MRNUMBER = {0274462},
	MRREVIEWER = {N. Popescu},
}

@article {chen-burau,
	AUTHOR = {Chen, Weiyan},
	TITLE = {Homology of braid groups, the {B}urau representation, and
	points on superelliptic curves over finite fields},
	JOURNAL = {Israel J. Math.},
	FJOURNAL = {Israel Journal of Mathematics},
	VOLUME = {220},
	YEAR = {2017},
	NUMBER = {2},
	PAGES = {739--762},
	ISSN = {0021-2172},
	MRCLASS = {20F36 (11G05 14H20 14H50)},
	MRNUMBER = {3666443},
	MRREVIEWER = {Markus Szymik},
	DOI = {10.1007/s11856-017-1534-7},
	URL = {https://doi.org/10.1007/s11856-017-1534-7},
}

@article {birmanhilden,
	AUTHOR = {Birman, Joan S. and Hilden, Hugh M.},
	TITLE = {On isotopies of homeomorphisms of {R}iemann surfaces},
	JOURNAL = {Ann. of Math. (2)},
	FJOURNAL = {Annals of Mathematics. Second Series},
	VOLUME = {97},
	YEAR = {1973},
	PAGES = {424--439},
	ISSN = {0003-486X},
	MRCLASS = {30A48},
	MRNUMBER = {325959},
	MRREVIEWER = {William Harvey},
	DOI = {10.2307/1970830},
	URL = {https://doi.org/10.2307/1970830},
}

@article {songtillmann,
	AUTHOR = {Song, Yongjin and Tillmann, Ulrike},
	TITLE = {Braids, mapping class groups, and categorical delooping},
	JOURNAL = {Math. Ann.},
	FJOURNAL = {Mathematische Annalen},
	VOLUME = {339},
	YEAR = {2007},
	NUMBER = {2},
	PAGES = {377--393},
	ISSN = {0025-5831},
	MRCLASS = {55P48 (55R37 57M50)},
	MRNUMBER = {2324724},
	MRREVIEWER = {Paolo Salvatore},
	DOI = {10.1007/s00208-007-0117-z},
	URL = {https://doi.org/10.1007/s00208-007-0117-z},
}

@incollection {segaltillmann,
	AUTHOR = {Segal, Graeme and Tillmann, Ulrike},
	TITLE = {Mapping configuration spaces to moduli spaces},
	BOOKTITLE = {Groups of diffeomorphisms},
	SERIES = {Adv. Stud. Pure Math.},
	VOLUME = {52},
	PAGES = {469--477},
	PUBLISHER = {Math. Soc. Japan, Tokyo},
	YEAR = {2008},
	MRCLASS = {55R80 (32G15)},
	MRNUMBER = {2509721},
	MRREVIEWER = {John McCleary},
	DOI = {10.2969/aspm/05210469},
	URL = {https://doi.org/10.2969/aspm/05210469},
}

@phdthesis{cazanave,
	author       = {Christophe Cazanave}, 
	title        = {Th\'eorie homotopique des sch\'emas d'{A}tiyah et {H}itchins},
	school       = {Universit\'e Paris 13},
	year         = "2009"
}

@article {Beh,
	AUTHOR = {Behrend, Kai A.},
	TITLE = {Derived {$l$}-adic categories for algebraic stacks},
	JOURNAL = {Mem. Amer. Math. Soc.},
	FJOURNAL = {Memoirs of the American Mathematical Society},
	VOLUME = {163},
	YEAR = {2003},
	NUMBER = {774},
	PAGES = {viii+93},
	ISSN = {0065-9266},
	MRCLASS = {14A20 (11F25 11G05 14F99 18E30)},
	MRNUMBER = {1963494},
	MRREVIEWER = {Burt Totaro},
	DOI = {10.1090/memo/0774},
	URL = {https://doi.org/10.1090/memo/0774},
}

@book {Kac,
    AUTHOR = {Kac, Victor G.},
     TITLE = {Infinite-dimensional {L}ie algebras},
   EDITION = {Third},
 PUBLISHER = {Cambridge University Press, Cambridge},
      YEAR = {1990},
     PAGES = {xxii+400},
      ISBN = {0-521-37215-1; 0-521-46693-8},
   MRCLASS = {17B65 (17B67 17B68 58F07)},
  MRNUMBER = {1104219},
       DOI = {10.1017/CBO9780511626234},
       URL = {https://doi-org.ezp.sub.su.se/10.1017/CBO9780511626234},
}

@book {R,
    AUTHOR = {Rosen, Michael},
     TITLE = {Number theory in function fields},
    SERIES = {Graduate Texts in Mathematics},
    VOLUME = {210},
 PUBLISHER = {Springer-Verlag, New York},
      YEAR = {2002},
     PAGES = {xii+358},
      ISBN = {0-387-95335-3},
   MRCLASS = {11R58 (11R60 11T55)},
  MRNUMBER = {1876657},
MRREVIEWER = {Ernst-Ulrich Gekeler},
       DOI = {10.1007/978-1-4757-6046-0},
       URL = {https://doi-org.ezp.sub.su.se/10.1007/978-1-4757-6046-0},
}

@article {R-G,
    AUTHOR = {Roditty-Gershon, Edva},
     TITLE = {Statistics for products of traces of high powers of the
              {F}robenius class of hyperelliptic curves},
   JOURNAL = {J. Number Theory},
  FJOURNAL = {Journal of Number Theory},
    VOLUME = {132},
      YEAR = {2012},
    NUMBER = {3},
     PAGES = {467--484},
      ISSN = {0022-314X},
   MRCLASS = {11M50 (11G20)},
  MRNUMBER = {2875350},
MRREVIEWER = {Niko Naumann},
       DOI = {10.1016/j.jnt.2011.09.008},
       URL = {https://doi-org.ezp.sub.su.se/10.1016/j.jnt.2011.09.008},
}

@article {bump-gamburd,
    AUTHOR = {Bump, Daniel and Gamburd, Alex},
     TITLE = {On the averages of characteristic polynomials from classical
              groups},
   JOURNAL = {Comm. Math. Phys.},
  FJOURNAL = {Communications in Mathematical Physics},
    VOLUME = {265},
      YEAR = {2006},
    NUMBER = {1},
     PAGES = {227--274},
      ISSN = {0010-3616},
   MRCLASS = {60B15 (05E10 15A52 20G05 81R05)},
  MRNUMBER = {2217304},
MRREVIEWER = {Uwe Franz},
       DOI = {10.1007/s00220-006-1503-1},
       URL = {https://doi-org.ezp.sub.su.se/10.1007/s00220-006-1503-1},
}

@incollection {How,
    AUTHOR = {Howe, Roger},
     TITLE = {Perspectives on invariant theory: {S}chur duality,
              multiplicity-free actions and beyond},
 BOOKTITLE = {The {S}chur lectures (1992) ({T}el {A}viv)},
    SERIES = {Israel Math. Conf. Proc.},
    VOLUME = {8},
     PAGES = {1--182},
 PUBLISHER = {Bar-Ilan Univ., Ramat Gan},
      YEAR = {1995},
   MRCLASS = {13A50 (15A72 20G05 22E46)},
  MRNUMBER = {1321638},
MRREVIEWER = {Frank D. Grosshans},
}

@book {Bump,
    AUTHOR = {Bump, Daniel},
     TITLE = {Lie groups},
    SERIES = {Graduate Texts in Mathematics},
    VOLUME = {225},
 PUBLISHER = {Springer-Verlag, New York},
      YEAR = {2004},
     PAGES = {xii+451},
      ISBN = {0-387-21154-3},
   MRCLASS = {22-01 (22C05 22E15 22E46)},
  MRNUMBER = {2062813},
MRREVIEWER = {Karl-Hermann Neeb},
       DOI = {10.1007/978-1-4757-4094-3},
       URL = {https://doi-org.ezp.sub.su.se/10.1007/978-1-4757-4094-3},
}

@article {A,
    AUTHOR = {Artin, Emil},
     TITLE = {Quadratische {K}\"{o}rper im {G}ebiete der h\"{o}heren {K}ongruenzen.
              {I}},
   JOURNAL = {Math. Z.},
  FJOURNAL = {Mathematische Zeitschrift},
    VOLUME = {19},
      YEAR = {1924},
    NUMBER = {1},
     PAGES = {153--206},
      ISSN = {0025-5874},
   MRCLASS = {DML},
  MRNUMBER = {1544651},
       DOI = {10.1007/BF01181074},
       URL = {https://doi-org.ezp.sub.su.se/10.1007/BF01181074},
}

@article {tillmann-stable,
	AUTHOR = {Tillmann, Ulrike},
	TITLE = {On the homotopy of the stable mapping class group},
	JOURNAL = {Invent. Math.},
	FJOURNAL = {Inventiones Mathematicae},
	VOLUME = {130},
	YEAR = {1997},
	NUMBER = {2},
	PAGES = {257--275},
	ISSN = {0020-9910},
	MRCLASS = {57M99 (19L20 55P47 55Q50 55S12 55U99)},
	MRNUMBER = {1474157},
	MRREVIEWER = {Darryl McCullough},
	DOI = {10.1007/s002220050184},
	URL = {https://doi-org.ezp.sub.su.se/10.1007/s002220050184},
}

@article {hamstrom,
	AUTHOR = {Hamstrom, Mary-Elizabeth},
	TITLE = {Homotopy groups of the space of homeomorphisms on a
	{$2$}-manifold},
	JOURNAL = {Illinois J. Math.},
	FJOURNAL = {Illinois Journal of Mathematics},
	VOLUME = {10},
	YEAR = {1966},
	PAGES = {563--573},
	ISSN = {0019-2082},
	MRCLASS = {55.45 (57.55)},
	MRNUMBER = {202140},
	MRREVIEWER = {George McCarty},
	URL = {http://projecteuclid.org.ezp.sub.su.se/euclid.ijm/1256054895},
}

@article {teichmuller,
	AUTHOR = {Teichm\"{u}ller, Oswald},
	TITLE = {Extremale quasikonforme {A}bbildungen und quadratische
	{D}ifferentiale},
	JOURNAL = {Abh. Preuss. Akad. Wiss. Math.-Nat. Kl.},
	VOLUME = {1939},
	YEAR = {1940},
	NUMBER = {22},
	PAGES = {197},
	MRCLASS = {27.0X},
	MRNUMBER = {0003242},
	MRREVIEWER = {R. P. Boas, Jr.},
}

@article {earle-eells,
	AUTHOR = {Earle, Clifford J. and Eells, James},
	TITLE = {A fibre bundle description of {T}eichm\"{u}ller theory},
	JOURNAL = {J. Differential Geometry},
	FJOURNAL = {Journal of Differential Geometry},
	VOLUME = {3},
	YEAR = {1969},
	PAGES = {19--43},
	ISSN = {0022-040X},
	MRCLASS = {57.47},
	MRNUMBER = {276999},
	URL = {http://projecteuclid.org.ezp.sub.su.se/euclid.jdg/1214428816},
}

@article {dwyertwisted,
	AUTHOR = {Dwyer, William G.},
	TITLE = {Twisted homological stability for general linear groups},
	JOURNAL = {Ann. of Math. (2)},
	FJOURNAL = {Annals of Mathematics. Second Series},
	VOLUME = {111},
	YEAR = {1980},
	NUMBER = {2},
	PAGES = {239--251},
	ISSN = {0003-486X},
	MRCLASS = {18G99 (57R99)},
	MRNUMBER = {569072},
	MRREVIEWER = {J. F. Adams},
	DOI = {10.2307/1971200},
	URL = {https://doi.org/10.2307/1971200},
}

@article {krannichtopologicalmoduli,
	AUTHOR = {Krannich, Manuel},
	TITLE = {Homological stability of topological moduli spaces},
	JOURNAL = {Geom. Topol.},
	FJOURNAL = {Geometry \& Topology},
	VOLUME = {23},
	YEAR = {2019},
	NUMBER = {5},
	PAGES = {2397--2474},
	ISSN = {1465-3060},
	MRCLASS = {55P48 (55R40 55R80 57R19 57R50)},
	MRNUMBER = {4019896},
	MRREVIEWER = {Maria Basterra},
	DOI = {10.2140/gt.2019.23.2397},
	URL = {https://doi.org/10.2140/gt.2019.23.2397},
}

@article {Jut,
    AUTHOR = {Jutila, Matti},
     TITLE = {On the mean value of {$L({\tfrac{1}{2}},\,\chi )$} for real
              characters},
   JOURNAL = {Analysis},
  FJOURNAL = {Analysis. International Journal of Analysis and its
              Application},
    VOLUME = {1},
      YEAR = {1981},
    NUMBER = {2},
     PAGES = {149--161},
      ISSN = {0174-4747},
   MRCLASS = {10H08},
  MRNUMBER = {632705},
MRREVIEWER = {D. R. Heath-Brown},
       DOI = {10.1524/anly.1981.1.2.149},
       URL = {https://doi-org.ezp.sub.su.se/10.1524/anly.1981.1.2.149},
}

@article {Young,
    AUTHOR = {Young, Matthew P.},
     TITLE = {The third moment of quadratic {D}irichlet {L}-functions},
   JOURNAL = {Selecta Math. (N.S.)},
  FJOURNAL = {Selecta Mathematica. New Series},
    VOLUME = {19},
      YEAR = {2013},
    NUMBER = {2},
     PAGES = {509--543},
      ISSN = {1022-1824},
   MRCLASS = {11M06 (11A25 11N37)},
  MRNUMBER = {3090236},
MRREVIEWER = {Olivier Bordell\`es},
       DOI = {10.1007/s00029-012-0104-4},
       URL = {https://doi-org.ezp.sub.su.se/10.1007/s00029-012-0104-4},
}

@article {SY,
    AUTHOR = {Soundararajan, K. and Young, Matthew P.},
     TITLE = {The second moment of quadratic twists of modular
              {$L$}-functions},
   JOURNAL = {J. Eur. Math. Soc. (JEMS)},
  FJOURNAL = {Journal of the European Mathematical Society (JEMS)},
    VOLUME = {12},
      YEAR = {2010},
    NUMBER = {5},
     PAGES = {1097--1116},
      ISSN = {1435-9855},
   MRCLASS = {11F66 (11F67)},
  MRNUMBER = {2677611},
MRREVIEWER = {D. R. Heath-Brown},
       DOI = {10.4171/JEMS/224},
       URL = {https://doi-org.ezp.sub.su.se/10.4171/JEMS/224},
}

@article {Ing,
    AUTHOR = {Ingham, Albert E.},
     TITLE = {Mean-value theorems in the theory of the {R}iemann
              zeta-function},
   JOURNAL = {Proc. London Math. Soc. (2)},
  FJOURNAL = {Proceedings of the London Mathematical Society. Second Series},
    VOLUME = {27},
      YEAR = {1927},
    NUMBER = {4},
     PAGES = {273--300},
      ISSN = {0024-6115},
   MRCLASS = {DML},
  MRNUMBER = {1575391},
       DOI = {10.1112/plms/s2-27.1.273},
       URL = {https://doi-org.ezp.sub.su.se/10.1112/plms/s2-27.1.273},
}

@article {Ha-Li,
    AUTHOR = {Hardy, G. H. and Littlewood, J. E.},
     TITLE = {Contributions to the theory of the {R}iemann zeta-function and
              the theory of the distribution of primes},
   JOURNAL = {Acta Math.},
  FJOURNAL = {Acta Mathematica},
    VOLUME = {41},
      YEAR = {1916},
    NUMBER = {1},
     PAGES = {119--196},
      ISSN = {0001-5962},
   MRCLASS = {DML},
  MRNUMBER = {1555148},
       DOI = {10.1007/BF02422942},
       URL = {https://doi-org.ezp.sub.su.se/10.1007/BF02422942},
}

@article {GHRR,
    AUTHOR = {Goulden, Ian P. and Huynh, Duc Khiem and Rishikesh and
              Rubinstein, Michael O.},
     TITLE = {Lower order terms for the moments of symplectic and orthogonal
              families of {$L$}-functions},
   JOURNAL = {J. Number Theory},
  FJOURNAL = {Journal of Number Theory},
    VOLUME = {133},
      YEAR = {2013},
    NUMBER = {2},
     PAGES = {639--674},
      ISSN = {0022-314X},
   MRCLASS = {11A25 (11M06 11N37)},
  MRNUMBER = {2994379},
MRREVIEWER = {Olivier Bordell\`es},
       DOI = {10.1016/j.jnt.2012.08.009},
       URL = {https://doi-org.ezp.sub.su.se/10.1016/j.jnt.2012.08.009},
}

@article {Sound,
    AUTHOR = {Soundararajan, K.},
     TITLE = {Nonvanishing of quadratic {D}irichlet {$L$}-functions at
              {$s=\frac12$}},
   JOURNAL = {Ann. of Math. (2)},
  FJOURNAL = {Annals of Mathematics. Second Series},
    VOLUME = {152},
      YEAR = {2000},
    NUMBER = {2},
     PAGES = {447--488},
      ISSN = {0003-486X},
   MRCLASS = {11M20 (11R42)},
  MRNUMBER = {1804529},
MRREVIEWER = {J. B. Conrey},
       DOI = {10.2307/2661390},
       URL = {https://doi-org.ezp.sub.su.se/10.2307/2661390},
}

@article {CFKRS,
    AUTHOR = {Conrey, J. B. and Farmer, D. W. and Keating, J. P. and
              Rubinstein, M. O. and Snaith, N. C.},
     TITLE = {Integral moments of {$L$}-functions},
   JOURNAL = {Proc. London Math. Soc. (3)},
  FJOURNAL = {Proceedings of the London Mathematical Society. Third Series},
    VOLUME = {91},
      YEAR = {2005},
    NUMBER = {1},
     PAGES = {33--104},
      ISSN = {0024-6115},
   MRCLASS = {11M26},
  MRNUMBER = {2149530},
MRREVIEWER = {K. Soundararajan},
       DOI = {10.1112/S0024611504015175},
       URL = {https://doi-org.ezp.sub.su.se/10.1112/S0024611504015175},
}

@article {florea3,
    AUTHOR = {Florea, Alexandra},
     TITLE = {The fourth moment of quadratic {D}irichlet {$L$}-functions
              over function fields},
   JOURNAL = {Geom. Funct. Anal.},
  FJOURNAL = {Geometric and Functional Analysis},
    VOLUME = {27},
      YEAR = {2017},
    NUMBER = {3},
     PAGES = {541--595},
      ISSN = {1016-443X},
   MRCLASS = {11M06 (11R58)},
  MRNUMBER = {3655956},
MRREVIEWER = {Caroline L. Turnage-Butterbaugh},
       DOI = {10.1007/s00039-017-0409-8},
       URL = {https://doi-org.ezp.sub.su.se/10.1007/s00039-017-0409-8},
}

@article {florea2,
    AUTHOR = {Florea, Alexandra},
     TITLE = {The second and third moment of {$L(1/2,\chi)$} in the
              hyperelliptic ensemble},
   JOURNAL = {Forum Math.},
  FJOURNAL = {Forum Mathematicum},
    VOLUME = {29},
      YEAR = {2017},
    NUMBER = {4},
     PAGES = {873--892},
      ISSN = {0933-7741},
   MRCLASS = {11M06 (11M38)},
  MRNUMBER = {3669007},
MRREVIEWER = {Biswajyoti Saha},
       DOI = {10.1515/forum-2015-0152},
       URL = {https://doi-org.ezp.sub.su.se/10.1515/forum-2015-0152},
}

@article {florea1,
    AUTHOR = {Florea, Alexandra M.},
     TITLE = {Improving the error term in the mean value of
              {$L(\frac12,\chi)$} in the hyperelliptic ensemble},
   JOURNAL = {Int. Math. Res. Not. IMRN},
  FJOURNAL = {International Mathematics Research Notices. IMRN},
      YEAR = {2017},
    NUMBER = {20},
     PAGES = {6119--6148},
      ISSN = {1073-7928},
   MRCLASS = {11M06 (11M50)},
  MRNUMBER = {3712193},
MRREVIEWER = {Steven Joel Miller},
       DOI = {10.1093/imrn/rnv387},
       URL = {https://doi-org.ezp.sub.su.se/10.1093/imrn/rnv387},
}

@article {randalwilliamstwisted,
    AUTHOR = {Randal-Williams, Oscar},
     TITLE = {Cohomology of automorphism groups of free groups with twisted
              coefficients},
   JOURNAL = {Selecta Math. (N.S.)},
  FJOURNAL = {Selecta Mathematica. New Series},
    VOLUME = {24},
      YEAR = {2018},
    NUMBER = {2},
     PAGES = {1453--1478},
      ISSN = {1022-1824},
   MRCLASS = {20F28 (20J06 57R20)},
  MRNUMBER = {3782426},
MRREVIEWER = {Valeriy G. Bardakov},
       DOI = {10.1007/s00029-017-0311-0},
       URL = {https://doi.org/10.1007/s00029-017-0311-0},
}

@article {DGH,
    AUTHOR = {Diaconu, Adrian and Goldfeld, Dorian and Hoffstein, Jeffrey},
     TITLE = {Multiple {D}irichlet series and moments of zeta and
              {$L$}-functions},
   JOURNAL = {Compositio Math.},
  FJOURNAL = {Compositio Mathematica},
    VOLUME = {139},
      YEAR = {2003},
    NUMBER = {3},
     PAGES = {297--360},
      ISSN = {0010-437X},
   MRCLASS = {11M06 (11F66 11M41)},
  MRNUMBER = {2041614},
MRREVIEWER = {Emmanuel P. Royer},
       DOI = {10.1023/B:COMP.0000018137.38458.68},
       URL = {https://doi-org.ezp.sub.su.se/10.1023/B:COMP.0000018137.38458.68},
}

@article {Dia,
    AUTHOR = {Diaconu, Adrian},
     TITLE = {On the third moment of {$L(\frac{1}{2},\chi_d)$} {I}: {T}he
              rational function field case},
   JOURNAL = {J. Number Theory},
  FJOURNAL = {Journal of Number Theory},
    VOLUME = {198},
      YEAR = {2019},
     PAGES = {1--42},
      ISSN = {0022-314X},
   MRCLASS = {11M06 (11R58)},
  MRNUMBER = {3912928},
MRREVIEWER = {Sandro Bettin},
       DOI = {10.1016/j.jnt.2018.09.023},
       URL = {https://doi-org.ezp.sub.su.se/10.1016/j.jnt.2018.09.023},
}

@article {gkrw-secondary,
    AUTHOR = {Galatius, S{\o}ren and Kupers, Alexander and Randal-Williams,
              Oscar},
     TITLE = {{$E_2$}-cells and mapping class groups},
   JOURNAL = {Publ. Math. Inst. Hautes \'{E}tudes Sci.},
  FJOURNAL = {Publications Math\'{e}matiques. Institut de Hautes \'{E}tudes
              Scientifiques},
    VOLUME = {130},
      YEAR = {2019},
     PAGES = {1--61},
      ISSN = {0073-8301},
   MRCLASS = {20J05 (20F38 57K20)},
  MRNUMBER = {4028513},
MRREVIEWER = {Nick Salter},
       DOI = {10.1007/s10240-019-00107-8},
       URL = {https://doi-org.ezp.sub.su.se/10.1007/s10240-019-00107-8},
}

@article {randalwilliamswahl,
	AUTHOR = {Randal-Williams, Oscar and Wahl, Nathalie},
	TITLE = {Homological stability for automorphism groups},
	JOURNAL = {Adv. Math.},
	FJOURNAL = {Advances in Mathematics},
	VOLUME = {318},
	YEAR = {2017},
	PAGES = {534--626},
	ISSN = {0001-8708},
	MRCLASS = {20J05},
	MRNUMBER = {3689750},
	MRREVIEWER = {Jason Philip Gino Semeraro},
	DOI = {10.1016/j.aim.2017.07.022},
	URL = {https://doi.org/10.1016/j.aim.2017.07.022},
}

@article {noohi-homotopy,
	AUTHOR = {Noohi, Behrang},
	TITLE = {Homotopy types of topological stacks},
	JOURNAL = {Adv. Math.},
	FJOURNAL = {Advances in Mathematics},
	VOLUME = {230},
	YEAR = {2012},
	NUMBER = {4-6},
	PAGES = {2014--2047},
	ISSN = {0001-8708},
	MRCLASS = {55U40 (55P65 55R65)},
	MRNUMBER = {2927363},
	MRREVIEWER = {Frank Neumann},
	DOI = {10.1016/j.aim.2012.04.001},
	URL = {https://doi-org.ezp.sub.su.se/10.1016/j.aim.2012.04.001},
}

@article {bauerformal,
	AUTHOR = {Bauer, Tilman},
	TITLE = {Formal plethories},
	JOURNAL = {Adv. Math.},
	FJOURNAL = {Advances in Mathematics},
	VOLUME = {254},
	YEAR = {2014},
	PAGES = {497--569},
	ISSN = {0001-8708},
	MRCLASS = {55S25 (16W99 18D20)},
	MRNUMBER = {3161106},
	MRREVIEWER = {J. P. C. Greenlees},
	DOI = {10.1016/j.aim.2013.12.023},
	URL = {https://doi-org.ezp.sub.su.se/10.1016/j.aim.2013.12.023},
}

@article {AK,
    AUTHOR = {Andrade, J. C. and Keating, J. P.},
     TITLE = {Conjectures for the integral moments and ratios of
              {$L$}-functions over function fields},
   JOURNAL = {J. Number Theory},
  FJOURNAL = {Journal of Number Theory},
    VOLUME = {142},
      YEAR = {2014},
     PAGES = {102--148},
      ISSN = {0022-314X},
   MRCLASS = {11G20 (11M50 14G10)},
  MRNUMBER = {3208396},
MRREVIEWER = {Steven Joel Miller},
       DOI = {10.1016/j.jnt.2014.02.019},
       URL = {https://doi-org.ezp.sub.su.se/10.1016/j.jnt.2014.02.019},
}

@unpublished{shuklin,
	author = {Shuklin, Georgii},
	title = "{A {V}oevodsky motive associated to a log scheme}",
	note = {Available at arXiv:2209.03720},
	year = {2022},
}

@article {XLi,
    AUTHOR = {Li, Xiannan},
     TITLE = {Moments of quadratic twists of modular {$L$}-functions},
   JOURNAL = {Invent. Math.},
  FJOURNAL = {Inventiones Mathematicae},
    VOLUME = {237},
      YEAR = {2024},
    NUMBER = {2},
     PAGES = {697--733},
      ISSN = {0020-9910,1432-1297},
   MRCLASS = {11F67 (11F11)},
  MRNUMBER = {4768632},
       DOI = {10.1007/s00222-024-01265-1},
       URL = {https://doi.org/10.1007/s00222-024-01265-1},
}

@article {harrvistrupwahl,
    AUTHOR = {Harr, Oscar and Vistrup, Max and Wahl, Nathalie},
     TITLE = {Disordered arcs and {H}arer stability},
   JOURNAL = {High. Struct.},
  FJOURNAL = {Higher Structures},
    VOLUME = {8},
      YEAR = {2024},
    NUMBER = {1},
     PAGES = {193--223},
      ISSN = {2209-0606},
   MRCLASS = {57K20 (57M07 57R50)},
  MRNUMBER = {4752520},
}

@article {bianchi3,
    AUTHOR = {Bianchi, Andrea},
     TITLE = {Deloopings of {H}urwitz spaces},
   JOURNAL = {Compos. Math.},
  FJOURNAL = {Compositio Mathematica},
    VOLUME = {160},
      YEAR = {2024},
    NUMBER = {7},
     PAGES = {1651--1714},
      ISSN = {0010-437X,1570-5846},
   MRCLASS = {55P35 (55N45 55P62 55R80 57T25)},
  MRNUMBER = {4797109},
       DOI = {10.1112/S0010437X2400719X},
       URL = {https://doi.org/10.1112/S0010437X2400719X},
}

@article {bianchi2,
    AUTHOR = {Bianchi, Andrea},
     TITLE = {Hurwitz-{R}an spaces},
   JOURNAL = {Geom. Dedicata},
  FJOURNAL = {Geometriae Dedicata},
    VOLUME = {217},
      YEAR = {2023},
    NUMBER = {5},
     PAGES = {1--56, Article ID 84},
      ISSN = {0046-5755,1572-9168},
   MRCLASS = {55R80 (18F60 54B15)},
  MRNUMBER = {4622111},
       DOI = {10.1007/s10711-023-00820-z},
       URL = {https://doi.org/10.1007/s10711-023-00820-z},
}

@article {bianchi4,
    AUTHOR = {Bianchi, Andrea},
     TITLE = {Moduli spaces of {R}iemann surfaces as {H}urwitz spaces},
   JOURNAL = {Adv. Math.},
  FJOURNAL = {Advances in Mathematics},
    VOLUME = {430},
      YEAR = {2023},
     PAGES = {1--62, Article ID 109217},
      ISSN = {0001-8708,1090-2082},
   MRCLASS = {55P62 (55P35 55R80 57T25)},
  MRNUMBER = {4621956},
       DOI = {10.1016/j.aim.2023.109217},
       URL = {https://doi.org/10.1016/j.aim.2023.109217},
}

@incollection {luriecobordism,
    AUTHOR = {Lurie, Jacob},
     TITLE = {On the classification of topological field theories},
 BOOKTITLE = {Current developments in mathematics, 2008},
     PAGES = {129--280},
 PUBLISHER = {Int. Press, Somerville, MA},
      YEAR = {2009},
      ISBN = {978-1-57146-139-1},
   MRCLASS = {57R56 (18D10 18G30 57R15 57R75)},
  MRNUMBER = {2555928},
MRREVIEWER = {Julia\ Bergner},
}

@unpublished{sierrawahl,
	author = {Sierra, Ismael and Wahl, Nathalie},
	title = "{Homological stability for symplectic groups via algebraic arc complexes}",
    note = {arXiv:2411.07895},
	year = {2024},
}

@unpublished{MPPRW,
	author = {Miller, Jeremy and Patzt, Peter and Petersen, Dan and Randal-Williams, Oscar},
	title = "{Uniform twisted homological stability}",
	note = {Available at arXiv:2402.00354},
	year = {2024},
}

@article {bianchi1,
    AUTHOR = {Bianchi, Andrea},
     TITLE = {Partially multiplicative quandles and simplicial {H}urwitz
              spaces},
   JOURNAL = {Doc. Math.},
  FJOURNAL = {Documenta Mathematica},
    VOLUME = {30},
      YEAR = {2025},
    NUMBER = {3},
     PAGES = {611--672},
      ISSN = {1431-0635,1431-0643},
   MRCLASS = {55R80 (08A05 08A35 18M15 20B05 20M05)},
  MRNUMBER = {4916105},
       DOI = {10.4171/dm/996},
       URL = {https://doi.org/10.4171/dm/996},
}

@article {langley-remmel,
    AUTHOR = {Langley, Thomas and Remmel, Jeffrey},
     TITLE = {The plethysm {$s_\lambda[s_\mu]$} at hook and near-hook
              shapes},
   JOURNAL = {Electron. J. Combin.},
  FJOURNAL = {Electronic Journal of Combinatorics},
    VOLUME = {11},
      YEAR = {2004},
    NUMBER = {1},
     PAGES = {Research Paper 11, 26},
      ISSN = {1077-8926},
   MRCLASS = {05E10 (05E05)},
  MRNUMBER = {2035305},
MRREVIEWER = {Grant\ Walker},
       DOI = {10.37236/1764},
       URL = {https://doi.org/10.37236/1764},
}

@article {CFZ,
    AUTHOR = {Conrey, Brian and Farmer, David W. and Zirnbauer, Martin R.},
     TITLE = {Autocorrelation of ratios of {$L$}-functions},
   JOURNAL = {Commun. Number Theory Phys.},
  FJOURNAL = {Communications in Number Theory and Physics},
    VOLUME = {2},
      YEAR = {2008},
    NUMBER = {3},
     PAGES = {593--636},
      ISSN = {1931-4523,1931-4531},
   MRCLASS = {11M26},
  MRNUMBER = {2482944},
MRREVIEWER = {Steven\ Joel\ Miller},
       DOI = {10.4310/CNTP.2008.v2.n3.a4},
       URL = {https://doi.org/10.4310/CNTP.2008.v2.n3.a4},
}

@unpublished{Wang,
	author = {Wang, Victor Y.},
	title = "{Notes on zeta ratio stabilization}",
	note = {Available at arXiv:2402.01214},
	year = {2024},
}

@article {baezmoellertrimble,
    AUTHOR = {Baez, John C. and Moeller, Joe and Trimble, Todd},
     TITLE = {Schur functors and categorified plethysm},
   JOURNAL = {High. Struct.},
  FJOURNAL = {Higher Structures},
    VOLUME = {8},
      YEAR = {2024},
    NUMBER = {1},
     PAGES = {1--53},
      ISSN = {2209-0606},
   MRCLASS = {18D20 (05E05 18A35 18F30 18Mxx 19A22 20C30)},
  MRNUMBER = {4752517},
MRREVIEWER = {Primo\v z\ Moravec},
}

@unpublished{hatcher-madsenweiss,
	author = {Hatcher, Allen},
	title = "{A short exposition of the {M}adsen--{W}eiss theorem}",
	note = {Available at arXiv:1103.5223},
	year = {2011},
}

@article {petersentavakolyin,
    AUTHOR = {Petersen, Dan and Tavakol, Mehdi and Yin, Qizheng},
     TITLE = {Tautological classes with twisted coefficients},
   JOURNAL = {Ann. Sci. \'{E}c. Norm. Sup\'{e}r. (4)},
  FJOURNAL = {Annales Scientifiques de l'\'{E}cole Normale Sup\'{e}rieure. Quatri\`eme
              S\'{e}rie},
    VOLUME = {54},
      YEAR = {2021},
    NUMBER = {5},
     PAGES = {1179--1236},
      ISSN = {0012-9593},
   MRCLASS = {14H10 (14C25)},
  MRNUMBER = {4363247},
       DOI = {10.24033/asens.2479},
       URL = {https://doi.org/10.24033/asens.2479},
}

@article {bergstromminabe,
	AUTHOR = {Bergstr\"{o}m, Jonas and Minabe, Satoshi},
	TITLE = {On the cohomology of moduli spaces of (weighted) stable
	rational curves},
	JOURNAL = {Math. Z.},
	FJOURNAL = {Mathematische Zeitschrift},
	VOLUME = {275},
	YEAR = {2013},
	NUMBER = {3-4},
	PAGES = {1095--1108},
	ISSN = {0025-5874},
	MRCLASS = {14H10 (20C30)},
	MRNUMBER = {3127048},
	MRREVIEWER = {Montserrat Teixidor i Bigas},
	DOI = {10.1007/s00209-013-1171-8},
	URL = {https://doi-org.ezp.sub.su.se/10.1007/s00209-013-1171-8},
}

@unpublished{millerpatztpetersen,
	author = {Miller, Jeremy and Patzt, Peter and Petersen, Dan},
	title = "{Representation stability, secondary stability, and polynomial functors}",
	note = {Preprint available at arXiv:1910.05574},
	year = {2019},
}

@article {gelfandzelevinskyodd,
    AUTHOR = {Gelfand, I. M. and Zelevinsky, A. V.},
     TITLE = {Representation models for classical groups and their higher
              symmetries},
      NOTE = {The mathematical heritage of \'{E}lie Cartan (Lyon, 1984)},
   JOURNAL = {Ast\'{e}risque},
  FJOURNAL = {Ast\'{e}risque},
      YEAR = {1985},
    NUMBER = {Num\'{e}ro Hors S\'{e}rie},
     PAGES = {117--128},
      ISSN = {0303-1179},
   MRCLASS = {22E55 (20G05)},
  MRNUMBER = {837197},
MRREVIEWER = {J. L\~{o}hmus},
}

@article {proctor,
    AUTHOR = {Proctor, Robert A.},
     TITLE = {Odd symplectic groups},
   JOURNAL = {Invent. Math.},
  FJOURNAL = {Inventiones Mathematicae},
    VOLUME = {92},
      YEAR = {1988},
    NUMBER = {2},
     PAGES = {307--332},
      ISSN = {0020-9910},
   MRCLASS = {20G15 (11E57)},
  MRNUMBER = {936084},
MRREVIEWER = {S. I. Gel\cprime fand},
       DOI = {10.1007/BF01404455},
       URL = {https://doi-org.ezp.sub.su.se/10.1007/BF01404455},
}

@article {bianchi-braid,
    AUTHOR = {Bianchi, Andrea},
     TITLE = {Braid groups, mapping class groups and their homology with
              twisted coefficients},
   JOURNAL = {Math. Proc. Cambridge Philos. Soc.},
  FJOURNAL = {Mathematical Proceedings of the Cambridge Philosophical
              Society},
    VOLUME = {172},
      YEAR = {2022},
    NUMBER = {2},
     PAGES = {249--266},
      ISSN = {0305-0041},
   MRCLASS = {55R35 (20F36 55R40 55R80)},
  MRNUMBER = {4379067},
       DOI = {10.1017/s0305004121000219},
       URL = {https://doi-org.ezp.sub.su.se/10.1017/s0305004121000219},
}

@article {charney,
    AUTHOR = {Charney, Ruth},
     TITLE = {A generalization of a theorem of {V}ogtmann},
 BOOKTITLE = {Proceedings of the {N}orthwestern conference on cohomology of
              groups ({E}vanston, {I}ll., 1985)},
   JOURNAL = {J. Pure Appl. Algebra},
  FJOURNAL = {Journal of Pure and Applied Algebra},
    VOLUME = {44},
      YEAR = {1987},
    NUMBER = {1-3},
     PAGES = {107--125},
      ISSN = {0022-4049},
   MRCLASS = {18F25 (11E70 19D55 19G99 20J05)},
  MRNUMBER = {885099},
MRREVIEWER = {Ross Staffeldt},
       DOI = {10.1016/0022-4049(87)90019-3},
       URL = {https://doi-org.ezp.sub.su.se/10.1016/0022-4049(87)90019-3},
}

@article {olsson,
    AUTHOR = {Olsson, Martin C.},
     TITLE = {({L}og) twisted curves},
   JOURNAL = {Compos. Math.},
  FJOURNAL = {Compositio Mathematica},
    VOLUME = {143},
      YEAR = {2007},
    NUMBER = {2},
     PAGES = {476--494},
      ISSN = {0010-437X,1570-5846},
   MRCLASS = {14D22 (14A20 14N35)},
  MRNUMBER = {2309994},
MRREVIEWER = {Charles\ D.\ Cadman},
       DOI = {10.1112/S0010437X06002442},
       URL = {https://doi.org/10.1112/S0010437X06002442},
}

@article {cadman,
    AUTHOR = {Cadman, Charles},
     TITLE = {Using stacks to impose tangency conditions on curves},
   JOURNAL = {Amer. J. Math.},
  FJOURNAL = {American Journal of Mathematics},
    VOLUME = {129},
      YEAR = {2007},
    NUMBER = {2},
     PAGES = {405--427},
      ISSN = {0002-9327,1080-6377},
   MRCLASS = {14D20 (14A20 14N35)},
  MRNUMBER = {2306040},
MRREVIEWER = {Michael\ A.\ Rose},
       DOI = {10.1353/ajm.2007.0007},
       URL = {https://doi.org/10.1353/ajm.2007.0007},
}

@article {bandboyland,
    AUTHOR = {Band, Gavin and Boyland, Philip},
     TITLE = {The {B}urau estimate for the entropy of a braid},
   JOURNAL = {Algebr. Geom. Topol.},
  FJOURNAL = {Algebraic \& Geometric Topology},
    VOLUME = {7},
      YEAR = {2007},
     PAGES = {1345--1378},
      ISSN = {1472-2747},
   MRCLASS = {37E30 (20F36 37B40 57M25)},
  MRNUMBER = {2350285},
MRREVIEWER = {Darryl McCullough},
       DOI = {10.2140/agt.2007.7.1345},
       URL = {https://doi-org.ezp.sub.su.se/10.2140/agt.2007.7.1345},
}

@article {monoidalgrothendieck,
    AUTHOR = {Moeller, Joe and Vasilakopoulou, Christina},
     TITLE = {Monoidal {G}rothendieck construction},
   JOURNAL = {Theory Appl. Categ.},
  FJOURNAL = {Theory and Applications of Categories},
    VOLUME = {35},
      YEAR = {2020},
     PAGES = {Paper No. 31, 1159--1207},
      ISSN = {1201-561X},
   MRCLASS = {18D30 (18M05)},
  MRNUMBER = {4127726},
MRREVIEWER = {Laurent\ Poinsot},
}

@incollection {salvatore,
    AUTHOR = {Salvatore, Paolo},
     TITLE = {Configuration spaces with summable labels},
 BOOKTITLE = {Cohomological methods in homotopy theory ({B}ellaterra, 1998)},
    SERIES = {Progr. Math.},
    VOLUME = {196},
     PAGES = {375--395},
 PUBLISHER = {Birkh\"auser, Basel},
      YEAR = {2001},
      ISBN = {3-7643-6588-9},
   MRCLASS = {55R80 (18D50 55P43)},
  MRNUMBER = {1851264},
MRREVIEWER = {Pilar\ C.\ Carrasco},
}

@article {bloomquist,
    AUTHOR = {Bloomquist, Wade and Patzt, Peter and Scherich, Nancy},
     TITLE = {Quotients of braid groups by their congruence subgroups},
   JOURNAL = {Proc. Amer. Math. Soc. Ser. B},
  FJOURNAL = {Proceedings of the American Mathematical Society. Series B},
    VOLUME = {11},
      YEAR = {2024},
     PAGES = {508--524},
      ISSN = {2330-1511},
   MRCLASS = {20F36 (20H05 57K20)},
  MRNUMBER = {4811535},
MRREVIEWER = {Ioannis\ Diamantis},
       DOI = {10.1090/bproc/200},
       URL = {https://doi.org/10.1090/bproc/200},
}

@unpublished{vaintrob,
	author = {Vaintrob, Dmitry},
	title = "{Moduli of framed formal curves}",
	note = {arXiv:1910.11550 },
	year = 2019,
}

@unpublished{vaintrob2,
	author = {Vaintrob, Dmitry},
	title = "{Formality of little disks and algebraic geometry}",
	note = {arXiv:2103.15054},
	year = 2021,
}

@article {kawazumimorita,
	AUTHOR = {Kawazumi, Nariya and Morita, Shigeyuki},
	TITLE = {The primary approximation to the cohomology of the moduli
	space of curves and cocycles for the stable characteristic
	classes},
	JOURNAL = {Math. Res. Lett.},
	FJOURNAL = {Mathematical Research Letters},
	VOLUME = {3},
	YEAR = {1996},
	NUMBER = {5},
	PAGES = {629--641},
	ISSN = {1073-2780},
	MRCLASS = {14H10 (14F99 57R20)},
	MRNUMBER = {1418577},
	MRREVIEWER = {Gheorghe Ionesei},
	DOI = {10.4310/MRL.1996.v3.n5.a6},
	URL = {http://dx.doi.org/10.4310/MRL.1996.v3.n5.a6},
}

@incollection {hainlooijenga,
	AUTHOR = {Hain, Richard and Looijenga, Eduard},
	TITLE = {Mapping class groups and moduli spaces of curves},
	BOOKTITLE = {Algebraic geometry---{S}anta {C}ruz 1995},
	SERIES = {Proc. Sympos. Pure Math.},
	VOLUME = {62},
	PAGES = {97--142},
	PUBLISHER = {Amer. Math. Soc., Providence, RI},
	YEAR = {1997},
	MRCLASS = {14H10 (14C30 14H15 32G15 32G20)},
	MRNUMBER = {1492535 (99a:14032)},
	MRREVIEWER = {Takashi Ichikawa},
}

@article {tallwraith,
    AUTHOR = {Tall, David O. and Wraith, Gavin C.},
     TITLE = {Representable functors and operations on rings},
   JOURNAL = {Proc. London Math. Soc. (3)},
  FJOURNAL = {Proceedings of the London Mathematical Society. Third Series},
    VOLUME = {20},
      YEAR = {1970},
     PAGES = {619--643},
      ISSN = {0024-6115},
   MRCLASS = {13.90 (08.00)},
  MRNUMBER = {265348},
MRREVIEWER = {J. Sonner},
       DOI = {10.1112/plms/s3-20.4.619},
       URL = {https://doi.org/10.1112/plms/s3-20.4.619},
}

@article {borgerwieland,
    AUTHOR = {Borger, James and Wieland, Ben},
     TITLE = {Plethystic algebra},
   JOURNAL = {Adv. Math.},
  FJOURNAL = {Advances in Mathematics},
    VOLUME = {194},
      YEAR = {2005},
    NUMBER = {2},
     PAGES = {246--283},
      ISSN = {0001-8708},
   MRCLASS = {13K05 (13A99 16W30)},
  MRNUMBER = {2139914},
MRREVIEWER = {Stefaan Caenepeel},
       DOI = {10.1016/j.aim.2004.06.006},
       URL = {https://doi.org/10.1016/j.aim.2004.06.006},
}

@article {kimurastasheffvoronov2,
    AUTHOR = {Kimura, Takashi and Stasheff, Jim and Voronov, Alexander A.},
     TITLE = {On operad structures of moduli spaces and string theory},
   JOURNAL = {Comm. Math. Phys.},
  FJOURNAL = {Communications in Mathematical Physics},
    VOLUME = {171},
      YEAR = {1995},
    NUMBER = {1},
     PAGES = {1--25},
      ISSN = {0010-3616},
   MRCLASS = {14H10 (18G35 55P35 81T30 81T40)},
  MRNUMBER = {1341693},
MRREVIEWER = {Reinhold W. Gebert},
       URL = {http://projecteuclid.org/euclid.cmp/1104273401},
}

@article {katonakayama,
    AUTHOR = {Kato, Kazuya and Nakayama, Chikara},
     TITLE = {Log {B}etti cohomology, log \'{e}tale cohomology, and log de
              {R}ham cohomology of log schemes over {${\bf C}$}},
   JOURNAL = {Kodai Math. J.},
  FJOURNAL = {Kodai Mathematical Journal},
    VOLUME = {22},
      YEAR = {1999},
    NUMBER = {2},
     PAGES = {161--186},
      ISSN = {0386-5991},
   MRCLASS = {14F20 (14F25 14F40)},
  MRNUMBER = {1700591},
MRREVIEWER = {Claudio Pedrini},
       DOI = {10.2996/kmj/1138044041},
       URL = {https://doi.org/10.2996/kmj/1138044041},
}

@article {illusieoverview,
    AUTHOR = {Illusie, Luc},
     TITLE = {An overview of the work of {K}. {F}ujiwara, {K}. {K}ato, and
              {C}. {N}akayama on logarithmic \'{e}tale cohomology},
      NOTE = {Cohomologies $p$-adiques et applications arithm\'{e}tiques, II},
   JOURNAL = {Ast\'{e}risque},
  FJOURNAL = {Ast\'{e}risque},
    NUMBER = {279},
      YEAR = {2002},
     PAGES = {271--322},
      ISSN = {0303-1179},
   MRCLASS = {14F20 (14C25 14F30)},
  MRNUMBER = {1922832},
MRREVIEWER = {Mark Kisin},
}

@incollection {borelstablereal,
    AUTHOR = {Borel, Armand},
     TITLE = {Stable real cohomology of arithmetic groups. {II}},
 BOOKTITLE = {Manifolds and {L}ie groups ({N}otre {D}ame, {I}nd., 1980)},
    SERIES = {Progr. Math.},
    VOLUME = {14},
     PAGES = {21--55},
 PUBLISHER = {Birkh\"auser, Boston, Mass.},
      YEAR = {1981},
   MRCLASS = {22E41},
  MRNUMBER = {642850 (83h:22023)},
MRREVIEWER = {Avner Ash},
}

@article {armstrongorbit,
    AUTHOR = {Armstrong, Mark A.},
     TITLE = {On the fundamental group of an orbit space},
   JOURNAL = {Proc. Cambridge Philos. Soc.},
  FJOURNAL = {Proceedings of the Cambridge Philosophical Society},
    VOLUME = {61},
      YEAR = {1965},
     PAGES = {639--646},
      ISSN = {0008-1981},
   MRCLASS = {55.40},
  MRNUMBER = {187244},
MRREVIEWER = {R. F. Williams},
       DOI = {10.1017/s0305004100038974},
       URL = {https://doi.org/10.1017/s0305004100038974},
}

@article {delignedegeneration,
    AUTHOR = {Deligne, Pierre},
     TITLE = {Th\'eor\`eme de {L}efschetz et crit\`eres de
              d\'eg\'en\'erescence de suites spectrales},
   JOURNAL = {Inst. Hautes \'Etudes Sci. Publ. Math.},
  FJOURNAL = {Institut des Hautes \'Etudes Scientifiques. Publications
              Math\'ematiques},
    NUMBER = {35},
      YEAR = {1968},
     PAGES = {259--278},
      ISSN = {0073-8301},
   MRCLASS = {14.52},
  MRNUMBER = {0244265 (39 \#5582)},
MRREVIEWER = {D. I. Lieberman},
}

@article {bmp,
    AUTHOR = {Brendle, Tara and Margalit, Dan and Putman, Andrew},
     TITLE = {Generators for the hyperelliptic {T}orelli group and the
              kernel of the {B}urau representation at {$t=-1$}},
   JOURNAL = {Invent. Math.},
  FJOURNAL = {Inventiones Mathematicae},
    VOLUME = {200},
      YEAR = {2015},
    NUMBER = {1},
     PAGES = {263--310},
      ISSN = {0020-9910},
   MRCLASS = {20F65 (20F36)},
  MRNUMBER = {3323579},
MRREVIEWER = {Dawid Kielak},
       DOI = {10.1007/s00222-014-0537-9},
       URL = {https://doi-org.ezp.sub.su.se/10.1007/s00222-014-0537-9},
}

@book {cohenladamay,
    AUTHOR = {Cohen, Frederick R. and Lada, Thomas J. and May, J. Peter},
     TITLE = {The homology of iterated loop spaces},
    SERIES = {Lecture Notes in Mathematics, Vol. 533},
 PUBLISHER = {Springer-Verlag},
   ADDRESS = {Berlin},
      YEAR = {1976},
     PAGES = {vii+490},
   MRCLASS = {55G25 (55D35)},
  MRNUMBER = {0436146 (55 \#9096)},
MRREVIEWER = {Peter J. Eccles},
}

@book {lodayvallette,
    AUTHOR = {Loday, Jean-Louis and Vallette, Bruno},
     TITLE = {Algebraic operads},
    SERIES = {Grundlehren der Mathematischen Wissenschaften},
    VOLUME = {346},
 PUBLISHER = {Springer},
   ADDRESS = {Heidelberg},
      YEAR = {2012},
     PAGES = {xxiv+634},
      ISBN = {978-3-642-30361-6},
   MRCLASS = {18D50 (16E99)},
  MRNUMBER = {2954392},
MRREVIEWER = {Andrey Yu. Lazarev},
       DOI = {10.1007/978-3-642-30362-3},
       URL = {http://dx.doi.org/10.1007/978-3-642-30362-3},
}

@article {Ke-Sn1,
    AUTHOR = {Keating, J. P. and Snaith, N. C.},
     TITLE = {Random matrix theory and {$\zeta(1/2+it)$}},
   JOURNAL = {Comm. Math. Phys.},
  FJOURNAL = {Communications in Mathematical Physics},
    VOLUME = {214},
      YEAR = {2000},
    NUMBER = {1},
     PAGES = {57--89},
      ISSN = {0010-3616},
   MRCLASS = {11M26 (15A52 82B41)},
  MRNUMBER = {1794265},
MRREVIEWER = {Ze\'{e}v Rudnick},
       DOI = {10.1007/s002200000261},
       URL = {https://doi-org.ezp.sub.su.se/10.1007/s002200000261},
}

@article {Ke-Sn2,
    AUTHOR = {Keating, J. P. and Snaith, N. C.},
     TITLE = {Random matrix theory and {$L$}-functions at {$s=1/2$}},
   JOURNAL = {Comm. Math. Phys.},
  FJOURNAL = {Communications in Mathematical Physics},
    VOLUME = {214},
      YEAR = {2000},
    NUMBER = {1},
     PAGES = {91--110},
      ISSN = {0010-3616},
   MRCLASS = {11M26 (15A52 82B41)},
  MRNUMBER = {1794267},
MRREVIEWER = {Ze\'{e}v Rudnick},
       DOI = {10.1007/s002200000262},
       URL = {https://doi-org.ezp.sub.su.se/10.1007/s002200000262},
}

@book {KaS,
    AUTHOR = {Katz, Nicholas M. and Sarnak, Peter},
     TITLE = {Random matrices, {F}robenius eigenvalues, and monodromy},
    SERIES = {American Mathematical Society Colloquium Publications},
    VOLUME = {45},
 PUBLISHER = {American Mathematical Society, Providence, RI},
      YEAR = {1999},
     PAGES = {xii+419},
      ISBN = {0-8218-1017-0},
   MRCLASS = {11G25 (11M06 11Y35 14D05 14G10 60F99 82B44)},
  MRNUMBER = {1659828},
MRREVIEWER = {Philippe G. Michel},
       DOI = {10.1090/coll/045},
       URL = {https://doi-org.ezp.sub.su.se/10.1090/coll/045},
}

@article {Rud,
    AUTHOR = {Rudnick, Ze\'{e}v},
     TITLE = {Traces of high powers of the {F}robenius class in the
              hyperelliptic ensemble},
   JOURNAL = {Acta Arith.},
  FJOURNAL = {Acta Arithmetica},
    VOLUME = {143},
      YEAR = {2010},
    NUMBER = {1},
     PAGES = {81--99},
      ISSN = {0065-1036},
   MRCLASS = {11M50 (11G20)},
  MRNUMBER = {2640060},
MRREVIEWER = {Steven Joel Miller},
       DOI = {10.4064/aa143-1-5},
       URL = {https://doi-org.ezp.sub.su.se/10.4064/aa143-1-5},
}

@article {Su90,
    AUTHOR = {Sundaram, Sheila},
     TITLE = {The {C}auchy identity for {${\rm Sp}(2n)$}},
   JOURNAL = {J. Combin. Theory Ser. A},
  FJOURNAL = {Journal of Combinatorial Theory. Series A},
    VOLUME = {53},
      YEAR = {1990},
    NUMBER = {2},
     PAGES = {209--238},
      ISSN = {0097-3165},
   MRCLASS = {05E10 (05A19 20C33 22E45)},
  MRNUMBER = {1041446},
       DOI = {10.1016/0097-3165(90)90058-5},
       URL = {https://doi-org.ezp.sub.su.se/10.1016/0097-3165(90)90058-5},
}

@article {BJ,
    AUTHOR = {Bae, Sunghan and Jung, Hwanyup},
     TITLE = {Statistics for products of traces of high powers of the
              {F}robenius class of hyperelliptic curves in even
              characteristic},
   JOURNAL = {Int. J. Number Theory},
  FJOURNAL = {International Journal of Number Theory},
    VOLUME = {15},
      YEAR = {2019},
    NUMBER = {7},
     PAGES = {1519--1530},
      ISSN = {1793-0421},
   MRCLASS = {11G20 (11M38 11T55)},
  MRNUMBER = {3982825},
MRREVIEWER = {John T. Cullinan},
       DOI = {10.1142/S1793042119500878},
       URL = {https://doi-org.ezp.sub.su.se/10.1142/S1793042119500878},
}

@incollection {jimbo-miwa,
	AUTHOR = {Jimbo, Michio and Miwa, Tetsuji},
	TITLE = {On a duality of branching rules for affine {L}ie algebras},
	BOOKTITLE = {Algebraic groups and related topics ({K}yoto/{N}agoya, 1983)},
	SERIES = {Adv. Stud. Pure Math.},
	VOLUME = {6},
	PAGES = {17--65},
	PUBLISHER = {North-Holland, Amsterdam},
	YEAR = {1985},
	MRCLASS = {17B67},
	MRNUMBER = {803329},
	MRREVIEWER = {Nigel B. Backhouse},
	DOI = {10.2969/aspm/00610017},
	URL = {https://doi-org.ezp.sub.su.se/10.2969/aspm/00610017},
}

@article {chr,
    AUTHOR = {Calderbank, A. Robert and Hanlon, Philip and Robinson, Robert W.},
     TITLE = {Partitions into even and odd block size and some unusual
              characters of the symmetric groups},
   JOURNAL = {Proc. London Math. Soc. (3)},
  FJOURNAL = {Proceedings of the London Mathematical Society. Third Series},
    VOLUME = {53},
      YEAR = {1986},
    NUMBER = {2},
     PAGES = {288--320},
      ISSN = {0024-6115},
   MRCLASS = {20C30 (05A15 05A17 06A10)},
  MRNUMBER = {850222},
MRREVIEWER = {Bruce Sagan},
       DOI = {10.1112/plms/s3-53.2.288},
       URL = {https://doi-org.ezp.sub.su.se/10.1112/plms/s3-53.2.288},
}

@article {Del80,
    AUTHOR = {Deligne, Pierre},
     TITLE = {La conjecture de {W}eil. {II}},
   JOURNAL = {Inst. Hautes \'Etudes Sci. Publ. Math.},
  FJOURNAL = {Institut des Hautes \'Etudes Scientifiques. Publications
              Math\'ematiques},
    NUMBER = {52},
      YEAR = {1980},
     PAGES = {137--252},
      ISSN = {0073-8301},
     CODEN = {PMIHA6},
   MRCLASS = {14G13 (10H10)},
  MRNUMBER = {601520 (83c:14017)},
MRREVIEWER = {Spencer J. Bloch},
       URL = {http://www.numdam.org/item?id=PMIHES_1980__52__137_0},
}

@article {sullivaninfinitesimal,
    AUTHOR = {Sullivan, Dennis},
     TITLE = {Infinitesimal computations in topology},
   JOURNAL = {Inst. Hautes \'Etudes Sci. Publ. Math.},
  FJOURNAL = {Institut des Hautes \'Etudes Scientifiques. Publications
              Math\'ematiques},
    VOLUME = {47},
      YEAR = {1977},
     PAGES = {269--331},
      ISSN = {0073-8301},
   MRCLASS = {57D99 (55D99 58A10)},
  MRNUMBER = {0646078 (58 \#31119)},
MRREVIEWER = {J. F. Adams},
}

@article {kimhyperplane,
    AUTHOR = {Kim, Minhyong},
     TITLE = {Weights in cohomology groups arising from hyperplane
              arrangements},
   JOURNAL = {Proc. Amer. Math. Soc.},
  FJOURNAL = {Proceedings of the American Mathematical Society},
    VOLUME = {120},
      YEAR = {1994},
    NUMBER = {3},
     PAGES = {697--703},
      ISSN = {0002-9939},
     CODEN = {PAMYAR},
   MRCLASS = {14F20 (14C30 52B30)},
  MRNUMBER = {1179589 (94e:14027)},
MRREVIEWER = {Andrzej Kozlowski},
       DOI = {10.2307/2160458},
       URL = {http://dx.doi.org/10.2307/2160458},
}

@article {lehrerhyperplane,
    AUTHOR = {Lehrer, Gus I.},
     TITLE = {The {$l$}-adic cohomology of hyperplane complements},
   JOURNAL = {Bull. London Math. Soc.},
  FJOURNAL = {The Bulletin of the London Mathematical Society},
    VOLUME = {24},
      YEAR = {1992},
    NUMBER = {1},
     PAGES = {76--82},
      ISSN = {0024-6093},
     CODEN = {LMSBBT},
   MRCLASS = {14F20 (52B30)},
  MRNUMBER = {1139062 (92j:14022)},
MRREVIEWER = {P. Orlik},
       DOI = {10.1112/blms/24.1.76},
       URL = {http://dx.doi.org/10.1112/blms/24.1.76},
}

@unpublished{mixedhodge,
   author = {{Getzler}, Ezra},
    title = "{Mixed {H}odge structures of configuration spaces}",
archivePrefix = "arXiv",
   note = {Preprint 96-61, Max-Planck-Institut f\"ur Mathematik, Bonn. arXiv:alg-geom/9510018},
     year = 1995,
}

@article {grw,
    AUTHOR = {Galatius, S{\o}ren and Randal-Williams, Oscar},
     TITLE = {Monoids of moduli spaces of manifolds},
   JOURNAL = {Geom. Topol.},
  FJOURNAL = {Geometry \& Topology},
    VOLUME = {14},
      YEAR = {2010},
    NUMBER = {3},
     PAGES = {1243--1302},
      ISSN = {1465-3060},
   MRCLASS = {57R90 (55P47 57R56)},
  MRNUMBER = {2653727},
MRREVIEWER = {R. M. Vogt},
       DOI = {10.2140/gt.2010.14.1243},
       URL = {https://doi-org.ezp.sub.su.se/10.2140/gt.2010.14.1243},
}

@unpublished{evw2,
   author = {Ellenberg, Jordan and Venkatesh, Akshay and Westerland, Craig},
    title = "{Homological stability for {H}urwitz spaces and
the {C}ohen--{L}enstra conjecture over function fields, II}",
note = "Available at arXiv:1212.0923",
     year = 2012,
}

@article {priddy,
    AUTHOR = {Priddy, Stewart B.},
     TITLE = {Koszul resolutions},
   JOURNAL = {Trans. Amer. Math. Soc.},
  FJOURNAL = {Transactions of the American Mathematical Society},
    VOLUME = {152},
      YEAR = {1970},
     PAGES = {39--60},
      ISSN = {0002-9947},
   MRCLASS = {18.20},
  MRNUMBER = {265437},
MRREVIEWER = {A. K. Bousfield},
       DOI = {10.2307/1995637},
       URL = {https://doi-org.ezp.sub.su.se/10.2307/1995637},
}

@article {berglundkoszulspaces,
    AUTHOR = {Berglund, Alexander},
     TITLE = {Koszul spaces},
   JOURNAL = {Trans. Amer. Math. Soc.},
  FJOURNAL = {Transactions of the American Mathematical Society},
    VOLUME = {366},
      YEAR = {2014},
    NUMBER = {9},
     PAGES = {4551--4569},
      ISSN = {0002-9947},
   MRCLASS = {55P62 (16S37 18D50)},
  MRNUMBER = {3217692},
MRREVIEWER = {Samuel B. Smith},
       DOI = {10.1090/S0002-9947-2014-05935-7},
       URL = {https://doi.org/10.1090/S0002-9947-2014-05935-7},
}

@article {bornevistoli,
    AUTHOR = {Borne, Niels and Vistoli, Angelo},
     TITLE = {Parabolic sheaves on logarithmic schemes},
   JOURNAL = {Adv. Math.},
  FJOURNAL = {Advances in Mathematics},
    VOLUME = {231},
      YEAR = {2012},
    NUMBER = {3-4},
     PAGES = {1327--1363},
      ISSN = {0001-8708},
   MRCLASS = {14F05 (14A20 18D10)},
  MRNUMBER = {2964607},
MRREVIEWER = {Jon Eivind Vatne},
       DOI = {10.1016/j.aim.2012.06.015},
       URL = {https://doi-org.ezp.sub.su.se/10.1016/j.aim.2012.06.015},
}

@incollection {ivanovtwisted,
	AUTHOR = {Ivanov, Nikolai V.},
	TITLE = {On the homology stability for {T}eichm\"uller modular groups:
	closed surfaces and twisted coefficients},
	BOOKTITLE = {Mapping class groups and moduli spaces of {R}iemann surfaces
	({G}\"ottingen, 1991/{S}eattle, {WA}, 1991)},
	SERIES = {Contemp. Math.},
	VOLUME = {150},
	PAGES = {149--194},
	PUBLISHER = {Amer. Math. Soc., Providence, RI},
	YEAR = {1993},
	MRCLASS = {57N05 (20F38 30F60 32G15 57M99)},
	MRNUMBER = {1234264 (94h:57022)},
	MRREVIEWER = {Darryl McCullough},
	DOI = {10.1090/conm/150/01290},
	URL = {http://dx.doi.org/10.1090/conm/150/01290},
}

@article {Di-Wh,
    AUTHOR = {Diaconu, Adrian and Whitehead, Ian},
     TITLE = {On the third moment of {$L(\frac{1}{2}, \chi_d)$} {II}: the
              number field case},
   JOURNAL = {J. Eur. Math. Soc. (JEMS)},
  FJOURNAL = {Journal of the European Mathematical Society (JEMS)},
    VOLUME = {23},
      YEAR = {2021},
    NUMBER = {6},
     PAGES = {2051--2070},
      ISSN = {1435-9855},
   MRCLASS = {11M06 (11F68)},
  MRNUMBER = {4244522},
MRREVIEWER = {Sandro Bettin},
       DOI = {10.4171/JEMS/1049},
       URL = {https://doi-org.ezp.sub.su.se/10.4171/JEMS/1049},
}

@article {borelstablereal1,
    AUTHOR = {Borel, Armand},
     TITLE = {Stable real cohomology of arithmetic groups},
   JOURNAL = {Ann. Sci. \'{E}cole Norm. Sup. (4)},
  FJOURNAL = {Annales Scientifiques de l'\'{E}cole Normale Sup\'{e}rieure. Quatri\`eme
              S\'{e}rie},
    VOLUME = {7},
      YEAR = {1974},
     PAGES = {235--272 (1975)},
      ISSN = {0012-9593},
   MRCLASS = {22E40 (20G10)},
  MRNUMBER = {387496},
MRREVIEWER = {H. Garland},
       URL = {http://www.numdam.org/item?id=ASENS_1974_4_7_2_235_0},
}

@book {W,
    AUTHOR = {Weil, Andr\'{e}},
     TITLE = {Sur les courbes alg\'{e}briques et les vari\'{e}t\'{e}s qui s'en
              d\'{e}duisent},
    SERIES = {Publ. Inst. Math. Univ. Strasbourg},
    VOLUME = {7},
 PUBLISHER = {Hermann \& Cie, Paris},
      YEAR = {1948},
     PAGES = {iv+85},
   MRCLASS = {14.0X},
  MRNUMBER = {0027151},
MRREVIEWER = {O. F. G. Schilling},
}

@article {RW,
    AUTHOR = {Rubinstein, Michael O. and Wu, Kaiyu},
     TITLE = {Moments of zeta functions associated to hyperelliptic curves
              over finite fields},
   JOURNAL = {Philos. Trans. Roy. Soc. A},
  FJOURNAL = {Philosophical Transactions of the Royal Society A.
              Mathematical, Physical and Engineering Sciences},
    VOLUME = {373},
      YEAR = {2015},
    NUMBER = {2040},
     PAGES = {20140307, 37},
      ISSN = {1364-503X},
   MRCLASS = {11M06 (11G20 11M50)},
  MRNUMBER = {3338121},
MRREVIEWER = {Adam J. Harper},
       DOI = {10.1098/rsta.2014.0307},
       URL = {https://doi-org.ezp.sub.su.se/10.1098/rsta.2014.0307},
}

@article {HR,
    AUTHOR = {Hoffstein, Jeffrey and Rosen, Michael},
     TITLE = {Average values of {$L$}-series in function fields},
   JOURNAL = {J. Reine Angew. Math.},
  FJOURNAL = {Journal f\"{u}r die Reine und Angewandte Mathematik. [Crelle's
              Journal]},
    VOLUME = {426},
      YEAR = {1992},
     PAGES = {117--150},
      ISSN = {0075-4102},
   MRCLASS = {11E41 (11R29 11R58)},
  MRNUMBER = {1155750},
MRREVIEWER = {Kazuyuki Hatada},
       DOI = {10.1515/crll.1992.426.117},
       URL = {https://doi-org.ezp.sub.su.se/10.1515/crll.1992.426.117},
}

@unpublished{DPP,
	author = {Diaconu, Adrian and Pa\c{s}ol, Vicen\c{t}iu and Popa, Alexandru A.},
	title = "{Quadratic {W}eyl group multiple {D}irichlet series of type $D_{\scriptscriptstyle 4}^{\scriptscriptstyle (1)}$}",
	note = {Preprint available at arXiv:2111.11062},
	year = 2021,
}

@unpublished{DV,
	author = {Diaconu, Adrian and Pa\c{s}ol, Vicen\c{t}iu},
	title = "{Moduli of hyperelliptic curves and multiple {D}irichlet series}",
	note = {Preprint available at arXiv:1808.09667},
	year = 2018,
}

@article {DT,
    AUTHOR = {Diaconu, Adrian and Twiss, Henry},
     TITLE = {Secondary terms in the asymptotics of moments of
              {$L$}-functions},
   JOURNAL = {J. Number Theory},
  FJOURNAL = {Journal of Number Theory},
    VOLUME = {252},
      YEAR = {2023},
     PAGES = {243--297},
      ISSN = {0022-314X,1096-1658},
   MRCLASS = {11R58 (11F68 11M06 11M32)},
  MRNUMBER = {4618015},
       DOI = {10.1016/j.jnt.2023.04.010},
       URL = {https://doi.org/10.1016/j.jnt.2023.04.010},
}

@article {Sh,
    AUTHOR = {Shen, Quanli},
     TITLE = {The fourth moment of quadratic {D}irichlet {$L$}-functions},
   JOURNAL = {Math. Z.},
  FJOURNAL = {Mathematische Zeitschrift},
    VOLUME = {298},
      YEAR = {2021},
    NUMBER = {1-2},
     PAGES = {713--745},
      ISSN = {0025-5874},
   MRCLASS = {11M06 (11M50)},
  MRNUMBER = {4257106},
MRREVIEWER = {D. R. Heath-Brown},
       DOI = {10.1007/s00209-020-02609-2},
       URL = {https://doi-org.ezp.sub.su.se/10.1007/s00209-020-02609-2},
}

@incollection {kontsevichfeynman,
    AUTHOR = {Kontsevich, Maxim},
     TITLE = {Feynman diagrams and low-dimensional topology},
 BOOKTITLE = {First {E}uropean {C}ongress of {M}athematics, {V}ol.\ {II}
              ({P}aris, 1992)},
    SERIES = {Progr. Math.},
    VOLUME = {120},
     PAGES = {97--121},
 PUBLISHER = {Birkh\"auser, Basel},
      YEAR = {1994},
   MRCLASS = {57R57 (14H15 32G15 57M25)},
  MRNUMBER = {1341841 (96h:57027)},
MRREVIEWER = {Anatoly Libgober},
}

@incollection {joyalanalyticfunctors,
	AUTHOR = {Joyal, Andr{\'e}},
	TITLE = {Foncteurs analytiques et esp\`eces de structures},
	BOOKTITLE = {Combinatoire \'enum\'erative ({M}ontreal, {Q}ue.,
	1985)},
	SERIES = {Lecture Notes in Math.},
	VOLUME = {1234},
	PAGES = {126--159},
	PUBLISHER = {Springer, Berlin},
	YEAR = {1986},
	MRCLASS = {05A15 (05A19 18B99)},
	MRNUMBER = {927763 (89b:05014)},
	MRREVIEWER = {Joseph Kung},
	DOI = {10.1007/BFb0072514},
	URL = {http://dx.doi.org/10.1007/BFb0072514},
}

@article {segalrationalfunctions,
	AUTHOR = {Segal, Graeme},
	TITLE = {The topology of spaces of rational functions},
	JOURNAL = {Acta Math.},
	FJOURNAL = {Acta Mathematica},
	VOLUME = {143},
	YEAR = {1979},
	NUMBER = {1-2},
	PAGES = {39--72},
	ISSN = {0001-5962},
	CODEN = {ACMAA8},
	MRCLASS = {55P10 (32C42 81E10)},
	MRNUMBER = {533892 (81c:55013)},
	MRREVIEWER = {D. B. Fuks},
	DOI = {10.1007/BF02392088},
	URL = {http://dx.doi.org/10.1007/BF02392088},
}

@article {Fuk70,
    AUTHOR = {Fuks, D. B.},
     TITLE = {Cohomology of the braid group {${\rm mod}\ 2$}},
   JOURNAL = {Funkcional. Anal. i Prilo\v{z}en.},
  FJOURNAL = {Akademija Nauk SSSR. Funkcional\cprime nyi Analiz i ego Prilo\v{z}enija},
    VOLUME = {4},
      YEAR = {1970},
    NUMBER = {2},
     PAGES = {62--73},
      ISSN = {0374-1990},
   MRCLASS = {14.55},
  MRNUMBER = {0274463},
MRREVIEWER = {N. Popescu},
}

@article {FN62,
    AUTHOR = {Fox, Ralph and Neuwirth, Lee},
     TITLE = {The braid groups},
   JOURNAL = {Math. Scand.},
  FJOURNAL = {Mathematica Scandinavica},
    VOLUME = {10},
      YEAR = {1962},
     PAGES = {119--126},
      ISSN = {0025-5521},
   MRCLASS = {55.20},
  MRNUMBER = {150755},
MRREVIEWER = {H. R. Gluck},
       DOI = {10.7146/math.scand.a-10518},
       URL = {https://doi-org.ezp.sub.su.se/10.7146/math.scand.a-10518},
}

@article {tillmann-detects,
    AUTHOR = {Tillmann, Ulrike},
     TITLE = {Higher genus surface operad detects infinite loop spaces},
   JOURNAL = {Math. Ann.},
  FJOURNAL = {Mathematische Annalen},
    VOLUME = {317},
      YEAR = {2000},
    NUMBER = {3},
     PAGES = {613--628},
      ISSN = {0025-5831},
   MRCLASS = {55P47 (55P48 55R35 57R56 81T40)},
  MRNUMBER = {1776120},
MRREVIEWER = {Kathryn P. Hess},
       DOI = {10.1007/PL00004416},
       URL = {https://doi-org.ezp.sub.su.se/10.1007/PL00004416},
}

@article {Sundaram,
    AUTHOR = {Sundaram, Sheila},
     TITLE = {The homology representations of the symmetric group on
              {C}ohen-{M}acaulay subposets of the partition lattice},
   JOURNAL = {Adv. Math.},
  FJOURNAL = {Advances in Mathematics},
    VOLUME = {104},
      YEAR = {1994},
    NUMBER = {2},
     PAGES = {225--296},
      ISSN = {0001-8708},
   MRCLASS = {05E25 (20C30)},
  MRNUMBER = {1273390},
       DOI = {10.1006/aima.1994.1030},
       URL = {https://doi-org.ezp.sub.su.se/10.1006/aima.1994.1030},
}

@article {toen,
    AUTHOR = {To{\"{e}}n, Bertrand},
     TITLE = {Champs affines},
   JOURNAL = {Selecta Math. (N.S.)},
  FJOURNAL = {Selecta Mathematica. New Series},
    VOLUME = {12},
      YEAR = {2006},
    NUMBER = {1},
     PAGES = {39--135},
      ISSN = {1022-1824},
   MRCLASS = {14F35 (14A20 18F10 55U35)},
  MRNUMBER = {2244263},
MRREVIEWER = {Mark Hovey},
       DOI = {10.1007/s00029-006-0019-z},
       URL = {https://doi-org.ezp.sub.su.se/10.1007/s00029-006-0019-z},
}

@unpublished{ETW17,
    AUTHOR = {Ellenberg, Jordan and Tran, TriThang and Westerland, Craig},
     TITLE = {Fox--{N}euwirth--{F}uks cells, quantum shuffle algebras, and {M}alle's conjecture for function fields},
 	year = {2017},
	note = "Preprint. arXiv:1701.04541",
 }

@article {talpovistoli,
    AUTHOR = {Talpo, Mattia and Vistoli, Angelo},
     TITLE = {The {K}ato-{N}akayama space as a transcendental root stack},
   JOURNAL = {Int. Math. Res. Not. IMRN},
  FJOURNAL = {International Mathematics Research Notices. IMRN},
      YEAR = {2018},
    NUMBER = {19},
     PAGES = {6145--6176},
      ISSN = {1073-7928,1687-0247},
   MRCLASS = {14A20 (18D10)},
  MRNUMBER = {3867403},
MRREVIEWER = {Hsian-Hua\ Tseng},
       DOI = {10.1093/imrn/rnx079},
       URL = {https://doi-org.ezp.sub.su.se/10.1093/imrn/rnx079},
}

@unpublished{DPP-log,
    AUTHOR = {Dupont, Cl\'ement and Panzer, Erik and Pym, Brent},
     TITLE = {Logarithmic morphisms, tangential basepoints, and little disks},
 	year = {2024},
	note = "Preprint. arXiv:2408.13108",
 }

@incollection{wahlstability,
    AUTHOR = {Wahl, Nathalie},
     TITLE = {Homological stability for mapping class groups of surfaces},
     BOOKTITLE = {Handbook of Moduli, Vol. III},
     EDITOR = {Farkas, Gavril and Morrison, Ian},
     pages = {547--583},
         SERIES = {Adv. Lect. Math. (ALM)},
    VOLUME = {24},
 PUBLISHER = {Int. Press},
  ADDRESS = {Somerville, MA},
 	year = {2013},
 }

@article {bergstromvandergeer,
    AUTHOR = {Bergstr{\"o}m, Jonas and van der Geer, Gerard},
     TITLE = {The {E}uler characteristic of local systems on the moduli of
              curves and abelian varieties of genus three},
   JOURNAL = {J. Topol.},
  FJOURNAL = {Journal of Topology},
    VOLUME = {1},
      YEAR = {2008},
    NUMBER = {3},
     PAGES = {651--662},
      ISSN = {1753-8416},
   MRCLASS = {14K10 (14H15)},
  MRNUMBER = {2417447 (2009g:14053)},
MRREVIEWER = {Montserrat Teixidor i Bigas},
       DOI = {10.1112/jtopol/jtn015},
       URL = {http://dx.doi.org/10.1112/jtopol/jtn015},
}

@incollection {segalCFT,
	AUTHOR = {Segal, Graeme},
	TITLE = {The definition of conformal field theory},
	BOOKTITLE = {Differential geometrical methods in theoretical physics
	({C}omo, 1987)},
	SERIES = {NATO Adv. Sci. Inst. Ser. C: Math. Phys. Sci.},
	VOLUME = {250},
	PAGES = {165--171},
	PUBLISHER = {Kluwer Acad. Publ., Dordrecht},
	YEAR = {1988},
	MRCLASS = {58D30 (14H15 32G15 81E05 81E40)},
	MRNUMBER = {981378},
	MRREVIEWER = {Yukihiko Namikawa},
}

@article {gcovers,
	AUTHOR = {Petersen, Dan},
	TITLE = {The operad structure of admissible {$G$}-covers},
	JOURNAL = {Algebra Number Theory},
	FJOURNAL = {Algebra \& Number Theory},
	VOLUME = {7},
	YEAR = {2013},
	NUMBER = {8},
	PAGES = {1953--1975},
	ISSN = {1937-0652},
	MRCLASS = {18D50 (14D21 14H10 14N35)},
	MRNUMBER = {3134040},
	MRREVIEWER = {{\^A}ngela Mestre},
	DOI = {10.2140/ant.2013.7.1953},
	URL = {http://dx.doi.org/10.2140/ant.2013.7.1953},
}

@article {getzlerbv,
    AUTHOR = {Getzler, Ezra},
     TITLE = {Batalin-{V}ilkovisky algebras and two-dimensional topological
              field theories},
   JOURNAL = {Comm. Math. Phys.},
  FJOURNAL = {Communications in Mathematical Physics},
    VOLUME = {159},
      YEAR = {1994},
    NUMBER = {2},
     PAGES = {265--285},
      ISSN = {0010-3616},
     CODEN = {CMPHAY},
   MRCLASS = {81T70 (17B81 55Q99 58Z05 81T40)},
  MRNUMBER = {1256989 (95h:81099)},
MRREVIEWER = {J. Stasheff},
       URL = {http://projecteuclid.org/getRecord?id=euclid.cmp/1104254599},
}

@unpublished{costello-moduli,
	author = {Costello, Kevin},
	title = {The ${A}_\infty$ operad and the moduli space of curves
	},
	note  =  {Preprint available at arXiv:math/0402015},
	year = {2004},
}

@book {macdonald,
    AUTHOR = {Macdonald, Ian G.},
     TITLE = {Symmetric functions and {H}all polynomials},
    SERIES = {Oxford Mathematical Monographs},
   EDITION = {Second},
      NOTE = {With contributions by A. Zelevinsky,
              Oxford Science Publications},
 PUBLISHER = {The Clarendon Press Oxford University Press},
   ADDRESS = {New York},
      YEAR = {1995},
     PAGES = {x+475},
      ISBN = {0-19-853489-2},
   MRCLASS = {05E05 (05-02 20C30 20C33 20K01 33C80 33D80)},
  MRNUMBER = {1354144 (96h:05207)},
MRREVIEWER = {John R. Stembridge},
}

@book {farbmargalit,
    AUTHOR = {Farb, Benson and Margalit, Dan},
     TITLE = {A primer on mapping class groups},
    SERIES = {Princeton Mathematical Series},
    VOLUME = {49},
 PUBLISHER = {Princeton University Press},
   ADDRESS = {Princeton, NJ},
      YEAR = {2012},
     PAGES = {xiv+472},
      ISBN = {978-0-691-14794-9},
   MRCLASS = {57M50 (20F36 20F65 57M07 57N05)},
  MRNUMBER = {2850125 (2012h:57032)},
MRREVIEWER = {Stephen P. Humphries},
}

@article {mcduff,
    AUTHOR = {McDuff, Dusa},
     TITLE = {Configuration spaces of positive and negative particles},
   JOURNAL = {Topology},
  FJOURNAL = {Topology. An International Journal of Mathematics},
    VOLUME = {14},
      YEAR = {1975},
     PAGES = {91--107},
      ISSN = {0040-9383},
   MRCLASS = {55D35 (57D25)},
  MRNUMBER = {0358766 (50 \#11225)},
MRREVIEWER = {D. B. Fuks},
}

@article {mcduffsegal,
	AUTHOR = {McDuff, Dusa and Segal, Graeme},
	TITLE = {Homology fibrations and the ``group-completion'' theorem},
	JOURNAL = {Invent. Math.},
	FJOURNAL = {Inventiones Mathematicae},
	VOLUME = {31},
	YEAR = {1975/76},
	NUMBER = {3},
	PAGES = {279--284},
	ISSN = {0020-9910},
	MRCLASS = {55D45},
	MRNUMBER = {402733},
	MRREVIEWER = {Harold Hastings},
	DOI = {10.1007/BF01403148},
	URL = {https://doi-org.ezp.sub.su.se/10.1007/BF01403148},
}

@article {kupersrandalwilliams-torelli,
	AUTHOR = {Kupers, Alexander and Randal-Williams, Oscar},
	TITLE = {On the cohomology of {T}orelli groups},
	JOURNAL = {Forum Math. Pi},
	FJOURNAL = {Forum of Mathematics. Pi},
	VOLUME = {8},
	YEAR = {2020},
	PAGES = {e7, 83},
	MRCLASS = {57S05 (11F75 18M05 20G05 55R40)},
	MRNUMBER = {4089394},
	MRREVIEWER = {Nick Salter},
	DOI = {10.1017/fmp.2020.5},
	URL = {https://doi-org.ezp.sub.su.se/10.1017/fmp.2020.5},
}

@incollection {mumfordtowards,
    AUTHOR = {Mumford, David},
     TITLE = {Towards an enumerative geometry of the moduli space of curves},
 BOOKTITLE = {Arithmetic and geometry, {V}ol. {II}},
    SERIES = {Progr. Math.},
    VOLUME = {36},
     PAGES = {271--328},
 PUBLISHER = {Birkh\"auser Boston},
   ADDRESS = {Boston, MA},
      YEAR = {1983},
   MRCLASS = {14H10 (14C15)},
  MRNUMBER = {717614 (85j:14046)},
MRREVIEWER = {Werner Kleinert},
}

@article {madsenweiss,
    AUTHOR = {Madsen, Ib and Weiss, Michael},
     TITLE = {The stable moduli space of {R}iemann surfaces: {M}umford's
              conjecture},
   JOURNAL = {Ann. of Math. (2)},
  FJOURNAL = {Annals of Mathematics. Second Series},
    VOLUME = {165},
      YEAR = {2007},
    NUMBER = {3},
     PAGES = {843--941},
      ISSN = {0003-486X},
     CODEN = {ANMAAH},
   MRCLASS = {14H10 (14F43 19D06 55P47)},
  MRNUMBER = {2335797 (2009b:14051)},
MRREVIEWER = {Ulrike Tillmann},
       DOI = {10.4007/annals.2007.165.843},
       URL = {http://dx.doi.org/10.4007/annals.2007.165.843},
}

@article {harerstability,
    AUTHOR = {Harer, John L.},
     TITLE = {Stability of the homology of the mapping class groups of
              orientable surfaces},
   JOURNAL = {Ann. of Math. (2)},
  FJOURNAL = {Annals of Mathematics. Second Series},
    VOLUME = {121},
      YEAR = {1985},
    NUMBER = {2},
     PAGES = {215--249},
      ISSN = {0003-486X},
     CODEN = {ANMAAH},
   MRCLASS = {57M99 (20F34)},
  MRNUMBER = {786348 (87f:57009)},
MRREVIEWER = {K. Vogtmann},
       DOI = {10.2307/1971172},
       URL = {http://dx.doi.org/10.2307/1971172},
}

@incollection {backgroundspace,
    AUTHOR = {Cohen, Ralph L. and Madsen, Ib},
     TITLE = {Surfaces in a background space and the homology of mapping
              class groups},
 BOOKTITLE = {Algebraic geometry---{S}eattle 2005. {P}art 1},
    SERIES = {Proc. Sympos. Pure Math.},
    VOLUME = {80},
     PAGES = {43--76},
 PUBLISHER = {Amer. Math. Soc.},
   ADDRESS = {Providence, RI},
      YEAR = {2009},
   MRCLASS = {57M07 (55P47)},
  MRNUMBER = {2483932 (2010d:57002)},
MRREVIEWER = {Laurence R. Taylor},
}

@article {looijengastable,
    AUTHOR = {Looijenga, Eduard},
     TITLE = {Stable cohomology of the mapping class group with symplectic
              coefficients and of the universal {A}bel-{J}acobi map},
   JOURNAL = {J. Algebraic Geom.},
  FJOURNAL = {Journal of Algebraic Geometry},
    VOLUME = {5},
      YEAR = {1996},
    NUMBER = {1},
     PAGES = {135--150},
      ISSN = {1056-3911},
   MRCLASS = {14H15 (14C30 14F25)},
  MRNUMBER = {1358038 (97g:14026)},
}

@article {miller,
    AUTHOR = {Miller, Edward Y.},
     TITLE = {The homology of the mapping class group},
   JOURNAL = {J. Differential Geom.},
  FJOURNAL = {Journal of Differential Geometry},
    VOLUME = {24},
      YEAR = {1986},
    NUMBER = {1},
     PAGES = {1--14},
      ISSN = {0022-040X},
     CODEN = {JDGEAS},
   MRCLASS = {32G15 (57N05)},
  MRNUMBER = {857372 (88b:32051)},
MRREVIEWER = {Ronnie Lee},
       URL = {http://projecteuclid.org/getRecord?id=euclid.jdg/1214440254},
}

@article {shapirohyperplane,
	AUTHOR = {Shapiro, Boris Z.},
	TITLE = {The mixed {H}odge structure of the complement to an arbitrary
	arrangement of affine complex hyperplanes is pure},
	JOURNAL = {Proc. Amer. Math. Soc.},
	FJOURNAL = {Proceedings of the American Mathematical Society},
	VOLUME = {117},
	YEAR = {1993},
	NUMBER = {4},
	PAGES = {931--933},
	ISSN = {0002-9939},
	CODEN = {PAMYAR},
	MRCLASS = {32S35 (52B30)},
	MRNUMBER = {1131042 (93e:32045)},
	MRREVIEWER = {P. Orlik},
	DOI = {10.2307/2159517},
	URL = {http://dx.doi.org/10.2307/2159517},
}

@phdthesis{howellthesis,
	author = "Howell, Nicholas L.",
	title = "Motives of log schemes",
	school = "University of Oregon",
	year = 2017}

@article {boldsen,
    AUTHOR = {Boldsen, S{\o}ren K.},
     TITLE = {Improved homological stability for the mapping class group
              with integral or twisted coefficients},
   JOURNAL = {Math. Z.},
  FJOURNAL = {Mathematische Zeitschrift},
    VOLUME = {270},
      YEAR = {2012},
    NUMBER = {1-2},
     PAGES = {297--329},
      ISSN = {0025-5874},
     CODEN = {MAZEAX},
   MRCLASS = {57M50 (55T05 57N05)},
  MRNUMBER = {2875835 (2012k:57026)},
MRREVIEWER = {Thomas Koberda},
       DOI = {10.1007/s00209-010-0798-y},
       URL = {http://dx.doi.org/10.1007/s00209-010-0798-y},
}

@book {acg,
    AUTHOR = {Arbarello, Enrico and Cornalba, Maurizio and Griffiths, Phillip
              A.},
     TITLE = {Geometry of algebraic curves. {V}olume {II}},
    SERIES = {Grundlehren der Mathematischen Wissenschaften [Fundamental
              Principles of Mathematical Sciences]},
    VOLUME = {268},
      NOTE = {With a contribution by Joseph Daniel Harris},
 PUBLISHER = {Springer},
   ADDRESS = {Heidelberg},
      YEAR = {2011},
     PAGES = {xxx+963},
      ISBN = {978-3-540-42688-2},
   MRCLASS = {14H10},
  MRNUMBER = {2807457},
}

@article {bergstrom09,
    AUTHOR = {Bergstr{\"o}m, Jonas},
     TITLE = {Equivariant counts of points of the moduli spaces of pointed
              hyperelliptic curves},
   JOURNAL = {Doc. Math.},
  FJOURNAL = {Documenta Mathematica},
    VOLUME = {14},
      YEAR = {2009},
     PAGES = {259--296},
      ISSN = {1431-0635},
   MRCLASS = {14H10 (11G20)},
  MRNUMBER = {MR2538614},
}

@article {getzlerkapranov,
    AUTHOR = {Getzler, Ezra and Kapranov, Mikhail M.},
     TITLE = {Modular operads},
   JOURNAL = {Compositio Math.},
  FJOURNAL = {Compositio Mathematica},
    VOLUME = {110},
      YEAR = {1998},
    NUMBER = {1},
     PAGES = {65--126},
      ISSN = {0010-437X},
     CODEN = {CMPMAF},
   MRCLASS = {18C15 (08A02 14H10 57M50)},
  MRNUMBER = {MR1601666 (99f:18009)},
MRREVIEWER = {Alexandre I. Kabanov},
       DOI = {10.1023/A:1000245600345},
}

@article {acv03,
    AUTHOR = {Abramovich, Dan and Corti, Alessio and Vistoli, Angelo},
     TITLE = {Twisted bundles and admissible covers},
      NOTE = {Special issue in honor of Steven L. Kleiman},
   JOURNAL = {Comm. Algebra},
  FJOURNAL = {Communications in Algebra},
    VOLUME = {31},
      YEAR = {2003},
    NUMBER = {8},
     PAGES = {3547--3618},
      ISSN = {0092-7872},
     CODEN = {COALDM},
   MRCLASS = {14H10 (14A20 14H30)},
  MRNUMBER = {MR2007376 (2005b:14049)},
MRREVIEWER = {Andrew Kresch},
       DOI = {10.1081/AGB-120022434},
       URL = {http://dx.doi.org/10.1081/AGB-120022434},
}

\end{document}